\DeclareMathAlphabet{\mathbbm}{U}{bbm}{m}{n}
\DeclareFontFamily{U}{BOONDOX-calo}{\skewchar\font=45 }
\DeclareFontShape{U}{BOONDOX-calo}{m}{n}{
  <-> s*[1.05] BOONDOX-r-calo}{}
\DeclareFontShape{U}{BOONDOX-calo}{b}{n}{
  <-> s*[1.05] BOONDOX-b-calo}{}
\DeclareMathAlphabet{\mcb}{U}{BOONDOX-calo}{m}{n}
\SetMathAlphabet{\mcb}{bold}{U}{BOONDOX-calo}{b}{n}
\setlist{noitemsep,topsep=4pt}
\def\DeclareSymbol#1#2#3{\expandafter\gdef\csname MH@symb@#1\endcsname{\tikzsetnextfilename{symbol#1}%
\tikz[baseline=#2,scale=0.15,line join=round]{#3}}\expandafter\gdef\csname MH@symb@#1s\endcsname{\scalebox{0.7}{\tikzsetnextfilename{symbol#1}%
\tikz[baseline=#2,scale=0.15,line join=round]{#3}}}}
\def\<#1>{\csname MH@symb@#1\endcsname}
\newcommand{\cut}{\mathfrak{C}}
\newcommand{\for}{\mathrm{for}}
\def\wnorm#1{\lfloor \hspace{-0.29em} \rceil #1 \lfloor \hspace{-0.29em} \rceil}
\def\mainroot{\rho_{\ast}}
\def\CE{\mathcal{E}}
\newcommand{\mcM}{\mathcal{M}}
\newcommand{\mcE}{\mathcal{E}}
\newcommand{\mcA}{\mathcal{A}}
\newcommand{\mcV}{\mathcal{V}}
\newcommand{\mcC}{\mathcal{C}}
\newcommand{\mcB}{\mathcal{B}}
\newcommand{\mcS}{\mathcal{S}}
\newcommand{\mcU}{\mathcal{U}}
\newcommand{\mcT}{\mathcal{T}}
\newcommand{\mcF}{\mathcal{F}}
\newcommand{\mcN}{\mathcal{N}}
\newcommand{\mcD}{\mathcal{D}}
\newcommand{\mcW}{\mathcal{W}}
\newcommand{\mcP}{\mathcal{P}}
\newcommand{\mcG}{\mathcal{G}}
\newcommand{\mcX}{\mathcal{X}}
\newcommand{\mcQ}{\mathcal{Q}}
\newcommand{\mcZ}{\mathcal{Z}}
\newcommand{\cu}{\mathrm{cu}}
\newcommand{\mbbF}{\mathbb{F}}
\newcommand{\mbbM}{\mathbb{M}}
\newcommand{\mbbD}{\mathbb{D}}
\newcommand{\mbbG}{\mathbb{G}}
\newcommand{\mbbA}{\mathbb{A}}
\newcommand{\mbn}{\mathbf{n}}
\newcommand{\mbm}{\mathbf{m}}
\newcommand{\mbj}{\mathbf{j}}
\newcommand{\mbk}{\mathbf{k}}
\newcommand{\T}{\mathbf{T}}
\newcommand{\mfu}{\mathfrak{u}}
\newcommand{\mfF}{\mathfrak{F}}
\newcommand{\mfT}{\mathfrak{T}}
\newcommand{\mfn}{\mathfrak{n}}
\newcommand{\mfo}{\mathfrak{o}}
\newcommand{\mfe}{\mathfrak{e}}
\newcommand{\mfL}{\mathfrak{L}}
\newcommand{\mfC}{\mathfrak{C}}
\newcommand{\mfR}{\mathfrak{R}}
\newcommand{\mft}{\mathfrak{t}}
\newcommand{\mfm}{\mathfrak{m}}
\newcommand{\mfM}{\mathfrak{M}}
\newcommand{\mfp}{\mathfrak{p}}
\newcommand{\mfl}{\mathfrak{l}}
\newcommand{\mfh}{\mathfrak{h}}
\newcommand{\mfq}{\mathfrak{q}}
\newcommand{\mff}{\mathfrak{f}}
\def\wwnorm#1{|\!|\!| #1 |\!|\!|}
\def\cC{\mathscr{C}}
\def\cA{\mathscr{A}}
\def\cG{\mathscr{G}}
\def\cS{\mathscr{S}}
\def\cD{\mathscr{D}}
\def\cE{\mathscr{E}}
\def\cB{\mathscr{B}}
\def\enS{\mathscr{S}} 
\def\Cum{\mathbf{E}^c}
\def\CCum{\textnormal{\tiny{Cum}}}
\def\all{\textnormal{\tiny{all}}}
\def\can{\textnormal{\tiny{can}}}
\def\BPHZ{\textnormal{\tiny \textsc{bphz}}}
\def\rand{\textnormal{\tiny rand}}
\def\Div{\mathrm{Div}}
\def\Dom{\mathrm{Dom}}
\def\DDom{\mathcal{D}}
\def\reduce{\mcb{R}}
\newcommand{\ch}{\mathrm{c}}
\newcommand{\p}{\mathrm{p}}
\newcommand{\inte}{\mathrm{int}}
\newcommand{\exte}{\mathrm{ext}}
\newcommand{\ex}{\mathrm{ex}}
\newcommand{\loc}{\mathrm{loc}}
\newcommand{\Coll}{\mathrm{Coll}}
\newcommand{\go}[1]{\mathbf{#1}}
\newcommand{\bo}[1]{\mathbf{#1}}
\newcommand{\mvert}{\mathcal{V}}
\newcommand{\mtree}{\mathcal{U}}
\newcommand{\set}[1]{\{#1\}} 
\newcommand{\clearrootn}{\tilde{P}}
\def\projplusshape{\mathbf{P}}
\def\projminusshape{\mathbf{P}}
\def\combplus[#1,#2,#3,#4]{\binom{#1\ {\scriptstyle #4} }{#2\ #3}}
\newcommand{\leavesleft}[2]{L(#1,#2)}
\newcommand{\kernelsleft}[2]{K(#1,#2)}
\newcommand{\nodesleft}[2]{\tilde{N}(#1,#2)}
\newcommand{\genvert}[1]{H_{#1}}
\newcommand{\singleslicegenvert}[2]{H_{#1}^{#2}}
\def\singlescalegenvert[#1,#2]{\hat{H}^{#2}_{#1}}
\def\multiscalegenvert[#1,#2]{H^{#2}_{#1}}
\def\Moll{\mathrm{Moll}}
\newcommand{\mmax}[1]{\overline{#1}}
\newcommand{\mmin}[1]{\underline{#1}}
\def\For{\mathfrak{F}}
\def\tDelta{\hat{\Delta}}
\def\nr[#1]{\tilde{N}[#1]} 
\def\inn[#1]{\mathring{N}[#1]}
\def\nrinn[#1]{\hat{N}_{#1}} 
\def\nrmod[#1,#2]{\tilde{N}_{#1}(#2)}
\def\nrinnmod[#1,#2]{\hat{N}_{#1}(#2)}
\def\ident[#1]{\underline{#1}}
\def\mylink#1#2{\mathrel{\vbox{\offinterlineskip\ialign{%
    \hfil##\hfil\cr
    $\scriptscriptstyle#1$\cr
    \noalign{\kern0.1ex}
    $#2$\cr
}}}}
\def\mysublink[#1]#2#3{\mathrel{\vbox{\offinterlineskip\ialign{%
    \hfil##\hfil\cr
    $\scriptscriptstyle#2$\cr
    \noalign{\kern0.1ex}
    $#3$\cr
    \noalign{\kern-0.2ex}
    \smash{\raisebox{-\height}{\hbox{$\scriptscriptstyle #1$}}}\cr
    \noalign{\kern0.2ex}
}}}}
\newcommand{\link}[3]{#1\mylink{#3}{\longleftrightarrow}#2}
\newcommand{\ulink}[3]{#1\mylink{#3}{\leftrightsquigarrow}#2}
\def\allf{\mcb{C}_{\ast}}
\def\smooth{\mcb{C}}
\def\fon[#1]{\cC_{#1}}
\newcommand{\allpon}[1]{\mcP[#1]}
\newcommand{\fillingpon}[1]{\bar{\mcP}[#1]}
\newcommand{\tpode}{\hat{\mcb{A}}}
\newcommand{\ttpode}{\tilde{\mcb{A}}}
\def\mincompproj[#1]{\mfp_{#1}}
\def\Proj_#1{\mathop{\mathrm{Proj}_{#1}}}
\def\negrenorm[#1]{\mfR_{#1}}
\def\topnegrenorm[#1]{\overline{\mfR}_{#1}}
\newcommand{\fullf}{\mathbb{F}_{\pi}}
\newcommand{\fmod}[1]{\mathbb{F}_{#1,\pi}}
\def\quotedge[#1]{E^{q}_{#1}}
\def\cuts{\mathbb{C}_{+}}
\def\posrenorm[#1]{\mcC_{#1}}
\def\topposrenorm[#1]{\overline{\mcC_{#1}}}
\def\cutsmod[#1]{\mathbb{C}_{+,#1}}
\def\fullcuts{\cut}
\def\fullcutsmod[#1]{\cut_{#1}}
\def\fict{\mcb{f}}
\def\emptyset{{\centernot\ocircle}}
\colorlet{symbols}{blue!90!black}
\colorlet{testcolor}{green!60!black}
\colorlet{darkblue}{blue!60!black}
\colorlet{darkgreen}{green!60!black}
\colorlet{redkernel}{red!80}
\def\symbol#1{\textcolor{symbols}{#1}}
\def\1{\mathbf{\symbol{1}}}
\newcommand*{\mathcolor}{}
\def\mathcolor#1#{\mathcoloraux{#1}}
\newcommand*{\mathcoloraux}[3]{%
  \protect\leavevmode
  \begingroup
    \color#1{#2}#3%
  \endgroup
}
\def\drawx{\draw[-,solid] (-3pt,-3pt) -- (3pt,3pt);\draw[-,solid] (-3pt,3pt) -- (3pt,-3pt);}
\tikzset{
  coalline/.style={semithick,draw=black},
  coalnode/.style={circle,fill=black!10,draw=black,inner sep=0pt, minimum size=2mm},
  coalnode2/.style={circle,fill=black!60,draw=black,inner sep=0pt, minimum size=2mm},
  shadednode/.style={circle,fill=blue!40,inner sep=0pt,minimum size=9pt},
  shadededge/.style={line width=6pt,blue!40, shorten >= -3pt,shorten <=-3pt},
  subtreenode/.style={circle,fill=gray!40,inner sep=0pt,minimum size=12pt},
  smallsubtreenode/.style={circle,fill=gray!40,inner sep=0pt,minimum size=9pt},
  subtreeedge/.style={line width=10pt,gray!40, shorten >= -3pt,shorten <=-3pt},
  smallsubtreeedge/.style={line width=6pt,gray!40, shorten >= -3pt,shorten <=-3pt},
  root/.style={circle,fill=testcolor,inner sep=0pt, minimum size=2mm},
  logof/.style={circle,fill=darkblue,inner sep=0pt, minimum size=2mm},
  dot/.style={circle,fill=black,inner sep=0pt,minimum size=1mm},
  sdot/.style={circle,fill=black,inner sep=0pt,minimum size=.5mm},
  int/.style={circle,fill=black,draw=black,inner sep=0pt,minimum size=0.7mm},
  circ/.style={circle,draw=black,inner sep=0pt, minimum size=1mm},
  var/.style={circle,fill=black!10,draw=black,inner sep=0pt, minimum size=2mm},
  dotred/.style={circle,fill=black!50,inner sep=0pt, minimum size=2mm},
  generic/.style={semithick,shorten >=1pt,shorten <=1pt},
  oddfunc/.style={generic, dotted},
  dist/.style={ultra thick,draw=testcolor,shorten >=1pt,shorten <=1pt},
  testfcn/.style={ultra thick,testcolor,shorten >=1pt,shorten <=1pt,<-},
  testfunction/.style={ultra thick,testcolor,shorten >=1pt,shorten <=1pt},
  testfcnx/.style={ultra thick,testcolor,shorten >=1pt,shorten <=1pt,<-,
    postaction={decorate,decoration={markings,mark=at position 0.6 with {\drawx}}}},
  kprime/.style={semithick,shorten >=1pt,shorten <=1pt,densely dashed,->},
  kprimex/.style={semithick,shorten >=1pt,shorten <=1pt,densely dashed,->,
    postaction={decorate,decoration={markings,mark=at position 0.4 with {\drawx}}}},
  kernel/.style={semithick,shorten >=1pt,shorten <=1pt,->,draw=black},
  leaf/.style={decorate, decoration = {zigzag, amplitude=1.5pt, segment length=3pt}, semithick,shorten >=1pt,shorten <=1pt,draw=black},
  keps/.style={semithick,densely dashed,shorten >=1pt,shorten <=1pt,->},
  Pkernel/.style={ultra thick,shorten >=1pt,shorten <=1pt,->,draw=blue},
  PkernelBig/.style={very thick,shorten >=1pt,shorten <=1pt,decorate, draw=blue, decoration={zigzag,amplitude=1.5pt,segment length = 3pt,pre length=2pt,post length=2pt}},
  multx/.style={shorten >=1pt,shorten <=1pt,
    postaction={decorate,decoration={markings,mark=at position 0.5 with {\drawx}}}},
  kernelx/.style={semithick,shorten >=1pt,shorten <=1pt,->,
    postaction={decorate,decoration={markings,mark=at position 0.4 with {\drawx}}}},
  kernel1/.style={->,semithick,shorten >=1pt,shorten <=1pt,postaction={decorate,decoration={markings,mark=at position 0.5 with {\draw[-] (0,-0.2) -- (0,0.2);}}}},
  kernel2/.style={->,semithick,shorten >=1pt,shorten <=1pt,postaction={decorate,decoration={markings,mark=at position 0.45 with {\draw[-] (0.05,-0.2) -- (0.05,0.2);\draw[-] (-0.05,-0.2) -- (-0.05,0.2);}}}},
  kernel3/.style={->,semithick,shorten >=1pt,shorten <=1pt,postaction={decorate,decoration={markings,mark=at position 0.45 with {
    \draw[-] (0.075,-0.2) -- (0.075,0.2);
    \draw[-] (-0.075,-0.2) -- (-0.075,0.2);
    \draw[-] (0,-0.2) -- (0,0.2);}}}},
  kernel4/.style={->,semithick,shorten >=1pt,shorten <=1pt,postaction={decorate,decoration={markings,mark=at position 0.45 with {
    \draw[-] (0.15,-0.2) -- (0.15,0.2);
    \draw[-] (0.05,-0.2) -- (0.05,0.2);
    \draw[-] (-0.05,-0.2) -- (-0.05,0.2);
    \draw[-] (-.15,-0.2) -- (-.15,0.2);}}}},  
  kernelBig/.style={semithick,shorten >=1pt,shorten <=1pt,decorate, decoration={zigzag,amplitude=1.5pt,segment length = 3pt,pre length=2pt,post length=2pt}},
  kernelBigg/.style={thick,shorten >=1pt,shorten <=1pt,decorate, decoration={zigzag,amplitude=3.5pt,segment length = 7pt,pre length=2pt,post length=2pt}},
  kernelBigg1/.style={thick,shorten >=1pt,shorten <=1pt,decorate, decoration={saw,amplitude=3.5pt,segment length = 7pt,pre length=2pt,post length=2pt}},
  kernelBigg2/.style={thick,shorten >=1pt,shorten <=1pt,decorate, decoration={bumps,amplitude=3.5pt,segment length = 7pt,pre length=2pt,post length=2pt}},
  rho/.style={dotted,semithick,shorten >=1pt,shorten <=1pt},
  renorm/.style={shape=circle,fill=white,inner sep=1pt},
  labl/.style={shape=rectangle,fill=white,inner sep=1pt},
cumu2n/.style={inner sep=3pt},
cumu2/.style={draw=red!80,fill=red!40},
cumu3/.style={regular polygon, regular polygon sides=3,draw=red!80,rounded corners=3pt,fill=red!40,minimum size=5mm},
cumu4/.style={regular polygon, regular polygon sides=4,draw=red!80,rounded corners=3pt,fill=red!40,minimum size=7mm},
cumu5/.style={regular polygon, regular polygon sides=5,draw=red!80,rounded corners=3pt,fill=red!40,minimum size=7mm},
bcumu2n/.style={inner sep=3pt},
bcumu2/.style={draw=blue!80,fill=blue!40},
bcumu3/.style={regular polygon, regular polygon sides=3,draw=blue!80,rounded corners=3pt,fill=blue!40,minimum size=5mm},
bcumu4/.style={regular polygon, regular polygon sides=4,draw=blue!80,rounded corners=3pt,fill=blue!40,minimum size=7mm},
bcumu5/.style={regular polygon, regular polygon sides=5,draw=blue!80,rounded corners=3pt,fill=blue!40,minimum size=7mm},
  xi/.style={circle,fill=symbols!10,draw=symbols,inner sep=0pt,minimum size=1.2mm},
  xib/.style={circle,fill=symbols!10,draw=symbols,inner sep=0pt,minimum size=1.6mm},
  not/.style={circle,fill=symbols,draw=symbols,inner sep=0pt,minimum size=0.5mm},
  >=stealth,
  }
\newtheorem{assumption}[lemma]{Assumption}
\colorlet{darkblue}{blue!90!black}
\def\martin#1{}
\def\ajay#1{}
\def\s{\mathfrak{s}}
\def\c{\mathfrak{c}}
\newcommand{\e}{\varepsilon}
\def\${|\!|\!|}
\def\Ker{\mathrm{Ker}}
\def\RKer{\mathrm{RKer}}
\def\KerHat{\widehat{\RKer}}
\def\KerTilde{\widetilde{\RKer}}
\def\?{{\color{red}?}}
\def\id{\mathrm{id}}
\def\Max{\mathop{\mathrm{Max}}}
\def\Min{\mathop{\mathrm{Min}}}
\def\restr{\mathord{\upharpoonright}}
\def\Id{\mathrm{Id}}
\def\PPi{\boldsymbol{\Pi}}
\def\id{\mathrm{id}}
\def\Lab{\mathfrak{L}}
\def\Ke{\mathfrak{L}_{+}}
\def\Le{\mathfrak{L}_{-}}
\def\gv{\mathrm{gv}}
\def\Con{\mathrm{Conn}}
\def\Tr{\mathfrak{T}}
\def\sT{{\overline{T}}} 
\def\sn{\overline{\mfn}} 
\def\se{\overline{\mfe}} 
\def\allnodes{N^{\ast}}
\newcommand{\pow}[2]{\mathrm{X}^{#1}_{#2}}
\newcommand{\powroot}[3]{\mathrm{X}^{#1}_{#2,#3}}
\newcommand{\fpowquot}[3]{\overline{\mathrm{X}}^{#1,#3}_{#2}}
\newcommand{\powquot}[3]{\mathrm{X}^{#1,#3}_{#2}}
\newcommand{\powrootquot}[4]{\mathrm{X}^{#1,#4}_{#2,#3}}
\newcommand{\ke}[2]{\Ker^{#1}_{#2}}
\newcommand{\kerec}[3]{\Ker^{#1,#3}_{#2}}
\newcommand{\wwick}[1]{\mathrm{Wick}\left(#1\right)}
\newcommand{\wickleaves}{\tilde{L}}
\newcommand{\genwickleaves}{\mcb{l}}
\newcommand{\allwickleaves}{\mcb{l}^{\all}}
\def\one{\mathbbm{1}}
\let\basepoint\logof
\def\logof{\mathord{{\basepoint}}} 
\def\back{\!\!\!}
\begin{document}

\title{An analytic BPHZ theorem for Regularity Structures}

\author{Ajay Chandra and Martin Hairer}
\institute{University of Warwick, UK\\ \email{A.Chandra@Warwick.ac.uk, M.Hairer@Warwick.ac.uk}}
\date{\today}
\maketitle

\begin{abstract}
We prove a general theorem on the stochastic convergence of appropriately renormalized models arising from nonlinear stochastic PDEs. 
The theory of regularity structures gives a fairly automated framework for studying these problems 
but previous works had to expend significant effort to obtain these stochastic estimates in an ad-hoc manner.

In contrast, the main result of this article operates as a black box which automatically 
produces these estimates for nearly all of
the equations that fit within the scope of the theory of regularity structures. 
Our approach leverages multi-scale analysis strongly reminiscent to that used in constructive field theory, but with several 
significant twists. These come in particular from the presence of ``positive renormalizations'' caused
by the recentering procedure proper to the theory of regularity structure, 
from the difference in the action of the group of possible
renormalization operations, as well as from the fact that we allow for non-Gaussian driving fields. 

One rather surprising fact is that although the ``canonical lift''
is not continuous on any H\"older-type space containing the noise (which is why
renormalization is required), we show that the ``BPHZ lift'' where the 
renormalization constants are computed using the formula given in \cite{BHZalg}, \textit{is} continuous
in law when restricted to a class of stationary random fields with sufficiently many moments.
\end{abstract}

\setcounter{tocdepth}{2}
\tableofcontents
\section{Introduction}
The theory of regularity structures \cite{Regularity} presents a systematic and robust methodology for 
interpreting / solving a wide class of parabolic stochastic partial differential equations (SPDEs).
One natural subclass of such SPDEs are equations of the type 
\begin{equ}\label{semilinear SHE}
(
\partial_{t} - \CL
)
\varphi 
= 
F(\varphi,\zeta)\;,
\end{equ}
where $\CL$ is an elliptic differential operator
(think of the standard Laplacian on $d$-dimensional space $\R^{d}$ or the $d$-dimensional
torus $\T^d$) and $F(\cdot,\cdot)$ is a local non-linearity that depends on less derivatives of $\phi$ than $\CL$.
The solution $\varphi(t,x)$ can be formally thought of as a random space-time field $\phi:\R^{d+1} \rightarrow \R$, 
and $\zeta$ is a random space time field, the prototypical example being given by space-time white 
noise, the centred Gaussian generalised random field satisfying $\E[\zeta(t,x) \zeta(s,y)] = \delta(t-s)\delta(x-y)$.  

Equations like \eqref{semilinear SHE} can formally be obtained as limits of the dynamics of classical 
models of statistical mechanics in suitable regimes. Two intensely studied examples are the 
KPZ equation appearing from the evolution of random interfaces, where $F(\varphi, \zeta) = |\nabla \varphi|^{2} + \zeta$, and the $\Phi^{4}_{d}$ equation appearing from limits of dynamical Ising models, where $F(\varphi,\zeta) = -\varphi^{3} + \zeta$.

An important restriction on the class of equations that \cite{Regularity} is able to treat is \emph{local subcriticality} -- a precise definition of this condition is fairly technical\footnote{See \cite[Sec.~5]{BHZalg}
for a formal definition with a wider scope (it also incorporates systems of equations)
than the somewhat informal definition given in \cite{Regularity}.}  
but heuristically it states that all non-linear terms appearing on the right hand side 
of \eqref{semilinear SHE} behave like perturbations of the noise term as one rescales the equation towards smaller scales and that these perturbations can be written as multi-linear functionals of the driving noise $\zeta$. 

An immediate obstacle to rigorously formulating these SPDEs is that the roughness of the driving 
noise $\zeta$ forces the solution $\varphi$ to live in a space of functions / distributions with insufficient 
regularity for $F(\varphi,\zeta)$ to have a canonical meaning.  
As with any other integration theory, the program of giving a rigorous solution theory to such SPDEs 
entails understanding how to define these solutions as limits of well-defined approximate solutions.

To obtain such well-defined approximations we can convolve the driving noise with some smooth approximate identity $\rho_{\eps}$ with $\eps > 0$ and $\lim_{\eps \downarrow 0} \rho_{\eps}(\cdot) = \delta(\cdot)$. It is then 
typically easy to obtain, for every realization of $\zeta$, a local existence theorem for 
\begin{equ}\label{mollified semilinear SHE}
(
\partial_{t} - \CL
)
\varphi_{\eps}
= 
F(\varphi_{\eps},
\zeta_{\eps})\;,
\end{equ}
where $\zeta_{\eps} \eqdef \zeta \ast \rho_{\eps}$. 
Naively one would hope to define the solution of \eqref{semilinear SHE} via the limit $\varphi \eqdef \lim_{\eps \downarrow 0} \varphi_{\eps}$ -- unfortunately it is generic for this limit to fail to exist and this signals the need to \emph{renormalize}.
The process of renormalization entails prescribing a scheme of modifying the nonlinearity $F$ at each value of $\eps$, giving rise to a family of non-linearities $F_{\eps}(\cdot,\cdot)$, such that if one considers the modified equations
\begin{equ}\label{renormalized mollified semilinear SHE}
(
\partial_{t} - \CL
)
\hat{\varphi}_{\eps}
= 
F_\eps(\hat{\varphi}_{\eps},\zeta_{\eps})\;,
\end{equ}
then the space-time functions $\hat{\varphi}_{\eps}$ will converge in probability, as $\eps \downarrow 0$, to a non-trivial limiting space-time distribution $\varphi$.
The need for renormalization can be seen very explicitly when one uses Picard iteration to replace the non-linearity $F$ with a formal expansion in terms of multilinear functionals of the linear solution. 

We now give a detailed presentation of this for the specific case where $F(\phi,\zeta) = \phi^{3} + \zeta$
and we work on the $3$-dimensional torus. 
We write $G$ for the space-time Green's function of the differential operator $(\partial_{t} - \CL)$, recentered
so that it averages to $0$.
Then, using Picard iteration, one obtains a formal expansion of the RHS of \eqref{mollified semilinear SHE}
\begin{equs}
(\partial_{t} - \CL) 
\varphi_{\eps}
=&
\zeta_{\eps} + 
(G \ast \zeta_{\eps})^{3}
+
3[G\ast(G \ast \zeta_{\eps})^{3}](G \ast \zeta_{\eps})^{2}
\\
&
+
3[G\ast(G \ast \zeta_{\eps})^{3}]^{2}(G \ast \zeta_{\eps})
+
[G\ast(G \ast \zeta_{\eps})^{3}]^{3}
+
\cdots
\end{equs}
where $\ast$ denotes space-time convolution. 
Observe that $G \ast \zeta_{\eps}$ is nothing but the solution to the linear equation underlying \eqref{mollified semilinear SHE}. 
Some simple graphical notation borrowed from \cite{Regularity}\footnote{This graphical notation is more than a minor convenience -- developing a formalism for seeing all these multilinear functionals of $\zeta$ as combinatorial objects allows us to describe various complicated operations with full precision in great generality.} clarifies this expansion -- using this for the first three terms of the RHS gives
\begin{equ}\label{expansion}
(\partial_{t} - \CL) 
\varphi_{\eps}
=
\<0>_{\eps}
+
\<111>_{\eps}
+
3
\<131>_{\eps}
+
\cdots\;.
\end{equ}
As $\eps \downarrow 0$ the second two objects on the RHS (and infinitely many others hiding in the ``$\cdots$'') will not converge for generic realizations of $\zeta$. 
However, one can switch to a stochastic point of view and see that the $\eps \downarrow 0$ limits of the space-time functions
\begin{equs}\label{renormalization of trees}
\widehat{\<111>}_{\eps}
&\eqdef
\<111>_{\eps} - 3\,C^\eps_{\<11s>}\,\<1>_{\eps}\\
\widehat{\<131>}_{\eps}
&\eqdef
\<131>_{\eps} + 3\, (C^\eps_{\<11s>})^2\,\<10>_{\eps}
-
C^\eps_{\<11s>}\,\<30>_{\eps}
-3C^\eps_{\<11s>}\,\<12>_{\eps}
- 3C^\eps_{\<22s>}\,\<1>_{\eps}
\end{equs}
exist for \emph{almost every} realization of $\zeta$ in suitable spaces of space-time distributions,
provided that we set
\begin{equ}
C^\eps_{\<11s>} \eqdef \E[\<11>_{\eps}(0)]
\quad
\textnormal{and}
\quad
C^\eps_{\<22s>}
\eqdef
\E[\<22>_{\eps}(0)]
\;.
\end{equ}
We also extended our graphical notation by setting
\begin{equ}
\<11>_{\eps} \eqdef (G \ast \zeta_{\eps})^{2}\;,\quad
\<10>_{\eps} \eqdef G\ast(G \ast \zeta_{\eps})\;,\quad
\<12>_{\eps} \eqdef [G\ast(G \ast \zeta_{\eps})](G \ast \zeta_{\eps})^2\;,
\end{equ}
%
etc.\ and used the notation $\tau_{\eps}(0)$ to indicate we are evaluating the space-time function $\tau_{\eps}$ at $z = 0$.

This $\eps$-dependent subtraction of ``counterterms'' on individual terms of our formal expansion \eqref{expansion} to make each term convergent as $\eps \downarrow 0$ is reminiscent of the \emph{perturbative renormalization} developed for Quantum Field Theory (QFT)\footnote{One important difference is that the perturbative renormalization in QFT is carried out on expectation values, or ``amplitudes'', not stochastic objects.}.

Making these individual terms convergent is only part of the procedure of perturbative renormalization, the other is rewriting the nonlinearity $F$ so that the needed counterterms are automatically generated in the Picard iteration. 
If, in \eqref{renormalized mollified semilinear SHE}, we set $F_{\eps}(\varphi_{\eps},\zeta_\eps) \eqdef  \varphi_{\eps}^{3} - c_{\eps} \varphi_{\eps} + \zeta_\eps$, where $c_{\eps} \eqdef 3C^\eps_{\<11s>} + 9C^\eps_{\<22s>}$, then, generating an expansion like \eqref{expansion} for $\hat{\varphi}_{\eps}$ and collecting terms appropriately, one will see
\begin{equ}\label{renormalized expansion}
(\partial_{t} - \CL) 
\hat{\varphi}_{\eps}
=
\<0>_{\eps}
+
\widehat{\<111>}_{\eps}
+
3
\widehat{\<131>}_{\eps}
+
\cdots\;,
\end{equ}
where all the divergent terms we did not explicitly write down are also organized with counterterms so that \eqref{renormalized expansion} is an infinite expansion where each individual term, for almost every realization of $\zeta$, is convergent as $\eps \downarrow 0$. 

This analysis of formal expansions is an old story -- a systematic procedure for choosing these counterterms 
is the BPHZ formalism \cite{BP,Hepp,Zimmermann} which can be adapted from perturbative QFT to ``perturbative SPDE''. 
Unfortunately, this analysis at the level of formal series falls far short of showing that once suitably 
renormalized, the actual solutions $\hat{\varphi}_{\eps}$ converge to a limit $\hat{\varphi}$. 
This proved to be a difficult problem until the publication of \cite{KPZ,Paracontrol,Regularity} which was followed by a
substantial amount of progress in the field.

We now summarize the approach of the theory of regularity structures. 
To motivate this approach we first observe that the $\eps \downarrow 0$ limits of the left hand sides of \eqref{renormalization of trees}, which we denote by $\widehat{\<111>}$ and $\widehat{\<131>}$, are probabilistic limits which are measurable functions of $\zeta$. 
Adopting a deterministic approach, they are not continuous functions of $\zeta$ in any reasonable topology.
Following this thread, if we denote by $\zeta \mapsto \varphi(\zeta)$ the ``solution map'' that takes a realization of the driving noise to the corresponding solution to the SPDE \eqref{semilinear SHE} then it seems natural to expect that $\varphi(\cdot)$ will not be continuous either -- this is discouraging since we would want to define $\varphi(\zeta)$ for typical realizations of $\zeta$ by setting $\varphi(\zeta) \eqdef \lim_{\eps \downarrow 0} \varphi(\zeta_{\eps})$. 

The theory of regularity structures does not view $\varphi$ as the primary solution map and rather works with a map defined on a much richer space. 
It replaces the (linear) space of realizations of $\zeta$ with a non-linear metric space $\mcb{M}_{0}$ of objects called \emph{models} which encode, loosely speaking, the realization of $\zeta$ \emph{and} sufficiently many multilinear functionals of $G \ast \zeta$ so that one can approximate $F$ sufficiently well. 

In order to describe the elements of the space $\mcb{M}_{0}$, one must go further then just specifying a realization of each such multi-linear functional. Additionally, one must also specify, for every space-time point $z$, a suitable
method of recentering this functional around $z$ so that the recentered functional satisfies certain analytic bounds close to that point $z$. 
This recentering is performed via the subtraction of $z$-dependent counterterms\footnote{These counterterms have no obvious counterpart in perturbative renormalization for QFT.} and can be interpreted as
a type of ``positive renormalization'', see \cite{BHZalg}. 
Moreover, the definition of the space $\mcb{M}_{0}$ enforces analytic and highly non-trivial algebraic constraints on how these counterterms depend on $z$ -- the latter constraints are responsible for the non-linearity of the space $\mcb{M}_{0}$.
The reward for this increase in complexity is that the map from a model $Z \in \mcb{M}_{0}$ to the solution $\Phi(Z)$ of the equation ``driven'' by $Z$ is continuous.  

At first sight, it is not clear why this would be of any help. Indeed, there is a natural ``naive'' 
way of defining a family of models $(Z^{\zeta_\eps}_{\can})_{\eps,\zeta}$ so that the solution 
$\varphi(\zeta_{\eps})$ to $\eqref{mollified semilinear SHE}$ is given by $\Phi(Z^{\zeta_\eps}_{\can})$. 
Viewing $\zeta$ as random, $(Z^{\zeta_\eps}_{\can})_{\eps,\zeta}$ is then a sequence of random models but,
for essentially the same reasons as above, this sequence typically 
fails to converge in $\mcb{M}_{0}$ as $\eps \downarrow 0$. 

The reason why this perspective is still useful is that there is more than one way
of associating a model to a given realisation of the driving noise.
In particular, it turns out that the renormalisation procedure which was required then in order to
make the terms in the formal series expansion converge can be encoded into a suitable definition of
what we mean by ``the model $Z_\eps$ associated to $\zeta_\eps$''.
Indeed, it was shown in many different special cases \cite{Regularity,Paracontrol,HaoSG,ZhuZhu,Hoshino2016} 
that there is a 
probabilistically convergent family of models $(Z_{\BPHZ}^{\zeta_{\eps}})_{\eps,\zeta}$ such that, for each fixed $\zeta$, the solution $\hat{\varphi}_\eps(\zeta_{\eps})$ to \eqref{renormalized mollified semilinear SHE}, with a suitable choice of $F_{\eps}$, is given by $\Phi(Z_{\BPHZ}^{\zeta_{\eps}})$. 
Thanks to the deterministic continuity of the map $Z \mapsto \Phi(Z)$, it then follows that the probabilistic convergence of the models $Z_{\BPHZ}^{\zeta_{\eps}}$ is inherited by $\hat{\varphi}_\eps(\zeta_{\eps})$. 
Writing $Z_{\BPHZ}^{\zeta} \eqdef \lim_{\eps \downarrow 0} Z_{\BPHZ}^{\zeta_{\eps}}$, one can view $\Phi(Z_{\BPHZ}^{\zeta})$ as ``a solution'' to the SPDE at hand\footnote{We do not say ``the solution'' because there will actually be an infinite family of solutions. This degeneracy can be seen in our example -- one would expect $\hat{\varphi}_{\eps}$ to converge for any choice of the form $F_{\eps}(\varphi,\zeta) = \varphi^{3} - (c_{\eps} + c_{1})\varphi + c_{2} + \zeta$ where $c_{1}$ and $c_{2}$ are finite constants. The full space of solutions to a given locally subcritical SPDE will always be parameterized by a finite number of real parameters.}

While the machinery of \cite{Regularity} was formulated systematically and covered a wide class of SPDEs, 
two tasks in this procedure were left to be dealt with in a somewhat ad-hoc way: 
\begin{enumerate}
\item Create a concrete space of models $\mcb{M}_{0}$ tailor-made so that the type of SPDEs at hand 
can be realized as one driven by a model in $\mcb{M}_{0}$. This space should have enough flexibility 
to allow for the renormalization subtractions needed to yield probabilistically convergent families of models.
\item Find a probabilistically convergent family of  ``renormalised'' random models $Z^{\zeta}_{\eps}$.
\end{enumerate}
A systematic approach for the first task has recently been presented in \cite{BHZalg}. 
In particular, given a random, smooth, driving noise $\zeta$ the space of models described by \cite{BHZalg} is rich enough to include a specific random model $Z^{\zeta}_{\BPHZ}$ henceforth refereed to as the 
``BPHZ lift of $\zeta$'' defined in \cite[Eq.~6.22]{BHZalg}, see also \cite[Eq.~3.1]{Proceedings}. 

The goal of the present article is to prove the following implication: if a family of random smooth driving 
noises $(\zeta_{n})_{n \in \N}$ is uniformly bounded in an appropriate sense and converges, as $n \rightarrow \infty$, in probability to a random driving noise $\xi$, then $\xi$ admits a BPHZ lift $Z^{\xi}_{\BPHZ}$.
Furthermore, the random model $Z^{\xi}_{\BPHZ}$ is obtained as the limit in probability of the 
random models $(Z^{\zeta_{n}}_{\BPHZ})_{n \in \N}$. In particular, the BPHZ lift is canonical
and stable, i.e.\ independent of any specific choice of approximation procedure.
Our Theorem~\ref{thm: Gaussian case} states this result in the Gaussian case and covers in particular 
all examples of  locally sub-critical SPDEs driven by space-time white noise. 
We state our main result in full generality in Theorem~\ref{main thm: continuity}.
The trio of papers \cite{Regularity}, \cite{BHZalg}, and the present article then give a completely 
``automatic'' and self-contained black box for obtaining local existence theorems for a wide class of SPDEs.

We now describe the approach and outline of this paper. 
As already mentioned, there are two types of renormalization that appear here -- negative renormalizations which compensate for divergences and base-point dependent positive renormalizations which appear in our recentering procedure. 
The definition of the BPHZ model found in \cite{BHZalg,Proceedings} invokes two maps called the positive and negative twisted antipodes.
 Each of these maps is defined through a recursive formula. A very similar (in spirit) definition is given in Section~\ref{Sec: def of BPHZ model} which is later to shown to coincides with the definition given in \cite{BHZalg,Proceedings}.

Our first step is to obtain a more explicit integral formula for the 
BPHZ renormalized model in terms of a sum over forests and cuts, and this is the content of 
Section~\ref{sec: derivation of formula}. 
Here the forests come from negative renormalization and are really the same as those of Zimmerman's forest formula \cite{Zimmermann} -- they are families of divergent structures where each pair of structures in each family must be either nested or disjoint.
The cuts correspond to locations of positive renormalizations. 

The most fundamental obstacle that appears when estimating this explicit integral formula is that renormalizations can obstruct each other. This can happen in two ways: (i) two negative renormalizations can obstruct each other if the two divergent structures are neither nested nor disjoint (ii) two renormalizations of opposite sign can obstruct each other if they overlap, that is a positive renormalization might occur within a divergent structure. 
As a result, one cannot in general expect to be able to ``harvest'' all cancellations created by the
two renormalization procedures.

The first problem is the famous problem of overlapping divergences of perturbative quantum field theory. 
Its solution comes from using scale decompositions so that, for any given scale assignment, 
one can identify which one of a pair of 
overlapping divergent structures to renormalize. 
The original approach to this problem was to represent the explicit formula in terms of Fourier 
variables and splitting up the Fourier integration domain into ``Hepp sectors''. 
The resulting estimates can be summarised by the mantra that ``overlapping divergences don't overlap in phase-space''. 

The use of momentum space techniques is too limiting for our purposes. 
However an alternative approach in position space was described in \cite{FMRS85}, where 
one expands the kernels associated to the lines of a Feynman diagram into an infinite sum 
of slices, each of which is localized at a particular length scale. 
A full scale assignment is an assignment of scale to every single line of a Feynman diagram.
For each full scale assignment on a Feynman diagram, the authors of \cite{FMRS85} define an operator on the set of Zimmerman forests for that diagram called the  ``projection onto safe forests''.
This allows for the sum over Zimmerman forests to be re-organized, as a function of the scale assignment, into sums over subsets of forests we call ``intervals''. 
With this reorganization one is guaranteed to harvest the right negative renormalizations for each given scale assignment.

The problem of overlaps between positive and negative renormalizations does not appear in 
perturbative QFT, but it turns out that it also does not cause a problem.
One can show that if one chooses appropriately which cuts to harvest, as a function of scale assignments, then the larger sum over forests and cuts can also be reorganized appropriately for each scale assignment. 
The mantra here is that ``positive and negative renormalizations don't overlap in phase-space''. 

The set-theoretic aspects and notation of forest projections and intervals, how they can be used to reorganize sums over forests, and sufficient conditions (which we call ``compatibility'') for the sum over forests and sum over cuts to interact well are all described in Section~\ref{Sec: reorganizing sums}. 
This is all done abstractly, then in Section~\ref{Sec: multiscale expansion} we present our multiscale expansion procedure.
In Section~\ref{sec: Safe forest projection} we make the notions of Section~\ref{Sec: reorganizing sums} concrete: we present, as a function of each scale assignment, the \cite{FMRS85} forest projection, our algorithm for choosing which cuts to harvest, and the proof that these two notions are compatible.

In Section~\ref{Sec: summing over scales} we describe how one inductively controls the nested renormalization cancellations that might appear and we obtain full control over one copy of the BPHZ renormalized model. 
When estimating these nested renormalizations we use seminorms similar to ones introduced in \cite{FMRS85} but simplified.  
In Section~\ref{sec: est on moments of trees} we state and verify conditions which take as input the control over a single copy of the model that we obtained in Section~\ref{Sec: summing over scales} and then gives as output estimates on higher moments of that model. 

One key generalization in this work is that we do not assume Gaussianity of the driving noise distribution. 
This was already done earlier in special cases in \cite{HS15,CS16,SX16}, but the novelty here 
is that we are able to also 
accommodate very weakly mixing space-time random fields and genuinely non-Gaussian situations\footnote{As an example, we can accommodate a driving noise which is given by a Wick power of the linear solution to the stochastic heat equation.}.

We are also able to deal with driving noises that are more singular than space-time white noise, but we 
are still restricted to 
regimes where the multilinear functionals of the noise 
appearing in our analysis have ``regularity'' (in some power-counting sense to be made precise below) 
strictly better than that of white noise.
Instances of this limitation were already observed in the 
works \cite{MR1883719,MR2667703,Hoshino2016,HuNualart}. The reason for
this is that the objects indexed by trees that are being renormalised in the theory of regularity structures should
really be thought of as ``half Feynman diagrams'', with the full diagram arising in the moment
estimates by gluing together several trees by their leaves, see for example \cite{KPZ}. 
The analogue of the BPHZ renormalisation procedure arising in our context however takes place at the
level of the trees, not at the level of the resulting Feynman diagrams, so that divergent structures 
straddling more than one of the constituent trees cannot be taken into account, thus leading to
divergencies. This is not a mere technical problem but points to a genuine change in behaviour of the
corresponding stochastic PDEs, as already seen in \cite{MR1883719} in the context of rough paths,
where SDEs driven by fractional Brownian motion can only be made sense of for Hurst parameters $H > {1\over 4}$,
even though the theory of rough paths should in principle ``work'' all the way down to $H > 0$.
The works \cite{MR2823213,MR2889659} suggest that even if there is a ``canonical'' rough path corresponding to fractional
Brownian motion with $H \le {1\over 4}$, one would expect it to live over a larger 
probability space, so that the L\'evy area is no longer a measurable function of the underlying path.
In the genuinely non-Gaussian case, our analysis points to a similar problem already arising
at $H = {1\over 3}$ in the case of non-vanishing third cumulants.

\subsection*{Acknowledgements}

{\small
We are very grateful to Abdelmalek Abdesselam, Yvain Bruned, Kurusch Ebrahimi-Fard, and Lorenzo Zambotti for many 
interesting discussions on the subject of this article.
AC would like to thank Henri Elad Altman, Joscha Diehl, and Philipp Schoenbauer for their close reading which helped in the proofreading of this paper. 
MH gratefully acknowledges support by the Leverhulme Trust and by an ERC consolidator grant, project 615897.
}

\section{Setting, preliminary definitions, and the main theorem}

The most primitive object of the theory of \cite{Regularity} is the \emph{regularity structure}. 
In \cite{BHZalg} a very general formulation for the construction of regularity structures is given by defining various algebraic spaces of trees and forests and taking progressive quotients/projections. 
In this section, we introduce some of the objects and notations from \cite{BHZalg}.

\subsection{Space-time scalings, types, and homogeneities}
For the rest of the paper we fix a dimension $d \ge 1$ of space-time. 
For a multi-index $k = (k_{i})_{i=1}^{d} \in \N^{d}$ we define the degree of $k$ to given by
\begin{equ}\label{def: multi-index degree}
|k| \eqdef \sum_{i=1}^{d}|k_{i}|\;.
\end{equ}
Most interesting SPDE falling under our framework have underlying linear equations which scale anistropically with respect to the various components of space-time -- for example the stochastic heat equation underlying \eqref{semilinear SHE} scales \emph{parabolically}. 

A $d$-dimensional space-time ``scaling'' is a multi-index $\s = (\s_{i})_{i=1}^{d} \in \N^{d}$ with strictly positive entries. From here on we treat the space-time scaling $\s$ as fixed. 

For $z = (z_{1},\dots,z_{d})\in \R^{d}$ we do not define $|z|$ to be given by the standard Euclidean norm but instead set
\begin{equ}\label{def: scaled space-time distance}
|z| \eqdef \sum_{i=1}^{d} |z_{i}|^{1 \over \s_{i}}\;.
\end{equ}

There appears to be a bad overload of notation between \eqref{def: scaled space-time distance} and \eqref{def: multi-index degree} but it will always be clear from context whether the argument of $|\cdot|$ is a multi-index or a space-time vector. 

While $|\cdot|$ on $\R^{d}$ is not a norm it certainly defines a metric. 
With this metric the space $\R^{d}$ has Hausdorff dimension $|\s|$ -- in particular the function $|z|^{\alpha}$ on $\R^{d}$ is locally integrable at $z=0$ if and only if $\alpha > -|\s|$. 

We introduce a notion of $\s$-degree for multi-indices $k = (k_{i})_{i=1}^{d} \in \Z^{d}$ by setting
\[
|k|_{\s} 
\eqdef \sum_{i=1}^{d} k_{i}\s_{i}\;.
\]  
This induces a notion of $\s$-degree for polynomials on $\R^{d}$: for $k \in \N^{d}$ the $\s$-degree of a monomial $\R^{d} \ni  z \mapsto z^{k}$ is $|k|_{\s}$ and the $\s$-degree of a polynomial  is the maximum among the $\s$-degrees of its monomials (with the $\s$-degree of the $0$ polynomial set to $-\infty$). 
\subsubsection{Types and homogeneities}
The combinatorial trees we introduce should be thought of as arising, in spirit, from Picard iteration of the mild formulation system of SPDEs one is trying to solve. As we saw in the introduction, for \eqref{semilinear SHE} one obtains trees where the edges correspond to convolution with the kernel $G$ and the leaves correspond to the driving noise. 

To accommodate systems of equations we work with a collection of different convolution kernels and driving noises.
To formalize this we fix, for the rest of the article, a finite set of \emph{types} $\label{def: set of types}\Lab$ which comes with a partition $\Lab = \Ke \sqcup \Le$. The elements of $\Ke$ and $\Le$ are respectively called kernel types and noise types.
\begin{definition}\label{def:hom} 
A homogeneity assignment is a map $|\cdot|_{\s}: \Lab \rightarrow \R$ 
such that 
\[\label{def: kernel and leaf types}
\Ke = \{ \mft \in \Lab:\ |\mft|_{\s} > 0\}
\enskip
\textnormal{and}
\enskip
\Le = \{\mft \in \Lab:\ |\mft|_{\s} < 0\}\;.
\]
\end{definition}
A choice of homogeneity assignment should be considered fixed except in a few specific arguments where we explicitly state this is not case. 

For each $\mft \in \Ke$, $|\mft|_{\s}$ quantifies the regularizing effect of the corresponding kernel. On the other hand, for $\mft \in \Le$, $|\mft|_{\s}$ encodes the regularity (or rather lack thereof) 
of the driving noise corresponding to $\mft$.
For any set $A$ and map $\mft:A \rightarrow \Lab$, we write
$\label{def: homogeneity of a set}|\mft(A)|_{\s} \eqdef \sum_{a \in A} |\mft(a)|_{\s}$.
Given a homogeneity assignment $|\cdot|_{\s}$ we extend it to $\Z^{d} \oplus \Z(\Lab)$ by setting
\[
a = k + \sum_{\mft \in \Lab}c_{\mft} \mft \in \Z^{d} \oplus \Z(\Lab)\qquad \Rightarrow\qquad
|a|_{\s} \eqdef |k|_{\s} + \sum_{\mft \in \Lab}c_{\mft} |\mft|_{\s}\;.
\]
\subsection{Trees, forests, and decorations} 
\subsubsection{Trees}\label{subsec: trees}
A rooted tree $T$ is an acyclic finite connected simple graph with a distinguished 
vertex $\rho_{T}$ which we call \emph{the root}. 
This graph can be presented as a set of nodes $N_{T}$ and edges $E_{T} \subset N_{T}^{2}$. Since we force our trees to have roots it follows that  $\rho_{T} \in N_{T} \not = \emptyset$. 
On the other hand we do allow $E_{T} = \emptyset$ which forces $|N_{T}| = 1$; such a tree 
will be called \emph{trivial} and is also denoted by $\bullet$.

We view the node set $N_{T}$ of a rooted tree $T$ as being equipped with a partial order $\le$ where $u \le v$ 
if and only if $u$ lies on the unique path connecting $v$ to the root. 
We then view $T$ as being directed, with every edge being of the form $(u,v) \in E_{T}$ with $u < v$. 
With this convention, the root of $T$ is the unique vertex with no incoming edges.
We define maps $e_{\ch}, e_{\p}: E_{T} \rightarrow N_{T}$ with the property that for any $e \in E_{T}$ one has $e = (e_{\p}(e),e_{\ch}(e))$ -- here $\ch$ stands for ``child'' and $\p$ stands for parent. 
When $e$ has been fixed for an expression we may write $e_{\p}$ and $e_{\ch}$, suppressing the argument.
Note that we inherit a poset structure on edges by setting $e \le \tilde{e} \Leftrightarrow e_{\p} \le \tilde{e}_{\p}$. 

An morphism of rooted trees $T$ and $\bar{T}$ is a map $f: N_{T} \sqcup E_{T} \rightarrow N_{\bar{T}} \sqcup E_{\bar{T}}$ with the property that for any $e = (x,y) \in E_{T}$ one has $f(e) = (f(x),f(y)) \in E_{\bar{T}}$. 

A typed rooted tree is a pair $(T,\mft)$ where $T$ is a rooted tree and $\mft: E_{T} \rightarrow \Lab$ with the properties that (i) if for some $e \in E_{T}$ one has $\mft(e) \in \Le$ then $e$ is maximal in the partial order on $E_{T}$ and (ii) For distinct $e,\ e' \in E_{T}$ with $\mft(e),\mft(e') \in \Le$ one has $e_{\p} \neq e_{\p}'$.

A morphism of typed rooted trees is morphism of rooted trees which preserves edge types. 
The type map $\mft$ is often suppressed from from notation. 
Henceforth the term ``tree'' denotes a typed rooted tree. 
\subsubsection{Forests}\label{subsec: forests}
A typed rooted forest $F$ is a typed directed graph (consisting of a node set $N_{F}$, an edge set $E_{F} \subset N_{F}^{2}$, and a type map $\mft: E_{F} \rightarrow \Lab$) with the property that every connected component of $F$ is a typed rooted tree (we often call each of these components a ``tree'' of $F$). 
Henceforth the term ``forest'' will always refer to a typed rooted forest. 

Clearly any typed rooted tree is also a typed rooted forest.
We do allow for $N_{F} = \emptyset$ in the definition of a forest and this empty forest will be denoted by $\mathbf{1}$.
Note that $\mathbf{1}$ is \emph{not} a tree. 
We also write $\rho(F) \subset N_{F}$ for the collection of all nodes of $F$ having no incoming edges.
Additionally, we view $N_{F}$ and $E_{F}$ as posets by inheriting the poset structures from their connected components 
of $F$ and postulating that pairs of edges / vertices belonging to different connected components are not comparable.
We say $f\colon N_{F} \sqcup E_{F} \rightarrow N_{F'} \sqcup E_{F'}$ is a typed rooted forest morphism from $F$ to $F'$ if it restricts to a morphism of rooted trees on each connected component.

One has the decomposition $E_{F} = K(F) \sqcup \mathbb{L}(F)$ where
\[
\label{e:leaf and kernel edges}
\mathbb{L}(F) \eqdef 
\{ e \in E_{F}:\ \mft(e) \in \Le \}
\enskip
\textnormal{and} 
\enskip
K(F) \eqdef 
\{ e \in E_{F}:\ \mft(e) \in \Ke \}\;.
\]
We call the elements of these two sets \textit{noise edges} and \textit{kernel edges}, respectively.
While viewing noises as edges was the point of view taken in \cite{BHZalg}, where it
leads to a cleaner description of the algebraic properties of these objects, it will be much
more natural in the present setting to view each noise as a node.
We will therefore always restrict ourselves to situations in which, for any given node $v \in N(F)$,
there can be at most one kernel edge $e \in \mathbb{L}(F)$ with $e_\p = v$.
As a consequence, we will never work with $\mathbb{L}(F)$, but instead with
the set $\label{def: leaf nodes} L(F) \eqdef e_{\p}[\mathbb{L}(F)]$
Observe that, thanks to the above-mentioned restriction, $L(F)$ naturally
inherits a type map $\mft: L(F) \rightarrow \Le$.
We also define a subset of nodes $N(F) \eqdef N_{F} \setminus e_{\ch}(\mathbb{L}(F))$  which should 
be thought of as ``true'' nodes (they are the ones that will later on correspond to integration variables) 
and consider the elements of $e_{\ch}(\mathbb{L}(F))$ as ``fictitious'' nodes that will not play any role
in the sequel. 
We also use the shorthand $\tilde{N}(F) \eqdef N(F) \setminus \rho(F)$.
The reason why this shorthand will often be used is that, when renormalizing 
a subtree $S$ of a larger tree $T$, one should think of ``contracting'' this subtree to a
single node, see \cite{Regularity,BHZalg}. We will then always identify the integration variable
corresponding to the contraction of $S$ with the root of $S$ in $T$, so that $\tilde{N}(S)$
precisely corresponds to the variables that get integrated out by the renormalization procedure. 

We often include figures representing trees and forests. To keep things simple our pictures will 
only distinguish between kernel type and noise type:
edges of kernel type will be depicted by straight arrows and edges of noise type will be 
depicted by a zigzag line, as in the following figure.  
\[
\begin{tikzpicture}
    \node[root] (root) at (0.5, 0) {};
    \node[dot](first) at (-0.5, 0.5) {};
    \node[dot] (second) at (0.5, 1) {};
    \node[dot] (third) at (-0.5, 1.5) {};
    \node[circ] (root anoise) at (1.25, 0.25) {};
    \node[circ] (second anoise) at (1.25, 1.25) {};
    \node[circ] (first anoise) at (-1.25, 0.75) {};
    \node[circ] (third anoise) at (-1.25, 1.75) {};
    \node[dot] (fourth) at (0.5, 2) {};

    \draw[kernel] (root) to (first);
    \draw[kernel] (first) to (second);
    \draw[kernel] (second) to (third);
    \draw[kernel] (third) to (fourth);

    \draw[leaf] (root) to (root anoise);
    \draw[leaf] (first) to (first anoise);
    \draw[leaf] (second) to (second anoise);
    \draw[leaf] (third) to (third anoise);
\end{tikzpicture}
\]
The arrow for an edge $e$ always travels from $e_{\p}$ to $e_{\ch}$. 
Above we have marked the true vertices with solid circles and the fictitious nodes with open circles,
with the root being drawn as a larger green dot. 

We define the \emph{forest product} $F_{1} \cdot F_{2}$ between two forests $F_{1}$ and $F_{2}$ as their disjoint union. 
Since there is no ordering on the set of components of a forest, the forest product is commutative and associative 
with the empty forest $\mathbf{1}$ serving as unit.
Given a forest $F$ we say that a forest $A$ is a subforest of $F$ if $N_{A} \subset N_{F}$, $E_{A} \subset E_{F}$ and the natural inclusion map $\iota: N_{A} \sqcup E_{A} \rightarrow N_{F} \sqcup E_{F}$ is a forest morphism. 
In particular, the empty forest $\mathbf{1}$ is a subforest of every forest.
We define $\Con(F)$ to be the set of connected components of $F$, each viewed as a subtree of $F$ (thus $\Con(F)$ may contain distinct elements which are isomorphic as trees). 

Two subforests $A$ and $B$ of a forest $F$ are said to be disjoint if $N_{A} \cap N_{B} = \emptyset$. 
We also define the union $A \cup B$ and intersection $A \cap B$ of two subforests $A,B$ of $F$ to be the subforests with node and edge sets $N_{A} \cup N_{B}, E_{A} \cup E_{B}$, and $N_{A} \cap N_{B}$, $E_{A} \cap E_{B}$, respectively. 
If $B$ is a subforest of $A$ such that $B \cap A$ consists of a union of connected components of $A$ then $A \setminus B$ is also a subforest.
\begin{remark}\label{disappearing noises}
Caution: one may have $G$ a subforest of a forest $F$, $u \in L(F) \cap N(G)$, but $u \not \in L(G)$ - this would happen if the unique $e \in \mathbb{L}(F)$ with $e_{p} = u$ did not belong to $\mathbb{L}(G)$.
\end{remark}
A subforest that happens to be a tree is called a subtree.  
Both the intersection and the union of a non-disjoint pair of subtrees is again a subtree.
\subsubsection{Decorations}
A semi-decorated tree consists of a tree $T$ and two maps: a node label $\mfn: N(T) \rightarrow \N^{d}$ and an edge label $\mfe: E_{T} \rightarrow \N^{d}$.
Given a forest $T$, node decoration $\mfn$ and edge decoration $\mfe$ on $T$ we denote the corresponding semi-decorated tree by $T^{\mfn}_{\mfe}$.

A choice of homogeneity assignment $|\cdot|_{s}$ induces a notion of a homogeneity for semi-decorated trees by setting 
\begin{equ}\label{Regularity structure homogeneity}
|T^{\mfn}_{\mfe}|_{\s}
\eqdef 
\sum_{
e \in E_{T}
}
\left(
|\mft(e)|_{\s}
-
|\mfe(e)|_{\s}
\right)
+
\sum_{u \in N(T)}
|\mfn(u)|_{\s}\;.
\end{equ}
\subsection{Regularity structures of semi-decorated trees}

\subsubsection{The reduced regularity structure}

We first recall the definition of a regularity structure from \cite[Def.~2.1]{Regularity}. 
\begin{definition}\label{def: reg struct}
A \emph{regularity structure } $\mathscr{T} = (A,\mcb{T},G)$ consists of
\begin{itemize}
\item An index set $A \subset \R$ which is locally finite and bounded from below.
\item A model space $T$, which is a graded vector space $\mcb{T} = \bigoplus_{\alpha \in A} \mcb{T}_{\alpha}$, with each $\mcb{T}_{\alpha}$ a Banach space.
\item A \emph{structure group} $G$ of linear operators acting on $\mcb{T}$ such that, for every $\Gamma \in G$, every $\alpha \in A$, and every $\tau \in \mcb{T}_{\alpha}$, one has
\[
\Gamma \tau - \tau 
\in 
\bigoplus_{\beta < \alpha} \mcb{T}_{\beta}\;.
\]
\end{itemize}
\end{definition}
When formulating our main theorem in the context of the theory of regularity structures the relevant regularity structure will be the \emph{reduced regularity structure} $\mathscr{T}$ described by the procedure of \cite[Sec.~6.4]{BHZalg}\footnote{This is a generalisation of the more informal prescription of \cite[Sec.~8]{Regularity}.}.  
This procedure takes as input a choice of space-time dimension $d \ge 1$, a $d$-dimensional space-time scaling $\s$, sets of leaf and kernel types $\Ke$ and $\Le$, a homogeneity assignment $|\cdot|_{\s}$ on these types, and finally a \emph{rule} $R$.
We have introduced all these notions except the last one. 

The vector space $\mcb{T}$ of the reduced regularity structure is the free vector space generated by a 
collection $B_{\circ}$ of semi-decorated trees prescribed by the rule $R$. 
Then one has $A \eqdef \{ |T^{\mfn}_{\mfe}|_{\s}: T^{\mfn}_{\mfe} \in B_{\circ} \}$ and for each $\alpha \in A$ one sets $\mcb{T}_{\alpha}$ to be the free vector space generated by all semi-decorated trees $T^{\mfn}_{\mfe} \in B_{\circ}$ 
with $|T^{\mfn}_{\mfe}|_{\s} = \alpha$. 

\begin{remark}
The collection $B_\circ$ introduced in \cite{BHZalg} is slightly different than the one introduced here
since it furthermore allows a tree to be endowed with an additional colored subforest, as well
as an additional label $\mfo$ which keeps track of some information coming from contracted
subtrees. The collection $B_\circ$ we are using here consists of those elements for which $\mfo = 0$ and
the colored subforest is empty. This is because we will only ever consider the \textit{reduced regularity structure}
introduced in \cite[Sec.~6.4]{BHZalg}.
\end{remark}

This collection $B_{\circ}$ has to be rich enough to describe the solution to the system of SPDEs in question and be closed under positive and negative renormalization operations. 
A natural method to build $B_{\circ}$ so that this is the case is to build it up inductively by starting from some primitive objects and then applying operations like integration and point-wise products. 
We describe this now.

For every $\mft \in \Ke$ and $k \in \N^{d}$ we define an operator $\mcb{I}_{\mft}^{k}[\cdot]$ on semi-decorated trees as follow: for an arbitrary semi-decorated tree $\tau$ one obtains $\mcb{I}_{\mft}^{k}[\tau]$ by adding a new vertex to the set of vertices of $\tau$ which becomes the new root and has a vanishing node label -- we then attach the new root to the old one with an edge of type $\mft$ and edge decoration $k$. 

Additionally, for any $\mfl \in \Le$ we write $\Xi_{\mfl}$ for the semi-decorated tree which consists of a single edge of type $\mfl$  with vanishing edge and node labels.
We also introduce a commutative and associative binary operation on semi-decorated trees called the tree product: for two semi-decorated trees $\tau$ and $\bar{\tau}$, the tree product $\tau \star \bar{\tau}$ is obtained by identifying the roots of $\tau$ and $\bar{\tau}$, which becomes the root of $\tau \star \bar{\tau}$, and then setting the new node label of this root to be the sum of the node label of the two previous roots. 

The set of primitive semi-decorated trees we start with is $\{\bullet^{n}\}_{n \in \N^{d}} \sqcup \{ \Xi_{\mfl} \}_{\mfl \in \Le}$ -- the first set is called the set of ``abstract polynomials'' and the second is called the set of ``abstract noises''. 
Then, one builds up $B_{\circ}$ by applying tree products and applying the operators $\mcb{I}_{\mft}^{k}$. 
However, this produces a collection $B_{\circ}$ which is too large -- certainly $A$ will fail to be bounded below. 
A \textit{rule} is then an algorithm which explicitly specifies when one is allowed to take tree products and apply the operators $\mcb{I}_{\mft}^{k}$ as one builds $B_{\circ}$.
For a precise definition of what a rule is we point the reader to \cite[Sec.~5.2]{BHZalg}. 
For the present article, this precise definition is irrelevant, one should think of a rule as a way of formalising
what one means by a ``class of semilinear stochastic PDEs'' and $B_{\circ}$ as the set of trees that are
required when trying to formulate elements from that class within the theory of regularity structures.
A rule is then ``subcritical'' if all elements in the corresponding class of SPDEs are subcritical in the
sense of \cite{Regularity}. It is ``complete'' if the class is sufficiently large so that it is closed
under the natural renormalisation operations described in \cite{BHZalg}.
We now fix a for the rest of the paper a complete subcritical rule $R$ and a 
corresponding reduced regularity structure $\mcT$.

\begin{remark}
For example, if we wish to consider the SPDE formally given by
\begin{equ}
\d_t \Phi = \Delta \Phi - \Phi^3 + \xi\;,
\end{equ}
in dimension $d < 4$ and for $\xi$ space-time white noise, then the smallest complete rule 
allowing to describe this equation contains all equations of the type
\begin{equ}
\d_t \Phi = \Delta \Phi - P(\Phi) + \xi\;,
\end{equ}
where $P$ is a polynomial of degree $3$.
\end{remark}

\subsubsection{Admissible models}

Next we recall the definition of a smooth model on a regularity structure $\mathscr{T}$. 
In what follows we denote by $\label{def: smooth functions 1}\mcb{C}(\R^{d})$ the space of all smooth real valued functions on $\R^{d}$.  
\begin{definition}\label{def: smooth model}
Given a regularity structure $\mathscr{T} = (A,\mcb{T},G)$, a smooth model $Z = (\Pi,\Gamma)$ on $\mathscr{T}$ on $\R^{d}$ with scaling $\s$ consists of
\begin{itemize}
\item A map $\Gamma: \R^{d} \times \R^{d} \rightarrow G$ such that $\Gamma_{xx} = \Id$, and such $\Gamma_{xy}\Gamma_{yz} = \Gamma_{xz}$ for all $x,y,z \in \R^{d}$.
\item A collection $\Pi = (\Pi_{x})_{x \in \R^{d}}$ of continuous linear maps $\Pi_{x}: \mcb{T} \rightarrow \mcb{C}^{\infty}(\R^{d})$ such that $\Pi_{y} = \Pi_{x} \Gamma_{xy}$ for all $x,y \in \R^{d}$.
\end{itemize}
Furthermore, for every $\alpha \in A$ and every compact set $\mathfrak{K} \subset \R^{d}$, we assume the existence of a finite constant $C_{\alpha,\mathfrak{K}}$ such that the bounds
\begin{equ}\label{analytic requirements}
|(\Pi_{x}\tau)(y)| \le C_{\alpha,\mathfrak{K}}
\| \tau \|_{\alpha}
|x-y|^{\alpha}
\enskip
\textnormal{and}
\enskip
\|\Gamma_{xy} \tau 
\|_{\beta}
\le
C_{\alpha,\mathfrak{K}}
\|\tau \|_{\alpha}
|x-y|^{\beta - \alpha}
\end{equ}  
hold uniformly in $x,y \in \mathfrak{K}$, $\beta \in A$ with $\beta < \alpha$, and $\tau \in \mcb{T}_{\alpha}$. Here for any $\beta \in A$, $\|\cdot\|_{\beta}$ denotes the norm on the Banach space $\mcb{T}_{\alpha}$.
\end{definition}
Above, there is no constraint how a model interacts with the various kernel types, noise types, and edge labels -- to do this one introduces an ``admissibility'' condition. 
This requires associating to each abstract kernel type a concrete convolution kernel.

First, following \cite[Sec.~10.3]{Regularity}, for any $\zeta \in [0,|\s|)$ and $m \in \N$ we introduce norms $\|\cdot\|_{\zeta,m}$ on smooth functions $K: \R^{d} \setminus \{ 0\} \rightarrow \R$ by setting
\begin{equ}\label{def: singular kernel norm}
\|K\|_{\zeta,m}
\eqdef
\max_{
\substack{
k \in \N^{d}\\
|k|_{\s} \le m
}
}
\sup_{x \in \R^{d}\setminus \{0\}}
|x|^{|k|_{\s} + |\s| - \zeta}
|D^k K(x)|\;.
\end{equ}
\begin{definition}\label{def: kernel type map}
A \emph{kernel type map} is a tuple $K = (K_{\mft})_{\mft \in \Ke}$ where for every $\mft \in \Ke$ 
\begin{itemize}
\item $K_{\mft}:\R^{d} \setminus \{0\} \rightarrow \R$ is smooth function. 
\item $K_{\mft}(x)$ vanishes if $|x| > 1$.
\item For every $m \in \N$ one has $\|K_{\mft}\|_{|\mft|_{\s},m} < \infty$.
\end{itemize}
\end{definition}
\begin{remark} 
Within the scope of regularity structures it is common to also require that for some value of $r > 0$ one has $\int_{\R^{d}}K_{\mft}(x)P(x)\ dx = 0$ for any polynomial $P$ of $\s$-degree strictly less than $r$. 
Since there is no need for this in any of the theorems stated in this paper we do not enforce any such requirement.
\end{remark}
For the rest of the paper we take our kernel type map $K$ as fixed.
With all these notions in hand we can now give a definition of an admissible smooth model.
This definition only makes sense in the context of the regularity structure built
from a set of types and a rule $R$ as described above, so we assume that we are in this setting from now on.

\begin{definition}\label{def: admissibility}
A smooth model $Z = (\Pi,\Gamma)$ is said to be admissible it satisfies all of the following properties.
\begin{itemize}
\item For every $z,\bar{z} \in \R^{d}$, $\Pi_{z} [\bullet^{0}](\bar{z}) = 1$. 
\item For any $\mft \in \Ke$, $k \in \N^{d}$ and $\tau \in B_{\circ}$ such that $\mcb{I}^{k}_{\mft}(\tau) \in B_{\circ}$ one has
\[
\big(
\Pi_{z} 
\mcb{I}^{k}_{\mft}(\tau)
\big)
(\bar{z})
=
\big(
D^{k}K_{\mft}
\ast
\Pi_{z}\tau
\big)(\bar z)
-
\sum_{
|j|_{\s} < 
|\mcb{I}^{k}_{\mft}(\tau)|_{\s}
}
\frac{
(\bar{z} - z)^j
}
{j!}
(D^{k+j}K_{\mft} \ast \Pi_{z}\tau)(z)\;,
\]
for all $z,\bar{z} \in \R^{d}$. Here $\ast$ denotes convolution in $\R^{d}$ and $j$ denotes
a multiindex in $\N^d$. 
\item For any $n \in \N^{d}$, $z,\bar{z} \in \R^{d}$, and $\tau \in B_{\circ}$ such that $ \bullet^{n} \star \tau \in B_{\circ}$, one has 
\[
\big(\Pi_{z} [\bullet^{n} \star \tau] \big)(\bar{z})
=
(\bar z- z)^{n}
\big(\Pi_{z} \tau \big)(\bar{z})\;.
\]
\end{itemize}
\end{definition}
We denote by $\mathscr{M}_{\infty}(\mathscr{T})$ the collection of all smooth admissible models on $\mathscr{T}$. 
The notion of a model is formulated at a deterministic level. 
When formulating a system of SPDEs as a system ``driven'' by a model, each realization of the 
noise yields a different model.

If we turn to solving an SPDE like \eqref{mollified semilinear SHE} then there is a 
canonical way to lift each smoothened realization of the driving noise to a corresponding 
model, which we describe now.
\begin{definition}\label{def: noise type map}
A \emph{smooth noise} is a tuple $\xi = (\xi_{\mft})_{\mft \in \Le}$ with $\xi_{\mft} \in \mcC^{\infty}(\R^{d})$ for every $\mft \in \Le$. We denote by $\Omega_{\infty}$ the set of all noises for our fixed set of types $\Le$. 
\end{definition}
Given a smooth noise, there is a canonical way of associating a model to it.
\begin{definition}
Given $\xi \in \Omega_{\infty}$, the \emph{canonical lift} $Z_{\can}^{\xi} = (\Pi,\Gamma) \in \mathscr{M}_{\infty}(\mathscr{T})$  of $\xi$ is the unique admissible model such that
\begin{itemize}
\item for every $\mfl \in \Le$ and $z \in \R^{d}$ one has
$\big(\Pi_{z}\Xi_{\mfl}\big)(\cdot) = \xi_{\mfl}(\cdot)$,
\item for every $z,\bar{z} \in \R^{d}$ and $\tau, \bar{\tau} \in B_{\circ}$ with $\tau \star \bar{\tau} \in B_\circ$,
\[
\big(\Pi_{z}[\tau \star \bar{\tau}] \big)(\bar{z})
=
\big(\Pi_{z}\tau\big)(\bar{z})\cdot \big(\Pi_{z}\bar \tau\big)(\bar{z})\;.
\]
\end{itemize}
\end{definition}
Reiterating what was said in the introduction, for a fixed realization of $\zeta_{\eps}$, the equation \eqref{mollified semilinear SHE} will be viewed as driven by the model $Z^{\zeta_{\eps}}_{\can}$ and the solution $\phi(\zeta_{\eps})$ can be written in terms of $\Phi(Z^{\zeta_{\eps}}_{\can})$ where $Z \rightarrow \Phi(Z)$ is defined on $\mathscr{M}_{\infty}(\mathscr{T})$. 

Next, we describe the topology that makes $\Phi$ \emph{continuous}.
First, we introduce a family of test functions.
\begin{definition}\label{space of functions}
Given a homogeneity assignment $|\cdot|_{\s}$ we define the following notation.
For any smooth function $\psi$ on $\R^{d}$ we define
\[
\| 
\psi 
\|_{{\s}}
\eqdef
\max
\left\{
\sup_{x \in \R^{d}}
|D^{k}\psi(x)|:\ 
k \in \N^{d}, |k|_{\s} \le \Big\lceil - \min_{\mft \in \Le} |\mft|_{\s} 
\Big\rceil
\right\}\;.
\]
We define $\mcb{B}_{{\s}}$ to be the set of all smooth functions $\psi$ on $\R^{d}$ with $\|\psi\|_{{\s}} \le 1$ and $\psi(x) = 0$ for all $x \in \R^{d}$ with $|x| \ge 1$.
\end{definition}
Next we define a family of pseudometrics on $\mathscr{M}_{\infty}(\mathscr{T})$ as follows.
For $(\Pi,\Gamma),\ (\bar{\Pi},\bar{\Gamma}) \in \mathscr{M}_{\infty}(\mathscr{T})$, $\alpha \in A$, and compact $\mathfrak{K} \subset \R^{d}$, we define
\begin{equ}\label{def: pseudometric on smooth models}
\$
(\Pi,\Gamma); (\bar{\Pi},\bar{\Gamma})
\$_{\alpha, \mathfrak{K}}
\eqdef
\| \Pi - \bar{\Pi} \|_{\alpha; \mathfrak{K}}
+
\| \Gamma - \bar{\Gamma} \|_{\alpha; \mathfrak{K}}
\end{equ}
where
\begin{equs}
\| \Pi - \bar{\Pi} \|_{\alpha; \mathfrak{K}}
&\eqdef 
\sup
\left\{
\frac{
|
\langle 
(\Pi_{x} - \bar{\Pi}_{x})
\tau,
\phi_{x}^{\lambda}
\rangle|
}
{\|\tau\|_{\alpha} \lambda^{\alpha}}
:
\begin{split}
&x \in \mathfrak{K},\ 
\tau \in \mathscr{T}_{\alpha}\\
& \lambda \in (0,1],\ 
\phi \in \mcb{B}_{{\s}}.
\end{split}
\right\}\;,\\[1.5ex]
\| \Gamma - \bar{\Gamma} \|_{\alpha; \mathfrak{K}}
&
\eqdef
\sup
\left\{
\frac{
\|
(\Gamma_{xy} - \bar{\Gamma}_{xy})
\tau,
\phi_{x}^{\lambda}
\|_{\beta}
}
{\|\tau\|_{\alpha} 
\cdot
|x-y|^{\alpha - \beta}}:\ 
\begin{split}
&
x,y \in \mathfrak{K},\ 
x \not = y,\ \phi \in \mcb{B}_{{\s}}\\
&
\tau \in \mathscr{T}_{\alpha},
\beta \in A, \beta < \alpha.
\end{split}
\right\}\;.
\end{equs}
Endowing $\mathscr{M}_{\infty}(\mathscr{T})$ with the system of pseudometrics $(\$\bullet ; \bullet\$_{\alpha, \mathfrak{K}})_{\alpha, \mathfrak{K}}$, we can view $\mathscr{M}_{\infty}(\mathscr{T})$ as a metric space. 
The space $\mathscr{M}_{\infty}$ is certainly not complete -- we denote by $\mathscr{M}_{0}(\mathscr{T})$ the completion of $\mathscr{M}_{\infty}(\mathscr{T})$. Elements of $\mathscr{M}_{0}(\mathscr{T})$ are called admissible models -- they include models which are \emph{distributional} in character. 

Viewing the map $\Phi(\cdot)$'s target space as an appropriate H\"older-Besov space of distributions one can prove estimates that show $\Phi$ admits a unique extension to $\mathscr{M}_{0}(\mathscr{T})$. 
However at this point we have reached the limit of what one can do with a purely pathwise analysis. 
As we stated in the introduction, if one fixes a realization of $\zeta$ then the deterministic convergence of $\zeta_{\eps}$ to $\zeta$ as $\eps \downarrow 0$ cannot be turned into a similar statement about convergence of the canonical lifts $Z^{\zeta_{\eps}}_{\can}$ -- the latter will in general not converge in $\mathscr{M}_{0}(\mathscr{T})$. 
\subsection{Probabilistic assumptions and the main theorem}
Switching to a probabilistic approach, even when one views $\zeta$ as a random noise the model-valued random variables $Z^{\zeta_{\eps}}_{\can}$ will often not converge in a probabilistic sense -- one may still need to renormalize. 

Before stating our main theorem guaranteeing that this can be done we formalize the stochastic setting we are working in. 
\begin{definition}\label{def: smooth random variables}
We denote by $\mathcal{M}(\Omega_{\infty})$ the collection of all $\Omega_{\infty}$-valued random variables $\xi$ which satisfy all of the following properties.
\begin{enumerate}
\item For every $z \in \R^{d}$, $\mft \in \Le$ and $k \in \N^d$, the random variable $D^k \xi_{\mft}(z)$ has finite moments of all orders with its first moment vanishing.
\item The law of $\xi$ is invariant under the action of $\R^{d}$ on $\Omega_{\infty}$ given by translations.
\end{enumerate}
\end{definition}

These random variables are assumed to be defined on some underlying probability space $\Omega$ and, when
needed, we write $\omega$ for an element of $\Omega$. We will however mostly follow usual custom in
suppressing the dependence on $\omega$ from our notations.

Given $\xi \in \mathcal{M}(\Omega_{\infty})$ we will describe below in \eqref{def: BPHZ renormalized tree}
and \eqref{def of BPHZ model} how to construct what we call the 
``BPHZ renormalized lift'' of a realization $\xi(\omega)$, henceforth denoted by $Z^{\xi(\omega)}_{\BPHZ}$. 

Note that this is a slight abuse of notations since this object actually depends both 
on a specific realisation $\xi(\omega)$  and on the distribution of $\xi$.
We often write $Z^{\xi}_{\BPHZ}$ for the corresponding model-valued random variable.
We will also see in Lemma~\ref{lem: BPHZ model and tree} below that this construction is 
equivalent to that given in \cite[Eq.~3.1]{Proceedings} and \cite[Thm~6.17]{BHZalg}.
Additionally we define the space of stationary random models the BPHZ lift takes values in.
\begin{definition}
Given a regularity structure $\mathscr{T} = (\mcb{T},G)$, we denote by $\mathscr{M}_{\rand}(\mathscr{T})$ the space of all $\mathscr{M}_{0}(\mathscr{T})$-valued random variables $(\Pi,\Gamma)$ which are stationary in the sense that
there exists an action $\tau$ of $\R^d$ onto the underlying probability space by measure preserving maps 
such that, for every $x,y, h \in \R^d$, the identities
\begin{equ}
\Pi_{x+h}(\omega) = \Pi_{x}(\tau_h \omega)\;,\qquad
\Gamma_{x+h,y+h}(\omega) = \Gamma_{x,y}(\tau_h\omega)\;,
\end{equ}
hold almost surely.
\end{definition}

Then, after fixing a regularity structure $\mathscr{T}$, we can view the BPHZ lift as a map $Z^{\bullet}_{\BPHZ}:\mathcal{M}(\Omega_{\infty}) \rightarrow \mathscr{M}_{\rand}(\mathscr{T})$ corresponding to $\xi \mapsto Z^{\xi}_{\BPHZ}$. 

Note here that $Z_\BPHZ$ maps a space of random variables to another space of random variables
defined over the same probability space.
Before continuing we state a less technical and more specialized version of our main theorem. 
Henceforth, given $d \ge 1$, we call a random element of 
$\Omega_0 = \bigoplus_{\mft \in \Le} \mcD'(\R^{d})$ simply a \textit{noise}.
Given furthermore a scaling $\s$, a finite set of types $\Le$, and a homogeneity 
assignment $|\cdot|_{\s}$, we say that a Gaussian noise $\xi$ is \textit{compatible} with the homogeneity
assignment if, for every 
$\mft_{1}, \mft_{2} \in \Le$ there exists a distribution $C_{\mft_{1},\mft_{2}} \in \mcD'(\R^{d})$ with singular support contained in $\{0\}$ such that, for every $f \in \mcD'(\R^{d})$ one has
\[
\E[\xi_{\mft_{1}}(f) \xi_{\mft_{2}}(f)]
=
C_{\mft_{1},\mft_{2}}\Big( \int_{\R^{d}}dy f(y - \cdot)f(y) \Big)\;.
\]
Denoting by $x \mapsto C_{\mft_{1},\mft_{2}}(x)$ the smooth function representing $C_{\mft_{1},\mft_{2}}$ away from $0$,
we also require that for any $g \in \mcD'(\R^{d})$ with $D^{k}g(0) = 0$ for every $k \in \N^{d}$ with 
$|k|_{\s} < - |\mft_{1}|_{\s} - |\mft_{2}|_{\s} - |\s|$ one has
\[
C_{\mft_{1},\mft_{2}}(g) = 
\int_{\R^{d}}dx\ C_{\mft_{1},\mft_{2}}(x) g(x)\;.
\]
Finally, we impose that there exists $\kappa > 0$ such that for every $k \in \N^{d}$ with 
$|k|_{\s} \le 6 |\s|$ and $\mft_{1},\mft_{2} \in \Le$ one has
\begin{equ}[e:boundCompat]
\sup_{x \in \R^{d} \setminus \{0\}}
\big|
D^{k}
C_{\mft_{1},\mft_{2}}(x)
\big|
\cdot
|x|^{ - |\mft_{1}|_{\s} - |\mft_{2}|_{\s} + \delta + |k|_{\s}}
< \infty\;.
\end{equ}
Furthermore, we say that a family $\xi^{(n)}$ of compatible Gaussian noises is \textit{uniformly compatible}
if the quantity \eqref{e:boundCompat} is bounded for the corresponding covariance functions $C_{\mft_{1},\mft_{2}}^{(n)}$, 
uniformly over $n$.

\begin{theorem}\label{thm: Gaussian case}
Fix $d \ge 1$, a scaling $\s$, finite sets of types $\Le$ and $\mfL_{+}$, a homogeneity assignment $|\cdot|_{\s}$, 
a rule $R$ which is complete and subcritical with respect to $|\cdot|_{\s}$, and write
$\mathscr{T} = (\mcb{T},G)$ for the corresponding regularity structure. Assume that for every 
basis vector $T^{\mfn}_{\mfe} \in \mcb{T}$ and every subtree $S$ of $T$ with $|N(S)| \ge 2$, the following holds:
\begin{itemize}
\item For any non-empty leaf typed set $A$ with $\mft(A) \subset \mft(L(T))$ (in the sense of sets, not multi-sets) and $|A| + |L(S)|$ even, one has $|S^{0}_{\mfe}|_{\s} + |\mft(A)|_{\s} + |A| \cdot |\s|
> 0$. 
\item For every $u \in L(S)$ one has $|S^{0}_{\mfe}|_{\s} - |\mft(u)|_{\s} > 0$.
\item $|S^{0}_{\mfe}|_{\s} > -\frac{|\s|}{2}$.
\end{itemize}

Then, for every compatible Gaussian noise $\xi$, 
$Z^{\bullet}_{\BPHZ}$ extends continuously to $\xi$ in the following sense. 
There exists a unique element $Z^{\xi}_{\BPHZ} \in \mathscr{M}_{\rand}(\mathscr{T})$ such that, for any 
uniformly compatible sequence of Gaussian noises $(\xi_{n})_{n \in \N}$ with $\lim_{n \to \infty} \xi_{n} = \xi$ in probability in $\Omega_0$, 
one has $\lim_{n \to \infty} Z^{\xi_{n}}_{\BPHZ} = Z^{\xi}_{\BPHZ}$ in probability in $\mathscr{M}_{0}(\mathscr{T})$.  
\end{theorem}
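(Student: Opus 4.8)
The plan is to obtain Theorem~\ref{thm: Gaussian case} as the specialisation to Gaussian noise of the general continuity statement Theorem~\ref{main thm: continuity}, so that the work divides into two parts. The first part, which carries all the analytic weight, establishes the general statement for a class of stationary driving noises satisfying uniform bounds on their cumulants together with a power-counting condition. The second part checks that a compatible Gaussian noise, in the sense defined above, belongs to that class: this is immediate for the cumulants, since a Gaussian noise has vanishing cumulants of order $\ge 3$ and its second cumulant is exactly $C_{\mft_1,\mft_2}$, whose derivatives are controlled by \eqref{e:boundCompat} (uniformly in $n$ in the uniformly compatible case), while the three displayed inequalities on subtrees $S$ are precisely the translation of the abstract power-counting condition, with the strict inequality in the third bullet and the margin $\kappa>0$ in \eqref{e:boundCompat} encoding the ``better than white noise'' restriction.

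First I would reduce the convergence of models to moment estimates. From the definition of the pseudometrics $\$\cdot;\cdot\$_{\alpha,\mathfrak K}$ on $\mathscr{M}_\infty(\mathscr{T})$, convergence in probability in $\mathscr{M}_0(\mathscr{T})$ follows from a uniform bound of the form $\E\,\big|\langle(\Pi^{\xi_n}_{\BPHZ,x}-\Pi^{\xi_m}_{\BPHZ,x})\tau,\phi^\lambda_x\rangle\big|^p \lesssim \lambda^{p(\alpha+\theta)}\,\delta_{n,m}^p$ for some $\theta>0$ and some $\delta_{n,m}\to 0$, together with the corresponding bound for $\Gamma$, via a Kolmogorov/Besov-type argument as in \cite{Regularity}. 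By stationarity one may take $x=0$, and by the equivalence of moments on a fixed Wiener chaos (Gaussian hypercontractivity) it suffices to establish the $p=2$ bound; this is exactly the abstract passage from ``one copy of the model'' to higher moments carried out in Section~\ref{sec: est on moments of trees}. Expanding the second moment of $\langle\Pi^{\xi}_{\BPHZ,0}\tau,\phi^\lambda\rangle$ and contracting leaves in pairs against the covariance $C_{\mft_1,\mft_2}$ turns it into a finite sum of position-space integrals attached to the Feynman diagrams obtained by gluing two copies of $\tau$ along a pairing of their leaves, and the goal becomes to show that each such integral is absolutely convergent with the right scaling in $\lambda$.

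The core of the argument is the analysis of these glued integrals once one inserts the explicit forest-and-cut formula for $\Pi^{\xi}_{\BPHZ}$ of Section~\ref{sec: derivation of formula}. I would expand each kernel $K_\mft$ into scale slices as in Section~\ref{Sec: multiscale expansion} and, for each full scale assignment on the glued diagram, use the safe-forest projection of \cite{FMRS85} together with the cut-harvesting algorithm of Section~\ref{sec: Safe forest projection} to reorganise the sum over Zimmerman forests and cuts into sums over ``intervals'' (Section~\ref{Sec: reorganizing sums}), so that for every scale assignment one extracts precisely the negative renormalisations (Taylor subtractions on the overlapping divergent subtrees) and positive renormalisations (recentering subtractions at the cuts) that are effective there. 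Controlling the resulting nested cancellations with the seminorms of Section~\ref{Sec: summing over scales}, the three hypotheses on subtrees $S$ supply a strictly positive power-counting degree at every scale: the second and third bullets control a single renormalised subtree once its own leaves are paired, while the first bullet --- with the parity constraint reflecting that leaves are glued in pairs --- controls the subdiagrams that straddle both copies of $\tau$. Geometric summation over scales then yields the bound, and the uniform form of \eqref{e:boundCompat} gives $\delta_{n,m}$ in terms of a singular-kernel distance between $C^{(n)}$ and $C^{(m)}$, which tends to $0$ because $\xi_n\to\xi$ in probability forces convergence of the covariances on the relevant test functions; uniqueness of the limiting model $Z^\xi_{\BPHZ}$ then follows from the density of the smooth compatible noises together with the Cauchy estimate.

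The step I expect to be the main obstacle is proving that the safe-forest projection and the cut-harvesting algorithm are \emph{compatible} in the precise sense of Section~\ref{Sec: reorganizing sums}: that the combined sum over forests and cuts really does reorganise, scale assignment by scale assignment, into a sum over intervals with every renormalisation harvested exactly once. This is the genuinely new difficulty relative to constructive field theory, where there are no positive renormalisations and one only has to handle overlapping divergences; here a cut can sit inside a divergent subtree, and one must establish the ``positive and negative renormalisations don't overlap in phase space'' mantra by exhibiting, for each scale assignment, a single choice making both reorganisations simultaneously valid. A secondary obstacle is quantitative: the recentering counterterms are non-local and have no QFT analogue, so one must check that the Taylor-remainder gains they produce are strong enough, uniformly over the scales, to beat the loss coming from the enlarged integration domains --- this is where the strict inequalities in the hypotheses, and the margin $\kappa$, are essential.
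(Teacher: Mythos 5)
Your proposal is correct and follows precisely the paper's route: Theorem~\ref{thm: Gaussian case} is obtained by specialising Theorem~\ref{main thm: continuity} to a Gaussian noise and checking that the three displayed power-counting hypotheses are exactly the $(\c,|\cdot|_{\s},\mfL_{\CCum})$-super-regularity of Definition~\ref{def: strong subcriticality} in the case $\mfL_{\CCum}=\{(\mft,[2])\}$, while \eqref{e:boundCompat} (uniformly in $n$) furnishes the required bound $\sup_n\|\xi_n\|_{N,\c,1}<\infty$; the paper's own proof is the one-line deferral to Theorem~\ref{main thm: continuity}, and your outline of how that theorem is proved (multiscale expansion, safe-forest projection, cut-harvesting, compatibility, moment estimates) matches Sections~\ref{Sec: reorganizing sums}--\ref{sec: est on moments of trees}. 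One small slip: you invoke Gaussian hypercontractivity to reduce to $p=2$ and then identify this with the passage of Section~\ref{sec: est on moments of trees} --- these are actually two alternative devices, the former being a Gaussian shortcut and the latter the direct non-Gaussian $2p$-moment estimate which the paper uses; both work here, so the conclusion is unaffected.
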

\begin{proof}
This is a straightforward consequence of Theorem~\ref{main thm: continuity} below.
\end{proof}
The outline of this section is as follows. Mirroring our discussion of models, we will view $\mathcal{M}(\Omega_{\infty})$ as a subset of a larger space of random distributions $\mathcal{M}(\Omega_{0})$. 
We will then introduce some quantitative tools for looking at these random distributions which allow us to show that a notion of probabilistic ``convergence'' of a sequence $(\xi_{n})_{n \in \N} \subset \mathcal{M}(\Omega_{\infty})$ yields stochastic $L^{p}$ convergence of the corresponding BPHZ models $(Z^{\xi_{n}}_{\BPHZ})_{n \in \N}$.
 
\subsubsection{Quantitative estimates on cumulants}

Our quantitative assumptions on the driving noise all involve their \emph{cumulants}.
Throughout the paper, for any (multi)set $B$ of real-valued random variables 
with finite moments of all orders defined on some underlying probability space $\Omega$, 
we write $\Cum[B]$ for the joint cumulant of the elements of $B$.
Recall that, writing $\E[B]$ as a shorthand for the expectation of the product
of all elements of $B$, the cumulant is defined recursively by the identities
$\Cum[\emptyset] = 1$ and then
$\E[B] = \sum_{\pi \in \CP(B)}\prod_{A \in \pi} \Cum[A]$, where
$\CP(B)$ denotes the set of all partitions of $B$ into disjoint subsets, see for example
\cite[Sec.~13.5]{GlimmJaffe}.

We introduce the ``large diagonal'' of the set
$\R^{dN}$ of configurations of $N$ points in $\R^d$ by
\[
\mathrm{Diag}_{N}
\eqdef
\{ 
x \in \R^{dN}:\ 
\exists 1 \le i < j \le d \textnormal{ with } x_{i} = x_{j} 
\}\;.
\]
While we allow for our random driving noises to be quite singular we impose some strong, but natural, regularity assumptions on their cumulants -- this is encoded via the following notion. 
\begin{definition}
Suppose that, for $N \ge 2$, we are given a collection\footnote{This collection may be a multiset in that the same
 random variable may appear more than once.} $\{\zeta_{n}\}_{n=1}^{N}$ of $\mcD'(\R^{d})$-valued random variables that each have moments of all orders, invariant in joint law under the natural action of $\R^{d}$ on this collection given by simultaneous translation, and are all defined on the same probability space. 

We say that
such a collection \emph{admits pointwise cumulants} if there exists a distribution $F \in \mcD'(\R^{dN})$ with the singular support of $F$ contained in $\mathrm{Diag}_{N}$ (we denote by $F(x)$ the corresponding function, which is smooth away from $\mathrm{Diag}_{N}$) and satisfying, for every collection of test functions $f_{1},\dots,f_{N} \in \mcD(\R^{d})$ the identity
\begin{equ}\label{cumulant as dist}
\Cum[ \{\zeta_{n}(f_{n})\}_{n=1}^{N} ]
=
F(f_{1} \otimes \cdots \otimes f_{N})
\end{equ}
where $(f_{1} \otimes \cdots \otimes f_{N})(x) \eqdef \prod_{n=1}^{N} f_{n}(x_{n}) \in  \mcD(\R^{dN})$.

As part of this definition, we also enforce two regularity conditions, one on second cumulants and the other on higher cumulants.  
When $N=2$, translation invariance guarantees that there exists a unique $\reduce[F] \in \mcD'(\R^{d})$ such that for every $f,g \in \mcD'(\R^{d})$ one has
\begin{equ}\label{transinvar second cumulant}
F(f \otimes g) = \reduce[F]\big(f \ast \mathrm{Ref}(g)\big)
\end{equ}
where $\mathrm{Ref}(g)(x) \eqdef g(-x)$ and $\ast$ denotes convolution. 
Note that the singular support of $\reduce(F)$ is given by $\{0\}$. 
Given a distribution $G \in \mcD'(\R^{d})$ with singular support contained in $\{0\}$ and $r \in \R$ we say $G$ is of order $r$ at $\{0\}$ if, for $f \in \mcD(\R^{d})$ 
with $D^k f(0) = 0$ for every $k \in \N^{d}$ with $|k|_\s < r$, one has
\begin{equ}
G(f) = \int_{\R^{d}} f(y)G(y)\,dy\;,
\end{equ}
When $N \ge 3$, we enforce that $F(x)$ be a locally integrable function on $\R^{dN}$ and for the distribution $F$ to be given by integration against $F(x)$.  
\end{definition}\label{def: all random variables}
For a collection 
$\{\zeta_{n}\}_{n=1}^{N}$ admitting pointwise cumulants $F$ as above, we will write
$\Cum[ \{\zeta_{n}(x_{n})\}_{n=1}^{N} ]$ for the value $F(x_{1},\dots,x_{N})$
(provided that $x \not\in \mathrm{Diag}_{N}$) and 
$\Cum[ \{\zeta_{n}\}_{n=1}^{N}]$ for the distribution
$F$.
\begin{definition} 
We define $\mathcal{M}(\Omega_{0})$ to be the set of all $\bigoplus_{\mft \in \Le} \mcD'(\R^{d})$-valued random variables $\xi$ that satisfy the following properties.
\begin{itemize}
\item For every $\mft \in \Le$ and $f \in \mcD(\R^{d})$, $\xi_{\mft}(f)$ has moments of all orders.
\item The law of $\xi$ is stationary, i.e. invariant under the action of $\R^{d}$ on $\bigoplus_{\mft \in \Le} \mcD'(\R^{d})$ by translation.
\item For every $N \ge 2$ and any map $\mft: [N] \rightarrow \Le$ the collection of 
generalised random fields $\{\xi_{\mft(n)}\}_{n=1}^{N}$ admits pointwise cumulants. 
\end{itemize}
\end{definition}
Clearly, one has $\mathcal{M}(\Omega_{\infty}) \subset \mathcal{M}(\Omega_{0})$.
Also, for $\xi, \bar{\xi} \in \mathcal{M}(\Omega_{0})$ defined on the same probability space, we say 
that $\xi$ and $\bar{\xi}$ jointly admit pointwise cumulants if, for every $N\ge 2$, $1 \le M \le N$, and $\mft:[N] \rightarrow \Le$ the multiset $\{ \xi_{\mft(m)} \}_{m=1}^{M} \sqcup \{\bar{\xi}_{\mft(n)}\}_{n=M+1}^{N}$ admits pointwise cumulants.

In order to prove theorems that give near optimal results it is natural to take advantage of the fact that for 
certain random noise-types many cumulants vanish: the most obvious case is that of Gaussian noise where all 
cumulants of order greater than $2$ vanish, but another interesting case is that of noise belonging to a Wiener 
chaos of fixed order.
We define a set that indexed all possible cumulants by 
\[
\mfL_{\CCum}^{\all}
\eqdef
\{ (\mft,[M]):\ M \in \N, \textnormal{ and } \mft: [M] \rightarrow \Le \}\;.
\]
We will then consider noises such that all cumulants outside of
some subset $\mfL_{\CCum} \subset \mfL_{\CCum}^{\all}$ vanish.
We make the following natural assumptions on this subset. 
\begin{assumption}\label{assump: set of cumulants}
We assume that we have fixed a set $\mfL_{\CCum} \subset \mfL_{\CCum}^{\all}$ of allowable cumulants with the following properties:
\begin{enumerate} 
\item For every $(\mft,[M]) \in \mfL_{\CCum}$ one has $M \ge 2$.
\item $\mfL_{\CCum}$ is closed under permutations - that is if $(\mft,[M]) \in \mfL_{\CCum}$ then for every permutation $\sigma:[M] \rightarrow [M]$ one has $(\mft \circ \sigma,[M]) \in \mfL_{\CCum}$. 
It is natural to think of $\mfL_{\CCum}$ as a collection of ``leaf typed multisets'', thus it makes sense to ask, given a finite set $B$ and a type map $\mft: B \rightarrow \Le$, whether $(\mft,B) \in \mfL_{\CCum}$. 
\item If $(\mft,B) \in \mfL_{\CCum}$ then for any $A \subset B$ with $|A| \ge 2$ one has $(\mft,A) \in \mfL_{\CCum}$.
\item For every $\bar{\mft} \in \Le$ one has $(\mft,[2]) \in \mfL_{\CCum}$ where $\mft(1) = \mft(2) = \bar{\mft}$. 
\end{enumerate}
\end{assumption}
\begin{remark}
Our main theorem will enforce an assumption of ``super-regularity'' on the semi-decorated trees. This notion of super-regularity will depend on our choice of $\mfL_{\CCum}$, becoming more stringent as this set becomes richer. 
The second item in the above assumption on $\mfL_{\CCum}$ enforces a type of ``completeness'' that guarantees that the constraint of super-regularity is suitably strong. 
\end{remark}
\begin{remark}
Given $(\mft,[N]) \in \mfL^{\all}_{\CCum}$ with $\mft(i) = \bar{\mft}$ for $i \in [N]$ we sometimes write $(\bar{\mft},[N])$ instead of $(\mft,[N])$.
\end{remark}
We also define $\mcM(\Omega_{0},\mfL_{\CCum})$ and $\mcM(\Omega_{\infty},\mfL_{\CCum})$ to be the
subsets of $\mcM(\Omega_{0})$ and $\mcM(\Omega_{\infty})$, respectively, consisting of those elements $\xi$ with 
$\Cum[\{\xi_{\mft(n)}\}_{n=1}^{N}] = 0$ for every $(\mft,[M]) \in \mfL_{\CCum}^{\all} \setminus \mfL_{\CCum}$.
We now define the following family of ``norms'' on $\mathcal{M}(\Omega_{0},\mfL_{\CCum})$.
First, we fix, for the entire paper, a smooth non-negative function $u: \R^{d} \rightarrow u$ satisfying $|u(x)| \le 1$ for all $x$, $u(x) = 1$ for $|x| \le 1$, and $u(x) = 0$ for $|x| \ge 2$. 
We then set, for each $k \in \N^{d}$, $p_{k}(x) = u(x)x^{k}$. 
\begin{definition}\label{def: cumulantbound}
For $\xi \in \mcM(\Omega_{0},\mfL_{\CCum})$ and $N \ge 2$, we set $\|\xi\|_{N,|\cdot|_{\s}}$ to be equal to  
\[
\mathrm{Diag}(\xi)
\vee
\max_{
\substack{
(\mft, [M]) \in \mfL_{\CCum}\\
2 \le M \le N
}
}
\sup_{
\substack{
x \in \R^{dM}\\
x \not \in \mathrm{Diag}_{M}}
}
\left|\Cum[ \{\xi_{\mft(m)}(x_{m})\}_{m=1}^M ]\right|
\Bigl(\sup_{1 \le i < j \le M} |x_i - x_j|\Bigr)^{-|\mft([M])|_{\s}}\;,
\] 
where we use the notation $[M] = \{1,\ldots,M\}$ and $|\mft([M])|_{\s}$ is as in Definition~\ref{def:hom}.

The quantity $\mathrm{Diag}(\xi)$ is defined as follows. 
If there exists any $(\mft,[2]) \in \mfL_{\CCum}$ such that $\reduce[\Cum[\{\xi_{\mft(1)},\xi_{\mft(2)}\}]]$ is not of order $(- |\mft([2])|_{\s} - |\s|) \vee 0$ at $0$ then we set  $\mathrm{Diag}(\xi) \eqdef \infty$.
Otherwise we set
\begin{equ}
\label{def: control over the diagonal}
\mathrm{Diag}(\xi)
\eqdef
\max_{(\mft,[2]) \in \mfL_{\CCum} }
\max_{
\substack{
k \in \N^{d},\\
|k|_{\s} < -|\mft([2])|_{\s} - |\s| 
}
}
|\reduce[\Cum[\{\xi_{\mft(1)},\xi_{\mft(2)}\}]](p_{k})|\;.
\end{equ}
where $\reduce[\cdot]$ is as in \eqref{transinvar second cumulant}. 
\end{definition}
\begin{remark}
The point of the definition of $\mathrm{Diag}(\xi)$ is that in general we do not impose that the 
function $\Cum[\{\xi_{\mft(1)},\xi_{\mft(2)}\}]$ is locally integrable near the diagonal. 
In general,
there are then several inequivalent ways of extending it to a distribution, and these are precisely
parametrised by the values $\tilde F(p_{k})$. 
\end{remark}
It turns out that this ``norm'' also naturally defines a ``metric'' on 
elements of $\mathcal{M}(\Omega_{0},\mfL_{\CCum})$ which are defined on the same probability space. 
(We intentionally write ``metric'' in quotation marks since it does not satisfy the triangle inequality.)
\begin{definition}\label{def: metric on mollified measures}
For $\xi, \bar \xi \in \mcM(\Omega_{0},\mfL_{\CCum})$ defined on the same probability space and jointly admitting pointwise cumulants we set $\|\xi; \bar\xi\|_{N,|\cdot|_{\s}}$ to be given by
\[
\mathrm{Diag}(\xi ; \bar{\xi})
\vee
\max_{
\substack{
(\mft, [M]) \in \mfL_{\CCum}\\
2 \le \hat{M} \le M \le N
}
}
\sup_{
\substack{
x \in \R^{dM}\\
x \not \in \mathrm{Diag}_{M}}
}
\left|
\Cum
\left[
\{\hat{\xi}_{\mft(i),i}(x_{i})\}_{i=1}^{M}\\
\right]
\right|
\Bigl(\sup_{1 \le i < j \le M} |x_i - x_j|\Bigr)^{-|\mft([M])|_{\s}}
\]
where, for any $\hat M \le M$, the generalized random fields $\{\hat{\xi}_{\mft,i}\}_{i=1}^{M}$ are given by
\begin{equ}\label{def: telescoping field}
\hat{\xi}_{\mft,i}
\eqdef
\xi_{\mft} \one\{i \le \hat M\} - \bar\xi_{\mft} \one\{i \ge \hat M\}\;.
\end{equ}
The quantity $\mathrm{Diag}(\xi;\bar{\xi})$ is defined as follows. 
If there exists any $(\mft,[2]) \in \mfL_{\CCum}$ and $\hat{M} \in \{1,2\}$ such that $\reduce[\Cum[\{\hat{\xi}_{\mft(1)},\hat{\xi}_{\mft(2)}\}]]$ is not of order $|\mft([2])|_{\s} - |\s|$ at $0$ then we set  $\mathrm{Diag}(\xi;\bar{\xi}) \eqdef \infty$.
Otherwise we set
\begin{equ}\label{eq: control over diagonal 2}
\mathrm{Diag}(\xi ; \bar{\xi})
\eqdef
\max_{
\substack{
(\mft,[2]) \in \mfL_{\CCum}\\
1 \le \hat{M} \le 2 }
}
\max_{
\substack{
k \in \N^{d},\\
|k|_{\s} < -|\mft([2])|_{\s} - |\s| 
}
}
|\reduce[\Cum[\{\hat{\xi}_{\mft(1)},\hat{\xi}_{\mft(2)}\}]](p_{k})|\;.
\end{equ}
If $\xi$ and $\bar \xi$ do not admit joint pointwise cumulants we set $\|\xi; \bar\xi\|_{N} = \infty$ for every $N$. 
\end{definition}
\subsubsection{Super-regular semi-decorated trees}
The following assumption on the set of types $\mfL$ and on the set $\mfL_\CCum$ from Assumption~\ref{assump: set of cumulants}
will be in place throughout the entire paper. 
\begin{assumption}\label{assump - noise + kernel}
For every $\mft' \in \Le$ one has $|\mft'|_{\s} > -|\s|$ and for every $(\mft,[M]) \in \mfL_{\CCum}$ with $M \ge 3$ one has $|\mft([M])|_{\s} > (1-M)|\s|$.
\end{assumption} 
\begin{remark}
The above assumption guarantees that only cumulants which need to be renormalized are second cumulants. 
There is no fundamental obstruction in renormalizing higher cumulants in our approach but we refrain from doing so since this would make the presentation harder to follow. \tabularnewline
\end{remark}
Next we state a definition and an assumption on semi-decorated trees we will need in order to prove our stochastic bounds.
\begin{definition}\label{external cumulant homogeneity jump}
Given finite leaf typed sets $A$ and $B$ with $\mft(B) \subset \mft(A)$ (in the sense of sets, not multi-sets) we define $\mcb{j}_{A}(B)$ to be the minimum value of $|\mft(C)| + |C| \cdot |\s|$ where $C$ is a non-empty finite leaf typed set with $\mft(C) \subset \mft(A)$ (in the sense of sets, not multi-sets) and with the property that there exists a partition $\pi$ of $C \sqcup B$ with $(\mft,\tilde{B}) \in \mfL_{\CCum}$ and $\tilde{B} \cap B \not = \emptyset$ for every $\tilde{B} \in \pi$.
\end{definition}
\begin{remark}
The quantity $\mcb{j}_{A}(B)$ represents the worse case homogeneity change one sees when the noises of $B$ form cumulants with some other noises drawn from multiple copies of $A$. 
\end{remark}
\begin{definition}\label{def: strong subcriticality}
Given a homogeneity assignment $|\cdot|_{\s}$ and a set of cumulants $\mfL_{\CCum}$, a semi-decorated tree $T^{\mfn}_{\mfe}$ is said to be $(|\cdot|_{\s},\mfL_{\CCum})$-\emph{super-regular} if for every subtree $S$ of $T$ with $|N(S)| > 1$ and $L(S) \not = \emptyset$ and for any $u \in L(S)$ one has
\begin{equ}\label{super-reg 0}
|S^{0}_{\mfe}|_{\s} + \frac{|\s|}{2} \wedge ( - |\mft(u)|_{\s}) \wedge \mcb{j}_{L(T)}(L(S)) > 0\;.
\end{equ}
\end{definition}
\subsubsection{Main results}
We make use of translated and rescaled test functions,
namely we write
\[
\psi^{\lambda}_{z}(x_{1},\dots,x_{d}) \eqdef \lambda^{-|\s|}\psi
\left(
\lambda^{-\s_{1}}(x_{1}-z_{1}),\dots,\lambda^{-\s_{d}}(x_{d}-z_{d})
\right)\;.
\]

\begin{remark}
In what follows we often shrink the co-domain and view $Z^{\bullet}_{\BPHZ}$ as a map $Z^{\bullet}_{\BPHZ}: \mcM(\Omega_{\infty},\mfL_{\CCum}) \rightarrow \mathscr{M}_{\rand}(\mathscr{T})$ given by $\xi \mapsto Z^{\xi}_{\BPHZ}$. All of our main theorems concern bounds or continuity properties of this map and will take as assumptions conditions that depend on $\mfL_{\CCum}$. 
\end{remark}

\begin{remark}\label{remark: simplified cumulant homogeneity}
In the statement of the main theorem which immediately follows and subsequent proofs, we will use generalisations $\|\bullet\|_{N,\c}$ and $\|\bullet;\bullet\|_{N,\c}$ of Definitions~\ref{def: cumulantbound} and~\ref{def: metric on mollified measures} as well as a notion of ``$(\c,|\cdot|_{\s},\mfL_{\CCum})$-super-regularity'' which generalizes Definition~\ref{def: strong subcriticality}. 
Here $\c$ is a parameter which will be called a \emph{cumulant homogeneity} for $\mfL_{\CCum}$ consistent with $|\cdot|_{\s}$ -- this is explained in Appendix~\ref{cumulant homogeneities}. 
As this notion is a bit technical we prefer to state the main theorem without introducing it. For now, the reader
who doesn't wish to delve into these details
could just keep in mind that there is an acceptable choice of the parameter $\c$ for which $(\c,|\cdot|_{\s},\mfL_{\CCum})$-super-regularity reduces to the $|\cdot|_{\s}$ super-regularity of Definition~\ref{def: strong subcriticality} and for which
the quantities $\| \bullet \|_{N,\c}$ and $\| \bullet; \bullet \|_{N,\c}$ reduce to $\| \bullet \|_{N,|\cdot|_{\s}}$ and $\| \bullet; \bullet \|_{N,|\cdot|_{\s}}$, respectively, as noted in Remark~\ref{rem:backtrack}.
In particular, these notions agree in the Gaussian case.
\end{remark}
\begin{theorem}\label{upgraded thm - main theorem}
Fix $d \ge 1$, a scaling $\s$, finite sets of types $\Le$ and $\mfL_{+}$, a homogeneity assignment $|\cdot|_{\s}$, a set of cumulants $\mfL_{\CCum}$ satisfying Assumption~\ref{assump - noise + kernel}, and a cumulant homogeneity $\c$ for $\mfL_{\CCum}$ consistent with $|\cdot|_{\s}$. 
Let $R$ be a complete subcritical rule and $\mathscr{T}$ be the corresponding reduced regularity structure.

Then the map $Z^{\bullet}_{\BPHZ}:\mcM(\Omega_{\infty},\mfL_{\CCum}) \rightarrow \mathscr{M}_{\rand}(\mathscr{T})$ has the following properties.

For every compact $\mathfrak{K} \subset \R^{d}$, $\xi \in \CM(\Omega_\infty,\mfL_{\CCum})$,  $p \in \N$, and $\tau \eqdef T^{\mfn}_{\mfe} \in B_{\circ}$ which is $(\c,|\cdot|_{\s},\mfL_{\CCum})$-super regular, there exists $C_{\tau,p}$ such that, writing $Z^{\xi}_{\BPHZ} = (\hat{\Pi}^{\xi},\hat{\Gamma}^{\xi})$, one has
\begin{equ}\label{eq: uniform bound - main theorem}
\E \left( \hat{\Pi}^{\xi}_{z}[\tau](\psi^{\lambda}_{z}) \right)^{2p}
\le
C_{\tau,p}
\Big( 
\prod_{e \in K(T)} \|K_{\mft(e)}\|_{|\mft(e)|_{\s},m}
\Big)^{2p}
\|\xi\|_{N,\c}
\lambda^{2p|\tau|_{\s}}\;,
\end{equ}
where $m \eqdef 2|\s|\cdot |N(T)|$ and $N \eqdef 2p |L(T)|$,
uniformly over $\psi \in \mcb{B}_{{\s}}$, $\lambda \in (0,1]$ and $z \in \mathfrak{K}$.
Moreover, with the same notation and conventions as above, for any $\bar\xi \in \CM(\Omega_\infty)$ defined on the same probability space one has 
\begin{equs}\label{eq: convergence bound - main theorem}
&
\E \left[ 
\left(
\hat{\Pi}^{\xi}_{z}  
-
\hat{\Pi}^{\bar{\xi}}_{z} \right) [\tau]
(\psi^{\lambda}_{z})\right]^{2p}\\
&
\quad
\quad
\le
C_{\tau,p}
\Big( 
\prod_{e \in K(T)} \|K_{\mft(e)}\|_{|\mft(e)|_{\s},m}
\Big)^{2p}
(\|\xi\|_{N,\c} \vee \|\bar \xi\|_{N,\c})
\| \xi; \bar\xi\|_{N,\c}
\lambda^{2p|\tau|_{\s}}\;.
\end{equs}
\end{theorem}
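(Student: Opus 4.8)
The plan is to follow the programme sketched in the introduction: assemble the statement from the explicit integral formula for the BPHZ model, the combinatorial reorganisation of the sum over forests and cuts, a multiscale power-counting estimate for a single copy of the model, and a final cumulant-expansion step that bootstraps control over one copy into control over all moments.

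First I would record the explicit formula for $\hat\Pi^\xi_z[\tau]$ derived in Section~\ref{sec: derivation of formula}: it writes the renormalised model as a sum over pairs $(F,\cut)$, where $F$ is a Zimmermann forest of divergent subtrees of $\tau$ (pairwise nested or disjoint) and $\cut$ records the positions of the positive recentering renormalisations, each summand being an explicit iterated integral of products of the kernels $K_{\mft(e)}$ against the noise, with Taylor-subtraction operators inserted according to $F$ and $\cut$. Next I would decompose each kernel $K_{\mft}=\sum_n K_{\mft}^{(n)}$ into single-scale pieces localised at dyadic scales, turning the formula into a sum over full scale assignments $\mathbf n$ on the kernel edges. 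For each fixed $\mathbf n$, I would invoke the ``projection onto safe forests'' of \cite{FMRS85} together with the cut-harvesting algorithm of Section~\ref{sec: Safe forest projection}, and the compatibility statement proved there, to reorganise the sum over $(F,\cut)$ into a sum over the ``intervals'' of Section~\ref{Sec: reorganizing sums}; this is exactly the mechanism that extracts the correct negative renormalisation for each overlapping divergence and guarantees that positive and negative renormalisations do not obstruct one another at that scale.

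With the reorganisation in hand, for each scale assignment the renormalised integrand obeys a power-counting bound in which every divergent subtree contributes a strictly positive power of its scale, thanks to the Taylor subtractions; summing over all scale assignments then converges precisely because of the super-regularity hypotheses on $\tau$, whose three conditions correspond to the three ways a subtree $S$ can be ``cut out'' when $\tau$ is glued to further copies of itself through cumulants: $|S^{0}_{\mfe}|_{\s}+|\mft(A)|_{\s}+|A|\cdot|\s|>0$ controls the case where the leaves of $S$ pair with $|A|$ external noises, $|S^{0}_{\mfe}|_{\s}-|\mft(u)|_{\s}>0$ the case where a single leaf is exported, and $|S^{0}_{\mfe}|_{\s}>-|\s|/2$ the self-pairing of a leaf. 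Running the induction over nested renormalisations with the (simplified) FMRS-type seminorms of Section~\ref{Sec: summing over scales} then yields full control over a single copy of the model, with the dependence on the kernels and on the noise entering exactly through $\prod_{e\in K(T)}\|K_{\mft(e)}\|_{|\mft(e)|_{\s},m}$ and through the cumulant norm $\|\xi\|_{N,\c}$.

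To pass from one copy to the $2p$-th moment I would invoke the cumulant criterion of Section~\ref{sec: est on moments of trees}: expanding $\E\big(\hat\Pi^{\xi}_z[\tau](\psi^{\lambda}_z)\big)^{2p}$ over partitions of the $2p|L(T)|$ leaves into cumulant groups (each necessarily in $\mfL_{\CCum}$) produces a finite sum of iterated integrals, each of which is a ``glued'' diagram built from $2p$ copies of $\tau$ joined at their leaves by cumulant functions; the single-copy estimates, combined with the fact that such a glued diagram is again absolutely convergent with the advertised $\lambda$-scaling whenever $\tau$ is super-regular, give \eqref{eq: uniform bound - main theorem} with $N=2p|L(T)|$. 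For the difference bound \eqref{eq: convergence bound - main theorem} I would run the same argument with the telescoping fields $\hat\xi_{\mft,i}$ of \eqref{def: telescoping field}: each term of the telescoped expansion contains exactly one cumulant group built from a genuine difference, contributing the factor $\|\xi;\bar\xi\|_{N,\c}$, while the remaining groups are bounded by $\|\xi\|_{N,\c}\vee\|\bar\xi\|_{N,\c}$. The main obstacle throughout is the scale-reorganisation step: proving that the FMRS safe-forest projection and the cut-harvesting algorithm are \emph{compatible}, so that one reorganisation simultaneously harvests the right negative and the right positive renormalisations for every scale assignment, and then controlling the nested cancellations uniformly in the scale assignment so that the sum over scales converges --- this is where the genuinely new difficulty relative to constructive field theory, the interaction between positive and negative renormalisations, must be resolved.
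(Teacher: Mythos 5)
Your proposal tracks the paper's actual route closely: it correctly identifies the explicit forest-and-cut formula of Section~\ref{sec: derivation of formula}, the scale decomposition, the FMRS safe-forest projection together with the compatible cut-harvesting rule, the inductive seminorm control over a single copy of the integrand (Lemma~\ref{lem: bound on full integrand}), the passage to $2p$-th moments by constructing the glued $2p$-fold coalescence diagram and verifying its power-counting (Proposition~\ref{final prop} feeding Theorem~\ref{thm: half graph assumptions}), and the telescoping device for the difference bound. The only imprecision is interpretative: the third super-regularity condition $|S^{0}_{\mfe}|_{\s}>-|\s|/2$ is not about ``self-pairing of a leaf'' but about ensuring that a multiscale cluster straddling two or more disjoint divergent pieces retains good power counting (the cases $|J_2|\ge 2$ and $k\ge 2$ in Lemmas~\ref{subdiv free big graph} and~\ref{lem: proof of subdivfree}); this does not affect the validity of your strategy.
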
 
\ajay{Should add something about all homogeneity assignments staying away from integers...}
\begin{definition}
Given homogeneity assignments $|\cdot|_{\s}$ and $\wnorm{\cdot}_{\s}$ and a rule $R$ which is complete and subcritical with respect to $\wnorm{\cdot}_{\s}$, we say $|\cdot|_{\s}$ is $R$-consistent with $\wnorm{\cdot}_{\s}$ if all of the following conditions hold:
\begin{itemize}
\item $\wnorm{\cdot}_{\s}$ satisfies Assumptions~\ref{assump - noise + kernel}.
\item $R$ is also a complete and subcritical rule with respect to $\wnorm{\cdot}_{\s}$. 
\item The collection $B_{-}$ of semi-decorated trees generated by the rule $R$ under either $|\cdot|_{\s}$ and $\wnorm{\cdot}_{\s}$ coincide.
\item For any semi-decorated tree $T^{\mfn}_{\mfe} \in B_{-}$, one has 
$
\lfloor
|T^{\mfn}_{\mfe}|_{\s}
\rfloor
=
\big\lfloor
\wnorm{T^{\mfn}_{\mfe}}_{\s}
\big\rfloor
$.
\end{itemize}
\end{definition}
Combining Theorem~\ref{upgraded thm - main theorem} with the proof of \cite[Thm~10.7]{Regularity} immediately gives the following theorem. 
\begin{theorem}\label{reg struct - main theorem}
Fix $d \ge 1$, a scaling $\s$, finite sets of types $\Le$ and $\mfL_{+}$, a homogeneity assignment $\wnorm{\cdot}_{\s}$ and a set of cumulants $\mfL_{\CCum}$ satisfying Assumption~\ref{assump - noise + kernel}, and a rule $R$ which is complete and subcritical with respect to $\wnorm{\cdot}_{\s}$. 
Denote by $\mathscr{T} = (A,\mcb{T},G)$ be the corresponding reduced regularity structure with $\wnorm{\cdot}_{\s}$ serving as the homogeneity assignment.

Fix $\kappa > 0$, a second homogeneity assignment $|\cdot|_{\s}$, and a cumulant homogeneity $\c$ on $\mfL_{\CCum}$ consistent with $|\cdot|_{\s}$ with the following properties:   
\begin{itemize}
\item $|\cdot|_{\s}$ is $R$-consistent with $\wnorm{\cdot}_{\s}$.
\item For any $\alpha \in A$ with $\alpha < 0$, every semi-decorated tree $T^{\mfn}_{\mfe} \in \mcb{T}_{\alpha}$ is $(\c,|\cdot~|_{\s},\mfL_{\CCum})$-super regular and 
\[
|T^{\mfn}_{\mfe}|_{\s}
\ge
\wnorm{T^{\mfn}_{\mfe}}_{\s}
+
\kappa.
\]
\end{itemize}
Then the map $Z^{\bullet}_{\BPHZ}:\mcM(\Omega_{\infty},\mfL_{\CCum}) \rightarrow \mathscr{M}_{\rand}(\mathscr{T})$ has the following property.

For every compact set $\mathfrak{K} \subset \R^{d}$, $\alpha \in A$, and $p \in \N$ there exists $C$ such that for any $\xi, \bar{\xi} \in \CM(\Omega_\infty,\mfL_{\CCum})$ one has 
\begin{equs}
{}
&
\E \$ Z^{\xi}_{\BPHZ}; Z^{\bar{\xi}}_{\BPHZ} \$_{\alpha, \mathfrak{K}}^{2p}
\le
C
(\|\xi\|_{N,\c} \vee \|\bar \xi\|_{N,\c})
\big( 
\| \xi; \bar\xi\|_{N,\c}
\big)\;,\ \textnormal{\it where}\\
N \eqdef&\ 2 \Big(p \vee \Big\lceil \frac{|\s|}{\kappa} \Big\rceil \Big) \cdot \max \{ |L(T)|: T^{\mfn}_{\mfe} \in \mcb{T}_{\alpha}\; \textnormal{\it for } \alpha \in A,\ \alpha < 0 \}\;. 
\end{equs}
\end{theorem}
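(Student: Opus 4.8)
The plan is to obtain the statement as an essentially immediate consequence of Theorem~\ref{upgraded thm - main theorem}, by running the argument of \cite[Thm~10.7]{Regularity} with the decay rates provided by \eqref{eq: uniform bound - main theorem} and \eqref{eq: convergence bound - main theorem}. The first point to record is algebraic: since $|\cdot|_{\s}$ is $R$-consistent with $\wnorm{\cdot}_{\s}$, the two homogeneity assignments generate the same collection of semi-decorated trees, and since $\lfloor |T^{\mfn}_{\mfe}|_{\s}\rfloor = \lfloor \wnorm{T^{\mfn}_{\mfe}}_{\s}\rfloor$ for all of them they induce the same structure group and the same admissibility relations of Definition~\ref{def: admissibility} --- those relations only involve integer parts of homogeneities. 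Hence $\mathscr{T}$, its space $\mathscr{M}_{0}(\mathscr{T})$ of admissible models, and the model $Z^{\xi}_{\BPHZ}$ itself do not depend on which of the two assignments is used; only the grading $\mcb{T} = \bigoplus_{\alpha}\mcb{T}_{\alpha}$ and the normalising exponents in the pseudometrics $\$\cdot\;;\cdot\$_{\alpha,\mathfrak{K}}$ change. In the analysis below I would use $|\cdot|_{\s}$ to measure decay and $\wnorm{\cdot}_{\s}$ to read off gradings.

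The core step is the reduction, carried out exactly as in \cite[Sec.~10]{Regularity}, of a bound on $\E\$Z^{\xi}_{\BPHZ};Z^{\bar{\xi}}_{\BPHZ}\$^{2p}_{\alpha,\mathfrak{K}}$ (for a fixed $\alpha\in A$) to $L^{2q}$ bounds, uniform over $z\in\mathfrak{K}$, $\lambda\in(0,1]$ and $\psi\in\mcb{B}_{\s}$, on $\lambda^{-\wnorm{\tau}_{\s}}\hat{\Pi}^{\xi}_{z}[\tau](\psi^{\lambda}_{z})$ and its $\xi\leftrightarrow\bar{\xi}$ difference, with $\tau = T^{\mfn}_{\mfe}$ ranging over the finite basis of the negatively graded part of $\mcb{T}$: the $\hat{\Gamma}$-part of an admissible model and its positively graded sectors are recovered from the negatively graded action of $\hat{\Pi}$ through the admissibility relations and the extension/reconstruction theorems. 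By hypothesis every such $\tau$ lies in some $\mcb{T}_{\alpha}$ with $\alpha<0$ and is $(\c,|\cdot|_{\s},\mfL_{\CCum})$-super-regular, so Theorem~\ref{upgraded thm - main theorem} applies. Absorbing the fixed kernel factor $\prod_{e\in K(T)}\|K_{\mft(e)}\|_{|\mft(e)|_{\s},m}$ into the constant and using $|T^{\mfn}_{\mfe}|_{\s}\ge\wnorm{T^{\mfn}_{\mfe}}_{\s}+\kappa$, the bound \eqref{eq: uniform bound - main theorem} reads $\E(\hat{\Pi}^{\xi}_{z}[\tau](\psi^{\lambda}_{z}))^{2q}\lesssim\|\xi\|_{N,\c}\,\lambda^{2q(\wnorm{\tau}_{\s}+\kappa)}$ with $N = 2q|L(T)|$, and similarly \eqref{eq: convergence bound - main theorem} supplies the difference bound with the extra factor $(\|\xi\|_{N,\c}\vee\|\bar{\xi}\|_{N,\c})\|\xi;\bar{\xi}\|_{N,\c}$ in place of $\|\xi\|_{N,\c}$.

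Finally, the surplus decay $\lambda^{2q\kappa}$ beyond the rate $\lambda^{2q\wnorm{\tau}_{\s}}$ dictated by the target grading is what a Kolmogorov-type continuity argument in the variables $z\in\mathfrak{K}$ and $\lambda\in(0,1]$, with respect to the $\s$-scaled metric and exactly as in \cite[Sec.~10]{Regularity}, consumes in order to pass from these pointwise-in-$(z,\lambda)$ moment bounds to a bound on the supremum defining $\$\cdot\$_{\alpha,\mathfrak{K}}$; for that argument to close one needs roughly $q\kappa\gtrsim|\s|$, so taking $q = p\vee\lceil|\s|/\kappa\rceil$ and maximising $|L(T)|$ over basis trees of negative homogeneity yields precisely the value $N = 2(p\vee\lceil|\s|/\kappa\rceil)\cdot\max\{|L(T)|\}$ appearing in the statement, together with the asserted estimate. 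The only genuine care needed in this last, bookkeeping, part --- all the real work having been discharged into Theorem~\ref{upgraded thm - main theorem} --- is the clean separation of roles between the two homogeneity assignments: checking that the \cite{Regularity} reduction and the Kolmogorov estimate go through unchanged when the decay exponents are expressed in $|\cdot|_{\s}$ while the structure of $\mathscr{T}$ and the normalisations in $\$\cdot\$_{\alpha,\mathfrak{K}}$ refer to $\wnorm{\cdot}_{\s}$, which is exactly what the $R$-consistency conditions (in particular the matching of integer parts) guarantee, and verifying that the second hypothesis indeed furnishes $(\c,|\cdot|_{\s},\mfL_{\CCum})$-super-regularity for every basis tree of every negatively graded sector, including those appearing in the recursive structure-group relations.
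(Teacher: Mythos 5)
Your proposal follows exactly the paper's argument: the paper's entire proof of this statement is the single sentence ``Combining Theorem~\ref{upgraded thm - main theorem} with the proof of \cite[Thm~10.7]{Regularity} immediately gives the following theorem,'' and your write-up is a faithful unpacking of that --- applying the moment bounds \eqref{eq: uniform bound - main theorem}--\eqref{eq: convergence bound - main theorem} to the finitely many basis trees of negative grading, noting that $R$-consistency (in particular the matching of integer parts) makes the regularity structure, structure group and admissibility relations independent of which of the two homogeneity assignments is used, and feeding the surplus decay $\lambda^{2q\kappa}$ into the Kolmogorov-type continuity argument of \cite[Sec.~10]{Regularity} with $q = p \vee \lceil |\s|/\kappa\rceil$ to recover the stated $N$. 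No genuinely different decomposition or lemma is used, so the two arguments coincide.
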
 
The following theorem states that if we impose some mild regularity conditions then the map $\xi \mapsto Z^{\xi}_{\BPHZ}$ admits some nice continuity and regularity properties.
\begin{theorem}\label{main thm: continuity}
Fix $d \ge 1$, a scaling $\s$, finite sets of types $\Le$ and $\mfL_{+}$, a homogeneity assignment $\wwnorm{\cdot}_{\s}$ and a set of cumulants $\mfL_{\CCum}$ satisfying Assumption~\ref{assump - noise + kernel}, and a cumulant homogeneity $\c$ on $\mfL_{\CCum}$ consistent with $\wwnorm{\cdot}_{\s}$. Let $R$ be a rule which is complete and subcritical with respect to $\wwnorm{\cdot}_{\s}$.

Fix $\kappa$ satisfying
\[
0 < \kappa < 
\frac{1}{2}
\min_{(\mft,[2]) \in \mfL_{\CCum}}
\Big( \lceil - \wwnorm{\mft([2])}_{\s} \rceil + \wwnorm{\mft([2])}_{\s} \Big),
\] 
and small enough such that the homogeneity assignment    
\[
\wnorm{\mft}_{\s}
\eqdef \wwnorm{\mft}_{\s} \cdot (1 + \kappa \mathbbm{1}\{ \mft \in \Le\})\; \textnormal{\it for every }\mft \in \mfL\;.
\]
is $R$-consistent with $\wwnorm{\cdot}$. 
Denote by $\mathscr{T} = (A,\mcb{T},G)$ the corresponding reduced regularity structure generated by $R$ with $\wnorm{\cdot}_{\s}$ as its homogeneity assignment.

Assume that for every $\alpha \in A$ with $\alpha < 0$ and every $T^{\mfn}_{\mfe} \in \mcb{T}_{\alpha}$, the decorated tree $T^{\mfn}_{\mfe}$ is $(\tilde{\c}, \wnorm{\cdot}_{\s}, \mfL_{\CCum})$-super-regular where $\tilde{\c}$ is the $\kappa$-penalization\footnote{see Definition~\ref{def: cumulant homogeneity penalization}.} of $\c$.
Define
\begin{equ}
\mcM(\Omega_{0},\mfL_{\CCum},\c)
\eqdef
\{ 
\xi \in \mcM(\Omega_0,\mfL_{\CCum}):\ 
\|\xi\|_{N,\c,2}
<
\infty
\}\;,
\end{equ}
where
\begin{equ}
N \eqdef\
2 \cdot \Big( 2 \vee \Big\lceil \frac{2|\s|}{\kappa} \Big\rceil \Big) 
\max \{ |L(T)|: T^{\mfn}_{\mfe} \in \mcb{T}_{\alpha}\; 
\textnormal{\it for } \alpha \in A,\ \alpha < 0 \}\;.   
\end{equ}
Then there is a unique extension of $Z^{\bullet}_{\BPHZ}:\mcM(\Omega_{\infty},\mfL_{\CCum}) \rightarrow \mathscr{M}_{\rand}(\mathscr{T})$ to all of $\mcM(\Omega_{0},\mfL_{\CCum},\c)$ which satisfies the following continuity property: for any $(\xi_{n})_{n \in \N} \subset \mcM(\Omega_{0},\mfL_{\CCum},\c)$ and $\xi \in \mcM(\Omega_{0},\mfL_{\CCum},N,\c)$ such that 
\begin{claim}
\item[(i)] $\sup_{n \ge 0} \|\xi_{n}\|_{N,\c,1} < \infty$ and
\item[(ii)] $\xi_{n} \rightarrow \xi$ in probability on $\bigoplus_{\mft \in \Le} \mathcal{D}'(\R^{d})$
\end{claim}
$Z^{\xi_{n}}_{\BPHZ}$ converges to $Z^{\xi}_{\BPHZ}$ in probability on $\mathscr{M}_{0}(\mathscr{T})$. 
\end{theorem}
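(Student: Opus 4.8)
The plan is to deduce Theorem~\ref{main thm: continuity} from the quantitative bound of Theorem~\ref{reg struct - main theorem} by a mollification argument, the only genuine point being that Theorem~\ref{reg struct - main theorem} is available only for \emph{smooth} noises. Fix once and for all a smooth, compactly supported, non-negative $\rho$ with $\int \rho = 1$, set $\rho_{\eps}(x) \eqdef \eps^{-|\s|}\rho(\eps^{-\s_{1}}x_{1},\dots,\eps^{-\s_{d}}x_{d})$, and for $\xi \in \mcM(\Omega_{0},\mfL_{\CCum})$ write $\xi_{\eps} \eqdef \xi \ast \rho_{\eps}$, acting componentwise. Since cumulants are multilinear, $\Cum[\{(\xi_{\eps})_{\mft(i)}(x_{i})\}_{i}]$ is the convolution of $\Cum[\{\xi_{\mft(i)}(y_{i})\}_{i}]$ against $\bigotimes_{i}\rho_{\eps}$, so $\xi_{\eps} \in \mcM(\Omega_{\infty},\mfL_{\CCum})$ (smoothness, stationarity, all moments, the vanishing first moment, and the vanishing of all cumulants outside $\mfL_{\CCum}$ all pass to $\xi_{\eps}$) and $Z^{\xi_{\eps}}_{\BPHZ}$ is already defined.

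The analytic core is a triple of elementary but technical convolution estimates, all of which exploit the ``$\kappa$-room'' built into the hypotheses (the upscaled assignment $\wnorm{\cdot}_{\s}$, with $\wnorm{\mft}_{\s} = \wwnorm{\mft}_{\s}(1+\kappa\mathbbm{1}\{\mft\in\Le\})$, and the $\kappa$-penalization $\tilde{\c}$ of $\c$): (a) mollification does not increase the ``$1$''-version of the cumulant norm, so $\sup_{\eps\in(0,1]}\|\xi_{\eps}\|_{N,\tilde{\c}} \lesssim \|\xi\|_{N,\c,1}$; (b) if $\|\xi\|_{N,\c,2} < \infty$ then $\|\xi_{\eps};\xi_{\eps'}\|_{N,\tilde{\c}} \to 0$ as $\eps,\eps' \to 0$, at rate $(\eps\vee\eps')^{\theta}$ for some $\theta = \theta(\kappa) > 0$, with a constant controlled already by the ``$1$''-norm, so that the same rate holds for each $\xi_{n}$ in terms of $\sup_{m}\|\xi_{m}\|_{N,\c,1}$; (c) for every fixed $\eps$, if $\eta_{n} \to \eta$ in probability in $\bigoplus_{\mft\in\Le}\mcD'(\R^{d})$ with $\sup_{n}\|\eta_{n}\|_{N,\c,1} < \infty$, then $\|(\eta_{n})_{\eps};\eta_{\eps}\|_{N,\tilde{\c}} \to 0$ as $n \to \infty$. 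For (c) one notes that convergence in probability together with the uniform bounds on all moments gives $\eta_{n}(f) \to \eta(f)$ in every $L^{p}$, hence all joint moments and therefore all joint cumulants converge when tested against test functions; convolution with $\bigotimes_{i}\rho_{\eps}$ promotes this to locally uniform convergence of the (smooth) cumulant functions of $(\eta_{n})_{\eps}$, and a standard interpolation against the uniform bound at the strictly better homogeneity $\c$ (which dominates $\tilde{\c}$) upgrades it to convergence in the scale-weighted seminorms defining $\|\cdot;\cdot\|_{N,\tilde{\c}}$, the near-diagonal term $\mathrm{Diag}$ included.

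Granting (a)--(c), one first checks that Theorem~\ref{reg struct - main theorem} applies, with the reduced regularity structure taken as the one generated by $\wnorm{\cdot}_{\s}$, with $\wwnorm{\cdot}_{\s}$ as the auxiliary (better) homogeneity and $\tilde{\c}$ as the cumulant homogeneity: $R$-consistency of $\wnorm{\cdot}_{\s}$ with $\wwnorm{\cdot}_{\s}$ and $(\tilde{\c},\wnorm{\cdot}_{\s},\mfL_{\CCum})$-super-regularity of every tree of negative homogeneity are among the hypotheses, and each such tree contains a noise edge, whence the homogeneity gap $\wwnorm{T^{\mfn}_{\mfe}}_{\s} \ge \wnorm{T^{\mfn}_{\mfe}}_{\s} + \kappa'$ for a fixed $\kappa' > 0$ follows from the $\kappa$-upscaling of the noise homogeneities. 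Theorem~\ref{reg struct - main theorem} then gives, for all $\alpha$, compact $\mathfrak{K}$, $p\in\N$, and smooth $\zeta,\bar\zeta \in \mcM(\Omega_{\infty},\mfL_{\CCum})$ on a common probability space,
\begin{equ}
\E\,\$ Z^{\zeta}_{\BPHZ}; Z^{\bar\zeta}_{\BPHZ}\$_{\alpha,\mathfrak{K}}^{2p} \lesssim \big(\|\zeta\|_{N,\tilde{\c}}\vee\|\bar\zeta\|_{N,\tilde{\c}}\big)\,\|\zeta;\bar\zeta\|_{N,\tilde{\c}}\;.
\end{equ}
Applying this to $(\xi_{\eps},\xi_{\eps'})$ and using (a)--(b), the family $(Z^{\xi_{\eps}}_{\BPHZ})_{\eps}$ is Cauchy in $L^{2p}$ for each pseudometric $\$\cdot;\cdot\$_{\alpha,\mathfrak{K}}$; its limit in probability lies in $\mathscr{M}_{\rand}(\mathscr{T})$ (stationarity being preserved under limits in probability) and does not depend on $\rho$ (interlace two mollifier families and apply the bound to the cross terms, whose $\|\cdot;\cdot\|_{N,\tilde{\c}}$ still vanishes). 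We \emph{define} this limit to be $Z^{\xi}_{\BPHZ}$ for $\xi \in \mcM(\Omega_{0},\mfL_{\CCum},\c)$; it extends the original map on $\mcM(\Omega_{\infty},\mfL_{\CCum})$ by applying the same bound to $(\xi_{\eps},\xi)$ together with (a) and (c) (with $\eta_{n}\equiv\xi$), and any continuous extension must coincide with it since the mollifications of any $\xi$ converge to $\xi$ in the topology declared continuous.

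For the continuity statement, fix $\alpha,\mathfrak{K},p$; by Chebyshev's inequality it suffices to prove $\E\,\$ Z^{\xi_{n}}_{\BPHZ}; Z^{\xi}_{\BPHZ}\$_{\alpha,\mathfrak{K}}^{2p} \to 0$. By the triangle inequality for the pseudometric and $\E(X+Y+Z)^{2p} \lesssim \E X^{2p}+\E Y^{2p}+\E Z^{2p}$, this is bounded by a constant times $\E\,\$ Z^{\xi_{n}}_{\BPHZ}; Z^{(\xi_{n})_{\eps}}_{\BPHZ}\$_{\alpha,\mathfrak{K}}^{2p} + \E\,\$ Z^{(\xi_{n})_{\eps}}_{\BPHZ}; Z^{\xi_{\eps}}_{\BPHZ}\$_{\alpha,\mathfrak{K}}^{2p} + \E\,\$ Z^{\xi_{\eps}}_{\BPHZ}; Z^{\xi}_{\BPHZ}\$_{\alpha,\mathfrak{K}}^{2p}$. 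The third term tends to $0$ as $\eps \to 0$ since $Z^{\xi}_{\BPHZ} = \lim_{\eps\to0} Z^{\xi_{\eps}}_{\BPHZ}$, so we fix $\eps$ making it small; the first term, on writing $Z^{\xi_{n}}_{\BPHZ} = \lim_{\eps'\to0} Z^{(\xi_{n})_{\eps'}}_{\BPHZ}$ in $L^{2p}$, applying the displayed bound to $((\xi_{n})_{\eps'},(\xi_{n})_{\eps})$ and using (a)--(b), is $\lesssim \big(\sup_{m}\|\xi_{m}\|_{N,\c,1}\big)\eps^{\theta}$, hence small for that $\eps$ \emph{uniformly in $n$}; and the middle term, with $\eps$ now fixed, is by the displayed bound and (a) at most $C\big(\sup_{m}\|\xi_{m}\|_{N,\c,1}\vee\|\xi\|_{N,\c,2}\big)\,\|(\xi_{n})_{\eps};\xi_{\eps}\|_{N,\tilde{\c}}$, which tends to $0$ as $n\to\infty$ by (c). Letting $n\to\infty$ and then $\eps\to0$ gives the claim. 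The main obstacle is estimate (c): turning the merely distributional convergence of cumulants coming from $\xi_{n}\to\xi$ in $\mcD'$ into convergence in the scale-sensitive weighted seminorms $\|\cdot;\cdot\|_{N,\tilde{\c}}$ uniformly down to the diagonal; this is precisely where the gap between the ``$1$'' and ``$2$'' versions of the cumulant norm and the $\kappa$-penalization of $\c$ are essential, a soft compactness argument being insufficient, and the uniform-in-$n$ mollification bound (b) is the other place where the $\kappa$-room is indispensable.
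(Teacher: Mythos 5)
Your proof is correct and follows essentially the same route as the paper's: mollify the rough noise, invoke the quantitative $L^{2p}$ model estimate (which you route through Theorem~\ref{reg struct - main theorem} and the paper routes through Theorem~\ref{upgraded thm - main theorem} plus \cite[Thm~10.7]{Regularity} --- these are the same thing), control the effect of mollification on the cumulant norms (your items (a)--(b) are exactly Proposition~\ref{convergence of mollifications}), and close with a three-term triangle inequality. Two remarks on the differences. First, a technical one: the paper does not work directly with $\wwnorm{\cdot}_{\s}$ as the auxiliary homogeneity and the $\kappa$-penalization $\tilde{\c}$; instead it introduces the intermediate assignment $|\mft|_{\s} \eqdef \wwnorm{\mft}_{\s}(1+\tfrac{\kappa}{2}\mathbbm{1}\{\mft\in\Le\})$ and the $\kappa/2$-penalization, in order to leave slack so that one application of Proposition~\ref{convergence of mollifications} (which eats $\kappa/2$) still lands inside the hypotheses of the model estimate. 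Your version with the full $\kappa$ works just as well since the stated super-regularity hypothesis is with the stronger $\kappa$-penalization, but the paper's splitting is slightly more economical and matches the factor $2|\s|/\kappa$ appearing in the definition of $N$. Second, a structural one: for the ``middle term'' $Z^{(\xi_n)_{\eps}}_{\BPHZ}$ vs.\ $Z^{\xi_\eps}_{\BPHZ}$ at fixed $\eps$, you formulate a noise-level statement (your (c)) that $\|(\xi_n)_\eps;\xi_\eps\|_{N,\tilde{\c}}\to 0$, obtained by interpolating locally uniform convergence of the mollified cumulant functions against the uniform $\|\cdot\|_{N,\c,1}$ bound, and then you push this through the quantitative model estimate; the paper instead argues directly at the model level that $Z^{\bullet\ast\eta_m}_{\BPHZ}$ is a continuous function of the frozen realization and the cumulant data and pushes the convergence in probability through that map. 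Both are valid; your route is more quantitative and arguably cleaner since it avoids having to make precise what ``continuous in the frozen realization'' means, but it does require the interpolation step in (c), which deserves to be spelled out (in particular the convergence of the near-diagonal $\mathrm{Diag}$ term, which you flag but do not prove). Neither discrepancy is a gap.
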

\begin{remark}
Of course, this extended map $Z^{\bullet}_{\BPHZ}$ also satisfies a weaker version of the above property  -- if (ii) above is replaced by the assumption that $\xi_{n} \rightarrow \xi$ in law on $\bigoplus_{\mft \in \Le} \mathcal{D}'(\R^{d})$ then $Z^{\xi_{n}}_{\BPHZ}$ converges to $Z^{\xi}_{\BPHZ}$ in law on $\mathscr{M}_{0}(\mathscr{T})$.
\end{remark}
\begin{remark}
We define $Z^{\xi}_{\BPHZ}$, for any $\xi \in \mcM(\Omega_{0},\mfL_{\CCum},N,\c)$, as the $L^{2}$ limit $ \lim_{n \rightarrow \infty} Z_{\BPHZ}^{\xi_{n}}$ where we set $\xi_{n} = \xi \ast \eta_{n}$ where $(\eta_{n})_{n \in \N}$ is a sequence of approximate identities converging in an appropriate sense to a Dirac delta function as $n \rightarrow \infty$.  
Thus we can see this extended $Z^{\bullet}_{\BPHZ}$ as a map from $\mcM(\Omega_{0},\mfL_{\CCum},N,\c)$ into the space of all measurable maps from $\Omega_{0}$ to $\mathscr{M}_{0}(\mathscr{T})$.
\end{remark} 

\begin{remark}
Henceforth all of our estimates are claimed to be uniform in $\lambda \in (0,1]$, even if this is not explicitly stated.
\end{remark}
For the rest of the paper we fix a cumulant homogeneity $\c$ on $\mfL_{\CCum}$ consistent with our fixed homogeneity assignment $|\cdot|_{\s}$ and a $(\c,|\cdot|_{\s},\mfL_{\CCum})$-super regular semi-decorated tree $\sT^{\sn}_{\se} \in B_{\circ}$ for which we will prove Theorem~\ref{upgraded thm - main theorem}.
We gather all the steps to prove Theorem~\ref{upgraded thm - main theorem} and Theorem~\ref{main thm: continuity} in Section~\ref{sec: proof of main theorem}. 
\section{Renormalization of combinatorial trees and random fields}\label{Sec: def of BPHZ model} 
\subsection{Colorings, a new decoration, more homogeneities, and identified forests}
\subsubsection{Colorings}
\begin{definition}
A colored forest is a pair $(F,\hat{F})$ where $F$ is a typed rooted forest and the ``coloring'' $\hat{F}$ is a pair $(\hat{F}_{1},\hat{F}_{2})$ of disjoint subforests of $F$ with the property that $\rho(\hat{F}_{2}) \subset \rho(F)$.

Note that $\hat{F}$ induces a map\footnote{Here $\hat{F}$ is an overloaded notation but whether it is being treated as a subforest or as a map should always be clear from context.} $\hat{F}:N_{F} \sqcup E_{F} \rightarrow \{0,1,2\}$ 
by setting $\hat F(u) = i$ if $u \in N_{\hat F_i} \sqcup E_{\hat F_i}$ and $\hat F(u) = 0$ otherwise.
Clearly, one can recover the tuple $\hat{F}$ from the map $\hat{F}(\cdot)$.
\end{definition}
There is a useful alternate notation for specifying colorings of a forest. 
Given $F$ and two disjoint subforests $A$ and $B$ of $F$ with $\rho(B) \subset \rho(F)$ we write $[A]_{1} \sqcup [B]_{2}$ for the coloring $\hat{F} = (A,B)$.
If one of the two forests is empty we may drop it, i.e. we may write $(F,[A]_{1})$ instead of $(F,[A]_{1} \sqcup [\mathbf{1}]_{2})$. 
Additionally, for $i \in \{0,1,2\}$ we write $(F,i)$ for the colored forest with the constant 
coloring $\hat{F}(\cdot) = i$. 
\subsubsection{Decorated colored forests}
A decorated colored forest consists of a colored forest $(F,\hat{F})$ and three maps: $\mfn: N(F) \rightarrow \N^{d}$, $\mfo: N(\hat{F}_{1}) \rightarrow \Z^{d} \oplus \Z(\Lab)$, and $\mfe: E_{F} \rightarrow \N^{d}$,
where $\Z(\Lab)$ denotes the free $\Z$-module generated by $\Lab$. 
Given a colored forest $(F,\hat{F})$ and decorations $\mfn$, $\mfo$, and $\mfe$ on $F$ we denote the corresponding decorated colored forest by $(F,\hat{F})^{\mfn,\mfo}_{\mfe}$. 

For the trivial tree we just write $(\bullet,i)^{\mfn,\mfo}$ and also observe that the empty forest $\mathbf{1}$ is automatically a decorated colored forest. 
When working with a forest $(F,\hat{F})$ where $\hat{F}_{1} = \mathbf{1}$ we abuse notation and write $\mfo = 0$ instead of writing $\mfo = \emptyset$. 
Moreover, if the $\mfo$-label vanishes or is given by $\emptyset$ we often drop it from the notation. 

Observe that given a decorated colored forest $(F,\hat{F})^{\mfn,\mfo}_{\mfe}$ and a subforest $A$ of $F$ there is, by taking restrictions, a corresponding decorated colored forest $(A,\hat{F})^{\mfn,\mfo}_{\mfe}$. 
Here we abuse notation by not making this restriction on our decorations or colorings explicit in our notation.

Clearly any semi-decorated tree $T^{\mfn}_{\mfe}$ naturally corresponds to a decorated colored tree $(T,0)^{\mfn}_{\mfe}$. 
Finally, we also remark that the forest product extends naturally to a product on decorated colored forests.
\subsubsection{More homogeneities}
With the introduction of coloring and  $|\cdot|_{s}$ we define define two homogeneities 
$|\cdot|_{+}$ and $|\cdot|_{-}$ on decorated colored forests as follows: 
given $\tau = (F,\hat{F})^{\mfn,\mfo}_{\mfe}$, we set 
\begin{equs}
|\tau|_{-}
\eqdef&\ 
\sum_{
\substack{
e \in E_{F}\\
\hat{F}(e)=0}
}
\left(
|\mft(e)|_{\s}
-
|\mfe(e)|_{\s}
\right)
+
\sum_{u \in N(F)}
|\mfn(u)|_{\s}\;,\\
|\tau|_{+}
\eqdef&\ 
\sum_{
\substack{
e \in E_{F}\\
\hat{F}(e) \not = 2
}
}
\left(
|\mft(e)|_{\s}
-
|\mfe(e)|_{\s}
\right)
+
\sum_{
\substack{
u \in N(F)\\
\hat{F}(u) 
\not = 2
}
}
(|\mfn(u)|_{\s} +
|\mfo(u)|_{\s})\;.
\end{equs}
\subsubsection{Identified forests}\label{subsec: identified forests}
An \textit{identified forest} (or i-forest) is a pair $\bo{\sigma} = (\sigma,\iota_{\bo{\sigma}})$ where $\sigma$ is a colored forest and $\iota_{\bo{\sigma}}$ is a forest morphism from $\sigma$ into $\sT$ which restricts to a tree monomorphism on \emph{each connected component} of $\sigma$. Note that $\iota_{\bo{\sigma}}$ is \emph{not} required to be globally injective and in most cases it will not be.
Two i-forests $\bo{\sigma_{1}}$, $\bo{\sigma_{2}}$ are isomorphic if there exists a forest isomorphism $\iota\colon \sigma_{1} \rightarrow \sigma_{2}$ such that $\iota_{\bo{\sigma_{2}}} \circ \iota = \iota_{\bo{\sigma_{1}}}$. 

Sometimes when talking about an i-forest $\bo{\sigma}$ we may write
\begin{equ}\label{explicit i-forest}
\sigma = (F^{(1)},\widehat{F^{(1)}}) \cdots (F^{(k)},\widehat{F^{(k)}}).
\end{equ}
In this case it should be understood that for $1 \le j \le k$ the $F^{(j)}$ are explicit subforests of $\sT$ and there is no need to mention $\iota_{\bo{\sigma}}$. 
Note that there is some subtlety here since the $F^{(j)}$ may not be \emph{disjoint} subforests of $\sT$. 

We make decorations explicit in our notation, so writing $\bo{\sigma}$ indicates the constituent $\sigma$ is undecorated while in the decorated case we write $\bo{\sigma}^{\mfn}_{\mfe}$. Note that the decorations of an i-forest
are not necessarily inherited from $\sT^{\sn}_{\se}$. 

Recall that when we say $S$ is a subtree of $\sT$ we are being more explicit with how $S$ is identified as a sub-object of $\sT$, that is $N_{S} \subset N_{\sT}$, $N(S) \subset N(\sT)$, and $E_{S} \subset E_{\sT}$ as concrete sets with the type map on $E_{S}$ being the restriction of the one on $E_{\sT}$. For an i-tree $\bo{\sigma}$ we usually write $\sigma = (S,0)$ where $S$ is a subtree of $\sT$. 

The set of all decorated i-forests is denoted by $\ident[\For_{2}]$.
The subset of $\ident[\For_{2}]$ of decorated i-trees is denoted by $\ident[\Tr_{2}]$.
For $i \in \{0,1\}$ we define $\ident[\Tr_{i}]$ as the collection of all decorated i-trees $(T,\hat{T})^{\mfn,\mfo}_{\mfe} \in \Tr_{2}$ with $\hat{T}(\cdot) \le i$ and define $\ident[\For_{i}]$ analogously.
Given any collection of decorated i-forests $\mathfrak{G}$ we denote by $\mathring{\mathfrak{G}}$ the corresponding set of undecorated objects.

We denote by $\label{def: set of identified forests} \langle \ident[\For_{2}] \rangle$ the unital algebra obtained by endowing
free vector space generated by $\ident[\For_{2}]$ with the (linear extension of the) forest product. 
For any subset $A \subset \ident[\For_{2}]$ we denote by $\langle A \rangle_{\for}$  and $\langle A \rangle$ the subalgebra of $\langle \ident[\For_{2}] \rangle $ generated by $A$ and the vector subspace of $\langle \ident[\For_{2}] \rangle$ generated by $A$, respectively. 
\subsection{Co-actions and twisted antipodes}\label{subsec: coprods and twisted antipodes}
Our algebraic description of positive and negative renormalizations both involve two steps. In the first step one uses a ``co-action'' extracts object(s) which will be later assigned a counterterm and in the second step one builds the counter-term by using a ``twisted antipode'' to perform recursive renormalization procedure \emph{within} the extracted object(s).  

Each coaction produces a linear combination of tensor products -- in each product one factor consists of the extracted object(s) to be assigned counter-terms while the other factor consists of the part of $\sT$ which is left over.
On the other hand each twisted antipode will generate a forest product. 
\subsubsection{Negative renormalization}
First, we define a set of decorated i-trees which, due to their power-counting, should be assigned counterterms for negative renormalization. 
We set
\[
\mathfrak{X}_{-} 
\eqdef 
\left\{
(T,0)^{\mfn}_{\mfe} \in \ident[\Tr_{0}]:\ 
\mfn(\rho_{T}) = 0 
\textnormal{ and }
|(T,0)^{\mfn}_{\mfe}|_{-} < 0
\right\}\;,
\]
and we define $\mathfrak{p}_{-}\colon \langle \ident[\Tr_{0}] \rangle_{\for} \rightarrow \langle \mathfrak{X}_{-} \rangle_{\for}$ to be the algebra homorphism given by projection onto $\langle \mathfrak{X}_{-} \rangle_{\for}$.
The co-action describing the negative renormalization is a linear map
\[
\tDelta_{-}
\colon
\langle \ident[\Tr_{0}] \rangle
\rightarrow
\langle \mathfrak{X}_{-} \rangle_{\for}
\otimes 
\langle \ident[\Tr_{1}] \rangle\;.
\] 
We now specify, for each given i-tree, what sorts of subforests we will try to extract when applying $\tDelta_{-}$. 
\begin{definition}
For $(F,0) \in \mathring{\For}_{0}$ we define $\mathfrak{A}_{1}(F,0)$ to be the collection of all subforests $G$ of $F$ with the property that $\Con(G)$ contains no trivial trees.
\end{definition}
Note that one always has $\mathbf{1} \in \mathfrak{A}_{1}(F,0)$. Now, for any $(T,0)_{\mfe}^{\mfn} \in \ident[\Tr_{0}]$, we set 
\begin{equ}\label{def: negative coprod}
\tDelta_{-}(T,0)_{\mfe}^{\mfn}
\eqdef
\sum_{G \in \mathfrak{A}_{1}(T,0)}
\sum_{\mfn_{G},\mfe_{G}}
\frac{1}{\mfe_{G}!}
\binom{\mfn}{\mfn_{G}}
\mfp_{-}\Big[(G,0)_{\mfe}^{\mfn_{G} + \chi \mfe_{G}}
\Big]
\otimes
\big(T, 
[G]_{1} 
\big)^{\mfn - \mfn_{G},\mfn_{G} + \chi \mfe_{G}}_{\mfe_{G} + \mfe}\;.
\end{equ}
Here in the second sum above we are summing over (i) all $\mfn_{G}: N(T) \rightarrow \N^{d}$ supported on $N(G)$ and (ii) all $\mfe_{G}: E_{T} \rightarrow \N^{d}$ supported on
\[
\partial(G,T)
\eqdef
\{
(e_{\p},e_{\ch}) \in E_{T}\setminus E_{G}:\ e_{\p} \in N_{G}\}\;.
\]
Also, for forest $T$ and any decoration $\tilde{\mfe}:E_{T} \rightarrow \N^{d}$ we define $\chi\tilde{\mfe}: N(T) \rightarrow \N^{d}$ as 
$\chi\tilde\mfe(u)
\eqdef
\sum_{
e \,:\,
e_{\p} = u
}
\tilde{\mfe}(e)$.
\begin{remark} The appearance of formulas like \eqref{def: negative coprod} will be quite common -- we adopt these conventions on the meaning of $\sum_{\mfn_G, \mfe_{G}}$ throughout this paper. Note that they make sense even if $T$ is replaced by a forest and $G$ is a subforest of $F$.
\end{remark}
Recursive negative renormalization is described by an algebra homomorphism 
\[
\tpode_{-}:
\langle \mathfrak{X}_{-}
\rangle_{\for}
\rightarrow
\langle \ident[\Tr_{1}]
\rangle_{\for}\;.
\]
One can imagine $\tpode_{-}$ as given by iteration of a ``reduced'' analogue of $\hat{\Delta}_{-}$. 
By reduced, we mean it is more constrained in what objects it extracts.
\begin{definition}
For $(F,0) \in \mathring{\For}_{0}$ we define $\overline{\mathfrak{A}}_{1}(F,0)$ to be the collection of all $G \in \mathfrak{A}_{1}(F,0)$ with the property that for every $T \in \Con(F)$ one has $T \not \in \Con(G)$. 
\end{definition}
We first define the map $\tpode_{-}$ for any $(F,0)^{\mfn}_{\mfe} \in \For_{0}$ which can be written as a forest product of elements of $\mathfrak{X}_{-}$. 

This definition will be inductive in $|E_{F}|$ with the base case given by setting $\tpode_{-} \mathbf{1} = \mathbf{1}$. The induction is given by setting
\begin{equ}
\tpode_{-} 
(F,0)^{\mfn}_{\mfe}
\eqdef
(-1)^{|\Con(F)|}
\sum_{
\substack{
G \in \overline{\mathfrak{A}}_{1}(F,0)\\
\mfn_{G},\mfe_{G}
}
}
\frac{1}{\mfe_{G}!}
\binom{\mfn}{\mfn_{G}}
\Big[
\tpode_{-}
\mfp_{-}
(G,0)_{\mfe}^{\mfn_{G} + \chi \mfe_{G}}
\Big]
\cdot
\big(F, 
[G]_{1} 
\big)^{\mfn - \mfn_{G}}_{\mfe_{G} + \mfe}\;.
\end{equ} 
\subsubsection{Positive renormalization}\label{subsubsec: positive renorm}
In order to define $\mathfrak{X}_{+}$ we first need some new notions.
Given a subtree $T$ of $\sT$ and an edge $e \in E_{T}$ we define $T_{\ge}(e)$ to be the subtree of $T$ formed by all edges $e' \in E_{T}$ with $e' \ge e$, the corresponding node set being given by the collection of all $u \in N_{T}$ with $u \ge e_{\p}$. 
Note that if $e \in K(T)$ then $e_{p} \not \in L(T_{\ge}(e))$ (see Remark~\ref{disappearing noises}). 

Since it will be useful later we also define $T_{\not \ge }(e)$ to be the subtree of $T$ determined by all edges $e' \in E_{\sT}$ with $e' \not \ge e$, the corresponding node set is given by the collection of all $u \in N_{\sT}$ with $u \not > e_{\p}$. 

We draw a picture to make these definitions clear. On the leftmost picture we draw some subtree $T$ of $\sT$ and specify an edge $e \in E_{T}$ by drawing a small bisector in the middle of it.
In the middle picture we have shaded in the subtree $T_{\ge}(e)$ while on the rightmost picture we have shaded in the subtree $T_{\not \ge}(e)$. 
\[
\begin{tikzpicture}[scale=.7,baseline=0,rotate=-45]
    \node[dot] (root) at (0, 0) {};
    \node[dot] (bottom left noise) at (-1, 1) {};
    \node[circ] (bottom left anoise) at (-1, 1.75) {};
    \node[dot] (bottom right noise) at (1, 1) {};
    \node[circ] (bottom right anoise) at (1, 1.75) {};
    \node[dot] (middle) at (0, 2) {};
    \node[dot] (middle left noise) at (-1, 3) {};
    \node[circ] (middle left anoise) at (-1, 3.75) {};
    \node[dot] (top) at (0, 4) {};
    \node[dot] (middle right noise) at (1, 3) {};
    \node[circ] (middle right anoise) at (1, 3.75) {};
    \node[dot] (top left noise) at (-1, 5) {};
    \node[dot] (top right noise) at (1, 5) {};
    \node[circ] (top left anoise) at (-1, 5.75) {};
    \node[circ] (top right anoise) at (1, 5.75) {};

    \draw[kernel] (root) to (middle);
    \draw[kernel1] (middle) to (top);
    \draw[kernel] (root) to (bottom left noise);
    \draw[kernel] (root) to (bottom right noise);
    \draw[kernel] (middle) to (middle left noise);
    \draw[kernel] (middle) to (middle right noise);
    \draw[kernel] (top) to (top left noise) ;
    \draw[kernel] (top) to (top right noise);
    \draw[leaf] (bottom left noise) to (bottom left anoise);
    \draw[leaf] (bottom right noise) to (bottom right anoise);
    \draw[leaf] (middle left noise) to (middle left anoise);
    \draw[leaf] (middle right noise) to (middle right anoise);
    \draw[leaf] (top left noise) to (top left anoise);
    \draw[leaf] (top right noise) to (top right anoise);
\end{tikzpicture}
\begin{tikzpicture}[scale=.7,baseline=0,rotate=-45]
    \node[subtreenode] (subtree top) at (0, 4) {};
    \node[subtreenode] (subtree top left noise) at (-1, 5) {};
    \node[subtreenode] (subtree top right noise) at (1, 5) {};
    \node[subtreenode] (subtree top left anoise) at (-1, 5.75) {};
    \node[subtreenode] (subtree top right anoise) at (1, 5.75) {};
    \node[subtreenode] (subtree middle) at (0, 2) {};

    \draw[subtreeedge] (subtree top) to (subtree top left noise);
    \draw[subtreeedge] (subtree top) to (subtree top right noise);

    \draw[subtreeedge] (subtree top left noise) to (subtree top left anoise);
    \draw[subtreeedge] (subtree top right noise) to (subtree top right anoise);

    \draw[subtreeedge] (subtree middle) to (subtree top);

    \node[dot] (root) at (0, 0) {};
    \node[dot] (bottom left noise) at (-1, 1) {};
    \node[circ] (bottom left anoise) at (-1, 1.75) {};
    \node[dot] (bottom right noise) at (1, 1) {};
    \node[circ] (bottom right anoise) at (1, 1.75) {};
    \node[dot] (middle) at (0, 2) {};
    \node[dot] (middle left noise) at (-1, 3) {};
    \node[circ] (middle left anoise) at (-1, 3.75) {};
    \node[dot] (top) at (0, 4) {};
    \node[dot] (middle right noise) at (1, 3) {};
    \node[circ] (middle right anoise) at (1, 3.75) {};
    \node[dot] (top left noise) at (-1, 5) {};
    \node[dot] (top right noise) at (1, 5) {};
    \node[circ] (top left anoise) at (-1, 5.75) {};
    \node[circ] (top right anoise) at (1, 5.75) {};

    \draw[kernel] (root) to (middle);
    \draw[kernel1] (middle) to (top);
    \draw[kernel] (root) to (bottom left noise);
    \draw[kernel] (root) to (bottom right noise);
    \draw[kernel] (middle) to (middle left noise);
    \draw[kernel] (middle) to (middle right noise);
    \draw[kernel] (top) to (top left noise) ;
    \draw[kernel] (top) to (top right noise);
    \draw[leaf] (bottom left noise) to (bottom left anoise);
    \draw[leaf] (bottom right noise) to (bottom right anoise);
    \draw[leaf] (middle left noise) to (middle left anoise);
    \draw[leaf] (middle right noise) to (middle right anoise);
    \draw[leaf] (top left noise) to (top left anoise);
    \draw[leaf] (top right noise) to (top right anoise);
\end{tikzpicture} 
\begin{tikzpicture}[scale=.7,baseline=0,rotate=-45]

    \node[subtreenode] (subtree root) at (0,0) {};
    \node[subtreenode] (subtree bottom left noise) at (-1, 1) {};
    \node[subtreenode] (subtree bottom left anoise) at (-1, 1.75) {};
    \node[subtreenode] (subtree bottom right noise) at (1, 1) {};
    \node[subtreenode] (subtree bottom right anoise) at (1, 1.75) {};
    \node[subtreenode] (subtree middle) at (0, 2) {};
    \node[subtreenode] (subtree middle left noise) at (-1, 3) {};
    \node[subtreenode] (subtree middle left anoise) at (-1, 3.75) {};
    \node[subtreenode] (subtree middle right noise) at (1, 3) {};
    \node[subtreenode] (subtree middle right anoise) at (1, 3.75) {};

    \draw[subtreeedge] (subtree root) to (subtree middle);

    \draw[subtreeedge] (subtree root) to (subtree bottom left noise);
    \draw[subtreeedge] (subtree root) to (subtree bottom right noise);
 
    \draw[subtreeedge] (subtree bottom left noise) to (subtree bottom left anoise);
    \draw[subtreeedge] (subtree bottom right noise) to (subtree bottom right anoise);

    \draw[subtreeedge] (subtree middle) to (subtree middle left noise);
    \draw[subtreeedge] (subtree middle) to (subtree middle right noise);

    \draw[subtreeedge] (subtree middle left noise) to (subtree middle left anoise);
    \draw[subtreeedge] (subtree middle right noise) to (subtree middle right anoise);

    \node[dot] (root) at (0, 0) {};
    \node[dot] (bottom left noise) at (-1, 1) {};
    \node[circ] (bottom left anoise) at (-1, 1.75) {};
    \node[dot] (bottom right noise) at (1, 1) {};
    \node[circ] (bottom right anoise) at (1, 1.75) {};
    \node[dot] (middle) at (0, 2) {};
    \node[dot] (middle left noise) at (-1, 3) {};
    \node[circ] (middle left anoise) at (-1, 3.75) {};
    \node[dot] (top) at (0, 4) {};
    \node[dot] (middle right noise) at (1, 3) {};
    \node[circ] (middle right anoise) at (1, 3.75) {};
    \node[dot] (top left noise) at (-1, 5) {};
    \node[dot] (top right noise) at (1, 5) {};
    \node[circ] (top left anoise) at (-1, 5.75) {};
    \node[circ] (top right anoise) at (1, 5.75) {};

    \draw[kernel] (root) to (middle);
    \draw[kernel1] (middle) to (top);
    \draw[kernel] (root) to (bottom left noise);
    \draw[kernel] (root) to (bottom right noise);
    \draw[kernel] (middle) to (middle left noise);
    \draw[kernel] (middle) to (middle right noise);
    \draw[kernel] (top) to (top left noise) ;
    \draw[kernel] (top) to (top right noise);
    \draw[leaf] (bottom left noise) to (bottom left anoise);
    \draw[leaf] (bottom right noise) to (bottom right anoise);
    \draw[leaf] (middle left noise) to (middle left anoise);
    \draw[leaf] (middle right noise) to (middle right anoise);
    \draw[leaf] (top left noise) to (top left anoise);
    \draw[leaf] (top right noise) to (top right anoise);
\end{tikzpicture}
\]
\begin{definition} \label{def: dangling trees} Given a subtree $S$ of $\sT$ and a subtree $S'$ of $S$ 
we define $\Tr(S,S')$ to be the collection of subtrees of $S$ defined as follows
\[
\Tr(S,S')
\eqdef
\left\{ 
S_{\ge}(e):\ 
e \in K(S),\ 
e_{\p} \in N_{S'},\ e_{\ch} \not \in N_{S'}
\right\}\;.
\]
\end{definition}
We also define $\label{def: i-trees with 2 colored root}\ident[\widehat{\Tr}_{2}]
\eqdef
\{
(T,\hat{T})^{\mfn,\mfo}_{\mfe} \in \ident[\Tr_2]:\ 
\hat{T}_{2} \not = \mathbf{1}
\}$. 
Finally we set
\[
\mathfrak{X}_{+} 
\eqdef
\left\{
(T,\hat{T})^{\mfn,\mfo}_{\mfe} \in \ident[\widehat{\Tr}_{2}]:\ 
\forall S \in \Tr(T,\hat{T}_{2}),\ 
|\clearrootn(S,\hat{T})_{\mfe}^{\mfn,\mfo} |_{+} > 0
\right\}\;,
\]
where the map $\clearrootn: \ident[\For_{2}] \rightarrow \ident[\For_{2}]$ acts on decorated colored forests 
by setting their root $\mfn$ label to vanish, that is
\[
\clearrootn
(F,\hat{F})^{\mfn,\mfo}_{\mfe} 
\eqdef
(F,\hat{F})^{\tilde{\mfn},\mfo}_{\mfe}\;,\qquad 
\tilde{\mfn}(u) = \mfn(u) \mathbbm{1}\{ u \not \in \rho(F)\}\;.
\]
We define $\mathfrak{p}_{+}: \langle \ident[\widehat{\Tr}_{2}] \rangle_{\for} \rightarrow \langle \mathfrak{X}_{+} \rangle_{\for}$ to be the algebra homomorphism given by projection onto  $\langle \mathfrak{X}_{+} \rangle_{\for}$.
The co-action for positive renormalization is a linear map
\[
\tDelta_{+}
:\ 
\langle \ident[\Tr_{1}] \rangle
\rightarrow
\langle \ident[\Tr_{1}] \rangle
\otimes
\langle \mathfrak{X}_{+} \rangle\;.
\]
\begin{definition}
Given $(T,\hat{T}) \in \ident[\mathring{\Tr}_{2}]$ we define $\mathfrak{A}_{2}(T,\hat{T})$ to be the collection of all subtrees $S$ of $T$ with the following properties that $\rho_{S} = \rho_{T}$ and that for every $T' \in \Con(\hat{T}_{1})$ one has $T'$ is either a subtree of $S$ or disjoint with $S$.
\end{definition}
We then define, for any $(T,\hat{T})_{\mfe}^{\mfn,\mfo} \in \ident[\Tr_{1}]$,
\begin{equ}\label{def: positive coprod}
\tDelta_{+}(T,\hat{T})_{\mfe}^{\mfn,\mfo}
\eqdef
\sum_{
\substack{
S \in \mathfrak{A}_{2}(T,\hat{T})\\
\mfn_{S}, \mfe_{S}}
}
\frac{1}{\mfe_{S}!}
\binom{\mfn}{\mfn_{S}}
(S,\hat{T})_{\mfe}^{\mfn_{S} + \chi \mfe_{S}}
\otimes
\mathfrak{p}_{+}
\Big(T, 
[\hat{T}_{1} \setminus S]_{1} 
\sqcup 
[S]_{2}
\Big)^{\mfn - \mfn_{S},\mfo }_{\mfe_{S} + \mfe}\;.
\end{equ}
Recursive positive renormalization is similarly described by a linear map
\[
\tpode_{+}:
\langle \mathfrak{X}_{+}
\rangle
\rightarrow
\langle \ident[\widehat{\Tr}_{2}]
\rangle_{\for}\;.
\]
It suffices to define the map $\tpode_{+}$ for any $(T,\hat{T})^{\mfn,\mfo}_{\mfe} \in \mathfrak{X}_{+}$ and then extend linearly. 
This definition on $\mathfrak{X}_{+}$ will be inductive with respect to $|E_{T} \setminus  E_{\hat{T}_{2}}|$.
The base case, when $|E_{T} \setminus  E_{\hat{T}_{2}}| = 0$, is given by setting 
\[
\tpode_{+}(T,2)^{\mfn,\mfo}_{\mfe} \eqdef (-1)^{\sum_{u \in N(T)} |\mfn(u)|} (T,2)^{\mfn,0}_{\mfe}
.
\]
Before stating the inductive definition we need some more notation. 
\begin{definition}
Given $(T,\hat{T}) \in \underline{\mathring{\widehat{\Tr}}_{2}}$ we define $\overline{\mathfrak{A}}_{2}(T,\hat{T})$ to be the collection of all $S \in \mathfrak{A}_{2}(T,\hat{T})$ such that $\hat{T}_{2}$ is a subtree of $S$ and for every $T' \in \Tr(T,\hat{T}_{2})$ one has $E_{T'} \cap E_{S} \not = \emptyset$.
\end{definition}
\begin{definition}
Given $(T,\hat{T})^{\mfn,\mfo}_{\mfe} \in \ident[\widehat{\Tr}_{2}]$ we define $\mathfrak{E}[(T,\hat{T})^{\mfn,\mfo}_{\mfe}]$ to be the set of all edge decorations $\mfe_{T}$ on $K(\sT)$ supported on $\partial(\hat{T}_{2},T)$ such that $(T,\hat{T})^{\mfn,\mfo}_{\mfe + \mfe_{T}} \in \mathfrak{X}_{+}$. 
\end{definition} 
We then set, for any $(T,\hat{T})^{\mfn,\mfo}_{\mfe} \in \mathfrak{X}_{+}$ with $\hat{T} \not\equiv 2$, 
\begin{equs}
\tpode_{+} (T,\hat{T})^{\mfn,\mfo}_{\mfe}
\eqdef
&
(-1)^{|\Tr(T,\hat{T}_{2})|}
\sum_{S \in \overline{\mathfrak{A}}_{2}(T,\hat{T})}
\sum_{
\mfn_{S},\mfe_{S},\mff
}
\binom{\mfn - \hat{\mfn}}{\mfn_{S}}
\frac{(-1)^{\sum_{u \in N(T)} (|\hat{\mfn}(u)| + |\chi \mff(u)|)}}{(\mff + \mfe_{S})!}
\\
&
\quad
\cdot
(S,\hat{T})_{\mfe + \mff}^{\mfn_{S} + \hat{\mfn} + \chi(\mfe_{S} + \mff),0}
\cdot
\tpode_{+}
\mfp_{+}
\Big(T, 
[\hat{T}_{1} \setminus S]_{1} 
\sqcup 
[S]_{2}
\Big)^{\mfn - \mfn_{S} - \hat{\mfn},\mfo}_{\mfe_{S} + \mfe},
\end{equs}
where $\hat{\mfn}(u) \eqdef \mfn(u) \mathbbm{1}\{u \in N(\hat{T}_{2})\}$ and the sum over $\mff$ is a sum over all edge decorations $\mff \in \mathfrak{E}[(T,\hat{T})^{\mfn,\mfo}_{\mfe}]$.

Our objective in this section is to describe two ways to map the decorated i-forests of the previous section to space-time functions and random fields.

\subsection{From combinatorial trees to space-time functions}\label{subsec: mapping to analytic objects}

In this subsection we describe how, for each noise $\xi \in \Omega_{\infty}$, to map i-forests to functions of the space-time variables $(x_{v})_{v \in \allnodes}$ where we set $\allnodes \eqdef N(\sT) \sqcup \{\logof\}$.
Here $\label{def: basepoint vertex} \logof$ is a new node element and the corresponding $x_{\logof}$ is the variable that encodes the base-point (the $z$ of $\Pi_{z}$). 
For any finite set $A$ we write $\label{def: smooth functions 2}\mcb{C}_{A}$ for the algebra of smooth functions $\mcb{C}^\infty((\R^{d})^{A})$ and we also use the shorthand $\label{def: all smooth functions}\allf \eqdef \mcb{C}_{\allnodes}$.

Throughout this article, 
given some finite set $A$, some $x \in (\R^d)^A$, and some
$B \subset A$, we write $x_B$ for the element of $(\R^d)^B$ with $(x_B)_i = x_i$ for $i \in B$.
Similarly, we will always use the shorthands $\int dy_B$ or $\int_B dy$ instead of 
writing $\int_{(\R^d)^B} dy$.
Finally, for $B \subset A$, we will always view $\mcb{C}_B$ 
as a subspace of $\mcb{C}_A$ via the canonical injection
$\iota$ given by $(\iota f)(x) = f(x_B)$.
For two disjoint sets $A$ and $\bar A$ and elements $x \in (\R^d)^A$ and $\bar x \in (\R^d)^{\bar A}$,
we also write $x \sqcup \bar x$ for the element $y \in (\R^d)^{A \sqcup \bar A}$ given by
$y_i = x_i$ for $i \in A$ and $y_i = \bar x_i$ for $i \in \bar A$.
Sometimes, we will make the abuse of notation of writing $x_u$ instead of $x_{\{u\}}$
in expressions like ``$x_u \sqcup y_A$''.

First, for any i-tree $(T,\hat{T})$ we make the following definitions. We define set $K(T,\hat{T}) \eqdef K(T) \cap \hat{T}^{-1}(0)$ and $L(T,\hat{T}) \eqdef L(T) \cap \hat{T}^{-1}(0)$. 
We set
\[
N(T,\hat{T}) 
\eqdef
N(T) \cap
\big(
\hat{T}^{-1}(0)
\sqcup
\rho(\hat{T}_{1})
\big).
\]
We also define an associated map $\gv: N(T) \rightarrow N(T,\hat{T}) \sqcup \{\logof\}$ by setting, for each $u \in N(T)$, 
\[
\gv(u)
\eqdef
\begin{cases}
u&
\textnormal{ if } u \in N(T,\hat{T}),\\
\rho_{S}&
\textnormal{ if } u \in N(S) \textnormal{ for } S \in \Con(\hat{T}_{1}),\\
\logof & 
\textnormal{ if } u \in N(\hat{T}_{2}).
\end{cases}
\]
For any $\xi \in \Omega_{\infty}$ and decorated i-tree $(T,\hat{T})^{\mfn,\mfo}_{\mfe} \in \underline{\Tr_{2}}$ we define $\mathring{\Upsilon}^{\xi}[(T,\hat{T})^{\mfn,\mfo}_{\mfe}] \in \allf$ by 
\begin{equs}
\mathring{\Upsilon}^{\xi}[(T,\hat{T})^{\mfn,\mfo}_{\mfe}](x)
&\eqdef
\Big( 
\prod_{u \in N(T)}
(x_{\gv(u)})^{\mfn(u)}
\Big)
\Big(
\prod_{
e \in K(T,\hat{T})
}
D^{\mfe(e)}K_{\mft(e)}(x_{\gv(e_{\p})} - x_{\gv(e_{\ch})})
\Big)\\
&
\qquad
\qquad
\cdot
\Big(
\prod_{v
\in 
L(T,\hat{T})
}
\xi_{\mft(v)}(x_{v})
\Big)\;.
\end{equs} 
We then define an algebra homomorphism $\Upsilon^{\xi}: \langle \ident[\For_{2}] \rangle \rightarrow \allf$ by setting, for $(T,\hat{T})^{\mfn,\mfo}_{\mfe} \in \underline{\Tr_{2}}$, 
\[
\Upsilon^{\xi}[(T,\hat{T})^{\mfn,\mfo}_{\mfe}](x)
\eqdef
\int_{N(T,\hat{T})} \back\mathrm{d}y\ 
\mathring{\Upsilon}^{\xi}[(T,\hat{T})^{\mfn,\mfo}_{\mfe}](y \sqcup x_{\rho_{T}} \sqcup x_{\logof})\;,
\]
and then extending multipliciatively and linearly to all of $\Upsilon^{\xi}$ (in particular, $\Upsilon^{\xi}[\mathbf{1}] = 1$). 
We also make the important observation that $\Upsilon^{\xi}[(T,\hat{T})^{\mfn,\mfo}_{\mfe}](x)$ depends only 
on \emph{either} $x_{\rho_{T}}$ \emph{or} $x_{\logof}$ the former case holds when $\hat{T}_{2} = \mathbf{1}$
and the later when $\hat{T}_{2} \not = \mathbf{1}$.
\subsection{The BPHZ renormalized tree}\label{subsec: BPHZ renormalized tree}
We now make our setting stochastic by fixing for what follows an arbitrary random noise $\xi \in \CM(\Omega_\infty)$.

We define an algebra homomorphism $\bar{\Upsilon}^{\xi}: \langle \For_{1} \rangle_{\for} \rightarrow \R$ by setting, for any $\bo{\sigma}^{\mfn,\mfo}_{\mfe} \in \ident[\Tr_{1}]$,
\begin{equation}\label{def: c}
\bar{\Upsilon}^{\xi} [\bo{\sigma}^{\mfn,\mfo}_{\mfe}] 
\eqdef
\E (\Upsilon^{\xi} \bo{\sigma}^{\mfn,\mfo}_{\mfe})(0)\;,
\end{equation}
and extending this definition multiplicatively and linearly.
\begin{remark}
It is important to note here that $\bar{\Upsilon}^{\xi}$ is deterministic and depends solely on the 
\textit{law} of the random variable $\xi$, not on any random sample drawn from it.
\end{remark}
Finally, we define, for each realization $\xi(\omega) \in \Omega_{\infty}$ of $\xi$, the smooth function $\hat{\Upsilon}^{\xi}[\sT^{\sn}_{\se}] \in \allf$, again depending only on $x_{\rho_{\sT}}$ and $x_{\logof}$, by setting
\begin{equation}\label{def: BPHZ renormalized tree}
\hat{\Upsilon}^{\xi(\omega)}\big[\sT^{\sn}_{\se}\big]
\eqdef
\Big(
\bar{\Upsilon}^{\xi}[\tpode_{-} \cdot ]
\otimes 
\Upsilon^{\xi(\omega)}[\cdot]
\otimes 
\Upsilon^{\xi(\omega)}[\tpode_{+} \cdot]
\Big)
(\Id \otimes \tDelta_{+} )
\tDelta_{-} (\sT,0)^{\sn,0}_{\se} \;.
\end{equation}
Above we are committing an abuse of notation, the RHS is technically a sum of triple tensor products each consisting of a scalar and two elements of $\allf$, however by simply multiplying them together we view the whole sum as an element of $\allf$. 

We emphasize again that in this expression 
$\bar{\Upsilon}^{\xi}$ is \textit{deterministic} and depends on the law of the random variable $\xi$,
while $\Upsilon^{\xi(\omega)}$ is \textit{random} and depends on the specific sample $\xi(\omega)$.
Later on, we will however typically suppress the chance element $\omega$ in our notations
and we will simply write $\hat{\Upsilon}^{\xi}[\sT^{\sn}_{\se}](x)$ for the above expression.
We will also view this as a family of random distributions in $\mcD'(\R^{d})$, using the shorthand, for each $z \in \R^{d}$ and $\phi \in \mcD(\R^{d})$,
\begin{equ}
\hat{\Upsilon}^{\xi}_{z}[\sT^{\sn}_{\se}](\phi) \eqdef \int_{\{\mainroot,\logof\}} \back dx\  \hat{\Upsilon}^{\xi}[\sT^{\sn}_{\se}](x)\,\phi(x_{\mainroot}) \delta(x_{\logof} -z)\;,
\end{equ}
where here, and in what follows, we simply write $\mainroot$ instead of $\rho_{\sT}$ for the root of $\sT$.
We refer to the above random distribution as the 
BPHZ renormalized tree corresponding to $\sT^{\sn}_{\se}$ with basepoint $z$.




\section{A forest and cut expansion for the BPHZ renormalized tree}\label{sec: derivation of formula} 
The goal of this section is to rewrite the opaque formula for $\hat{\Upsilon}^{\xi(\omega)}[\sT^{\sn}_{\se}]$ given in the previous section in a more explicit form amenable to direct analysis. 
The fundamental result of this section is Lemma~\ref{explicit formula}. 


\subsection{A forest formula for negative renormalizations}
For most of this subsection we work with a set of subtrees of $\sT$. We see this set as carrying the inclusion partial order (for which throughout this article we use the convention  $\subset\ \longleftrightarrow\ \le$). 
In general, for any poset $A$ and subset $\mathcal{A} \subset A$ we write $\Max(\mathcal{A})$ or $\mmax{\mathcal{A}}$ for the set of maximal elements of $\mathcal{A}$ and $\Min(\mathcal{A})$ or $\mmin{\mathcal{A}}$ for the set of minimal elements of $\mathcal{A}$.\label{posets} 
As an exception to this notation, $\sT$ always refers to our fixed tree.

We write $\Div$\label{Div} for the set of all subtrees $S$ of $\sT$ with $\omega(S) \eqdef - |(S^{0}_{\se},0)|_{\s} > 0$. 
Note that $S \in \Div$ forces $S$ to be non-trivial.
\begin{remark}\label{divergent trees come with all leaves}
Note that as a consequence of Definition~\ref{def: strong subcriticality}, if one has $S \in \Div$ and $e \in \mathbb{L}(\sT)$ with $e_{\p} \in N(S)$ then it must be the case that $e \in \mathbb{L}(S)$.
\end{remark}
\begin{definition}\label{forest of subtrees}
We say that $\mcF$ is a forest of subtrees if it is a collection of subtrees of $\sT$ with the 
property that for any pair $S, S' \in \mcF$ one has either $N_{S} \subset N_{S'}$, or 
$N_{S'} \subset N_{S}$, or $N_{S} \cap N_{S'} = \emptyset$. 
\end{definition}
As an example, suppose that $\sT$ is given by
\[
\begin{tikzpicture}[scale=.5,rotate=-45]
    \node[dot] (root) at (0, 0) {};
    \node[dot] (bottom left noise) at (-1, 1) {};
    \node[circ] (bottom left anoise) at (-1, 1.75) {};
    \node[dot] (bottom right noise) at (1, 1) {};
    \node[circ] (bottom right anoise) at (1, 1.75) {};
    \node[dot] (middle) at (0, 2) {};
    \node[dot] (middle left noise) at (-1, 3) {};
    \node[circ] (middle left anoise) at (-1, 3.75) {};
    \node[dot] (top) at (0, 4) {};
    \node[dot] (middle right noise) at (1, 3) {};
    \node[circ] (middle right anoise) at (1, 3.75) {};
    \node[dot] (top left noise) at (-1, 5) {};
    \node[dot] (top right noise) at (1, 5) {};
    \node[circ] (top left anoise) at (-1, 5.75) {};
    \node[circ] (top right anoise) at (1, 5.75) {};

    \draw[kernel] (root) to (middle);
    \draw[kernel] (middle) to (top);
    \draw[kernel] (root) to (bottom left noise);
    \draw[kernel] (root) to (bottom right noise);
    \draw[kernel] (middle) to (middle left noise);
    \draw[kernel] (middle) to (middle right noise);
    \draw[kernel] (top) to (top left noise) ;
    \draw[kernel] (top) to (top right noise);
    \draw[leaf] (bottom left noise) to (bottom left anoise);
    \draw[leaf] (bottom right noise) to (bottom right anoise);
    \draw[leaf] (middle left noise) to (middle left anoise);
    \draw[leaf] (middle right noise) to (middle right anoise);
    \draw[leaf] (top left noise) to (top left anoise);
    \draw[leaf] (top right noise) to (top right anoise);
\end{tikzpicture}
\]
with the root being at the bottom.
Indicating subtrees of a tree by shading them, we consider the set $\{S_{1},\ldots,S_{6}\}$ of subtrees of $\sT$ given by
\begin{equs}\label{picture: examples of subtrees}
S_{1}:
\begin{tikzpicture}[scale=.5,rotate=-45]
    \node[subtreenode] (subtree root) at (0,0) {};
    \node[subtreenode] (subtree bottom left noise) at (-1, 1) {};
    \node[subtreenode] (subtree bottom left anoise) at (-1, 1.75) {};
    \node[subtreenode] (subtree bottom right noise) at (1, 1) {};
    \node[subtreenode] (subtree bottom right anoise) at (1, 1.75) {};
    \node[subtreenode] (subtree middle) at (0, 2) {};
    \node[subtreenode] (subtree middle left noise) at (-1, 3) {};
    \node[subtreenode] (subtree middle left anoise) at (-1, 3.75) {};
    \node[subtreenode] (subtree middle right noise) at (1, 3) {};
    \node[subtreenode] (subtree middle right anoise) at (1, 3.75) {};
    \node[subtreenode] (subtree top) at (0, 4) {};

    \draw[subtreeedge] (subtree root) to (subtree middle);
    \draw[subtreeedge] (subtree middle) to (subtree top);

    \draw[subtreeedge] (subtree root) to (subtree bottom left noise);
    \draw[subtreeedge] (subtree root) to (subtree bottom right noise);
    \draw[subtreeedge] (subtree bottom left noise) to (subtree bottom left anoise);
    \draw[subtreeedge] (subtree bottom right noise) to (subtree bottom right anoise);

    \draw[subtreeedge] (subtree middle) to (subtree middle left noise);
    \draw[subtreeedge] (subtree middle) to (subtree middle right noise);
    \draw[subtreeedge] (subtree middle left noise) to (subtree middle left anoise);
    \draw[subtreeedge] (subtree middle right noise) to (subtree middle right anoise);

    \node[dot] (root) at (0, 0) {};
    \node[dot] (bottom left noise) at (-1, 1) {};
    \node[circ] (bottom left anoise) at (-1, 1.75) {};
    \node[dot] (bottom right noise) at (1, 1) {};
    \node[circ] (bottom right anoise) at (1, 1.75) {};
    \node[dot] (middle) at (0, 2) {};
    \node[dot] (middle left noise) at (-1, 3) {};
    \node[circ] (middle left anoise) at (-1, 3.75) {};
    \node[dot] (top) at (0, 4) {};
    \node[dot] (middle right noise) at (1, 3) {};
    \node[circ] (middle right anoise) at (1, 3.75) {};
    \node[dot] (top left noise) at (-1, 5) {};
    \node[dot] (top right noise) at (1, 5) {};
    \node[circ] (top left anoise) at (-1, 5.75) {};
    \node[circ] (top right anoise) at (1, 5.75) {};

    \draw[kernel] (root) to (middle);
    \draw[kernel] (middle) to (top);
    \draw[kernel] (root) to (bottom left noise);
    \draw[kernel] (root) to (bottom right noise);
    \draw[kernel] (middle) to (middle left noise);
    \draw[kernel] (middle) to (middle right noise);
    \draw[kernel] (top) to (top left noise) ;
    \draw[kernel] (top) to (top right noise);
    \draw[leaf] (bottom left noise) to (bottom left anoise);
      \draw[leaf] (bottom right noise) to (bottom right anoise);
      \draw[leaf] (middle left noise) to (middle left anoise);
      \draw[leaf] (middle right noise) to (middle right anoise);
      \draw[leaf] (top left noise) to (top left anoise);
      \draw[leaf] (top right noise) to (top right anoise);

\end{tikzpicture}
S_{2}:
\begin{tikzpicture}[scale=.5,rotate=-45]
    \node[subtreenode] (subtree root) at (0,0) {};
    \node[subtreenode] (subtree bottom left noise) at (-1, 1) {};
    \node[subtreenode] (subtree bottom left anoise) at (-1, 1.75) {};
    \node[subtreenode] (subtree bottom right noise) at (1, 1) {};
    \node[subtreenode] (subtree bottom right anoise) at (1, 1.75) {};
    \node[subtreenode] (subtree middle) at (0, 2) {};
    \node[subtreenode] (subtree middle left noise) at (-1, 3) {};
    \node[subtreenode] (subtree middle left anoise) at (-1, 3.75) {};

    \draw[subtreeedge] (subtree root) to (subtree middle);

    \draw[subtreeedge] (subtree root) to (subtree bottom left noise);
    \draw[subtreeedge] (subtree root) to (subtree bottom right noise);

    \draw[subtreeedge] (subtree bottom left noise) to (subtree bottom left anoise);
    \draw[subtreeedge] (subtree bottom right noise) to (subtree bottom right anoise);

    \draw[subtreeedge] (subtree middle) to (subtree middle left noise);
    \draw[subtreeedge] (subtree middle left noise) to (subtree middle left anoise);

    \node[dot] (root) at (0, 0) {};
    \node[dot] (bottom left noise) at (-1, 1) {};
    \node[circ] (bottom left anoise) at (-1, 1.75) {};
    \node[dot] (bottom right noise) at (1, 1) {};
    \node[circ] (bottom right anoise) at (1, 1.75) {};
    \node[dot] (middle) at (0, 2) {};
    \node[dot] (middle left noise) at (-1, 3) {};
    \node[circ] (middle left anoise) at (-1, 3.75) {};
    \node[dot] (top) at (0, 4) {};
    \node[dot] (middle right noise) at (1, 3) {};
    \node[circ] (middle right anoise) at (1, 3.75) {};
    \node[dot] (top left noise) at (-1, 5) {};
    \node[dot] (top right noise) at (1, 5) {};
    \node[circ] (top left anoise) at (-1, 5.75) {};
    \node[circ] (top right anoise) at (1, 5.75) {};

    \draw[kernel] (root) to (middle);
    \draw[kernel] (middle) to (top);
    \draw[kernel] (root) to (bottom left noise);
    \draw[kernel] (root) to (bottom right noise);
    \draw[kernel] (middle) to (middle left noise);
    \draw[kernel] (middle) to (middle right noise);
    \draw[kernel] (top) to (top left noise) ;
    \draw[kernel] (top) to (top right noise);
    \draw[leaf] (bottom left noise) to (bottom left anoise);
      \draw[leaf] (bottom right noise) to (bottom right anoise);
      \draw[leaf] (middle left noise) to (middle left anoise);
      \draw[leaf] (middle right noise) to (middle right anoise);
      \draw[leaf] (top left noise) to (top left anoise);
      \draw[leaf] (top right noise) to (top right anoise);

\end{tikzpicture}
S_{3}:
\begin{tikzpicture}[scale=.5,rotate=-45]
    \node[subtreenode] (subtree root) at (0,0) {};
    \node[subtreenode] (subtree bottom left noise) at (-1, 1) {};
    \node[subtreenode] (subtree bottom left anoise) at (-1, 1.75) {};
    \node[subtreenode] (subtree middle) at (0, 2) {};
    \node[subtreenode] (subtree middle left noise) at (-1, 3) {};
    \node[subtreenode] (subtree middle left anoise) at (-1, 3.75) {};

    \draw[subtreeedge] (subtree root) to (subtree middle);

    \draw[subtreeedge] (subtree root) to (subtree bottom left noise);
 
    \draw[subtreeedge] (subtree bottom left noise) to (subtree bottom left anoise);

    \draw[subtreeedge] (subtree middle) to (subtree middle left noise);

    \draw[subtreeedge] (subtree middle left noise) to (subtree middle left anoise);

    \node[dot] (root) at (0, 0) {};
    \node[dot] (bottom left noise) at (-1, 1) {};
    \node[circ] (bottom left anoise) at (-1, 1.75) {};
    \node[dot] (bottom right noise) at (1, 1) {};
    \node[circ] (bottom right anoise) at (1, 1.75) {};
    \node[dot] (middle) at (0, 2) {};
    \node[dot] (middle left noise) at (-1, 3) {};
    \node[circ] (middle left anoise) at (-1, 3.75) {};
    \node[dot] (top) at (0, 4) {};
    \node[dot] (middle right noise) at (1, 3) {};
    \node[circ] (middle right anoise) at (1, 3.75) {};
    \node[dot] (top left noise) at (-1, 5) {};
    \node[dot] (top right noise) at (1, 5) {};
    \node[circ] (top left anoise) at (-1, 5.75) {};
    \node[circ] (top right anoise) at (1, 5.75) {};

    \draw[kernel] (root) to (middle);
    \draw[kernel] (middle) to (top);
    \draw[kernel] (root) to (bottom left noise);
    \draw[kernel] (root) to (bottom right noise);
    \draw[kernel] (middle) to (middle left noise);
    \draw[kernel] (middle) to (middle right noise);
    \draw[kernel] (top) to (top left noise) ;
    \draw[kernel] (top) to (top right noise);
    \draw[leaf] (bottom left noise) to (bottom left anoise);
      \draw[leaf] (bottom right noise) to (bottom right anoise);
      \draw[leaf] (middle left noise) to (middle left anoise);
      \draw[leaf] (middle right noise) to (middle right anoise);
      \draw[leaf] (top left noise) to (top left anoise);
      \draw[leaf] (top right noise) to (top right anoise);

\end{tikzpicture}\\
S_{4}:
\begin{tikzpicture}[scale=.5,rotate=-45]
    \node[subtreenode] (subtree root) at (0,0) {};
    \node[subtreenode] (subtree bottom right noise) at (1, 1) {};
    \node[subtreenode] (subtree bottom right anoise) at (1, 1.75) {};
    \node[subtreenode] (subtree middle) at (0, 2) {};
    \node[subtreenode] (subtree middle right noise) at (1, 3) {};
    \node[subtreenode] (subtree middle right anoise) at (1, 3.75) {};

    \draw[subtreeedge] (subtree root) to (subtree middle);

    \draw[subtreeedge] (subtree root) to (subtree bottom right noise);
 
    \draw[subtreeedge] (subtree bottom right noise) to (subtree bottom right anoise);

    \draw[subtreeedge] (subtree middle) to (subtree middle right noise);

    \draw[subtreeedge] (subtree middle right noise) to (subtree middle right anoise);

    \node[dot] (root) at (0, 0) {};
    \node[dot] (bottom left noise) at (-1, 1) {};
    \node[circ] (bottom left anoise) at (-1, 1.75) {};
    \node[dot] (bottom right noise) at (1, 1) {};
    \node[circ] (bottom right anoise) at (1, 1.75) {};
    \node[dot] (middle) at (0, 2) {};
    \node[dot] (middle left noise) at (-1, 3) {};
    \node[circ] (middle left anoise) at (-1, 3.75) {};
    \node[dot] (top) at (0, 4) {};
    \node[dot] (middle right noise) at (1, 3) {};
    \node[circ] (middle right anoise) at (1, 3.75) {};
    \node[dot] (top left noise) at (-1, 5) {};
    \node[dot] (top right noise) at (1, 5) {};
    \node[circ] (top left anoise) at (-1, 5.75) {};
    \node[circ] (top right anoise) at (1, 5.75) {};

    \draw[kernel] (root) to (middle);
    \draw[kernel] (middle) to (top);
    \draw[kernel] (root) to (bottom left noise);
    \draw[kernel] (root) to (bottom right noise);
    \draw[kernel] (middle) to (middle left noise);
    \draw[kernel] (middle) to (middle right noise);
    \draw[kernel] (top) to (top left noise) ;
    \draw[kernel] (top) to (top right noise);
    \draw[leaf] (bottom left noise) to (bottom left anoise);
      \draw[leaf] (bottom right noise) to (bottom right anoise);
      \draw[leaf] (middle left noise) to (middle left anoise);
      \draw[leaf] (middle right noise) to (middle right anoise);
      \draw[leaf] (top left noise) to (top left anoise);
      \draw[leaf] (top right noise) to (top right anoise);

\end{tikzpicture}
S_{5}:
\begin{tikzpicture}[scale=.5,rotate=-45]

    \node[subtreenode] (subtree top) at (0, 4) {};
    \node[subtreenode] (subtree top left noise) at (-1, 5) {};
    \node[subtreenode] (subtree top right noise) at (1, 5) {};
    \node[subtreenode] (subtree top left anoise) at (-1, 5.75) {};
    \node[subtreenode] (subtree top right anoise) at (1, 5.75) {};

    \draw[subtreeedge] (subtree top) to (subtree top left noise);
    \draw[subtreeedge] (subtree top) to (subtree top right noise);

    \draw[subtreeedge] (subtree top left noise) to (subtree top left anoise);
    \draw[subtreeedge] (subtree top right noise) to (subtree top right anoise);

    \node[dot] (root) at (0, 0) {};
    \node[dot] (bottom left noise) at (-1, 1) {};
    \node[circ] (bottom left anoise) at (-1, 1.75) {};
    \node[dot] (bottom right noise) at (1, 1) {};
    \node[circ] (bottom right anoise) at (1, 1.75) {};
    \node[dot] (middle) at (0, 2) {};
    \node[dot] (middle left noise) at (-1, 3) {};
    \node[circ] (middle left anoise) at (-1, 3.75) {};
    \node[dot] (top) at (0, 4) {};
    \node[dot] (middle right noise) at (1, 3) {};
    \node[circ] (middle right anoise) at (1, 3.75) {};
    \node[dot] (top left noise) at (-1, 5) {};
    \node[dot] (top right noise) at (1, 5) {};
    \node[circ] (top left anoise) at (-1, 5.75) {};
    \node[circ] (top right anoise) at (1, 5.75) {};

    \draw[kernel] (root) to (middle);
    \draw[kernel] (middle) to (top);
    \draw[kernel] (root) to (bottom left noise);
    \draw[kernel] (root) to (bottom right noise);
    \draw[kernel] (middle) to (middle left noise);
    \draw[kernel] (middle) to (middle right noise);
    \draw[kernel] (top) to (top left noise) ;
    \draw[kernel] (top) to (top right noise);
    \draw[leaf] (bottom left noise) to (bottom left anoise);
      \draw[leaf] (bottom right noise) to (bottom right anoise);
      \draw[leaf] (middle left noise) to (middle left anoise);
      \draw[leaf] (middle right noise) to (middle right anoise);
      \draw[leaf] (top left noise) to (top left anoise);
      \draw[leaf] (top right noise) to (top right anoise);

\end{tikzpicture}
S_{6}:
\begin{tikzpicture}[scale=.5,rotate=-45]
    \node[subtreenode] (subtree root) at (0,0) {};
    \node[subtreenode] (subtree bottom left noise) at (-1, 1) {};
    \node[subtreenode] (subtree bottom left anoise) at (-1, 1.75) {};
    \node[subtreenode] (subtree middle) at (0, 2) {};
    \node[subtreenode] (subtree middle left noise) at (-1, 3) {};
    \node[subtreenode] (subtree middle left anoise) at (-1, 3.75) {};
    \node[subtreenode] (subtree top) at (0, 4) {};
    \node[subtreenode] (subtree top left noise) at (-1, 5) {};
    \node[subtreenode] (subtree top left anoise) at (-1, 5.75) {};

    \draw[subtreeedge] (subtree root) to (subtree middle);
    \draw[subtreeedge] (subtree middle) to (subtree top);

    \draw[subtreeedge] (subtree root) to (subtree bottom left noise);
    \draw[subtreeedge] (subtree bottom left noise) to (subtree bottom left anoise);
    \draw[subtreeedge] (subtree middle) to (subtree middle left noise);
    \draw[subtreeedge] (subtree middle left noise) to (subtree middle left anoise);
    \draw[subtreeedge] (subtree top) to (subtree top left noise);
    \draw[subtreeedge] (subtree top left noise) to (subtree top left anoise);

    \node[dot] (root) at (0, 0) {};
    \node[dot] (bottom left noise) at (-1, 1) {};
    \node[circ] (bottom left anoise) at (-1, 1.75) {};
    \node[dot] (bottom right noise) at (1, 1) {};
    \node[circ] (bottom right anoise) at (1, 1.75) {};
    \node[dot] (middle) at (0, 2) {};
    \node[dot] (middle left noise) at (-1, 3) {};
    \node[circ] (middle left anoise) at (-1, 3.75) {};
    \node[dot] (top) at (0, 4) {};
    \node[dot] (middle right noise) at (1, 3) {};
    \node[circ] (middle right anoise) at (1, 3.75) {};
    \node[dot] (top left noise) at (-1, 5) {};
    \node[dot] (top right noise) at (1, 5) {};
    \node[circ] (top left anoise) at (-1, 5.75) {};
    \node[circ] (top right anoise) at (1, 5.75) {};

    \draw[kernel] (root) to (middle);
    \draw[kernel] (middle) to (top);
    \draw[kernel] (root) to (bottom left noise);
    \draw[kernel] (root) to (bottom right noise);
    \draw[kernel] (middle) to (middle left noise);
    \draw[kernel] (middle) to (middle right noise);
    \draw[kernel] (top) to (top left noise) ;
    \draw[kernel] (top) to (top right noise);
    \draw[leaf] (bottom left noise) to (bottom left anoise);
      \draw[leaf] (bottom right noise) to (bottom right anoise);
      \draw[leaf] (middle left noise) to (middle left anoise);
      \draw[leaf] (middle right noise) to (middle right anoise);
      \draw[leaf] (top left noise) to (top left anoise);
      \draw[leaf] (top right noise) to (top right anoise);
\end{tikzpicture}.
\end{equs}
Note here that while $S_3$ and $S_4$ are isomorphic as labelled trees, they represent different
subtrees of $\sT$ and are therefore not isomorphic as i-trees.
The list of all subsets of $\{S_{1},\dots,S_{6}\}$ which are forests of subtrees is given by:
\begin{equs}[0]
\emptyset,\ \{S_{1}\}, \{S_{2}\},\{S_{3}\}, \{S_{4}\}, \{S_{5}\}, \{S_{6}\},\\
\{S_{3},S_{6}\}, \{S_{2},S_{3}\},\ \{S_{1},S_{2}\},\ \{S_{1},S_{3}\},\ \{S_{1},S_{4}\},\ \{S_{2},S_{5}\},\\
\{S_{3},S_{5}\},\ \{S_{4},S_{5}\},\ \{S_{3},S_{6}\},\ \{S_{1},S_{2},S_{3}\},\textnormal{ and } \{S_{2},S_{3},S_{5}\}\;.
\end{equs} 
\begin{remark}\label{rem:forests}
We will use two notions of forest in what follows, one is that of an i-forest 
and the other is the notion of a forest of subtrees. Clearly any collection $\mcA$ of subtrees of $\sT$ canonically determines an i-forest where, instead of considering $\mcA$ as a set with trees as elements, 
we see it as the i-forest obtained by the disjoint union of these trees.  

In the above example $\{S_2,S_3,S_4\}$ can be viewed as an i-forest (and in this case we would write
it as $S_2\cdot S_3 \cdot S_4$), but it is
not a forest of subtrees since $S_4$ is not related to $S_2$ (or $S_3$ for that matter)
by inclusion. This illustrates that, with this identification, being a forest of subtrees 
is a strictly stronger property than being an i-forest.

We often use a calligraphic font to indicate forests of subtrees versus standard Roman letters for i-forests.  
Finally, we observe that in the special case when $\mcF$ is a forest of subtrees of $\sT$ of depth $1$ or $0$, the node set $N_{F}$ of the canonical i-forest $F$ associated to it can be identified as a subset of $N_{\sT}$. 
\end{remark} 
\begin{definition}
We say that a subtree $S$ is \emph{compatible} with a forest of subtrees $\mcF$ if $\{S \} \cup \mcF$ 
is again a forest of subtrees (note that if $S \in \mcF$, then this is automatically the case).
\end{definition}
We write $\mathbb{F}$\label{bbF} for the collection of all forests of subtrees $\CF$
such that each tree in $\CF$ belongs to $\Div$.
Given a forest of subtrees $\CF$ and a subtree $S$ compatible with $\mcF$
we define the forest of subtrees which are the (immediate) children of $S$ in $\mathcal{F}$ as
\[\label{def: immediate children in forest}
C_{\mathcal{F}}(S)
\eqdef
\mathrm{Max} \{T \in \mathcal{F}:\ T < S \}\;.
\]
and the branch of $S$ in $\mcF$ by $\mcF_{S} \eqdef \{ T \in \mcF:\ T \le S\}$. 

For non-empty $\mathcal{F} \in \mathbb{F}$ and $k \in \N$ we define subsets $D_{k}(\mathcal{F})$ as follows. We set $D_{0}(\mathcal{F}) \eqdef \emptyset$, $D_{1}(\mathcal{F}) \eqdef \mmax{\mathcal{F}}$, and for $k \ge 1$ we set 
\[
D_{k+1}(\mathcal{F})
=
\bigsqcup_{S \in D_{k}(\mathcal{F})}
C_{\mathcal{F}}(S).
\]
We define the depth of $\mathcal{F}$, denoted by $\mathrm{depth}(\mathcal{F})$, to be given by
\[
\mathrm{depth}(\mathcal{F}) 
\eqdef 
\inf 
\Big\{ 
k \in \N:\ \bigcup_{j=0}^{k}
D_{j}(\mathcal{F})  = \mathcal{F}
\Big\} = \inf 
\{ 
k \ge 0:\ 
D_{k+1}(\mathcal{F})  = \emptyset
\}\;.
\]
Recalling our previous pictorial example of forests, if we set $\mcF = \{S_{1},S_{2},S_{3}\}$ we have $D_{1}(\mcF) = \{S_{1}\}$, $D_{2}(\mcF) = \{S_{2}\}$, and $D_{3}(\mcF) = \{S_{3}\}$. 
On the other hand, if we set $\mcF = \{S_{2},S_{3},S_{5}\}$ then $D_{1}(\mcF) = \{S_{2},S_{5}\}$ and $D_{2}(\mcF) = \{S_{3}\}$. 

We write $\mathbb{F}^{\le 1}$ for the collection of elements of $\mathbb{F}$ of depth $1$ or $0$.  
We also define, for any $\mathcal{F} \in \mathbb{F}$, 
\begin{equs}\label{e:maxForest}
\mathbb{F}[\mathcal{F}]
\eqdef&
\{ \mathcal{G} \in \mathbb{F}:\ \mmax{\mathcal{G}} = \mathcal{F} \}\;,\\
\mathbb{F}_{<}[\mathcal{F}]
\eqdef&
\left\{ \mathcal{G} \in \mathbb{F}^{\le 1}:\ \forall S \in \mcG,\ \exists T \in \mcF
\textnormal{ with } S < T
\right\}\;, \textnormal{ and}\\
\mathbb{F}_{\le}[\mathcal{F}]
\eqdef&
\left\{ \mathcal{G} \in \mathbb{F}^{\le 1}:\ \forall S \in \mcG,\ \exists T \in \mcF
\textnormal{ with } S \le T
\right\}\;.
\end{equs}
Note that $\mathbb{F}[\mathcal{F}]$ is empty unless $\CF \in  \mathbb{F}^{\le 1}$.

Finally, in some of our statements and proofs it is useful to sum over the decorations of an i-forest 
while keeping the specific i-forest fixed. 
We do this by introducing the following family of projection maps.
\begin{definition}\label{def: shapeprojection}
for any \emph{undecorated} i-forest $\bo{\sigma} \in \ident[\mathring{\For}_{2}]$ we write $\projminusshape_{\bo{\sigma}}$ for the projection onto the subspace of $\langle \ident[\For_{2}] \rangle$ spanned by all decorated i-forests of the form $\bo{\sigma}^{\mfn,\mfo}_{\mfe}$.
\end{definition}
We now describe the class of i-forests produced by the action of $\tpode_{-}$. 
\begin{definition}\label{def: negative forest} 
To each $\mathcal{F} \in \mathbb{F}$ we associate an i-forest $\bo{\sigma}_{\mathcal{F}} \in \mathring{\ident[\For_{2}]}$ as follows. If $\mathcal{F} = \emptyset$ then we set $\bo{\sigma}_{\mathcal{F}} \eqdef \mathbf{1}$.
For $\mathcal{F}$ of depth $j > 0$ we set 
\[
\bo{\sigma}_{\mcF} \eqdef 
(F^{(j)},\widehat{F^{(j)}})
\cdots
(F^{(1)},\widehat{F^{(1)}})
\]
where for $1 \le k \le j$ we set $F^{(k)} \eqdef D_{k}(\mathcal{F})$ and $\widehat{F^{(k)}} = \left[ D_{k+1}(\mathcal{F}) \right]_{1}$. In particular, $\widehat{F^{(j)}} = 0$. 
\end{definition}
Below we pictorially present $\bo{\sigma}_{\mcF}$ in the context of our previous example for the case $\mcF = \{S_{1},S_{2},S_{3}\}$. 
The color blue corresponds to the color $1$. 
The forest product of trees is represented by placing the trees next to each other but 
note that the order does not matter.
\[
\begin{tikzpicture}[scale=.6,baseline=0,rotate=-45]
    \node[smallsubtreenode,color=blue!40] (subtree root) at (0,0) {};
    \node[smallsubtreenode,color=blue!40] (subtree bottom left noise) at (-1, 1) {};
    \node[smallsubtreenode,color=blue!40] (subtree bottom left anoise) at (-1, 1.75) {};
    \node[smallsubtreenode,color=blue!40] (subtree bottom right noise) at (1, 1) {};
    \node[smallsubtreenode,color=blue!40] (subtree bottom right anoise) at (1, 1.75) {};
    \node[smallsubtreenode,color=blue!40] (subtree middle) at (0, 2) {};
    \node[smallsubtreenode,color=blue!40] (subtree middle left noise) at (-1, 3) {};
    \node[smallsubtreenode,color=blue!40] (subtree middle left anoise) at (-1, 3.75) {};

    \draw[smallsubtreeedge,color=blue!40] (subtree root) to (subtree middle);

    \draw[smallsubtreeedge,color=blue!40] (subtree root) to (subtree bottom left noise);
    \draw[smallsubtreeedge,color=blue!40] (subtree root) to (subtree bottom right noise);
    \draw[smallsubtreeedge,color=blue!40] (subtree bottom left noise) to (subtree bottom left anoise);
    \draw[smallsubtreeedge,color=blue!40] (subtree bottom right noise) to (subtree bottom right anoise);

    \draw[smallsubtreeedge,color=blue!40] (subtree middle) to (subtree middle left noise);

    \draw[smallsubtreeedge,color=blue!40] (subtree middle left noise) to (subtree middle left anoise);

    \node[dot] (root) at (0, 0) {};
    \node[dot] (bottom left noise) at (-1, 1) {};
    \node[circ] (bottom left anoise) at (-1, 1.75) {};
    \node[dot] (bottom right noise) at (1, 1) {};
    \node[circ] (bottom right anoise) at (1, 1.75) {};
    \node[dot] (middle) at (0, 2) {};
    \node[dot] (middle left noise) at (-1, 3) {};
    \node[circ] (middle left anoise) at (-1, 3.75) {};
    \node[dot] (top) at (0, 4) {};
    \node[dot] (middle right noise) at (1, 3) {};
    \node[circ] (middle right anoise) at (1, 3.75) {};

    \draw[kernel] (root) to (middle);
    \draw[kernel] (middle) to (top);
    \draw[kernel] (root) to (bottom left noise);
    \draw[kernel] (root) to (bottom right noise);
    \draw[kernel] (middle) to (middle left noise);
    \draw[kernel] (middle) to (middle right noise);
    \draw[leaf] (bottom left noise) to (bottom left anoise);
    \draw[leaf] (bottom right noise) to (bottom right anoise);
    \draw[leaf] (middle left noise) to (middle left anoise);
    \draw[leaf] (middle right noise) to (middle right anoise);
\end{tikzpicture}
\enskip
\begin{tikzpicture}[scale=.6,baseline=0,rotate=-45]
    \node[smallsubtreenode,color=blue!40] (subtree root) at (0,0) {};
    \node[smallsubtreenode,color=blue!40] (subtree bottom left noise) at (-1, 1) {};
    \node[smallsubtreenode,color=blue!40] (subtree bottom left anoise) at (-1, 1.75) {};

    \node[smallsubtreenode,color=blue!40] (subtree middle) at (0, 2) {};
    \node[smallsubtreenode,color=blue!40] (subtree middle left noise) at (-1, 3) {};
    \node[smallsubtreenode,color=blue!40] (subtree middle left anoise) at (-1, 3.75) {};

    \draw[smallsubtreeedge,color=blue!40] (subtree root) to (subtree middle);

    \draw[smallsubtreeedge,color=blue!40] (subtree root) to (subtree bottom left noise);

    \draw[smallsubtreeedge,color=blue!40] (subtree bottom left noise) to (subtree bottom left anoise);

    \draw[smallsubtreeedge,color=blue!40] (subtree middle) to (subtree middle left noise);
    \draw[smallsubtreeedge,color=blue!40] (subtree middle left noise) to (subtree middle left anoise);

    \node[dot] (root) at (0, 0) {};
    \node[dot] (bottom left noise) at (-1, 1) {};
    \node[circ] (bottom left anoise) at (-1, 1.75) {};
    \node[dot] (bottom right noise) at (1, 1) {};
    \node[circ] (bottom right anoise) at (1, 1.75) {};
    \node[dot] (middle) at (0, 2) {};
    \node[dot] (middle left noise) at (-1, 3) {};
    \node[circ] (middle left anoise) at (-1, 3.75) {};

    \draw[kernel] (root) to (middle);
    \draw[kernel] (root) to (bottom left noise);
    \draw[kernel] (root) to (bottom right noise);
    \draw[kernel] (middle) to (middle left noise);

    \draw[leaf] (bottom left noise) to (bottom left anoise);
    \draw[leaf] (bottom right noise) to (bottom right anoise);
    \draw[leaf] (middle left noise) to (middle left anoise);
\end{tikzpicture}
\enskip
\begin{tikzpicture}[scale=.6,baseline=0,rotate=-45]
    \node[dot] (root) at (0, 0) {};
    \node[dot] (bottom left noise) at (-1, 1) {};
    \node[circ] (bottom left anoise) at (-1, 1.75) {};

    \node[dot] (middle) at (0, 2) {};
    \node[dot] (middle left noise) at (-1, 3) {};
    \node[circ] (middle left anoise) at (-1, 3.75) {};

    \draw[kernel] (root) to (middle);
    \draw[kernel] (root) to (bottom left noise);

    \draw[kernel] (middle) to (middle left noise);

    \draw[leaf] (bottom left noise) to (bottom left anoise);

    \draw[leaf] (middle left noise) to (middle left anoise);
\end{tikzpicture}
\]
Given a forest of subtrees $\mcF$ we define $E_{\mcF} \subset E_{\sT}$ via $E_{\mcF} = \bigsqcup_{S \in \overline{\mcF}} E_{S}$. 
The following lemma states how the action of $\tpode_{-}$ admits an expansion into forests. 
\begin{lemma}\label{lemma: negative forest expansion}
Let $\mcF \in \mathbb{F}^{\le 1}$ and $F$ be the i-forest corresponding to $\mcF$.
Then for any node labeling $\mfn$ on $N(F)$ one has
\begin{equ}\label{eq: negative forest expansion}
\Big(
\sum_{\mathcal{G} \in \mathbb{F}[\mcF]}
\projminusshape_{\bo{\sigma}_{\mathcal{G}}}
\Big)
\tpode_{-} 
(F,0)^{\mfn}_{\se}
=
\tpode_{-}
(F,0)^{\mfn}_{\se}\;.
\end{equ}
\end{lemma}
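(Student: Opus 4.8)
The plan is to establish, by induction on the number of edges $|E_F| = \sum_{S \in \mcF} |E_S|$, the slightly stronger assertion that $\tpode_{-}(F,0)^{\mfn}_{\se}$ lies in the linear span of those decorated i-forests whose underlying undecorated shape is $\bo{\sigma}_{\mcG}$ for some $\mcG \in \mathbb{F}[\mcF]$. Granting this, \eqref{eq: negative forest expansion} is immediate: each $\projminusshape_{\bo{\sigma}_{\mcG}}$ is the projection onto the span of decorated i-forests of that one fixed shape, and the map $\mcG \mapsto \bo{\sigma}_{\mcG}$ is injective on $\mathbb{F}[\mcF]$ (one recovers $\mcG$ as the set of images in $\sT$, under the defining morphism, of the connected components of $\bo{\sigma}_{\mcG}$, using that $\mcG = \bigsqcup_k D_k(\mcG)$, and that an isomorphism of i-forests respects this morphism); so $\sum_{\mcG \in \mathbb{F}[\mcF]} \projminusshape_{\bo{\sigma}_{\mcG}}$ acts as the identity on that span, with no overcounting.

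For the base case $|E_F| = 0$ one has $\mcF = \emptyset$ (a member of $\Div$ is never trivial), so $F = \mathbf{1}$, $\tpode_{-}\mathbf{1} = \mathbf{1}$, and $\mathbb{F}[\emptyset] = \{\emptyset\}$ with $\bo{\sigma}_{\emptyset} = \mathbf{1}$; nothing is to be shown. For the inductive step, assume $\mcF \neq \emptyset$; being of depth $\le 1$ it is an antichain, hence a forest product $F = S_1 \cdots S_m$ of pairwise disjoint trees $S_i \in \Div$, and I would expand $\tpode_{-}(F,0)^{\mfn}_{\se}$ using its recursive definition. For each $G \in \overline{\mathfrak{A}}_{1}(F,0)$ contributing a nonzero term after $\mfp_{-}$, the components of $G$ are pairwise node‑disjoint subtrees of $\sT$, each strictly contained in one of the $S_i$, so the collection $\mcH = \Con(G)$ is an element of $\mathbb{F}^{\le 1}$ (divergence of its members follows from the $\mfp_{-}$ projection, since for a semi-decorated subtree with non-negative node labels the homogeneity $|\cdot|_{-}$ dominates the $\se$-only homogeneity $|(S^{0}_{\se},0)|_{\s}$), and moreover $|E_G| < |E_F|$. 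The inductive hypothesis then applies to $\tpode_{-}\mfp_{-}(G,0)^{\mfn_G + \chi\mfe_G}_{\se}$ and shows it is a combination of decorated i-forests of shapes $\{\bo{\sigma}_{\mcK} : \mcK \in \mathbb{F}[\mcH]\}$.

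It then remains to verify, for each $\mcK \in \mathbb{F}[\mcH]$, the combinatorial identity $\bo{\sigma}_{\mcK} \cdot (F, [G]_1) = \bo{\sigma}_{\mcK \cup \mcF}$, together with $\mcK \cup \mcF \in \mathbb{F}[\mcF]$. The latter holds because every member of $\mcK$ sits strictly inside some $S_i$ while the $S_i$ are pairwise disjoint, so $\mcK \cup \mcF$ is a forest of subtrees with maximal elements exactly $\mcF$. For the former, compute the layer decomposition: $D_1(\mcK \cup \mcF) = \mcF$, while for $k \ge 1$ one gets $D_{k+1}(\mcK \cup \mcF) = D_k(\mcK)$ — indeed $C_{\mcK \cup \mcF}(S_i) = \{T \in \mcH : T \subsetneq S_i\}$, whence $D_2(\mcK\cup\mcF) = \mcH = D_1(\mcK)$, and below the top layer the branches of $\mcK \cup \mcF$ coincide with those of $\mcK$. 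Feeding this into Definition~\ref{def: negative forest} yields $\bo{\sigma}_{\mcK \cup \mcF} = \bo{\sigma}_{\mcK} \cdot (F, [\mcH]_1)$, and the i-forest of $\mcH$, coloured with colour $1$ inside $F$, is precisely $(F, [G]_1)$. Hence every term of the recursive expansion of $\tpode_{-}(F,0)^{\mfn}_{\se}$ — including the $G = \mathbf{1}$ term, of shape $(F,0) = \bo{\sigma}_{\mcF}$ — has a shape of the required form, closing the induction.

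I expect the main work to be in two places: (i) checking carefully that $\Con(G)$ is genuinely a forest of subtrees of depth $\le 1$ whose members lie in $\Div$, so that the inductive hypothesis is applicable with the (possibly nontrivially root-decorated) arguments produced by the recursion; and (ii) the bookkeeping of the layer identity $D_{k+1}(\mcK \cup \mcF) = D_k(\mcK)$ and its translation through Definition~\ref{def: negative forest} into the shape identity above. Throughout, the summations over $\mfn_G, \mfe_G$, the scalar prefactors, the signs, and the projections $\mfp_{-}$ are irrelevant to this shape tracking, as they affect only the decorations and not the underlying i-forest.
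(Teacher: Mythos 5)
Your proposal is correct and follows essentially the same route as the paper: induction on $|E_F|$, unwinding the recursive definition of $\tpode_{-}$, applying the inductive hypothesis to $\tpode_{-}\mfp_{-}(G,0)$ for each surviving $G \in \overline{\mathfrak{A}}_1(F,0)$, and the shape identity $\bo{\sigma}_{\mcK} \cdot (F,[G]_1) = \bo{\sigma}_{\mcK \cup \mcF}$ together with the disjoint decomposition $\bigsqcup_{\mcG\in\mathbb{F}_{<}[\mcF]}\bigsqcup_{\mcK \in \mathbb{F}[\mcG]}\{\mcK\cup\mcF\} = \mathbb{F}[\mcF]$. The only cosmetic difference is that you phrase the induction as a statement about the linear span of shapes and then invoke injectivity of $\mcG \mapsto \bo{\sigma}_{\mcG}$, whereas the paper carries the sum of projections through the recursion directly; both are sound and rest on the same combinatorics.
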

\begin{proof} 
We prove the statement via induction in $|E_{F}|$. 
The base case, when $\mcF = \emptyset$ and $F = \mathbf{1}$, is trivial (the sum on the 
LHS of \eqref{eq: negative forest expansion} just gives $\projminusshape_{\emptyset}$). 

Now suppose $F$ has $k > 0$ edges and the claim has been proven for all $\mcF' \in \mathbb{F}^{\le 1}$ with $|E_{\mcF'}| < k$.
Then $(-1)^{|\Con(F)|}\tpode_{-}
(F,0)^{\mfn}_{\mfe}$ is given by 
\begin{equs}
{}
&
\sum_{
\mcG \in \mathbb{F}_{<}[\mcF]
}
\sum_{\mfn_{G}, \mfe_{G}}
\frac{1}{\mfe_{G}!}\binom{\mfn}{\mfn_{G}}
\left(
\tpode_{-}
\mfp_{-}
(G,0)^{\mfn_{G} + \chi \mfe_{G}}_{\se} 
\right)
\cdot
(F,[G]_{1})^{\mfn - \mfn_{G}}_{\se + \mfe_{G}}\\
=
& 
\sum_{
\mcG \in \mathbb{F}_{<}[\mcF]
}
\sum_{\mfn_{G}, \mfe_{G}}
\frac{1}{\mfe_{G}!}\binom{\mfn}{\mfn_{G}}
\Big(
\sum_{\mathcal{G}' \in \mathbb{F}[\mcG]}
\projminusshape_{\sigma_{\mathcal{G}'}}
\tpode_{-}
\mfp_{-}
(G,0)^{\mfn_{G} + \chi \mfe_{G}}_{\se} 
\Big)
\cdot
(F,[G]_{1})^{\mfn - \mfn_{G}}_{\se + \mfe_{G}}\\
=& 
\sum_{
\mcG \in \mathbb{F}_{<}[\mcF]
}
\sum_{\mathcal{G}' \in \mathbb{F}[\mcG]}
\projminusshape_{\sigma_{\mathcal{G}' \sqcup \mcF}}
\sum_{\mfn_{G}, \mfe_{G}}
\frac{1}{\mfe_{G}!}\binom{\mfn}{\mfn_{G}}
\Big(
\tpode_{-}
\mfp_{-}
(G,0)^{\mfn_{G} + \chi \mfe_{G}}_{\se} 
\Big)
\cdot
(F,[G]_{1})^{\mfn - \mfn_{G}}_{\se + \mfe_{G}}\\
=& 
\sum_{
\substack{
\mcG \in \mathbb{F}_{<}[\mcF]\\
\mathcal{G}' \in \mathbb{F}[\mcG]
}}
\projminusshape_{\sigma_{\mathcal{G}' \sqcup \mcF}}
\Big[
\sum_{
\substack{
H \in \overline{\mathfrak{A}}_{1}(F,0)\\
\mfn_{H}, \mfe_{H}
}
}
\frac{1}{\mfe_{H}!}\binom{\mfn}{\mfn_{H}}
\Big(
\tpode_{-}
\mfp_{-}
(H,0)^{\mfn_{H} + \chi \mfe_{H}}_{\se} 
\Big)
\cdot
(F,[H]_{1})^{\mfn - \mfn_{H}}_{\se + \mfe_{H}}
\Big]\\
=&
(-1)^{|\Con(F)|}
\Big(
\sum_{
\substack{
\mcG \in \mathbb{F}_{<}[\mcF]\\
\mathcal{G}' \in \mathbb{F}[\mcG]
}
}
\projminusshape_{\sigma_{\mathcal{G}' \sqcup \mcF}}
\Big)
\tpode_{-}(F,0)^{\mfn}_{\mfe}
\end{equs}
where for each $\mcG \in \mathbb{F}_{<}[\mcF] $ we are writing $G$ for the i-forest corresponding to $\mcG$ (which is certainly an i-subforest of $F$). 
Noting that $
\bigsqcup_{
\mcG \in \mathbb{F}_{<}[\mcF] 
}
\bigsqcup_{\mathcal{G}' \in \mathbb{F}[\mcG]}
\left( \mathcal{G}' \sqcup \mcF \right)
=
\mathbb{F}[\mcF]$ completes the proof.
\end{proof}
We now mix our expansion into forests with a cumulant expansion.
For any finite set $L$ we denote by $\bar \CP(L)$ the set of all partitions $\pi$ of $L$.
We also define $\CP(L) \eqdef \bigcup_{A \subset L}\bar \CP(A)$. 
We remind the reader that $\bar{\mathcal{P}}(\emptyset) = \{ \emptyset \}$. 
\begin{definition}
Given $\pi \in \allpon{L(\sT)}$ and a forest of subtrees $\mathcal{F}$, we say $\mathcal{F}$ and $\pi$ are \emph{compatible} if, for each $S \in \mathcal{F}$, $L(S)$ can be written as union of blocks of $\pi$.
\end{definition}
In sums over partitions or forests we sometimes write, as a subscript,  ``$\pi \textnormal{ comp. } \mathcal{F}$'' to restrict the sum to compatible partitions. 
We also write $\mathbbm{1}_{\mathrm{comp}}(\mathcal{F},\pi)$ for the indicator 
function of the condition that $\mathcal{F}$ and $\pi$ are compatible. Given a partition 
$\pi$, we write $\mathbb{F}_{\pi}$ for the collection of elements of $\mathbb{F}$ which are compatible with $\pi$. 

We now introduce some notions to allow us to explicitly write down the negative renormalization procedure. For any subtree $S$, we set
\begin{equation*}\label{def: some kernel edge sets}
\begin{split}
K^{\downarrow}(S) &\eqdef 
\{
e \in K(\bar{T}):\ 
e_{\p} \in N(S),\ e_{\ch} \not \in N(S)  
\},\\
\bar{K}^{\downarrow}(S) 
&\eqdef
K^{\downarrow}(S) \sqcup K(S),
\textnormal{ and } 
N^{\downarrow}(S)
\eqdef
e_{\ch}(K^{\downarrow}(S))\;.
\end{split}
\end{equation*}
We include a picture to make some of this notions clear.
Below we shaded in a subtree $S$ as before, but we additionally coloured $\rho_{S}$ in dark blue, the elements of $\tilde{N}(S)$ in light blue (recall that $\tilde N(S) = N(S) \setminus \{\rho_S\}$), the elements of $N^{\downarrow}(S)$ in red, and the edges of $K^{\downarrow}(S)$ in light green.
\[
\begin{tikzpicture}[scale = 0.8,rotate=-45]
    \draw[shadededge,color=green!40] (0,2) to (-1,3);
    \draw[shadededge,color=green!40] (0,4) to (-1,5);

    \node[subtreenode] (subtree middle) at (0, 2) {};
    \node[subtreenode] (subtree middle right noise) at (1, 3) {};
    \node[subtreenode] (subtree middle right anoise) at (1, 3.75) {};
    \node[subtreenode] (subtree top) at (0, 4) {};
    \node[subtreenode] (subtree top right noise) at (1, 5) {};
    \node[subtreenode] (subtree top right anoise) at (1, 5.75) {};

    \draw[subtreeedge] (subtree middle) to (subtree top);
    \draw[subtreeedge] (subtree middle) to (subtree middle right noise);
    \draw[subtreeedge] (subtree middle right noise) to (subtree middle right anoise);
    \draw[subtreeedge] (subtree top) to (subtree top right noise);
    \draw[subtreeedge] (subtree top right noise) to (subtree top right anoise);

      \node [shadednode,color=blue] at (0,2) {};
      \node [shadednode] at (1,3) {};
      \node [shadednode] at (0,4) {};
      \node [shadednode] at (1,5) {};
      \node [shadednode,color=red!40] at (-1,3) {};
      \node [shadednode,color=red!40] at (-1,5) {};

    \node[dot] (root) at (0, 0) {};
    \node[dot] (bottom left noise) at (-1, 1) {};
    \node[circ] (bottom left anoise) at (-1, 1.75) {};
    \node[dot] (bottom right noise) at (1, 1) {};
    \node[circ] (bottom right anoise) at (1, 1.75) {};
    \node[dot] (middle) at (0, 2) {};
    \node[dot] (middle left noise) at (-1, 3) {};
    \node[circ] (middle left anoise) at (-1, 3.75) {};
    \node[dot] (top) at (0, 4) {};
    \node[dot] (middle right noise) at (1, 3) {};
    \node[circ] (middle right anoise) at (1, 3.75) {};
    \node[dot] (top left noise) at (-1, 5) {};
    \node[dot] (top right noise) at (1, 5) {};
    \node[circ] (top left anoise) at (-1, 5.75) {};
    \node[circ] (top right anoise) at (1, 5.75) {};

    \draw[kernel] (root) to (middle);
    \draw[kernel] (middle) to (top);
    \draw[kernel] (root) to (bottom left noise);
    \draw[kernel] (root) to (bottom right noise);
    \draw[kernel] (middle) to (middle left noise);
    \draw[kernel] (middle) to (middle right noise);
    \draw[kernel] (top) to (top left noise);
    \draw[kernel] (top) to (top right noise);
    \draw[leaf] (bottom left noise) to (bottom left anoise);
      \draw[leaf] (bottom right noise) to (bottom right anoise);
      \draw[leaf] (middle left noise) to (middle left anoise);
      \draw[leaf] (middle right noise) to (middle right anoise);
      \draw[leaf] (top left noise) to (top left anoise);
      \draw[leaf] (top right noise) to (top right anoise);

%

\end{tikzpicture}
\]
Note that the zigzag at the top right does not belong to $\bar K^{\downarrow}(S)$ because
$K(\sT)$ only contains ``kernel edges'' by definition.

For a pair of subtrees $S,T$ with $T \le S$ we define $\label{def: incoming edges} K^{\partial}_{T}(S) \eqdef K^{\downarrow}(T) \cap K(S)$.

Given a forest of subtrees $\mcF$ and a subtree $S$ we define
\begin{equation*}\label{def: some modulo forest node sets}
\tilde{N}_{\mcF}(S) \eqdef \tilde{N}(S) \setminus \Big(\bigsqcup_{T \in C_{\mcF}(S)} \tilde{N}(T) \Big), \quad
N_{\mcF}(S) \eqdef \nrmod[\mcF,S] \sqcup \{\rho_{S}\}\;.
\end{equation*}
We also use kernel edge decomposition
\[
K(S)
=
\Big( \bigsqcup_{T \in C_{\mcF}(S)} K(T) \Big)
\sqcup 
\mathring{K}_{\mcF}(S) 
\sqcup 
K^{\partial}_{\mcF}(S)
\] 
where
\begin{equation*}\label{def: some modulo forest kernel sets}
\mathring{K}_{\mcF}(S) 
\eqdef K(S) \setminus \Big( \bigsqcup_{T \in C_{\mcF}(S)} \bar{K}^{\downarrow}(T) \Big), \textnormal{  and  } 
K^{\partial}_{\mcF}(S) \eqdef \bigsqcup_{T \in C_{\mcF}(S)} K^{\partial}_{T}(S).
\end{equation*}
To keep track of leaf edges we set 
\[\label{def: leaves mod forest}
L_{\mcF}(S)
\eqdef
L(S)
\setminus
\Big(
\bigsqcup_{T \in C_{\mcF}(S)}
L(T)
\Big).\]
To make some of these definitions more concrete we look at the example of $\mcF = \{S_{3},S_{6}\}$, referring to \eqref{picture: examples of subtrees}. Below we have shaded in $S_{6}$ in light gray and, on top of this, $S_3$ in dark gray. We also shaded the nodes of $\tilde{N}_{\mcF}(S_{6})$ in light blue, 
the edge of $\mathring{K}_{\mcF}(S_{6})$ in light green, and the edge of $K^{\partial}_{\mcF}(S_{6}) = K^{\partial}_{S_{3}}(S_{6})$ in red. 
$L_{\mcF}(S_{6})$ consists of just one edge which is the left uppermost zigzag.
\[
\begin{tikzpicture}[scale=.9,rotate=-45,subtreenode/.style={circle,fill=gray!40,inner sep=0pt,minimum size=16pt},
  subtreeedge/.style={line width=14pt,gray!40, shorten >= -3pt,shorten <=-3pt}]
    \node (subtree root) at (0,0) {};
    \node (subtree bottom left noise) at (-1, 1) {};
    \node (subtree bottom left anoise) at (-1, 1.75) {};
    \node (subtree middle) at (0, 2) {};
    \node (subtree middle left noise) at (-1, 3) {};
    \node (subtree middle left anoise) at (-1, 3.75) {};
    \node (subtree top) at (0, 4) {};
    \node (subtree top left noise) at (-1, 5) {};
    \node (subtree top left anoise) at (-1, 5.75) {};

    \node[subtreenode] at (subtree root) {};
    \node[subtreenode] at (subtree bottom left noise) {};
    \node[subtreenode] at (subtree bottom left anoise) {};
    \node[subtreenode] at (subtree middle) {};
    \node[subtreenode] at (subtree middle left noise) {};
    \node[subtreenode] at (subtree middle left anoise) {};
    \node[subtreenode] at (subtree top) {};
    \node[subtreenode] at (subtree top left noise) {};
    \node[subtreenode] at (subtree top left anoise) {};

    \draw[subtreeedge] (subtree root) to (subtree middle);
    \draw[subtreeedge] (subtree middle) to (subtree top);
    \draw[subtreeedge] (subtree root) to (subtree bottom left noise);
    \draw[subtreeedge] (subtree bottom left noise) to (subtree bottom left anoise);
    \draw[subtreeedge] (subtree middle) to (subtree middle left noise);
    \draw[subtreeedge] (subtree middle left noise) to (subtree middle left anoise);
    \draw[subtreeedge] (subtree top) to (subtree top left noise);
    \draw[subtreeedge] (subtree top left noise) to (subtree top left anoise);

    \draw[shadededge,green!40] (subtree top) to (subtree top left noise);
    \draw[shadededge,red!40] (subtree top) to (subtree middle);
    \node [shadednode] at (0,4) {};
    \node [shadednode] at (-1,5) {};

    \node[shadednode,gray] (subtree root) at (0,0) {};
    \node[shadednode,gray] (subtree bottom left noise) at (-1, 1) {};
    \node[shadednode,gray] (subtree bottom left anoise) at (-1, 1.75) {};
    \node[shadednode,gray] (subtree middle) at (0, 2) {};
    \node[shadednode,gray] (subtree middle left noise) at (-1, 3) {};
    \node[shadednode,gray] (subtree middle left anoise) at (-1, 3.75) {};

    \draw[shadededge,gray] (subtree root) to (subtree middle);
    \draw[shadededge,gray] (subtree root) to (subtree bottom left noise);
    \draw[shadededge,gray] (subtree bottom left noise) to (subtree bottom left anoise);
    \draw[shadededge,gray] (subtree middle) to (subtree middle left noise);
    \draw[shadededge,gray] (subtree middle left noise) to (subtree middle left anoise);

    \node[dot] (root) at (0, 0) {};
    \node[dot] (bottom left noise) at (-1, 1) {};
    \node[circ] (bottom left anoise) at (-1, 1.75) {};
    \node[dot] (bottom right noise) at (1, 1) {};
    \node[circ] (bottom right anoise) at (1, 1.75) {};
    \node[dot] (middle) at (0, 2) {};
    \node[dot] (middle left noise) at (-1, 3) {};
    \node[circ] (middle left anoise) at (-1, 3.75) {};
    \node[dot] (top) at (0, 4) {};
    \node[dot] (middle right noise) at (1, 3) {};
    \node[circ] (middle right anoise) at (1, 3.75) {};
    \node[dot] (top left noise) at (-1, 5) {};
    \node[dot] (top right noise) at (1, 5) {};
    \node[circ] (top left anoise) at (-1, 5.75) {};
    \node[circ] (top right anoise) at (1, 5.75) {};

    \draw[kernel] (root) to (middle);
    \draw[kernel] (middle) to (top);
    \draw[kernel] (root) to (bottom left noise);
    \draw[kernel] (root) to (bottom right noise);
    \draw[kernel] (middle) to (middle left noise);
    \draw[kernel] (middle) to (middle right noise);
    \draw[kernel] (top) to (top left noise) ;
    \draw[kernel] (top) to (top right noise);
    \draw[leaf] (bottom left noise) to (bottom left anoise);
      \draw[leaf] (bottom right noise) to (bottom right anoise);
      \draw[leaf] (middle left noise) to (middle left anoise);
      \draw[leaf] (middle right noise) to (middle right anoise);
      \draw[leaf] (top left noise) to (top left anoise);
      \draw[leaf] (top right noise) to (top right anoise);

%

\end{tikzpicture}
\]
\begin{remark}\label{remark: overload from subtree to collection}
For the rest of the paper we adopt the convention that any notation which takes a subtree of $\sT$ as an argument can also take a collection of subtrees of $\sT$ (not necessarily a forest) as an argument by taking unions over elements of that collection.
For example, for a collection $\mcA$ of subtrees we define
\[
\tilde{N}(\mcA) \eqdef \bigcup_{S \in \mcA} \tilde{N}(S)\;,\quad
L(\mcA) 
\eqdef 
\bigcup_{S \in \mcA} L(S)\;,\quad
K^{\downarrow}(\mcA) \eqdef 
\bigcup_{S \in \mcA}
K^{\downarrow}(S)\;,
\]
and so on.
\end{remark}
We now introduce shorthand for the functions appearing in our integrands.
For any set of kernel edges $E \subset K(\sT)$ and edge decoration $\mfe: K(\sT) \rightarrow \N^{d}$, we define the function $\ke{E}{\mfe} \in \smooth_A$ with $A = e_{\p}(E) \cup e_{\ch}(E)$, by
\[
\ke{E}{\mfe}(x) \eqdef
\prod_{e \in E}
D^{\se(e) + \mfe(e)}K_{\mft(e)}(x_{e_{\p}} - x_{e_{\ch}}).
\] 
Given $u \in \allnodes$, we also define $\kerec{E}{\mfe}{u} \in \smooth_A$
with $A = \{u\} \cup e_{\ch}(E)$, by 
\[
\kerec{E}{\mfe}{u}(x) 
\eqdef
\prod_{e \in E} D^{\se(e) + \mfe(e)}K_{\mft(e)}(x_{u} - x_{e_{\ch}}).
\]
For any $L \subset L(\sT)$ and $\pi \in \allpon{L(\sT)}$ we define the function $\mathrm{Cu}^{L}_{\pi} \in \smooth_{L}$ by
\[
\mathrm{Cu}^{L}_{\pi}(x) 
\eqdef
\prod_{
\substack{
B \in \pi\\
B \subset L
}}
\Cum[\{ \xi_{\mft(u)}(x_{u}) \}_{u \in B}],
\]
where $\Cum$ denotes joint cumulants as before.

Finally, for any $N \subset N(\sT)$, $\mfn:N \rightarrow \N^{d}$, and distinct $v,v' \in \allnodes$ we define the functions $\pow{N}{\mfn}$, $\powroot{N}{\mfn}{v}$, $\powquot{N}{\mfn}{v}$, and $\powrootquot{N}{\mfn}{v}{v'} \in \allf$ via
\begin{equs}[2]\label{def: various power functions}
\pow{N}{\mfn}
(x)
&\eqdef
\prod_{u \in N}
x_{u}^{\mfn(u)}, &\quad
\powroot{N}{\mfn}{v}
(x) 
&\eqdef
\prod_{u \in N}
(x_{u} - x_{v})^{\mfn(u)},\\
\powquot{N}{\mfn}{v}
(x)
&\eqdef
(x_{v})^{\sum_{u \in N} \mathfrak{n}(u)}, &\quad
\powrootquot{N}{\mfn}{v}{v'}(x)
&=
(x_{v'} - x_{v})^{\sum_{u \in N} \mathfrak{n}(u)}\;.
\end{equs}
We also set
$\fpowquot{N}{\mfn}{v}
(x) 
\eqdef
(-x_{v})^{\sum_{u \in N} \mfn(u)}$.
\subsubsection{An inductive definition of negative renormalization counterterms}
We will begin using multivariable multi-index notation more frequently: we take as a universe of multi-indices $(\N^{d})^{\allnodes \sqcup K(\sT)}$. We define $|\cdot|_{\s}$ and $|\cdot|$ for tuples of multi-indices by summation over entries. 
\begin{definition}\label{def:collapse}
Given a subtree $S$ and any subset $A \subset \allnodes$ with $\tilde N(S)\subset A$,
we define the ``collapsing map'' $\Coll_S$ onto the root of $S$ as the map
from $(\R^d)^A$ to itself given by
\begin{equ}
\Coll_S(x)_u \eqdef 
\left\{\begin{array}{cl}
	x_{\rho(S)} & \text{if $u \in \tilde N(S)$,} \\
	x_u & \text{otherwise.}
\end{array}\right.
\end{equ}
\end{definition}
This allows us to give the following definition.
\begin{definition}\label{set of derivatives}
For any subtree $S \in \Div$ we define $\mathrm{Der}(S)$ to be the set of all multi-indices $k$ supported on $\tilde{N}(S)$ with $|k|_{\s} < \omega(S)$. We also define an operator $\mathscr{Y}_{S}: \allf \mapsto \allf$ via 
\[
\left[
\mathscr{Y}_{S}
\phi
\right](x)\ 
=\ 
\sum_{k \in \mathrm{Der}(S)}
\frac{(x - \Coll_{S}(x))^{k}}{k!}
\big(D^{k} \phi \big)\big(\Coll_S(x)\big)\;.
\]
Here and below, given any set $M \subset N^*$, we write $M^c$
for its complement in $N^*$.
\end{definition}
We now inductively define, for each fixed $\pi \in \allpon{L(\sT)}$ and $\mcF \in \mathbb{F}_{\pi}$, operators $\genvert{\pi,\mcF,S}:\allf \rightarrow \allf$ for each $S \in \mcF$. 
This induction will be with respect to depth of the forest of subtrees $\{T \in \mcF:\ T < S \}$. 
Moreover, for any $M \subset \allnodes$, the map $\genvert{\pi,\mcF,S}$ maps 
$\mcb{C}_M$ to $\mcb{C}_{M \setminus \tilde{N}(S)}$.

The base case for our definition occurs when $C_{\mcF}(S) = \emptyset$ and, in that case, 
we set for any $\phi \in \allf$
\begin{equ}\label{def: generalized vertex base case}
[\genvert{\pi,\mcF,S}(\phi)](x)\ 
\eqdef
\int_{{\tilde{N}(S)}} \back dy\ 
\mathrm{Cu}^{L(S)}_{\pi}(y \sqcup x_{\rho_{S}})\,
\ke{K(S)}{0}(y \sqcup x_{\rho_{S}})
(-\mathscr{Y}_{S}\phi)(
x_{\tilde{N}(S)^c} \sqcup
y). 
\end{equ}
\begin{remark}
It is not hard to see that the integral above is absolutely convergent for fixed $x$.
For each $e \in K(S)$ we have a uniform bound of the type
\[
|
D^{\bar{e}}K_{\mft(e)}(y_{e_{\p}} - y_{e_{\ch}})|
\lesssim
\mathbbm{1}\left\{ | y_{e_{\p}} - y_{e_{\ch}}| \le 1 \right\}
\,
|y_{e_{\p}} - y_{e_{\ch}}|^{-|\s| + |\mft(e)|_{\s} - |\se(e)|_{\s}}
\] 
where $|\mft(e)|_{\s} - |\se(e)|_{\s} > 0$ and any occurrence of $y_{\rho_{S}}$ should be replaced by $x_{\rho_{S}}$.
The edges of $K(S)$ form a tree on the vertices of $\tilde{N}(S)$ so the support of $\ke{K(S)}{0}(y \sqcup x_{\rho_{S}})$, 
when seen as a function of $y_{\tilde{N}(S)}$ for fixed $x_{\rho_{S}}$, is compact.
In particular, if the other parts of the integrand of \eqref{def: cumulantbound} can be bounded uniformly in $y$ by some constant, then the entire integral can be bounded by this constant times the the product of $L_{1}(\R)$ norms of the functions $D^{\bar{e}}K_{\mft(e)}(\cdot)$. 
This is clearly the case as both $\mathrm{Cu}^{L}_{\pi}(y)$ and $(-\mathscr{Y}_{S}\phi)$ are continuous, and therefore bounded on compact sets. 
\end{remark}
\begin{remark}\label{remark: explanation of negative renormalization} A second question the reader may ask is what the definition \eqref{def: generalized vertex base case} is trying to accomplish. 
The quantity defined from this formula is a renormalization counterterm. 
One interpretation of the need for renormalization is the following -- while working on a problem one may see the appearance of a function $F$ which fails to belong to $L^1_\loc$ in some region (say at the origin for a distribution on $\R^{d}$ or on the diagonal for a translation invariant distribution on $\R^{d} \times \R^{d}$), so that it cannot be canonically identified with a distribution. 

The insertion of a regularization parameter $\eps > 0$ which disappears in the $\eps \downarrow 0$ limit may allow one to define a family of bonafide distributions $F_{\eps}$ but these may fail to converge in any meaningful sense as one takes $\eps \downarrow 0$.

In many cases it is then possible to obtain a convergent family of distributions $\tilde{F}_{\eps}$ by subtracting from each $F_{\eps}$ a well-chosen, $\eps$-dependent linear combination of $\delta$ functions and their derivatives situated at these singular regions of $F$. 
A simple example is encountered when one tries to make sense of $F(t) = |t|^{-\alpha}$ for $\alpha > 1$ with $\alpha \not \in \N$ as a distribution on $\R$.
A convergent family of distributions can be obtained by defining
\[
\tilde{F}_{\eps}(t)
\eqdef
(|t| \vee \eps)^{-\alpha}
-
\sum_{j=0}^{\lfloor \alpha \rfloor - 1}
\Big(\int_{\R}ds\ \frac{s^{j}}{j!} (|s| \vee \eps)^{-\alpha} \Big)\delta^{(j)}(t)\;,
\]
where $\delta^{(j)}$ denotes the $j$-th weak derivative of the delta function $\delta$ on $\R$. 
To derive good (uniform in $\eps$) bounds on $\tilde{F}_{\eps}(f)$ for a test function $f$ one exploits a Taylor remainder estimate for $f$.

For an example closer to our setting, suppose that we have a single divergent subtree $S$ 
which contains no divergent subtree. 
Then $S$ can be associated to the distribution on $(\R^{d})^{N(S)}$ given by integration against
the function $F_\eps$ defined as
\[
F_{\eps}(x)
\eqdef
\mathrm{Cu}^{L}_{\eps,\pi}(x)
\ke{K(S)}{0}(x).
\] 
We have made the regularization parameter explicit on the RHS. In practice all of our constructions will be applied to probability measures $\mathbb{P}_{\eps}$ corresponding to some $\xi_{\eps} \in \mcM(\Omega_{\infty})$ which converge to a limiting measure $\mathbb{P}$ with cumulants that are singular at coinciding points -- in such a limit $F_{\eps}$ fails to be in $L^{1}_{\mathrm{loc}} [(\R^{d})^{N^{\exte}(S)}]$ as $\eps \downarrow 0$.
The singular region occurs on the ``small'' diagonal -- those $(x_{v})_{v \in N(S)}$ with $x_{v} = x_{\rho_{S}}$ for all $v$. 
The solution is to subtract an appropriate linear combination of delta functions and their derivatives on this diagonal, i.e. $F_{\eps}(x) - \tilde{F}_{\eps}(x)$ is given by
\begin{equs}
\sum_{k \in \mathrm{Der}(S)}
\Big(
\int_{{\tilde{N}(S)}}
\back dy\ 
\frac{(y - \Coll_{S}(x))^{k}}{k!}
F_\eps(x_{\rho_{S}}\sqcup y)
\Big)
\prod_{u \in \tilde{N}(S)}\delta^{(k_{u})}(x_{u} - x_{\rho_{S}}).
\end{equs} 
\end{remark}
In general, $\genvert{\pi,\mcF,S}$
is given recursively by
\begin{equation*}\label{def: renormalized kernel 1}
\begin{split}
[\genvert{\pi,\mcF,S}(\phi)](x)\ 
&\eqdef 
\int_{{\nrmod[\mcF,S]}} \back dy \
\mathrm{Cu}_{\pi}^{L_{\mcF}(S)}(y)  
\ke{\mathring{K}_{\mcF}(S)}{0}(y \sqcup x_{\rho_{S}})\\
& \quad
\cdot
\genvert{\pi,\mcF, C_{\mcF}(S)}
\left[
\ke{K^{\partial}_{\mcF}(S)}{0}
\cdot
(-\mathscr{Y}_{S}\phi)
\right](x_{\tilde{N}(S)^c} \sqcup
y)\;,
\end{split}
\end{equation*}
where $\genvert{\pi,\mcF,C_{\mcF}(S)}$ denotes the composition of 
the operators $\genvert{\pi,\mcF,T}$ for all $T \in C_{\mcF}(S)$. 
No order needs to be prescribed for this composition since, for any $T_{1},T_{2} \in \mcF$ which are disjoint, the operators $H_{\pi,\mcF,T_{1}}$ and $H_{\pi,\mcF,T_{2}}$ commute. 

Let us sketch the argument showing that this is indeed the case.
We first observe that for any $T \in \mcF$ and any $\psi \in \allf$ which depends only on $x_v$ with $v \not \in \tilde{N}(T)$ one has 
\begin{equ}\label{eq: factorization}
H_{\pi,\mcF,T}[\phi \psi] 
=
\psi 
H_{\pi,\mcF,T}[\phi].
\end{equ}
Then one can unravel the inductive definition of $H_{\pi,\mcF,T}$ to arrive at an expansion of the form
\begin{equ}\label{eq: H expansion}
H_{\pi,\mcF,T}[\phi](x)
\eqdef
\sum_{k 
\in 
\widetilde{\mathrm{Der}}(T)
}
(D^{k}\phi)(\Coll_T(x))
\int_{\tilde{N}(T)} \back dy\ 
F_{T,k}(x_{N^{\downarrow}(T)} \sqcup y)\;,
\end{equ}
where $\widetilde{\mathrm{Der}}(T)$ is some set of multi-indices supported on $\tilde{N}(T)$ and, for each $k$, $F_{T,k}$ is a product of combinatorial factors, derivatives of $\{D^{\se(e)}K_{e}\}_{e \in K(T)}$ evaluated on differences of $(x_{v})_{v \in N(T)}$, and polynomials of those same differences. 
Using the combination of the observation \eqref{eq: factorization}, the symmetry of partial derivatives, and the fact that for distinct $i,j \in \{1,2\}$ one has $N^{\downarrow}(T_{i}) \cap \tilde{N}(T_{j}) = \tilde{N}(T_{i}) \cap \tilde{N}(T_{j}) = \emptyset$ it is straightforward to use the expansion \eqref{eq: H expansion} to prove the claimed commutation rule. 

More generally, given $\mcF \in \mathbb{F}_{\pi}$ and a subforest $\mcG \subset \mcF$ with $\mathrm{depth}(G) \le 1$ we set $H_{\pi,\mcF,\mcG} \eqdef \circ_{T \in G} H_{\pi,\mcF,T}$ where on the RHS we are denoting a composition.
We will also use this convention for variants of these $H$ operators we introduce later.
\subsubsection{Deriving an explicit formula for negative renormalizations}
In what follows, for any multi-index $\mfm$ and real number $r$ we write
\[
\mathbbm{1}_{< r}(\mfm) 
\eqdef 
\mathbbm{1} \left\{ |\mfm|_{\s}
< r \right\}\;.
\] 
We also define, for any three multi-indices $\mfn$, $\mfm$, $\mfe$ and real number $r$,  
\[
\combplus[\mfn,\mfm,\mfe,r]
\eqdef
\frac{1}{\mfe!}\binom{\mfn}{\mfm}\mathbbm{1}_{< r}(\mfm + \mfe) 
\] 
For any $\mcF \in \mathbb{F}$, any node decoration $\mfn$ on $\sT$, and any $\pi \in \allpon{L(\sT)}$ compatible with $\mcF$, we define a function $\mathring{\varpi}^{\mfn}_{\pi}[\mathcal{F}] \in \allf$ as follows. 
We set $\mathring{\varpi}^{\mfn}_{\pi}[\emptyset] = 1$ and, for $\mcF$ non-empty, we set
\begin{equation*}
\begin{split}
\mathring{\varpi}^{\mfn}_\pi[\mathcal{F}] \eqdef&
\left[
\prod_{S \in \mmax{\mcF}} 
\mathrm{Cu}_{\pi}^{L_{\mcF}(S)} 
\ke{\mathring{K}_{\mcF}(S)}{0}
\genvert{\pi,\mcF, C_{\mcF}(S)}
\left[
X_{\mfn}^{\tilde{N}(S)}
\ke{K^{\partial}_{\mcF}(S)}{0}
\right]
\right].
\end{split}
\end{equation*}
\begin{lemma}\label{lemma: negatively renormalized integrand} 
Let $\mathcal{F} \in \mathbb{F}$ and let $F$ be the i-forest corresponding to $\mmax{\mcF}$.
Let $\mfn$ be a node decoration on $F$ vanishing on $\rho(F)$. Then one has
\begin{equ}\label{eq: negatively renormalized integrand}
\bar{\Upsilon}^{\mathbb{P}}
\left[ 
\projminusshape_{\bo{\sigma}_{\mcF}}
\tpode_{-}
(F,0)^{\mfn}_{\se}
\right]
=
(-1)^{|\mmax{\mcF}|}
\sum_{
\substack{
\pi \in \fillingpon{L(\mcF)}\\
\pi \textnormal{ comp. } \mcF
}
}
\int_{{N_\CF}}
\back dy\,
\delta(y_{\rho(\mmax{\mcF})})
\mathring{\varpi}^{\mfn}_{\pi}[\mathcal{F}](y).
\end{equ}
Additionally, if $\mfn$ does not vanish on $\rho(F)$ then the RHS of \eqref{eq: negatively renormalized integrand} vanishes.
\end{lemma}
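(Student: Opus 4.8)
The plan is to prove Lemma~\ref{lemma: negatively renormalized integrand} by induction on $\mathrm{depth}(\mcF)$, running the recursion for $\tpode_{-}$ in parallel with the recursion defining the operators $\genvert{\pi,\mcF,S}$. The key point is that the two sides of \eqref{eq: negatively renormalized integrand} are built by essentially the same recursive mechanism once one identifies the right dictionary: the formula \eqref{def: negative coprod} for $\tDelta_{-}$ (and its ``reduced'' analogue defining $\tpode_{-}$) extracts subforests $G \in \overline{\mathfrak{A}}_{1}$ together with node/edge decorations $\mfn_G,\mfe_G$, and each such extraction corresponds exactly to passing from a forest of subtrees $\mcF$ to the branch data $C_{\mcF}(S)$ beneath a maximal tree $S$, with the ``push-down'' decorations $\mfe_G$ living on $\partial(G,T) = K^{\partial}_{\mcF}(S)$ and the polynomial decorations $\mfn_G$ handled by the operator $\mathscr{Y}_{S}$ via the set $\mathrm{Der}(S)$ from Definition~\ref{set of derivatives}. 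The cumulant expansion enters because $\bar{\Upsilon}^{\xi}$ applied to an i-tree is, by its definition \eqref{def: c}, the expectation $\E(\Upsilon^{\xi}\bo{\sigma}^{\mfn,\mfo}_{\mfe})(0)$, and this expectation of a product of noises expands over partitions $\pi$ into products of joint cumulants $\mathrm{Cu}^{L}_{\pi}$; restricting to partitions compatible with $\mcF$ is forced by the fact that each tree of $\sigma_{\mcF}$ is being evaluated separately under $\bar{\Upsilon}^{\xi}$, so only blocks of $\pi$ contained inside a single $L(S)$ can contribute.

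First I would set up the base case. When $\mathrm{depth}(\mcF) = 1$, the i-forest $\bo{\sigma}_{\mcF}$ is just a forest product of its maximal trees $S \in \mmax{\mcF}$, each coloured trivially ($\widehat{F^{(1)}} = 0$), and $\tpode_{-}$ acts on it by the base case of its recursion composed with the $\mfp_{-}$ projection, which by the definition of $\mathfrak{X}_{-}$ is nontrivial precisely because each $S \in \mmax{\mcF} \subset \Div$ has $|(S^0_{\se},0)|_{-} < 0$. Applying $\bar{\Upsilon}^{\xi}$ and using \eqref{def: c}, one multiplies out the expectation into the partition sum; the $-\mathscr{Y}_{S}$ operator in \eqref{def: generalized vertex base case} comes from the explicit form of the antipode's sign and the subtraction of the Taylor jet encoded in the definition of $\tpode_{-}$ (the factor $(-1)^{|\Con(F)|}$ and the recursion over $\overline{\mathfrak{A}}_{1}$), matching the $(-1)^{|\mmax{\mcF}|}$ prefactor and the integral over $N_{\mcF}$ with the delta function fixing the roots to $0$ (which is where the ``evaluate at $0$'' in $\bar{\Upsilon}^{\xi}$ comes in). For the inductive step I would peel off the outermost layer: $\tpode_{-}(F,0)^{\mfn}_{\se}$ with $F$ corresponding to $\mmax{\mcF}$ expands via $\overline{\mathfrak{A}}_{1}(F,0)$ into a sum over subforests $H$, and using Lemma~\ref{lemma: negative forest expansion} one reorganizes the resulting sum so that, after applying $\projminusshape_{\bo{\sigma}_{\mcF}}$, only the term $H$ corresponding to $D_{2}(\mcF)$ survives with the correct decoration constraints. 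The inner factor $\tpode_{-}\mfp_{-}(H,0)^{\cdots}$ is then handled by the inductive hypothesis applied to the forest of subtrees $\mcF_{<} \eqdef \{T \in \mcF : T \notin \mmax{\mcF}\}$, and one matches the remaining integration over $\tilde{N}_{\mcF}(S)$, the kernels $\ke{\mathring{K}_{\mcF}(S)}{0}$ and $\ke{K^{\partial}_{\mcF}(S)}{0}$, and the power functions $X^{\tilde{N}(S)}_{\mfn}$ against the recursive formula for $\genvert{\pi,\mcF,S}$.

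The last sentence of the lemma — that the RHS vanishes if $\mfn$ does not vanish on $\rho(F)$ — should follow directly: on the algebraic side $\tDelta_{-}$ and hence $\tpode_{-}$ only produce terms where the extracted trees have $\mfn(\rho) = 0$ (this is built into the definition of $\mathfrak{X}_{-}$ and the fact that the second sum in \eqref{def: negative coprod} is over $\mfn_G$ supported on $N(G)$ while the binomial $\binom{\mfn}{\mfn_G}$ combined with $\mfp_-$ kills roots with nonzero label), so the expression is identically zero before one even applies $\bar{\Upsilon}^{\xi}$.

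The main obstacle I expect is the bookkeeping of decorations: carefully tracking how the node label $\mfn$ splits between $\mfn_G$ (which becomes a polynomial weight via $X^{\tilde{N}(S)}_{\mfn}$, ultimately absorbed into $\mathscr{Y}_S$ through $\mathrm{Der}(S)$) and $\mfn - \mfn_G$ (which is pushed to the quotient), how the edge label $\mfe_G$ supported on $\partial(G,T)$ matches the kernel edge set $K^{\partial}_{\mcF}(S)$ and contributes the $\chi\mfe_G$ correction to node labels, and how the combinatorial factors $\frac{1}{\mfe_G!}\binom{\mfn}{\mfn_G}$ reproduce exactly the Taylor-coefficient normalizations hidden in $\mathscr{Y}_S$ and in the Leibniz rule used when differentiating $\ke{K^{\partial}_{\mcF}(S)}{0}$. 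This is the kind of computation where a sign or a factorial can easily be off, and verifying that the restriction to $\pi$ compatible with $\mcF$ is automatic (rather than an extra hypothesis) requires noting that a cumulant block straddling two different $L(S)$ would have to appear as a single factor in $\bar{\Upsilon}^{\xi}$ evaluated on a connected component of $\bo{\sigma}_{\mcF}$, which is impossible since each such component contains the leaves of only one $S$. Once the dictionary is fixed, each step is routine; the difficulty is purely in stating it precisely enough that the induction closes.
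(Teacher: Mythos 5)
Your plan is essentially the same as the paper's proof: both argue by induction on $\mathrm{depth}(\mcF)$, reduce to a single maximal tree using forest multiplicativity of $\bar{\Upsilon}^{\mathbb{P}}$ and the factorization of both sides, and in the inductive step match the unwound layer of $\tpode_{-}$ against the outer integral defining $\genvert{\pi,\mcF,S}$, with $\mathscr{Y}_S$ absorbing the $\mfn_G,\mfe_G$ decoration sums. One small correction: the isolation of the single subforest $H$ corresponding to $D_2(\mcF)$ in the inductive step is not a consequence of Lemma~\ref{lemma: negative forest expansion} (that lemma goes in the opposite direction and is only used later, in Proposition~\ref{explicit formula}); it follows directly from the definition of the shape projection $\projminusshape_{\bo{\sigma}_{\mcF}}$, since the factor $(S,[H]_1)^{\cdots}$ forces $H=\mmax{\mcG}$ to match the bottom layer of $\bo{\sigma}_{\mcF}$. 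With that attribution fixed, your outline closes in the same way the paper's does, including the observation that compatibility of $\pi$ with $\mcF$ is automatic from the multiplicativity of $\bar{\Upsilon}^{\mathbb{P}}$ over the connected components of $\bo{\sigma}_{\mcF}$.
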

\begin{proof}
The case where $\mathcal{F} = \emptyset$ holds trivially and we ignore this case below. 

We work with the RHS of \eqref{eq: negatively renormalized integrand} and try to transform it to the LHS, working inductively in the depth of $\mcF$. 

We first observe that \eqref{eq: negatively renormalized integrand} is forest multiplicative, that is we can write 
\[
\projminusshape_{\bo{\sigma}_{\mcF}}
\tpode_{-}
(F,0)^{\mfn}_{\se}
=
\prod_{S \in \overline{\mcF}}
\projminusshape_{\bo{\sigma}_{\mcF_{S}}}
\tpode_{-}
(S,0)^{\mfn}_{\se}\;.
\]
Both the integral and the sum over compatible partitions appearing in
the RHS of \eqref{eq: negatively renormalized integrand} factorize analogously. 
It follows that for both the base step and the inductive step it suffices to treat the situation where there is a single maximal tree in $\mcF$, so $F = S$ for some subtree $S$.

We first treat the base case when $\mcF$ has depth $1$. 
We then have
\[
\projminusshape_{\bo{\sigma}_{\{S\}}}
\tpode_{-}
(S,0)^{\mfn}_{\se}
=
-(S,0)^{\mfn}_{\se}
\]
and it is straightforward to see that 
\[
\bar{\Upsilon}^{\mathbb{P}}[(S,0)^{\mfn}_{\se}] 
=
-
\sum_{\pi \in \fillingpon{L(S)}}
\int_{{N(S)}} \back dy\,
\delta(y_{\rho_{S}})
\mathring{\varpi}^{\mfn}_{\pi}[\{S\}](y).
\]
We now treat the inductive case, we assume \eqref{eq: negatively renormalized integrand} is true for all forests of depth less than $j$ and prove it for a forest $\mcF$ of depth $j$ with $\mmax{\mcF} = \{S\}$. 

We write $\mathcal{G} \eqdef \mcF \setminus \{S\}$ and $G$ for the i-forest corresponding to $\mmax{\mathcal{G}}$. 
For $T \in \mmax{\mathcal{G}}$ we also write $\mathcal{G}_{T} = \{U \in \mathcal{F}:\ U \le T\}$, observe that $\mathcal{G}_{T}$ is a forest of subtrees of depth less than $j$. 

With this notation in hand we have that $\bar{\Upsilon}^{\mathbb{P}}
[
\projminusshape_{\bo{\sigma}_{\mcF}}
\tpode_{-} 
(S,0)^{\mfn}_{\se}
]$ is given by
\begin{equation*}
\begin{split}
-
\sum_{
\mfn_{G},\ 
\mfe_{G}
}
\bar{\Upsilon}^{\mathbb{P}}[
\left(S,[ G]_{1} \right)^{\mfn - \mfn_{G}}_{
\se + \mfe_{G}
} 
]
\cdot
\Big[
\prod_{T \in \mmax{\mathcal{G}}}
\combplus[\mfn,\mfn_{T},\mfe_{T},\omega(T)]
\bar{\Upsilon}^{\mathbb{P}}
[
\projminusshape_{\bo{\sigma}_{\mcG_{T}}}
\tpode_{-}
(T,0)^{\mfn_{T} + \chi \mfe_{T}}_{\se}
]
\Big],
\end{split}
\end{equation*}
where above the sum is over node decorations $\mfn_{G}$ on $N(G) \setminus \rho(G)$ and edge decorations $\mfe_{G}$ on $K^{\partial}_{\mcF}(S)$ -- the edge decorations $\mfe_{T}$ and node decorations $\mfn_{T}$ are given by the restrictions of $\mfe_{G}$ to $K^{\partial}_{T}(S)$ and $\mfn_{G}$ to $\tilde{N}(T)$, respectively.  

Expanding now the RHS of \eqref{eq: negatively renormalized integrand} and using the fact that
every partition $\pi$ of $L(S)$ compatible with $\CF$ consists of a partition
$\bar \pi$ of $L_\CF(S)$ and, for every $T \in \bar \CG$, a partition $\pi_T$ compatible with $\CG_T$,
we obtain
\begin{equs}\label{work: negatively renormalized integrand - inductive step}
{}
&
-\sum_{
\substack{
\pi \in \fillingpon{L(S)}\\
\pi \textnormal{ comp. } \mcF
}
}
\int_{{N_{\mcF}(S)}} \back dy\,
\delta(y_{\rho_{S}} )
\mathring{\varpi}^{\mfn}_\pi[\mathcal{F}](y)\\
&=
-\sum_{
\bar{\pi} \in \fillingpon{L_{\mcF}(S)}
}
\int_{{N_{\mcF}(S)}} \back dy \,
\delta(y_{\rho_{S}}) 
\mathrm{Cu}_{\bar{\pi}}^{L_{\mcF}(S)}(y)
\ke{\mathring{K}_{\mcF}(S)}{0}(y)
X_{\mfn}^{N_{\mathcal{F}}(S)}(y)\\
& \qquad
\cdot
\prod_{T \in \mmax{\mcG}}
\bigg[
\sum_{
\substack{
\pi_{T} \in \fillingpon{L(T)}\\
\pi_{T} \textnormal{ comp. } \mathcal{G}_{T} 
}
}
\genvert{\pi_{T},\mcG_{T}, T}
\left[
\pow{\tilde{N}(T)}{\mfn}
\ke{K^{\partial}_{T}(S)}{0}
\right](y)
\bigg]\;.
\end{equs}
We want to expand the action of the $\genvert{\pi_{T},\mcG_{T}, T}$ above. 
For any $T \in \mmax{\mcG}$ one has, by the rules of of differential calculus and some manipulation of binomial coefficients,
\begin{equs}
\mathscr{Y}_{T}
\Bigg[
\pow{\tilde{N}(T)}{\mfn} 
\ke{K^{\partial}_{T}(S)}{0}
\Bigg]
=&
\sum_{
\substack{
j \in \mathrm{Der}(T)\\
\mfe_{T}}
}
\frac{\powroot{\tilde{N}(T)}{j}{\rho_{T}}}{j!}
\binom{j}{\chi \mfe_{T}}
\binom{\mfn}{j - \chi \mfe_{T}}
\powquot{\tilde{N}(T)}{\mfn - j + \chi \mfe_{T}}{\rho_{T}}
\kerec{K^{\partial}_{T}(S)}{\mfe_{T}}{\rho_{T}}\\[2ex]
=&
\sum_{ \mfn_{T}, \mfe_{T}}
\combplus[\mfn,\mfn_{T},\mfe_{T},\omega(T)]
\powroot{\tilde{N}(T)}{\mfn_{T}}{\rho_{T}}
\powquot{\tilde{N}(T)}{\mfn - \mfn_{T}}{\rho_{T}}
\kerec{K^{\partial}_{T}(S)}{\mfe_T}{\rho_T} 
\end{equs}
where we view $j \in \mathrm{Der}(T)$ as node decorations on $\tilde{N}(T)$. We can then write
\begin{equs}
{}&
\sum_{
\substack{
\pi_{T} \in \fillingpon{L(T)}\\
\pi_{T} \textnormal{ comp. } \mathcal{G}_{T} 
}
}
\genvert{\pi_{T},\mcG_{T}, T}
\Bigg[
\pow{\tilde{N}(T)}{\mfn} 
\ke{K^{\partial}_{T}(S)}{0}
\Bigg](y)\\
&= -
\sum_{
\substack{
\pi_{T} \in \fillingpon{L(T)}\\
\pi_{T} \textnormal{ comp. } \mathcal{G}_{T} 
}
}
\sum_{ \mfn_{T}, \mfe_{T}}
\combplus[\mfn,\mfn_{T},\mfe_{T},\omega(T)] 
\powquot{\tilde{N}(T)}{\mfn - \mfn_{T}}{\rho_{T}}(y)
\kerec{K^{\partial}_{T}(S)}{\mfe_T}{\rho_T}(y)\\
& 
\cdot
\int_{{\tilde{N}_{\mcG_{T}}(T)}} \back dz\, 
\mathrm{Cu}_{\pi_{T}}^{L_{\mcG_{T}}(T)}(z)  
\ke{\mathring{K}_{\mcG_{T}}(T)}{0}(z \sqcup y_{\rho_{T}})\\
& 
\qquad\cdot
\genvert{\pi_{T},\mcG_{T}, C_{\mcG_{T}}(T)}
\Bigg[
\ke{K^{\partial}_{\mcG_{T}}(T)}{0}
\mathrm{X}^{\tilde{N}(T)}_{\mfn_{T},\rho_{T}}
\Bigg]
(y_{N^{\partial}_{T}(S)} \sqcup y_{\rho_{T}} \sqcup z).
\end{equs}
By translation invariance, the final integral does not depend on the value of $y_{\rho_{T}}$. 
We can therefore replace $y_{\rho_{T}}$ with $0$, which makes $\mathring{\varpi}_{\pi_{T}}^{\mfn_{T}}$ appear. By our induction hypothesis, the above quantity is thus equal to
\[
-
\sum_{ \mfn_{T}, \mfe_{T}}
\combplus[\mfn,\mfn_{T},\mfe_{T},\omega(T)]
\powquot{\tilde{N}(T)}{\mfn - \mfn_{T}}{\rho_{T}}(y)
\,\kerec{K^{\partial}_{T}(S)}{\mfe_T}{\rho_T}(y)
\,\bar{\Upsilon}^{\mathbb{P}}
\left[ 
\projminusshape_{\bo{\sigma}_{\mcG_{T}}}
\tpode_{-}
(T,0)^{\mfn_{T}}_{\se}
\right].
\]
Inserting this into the last line of \eqref{work: negatively renormalized integrand - inductive step} for each $T \in \mmax{\mcG}$ gives
\begin{equation*}
\begin{split}
&
\sum_{
\bar{\pi} \in \fillingpon{L_{\mcF}(S)}
}
\int_{{N_{\mcF}(S)}} \back dy\,
\delta(y_{\rho_{S}}) 
\,\mathrm{Cu}_{\bar{\pi}}^{L_{\mcF}(S)}(y)
\,\ke{\mathring{K}_{\mcF}(S)}{0}(y)
\,X_{\mfn}^{N_{\mathcal{F}}(S)}(y)\\
& \enskip
\cdot
\Bigg[
\prod_{T \in \mmax{\mcG}}
\sum_{ \mfn_{T}, \mfe_{T}}
\combplus[\mfn,\mfn_{T},\mfe_{T},\omega(T)]
\,\powquot{\tilde{N}(T)}{\mfn - \mfn_{T}}{\rho_{T}}(y)
\,\kerec{K^{\partial}_{T}(S)}{\mfe_T}{\rho_T}(y)
\,\bar{\Upsilon}^{\mathbb{P}}
\left[ 
\projminusshape_{\bo{\sigma}_{\mcG_{T}}}
\tpode_{-}
(T,0)^{\mfn_{T} + \chi \mfe_{T}}_{\se}
\right]
\Bigg]\\
&= 
-
\sum_{
\mfn_{G},\ \mfe_{G}
}
\bar{\Upsilon}^{\mathbb{P}}
\left[
\left(S, [G]_{1} \right)^{\mfn - \mfn_{G}}_{
\se + \mfe_{G}} 
\right]
\,\Bigg(
\prod_{T \in \mmax{\mcG}}
\combplus[\mfn,\mfn_{T},\mfe_{T},\omega(T)]
\bar{\Upsilon}^{\mathbb{P}}
\left[ 
\projminusshape_{\bo{\sigma}_{\mcG_{T}}}
\tpode_{-}
(T,0)^{\mfn_{T} + \chi \mfe_{T}}_{\se}
\right]
\Bigg).
\end{split}
\end{equation*}
\end{proof}
\subsection{A cutting formula for positive renormalizations}
In the previous subsection we saw that the relevant substructures of $\sT$ with regards to negative renormalizations were the subtrees of $\sT$ of negative homogeneity. 
The analogous substructures of $\sT$ for positive renormalizations will be edges in $K(\sT)$ which are ``trunks'' of subtrees of $\sT$ of positive homogeneity. 

We frequently reference our chosen poset structure on $K(\sT)$, namely $e \le \bar{e}$ if and only if the unique path of edges from $\bar{e}_{\ch}$ to the root $\mainroot$ of $\sT$ contains $e$. 

For $\cC \subset K(\sT)$, $e \in K(\sT)$, the set of immediate children of $e$ in $\cC$ is given by
\[
\label{def: immediate children - cuts}
C_{\cC}(e)
\eqdef
\mathrm{Min}
\left(
\{ \bar{e} \in \cC:\ \bar{e} > e \}
\right).
\] 
For a subset $\cC \subset K(\sT)$ we define the subtrees and sub-forests
\[\label{def: subtrees from cut sets}
\sT_{\not \ge}[\cC] 
\eqdef 
\bigcap_{e \in \cC} \sT_{\not \ge}(e) \textnormal{  and  } 
\Tr_{\ge}[\cC]
\eqdef
\Tr(\sT,\sT_{\not \ge}(\cC))\;,
\] 
where we adopt the convention $\sT_{\not \ge}[\emptyset] \eqdef \sT$ and $\Tr[\cdot,\cdot]$ is as in Definition~\ref{def: dangling trees}. 
Note that while $\Tr_{\ge}[\cC]$ is a collection of subtrees of $\sT$ it will in general \emph{not} be a forest of subtrees since distinct elements of $\Tr_{\ge}[\cC]$ may share roots.

Observe that both $\sT_{\not \ge}[\cC]$ and $\Tr_{\ge}[\cC]$ depend on $\cC$ only through $\mathrm{Min}(\cC)$ and that $\sT_{\not \ge}[\cC]$ always contains as a subtree the trivial tree consisting only of $\mainroot$.
\begin{remark}
In what follows we will begin writing co-action extraction / colorings as sums over sets of edges. It then makes sense to label the resulting sums over decorations with sets of edges rather than the subtrees they correspond to. Namely, if $\cC \subset K(\sT)$ then a sum over $\mfn_{\cC}$ corresponds to a sum over node decorations which are supported on $N(T_{\not \ge}[\cC])$ while a sum over $\mfe_{\cC}$ corresponds a sum over edge-derivative decorations which are supported on $\underline{\cC}$.
\end{remark}
\begin{definition} We say $e \in K(\sT)$ is a \emph{positive cut} if 
\[
\left|
\clearrootn(\sT_{\ge}(e),0)^{\sn}_{\se} 
\right|_{+}
> 0\;,
\]
where $\clearrootn$ is the map mentioned before which zeroes node labels at roots (this map naturally induces a map on i-forests). 

We denote by $\cut \subset K(\sT)$\label{def: set of cuts} the collection of all positive cuts and define a function $\gamma: \cut \rightarrow \N$ via 
\[
\gamma(e)
\eqdef  
\left\lceil
 \left|
\clearrootn(\sT_{\ge}(e),0)^{\sn}_{\se} 
\right|_{+}
\right\rceil.
\]
\end{definition}
The key role of the extended node-label $\mfo$ is to store information so that the negative renormalizations from $\tDelta_{-}$ do not cause $\tDelta_{+}$ or $\tpode_{+}$ to make \emph{additional} positive renormalizations. 

We now take advantage of this to help us unravel the action of $\tpode_{+}$. First recall that any i-tree can be realized as a subtree of $\sT$. We define $\Tr_{+}$ to be the collection of all decorated i-trees of the form $(\sT,\hat{T})^{\mfn,\mfo}_{\mfe} \in \ident[\mfT_{2}]$ with the properties that (i) $\mfe \ge \se$ and (ii) for every $e \in K(\sT)$ with $\hat{T}(e_{\p}) \not = 2$ and $\hat{T}(e) = 0$ one has
\[
 \left|
\clearrootn(\sT_{\ge}(e),0)^{\sn}_{\se} 
\right|_{+}
=
 \left|
\clearrootn(\sT_{\ge}(e),\hat{T})^{\mfn,\mfo}_{\mfe} 
\right|_{+}
+
|\mfe(e) - \se(e)|_{\s}\;.
\]
We then have the following lemma.
\begin{lemma}\label{lemma: extended labels do their job.}
One has
\begin{equs}
\tDelta_{1}(\sT,0)^{\sn}_{\se}
\in&
\langle 
\For_{0}
\rangle
\otimes
\langle 
\ident[\Tr_{1}]
\cap \Tr_{+}
\rangle\\
\textnormal{ and }
(\Id \otimes \tDelta_{2} ) \tDelta_{1}(\sT,0)^{\sn}_{\se}
\in&
\langle 
\For_{0}
\rangle
\otimes
\langle 
\ident[\Tr_{1}]
\rangle
\otimes
\langle
\Tr_{+}
\cap
\ident[\hat{\Tr}_{2}]
\rangle.
\end{equs}
\end{lemma}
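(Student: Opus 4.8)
The statement asserts two containments: first that $\tDelta_{-}(\sT,0)^{\sn}_{\se}$ lands in $\langle \For_{0} \rangle \otimes \langle \ident[\Tr_{1}] \cap \Tr_{+} \rangle$ (note that $\tDelta_1$, $\tDelta_2$ in the lemma statement are $\tDelta_-$, $\tDelta_+$ in the notation of the excerpt), and second that applying $(\Id \otimes \tDelta_{+})$ refines this so that the third tensor factor lands in $\Tr_{+} \cap \ident[\hat\Tr_{2}]$. The whole point is that the extended label $\mfo$, which is set on $N(\hat T_1)$ by $\tDelta_-$ precisely to record the edge-decoration jumps coming from the negative renormalization, is calibrated so that the power-counting seen by the positive-renormalization maps $\tDelta_+$, $\tpode_+$ along any kernel edge $e$ \emph{below} a contracted subtree equals the original power-counting $|\clearrootn(\sT_{\ge}(e),0)^{\sn}_{\se}|_+$ shifted by exactly $|\mfe(e)-\se(e)|_{\s}$. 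In other words, the defining identity of $\Tr_+$ is an invariant that is built into how $\tDelta_-$ produces its right tensor factor.

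First I would examine the explicit formula \eqref{def: negative coprod} for $\tDelta_{-}(\sT,0)^{\sn}_{\se}$. Its right tensor factor is $(\sT, [G]_1)^{\sn - \mfn_G, \,\mfn_G + \chi\mfe_G}_{\mfe_G + \se}$, so the new edge decoration is $\mfe = \se + \mfe_G$ with $\mfe_G$ supported on $\partial(G,\sT)$, and the new $\mfo$-label is $\mfn_G + \chi\mfe_G$, supported on $N(G)$. Fix a kernel edge $e$ with $\hat T(e_\p) = \hat T(e) = 0$ (here $\hat T = [G]_1$, so this means neither $e_\p$ nor $e$ lies in $G$). I need to verify the identity defining $\Tr_+$ for this edge, comparing $|\clearrootn(\sT_{\ge}(e),0)^{\sn}_{\se}|_+$ with $|\clearrootn(\sT_{\ge}(e),\hat T)^{\mfn,\mfo}_{\mfe}|_+ + |\mfe(e)-\se(e)|_{\s}$. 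Since $\mfe(e) - \se(e) = \mfe_G(e)$ is the derivative decoration planted on that specific edge $e$, the left-hand shift term is $|\mfe_G(e)|_{\s}$. On the right, $|\cdot|_+$ of $\clearrootn(\sT_{\ge}(e),\hat T)^{\mfn,\mfo}_{\mfe}$ collects $|\mft(e')|_{\s} - |\mfe(e')|_{\s}$ over edges $e' \ge e$ with $\hat T(e') \ne 2$ (all of them, since $\hat T \le 1$ here) plus $|\mfn(u)|_{\s} + |\mfo(u)|_{\s}$ over relevant nodes. Because $\mfo = \mfn_G + \chi\mfe_G$ restricted to $N(G)$, $\mfn = \sn - \mfn_G$, and $\mfe = \se + \mfe_G$, a direct bookkeeping — matching the $\mfn_G$ pieces of $\mfn$ against the $\mfn_G$ summand of $\mfo$, and the $\chi\mfe_G$ contribution of $\mfo$ against the $-|\mfe_G(e')|_{\s}$ pieces for $e'$ inside $G \cap \sT_{\ge}(e)$ (which telescopes via the definition of $\chi$), while the only $\mfe_G$-contribution surviving outside $G$ is exactly the one on the single edge $e$ — should show that everything cancels except the leftover $-|\mfe_G(e)|_{\s}$, which is precisely the claimed shift. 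This is the heart of the first containment.

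For the second containment, I would then apply $\tDelta_+$ to an element of $\ident[\Tr_1] \cap \Tr_+$ and check that (a) the right tensor factor now carries a non-trivial color-$2$ component, i.e.\ lands in $\ident[\hat\Tr_2]$ — this is visible from \eqref{def: positive coprod}, whose second slot is $\mfp_+(T, [\hat T_1 \setminus S]_1 \sqcup [S]_2)^{\dots}$, and one argues that $[S]_2$ is genuinely non-empty because $S$ contains the root; and (b) that this right factor remains in $\Tr_+$, which follows from the same edge-wise power-counting identity applied one layer further, now using that the $\mfo$-label was \emph{not} modified by $\tDelta_+$ (it only gets copied through, and on the color-$2$ part it is irrelevant since $|\cdot|_+$ ignores color-$2$ nodes). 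Essentially the invariant of $\Tr_+$ is stable under $\tDelta_+$ for the same structural reason it holds after $\tDelta_-$. The main obstacle I anticipate is the careful combinatorial verification in step (ii) above — tracking how the $\mfe_G$-decoration distributes over the edge set of $\sT_{\ge}(e)$, separating the part inside $G$ (where $\chi\mfe_G$ absorbs it into node labels) from the one crossing edge $e$ — and making sure the telescoping over $\chi$ is applied with the correct support conditions ($\mfn_G$ supported on $N(G)$, $\mfe_G$ supported on $\partial(G,\sT)$); getting the boundary cases right, in particular when $e_\p$ is a root of a component of $G$, is where the argument is most delicate. Everything else is a matter of carefully unwinding the definitions of $|\cdot|_+$, $\clearrootn$, and $\chi$.
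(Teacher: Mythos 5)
The paper states this lemma without proof, so there is nothing to compare against directly; what follows is an assessment of whether your plan actually closes the argument.

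Your strategy is the correct one in spirit: the content of the lemma is precisely the power-counting invariant built into the definition of $\Tr_+$, and the right way to verify it is to fix a kernel edge $e$, compute $|\clearrootn(\sT_{\ge}(e),\hat T)^{\mfn',\mfo'}_{\mfe'}|_+$ for the right tensor factor $(\sT,[G]_1)^{\sn-\mfn_G,\ \mfn_G+\chi\mfe_G}_{\se+\mfe_G}$, and show that the difference from $|\clearrootn(\sT_{\ge}(e),0)^{\sn}_{\se}|_+$ is exactly $-|\mfe_G(e)|_{\s}$. However, the bookkeeping you sketch is not correct as stated, and the case you flag as ``most delicate'' (when $e_\p$ is a root of a component of $G$) is in fact the case that is automatic, while the genuinely delicate case goes unaddressed.

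Two specific problems. First, you write that ``the $\chi\mfe_G$ contribution of $\mfo$ [matches] against the $-|\mfe_G(e')|_{\s}$ pieces for $e'$ inside $G \cap \sT_{\ge}(e)$'' — but $\mfe_G$ is supported on $\partial(G,\sT)$, i.e.\ on edges \emph{not} in $G$, so there are no $\mfe_G$-contributions on edges inside $G$; the telescoping you invoke happens between the edge-decoration subtractions $-|\mfe_G(e')|_{\s}$ on edges $e'\in E_{\sT_\ge(e)}$, $e'>e$, and the sums $|\chi\mfe_G(u)|_{\s}$ over $u\ge e_\ch$. Second, and more importantly, if you carry out the telescoping carefully you obtain
\begin{equ}
|\clearrootn(\sT_{\ge}(e),\hat T)^{\mfn',\mfo'}_{\mfe'}|_+ - |\clearrootn(\sT_{\ge}(e),0)^{\sn}_{\se}|_+
= |\mfn_G(e_\p)|_{\s}\,\mathbbm{1}\{e_\p\in N(G)\} + \sum_{\tilde e \ \textnormal{sibling of }e}|\mfe_G(\tilde e)|_{\s}\;,
\end{equ}
because $\chi\mfe_G(e_\p)$ collects $\mfe_G$ on \emph{all} edges outgoing from $e_\p$ (including siblings of $e$, none of which lie in $\sT_{\ge}(e)$), and because $\clearrootn$ zeroes $\mfn'$ at $e_\p$ but not $\mfo'$. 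The required identity $\Delta = -|\mfe_G(e)|_{\s}$ therefore reads as a sum of nonnegative terms equal to zero, forcing $\mfn_G(e_\p)=0$, $\mfe_G(e)=0$, and $\mfe_G(\tilde e)=0$ on siblings. When $e_\p\notin N(G)$ this is automatic since $\mfe_G$ and $\mfn_G$ vanish there. When $e_\p=\rho_{G'}$ for some component $G'$, this follows from the $\mfp_-$ constraint $(\mfn_G+\chi\mfe_G)(\rho_{G'})=0$ required for the left tensor factor to land in $\mathfrak{X}_-$ — so the case you flag as delicate is the easy one. The unresolved case is $e_\p$ an \emph{interior} node of some $G'$ (so $e\in\partial(G,\sT)$ emanates from the middle of a contracted tree): nothing you have said forces $\mfn_G(e_\p)=0$ or $\mfe_G(e)=0$ there. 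Your proof plan needs either an argument that the $\mfo$-label restriction to $\sT_{\ge}(e)$ discards a component $G'$ not fully contained in $\sT_{\ge}(e)$ (in which case the computation closes), or an argument showing why such terms do in fact satisfy the identity. Without addressing this, the proof is incomplete, and the same gap then propagates into your second step (stability of $\Tr_+$ under $\tDelta_+$), which is in any case sketched too briskly to verify.
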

Given $\cC \subset \cut$ and $k \in \N$ we define subsets $D_{k}(\cC)$ of $\cC$ as follows. We set $D_{0}(\cC) \eqdef \emptyset$, $D_{1}(\cC) \eqdef \underline{\cC}$, and for $k > 1$ we set
\[
D_{k}(\cC) \eqdef
\bigsqcup_{e \in D_{k-1}(\cC)}
C_{\cC}(e). 
\]
We define the depth of $\cC \subset \cut$ via 
\[
\mathrm{depth}(\cC) \eqdef \inf \Big\{ k \in \N:\ \bigsqcup_{j=1}^{k} D_{j}(\cC)  =\cC \Big\}
= \inf \{ k \ge 0:\  D_{k+1}(\cC)  =\emptyset\}\;.
\]
Below we draw a picture to clarify these notions.
We have drawn a tree $\sT$, but have neglected to draw any leaf edges or any of the fictitious nodes attached to their ends. 
The set $\cC \subset K(\sT)$ consists of all edges which have a non-zero number of tick marks. 
For $k \ge 1$ the set $D_{k}(\cC)$ then consists of those edges with $k$ tick marks.
We see that the set $\cC$ has depth $3$.
\[
\begin{tikzpicture}
    \node [style=dot] (null) at (0, -2) {};
    \node [style=dot] (2) at (0, -0.75) {};
    \node [style=dot] (3) at (1.25, -1) {};
    \node [style=dot] (1) at (-1, -1.25) {};
    \node [style=dot] (22) at (0, 0.25) {};
    \node [style=dot] (23) at (.75, 0) {};
    \node [style=dot] (231) at (1, 1) {};
    \node [style=dot] (21) at (-.75, 0) {};
    \node [style=dot] (31) at (1.35, -0.2) {};
    \node [style=dot] (221) at (0, 1.25) {};
    \node [style=dot] (2211) at (-0.5, 2) {};
    \node [style=dot] (2212) at (0.5, 2) {};
    \node [style=dot] (11) at (-1.75, -0.35) {};
    \node [style=dot] (12) at (-1.25, -0.2) {};
    \node [style=dot] (121) at (-1.3, 0.4) {};

    \draw[kernel1] (null) to (1);
    \draw[kernel1] (null) to (2);
    \draw[kernel] (null) to (3);

    \draw[kernel] (1) to (11);
    \draw[kernel2] (1) to (12);

    \draw[kernel] (12) to (121);

    \draw[kernel2] (2) to (21);
    \draw[kernel] (2) to (22);
    \draw[kernel2] (2) to (23);

    \draw[kernel2] (22) to (221);

    \draw[kernel3] (221) to (2211);
    \draw[kernel] (221) to (2212);

    \draw[kernel3] (23) to (231);

    \draw[kernel] (3) to (31);
   
\end{tikzpicture}
\]
For any $\cC \subset \cut$ we define $\Div_{\cC}$ to be the collection of those $T \in \Div$ such that $\cC \cap K(T) = \emptyset$. 
We define $\mathbb{F}_{\cC}$ to be the set of all forests $\mcF \in \mathbb{F}$ with $\mcF \subset \Div_{\cC}$.

Conversely, for any $\mathcal{F} \in \mathbb{F}$ we define $\cut_{\mathcal{F}}
\eqdef
\cut \setminus 
K(\mcF)$.
so that $\CF \in \mathbb{F}_{\cC}$ if and only if $\cC \subset \cut_{\mathcal{F}}$.

For any $\cC \subset \cut$ and $\mathcal{F} \in \Div_{\cC}$ we also define 
\[
\mathcal{F}[\cC] \eqdef \{ 
T \in \mathcal{F}: T \le S \textnormal{ for some } S \in \Tr_{ \ge}[\cC]
\}\;.
\]
Additionally, for $\mcF \in \mathbb{F}$ and $\cC,\cD \subset \cut_{\mcF}$ we define
\[
\mathcal{F}[\cC,\cD] \eqdef \{ 
T \in \mathcal{F}[\cC]: T \le \sT_{\not \ge}[\cD]
\}\;.
\]
\begin{definition}\label{def: positive cutting} 
Given $\cC \subset \cut$ and $\mathcal{F} \in \Div_{\cC}$ we define $\bo{\sigma}_{\cC,\mathcal{F}} \in \mathring{\tilde{\Tr}}$ as follows. 

If $\cC = \emptyset$ then we set $\bo{\sigma}_{\cC,\mcF} \eqdef (\sT,2)$.
Otherwise $\cC$ must be of depth $k \ge 1$ in which case we set 
\begin{equ}\label{eq: positive cutting}
\bo{\sigma}_{\cC,\mcF} 
\eqdef 
(T^{(1)},\widehat{T^{(1)}})
\cdots
(T^{(k)},\widehat{T^{(k)}}) 
,
\end{equ}
where each of the trees $T^{(j)}$ are subtrees of $\sT$ and
for $1 \le j \le k$ one has
\begin{itemize}
\item $T^{(j)} = \sT_{\not \ge}[D_{j+1}(\cC)]$,
\item $\widehat{T^{(j)}} = \big[\sT_{\not \ge}[D_{j}(\cC)]\big]_{2}
\sqcup \Big[ \overline{\mcF[D_{j}(\cC),D_{j+1}(\cC)]} \Big]_{1}$.
\end{itemize}
\end{definition}
We present a pictorial example to clarify the above definitions.
We let $\sT$ and $\cC$ be as in the immediately previous pictorial example and set $\mcF = \emptyset$.  
The tree $\bo{\sigma}_{\cC,\emptyset}$ is then given by the forest product (written in the same order as in \eqref{eq: positive cutting})
\begin{equ}\label{crimson tide}
\begin{tikzpicture}[scale=.7,baseline=0]
    
    \node [smallsubtreenode,color=red!40] (snull) at (0, -2) {};

    \node [smallsubtreenode,color=red!40] (s3) at (1.25, -1) {};

    \node [smallsubtreenode,color=red!40] (s31) at (1.35, -0.2) {};

    \draw[smallsubtreeedge,color=red!40] (snull) to (s3);

    \draw[smallsubtreeedge,color=red!40] (s3) to (s31);

    \node [style=dot] (null) at (0, -2) {};
    \node [style=dot] (2) at (0, -0.75) {};
    \node [style=dot] (3) at (1.25, -1) {};
    \node [style=dot] (1) at (-1, -1.25) {};
    \node [style=dot] (22) at (0, 0.25) {};

    \node [style=dot] (31) at (1.35, -0.2) {};

    \node [style=dot] (11) at (-1.75, -0.35) {};

    \draw[kernel1] (null) to (1);
    \draw[kernel1] (null) to (2);
    \draw[kernel] (null) to (3);

    \draw[kernel] (1) to (11);

    \draw[kernel] (2) to (22);

    \draw[kernel] (3) to (31);
   
\end{tikzpicture}
\cdot
\begin{tikzpicture}[scale=.7,baseline=0]
    
    \node [smallsubtreenode,color=red!40] (snull) at (0, -2) {};
    \node [smallsubtreenode,color=red!40] (s2) at (0, -0.75) {};
    \node [smallsubtreenode,color=red!40] (s3) at (1.25, -1) {};
    \node [smallsubtreenode,color=red!40] (s1) at (-1, -1.25) {};
    \node [smallsubtreenode,color=red!40] (s22) at (0, 0.25) {};

    \node [smallsubtreenode,color=red!40] (s31) at (1.35, -0.2) {};

    \node [smallsubtreenode,color=red!40] (s11) at (-1.75, -0.35) {};

    \draw[smallsubtreeedge,color=red!40] (snull) to (s1);
    \draw[smallsubtreeedge,color=red!40] (snull) to (s2);
    \draw[smallsubtreeedge,color=red!40] (snull) to (s3);

    \draw[smallsubtreeedge,color=red!40] (s1) to (s11);

    \draw[smallsubtreeedge,color=red!40] (s2) to (s22);

    \draw[smallsubtreeedge,color=red!40] (s3) to (s31);

    \node [style=dot] (null) at (0, -2) {};
    \node [style=dot] (2) at (0, -0.75) {};
    \node [style=dot] (3) at (1.25, -1) {};
    \node [style=dot] (1) at (-1, -1.25) {};
    \node [style=dot] (22) at (0, 0.25) {};
    \node [style=dot] (23) at (.75, 0) {};

    \node [style=dot] (21) at (-.75, 0) {};
    \node [style=dot] (31) at (1.35, -0.2) {};
    \node [style=dot] (221) at (0, 1.25) {};

    \node [style=dot] (2212) at (0.5, 2) {};
    \node [style=dot] (11) at (-1.75, -0.35) {};
    \node [style=dot] (12) at (-1.25, -0.2) {};
    \node [style=dot] (121) at (-1.3, 0.4) {};

    \draw[kernel1] (null) to (1);
    \draw[kernel1] (null) to (2);
    \draw[kernel] (null) to (3);

    \draw[kernel] (1) to (11);
    \draw[kernel2] (1) to (12);

    \draw[kernel] (12) to (121);

    \draw[kernel2] (2) to (21);
    \draw[kernel] (2) to (22);
    \draw[kernel2] (2) to (23);

    \draw[kernel2] (22) to (221);

    \draw[kernel] (221) to (2212);

    \draw[kernel] (3) to (31);
   
\end{tikzpicture}
\cdot
\begin{tikzpicture}[scale=.7,baseline=0]
    
    \node [smallsubtreenode,color=red!40] (snull) at (0, -2) {};
    \node [smallsubtreenode,color=red!40] (s2) at (0, -0.75) {};
    \node [smallsubtreenode,color=red!40] (s3) at (1.25, -1) {};
    \node [smallsubtreenode,color=red!40] (s1) at (-1, -1.25) {};
    \node [smallsubtreenode,color=red!40] (s22) at (0, 0.25) {};
    \node [smallsubtreenode,color=red!40] (s23) at (.75, 0) {};
    \node [smallsubtreenode,color=red!40] (s21) at (-.75, 0) {};
    \node [smallsubtreenode,color=red!40] (s31) at (1.35, -0.2) {};
    \node [smallsubtreenode,color=red!40] (s221) at (0, 1.25) {};

    \node [smallsubtreenode,color=red!40] (s2212) at (0.5, 2) {};
    \node [smallsubtreenode,color=red!40] (s11) at (-1.75, -0.35) {};
    \node [smallsubtreenode,color=red!40] (s12) at (-1.25, -0.2) {};
    \node [smallsubtreenode,color=red!40] (s121) at (-1.3, 0.4) {};

    \draw[smallsubtreeedge,color=red!40] (snull) to (s1);
    \draw[smallsubtreeedge,color=red!40] (snull) to (s2);
    \draw[smallsubtreeedge,color=red!40] (snull) to (s3);

    \draw[smallsubtreeedge,color=red!40] (s1) to (s11);

    \draw[smallsubtreeedge,color=red!40] (s1) to (s12);
    \draw[smallsubtreeedge,color=red!40] (s12) to (s121);

    \draw[smallsubtreeedge,color=red!40] (s2) to (s21);
    \draw[smallsubtreeedge,color=red!40] (s2) to (s22);
    \draw[smallsubtreeedge,color=red!40] (s2) to (s23);

    \draw[smallsubtreeedge,color=red!40] (s22) to (s221);

    \draw[smallsubtreeedge,color=red!40] (s221) to (s2212);

    \draw[smallsubtreeedge,color=red!40] (s3) to (s31);

    \node [style=dot] (null) at (0, -2) {};
    \node [style=dot] (2) at (0, -0.75) {};
    \node [style=dot] (3) at (1.25, -1) {};
    \node [style=dot] (1) at (-1, -1.25) {};
    \node [style=dot] (22) at (0, 0.25) {};
    \node [style=dot] (23) at (.75, 0) {};
    \node [style=dot] (231) at (1, 1) {};
    \node [style=dot] (21) at (-.75, 0) {};
    \node [style=dot] (31) at (1.35, -0.2) {};
    \node [style=dot] (221) at (0, 1.25) {};
    \node [style=dot] (2211) at (-0.5, 2) {};
    \node [style=dot] (2212) at (0.5, 2) {};
    \node [style=dot] (11) at (-1.75, -0.35) {};
    \node [style=dot] (12) at (-1.25, -0.2) {};
    \node [style=dot] (121) at (-1.3, 0.4) {};

    \draw[kernel1] (null) to (1);
    \draw[kernel1] (null) to (2);
    \draw[kernel] (null) to (3);

    \draw[kernel] (1) to (11);
    \draw[kernel2] (1) to (12);

    \draw[kernel] (12) to (121);

    \draw[kernel2] (2) to (21);
    \draw[kernel] (2) to (22);
    \draw[kernel2] (2) to (23);

    \draw[kernel2] (22) to (221);

    \draw[kernel3] (221) to (2211);
    \draw[kernel] (221) to (2212);

    \draw[kernel3] (23) to (231);

    \draw[kernel] (3) to (31);
   
\end{tikzpicture}
\end{equ}
We now want to state the analogues of Lemmas~\ref{lemma: negative forest expansion} and~\ref{lemma: negatively renormalized integrand} for positive renormalizations.

We denote by $\cut^{\le 1}$ the collection of all subsets of $\cut$ with depth $0$ or $1$ and also set $\cut^{\le 1}_{\mcF} \eqdef \cut^{\le 1} \cap 2^{\cut_{\mcF}}$. 
For any $\cC \subset \cut$ and $\mathcal{F} \in \mathbb{F}_{\cC}$ we define a coloring on $\sT$ given by
\[
\hat{\sT}[\cC,\mathcal{F}]
\eqdef
\Big[\sT_{\not \ge}[\cC]\Big]_{2}
\sqcup 
\Big[ \overline{\mathcal{F}[\cC]} \Big]_{1}\;.
\] 
and use the shorthand $\sT[\cC,\mathcal{F}] \eqdef (\sT,\hat{\sT}[\cC,\mcF])$. 

For our previous example with $\sT$ and $\cC$ one has $(\sT,\hat{\sT}[\cC,\emptyset])$ given by
\[
\begin{tikzpicture}[scale=.7,baseline=0]
    
    \node [smallsubtreenode,color=red!40] (snull) at (0, -2) {};
    
    \node [smallsubtreenode,color=red!40] (s3) at (1.25, -1) {};

    \node [smallsubtreenode,color=red!40] (s31) at (1.35, -0.2) {};

    \draw[smallsubtreeedge,color=red!40] (snull) to (s3);
    \draw[smallsubtreeedge,color=red!40] (s3) to (s31);

    \node [style=dot] (null) at (0, -2) {};
    \node [style=dot] (2) at (0, -0.75) {};
    \node [style=dot] (3) at (1.25, -1) {};
    \node [style=dot] (1) at (-1, -1.25) {};
    \node [style=dot] (22) at (0, 0.25) {};
    \node [style=dot] (23) at (.75, 0) {};
    \node [style=dot] (231) at (1, 1) {};
    \node [style=dot] (21) at (-.75, 0) {};
    \node [style=dot] (31) at (1.35, -0.2) {};
    \node [style=dot] (221) at (0, 1.25) {};
    \node [style=dot] (2211) at (-0.5, 2) {};
    \node [style=dot] (2212) at (0.5, 2) {};
    \node [style=dot] (11) at (-1.75, -0.35) {};
    \node [style=dot] (12) at (-1.25, -0.2) {};
    \node [style=dot] (121) at (-1.3, 0.4) {};

    \draw[kernel1] (null) to (1);
    \draw[kernel1] (null) to (2);
    \draw[kernel] (null) to (3);

    \draw[kernel] (1) to (11);
    \draw[kernel2] (1) to (12);

    \draw[kernel] (12) to (121);

    \draw[kernel2] (2) to (21);
    \draw[kernel] (2) to (22);
    \draw[kernel2] (2) to (23);

    \draw[kernel2] (22) to (221);

    \draw[kernel3] (221) to (2211);
    \draw[kernel] (221) to (2212);

    \draw[kernel3] (23) to (231);

    \draw[kernel] (3) to (31);
   
\end{tikzpicture}
\]
Such i-trees will be produced by the action of $\tDelta_{+}$, the action of $\tpode_{+}$ corresponds to iteratively pulling out more structures, always acting to the right. 

Heuristically the tree $\bo{\sigma}_{\cC,\emptyset}$ is one of the terms appearing in the expansion of $\tpode_{+}(\sT,\hat{\sT}[\cC,\emptyset])$. 
Working through the action of $\tpode_{+}$ on $(\sT,\hat{\sT}[\cC,\emptyset])$ as specified through its recursive definition, after one step of the recursion many terms will be generated but the term giving rise to $\bo{\sigma}_{\cC,\emptyset}$ is
\begin{equ}\label{intermediate tide}
\begin{tikzpicture}[scale=.7,baseline=-0.8cm]
    
    \node [smallsubtreenode,color=red!40] (snull) at (0, -2) {};

    \node [smallsubtreenode,color=red!40] (s3) at (1.25, -1) {};

    \node [smallsubtreenode,color=red!40] (s31) at (1.35, -0.2) {};

    \draw[smallsubtreeedge,color=red!40] (snull) to (s3);

    \draw[smallsubtreeedge,color=red!40] (s3) to (s31);

    \node [style=dot] (null) at (0, -2) {};
    \node [style=dot] (2) at (0, -0.75) {};
    \node [style=dot] (3) at (1.25, -1) {};
    \node [style=dot] (1) at (-1, -1.25) {};
    \node [style=dot] (22) at (0, 0.25) {};

    \node [style=dot] (31) at (1.35, -0.2) {};

    \node [style=dot] (11) at (-1.75, -0.35) {};

    \draw[kernel1] (null) to (1);
    \draw[kernel1] (null) to (2);
    \draw[kernel] (null) to (3);

    \draw[kernel] (1) to (11);

    \draw[kernel] (2) to (22);

    \draw[kernel] (3) to (31);
   
\end{tikzpicture}
\cdot
\tpode_{+}
\begin{tikzpicture}[scale=.7,baseline=-0.8cm]
    
    \node [smallsubtreenode,color=red!40] (snull) at (0, -2) {};
    \node [smallsubtreenode,color=red!40] (s2) at (0, -0.75) {};
    \node [smallsubtreenode,color=red!40] (s3) at (1.25, -1) {};

    \node [smallsubtreenode,color=red!40] (s22) at (0, 0.25) {};

    \node [smallsubtreenode,color=red!40] (s31) at (1.35, -0.2) {};

    \node [smallsubtreenode,color=red!40] (s1) at (-1, -1.25) {};
    \node [smallsubtreenode,color=red!40] (s11) at (-1.75, -0.35) {};

    \draw[smallsubtreeedge,color=red!40] (snull) to (s2);
    \draw[smallsubtreeedge,color=red!40] (snull) to (s3);

    \draw[smallsubtreeedge,color=red!40] (snull) to (s1);
    \draw[smallsubtreeedge,color=red!40] (s1) to (s11);

    \draw[smallsubtreeedge,color=red!40] (s2) to (s22);

    \draw[smallsubtreeedge,color=red!40] (s3) to (s31);

    \node [style=dot] (null) at (0, -2) {};
    \node [style=dot] (2) at (0, -0.75) {};
    \node [style=dot] (3) at (1.25, -1) {};
    \node [style=dot] (1) at (-1, -1.25) {};
    \node [style=dot] (22) at (0, 0.25) {};
    \node [style=dot] (23) at (.75, 0) {};
    \node [style=dot] (231) at (1, 1) {};
    \node [style=dot] (21) at (-.75, 0) {};
    \node [style=dot] (31) at (1.35, -0.2) {};
    \node [style=dot] (221) at (0, 1.25) {};
    \node [style=dot] (2211) at (-0.5, 2) {};
    \node [style=dot] (2212) at (0.5, 2) {};
    \node [style=dot] (11) at (-1.75, -0.35) {};
    \node [style=dot] (12) at (-1.25, -0.2) {};
    \node [style=dot] (121) at (-1.3, 0.4) {};

    \draw[kernel1] (null) to (1);
    \draw[kernel1] (null) to (2);
    \draw[kernel] (null) to (3);

    \draw[kernel] (1) to (11);
    \draw[kernel2] (1) to (12);

    \draw[kernel] (12) to (121);

    \draw[kernel2] (2) to (21);
    \draw[kernel] (2) to (22);
    \draw[kernel2] (2) to (23);

    \draw[kernel2] (22) to (221);

    \draw[kernel3] (221) to (2211);
    \draw[kernel] (221) to (2212);

    \draw[kernel3] (23) to (231);

    \draw[kernel] (3) to (31);
   
\end{tikzpicture}
\end{equ}
The term $\bo{\sigma}_{\cC,\emptyset}$ then appears from the above term when one goes one step further in the recursive definition of $\tpode_{+}$, namely by pulling out the middle tree of \eqref{crimson tide} from the second tree of \eqref{intermediate tide}.
We define the collection of cut sets
\begin{equs}\label{positive cuttings - lot of defs}
\cut[\cC,\mathcal{F}]
\eqdef&
\left\{
\cD \subset \cut_{\mcF}:\ 
\underline{\cD} = \cC
\right\},\\[1.5ex]
\cut_{ < }[\cC,\mathcal{F}]
\eqdef&
\left\{
\cD \in \cut^{\le 1}_{\mcF}:\
\forall e \in \cD,\ \exists e' \in \cC 
\textnormal{ with } e' < e 
\right\}\;.
\end{equs}
Also, for any $\cC \subset \cut$, $\mcF \in \mathbb{F}_{\cC}$, and $\cD \in \cut_{<}[\cC,\mcF]$ we set
\[
\sT[\cC,\cD,\mcF] 
\eqdef 
\Big(
\sT_{\not \ge}[\cD], 
\big[
\overline{\mcF[\cC,\cD]} 
\big]_{1}
\sqcup
\big[\sT_{\not \ge}[\cC]\big]_{2}
\Big).
\] 
With all this notation in hand we have the following lemma.
\begin{lemma}\label{lemma: positive antipode formula} 
Let $\mathcal{F} \in \mathbb{F}$ and $\cC \in \cut^{\le 1}_{\mcF}$.
Then for any node decoration $\mfn$, edge decoration $\mfe$, and extended node label $\mfo$ on $\sT$ such that $\sT[\cC,\mcF]^{\mfn,\mfo}_{\se + \mfe} \in \Tr_{+}$ one has
\[
\Big(
\sum_{\cD \in \cut[\cC,\mathcal{F}]}
\projplusshape_{\bo{\sigma}_{\cD,\mathcal{F}}}
\Big)
\tpode_{+}
\sT[\cC,\mcF]^{\mfn,\mfo}_{\se + \mfe}
=
\tpode_{+} 
\sT[\cC,\mcF]^{\mfn,\mfo}_{\se + \mfe}\;.
\]
\end{lemma}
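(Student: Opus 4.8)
The plan is to mimic, essentially verbatim, the inductive argument used in the proof of Lemma~\ref{lemma: negative forest expansion}, with the co-product $\tDelta_-$ and the antipode $\tpode_-$ replaced by their positive counterparts $\tDelta_+$, $\tpode_+$, the collection of forests $\mathbb{F}$ replaced by the collection of cut sets in $\cut$, and the depth-filtration $D_k(\cdot)$ on forests replaced by the depth-filtration $D_k(\cC)$ on cut sets. The induction will be on the quantity $|E_{\sT} \setminus E_{\hat\sT[\cC,\mcF]_2}|$, i.e. on the number of edges of $\sT$ not already colored $2$ (this is precisely the quantity with respect to which the recursive definition of $\tpode_+$ proceeds). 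The base case is $\cC = \emptyset$, where $\sT[\cC,\mcF] = (\sT,2)$ up to the $\mcF$-coloring on leftover structure, and $\tpode_+$ acts by the base-case formula $(-1)^{\sum_u |\mfn(u)|}(\sT,2)^{\mfn,0}_{\mfe}$; here $\cut[\emptyset,\mcF] = \{\emptyset\}$ and $\bo{\sigma}_{\emptyset,\mcF} = (\sT,2)$, so the left side collapses to a single projection $\projplusshape_{(\sT,2)}$ applied to something already supported on that shape, which gives the identity.

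For the inductive step, I would unravel one step of the recursive definition of $\tpode_+ \sT[\cC,\mcF]^{\mfn,\mfo}_{\se+\mfe}$. By the definition of $\tpode_+$ on $\mathfrak{X}_+$, this produces a sum over $S \in \overline{\mathfrak{A}}_2(\sT,\hat\sT[\cC,\mcF])$ (together with the decoration sums over $\mfn_S,\mfe_S,\mff$) of a forest product $(S,\hat\sT[\cC,\mcF])^{\dots} \cdot \tpode_+ \mfp_+(\sT, [\hat\sT_1\setminus S]_1 \sqcup [S]_2)^{\dots}$. Using Lemma~\ref{lemma: extended labels do their job.}, the extended label $\mfo$ guarantees $\mfp_+$ does not kill the term (no \emph{additional} positive renormalizations are triggered), and the surviving i-trees in the second factor are again of the form $\sT[\cC',\cD',\mcF]^{\dots} \in \Tr_+$ for suitable $\cC',\cD'$. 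One then identifies the set of $S$ ranged over in $\overline{\mathfrak{A}}_2$ with the set $\cut_{<}[\cC,\mcF]$ of depth-$\le 1$ cut sets lying strictly below $\cC$, and applies the inductive hypothesis to each $\tpode_+ \sT[\cD', \mcF]$ appearing. The crucial bookkeeping identity — analogous to $\bigsqcup_{\mcG \in \mathbb{F}_<[\mcF]}\bigsqcup_{\mcG' \in \mathbb{F}[\mcG]}(\mcG'\sqcup\mcF) = \mathbb{F}[\mcF]$ in the proof of Lemma~\ref{lemma: negative forest expansion} — is that
\[
\bigsqcup_{\cD \in \cut_{<}[\cC,\mcF]} \bigsqcup_{\cD' \in \cut[\cD,\mcF]} (\cD' \sqcup \underline{\cC}) = \cut[\cC,\mcF]\;,
\]
after which the telescoping of $\projplusshape$-sums through the forest product (using that $\bo{\sigma}_{\cD'\sqcup\underline{\cC},\mcF}$ factors as the forest product of $\bo{\sigma}_{\underline\cC,\mcF}$-type top piece with $\bo{\sigma}_{\cD',\mcF}$) closes the induction.

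The main obstacle, and the place where this argument genuinely differs from the negative case, is the interaction between the positive-renormalization cutting and the colored subforest $\overline{\mcF[\cC]}$ carried along in the $\hat\sT[\cC,\mcF]$ coloring. In the negative-renormalization proof the coloring recorded by $[G]_1$ is purely auxiliary and commutes trivially with the recursion; here, the $[\overline{\mcF[\cC,\cD]}]_1$ piece of $\widehat{T^{(j)}}$ in Definition~\ref{def: positive cutting} must be tracked carefully through $\mfp_+$ and through the definition of $\overline{\mathfrak{A}}_2$ (which requires each $T' \in \Con(\hat T_1)$ to be either inside $S$ or disjoint from $S$, and additionally imposes the edge-intersection condition with $\Tr(T,\hat T_2)$). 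I expect that verifying the compatibility of this $\mcF$-coloring restriction with the decomposition $\cut[\cC,\mcF] = \bigsqcup_\cD \bigsqcup_{\cD'} (\cD' \sqcup \underline\cC)$ — i.e. that $\mcF[\cD',\cdot]$ and $\mcF[\underline\cC,\cdot]$ reassemble correctly into $\mcF[\cC,\cdot]$ — will be the only step requiring more than formal transcription, and it is precisely what the somewhat intricate definitions of $\mcF[\cC]$, $\mcF[\cC,\cD]$, and $\sT[\cC,\cD,\mcF]$ were set up to make work. Everything else, including the combinatorial manipulation of the binomial and $1/\mfe!$ factors and the sign $(-1)^{|\Tr(T,\hat T_2)|}$, parallels the negative case and can be checked routinely.
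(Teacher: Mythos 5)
Your proposal is correct and follows essentially the same path as the paper's proof: induction on $|E_{\sT} \setminus E_{\sT_{\not \ge}[\cC]}|$ with $\mcF$ held fixed, unraveling one step of the recursive definition of $\tpode_+$, identifying the subtrees in $\overline{\mathfrak{A}}_2(\sT,\hat\sT[\cC,\mcF])$ with $\cut_<[\cC,\mcF]$, applying the inductive hypothesis, and closing the induction via the set identity $\bigsqcup_{\cD \in \cut_<[\cC,\mcF]}\bigsqcup_{\cE \in \cut[\cD,\mcF]}(\cC \sqcup \cE) = \cut[\cC,\mcF]$ (your $\cD' \sqcup \underline{\cC}$ is the same since $\cC$ has depth $\le 1$). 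The $\mcF$-coloring bookkeeping you flag as the one genuine subtlety does work out exactly as you hope, because $\mcF$ is fixed throughout the induction and the colored piece $[\overline{\mcF[\cC,\cD]}]_1$ is prescribed uniquely by $(\cC,\cD,\mcF)$ via Definition~\ref{def: positive cutting}, so it factors through the forest product formally.
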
 
\begin{proof}
Our proof operates inductively in the quantity 
\begin{equ}\label{eq: number of 2 edges}
|E_{\sT} \setminus E_{\sT_{\not \ge}[\cC]}|,
\end{equ}
with $\mcF$ being fixed for the entire proof. 
The base case of this induction, which occurs when the quantity \eqref{eq: number of 2 edges} is $0$, is immediate since we then have $\sT[\cC,\mcF] = (\sT,2)$ -- we then have $ 
\tpode_{+}
(\sT,2)^{\mfn,\mfo}_{\se + \mfe}
=
(-1)^{|\mfn|}(\sT,2)^{\mfn,0}_{\se + \mfe}$.

We now turn to the proving the inductive step. 
Fix $j \in \N$, and suppose the claim has been proven for any $\cC$ with $\cC \in \cut^{\le 1}_{\mcF}$ and \eqref{eq: number of 2 edges} less than $j$.

Then suppose we are given $\cC \in \cut^{\le 1}_{\mcF}$ with \eqref{eq: number of 2 edges} equal to $j+1$.
Then we have $(-1)^{|\cC|+ |\hat{\mfn}|} \tpode_{+} 
\sT[\cC,\mcF]^{\mfn,\mfo}_{\se + \mfe}$ is given by \ajay{Need to say more about going to first line}
\begin{equs}
{} 
&
\sum_{
\cD \in \cut_{<}[\cC,\mathcal{F}]
}
\sum_{
\mfe_{\cD},
\mfn_{\cD},
\mff_{\cC}
}
\frac{(-1)^{|\mff_{\cC}|}}{(\mff_{\cC} + \mfe_{\cD})!}
\binom{\mfn - \hat{\mfn}}{\mfn_{\cD}}
\sT[\cC,\cD,\mcF]^{
\hat{\mfn}
+
\mfn_{\cD} +  
\chi(\mfe_{\cD} + \mff_{\cC})
}_{\se + \mfe + \mff_{\cC}}\\
&
\qquad
\qquad
\qquad
\qquad
\qquad
\qquad
\cdot
\tpode_{+}
\mfp_{+}
\sT[\cD,\mcF]^{
\mfn - \mfn_{\cD} - \hat{\mfn},
\mfo
}_{\se + \mfe + \mfe_{\cD}}\\
=&
\sum_{
\cD \in \cut_{<}[\cC,\mathcal{F}]
}
\sum_{
\mfe_{\cD},
\mfn_{\cD},
\mff_{\cC}
}
\frac{(-1)^{|\mff_{\cC}|}}{(\mfe_{\cD} + \mff_{\cC})!}
\binom{\mfn - \hat{\mfn}}{\mfn_{\cD}}
\sT[\cC,\cD,\mcF]^{
\mfn_{\cD} +  \hat{\mfn} +
\chi(\mfe_{\cD} + \mff_{\cC})
}_{\se + \mfe + \mff_{\cC}}\\
&
\qquad
\qquad
\qquad
\qquad
\cdot
\sum_{\cE \in \cut[\cD,\mathcal{F}]}
\projplusshape_{\sigma_{\cE,\mcF}}
\sT[\cD,\mcF]^{
\mfn - \mfn_{\cD} - \hat{\mfn},
\mfo
}_{\se + \mfe + \mfe_{\cD}}\\
=&
\sum_{
\substack{
\cD \in \cut_{<}[\cC,\mathcal{F}]\\
\cE \in \cut[\cD,\mathcal{F}]
}
}
\projplusshape_{\sigma_{\cE \sqcup \cC,\mcF}}
\sum_{
\mfe_{\cD},
\mfn_{\cD},
\mff_{\cC}
}
\frac{(-1)^{|\mff_{\cC}|}}{(\mfe_{\cD} + \mff_{\cC})!}
\binom{\mfn}{\mfn_{\cD}}
\sT[\cC,\cD,\mcF]^{
\hat{\mfn}
+
\mfn_{\cD} +  
\chi(\mfe_{\cD} + \mff_{\cC})
}_{\se + \mfe + \mff_{\cC}}\\
&
\qquad
\qquad
\qquad
\qquad
\cdot
\tpode_{+}
\mfp_{+}
\sT[\cD,\mcF]^{
\mfn - \mfn_{\cD} - \hat{\mfn},
\mfo
}_{\se + \mfe + \mfe_{\cD}}\\
=&
(-1)^{|\cC|+ |\hat{\mfn}|}
\Big(
\sum_{
\substack{
\cD \in \cut_{<}[\cC,\mathcal{F}]\\
\cE \in \cut[\cD,\mathcal{F}]
}
}
\projplusshape_{\sigma_{\cE \sqcup \cC,\mcF}}
\Big)
\tpode_{+} 
\sT[\cC,\mcF]^{\mfn,\mfo}_{\se + \mfe},
\end{equs}
where $\hat{\mfn} = \mfn(N(T_{\not \ge}[\cC]))$ and the sum over $\mff_{\cC}$ is a sum over edge decorations $\mff_{\cC} \in \mathfrak{E}[\sT[\cC,\mcF]^{\mfn,\mfo}_{\se + \mfe}]$. 
Since 
\[
\bigsqcup_{
\cD \in \cut_{<}[\cC,\mathcal{F}]
}
\bigsqcup_{\cE \in \cut[\cD,\mathcal{F}]}
(\cC \sqcup \cE)
=
\cut[\cC,\mathcal{F}]\;,
\]
the result follows.
\end{proof}
\begin{remark}
Clearly, the statement of Lemma~\ref{lemma: positive antipode formula} holds if we relax the condition $\cC \in \cut^{\le 1}_{\mcF}$ to just that $\cC \in \cut_{\mcF}$ provided one modifies what's written on the RHS and sums over $\cD \in \cut[\underline{\cC},\mcF]$.
\end{remark}
We now introduce some more notation to facilitate writing explicit formulas for the integrands corresponding to the output of the positive twisted antipode.

For any $\cC \subset \cut$, $\mcF \in \mathbb{F}_{\cC}$, and $\cD \in \cut_{<}[\cC,\mcF]$
we define, using the convention of Remark~\ref{remark: overload from subtree to collection}, 
\begin{equation*}
\begin{split}
&
\tilde{N}[\cC,\mathcal{F}] 
\eqdef
\tilde{N}(\Tr_{\ge}[\cC]) 
\setminus 
\tilde{N}(\mathcal{F}),
K[\cC,\mathcal{F}] 
\eqdef
K(\Tr_{\ge}[\cC])
\setminus
(K(\mcF) \sqcup \cC),\\
& L_{\cC,\mathcal{F}}
\eqdef
L(\Tr_{\ge}[\cC])
\setminus 
L(\mcF)\;.   
\end{split}
\end{equation*}
For any $\cD \subset \cut$ and edge decoration $\mfe$ we define $\RKer_{\mfe}^{\cD} \in \allf$, depending on $x_{v}$ with $v \in e_{\ch}(\cD) \cup e_{\p}(\cD) \cup \{\logof\}$, via
\begin{equation*}
\begin{split}
&\RKer_{\mfe}^{\cD}(
x
)
\eqdef
(-1)^{|\cD|}
\prod_{e \in \cD}
\sum_{|k|_{\s} < \gamma(e) - |\mfe(e)|_{\s} }
\frac{(x_{e_{\p}} - x_{\logof})^{k}}{k!}
\,D^{k + \se(e) + \mfe(e)}K_{\mft(e)}(x_{\logof} - x_{e_{\ch}}).
\end{split}
\end{equation*} 
For any $\cC \subset \cut$ and edge decoration $\mfe$ on $K(\sT)$ we define $\mathrm{Der}(\cC,\mfe)$
to be the set of all edge decorations $\mff_{\cC}$ supported on $\underline{\cC}$ with the property that for every $e \in \underline{\cC}$ one has $|\mff_{\cC}(e)|_{\s} \le \gamma(e) - |\mfe(e)|_{\s}$. 

For any node decoration $\mfn$ on $N(T)$ with $\mfn \le \sn$ and edge decoration $\mfe$ on $K(\sT)$ we define $\ddot{\varpi}^{\mfn}_{\mfe}[\cC,\mathcal{F}](x) \in \allf$, depending on $x_{v}$ with $v \in \tilde{N}[\cC,\mathcal{F}] \sqcup \{\logof\}$, by setting
\begin{equation*} 
\begin{split}
\ddot{\varpi}^{\mfn}_{\mfe}[\cC,\mathcal{F}]
\eqdef&\ 
\RKer_{\mfe}^{\cC \setminus \underline{\cC}}
\cdot
\ke{K[\cC,\mathcal{F}]}{\mfe}
\cdot
\Big(
\sum_{\mff_{\cC} \in \mathrm{Der}(\cC,\mfe)}
\frac{\kerec{\underline{\cC}}{\mfe + \mff_{\cC}}{\logof}}
{\mff_{\cC}!}
\fpowquot{e_{\p}(\underline{\cC})}{\chi \mff_{\cC}}{\logof}
\Big)
\\
& \quad
\cdot
\Big(
\prod_{
T \in \overline{
\mathcal{F}[\cC]
}
}
\powrootquot{\tilde{N}(T)}{\mfn}{\logof}{\rho_{T}}
\Big)
\cdot
\powroot{
\tilde{N}[\cC,\mathcal{F}] 
}{\mfn}{\logof}
\cdot
\fpowquot{N(\sT_{\not \ge}[\cC])}{\mfn}{\logof}\;.
\end{split}
\end{equation*}
Finally, for an edge decoration $\mfe$ and $\cC \subset \cut$ we define an indicator function
\[
\mathbbm{1}_{<\gamma}^{\cC}(\mfe)
\eqdef
\prod_{e \in \cC}
\mathbbm{1}\{ |\mfe(e)|_{\s} < \gamma(e)\}
\]
\begin{lemma} \label{lemma: positively renormalized integrand} 
Let $\cC \subset \cut$ be non-empty and $\mathcal{F} \in \mathbb{F}_{\cC}$. 
Then for any node decoration $\mfn$ on $N(T)$ with $\mfn \le \sn$, edge decoration $\mfe$ on $K(\sT)$, and extended node label $\mfo$ on $\sT$ such that $\sT[\cC,\mcF]^{\mfn,\mfo}_{\se + \mfe} \in \Tr_{+}$ one has
\begin{equs}\label{eq: positive pseudopode identity}
\Upsilon^{\xi}
&\left[ 
\projplusshape_{\bo{\sigma}_{\cC,\mathcal{F}}}
\tpode_{+}
\sT[\cC,\mathcal{F}]^{\mfn, \mfo}_{\se + \mfe}
\right](x_{\logof})\\
&=
(-1)^{|\underline{\cC}|}
\int_{\tilde{N}[\cC,\mathcal{F}]} \back dy\  
\ddot{\varpi}^{\mfn}_{\mfe}
[\cC,\mathcal{F}](y \sqcup x_{\logof})
\Big( \prod_{u \in L_{\cC,\mathcal{F}}} \xi_{\mft(u)}(y_{u}) \Big)
\end{equs}
\end{lemma}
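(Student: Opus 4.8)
\textbf{Proof strategy for Lemma~\ref{lemma: positively renormalized integrand}.}
The plan is to mirror exactly the inductive argument used for the negative renormalization counterparts (Lemmas~\ref{lemma: negative forest expansion} and~\ref{lemma: negatively renormalized integrand}), but now tracking the analytic effect of the operator $\tpode_{+}$ rather than $\tpode_{-}$. The induction runs on the same quantity that organizes the proof of Lemma~\ref{lemma: positive antipode formula}, namely $|E_{\sT} \setminus E_{\sT_{\not \ge}[\cC]}|$, with $\mcF$ held fixed throughout. First I would combine Lemma~\ref{lemma: positive antipode formula} with the algebra-homomorphism property of $\Upsilon^{\xi}$: applying $\Upsilon^{\xi}$ to $\tpode_{+}\sT[\cC,\mcF]^{\mfn,\mfo}_{\se+\mfe}$ and inserting the resolution of the identity $\sum_{\cD \in \cut[\cC,\mcF]}\projplusshape_{\bo{\sigma}_{\cD,\mcF}}$ reduces the claim to computing the contribution of a single term $\projplusshape_{\bo{\sigma}_{\cC,\mcF}}\tpode_{+}\sT[\cC,\mcF]$ (this is why the LHS of \eqref{eq: positive pseudopode identity} singles out one shape projection). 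For the base case, $|E_{\sT} \setminus E_{\sT_{\not \ge}[\cC]}| = 0$ forces $\cC = \emptyset$, which is excluded by hypothesis, so the genuine base case is $\mathrm{depth}(\cC) = 1$ with $\cC = \underline{\cC}$; there $\tpode_{+}$ acts via a single application of its recursive formula and $\Upsilon^{\xi}$ of the result produces exactly $\RKer^{\emptyset}_{\mfe}$ (recall $\cC \setminus \underline{\cC} = \emptyset$ here) times the kernel factors $\kerec{\underline{\cC}}{\mfe + \mff_{\cC}}{\logof}$ and the power functions $\fpowquot{\cdot}{\cdot}{\logof}$ appearing in $\ddot{\varpi}^{\mfn}_{\mfe}$, with the noise factors $\xi_{\mft(u)}(y_u)$ for $u \in L_{\cC,\mcF}$ coming out unchanged since leaves are never touched by positive renormalization. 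The Taylor expansion $\mathscr{Y}$-type operator hidden inside the recursive definition of $\tpode_{+}$ (the $\mff_{\cC}$-sum, the $(x_{e_\p} - x_\logof)^k$ recentering, and the sign $(-1)^{|\cdot|}$) is precisely what assembles into $\RKer^{\cC \setminus \underline{\cC}}_{\mfe}$ and the $\mathrm{Der}(\cC,\mfe)$ summation.

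For the inductive step I would expand $\tpode_{+}\sT[\cC,\mcF]^{\mfn,\mfo}_{\se+\mfe}$ using one step of its recursive definition, which (as in the proof of Lemma~\ref{lemma: positive antipode formula}) produces a sum over $\cD \in \cut_{<}[\cC,\mcF]$, over the decorations $\mfn_{\cD}$, $\mfe_{\cD}$, $\mff_{\cC}$, of a forest-product term $\sT[\cC,\cD,\mcF]^{\cdots} \cdot \tpode_{+}\mfp_{+}\sT[\cD,\mcF]^{\cdots}$. The factor $\sT[\cC,\cD,\mcF]$ is a tree of depth $\le 1$ over the cut, so $\Upsilon^{\xi}$ evaluates it directly into the kernel and power factors attached to the edges of $\underline{\cC}$ and the nodes ``between'' $\cC$ and $\cD$; the other factor is handled by the inductive hypothesis applied to $\cD$ (which has strictly smaller $|E_{\sT}\setminus E_{\sT_{\not\ge}[\cD]}|$), producing $\ddot{\varpi}^{\mfn - \mfn_{\cD} - \hat{\mfn}}_{\mfe + \mfe_{\cD}}[\cD,\mcF]$. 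The heart of the computation is then a bookkeeping identity: one shows that summing the product of these two pieces over $\cD$, $\mfn_{\cD}$, $\mfe_{\cD}$, $\mff_{\cC}$ reproduces $\ddot{\varpi}^{\mfn}_{\mfe}[\cC,\mcF]$ with the integration variables $\tilde{N}[\cC,\mcF] = \tilde{N}[\cD,\mcF] \sqcup (\tilde{N}[\cC,\mcF]\setminus\tilde{N}[\cD,\mcF])$ partitioned accordingly, and with the disjoint union identity $\bigsqcup_{\cD\in\cut_<[\cC,\mcF]}\bigsqcup_{\cE\in\cut[\cD,\mcF]}(\cC\sqcup\cE) = \cut[\cC,\mcF]$ (already used in Lemma~\ref{lemma: positive antipode formula}) closing the recursion. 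The recentering manipulation — rewriting $(x_{e_\p} - x_{\rho_T})^{\mfn} = \sum (x_{e_\p} - x_\logof)^{\cdot}(x_\logof - x_{\rho_T})^{\cdot}$ via the binomial theorem, so that everything is expressed relative to $x_\logof$ — is the exact positive-renormalization analogue of the $\mathscr{Y}_T$ expansion performed in the proof of Lemma~\ref{lemma: negatively renormalized integrand}, and the combinatorial factors $\binom{\mfn - \hat\mfn}{\mfn_{\cD}}$, $\frac{1}{(\mff_{\cC}+\mfe_{\cD})!}$, and the signs $(-1)^{|\mff_{\cC}|}$, $(-1)^{\sum(|\hat\mfn|+|\chi\mff|)}$ all match up with the definitions of $\RKer$, $\mathrm{Der}(\cC,\mfe)$, and the $\fpowquot{\cdot}{\cdot}{\logof}$ factors.

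The step I expect to be the main obstacle is the careful matching of edge-derivative decorations at the ``seam'' between $\cC$ and $\cD$: the recursive definition of $\tpode_{+}$ carries an $\mff$-sum over $\mfE[\sT[\cC,\mcF]^{\mfn,\mfo}_{\se+\mfe}]$ constrained by the condition defining $\mathfrak{X}_{+}$ (the inequality $|\clearrootn(\sT_{\ge}(e),0)|_{+} = |\clearrootn(\sT_{\ge}(e),\hat{T})|_{+} + |\mfe(e)-\se(e)|_{\s}$ built into $\Tr_+$), and one must verify that this constraint is exactly what turns the naive Taylor sum into the truncated sum $|k|_{\s} < \gamma(e) - |\mfe(e)|_{\s}$ appearing in $\RKer^{\cC\setminus\underline{\cC}}_{\mfe}$ and into $\mathrm{Der}(\cC,\mfe)$. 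This is where the role of the extended label $\mfo$ (emphasized in the text: it ensures $\tDelta_+$/$\tpode_+$ do not generate \emph{additional} positive renormalizations beyond what the original homogeneities demand) must be used, via Lemma~\ref{lemma: extended labels do their job.}, to guarantee that $\gamma(e)$ — computed from the \emph{undecorated} $\sT_{\ge}(e)$ — is the correct truncation order at every stage of the recursion even after earlier decorations have been peeled off. Once that identification is made, the remaining algebra is routine bookkeeping of binomial coefficients and factorials, entirely parallel to the negative case.
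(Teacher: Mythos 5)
Your plan follows the same inductive route as the paper: strip off $\underline{\cC}$, expand $\tpode_{+}$ once via its recursive definition, recenter around $x_{\logof}$ with binomial manipulations, and close the recursion on $\cD = \cC \setminus \underline{\cC}$ via the inductive hypothesis, checking that the $\mff_{\cC}$-sum and the constraints built into membership in $\Tr_{+}$ reproduce exactly $\RKer$, $\mathrm{Der}(\cC,\mfe)$, and the truncation orders $\gamma(e)$. Two remarks. First, the paper inducts on $\mathrm{depth}(\cC)$ rather than on $|E_{\sT}\setminus E_{\sT_{\not\ge}[\cC]}|$ — both quantities drop when you pass from $\cC$ to $\cD$, so this is cosmetic — but the paper takes $\cC = \emptyset$ as the base case: the formula \eqref{eq: positive pseudopode identity} is perfectly sensible there (all kernel, cumulant and $\RKer$ factors degenerate to $1$, and both sides reduce to $\fpowquot{N(\sT)}{\mfn}{\logof}(x_{\logof})$), and this is cleaner than doing the full Taylor computation by hand at depth one, which is essentially the general inductive step written out once with $\cD = \emptyset$. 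Second, your opening move — inserting $\sum_{\cD \in \cut[\cC,\mcF]}\projplusshape_{\bo{\sigma}_{\cD,\mcF}}$ via Lemma~\ref{lemma: positive antipode formula} and ``reducing'' to a single shape projection — is unnecessary and slightly off: the left-hand side of \eqref{eq: positive pseudopode identity} already is a single shape-projected term, and Lemma~\ref{lemma: positive antipode formula} is not used inside this proof; it is the separate re-summation identity, invoked later in the proof of Proposition~\ref{explicit formula}. Drop that first step; everything after it (the binomial recentering, the matching of edge-derivative decorations at the seam between $\cC$ and $\cD$, and the role of $\mfo$ through $\Tr_{+}$) matches the paper's computation.
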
 
\begin{proof}
We prove the statement by induction in $\mathrm{depth}(\cC)$ for fixed $\mcF$. 
The base case, when $\cC = \emptyset$, is a straightforward computation.  

We move to the inductive step. 
Fix $j \in \N$ and suppose the claim has been proven for any $\cC \subset \cut_{\mcF}$ with $\mathrm{depth}(\cC) \le j$.
Now suppose we are given $\cC \subset \cut_{\mcF}$ with $\mathrm{depth}(\cC) = j+1$. 
Writing $\cD = \cC \setminus \underline{\cC}$ and $\hat{\mfn} = \sn(T_{\not \ge}[\cC])$, we then have
\begin{equs}\label{eq: inductive step for positive antipode, goal} 
\projplusshape_{\bo{\sigma}_{\cC,\mathcal{F}}}
&\tpode_{+}
\hat{T}[\cC,\mathcal{F}]^{\mfn,\mfo}_{\se + \mfe}\\
&=
(-1)^{|\cC|+ |\hat{\mfn}|}
\sum_{
\mfe_{\cD},
\mfn_{\cD},
\mff_{\cC}
}
\frac{(-1)^{|\mff_{\cC}|}}{(\mff_{\cC} + \mfe_{\cD})!}
\binom{\mfn - \hat{\mfn}}{\mfn_{\cD}}
\sT[\cC,\cD,\mcF]^{
\mfn_{\cD} +  
\chi(\mfe_{\cD} + \mff_{\cC}) + 
\hat{\mfn}
}_{\se + \mfe + \mff_{\cC}}\\
&
\qquad
\qquad
\cdot
\tpode_{+}
\mfp_{+}
\sT[\cD,\mcF]^{
\mfn - \hat{\mfn} - \mfn_{\cD} ,
\mfo
}_{\se + \mfe + \mfe_{\cD}}
\cdot
\mathbbm{1}_{< \gamma}^{\underline{\cD}}
\left(
\mfe_{\cD} + 
\mfe
\right)
\cdot
\mathbbm{1}_{< \gamma}^{\underline{\cC} }
\left(
\mff_{\cC}
+ 
\mfe
\right)
.
\end{equs}
We rewrite the corresponding RHS of \eqref{eq: positive pseudopode identity}. By the binomial identity one has
\begin{equation}\label{eq: positive antipode binomial}
\begin{split}
&\fpowquot{N(\sT)}
{\hat{\mfn}}{\logof}
\powroot{
\tilde{N}[\cC,\mathcal{F}]
}{\mfn - \hat{\mfn}}{\logof}
\Big(
\prod_{
T \in \mmax{
\mathcal{F}[\cC]
}
}
\powrootquot{\tilde{N}(T)}{\mfn - \hat{\mfn}}{\logof}{\rho_{T}}
\Big)\\
=& 
\sum_{\mfn_{\cD}}
\binom{\mfn - \hat{\mfn}}{\mfn_{\cD}}
\pow{\tilde{N}[\cC,\mathcal{F}] \cap N(\sT_{\not \ge}[\cD])}{\mfn_{\cD}}
\Big(
\prod_{
T \in \mmax{
\mathcal{F}[\cC,\cD]
}
}
\powquot{\tilde{N}(T)}{\mfn_{\cD}}{\rho_{T}}
\Big)
\fpowquot{N(\sT)}
{\hat{\mfn}}{\logof}
\\
&
\quad
\cdot
\fpowquot{N(\sT_{\not \ge}[\cD])}
{\mfn - \hat{\mfn} - \mfn_{\cD}}{\logof}
\cdot
\powroot{
\tilde{N}[\cC,\mcF]
}
{\mfn - \hat{\mfn} - \mfn_{\cD}}{\logof}
\cdot
\Big(
\prod_{
T \in \mmax{
\mathcal{F}[\cD]
}
}
\powrootquot{\tilde{N}(T)}{\mfn - \hat{\mfn} - \mfn_{\cD}}
{\logof}{\rho_{T}}
\Big)\;,
\end{split}
\end{equation}
where in keeping with our convention, the sum over $\mfn_{\cD}$ is a sum over node decorations supported on $N(\sT_{\not \ge}[\cD])$ -- however that the binomial coefficient forces it to be supported on $N(\sT_{\not \ge}[\cD]) \setminus N(\sT_{\not \ge}[\cC])$.

We also have
\begin{equs}
\RKer_{\mfe}^{\underline{\cD}}
&=
(-1)^{|\underline{\cD}| }
\sum_{\mfe_{\cD}}
\frac{1}{\mfe_{\cD}!}
\powroot{e_{\mathrm{p}}(\underline{\cD})}{\chi \mfe_{\cD}}{\logof}
\cdot
\Ker^{\underline{\cD},\logof}_{\mfe + \mfe_{\cD}}
\mathbbm{1}_{< \gamma}^{\underline{\cD}}
\left(
\mfe_{\cD} + 
\mfe
\right)\\
&=
(-1)^{|\underline{\cD}| }
\sum_{\mff_{\cD},\mfe_{\cD}}
\frac{1}{\mfe_{\cD}!}
\binom{\mfe_{\cD}}{\mff_{\cD}}
\powroot{e_{\mathrm{p}}(\underline{\cD})}{\chi(\mfe_{\cD} - \mff_{\cD})}{\logof}
\fpowquot{e_{\mathrm{p}}(\underline{\cD})}{\chi \mff_{\cD}}{\logof}
\Ker^{\underline{\cD},\logof}_{\mfe + \mfe_{\cD}}
\mathbbm{1}_{< \gamma}^{\underline{\cD}}
\left(
\mfe_{\cD} + 
\mfe
\right)\\
&=\label{eq: positive antipode expanding renormalized edges}
(-1)^{|\underline{\cD}| }
\sum_{\mff_{\cD},
\mfe_{\cD}}
\frac{1}{\mfe_{\cD}!}
\frac{1}{\mff_{\cD}!}
\powroot{e_{\mathrm{p}}(\underline{\cD})}{\chi \mfe_{\cD}}{\logof}
\fpowquot{e_{\mathrm{p}}(\underline{\cD})}{\chi \mff_{\cD}}{\logof}
\Ker^{\underline{\cD},\logof}_{\mfe + \mfe_{\cD} + \mff_{\cD}}\\
&
\qquad
\qquad
\qquad
\cdot
\mathbbm{1}_{< \gamma}^{\underline{\cD}}
\left(
\mfe_{\cD} + \mff_{\cD}
+
\mfe\right)\\ 
&=
(-1)^{|\underline{\cD}| }
\sum_{\mfe_{\cD}}
\frac{1}{\mfe_{\cD}!}
\powroot{e_{\mathrm{p}}(\underline{\cD})}{\chi \mfe_{\cD}}{\logof}
\Big(
\sum_{\mff_{\cD} \in \mathrm{Der}(\cD,\mfe + \mfe_{\cD})}
\frac{\Ker^{\underline{\cD},\logof}_{\mfe + \mfe_{\cD} + \mff_{\cD}}}{\mff_{\cD}!}
\fpowquot{e_{\mathrm{p}}(\underline{\cD})}{\chi \mff_{\cD}}{\logof}
\Big)
\end{equs}
where in going to the second line we used the binomial identity and in going to the third line we performed a change of summation $\mfe_{\cD}' \eqdef \mfe_{\cD} - \mff_{\cD}$. 
Note that there is an indicator function $\mathbbm{1}_{< \gamma}^{\underline{\cD}}
\left(
\mfe_{\cD} +
\mfe
\right)$ implicit in the last line, when this vanishes $\mathrm{Der}(\cD,\mfe + \mfe_{\cD})$ is empty. 

Using \eqref{eq: positive antipode expanding renormalized edges} we have that $\ke{K[\cC,\mathcal{F}]}{\mfe}
\RKer_{\mfe}^{\cC \setminus \underline{\cC}}$ is equal to
\begin{equs}\label{eq: positive antipode kernels}
{}
&
\sum_{\mfe_{\cD}}
\mathbbm{1}_{< \gamma}^{\underline{\cD}}
\left(
\mfe_{\cD} +
\mfe
\right)
\frac{1}{\mfe_{\cD}!}
\ke{K[\cD,\mathcal{F}]}{\mfe}
\RKer_{\mfe}^{\cD \setminus \underline{\cD}}
\\
&
\qquad
\cdot
\ke{K[\cC,\mathcal{F}] \cap K(T_{\not \ge}[\cD])}{\mfe}
(-1)^{|\underline{\cD}| }
\powroot{e_{\mathrm{p}}(\underline{\cD})}{\chi\mfe_{\cD}}{\logof}
\Big(
\sum_{\mff_{\cD} \in \mathrm{Der}(\cD,\mfe + \mfe_{\cD})}
\frac{\Ker^{\underline{\cD},\logof}_{\mfe + \mfe_{\cD} + \mff_{\cD}}}{\mff_{\cD}!}
\fpowquot{e_{\mathrm{p}}(\underline{\cD})}{\chi \mff_{\cD}}{\logof}
\Big)
\end{equs}
We then put this all together to see that the RHS of \eqref{eq: positive pseudopode identity} is equal to
\begin{equs}
&
(-1)^{|\cC|}
\sum_{
\mfe_{\cD},
\mfn_{\cD},
\mff_{\cC}
}
\frac{1}{(\mfe_{\cD} + \mff_{\cC})!}
\binom{\mfn}{\mfn_{\cD}}
\mathbbm{1}_{< \gamma}^{\underline{\cD}}
\left(
\mfe_{\cD} + 
\mfe
\right)
\cdot
\mathbbm{1}_{< \gamma}^{\underline{\cC}}
\left(
\mff_{\cC}(e) + 
\mfe(e)
\right)
\\
&
\cdot
\int dy_{\tilde{N}[\cC,\mathcal{F}] \setminus \tilde{N}[\cD,\mathcal{F}] }\ 
\powroot{e_{\mathrm{p}}(\underline{\cD})}{\chi \mfe_{\cD}}{\logof}
(
y
\sqcup 
x_{\logof}
)
\Ker_{\mfe}^{\underline{\cD},\logof}
(y \sqcup x_{\logof})
\ke{K[\cC,\mathcal{F}] \cap K(T_{\not \ge}[\cD])}{\mfe}
(y)
\\
&\quad 
\cdot
\pow{\tilde{N}[\cC,\mcF] \cap N(T_{\not \ge }[\cD])}
{\mfn_{\cD}}(y)
\Big(
\prod_{
T \in \overline{\mathcal{F}[\cC,\cD]}
}
\powquot{\tilde{N}(T)}
{\mfn_{\cD}}
{\rho_{T}}(y)
\Big)
\fpowquot{N(\sT)}
{\hat{\mfn}}{\logof}
\\
&
\quad
\cdot
\kerec{\underline{\cC}}{\mfe + \mff_{\cC}}{\logof}
\fpowquot{e_{\p}(\underline{\cC})}{\chi \mff_{\cC}}{\logof}
\Big( 
\prod_{u \in L_{\cC,\mathcal{F}} \setminus L_{\cD,\mathcal{F}}} 
\xi_{\mft(u)}(y_{u}) 
\Big)
\\
&
\cdot
(-1)^{
|\underline{\cD}|
}
\int dy_{ \tilde{N}[\cD,\mathcal{F}] }
\ddot{\varpi}^{\mfn - \hat{\mfn} -
\mfn_{\cD}
}_{\mfe + \mfe_{\cD}}
[\cD,\mathcal{F}](y \sqcup x_{\logof}).
\end{equs}
Here $\ddot{\varpi}^{\mfn - \hat{\mfn} - 
\mfn_{\cD}
}_{\mfe + \mfe_{\cD}}
[\cD,\mathcal{F}]$ is built using the second lines of \eqref{eq: positive antipode binomial} and \eqref{eq: positive antipode kernels}.

Using our inductive hypothesis for the very last line we see that the expression above is equal to
\begin{equation*}
\begin{split}
&
(-1)^{|\cC|}
\sum_{
\mfe_{\cD},
\mfn_{\cD},
\mff_{\cC}
}
\frac{1}{(\mff_{\cC} + \mfe_{\cD})!}
\binom{\mfn}{\mfn_{\cD}}
\mathbbm{1}^{\underline{\cD}}_{< \gamma}
(
\mfe_{\cD} 
\mfe )\\
&
\quad
\cdot
\Upsilon^{\xi}
\left[ 
(-1)^{|\mff_{\cC}| + |\hat{\mfn}|}
\sT[\cC,\cD,\mcF]^{\mfn_{\cD} + \hat{\mfn} + \chi(\mfe_{\cD} + \mff_{\cC})}_{\se + \mfe + \mff}
\right](x_{\logof})\\
&
\quad
\cdot
\Upsilon^{\xi}
\left[ 
\projplusshape_{\sigma_{\cD,\mathcal{F}}}
\tpode_{+}
\sT[\cD,\mathcal{F}]^{\mfn - \hat{\mfn} - \mfn_{\cD},\mfo}_{\se 
+
\mfe_{\cD}
}
\right]
(x_{\logof}).
\end{split}
\end{equation*}
Comparing this with \eqref{eq: inductive step for positive antipode, goal} we arrive at the desired result.
\end{proof}
\subsection{An explicit formula for the BPHZ renormalized tree}
In this subsection we put together the results of the previous two subsections and prove a proposition on the explicit form of chaos kernels for the BPHZ formula.
First, for any forest of subtrees $\mcF$ we define
\begin{equs}
\nodesleft{\mcF}{\sT}
\eqdef&
\tilde{N}(\sT)
\setminus
\tilde{N}(\mcF),\; 
N(\mcF,\sT)
\eqdef
\tilde{N}(\sT)
\setminus
\tilde{N}(\mcF),\\
\kernelsleft{\mcF}{\sT}
\eqdef&
K(\sT) \setminus \bar{K}^{\downarrow}(\overline{\mcF}),\;
\textnormal{and}\;
\leavesleft{\mcF}{\sT}
\eqdef
L(\sT) 
\setminus 
L(\mcF)\;.
\end{equs}
Then, for any $\wickleaves \subset L(\sT)$, $\pi \in \allpon{L(\sT) \setminus \wickleaves}$, $\cC \subset \cut$, and $\mathcal{F} \in \mathbb{F}_{\cC}$, we define 
$\mathcal{W}_{L}^{\pi}[\mcF, \cC] \in \mcb{C}_{\wickleaves \cup \{\mainroot,\logof\}}$ via the following integral formula where we write $ y\in (\R^{d})^{\nodesleft{\mcF}{\sT} \setminus \wickleaves}$, $x \in (\R^{d})^{\wickleaves \cup \{\mainroot,\logof\}}$, and $z = x \sqcup y$.
\begin{equs}{}
&\mathcal{W}_{\wickleaves}^{\pi}[\mcF, \cC]( x_{\wickleaves \cup \{\mainroot,\logof\}})\\
&\eqdef 
\int_{\nodesleft{\mcF}{\sT} \setminus \wickleaves} 
dy\ 
\mathrm{Cu}^{\leavesleft{\mcF}{\sT}}_{\pi}(z)
\cdot
\ke{\kernelsleft{\mcF}{\sT} \setminus \cC}{0}(z) 
\cdot
\RKer_{0}^{\kernelsleft{\mcF}{\sT} \cap \cC}(z)
\cdot
\powroot{N(\mcF,\sT)}{\sn}{\logof}(z)
\\
& 
\quad 
\cdot 
\Big( 
\prod_{S \in \mmax{\mcF}}
H_{\pi,\mcF,S} \left[ 
\RKer_{0}^{K^{\downarrow}(S) \cap \cC}
\cdot 
\ke{K^{\downarrow}(S) \setminus \cC}{0}
\powroot{\tilde{N}(S)}{\sn}{\logof}
\right](z_{N^{\downarrow}(S)} \sqcup z_{\rho_{S}})
\Big)\;.
\end{equs} 
\begin{proposition}\label{explicit formula} For any $\xi \in \mcM(\Omega_{\infty})$ and $x \in (\R^{d})^{\{\logof,\mainroot\}}$,  
\begin{equ}\label{eq: chaos decomp}
\hat{\Upsilon}^{\xi}[\sT^{\sn}_{\se}](x)
= 
\sum_{
\substack{
\wickleaves \subset L(\sT)\\
\pi \in \fillingpon{L(\sT) \setminus \wickleaves}\\
\mcG \in \mathbb{F}_{\pi},\ 
\cD \subset \cut_{\mcG}
}
}
\int_{\wickleaves\setminus \{\mainroot\}} dy\  
\mathcal{W}_{\wickleaves}^{\pi}[ \mcG, \cD]
(z) \cdot
\wwick{ \{\xi_{\mft(u)}(z_{u}) \}_{u \in \wickleaves}}
\end{equ}
where $z = x \sqcup y$ and, for any multiset $A$ of random variables, $\wwick{A}$ denotes the Wick product of the elements of $A$ (see \cite[Def.~4.5]{HS15} or \cite[Appendix B]{AT06}). 
\end{proposition}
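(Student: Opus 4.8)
The plan is to assemble \eqref{eq: chaos decomp} by expanding the definition \eqref{def: BPHZ renormalized tree} of $\hat{\Upsilon}^{\xi}[\sT^{\sn}_{\se}]$ through the three factors $\bar{\Upsilon}^{\xi}[\tpode_{-}\cdot]$, $\Upsilon^{\xi}[\cdot]$, $\Upsilon^{\xi}[\tpode_{+}\cdot]$ applied to $(\Id\otimes\tDelta_{+})\tDelta_{-}(\sT,0)^{\sn}_{\se}$, and then reorganizing the resulting sum over extracted subforests/cuts into a sum over forests of subtrees $\mcF$ (for the negative part), positive cuts $\cC$ (for the positive part), and a partition/Wick-leaf split $(\wickleaves,\pi)$ of $L(\sT)$ coming from the cumulant expansion of the expectation in $\bar{\Upsilon}^{\xi}$ together with the chaos decomposition of the random factor. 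Concretely, first I would apply $\tDelta_{-}$ and use Lemma~\ref{lemma: extended labels do their job.} to see that the middle and right legs land in $\langle\ident[\Tr_{1}]\rangle\otimes\langle\Tr_{+}\cap\ident[\hat{\Tr}_{2}]\rangle$, so that $\tpode_{+}$ acts only on genuinely positively-renormalizable i-trees; then I would insert the projector identities of Lemma~\ref{lemma: negative forest expansion} and Lemma~\ref{lemma: positive antipode formula} (in the form of the Remark following the latter) to replace the abstract sums produced by $\tDelta_{-}$, $\tDelta_{+}$, $\tpode_{-}$, $\tpode_{+}$ by sums indexed respectively by $\mcG\in\mathbb{F}$ and $\cD\subset\cut_{\mcG}$, i.e.\ $\sum_{\mcG}\projminusshape_{\bo{\sigma}_{\mcG}}$ and $\sum_{\cD}\projplusshape_{\bo{\sigma}_{\cD,\mcG}}$.

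Second, I would substitute the explicit integrand formulas: Lemma~\ref{lemma: negatively renormalized integrand} gives $\bar{\Upsilon}^{\xi}[\projminusshape_{\bo{\sigma}_{\mcG}}\tpode_{-}\cdots]$ as $(-1)^{|\mmax{\mcG}|}\sum_{\pi\text{ comp.}\mcG}\int\delta(y_{\rho})\mathring{\varpi}^{\mfn}_{\pi}[\mcG]$, Lemma~\ref{lemma: positively renormalized integrand} gives $\Upsilon^{\xi}[\projplusshape_{\bo{\sigma}_{\cD,\mcG}}\tpode_{+}\cdots]$ as an integral of $\ddot{\varpi}^{\mfn}_{\mfe}[\cD,\mcG]$ against $\prod\xi_{\mft(u)}(y_u)$ over $L_{\cD,\mcG}$, and $\Upsilon^{\xi}[\cdot]$ on the remaining middle factor contributes the bare kernels on $\kernelsleft{\mcG}{\sT}$, the power functions $\powroot{N(\mcG,\sT)}{\sn}{\logof}$, and the remaining noise leaves. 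The decorations produced by $\tDelta_{-}$ and $\tDelta_{+}$ (the sums $\sum_{\mfn_G,\mfe_G}$ with binomials and factorials, and the $\mfe$-transfers along $\partial(G,T)$ and $\partial(\hat{T}_2,T)$) must be shown to reassemble correctly: here the identity $(\Id\otimes\tDelta_{+})\tDelta_{-}$ is a genuine coassociativity-type statement, and one checks that the Leibniz-rule bookkeeping matches what appears inside $\mathring{\varpi}$ and $\ddot{\varpi}$ — this is exactly the kind of binomial manipulation already carried out in the proof of Lemma~\ref{lemma: negatively renormalized integrand}, so I would either cite that pattern or redo the one extra layer.

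Third, I would handle the probabilistic step: $\bar{\Upsilon}^{\xi}$ is an expectation of a product of noises (those in $L(\mcG)$ plus possibly more, once one tracks which leaves were absorbed into the negative counterterms), while $\Upsilon^{\xi(\omega)}$ carries the remaining noises as an honest random field. Decomposing $\E[\prod_{u\in L(\mcG)}\xi_{\mft(u)}(\cdot)]$ into cumulants via the definition of $\mathrm{Cu}^{L}_{\pi}$, and decomposing the product of the leftover noises into Wick products plus lower-chaos corrections (the standard identity expressing a product of fields as a sum over ways of Wick-contracting a subset of them, contributing $\wwick{\{\xi_{\mft(u)}(z_u)\}_{u\in\wickleaves}}$ with the contracted complement producing more cumulant factors), I would merge all cumulant factors into a single $\mathrm{Cu}^{\leavesleft{\mcF}{\sT}}_{\pi}$ with $\wickleaves$ the un-contracted leaves and $\pi$ a partition of $L(\sT)\setminus\wickleaves$; the compatibility condition "$\pi$ comp.\ $\mcF$" survives because a divergent subtree in $\mcG$ must contain all of its leaves (Remark~\ref{divergent trees come with all leaves}) and hence its leaf set is a union of $\pi$-blocks. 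Matching the signs $(-1)^{|\mmax{\mcG}|}$, $(-1)^{|\underline{\cD}|}$, and the various $(-1)^{|\cdot|}$ from $\RKer$ against the claimed formula, and renaming $(\mcG,\cD)\to$ the summation variables in \eqref{eq: chaos decomp}, finishes the identification with $\mathcal{W}_{\wickleaves}^{\pi}[\mcG,\cD]$ and the $H_{\pi,\mcF,S}$ operators (which is literally how $\mathring{\varpi}$ was defined).

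The main obstacle I expect is the interaction between the positive cuts and the negative forest — i.e.\ verifying that the extended label $\mfo$ really does what Lemma~\ref{lemma: extended labels do their job.} says, so that $\tpode_{+}$ never creates a positive renormalization \emph{inside} an already-extracted divergent subtree, and that consequently the sets $K[\cC,\mcF]$, $\tilde N[\cC,\mcF]$, $L_{\cC,\mcF}$ (built by removing $K(\mcF)$, $\tilde N(\mcF)$, $L(\mcF)$) partition the tree's data consistently with the negative-side sets $\kernelsleft{\mcF}{\sT}$, $\nodesleft{\mcF}{\sT}$, $\leavesleft{\mcF}{\sT}$. In other words, the bookkeeping that "positive and negative renormalizations don't overlap" has to be made exact at the level of which variable gets integrated where and which kernel gets Taylor-expanded around $x_{\logof}$ versus around a subtree root. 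Everything else is a careful but routine unrolling of the recursive definitions and re-indexing of sums, plus the elementary combinatorial identity relating products of Gaussian-like/non-Gaussian fields to Wick products and cumulants.
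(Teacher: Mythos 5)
Your proposal is correct and follows essentially the same route as the paper's proof: expand $(\Id\otimes\tDelta_{+})\tDelta_{-}(\sT,0)^{\sn}_{\se}$ via Lemma~\ref{lemma: extended labels do their job.}, reorganize the three tensor legs with the projector identities of Lemmas~\ref{lemma: negative forest expansion} and~\ref{lemma: positive antipode formula}, substitute the explicit integrands from Lemmas~\ref{lemma: negatively renormalized integrand} and~\ref{lemma: positively renormalized integrand}, and reconcile the expectation in $\bar\Upsilon^{\xi}$ with the Wick decomposition of the random leg by combining the cumulant expansion with the identity $\sum_{\bar\pi}\mathrm{Cu}_{\bar\pi}\cdot\wwick{\cdot}=\prod\xi$. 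The only cosmetic difference is direction: the paper transforms the right-hand side of \eqref{eq: chaos decomp} back into the expanded left-hand side, while you propose assembling the right-hand side from the left, and the paper performs the binomial bookkeeping you defer to ``the pattern of Lemma~\ref{lemma: negatively renormalized integrand}'' via one explicit application of the Chu--Vandermonde identity; neither difference changes the substance.
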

\begin{proof}
Starting from the LHS, we have
\begin{equ}\label{eq: explicit formula mess 1}
(\id \otimes \tDelta_{+} )
\tDelta_{-}
(\sT,0)^{\sn}_{\se}
=
\sum_{
\substack{
\mcF \in \mathbb{F}^{\le 1}\\
\cC \in \cut^{\le 1}_{\mcF}
}
}
\sum_{
\substack{
\mfn_{F},\mfe_{F}\\
\mfn_{\cC},\mfe_{\cC}
}
}
A(\mcF,\cC,\mfn_{F},\mfe_{F},
\mfn_{\cC},\mfe_{\cC})\;,
\end{equ} 
where $
A(\mcF,\cC,\mfn_{F},\mfe_{F},
\mfn_{\cC},\mfe_{\cC})$
is given by
\begin{equs}\label{eq: explicit formula mess 2}
&
\Big( 
\prod_{S \in \mcF} 
\mathbbm{1}_{< \omega(S)}(\mfn_{S} + \mfe_{S}) 
\Big)
\binom{\sn}{\mfn_{F}}\frac{1}{\mfe_{F}!}
(F,0)^{\mfn_{F} + \chi \mfe_{F},0}_{\se}
\\
&
\quad
\otimes
\binom{\sn - \mfn_{F}}{\mfn_{\cC}}
\frac{1}{\mfe_{\cC}!}
\mathbbm{1}_{< \gamma}^{\cC}
(
\mfe_{\cC}
+
\mfe_{F} 
)
\big(\sT_{\not \ge}[\cC], [F]_{1} \big)^{ \mfn_{\cC} + \chi \mfe_{\cC},0}_{\se + \mfe_{F}}\\
&
\quad
\otimes
\sT[\cC,\mcF]^{\mfn - \mfn_{F} - \mfn_{\cC},\mfo_{F}}_{\se + \mfe_{F} + \mfe_{\cC}}\;,
\end{equs}
where $\mfo_{F} \eqdef \mfn_{F} + \chi \mfe_{F}$. 
Note that we used Lemma~\ref{lemma: extended labels do their job.} here. 
Above, $F$ is the i-forest corresponding to $\mcF \in \mathbb{F}^{\le 1}$, the sum over decorations in \eqref{eq: explicit formula mess 1} is a sum over node decorations $\mfn_{F}$ on $N(F) \setminus \rho(F)$, $\mfn_{\cC}$ on $\sT_{ \not \ge}[\cC]$, edge decorations $\mfe_{F}$ on $K^{\downarrow}(\mcF)$ and $\mfe_{\cC}$ on $\cC$. The edge decorations $\mfe_{S}$ and node decorations $\mfn_{S}$ appearing in \eqref{eq: explicit formula mess 2} are given by the restrictions of $\mfe_{F}$ to $K^{\partial}_{S}(\sT)$ and $\mfn_{F}$ to $\tilde{N}(S)$, respectively. 

Observe that we have
\begin{equs}
&\bigsqcup_{\wickleaves \subset L(\sT)}
\bigsqcup_{\pi \in \fillingpon{L(\sT) \setminus \wickleaves}}
\bigsqcup_{
\substack{
\mcF \in \mathbb{F}_{\pi}\\
\cC \subset \cut_{\mcF}}
}
\{
(\mcF,\cC,\pi)
\}
=
\bigsqcup_{
\substack{
\mcF \in \mathbb{F}\\
\cC \subset \cut_{\mcF}}
}
\bigsqcup_{
\substack{
\pi \in \fillingpon{L(\mcF)} \\
\pi \textnormal{ comp. } \mcF
}
}
\bigsqcup_{
\bar{\pi} \in \allpon{\leavesleft{\mcF}{\sT} \setminus \wickleaves}
}
\{
(\mcF,\cC,\pi \sqcup \bar{\pi})
\},\\
& 
\sum_{\bar{\pi} \in \allpon{\leavesleft{\mcF}{\sT}}}
\mathrm{Cu}^{\leavesleft{\mcF}{\sT}}_{\pi}(y)
\wwick{ \{\xi_{\mft(u)}(y_{u}) \}_{u \in \leavesleft{\mcF}{\sT} \setminus \supp(\pi) }}
=
\prod_{u \in \leavesleft{\mcF}{\sT}}\xi_{\mft(u)}(y_{u}),
\end{equs}
where the second inequality holds for any $\mcF \in \mathbb{F}$. 
We can use the two identities above to interchange summations and eliminate the Wick monomials on the RHS of \eqref{eq: chaos decomp} to see that, again writing $z = y_{\nodesleft{\mcF}{\sT}} \sqcup x_{\{\mainroot,\logof\}}$, it is equal to
\begin{equs}\label{eq: explicit formula mess 4}
{}
&
\sum_{
\substack{
\mcF \in \mathbb{F}^{\le 1}\\
\cC \in \cut^{\le 1}_{\mcF}
}
}
\sum_{
\substack{
\mcG \in \mathbb{F}[\mcF]\\
\cD \in \cut[\cC,\mcF]
}
}
\sum_{
\substack{
\pi \in \fillingpon{L(\mcG)} \\
\pi \textnormal{ comp. } \mcG
}
}
\int_{\nodesleft{\mcF}{\sT}} 
dy\ 
\Big( 
\prod_{u \in \leavesleft{\mcF}{\sT}}\xi_{\mft(u)}(z)
\Big)
\powroot{N(\mcF,\sT)}{\sn}{\logof}(z)\\
&
\qquad
\cdot
\ke{\kernelsleft{\mcF}{\sT} \setminus \cD}{0}(z) 
\cdot
\RKer_{0}^{\kernelsleft{\mcF}{\sT} \cap \cD}(z)
\\
& 
\qquad 
\cdot 
\Big( 
\prod_{S \in \mcF}
H_{\pi,\mcG,S} \left[ 
\RKer_{0}^{K^{\downarrow}(S) \cap \cD}
\cdot 
\ke{K^{\downarrow}(S) \setminus \cD}{0}
\powroot{\tilde{N}(S)}{\sn}{\logof}(z)
\right](z_{N^{\downarrow}(S)} \sqcup z_{\rho_{S}})
\Big)\;.
\end{equs}
For the time being we fix $\mcF \in \mathbb{F}^{\le 1}$, 
$\cC \in \cut^{\le 1}_{\mcF}$, and $\cD \in \cut[\cC,\mcF]$. 
Then by straightforward computation, 
\begin{equs}\label{eq: explicit formula mess 5}
&
\sum_{
\substack{
\mcG \in \mathbb{F}[\mcF]\\
\pi \in \fillingpon{L(\mcG)} \\
\pi \textnormal{ comp. } \mcG
}
}
\Big( 
\prod_{S \in \mcF}
H_{\pi,\mcG,S} \left[ 
\RKer_{0}^{K^{\downarrow}(S) \cap \cD}
\cdot
\ke{K^{\downarrow}(S) \setminus \cD}{0}
\powroot{\tilde{N}(S)}{\sn}{\logof}
\right](z)
\Big)
\\
&=
\sum_{
\substack{
\mcG \in \mathbb{F}[\mcF]\\
\pi \in \fillingpon{L(\mcG)} \\
\pi \textnormal{ comp. } \mcG,\ 
}
}
\sum_{\tilde{\mfn}_{F}}
\binom{\sn}{\tilde{\mfn}_{F}}
\RKer_{0}^{\kernelsleft{\mcF}{\sT} \cap \cD}(z)\\
&
\enskip
\cdot
\Big( 
\prod_{S \in \mcF}
\powrootquot{\tilde{N}(S)}{\tilde{\mfn}_{F}}{\logof}{\rho_{S}}(z)
H_{\pi,\mcG,S} \left[ 
\RKer_{0}^{K^{\downarrow}(S) \cap \cD}
\ke{K^{\downarrow}(S) \setminus \cD}{0}
\pow{\tilde{N}(S)}{\sn - \tilde{\mfn}_{F}}
\right](z)
\Big)
\\
&=
(-1)^{|\mcF|}
\sum_{
\substack{
\tilde{\mfn}_{F}, \mfn_{F},\\
\mfe_{F}}
}
\binom{\sn}{\tilde{\mfn}_{F}}
\Big(
\prod_{S \in \mcF}
\powrootquot{\tilde{N}(S)}{\tilde{\mfn}_{F}}{\logof}{\rho_{S}}(z)
\powrootquot{\tilde{N}(S)}{\mfn - \tilde{\mfn}_{F} - \mfn_{F}}{\logof}{\rho_{S}}(z)
\combplus[\sn - \tilde{\mfn}_{F}, \mfn_{S},\mfe_{S}, \omega(S)]
\Big)\\
&
\enskip
\cdot
\RKer_{\mfe_{F}}^{K^{\downarrow}(\mcF) \cap \cD}(z)
\ke{K^{\downarrow}(\mcF) \setminus \cD}{\mfe_{F}}(z)
\sum_{
\substack{
\mcG \in \mathbb{F}[\mcF]\\
\pi \in \fillingpon{L(\mcG)} \\
\pi \textnormal{ comp. } \mcG
}
}
\int_{N_{\mcG}(\overline{\mcF})} dw
\delta(w_{\rho(\mcF)})
\mathring{\varpi}^{\mfn_{F} + \chi \mfe_{F}}_{\pi}[\mcG](w)\\
&=
\sum_{
\substack{
\tilde{\mfn}_{F}, \mfn_{F},\\
\mfe_{F}}
}
\binom{\sn}{\tilde{\mfn}_{F}}
\binom{\sn - \tilde{\mfn}_{F}}{\mfn_{F}}\frac{1}{\mfe_{F}!}
\Big(
\prod_{S \in \mcF}
\powrootquot{\tilde{N}(S)}{\sn - \mfn_{F}}{\logof}{\rho_{S}}(z)
\mathbbm{1}_{< \omega(S)}(\mfn_{S} + \mfe_{S})
\Big)\\
&
\enskip
\cdot
\RKer_{\mfe_{F}}^{K^{\downarrow}(\mcF) \cap \cD}(z)
\ke{K^{\downarrow}(\mcF) \setminus \cD}{\mfe_{F}}(z)
\cdot
\bar{\Upsilon}^{\mathbb{P}} \left[ \tpode_{-} 
(F,0)^{\mfn_{F} + \chi \mfe_{F}}_{\se}
\right]\;.
\end{equs}
In going to the second line we used the binomial identity and the fact that we can replace an $\powroot{\tilde{N}(S)}{\sn - \mfn_{F}}{\rho_{S}}$ with $\pow{\tilde{N}(S)}{\sn - \tilde{\mfn}_{F}}$ in the argument of $H_{\pi,\mcG,S}$ thanks to translation invariance. 
In going to the last line we used Lemmas~\ref{lemma: negative forest expansion} and~\ref{lemma: negatively renormalized integrand}. By the the Chu-Vandermonde identity we have that for each fixed $\mfn_{F}$,
\[
\sum_{\tilde{\mfn}_{F}}
\binom{\sn}{\tilde{\mfn}_{F}}
\binom{\sn - \tilde{\mfn}_{F}}{\mfn_{F}}
=
\binom{\sn}{\mfn_F}\;.
\]
It follows that \eqref{eq: explicit formula mess 4} is equal to
\begin{equation}\label{eq: explicit formula mess 4 v2}
\begin{split}
&
\sum_{
\substack{
\mcF \in \mathbb{F}^{\le 1}\\
\cC \in \cut^{\le 1}_{\mcF}\\
\cD \in \cut[\cC,\mcF]
}
}
\sum_{
\mfn_{F},\ 
\mfe_{F}
}
\binom{\sn}{\mfn_{F}}
\frac{1}{\mfe_{F}!}
\int_{\tilde{N}(\mcF,\sT)} dy\ 
\ke{\kernelsleft{\mcF}{\sT} \setminus \cD}{0}(z) 
\cdot 
\RKer_{0}^{\kernelsleft{\mcF}{\sT} \cap \cD}(z)\\
&
\quad
\cdot
\powroot{N(\mcF,\sT)}{\sn}{\logof}(z)
\cdot
\Big(
\prod_{S \in \mcF}
\powrootquot{\tilde{N}(S)}{\mfn - \mfn_{F}}{\logof}{\rho_{S}}(z)
\mathbbm{1}_{< \omega(S)}(\mfn_{S} + \mfe_{S})
\Big)
\cdot
\Big(
\prod_{u \in \leavesleft{\mcF}{\sT}}\xi_{\mft(u)}(z_{u})
\Big)
\\
& 
\quad 
\cdot
\RKer_{\mfe_{F}}^{K^{\downarrow}(\mcF) \cap \cD}(z)
\ke{K^{\downarrow}(\mcF) \setminus \cD}{\mfe_{F}}(z)
\cdot
\bar{\Upsilon}^{\mathbb{P}} \left[ \tpode_{-} 
(F,0)^{\mfn_{F} + \chi \mfe_{F}}_{\se}
\right]
\;.
\end{split}
\end{equation}
Next we observe that for fixed $
\mcF \in \mathbb{F}^{\le 1}$, 
$\cC \in \cut^{\le 1}_{\mcF}$,
$\mfn_{F}$, and $\mfe_{F}$ we have
\begin{equation*}\label{eq: explicit formula mess 6}
\begin{split}
\RKer_{0}^{\kernelsleft{\mcF}{\sT} \cap \cC} &\cdot
\RKer_{\mfe_{F}}^{K^{\downarrow}(\mcF) \cap \cC}\\
&=
(-1)^{|\cC|}
\sum_{\mfe_{\cC}}
\frac{1}{\mfe_{\cC}!}
\powroot{e_{\mathrm{p}}(\cC)}{\mfe_{\cC}}{\logof}
\Big(
\sum_{\mff_{\cC} \in \mathrm{Der}(\cC,\mfe + \mfe_{F})}
\frac{\Ker^{\logof,\cC}_{\mfe + \mfe_{\cC} + \mff_{\cC}}}{\mff_{\cC}!}
\fpowquot{e_{\mathrm{p}}(\cC)}{\chi \mff_{\cC}}{\logof}
\Big)
\end{split}
\end{equation*}
and we also have
\begin{equs}
\powroot{N(\mcF,\sT)}{\sn}{\logof}
\Big(
\prod_{S \in \mcF}
\powrootquot{\tilde{N}(S)}{\mfn - \mfn_{F}}{\logof}{\rho_{S}}
\Big)
&=
\sum_{\mfn_{\cC}}
\binom{\mfn - \mfn_{F}}{\mfn_{\cC}}
\pow{N_{\mcF}(\sT_{\not \ge}[\cC])}{\mfn_{\cC}}
\Big(
\prod_{
\substack{
S \in \mcF\\
S \le \sT_{\not \ge}[\cC]}
}
\powquot{\tilde{N}(S)}{\mfn_{\cC}}{\rho_{S}}
\Big)\\
&
\quad
\cdot
\fpowquot{N(\sT_{\not \ge}[\cC])}{\sn - \mfn_{F} - \mfn_{\cC}}{\logof}
\powroot{\tilde{N}[\cC,\mcF]}{\sn}{\logof}
\Big(
\prod_{S \in \mcF[\cC]}
\powrootquot{\tilde{N}(S)}{\sn - \mfn_{F} - \mfn_{\cC}}{\logof}{\rho_{S}}
\Big)\;.
\end{equs}
With these facts in hand one can carefully factorize the integral of \eqref{eq: explicit formula mess 4 v2}, and then use Lemmas~\ref{lemma: positive antipode formula} and~\ref{lemma: positively renormalized integrand} to write \eqref{eq: explicit formula mess 4 v2}
as
\begin{equation*}\label{eq: explicit formula mess 4 v3}
\begin{split}
&
\sum_{
\substack{
\mcF \in \mathbb{F}^{\le 1}\\
\cC \in \cut^{\le 1}_{\mcF}
}
}
\sum_{
\substack{
\mfn_{F},
\mfe_{F},\\
\mfe_{\cC},
\mfn_{\cC}
}
}
\binom{\sn}{\mfn_{F}}
\binom{\sn - \mfn_{F}}{\mfn_{\cC}}\frac{1}{\mfe_{F}!}
\frac{1}{\mfe_{\cC}!}
\Big(
\prod_{S \in \mcF}
\mathbbm{1}_{< \omega(S)}(\mfn_{S} + \mfe_{S})
\Big)\\
&
\qquad
\cdot
\mathbbm{1}_{< \gamma}^{\cC}(\mfe_{\cC} + \mfe_{F})
\Upsilon^{\zeta} 
\left[
\big(\sT_{\not \ge}[\cC], [F]_{1} \big)^{\mfn_{\cC} + \chi \mfe_{\cC},0}_{\se + \mfe_{F}}
\right]\\
&
\qquad
\cdot
\Upsilon^{\zeta} 
\left[
\tpode_{+}
\sT[\cC,\mcF]^{\sn - \mfn_{F} - \mfn_{\cC},
\mfo_{F}}_{\se + \mfe_{F} + \mfe_{\cC}}
\right]
\cdot
\bar{\Upsilon}^{\mathbb{P}} \left[ \tpode_{-} 
(F,0)^{\mfn_{F} + \chi \mfe_{F},0}_{0}
\right]
.
\end{split}
\end{equation*}
The proof is finished upon observing that the above expression is what one obtains by applying to \eqref{eq: explicit formula mess 1} the operator $\bar{\Upsilon}^{\xi}[ \cdot ]
\otimes 
\Upsilon^{\xi(\omega)}[\cdot]
\otimes 
\Upsilon^{\xi(\omega)}[ \cdot]$
.
\end{proof}
\subsection{From the BPHZ renormalized tree to the BPHZ model}
We close this section by checking that our BPHZ renormalized tree agrees with the BPHZ renormalized model, using freely the notations and terminology of \cite{BHZalg}.
\begin{lemma}\label{lem: BPHZ model and tree} Fix a realization $\xi(\omega) \in \Omega_{\infty}$. 
Let $Z^{\xi(\omega)}_{\BPHZ} = (\Pi^{\xi(\omega)},\Gamma^{\xi(\omega)})$ be the ``BPHZ model'' which is defined as the restriction to the reduced regularity structure of the model obtained by applying the BPHZ renormalization procedure of \cite[Thm~6.17]{BHZalg} to the random model corresponding to the canonical lift of $\xi(\omega)$ on the extended regularity structure. 
Then, for every $z \in \R^{d}$, $\big(\Pi_{z}\sT^{\sn}_{\se}\big)(\cdot)$ and $\hat{\Upsilon}^{\xi}_{z}[\sT^{\sn}_{\se}](\cdot)$ as defined in \eqref{def: BPHZ renormalized tree} coincide as random distributions. 
\end{lemma}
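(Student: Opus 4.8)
\textbf{Proof plan for Lemma~\ref{lem: BPHZ model and tree}.}
The strategy is to unfold both constructions back to their definitions and check that the recursive formulae match term by term. On the one hand, the BPHZ model of \cite[Thm~6.17]{BHZalg} is built by applying the negative twisted antipode $\tilde A_-$ to the character $g_-$ associated to $\bar\Upsilon^\xi$ (or rather the expectation of the canonical lift), then applying the recentering map built from the positive twisted antipode; restricting to the reduced regularity structure amounts to discarding the $\mfo$-label and the extra colored subforest. On the other hand, $\hat\Upsilon^\xi_z[\sT^\sn_\se]$ is \emph{defined} in \eqref{def: BPHZ renormalized tree} through $\big(\bar\Upsilon^\xi[\tpode_- \cdot]\otimes \Upsilon^{\xi}[\cdot]\otimes \Upsilon^{\xi}[\tpode_+\cdot]\big)(\Id\otimes\tDelta_+)\tDelta_-$. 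So the core task is to identify: (i) that the maps $\tDelta_-$, $\tpode_-$ of Section~\ref{Sec: def of BPHZ model} coincide, after restriction, with the coproduct $\Delta_-$ and twisted antipode $\mathcal A_-$ of \cite{BHZalg}; (ii) that $\tDelta_+$, $\tpode_+$ coincide similarly with $\Delta_+$ and $\mathcal A_+$; and (iii) that $\Upsilon^\xi$ and $\bar\Upsilon^\xi$ are precisely the realization maps $\boldsymbol\Pi^{\mathrm{can}}$ and the BPHZ character used in \cite{BHZalg}, again after restriction to the reduced structure.

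First I would set up the dictionary between notations. The coproduct $\tDelta_-$ of \eqref{def: negative coprod} sums over subforests $G\in\mathfrak A_1$ (connected components non-trivial) with the familiar Taylor-jet reshuffling of $\mfn$ and $\mfe$; this is exactly $\Delta_-$ of \cite[Sec.~6]{BHZalg} once one sends the $\mfo$-decoration to $0$ and takes the colored subforest empty, which is what the "reduced regularity structure" prescription does (cf.\ the Remark following Definition~\ref{def: reg struct}). Similarly $\tDelta_+$ of \eqref{def: positive coprod} is the positive coproduct. The twisted antipodes $\tpode_\pm$ defined here by their recursions are characterised (as in \cite{BHZalg}) by the algebraic identities $\mathcal M(\mathcal A_-\otimes\mathrm{id})\Delta_- = \text{(projection onto scalars)}$ and its positive analogue; I would verify that $\tpode_-$ and $\tpode_+$ satisfy exactly these defining identities — this is a direct induction on $|E_F|$ (resp.\ $|E_T\setminus E_{\hat T_2}|$) using the coassociativity-type properties of $\tDelta_-$ and $\tDelta_+$, and it pins them down uniquely. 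Then I would check that $\Upsilon^\xi$, extended multiplicatively, is the canonical admissible model: for $\Xi_{\mfl}$ it returns $\xi_{\mfl}$, for a tree product it multiplies, and for $\mcb I^k_{\mft}$ it convolves with $D^kK_{\mft}$ while subtracting the recentering Taylor polynomial — exactly Definition~\ref{def: admissibility}; the point here is that the coloring/basepoint bookkeeping in $\mathring\Upsilon^\xi$ via the map $\gv$ implements precisely the recentering counterterms of the admissible model at basepoint $x_{\logof}$. Finally, $\bar\Upsilon^\xi[\tau] = \E(\Upsilon^\xi\tau)(0)$ is the BPHZ character $g^{\mathrm{BPHZ}}_-$ of \cite[Eq.~6.22]{BHZalg}.

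With this dictionary the identity follows by tracing through the BPHZ construction of \cite[Thm~6.17]{BHZalg}: the renormalized model is $\boldsymbol\Pi^{\mathrm{BPHZ}} = (g^{\mathrm{BPHZ}}_-\otimes\boldsymbol\Pi^{\mathrm{can}})\Delta_-$ followed by the standard recentering, and the recentering is itself $(\boldsymbol\Pi^{\mathrm{can}}\otimes g_+ \mathcal A_+)\Delta_+$-type; composing these two gives exactly $\big(\bar\Upsilon^\xi[\tpode_- \cdot]\otimes \Upsilon^{\xi}[\cdot]\otimes \Upsilon^{\xi}[\tpode_+\cdot]\big)(\Id\otimes\tDelta_+)\tDelta_-$ applied to $(\sT,0)^\sn_\se$, which is the right-hand side of \eqref{def: BPHZ renormalized tree}. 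Evaluating the resulting element of $\allf$ as a distribution in $x_{\mainroot}$ with $x_{\logof}=z$ — which is the content of the shorthand $\hat\Upsilon^\xi_z[\sT^\sn_\se](\phi)$ — then matches $\big(\Pi_z\sT^\sn_\se\big)(\phi)$.

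\textbf{Main obstacle.} The genuine work is step (i)--(ii): the twisted antipodes here are defined through concrete recursions over $\overline{\mathfrak A}_1$ and $\overline{\mathfrak A}_2$ rather than through the abstract fixed-point identities of \cite{BHZalg}, and one must show these recursions solve those identities. The positive case is the delicate one, because of the role of the $\mfo$-label in preventing $\tDelta_+$ from generating spurious positive renormalizations inside already-extracted divergent subtrees — this is exactly what Lemma~\ref{lemma: extended labels do their job.} (and the sets $\Tr_+$, $\mathfrak E[\cdot]$) is engineered for. So the subtle point is to confirm that the $\mfo$-bookkeeping in our $\tpode_+$ recursion reproduces, after restriction to $\mfo=0$ on the reduced structure, the recentering character of \cite{BHZalg} — i.e.\ that nothing is lost or double-counted when one restricts the extended regularity structure back down. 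Once that compatibility is in place, everything else is routine unwinding of definitions.
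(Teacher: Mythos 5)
Your plan is sound and matches the paper's in its broad architecture: set up a dictionary between the identified objects here and the abstract objects of BHZ, and trace the two constructions against each other. There is, however, one genuine technical divergence worth flagging. You propose to characterize $\tpode_{\pm}$ via the universal fixed-point identities $\mathcal M(\mathcal A_\pm\otimes\mathrm{id})\Delta_\pm = (\text{proj.\ onto scalars})$ that uniquely determine a twisted antipode, and then to show the recursions of Section~\ref{subsec: coprods and twisted antipodes} solve those identities. The paper does not take this route: it instead introduces explicit \emph{comparison maps} — a ``forgetting'' map $\mcb{U}$ that discards the identification data and embeds i-forests into the abstract forest space, contraction maps $\mcb{k},\mcb{k}_1,\widehat{\mcb{k}}_2$ modelled on $\mcb{K}$ of BHZ (adjusted to respect the $\mfo$-decoration), a root-joining map $\mathscr{J}$, and the label-killing map $\mcb{O}$ — and then proves by direct induction (on $|E_F|$ for the negative antipode, on $|E_{\sT}\setminus E_{\sT_{\not\ge}[\cC]}|$ for the positive one) that $\mcb{k}_1\mcb{U}\tpode_- = \mcb{O}\ttpode_-\mcb{U}$ and $\mathscr{J}\widehat{\mcb{k}}_2\mcb{U}\tpode_+ = \mcb{O}\ttpode_+\widehat{\mcb{k}}_2\mcb{U}$. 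Your route buys a cleaner logical structure (uniqueness lets you avoid unwinding the recursion step by step), but it silently presupposes that $\mcb{U}$ and the contractions intertwine the two coproducts exactly — that is, that $(\mcb{U}\otimes\mcb{k}\mcb{U}\otimes\widehat{\mcb{k}}_2\mcb{U})(\Id\otimes\tDelta_+)\tDelta_- = (\Id\otimes\mcb{O}\otimes\Id)(\Id\otimes\Delta_2)\Delta_1$ — and this intertwining is itself a non-trivial part of the proof, which the paper does state and which your proposal would still need to verify before the universal-property argument can be invoked. You also need to confirm that the contraction $\mcb{k}$ here differs from $\mcb{K}$ of BHZ precisely by a modified rule for the new $\mfo$-label, which the paper makes explicit; without that adjustment the negative antipodes will not line up. So: same plan, different mechanism for the antipode comparison, and the intertwining identity for the coproducts (not just the antipodes) needs to be an explicit step in your argument as well.
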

\begin{proof}
There is little to prove here other than to describe why the minor differences between our framework and that of \cite{BHZalg} lead to absolutely no difference in the resulting analytic expressions.

The process of going from elements of our ``underlined'' spaces of identified objects to their appropriate 
representatives in the corresponding ``un-identified'' and non-underlined spaces imported from \cite{BHZalg} 
entails forgetting the identification map, performing a contraction, and possibly replacing a forest 
product with a tree product. 
This ``forgetting'' is denoted by an algebra homomorphism $\mcb{U}: \langle \ident[\For_{2}] \rangle \rightarrow  \langle \For_{2}\rangle$ given by dropping the identification data from any element in $\ident[\For_{2}]$ and then extending by linearity.

As in \cite{BHZalg} we use a variety of contraction maps. 
We start with a map $\mcb{k}: \langle \mfF_{2} \rangle \rightarrow \langle \mfF_{2} \rangle$ defined exactly as $\mcb{K}$ is in \cite[Def.~3.14]{BHZalg} but with a minor difference in how decorations are treated -- in the definition of $\mcb{k}$ the new $[\mfo]$ label should be replaced by $[\mfo]'$ given by
\[
[\mfo]'(x)
\eqdef
\sum_{y \in x}
\mfo(y)
+
\sum_{e \in E_{F} \cap \hat{F}^{-1}(2) \cap x }
(\mft(e) - \mfe(e)).
\]

We then define, for $i \in \{1,2\}$, maps $\mcb{k}_{i} \eqdef \mcb{k} \circ \Phi_{i}$ and $\widehat{\mcb{k}}_{i} \eqdef \mcb{k} \circ \widehat{\Phi}_{i}$ where $\Phi_{i}, \widehat{\Phi}_{i} : \langle \mfF_{2} \rangle \rightarrow \langle \mfF_{2} \rangle$ are defined by \cite[Eq.~3.22]{BHZalg} along with the immediately preceding and following paragraphs. 

Finally, we define $\mcb{O} : \langle \mfF_{2} \rangle \rightarrow \langle \mfF_{2} \rangle$ to be the map that sets all extended node labels $\mfo$ that appear to zero. 
Then it is straightforward to check that
\[
\Big(
\mcb{U}
\otimes 
\mcb{k} \mcb{U}
\otimes
\widehat{\mcb{k}}_{2}
\mcb{U}
\Big)
(\Id \otimes \tDelta_{+})
\tDelta_{-}
(\sT,0)^{\sn}_{\se}
=
(\Id \otimes \mcb{O} \otimes \Id)
(\Id \otimes \Delta_{2})
\Delta_{1}
\sT^{\sn}_{\se},
\]
where we are implicitly using that the RHS can be identified as an element of $\mcb{T}^{\ex}_{-} \otimes \mcb{T}^{\ex} \otimes \mcb{T}^{\ex}_{+}$.

For the negative twisted antipodes $\ttpode_{-}:\mcb{T}^{\ex}_{-} \rightarrow \widehat{\mcb{T}}^{\ex}_{-}$ it is again straightforward to see that, for every $(F,0)^{\mfn}_{\mfe} \in \ident[\For_{0}]$ which, $(i)$ can be written as a forest product of elements of $\mathfrak{X}_{-}$ and $(ii)$ satisfies $\mcb{k}_{1} \mcb{U}(F,0)^{\mfn}_{\mfe} \in \mcb{T}^{\ex}_{-}$, one has the identity
\begin{equ}
\mcb{k}_{1} 
\mcb{U}
\tpode_{-} 
(F,0)^{\mfn}_{\mfe}
=
\mcb{O} 
\ttpode_{-} 
\mcb{U}
(F,0)^{\mfn}_{\mfe}\;.
\end{equ}
This claim can be verified inductively in $|E_{F}|$.

We turn to positive twisted antipode -- for any i-tree of the form $(\sT,\hat{T})^{\mfn,\mfo}_{\mfe}$ with $\widehat{\mcb{k}}_{2}\mcb{U}(\sT,\hat{T})^{\mfn,\mfo}_{\mfe} \in \mcb{T}^{\ex}_{+}$ one has
\begin{equ}
\mathscr{J} \widehat{\mcb{k}}_{2}\mcb{U}\tpode_{+}(\sT,\hat{T})^{\mfn,\mfo}_{\mfe}
=
\mcb{O}
\ttpode
\widehat{\mcb{k}}_{2}\mcb{U}(\sT,\hat{T})^{\mfn,\mfo}_{\mfe},
\end{equ}
where $\mathscr{J}: \langle \widehat{\Tr}_{2} \rangle_{\for} \rightarrow \langle \widehat{\Tr}_{2} \rangle $ is the operation of joining roots as given in \cite[Def.~4.6]{BHZalg} and the equality above is in the sense of elements of $\widehat{\mcb{T}}^{\ex}_{+}$. 
This claim can be verified inductively in $|E_{\sT} \setminus E_{\sT_{\not \ge}[\cC]}|$. 

It follows that one has equality
\begin{equs}\label{eq: almost at BPHZ model}
&
\Big(
\mcb{k}_{1}
\mcb{U}
\tpode_{-}
\otimes 
\mcb{k} \mcb{U}
\otimes
\mathscr{J}
\widehat{\mcb{k}}_{2}
\mcb{U}
\tpode_{+}
\Big)
(\Id \otimes \tDelta_{+})
\tDelta_{-}
(\sT,0)^{\sn}_{\se}\\
&
\qquad
=
(\mcb{O} \ttpode_{-} \otimes \mcb{O} \otimes \mcb{O} \ttpode_{+})
(\Id \otimes \Delta_{2})
\Delta_{1}
\sT^{\sn}_{\se}
\end{equs}
as elements of $\widehat{\mcb{T}}^{\ex}_{-} \otimes \mcb{T}^{\ex}\otimes \widehat{\mcb{T}}^{\ex}_{+}$.

If we denote by $\PPi^{\xi(\cdot)}:\mcb{T}^{\ex} \rightarrow \mcb{C}^{\infty}$ the unique \emph{multiplicative} admissible random map with $\PPi^{\xi(\omega)}[\Xi_{\mft}] \eqdef
\xi_{\mft}(\omega)$ for all $\mft \in \Le$ and again write $Z^{\xi(\omega)}_{\BPHZ} = (\Pi^{\xi(\omega)},\Gamma^{\xi(\omega)})$ then for any $x_{\logof},x_{\mainroot} \in \R^{d}$ one has
\begin{equs}\label{def of BPHZ model}
{}
&
\big(
\Pi^{\xi(\omega)}_{x_{\logof}} 
\sT^{\sn}_{\se}\big)(x_{\mainroot})\\
=&\ 
\big(
g^{-}(\PPi^{\xi(\cdot)})
\otimes
\PPi^{\xi(\omega)}[\cdot](x_{\mainroot})
\otimes
g^{+}_{x_{\logof}}(\PPi^{\xi(\omega)})
\big)
( \ttpode_{-} \otimes \Id \otimes \ttpode_{+})
(\Id \otimes \Delta_{2})
\Delta_{1}
\sT^{\sn}_{\se}\\
=&\ 
\big(
g^{-}(\PPi^{\xi(\cdot)})
\otimes
\PPi^{\xi(\omega)}[\cdot](x_{\mainroot})
\otimes
g^{+}_{x_{\logof}}(\PPi^{\xi(\omega)})
\big)
(\mcb{O} \ttpode_{-} \otimes \mcb{O} \otimes \mcb{O} \ttpode_{+})
(\Id \otimes \Delta_{2})
\Delta_{1}
\sT^{\sn}_{\se}\;.
\end{equs}
The first equality is by definition, and the second holds because the maps $g^{-}(\PPi^{\xi(\cdot)})$, $\PPi^{\xi(\omega)}[\cdot](x_{\mainroot})$, and $g^{+}_{x_{\logof}}(\PPi^{\xi(\omega)})$ all ignore the extended node label. 

We then observe the following.
\begin{itemize} 
\item For any $(\sT,\hat{T})^{\mfn}_{\mfe} \in \ident[\Tr_{1}]$
with
$\mcb{k}
\mcb{U}(\sT,\hat{T})^{\mfn}_{\mfe}](x_{\mainroot})
\in
\mcb{T}^{\ex}$
\[
\PPi^{\xi(\omega)}[
\mcb{k}
\mcb{U}(\sT,\hat{T})^{\mfn}_{\mfe}](x_{\mainroot})
=
\Upsilon^{\xi(\omega)}[(\sT,\hat{T})^{\mfn}_{\mfe}](x_{\mainroot}).
\]
\item For any
$(F,\hat{F})^{\mfn}_{\mfe} \in \langle \ident[\widehat{\Tr}_{2}] \rangle_{\for}$
with
$\mathscr{J}
\widehat{\mcb{k}}_{2}
\mcb{U}
(F,\hat{F})^{\mfn}_{\mfe}
\in
\widehat{\mcb{T}}_{+}^{\ex}$
\[
g^{+}_{x_{\logof}}(\PPi^{\xi(\omega)})
\mathscr{J}
\widehat{\mcb{k}}_{2}
\mcb{U}
(F,\hat{F})^{\mfn}_{\mfe}
=
\Upsilon^{\xi(\omega)}[(F,\hat{F})^{\mfn}_{\mfe}](x_{\logof}).
\]
\item For any $(F,\hat{F})^{\mfn}_{\mfe} \in \ident[\mfF_{1}]$ with $\mcb{k}_{1}
\mcb{U}(F,\hat{F})^{\mfn}_{\mfe} \in \widehat{\mcb{T}}^{\ex}_{-}$
\[
g^{-}(\PPi^{\xi(\cdot)})
\mcb{k}_{1}
\mcb{U}(F,\hat{F})^{\mfn}_{\mfe}
=
\bar{\Upsilon}^{\xi}[(F,\hat{F})^{\mfn}_{\mfe}].
\]
\end{itemize}
Using these identities, all that is left is comparing \eqref{def: BPHZ renormalized tree} to
\eqref{def of BPHZ model}.
\end{proof}
\subsection{An example}\label{subsection - an example}
We have verified that in our setting we have recovered the BPHZ model of \cite{BHZalg}. 
In fact, the BPHZ model has already appeared in previous works using the framework of 
regularity structures, but not under that name. 
In this subsection we compare, for a single symbol, our formulas for the BPHZ model with those of the renormalized model appearing in \cite{KPZJeremy}. 
The symbol we choose to look at is $\<211>$.

The context here is that of the KPZ equation, we we are working on $\R^{2}$ with space-time scaling $\s = (2,1)$. 
We have a single kernel type $\mft$ which corresponds to the spatial derivative of the space-time heat kernel 
on $\R^{2}$ and a single noise type $\mfl$ which corresponds to the driving noise. 
We specify a homogeneity assignment $|\cdot |_{\s}$ by setting $|\mft|_{\s} = 1$ and $|\mfl|_{\s} = -3/2 - \kappa$ where $\kappa \in (0,1/10)$. We also fixed a random, smooth noise time map given by setting $\xi_{\eps} = \xi \ast \rho_{\eps}$. 

Using the symbolic notation for kernels and integrals of \cite{KPZJeremy} one has
\begin{equ}\label{eq: from KPZJeremy}
\big(\hat \Pi_{x_{\logof}}^{(\eps)}\<211>\big)(\phi_{x_{\logof}}^\lambda)
=
\begin{tikzpicture}[scale=0.35,baseline=0.8cm]
  \node at (0,-0.8)  [logof] (root) {};
  \node at (-2,1)  [root] (left) {};
  \node at (-2,3)  [dot] (left1) {};
  \node at (-2,5)  [dot] (left2) {};
  \node at (0,1) [var] (variable1) {};
  \node at (0,3) [var] (variable2) {};
  \node at (0,4.3) [var] (variable3) {};
  \node at (0,5.7) [var] (variable4) {};
  
  \draw[testfcn] (left) to (root);

  \draw[kernel1] (left1) to   (left);
  \draw[kernel] (left2) to  (left1);
  \draw[keps] (variable2) to  (left1); 
  \draw[keps] (variable1) to   (left); 
  \draw[keps] (variable3) to   (left2); 
  \draw[keps] (variable4) to   (left2);
\end{tikzpicture}
\;+\;
\begin{tikzpicture}[scale=0.35,baseline=0.8cm]
  \node at (0,-0.8) [logof] (root) {};
  \node at (-2,1)  [root] (left) {};
  \node at (-2,3)  [dot] (left1) {};
  \node at (-2,5)  [dot] (left2) {};
  \node at (0,4.3) [var] (variable3) {};
  \node at (0,5.7) [var] (variable4) {};
  
  \draw[testfcn] (left) to (root);

  \draw[kernelBig] (left1) to   (left);
  \draw[kernel] (left2) to  (left1);
  \draw[keps] (variable3) to   (left2); 
  \draw[keps] (variable4) to   (left2);
\end{tikzpicture}
\;-\;
\begin{tikzpicture}[scale=0.35,baseline=0.8cm]
  \node at (0,-0.8)  [logof] (root) {};
  \node at (-2,1)  [root] (left) {};
  \node at (0,3)  [dot] (left1) {};
  \node at (0,5)  [dot] (left2) {};
  \node at (-2,3) [dot] (variable1) {};
  \node at (-2,4.3) [var] (variable3) {};
  \node at (-2,5.7) [var] (variable4) {};
  
  \draw[testfcn] (left) to (root);

  \draw[kernel] (left1) to   (root);
  \draw[kernel] (left2) to  (left1);
  \draw[keps] (variable1) to  (left1); 
  \draw[keps] (variable1) to   (left); 
  \draw[keps] (variable3) to   (left2); 
  \draw[keps] (variable4) to   (left2);
\end{tikzpicture}
\;+2\;
\begin{tikzpicture}[scale=0.35,baseline=0.8cm]
  \node at (0,-0.8)  [logof] (root) {};
  \node at (-2,1)  [root] (left) {};
  \node at (-2,3)  [dot] (left1) {};
  \node at (-2,5)  [dot] (left2) {};
  \node at (0,1) [var] (variable1) {};
  \node at (0,5.7) [var] (variable4) {};
  
  \draw[testfcn] (left) to (root);

  \draw[kernel1] (left1) to   (left);
  \draw[kernelBig] (left2) to  (left1);
  \draw[keps] (variable1) to   (left); 
  \draw[keps] (variable4) to   (left2);
\end{tikzpicture}
\;+2\;
\begin{tikzpicture}[scale=0.35,baseline=0.8cm]
  \node at (0,-0.8)  [logof] (root) {};
  \node at (-2,1)  [root] (left) {};
  \node at (0,3)  [dot] (left1) {};
  \node at (-2,5)  [dot] (left2) {};
  \node at (0,5) [var] (variable1) {};
  \node at (-2,3) [dot] (variable3) {};
  \node at (-0.5,6) [var] (variable4) {};
  
  \draw[testfcn] (left) to (root);

  \draw[kernel1] (left1) to   (left);
  \draw[kernel] (left2) to  (left1);
  \draw[keps] (variable3) to  (left); 
  \draw[keps] (variable1) to   (left1); 
  \draw[keps] (variable3) to   (left2); 
  \draw[keps] (variable4) to   (left2);
\end{tikzpicture}
\;-2\;
\begin{tikzpicture}[scale=0.35,baseline=0.8cm]
  \node at (0,-0.8)  [logof] (root) {};
  \node at (-2,1)  [root] (left) {};
  \node at (0,3)  [dot] (left1) {};
  \node at (-2,5)  [dot] (left2) {};
  \node at (-2,3) [dot] (variable3) {};
  
  \draw[testfcn] (left) to (root);

  \draw[kernel] (left1) to   (root);
  \draw[kernelBig] (left2) to  (left1);
  \draw[keps] (variable3) to  (left); 
  \draw[keps] (variable3) to   (left2); 
\end{tikzpicture}\;.
\end{equ}
We have modified the notations of \cite{KPZJeremy} slightly, the new blue vertex corresponds to the base point $\logof$ of the model and the green dot now represents the variable for the root of the tree. However all the kernel notation remains exactly the same. 

Switching back to our conventions the combinatorial tree $T^{0}_{0}$ corresponding to $\<211>$ is given by
\begin{equ}
\begin{tikzpicture}
    \node [circ, label=below:$w_{4}$] (fifth anoise) at (-.5, 0) {};
    \node [circ, label=right:$w_{3}$] (fourth anoise) at (1, 1.5) {};

    \node [dot, label=below:$v_{4}$] (fifth noise) at (0, 0) {};
    \node [dot, label=right:$v_{3}$] (fourth noise) at (1, 1) {};

    \node [dot, label=below:$u_{3}$] (internal 4) at (1, 0) {};

    \node [dot, label=below:$u_{2}$] (internal 3) at (2, 0) {};
    \node [dot, label=right:$v_{2}$] (third noise) at (2, 1) {};
    \node [circ, label=right:$w_{2}$] (third anoise) at (2, 1.5) {};

    \node [root, label=below:$u_{1}$] (internal 2) at (3, 0) {};
    \node [dot, label=right:$v_{1}$] (second noise) at (3, 1) {};
    \node [circ, label=right:$w_{1}$] (second anoise) at (3, 1.5) {};

    \draw[kernel] (internal 2) to (second noise);
    \draw[leaf] (second noise) to (second anoise);
    \draw[kernel] (internal 2) to (internal 3);

    \draw[kernel] (internal 3) to (third noise);
    \draw[leaf]   (third noise) to (third anoise);
    \draw[kernel] (internal 3) to (internal 4);

    \draw[kernel] (internal 4) to (fourth noise);
    \draw[kernel] (internal 4) to (fifth noise);

    \draw[leaf] (fourth noise) to (fourth anoise);
    \draw[leaf] (fifth noise) to (fifth anoise);
\end{tikzpicture}
\end{equ}
One has $N(T) = \{u_{i}\}_{i=1}^{3} \sqcup \{v_{i}\}_{i=1}^{4}$ with $\rho_{T} = u_{1}$ and $L(T) = \{v_{i}\}_{i=1}^{4}$. 

We now describe what the RHS of \eqref{eq: chaos decomp} looks like in this example.
Assuming that we are considering centred Gaussian approximations to space-time white noise, 
only second order cumulants are non-zero, so that
any summand corresponding to a pair $(\wickleaves,\pi)$ not appearing in the list below does vanish. 
(In the general case considered in \cite{HS15}, one has to keep a few more terms.)
\begin{enumerate} 
\item $\wickleaves = \{v_{1},v_{2},v_{3},v_{4}\}$, $\pi = \emptyset$\;.
\item $\wickleaves = \{v_{1},v_{2}\}$, $\pi = \{\{v_{3},v_{4}\}\}$\;.
\item $\wickleaves = \{v_{3},v_{4}\}$, $\pi = \{\{v_{1},v_{2}\}\}$\;.
\item $\wickleaves = \{v_{1},v_{4}\}$, $\pi = \{\{v_{2},v_{3}\}\}$ + $v_{3} \leftrightarrow v_{4}$\;.
\item $\wickleaves = \{v_{2},v_{3}\}$, $\pi = \{\{v_{1},v_{4}\}\}$ + $v_{3} \leftrightarrow v_{4}$\;.
\item $\wickleaves = \emptyset$, $\pi = \{\{v_{1},v_{4}\},\{v_{2},v_{3}\}\}$ + $v_{3} \leftrightarrow v_{4}$\;.
\item $\wickleaves = \emptyset$, $\pi = \{\{v_{1},v_{2}\},\{v_{3},v_{4}\}\}$\;.
\end{enumerate}
Here, we write $v_{3} \leftrightarrow v_{4}$ to denote the same term with
$v_3$ and $v_4$ exchanged.

Concerning positive renormalizations, the set $\cut$ contains only a single edge: $\cut = \{(u_{1},u_{2})\}$.
Using the same numbering as above (and ignoring the permuted situations we didn't explicitly write out), we have the following possibilities for $\mathbb{F}_{\pi}$ 
\begin{enumerate} 
\item $\{\emptyset\}$\;.
\item $\{\emptyset, \{S_{1}\}\}$\;.
\item $ \{\emptyset,\{S_{2}\}\}$\;.
\item $\{\emptyset,\{S_{3}\}\}$.
\item $\{\emptyset\}$\;.
\item $ \{\emptyset,\{S_{3}\},\{T\},\{S_{3},T\}\}$
\item $\{\emptyset, \{S_{1}\}, \{S_{2}\}, \{S_{1},S_{2}\}, \{T,S_{1},S_{2}\}, \{S_{1},T\}, \{S_{2},T\}, \{T\}\}$.
\end{enumerate}
Here the trees $S_{1}$, $S_{2}$, and $S_{3}$ are respectively given by
\[
\begin{tikzpicture}
    \node [subtreenode] (s fifth anoise) at (-.5, 0) {};
    \node [subtreenode] (s fourth anoise) at (1, 1.5) {};

    \node [subtreenode] (s fifth noise) at (0, 0) {};
    \node [subtreenode] (s fourth noise) at (1, 1) {};

    \node [subtreenode] (s internal 4) at (1, 0) {};

    \draw[subtreeedge] (s internal 4) to (s fourth noise);
    \draw[subtreeedge] (s internal 4) to (s fifth noise);

    \draw[subtreeedge] (s fourth noise) to (s fourth anoise);
    \draw[subtreeedge] (s fifth noise) to (s fifth anoise);

    \node [circ] (fifth anoise) at (-.5, 0) {};
    \node [circ] (fourth anoise) at (1, 1.5) {};

    \node [dot] (fifth noise) at (0, 0) {};
    \node [dot] (fourth noise) at (1, 1) {};

    \node [dot] (internal 4) at (1, 0) {};

    \node [dot] (internal 3) at (2, 0) {};
    \node [dot]  (third noise) at (2, 1) {};
    \node [circ] (third anoise) at (2, 1.5) {};

    \node [root] (internal 2) at (3, 0) {};
    \node [dot] (second noise) at (3, 1) {};
    \node [circ] (second anoise) at (3, 1.5) {};

    \draw[kernel] (internal 2) to (second noise);
    \draw[leaf] (second noise) to (second anoise);
    \draw[kernel] (internal 2) to (internal 3);

    \draw[kernel] (internal 3) to (third noise);
    \draw[leaf]   (third noise) to (third anoise);
    \draw[kernel] (internal 3) to (internal 4);

    \draw[kernel] (internal 4) to (fourth noise);
    \draw[kernel] (internal 4) to (fifth noise);

    \draw[leaf] (fourth noise) to (fourth anoise);
    \draw[leaf] (fifth noise) to (fifth anoise);
\end{tikzpicture}
\quad
\begin{tikzpicture}
    \node [subtreenode] (s internal 3) at (2, 0) {};
    \node [subtreenode]   (s third noise) at (2, 1) {};
    \node [subtreenode]  (s third anoise) at (2, 1.5) {};

    \node [subtreenode]  (s internal 2) at (3, 0) {};
    \node [subtreenode]  (s second noise) at (3, 1) {};
    \node [subtreenode]  (s second anoise) at (3, 1.5) {};

    \draw[subtreeedge] (s internal 2) to (s second noise);
    \draw[subtreeedge] (s second noise) to (s second anoise);
    \draw[subtreeedge] (s internal 2) to (s internal 3);

    \draw[subtreeedge] (s internal 3) to (s third noise);
    \draw[subtreeedge] (s third noise) to (s third anoise);

    \node [circ] (fifth anoise) at (-.5, 0) {};
    \node [circ] (fourth anoise) at (1, 1.5) {};

    \node [dot] (fifth noise) at (0, 0) {};
    \node [dot] (fourth noise) at (1, 1) {};

    \node [dot] (internal 4) at (1, 0) {};

    \node [dot] (internal 3) at (2, 0) {};
    \node [dot]  (third noise) at (2, 1) {};
    \node [circ] (third anoise) at (2, 1.5) {};

    \node [root] (internal 2) at (3, 0) {};
    \node [dot] (second noise) at (3, 1) {};
    \node [circ] (second anoise) at (3, 1.5) {};

    \draw[kernel] (internal 2) to (second noise);
    \draw[leaf] (second noise) to (second anoise);
    \draw[kernel] (internal 2) to (internal 3);

    \draw[kernel] (internal 3) to (third noise);
    \draw[leaf]   (third noise) to (third anoise);
    \draw[kernel] (internal 3) to (internal 4);

    \draw[kernel] (internal 4) to (fourth noise);
    \draw[kernel] (internal 4) to (fifth noise);

    \draw[leaf] (fourth noise) to (fourth anoise);
    \draw[leaf] (fifth noise) to (fifth anoise);
\end{tikzpicture}
\quad
\begin{tikzpicture}
    \node [subtreenode] (s fourth anoise) at (1, 1.5) {};
    \node [subtreenode] (s fourth noise) at (1, 1) {};

    \node [subtreenode] (s internal 4) at (1, 0) {};

    \node [subtreenode] (s internal 3) at (2, 0) {};
    \node [subtreenode]  (s third noise) at (2, 1) {};
    \node [subtreenode] (s third anoise) at (2, 1.5) {};

    \draw[subtreeedge] (s internal 3) to (s third noise);
    \draw[subtreeedge]   (s third noise) to (s third anoise);
    \draw[subtreeedge] (s internal 3) to (s internal 4);

    \draw[subtreeedge] (s internal 4) to (s fourth noise);

    \draw[subtreeedge] (s fourth noise) to (s fourth anoise);

    \node [circ] (fifth anoise) at (-.5, 0) {};
    \node [circ] (fourth anoise) at (1, 1.5) {};

    \node [dot] (fifth noise) at (0, 0) {};
    \node [dot] (fourth noise) at (1, 1) {};

    \node [dot] (internal 4) at (1, 0) {};

    \node [dot] (internal 3) at (2, 0) {};
    \node [dot]  (third noise) at (2, 1) {};
    \node [circ] (third anoise) at (2, 1.5) {};

    \node [root] (internal 2) at (3, 0) {};
    \node [dot] (second noise) at (3, 1) {};
    \node [circ] (second anoise) at (3, 1.5) {};

    \draw[kernel] (internal 2) to (second noise);
    \draw[leaf] (second noise) to (second anoise);
    \draw[kernel] (internal 2) to (internal 3);

    \draw[kernel] (internal 3) to (third noise);
    \draw[leaf]   (third noise) to (third anoise);
    \draw[kernel] (internal 3) to (internal 4);

    \draw[kernel] (internal 4) to (fourth noise);
    \draw[kernel] (internal 4) to (fifth noise);

    \draw[leaf] (fourth noise) to (fourth anoise);
    \draw[leaf] (fifth noise) to (fifth anoise);
\end{tikzpicture}
\]
The sum corresponding to scenario 1 gives the first term in \eqref{eq: from KPZJeremy}. 

The sum for scenario 2 vanishes for either of two reasons: one is that the renormalization of $S_{1}$ is a ``Wick''-type renormalization (so the term with $S_{1}$ renormalized precisely kills the one without) and a second reason is that the kernel $K'$ in \cite{KPZJeremy} is chosen to annihilates constants.

The sum for scenario 3 gives the second term and third term in \eqref{eq: from KPZJeremy}, the third term comes when one chooses $\mathscr{C} = \{(u_{1},u_{2})\}$ which obstructs the renormalization.

The sums for scenarios 4 and 5 give, respectively, the fourth and fifth terms in \eqref{eq: from KPZJeremy} with the factors $2$ coming from the permutation we mentioned.

For the sum in scenario 6 we first note that one has 
\begin{equs}
\sum_{\mcF \in \mathbb{F}_{\pi}} 
\CW^{\pi}_{\emptyset}[\mcF,\emptyset]
=&
\sum_{
\substack{
\mcF \in \mathbb{F}_{\pi}\\
\mcF \not \ni T}
} 
\big(
\CW^{\pi}_{\emptyset}[\mcF,\emptyset] 
+
\CW^{\pi}_{\emptyset}[\mcF \cup \{T\},\emptyset]
\big)\\
=&
\sum_{
\substack{
\mcF \in \mathbb{F}_{\pi}\\
\mcF \not \ni T}
}
\big( 
\CW^{\pi}_{\emptyset}[\mcF,\emptyset] 
-
\CW^{\pi}_{\emptyset}[\mcF,\emptyset]
\big)
=
0\;.
\end{equs}
This sort of cancellation is quite common when $\wickleaves = \emptyset$. The rest of the sum in scenario 6 (given by $\CW^{\pi}_{\emptyset}[\emptyset,\{(u_{1},u_{2})\}] + \CW^{\pi}_{\emptyset}[\{S_{3}\},\{(u_{1},u_{2})\}]$) gives the sixth term in \eqref{eq: from KPZJeremy}. 

Finally, the same arguments used for scenario 2 also lead to the vanishing of the sum in scenario 7. 
\section{Reorganizing sums}\label{Sec: reorganizing sums}

Our main goal moving forward is to obtain, for $\xi \in \mcM(\Omega_{\infty},\mfL_{\CCum})$, uniform in $\lambda \in (0,1]$, $L^{2p}$ estimates for the random variable $\hat{\Upsilon}^{\xi}[\sT^{\sn}_{\se}](\psi^{\lambda}_{z})$ for some continuous function $\psi$ as in Theorem~\ref{upgraded thm - main theorem} and $z \in \R^{d}$. 
Our main theorem promises estimates uniform in $\psi$ but this comes automatically\footnote{See \cite[Remark~10.8 and Sec.~3.1]{Regularity}.}
by choosing $\psi$ to be the generator of a wavelet basis, so from now on we treat $\psi$ as fixed.\martin{Still need to make sure that bounds only depend on finitely many
derivatives of $\psi$.} \ajay{The only time the root $\mainroot$ gets hit with a derivative is with fictitous renormalization when $\sT^{\sn}_{\se} = \Xi \tau$ - also, we never see derivatives in $\logof$}
The theorem also gives uniformity in $z$ but this will be automatic from stationarity.

If we define, for $\wickleaves \subset L(\sT)$ and $\pi \in \fillingpon{L(T) \setminus \wickleaves}$, the kernels 
$\mathcal{W}^{\pi}_{\wickleaves,\lambda} \in \mcb{C}_{\wickleaves \cup \{\logof,\mainroot\}}$ by setting, for each $x \in (\R^{d})^{\wickleaves \cup \{\logof,\mainroot\}}$, 
\begin{equation}\label{eq: chaos kernels}
\mathcal{W}^{\pi}_{\wickleaves,\lambda}(x)
\eqdef 
\sum_{
\substack{
\mcF \in \mathbb{F}_{\pi}\\
\cC \subset \cut_{\mcF}
}
} 
\mathcal{W}_{\wickleaves}^{\pi}[\mcF, \cC](x)
\cdot 
\psi^{\lambda}_{x_{\logof}}(x_{\mainroot})
\end{equation}
then the functions 
$\big\{
\sum_{\pi \in \fillingpon{L(\sT) \setminus \wickleaves}} \mathcal{W}^{\pi}_{\wickleaves,\lambda}
\big\}_{\wickleaves \subset L_{\sT}}$ 
play the role of chaos kernels in a non-Gaussian analogue to the Wiener chaos decomposition for the random variable\footnote{Being pedantic, this is true upon fixing $x_{\logof} = \bar{z}$ and then integrating $x_{\mainroot}$ if $\mainroot \not \in \wickleaves$.} $\hat{\Upsilon}^{\xi}_{\bar{z}}[\sT^{\sn}_{\se}](\psi^{\lambda}_{\bar{z}})$. 
In order to obtain the mentioned $L^{2p}$ estimates we try to get good control over the behavior of the kernels $\mathcal{W}^{\pi}_{\wickleaves}$ for fixed\footnote{Applying the triangle inequality to deal with the sum over $\wickleaves$ and $\pi$ introduces combinatorial factors that are unimportant in our context.} $\wickleaves \subset L(\sT)$ and $\pi \in \fillingpon{L(T) \setminus \wickleaves}$ and throughout Sections~\ref{Sec: reorganizing sums} to \ref{sec: est on moments of trees} we treat both of these parameters as fixed. 

In what follows we enforce the condition $\xi \in \mcM(\Omega_{\infty},\mfL_{\CCum})$, we then have that $\mathcal{W}^{\pi}_{\wickleaves}$ vanishes unless $\pi$ is a partition on $L(T) \setminus \wickleaves$ which satisfies the property that for every $B \in \pi$ one has $(\mft,B) \in \mfL_{\CCum}$. Therefore we assume this of $\pi$, note that this prevents $\pi$ from containing any singletons.

We also write $\mathcal{W}_{\lambda}[\mcF, \cC] \in \mcb{C}_{\wickleaves \cup \{\mainroot,\logof\}}$ for summand of \eqref{eq: chaos decomp} for fixed $\mcF \in \mbbF_{\pi}$ and $\cC \subset \cut_{\mcF}$. 

One does not expect to get good control over the LHS of \eqref{eq: chaos kernels} by inserting absolute values and controlling each summand on the right-hand side separately -- doing so would prevent us from harvesting any of the numerous cancellations created by our renormalization procedures. 
Exploiting a renormalization cancellation for an edge $e \in \cut$ or divergent subtree $S \in \Div$ requires us to estimate together
\begin{equation*}
\begin{split}
\mathcal{W}_{\lambda}[\mcF, \cC] &\textnormal{ and } \mathcal{W}_{\lambda}[\mcF, \cC \sqcup \{e\}]\\ 
\textnormal{ or }
\mathcal{W}_{\lambda}[\mcF, \cC] &\textnormal{ and }  \mathcal{W}_{\lambda}[\mcF \sqcup \{S\}, \cC].
\end{split}
\end{equation*}   
In many examples it is impossible to harvest all the renormalizations of $\cut$ and $\Div$ simultaneously, this is, in our context, the problem of ``overlapping divergences''. 
The solution to this problem is the observation that ``overlapping divergences don't overlap in phase space''. 
More concretely we perform a multiscale expansion of the quantities we wish to control and then design an algorithm which tells us which renormalizations to harvest for a fixed \emph{scale assignment} $\mbn$ as in \cite{KPZJeremy}. 
This algorithm requires some basic manipulations with partially ordered sets (posets).
To make notions clear, we introduce these concepts prematurely, 
that is before we introduce the multiscale expansion itself. 
This means that many of the notions of this section will be used in an $\mbn$-dependent way in the sequel.
 
All posets have a natural notion of ``intervals'', that is subsets of the poset which are either empty or contain unique minimal and maximal elements along with all elements of the poset that fall between these two extremal elements. 
\begin{definition}
Given a poset $\mathbb{A}$, $\mbbM \subset \mathbb{A}$ is said to be an interval of $\mathbb{A}$ if $\mbbM$ is either empty or there exist elements $s(\mbbM)$ and $b(\mbbM)$ such that
\begin{equ}
\mbbM = \{a \in \mathbb{A}\,:\, s(\mbbM) \le a \le b(\mbbM)\}\;.
\end{equ}
\end{definition}
For a non-empty interval $\mathbb{M}$ we also use the notation $\mbbM = \left[ s(\mbbM), b(\mbbM) \right]$. 
If elements of $\mathbb{A}$ happen to be sets themselves, equipped with the partial
order given by inclusion (which will always be the case for the intervals
we consider), we write $\delta(\mathbb{M}) \eqdef b(\mathbb{M}) \setminus s(\mathbb{M})$.
As a mnemonic one should think of $s(\cdot)$ and $b(\cdot)$ as standing for ``small'' and ``big'', respectively.  
In what follows we view $\mathbb{F}_{\pi}$ as a poset by equipping it with the inclusion partial order.
\begin{remark} If $\mathbb{M}$ is an interval of $\mathbb{F}_{\pi}$ then for any $\cC \subset \cut$ one has that $\mbbM \cap \mathbb{F}_{\cC,\pi}$ is an interval of $\mathbb{F}_{\pi}$.

This is immediate when $\mathbb{M} = \emptyset$ or $\mathbb{M}$ is non-empty and $s(\mbbM) \not \in \mathbb{F}_{\cC,\pi}$, in both situations $\mathbb{M} \cap \mathbb{F}_{\cC,\pi} = \emptyset$. 
If $\mbbM$ is non-empty with $s(\mbbM) \in \mathbb{F}_{\cC,\pi}$, then it is straightforward to check that
$\mbbM  
\cap 
\mathbb{F}_{\cC,\pi}
=
\left[ s(\mbbM),
b(\mbbM) \cap \Div_{\cC}
\right]$.
\end{remark}
For the rest of the paper an \emph{interval of forests} refers to an interval of $\mathbb{F}_{\pi}$.
The intervals of forests of interest to us are specified as pullbacks of certain 
maps from $\mathbb{F}_{\pi}$ to itself called forest projections.
\begin{definition}\label{def: forest projection}
A map $P: \mathbb{F}_{\pi} \rightarrow \mathbb{F}_{\pi}$ is said to be a forest projection if for any $\mcS \in \fullf$, $P^{-1}[\mcS]$ is either empty or an interval of forests $\mbbM$ with $s(\mbbM) = \mcS$.
\end{definition}
Given $\cC \subset \cut$ and a forest projection $P$, we sometimes use the shorthand $P_{\cC}^{-1}[\cdot] = P^{-1}[\cdot] \cap \mathbb{F}_{\cC,\pi}$. 
A simple observation that we use frequently is that 
\begin{equation}\label{cut observation}
\cC_{1} 
\subset \cC_{2} \subset \cut \quad\Rightarrow\quad P_{\cC_{2}}^{-1}[\mcS] \subset P_{\cC_{1}}^{-1}[\mcS] \quad \forall\,\mcS \in \mathbb{F}_{\pi}\;.
\end{equation}
For the rest of this subsection assume that we have fixed a forest projection $P$. 
Given any forest projection $P$ and $\cC \subset \fullcuts$ we define $\mfM^{P}(\cC) \subset 2^{\fullf}$ via setting
\begin{equation*}\label{equ: intervals arising from cut set}
\mfM^{P}(\cC) \eqdef\ 
\left\{ 
\mbbM \neq \emptyset \,:\, \exists \CF \in \mathbb{F}_\pi\,\text{with}\,
P^{-1}_{\cC}[\CF] = \mathbb{M}
\right\}\;.
\end{equation*}
Conversely, for any forest projection $P$ and any $\mbbM \subset \fullf$ we define 
\begin{equation*}\label{equ: cuts giving interval}
\fullcuts^{P}(\mbbM) \eqdef 
\{ \cC \subset \fullcuts:\ P^{-1}_{\cC}[s(\mbbM)] = \mbbM \}\;.
\end{equation*}
Clearly $\fullcuts^{P}(\mbbM)$ is empty unless $\mbbM$ is an interval of forests. 
Then, for any forest projection $P$ one can rewrite the set $\{(\CF,\cC)\,:\, \cC \subset \fullcuts\;\&\; \mcF \in \fmod{\cC}\}$ as 
\begin{equ}[e:cutsandforests]
\bigsqcup_{\cC \subset \fullcuts} \bigsqcup_{\mbbM \in  \mfM^{P}(\cC)}
\{(\CF,\cC)\,:\, \mcF \in \mbbM\}
\enskip
=
\enskip
\bigsqcup_{\mbbM \subset \fullf} \bigsqcup_{\cC \in  \fullcuts^{P}(\mbbM)}
\{(\CF,\cC)\,:\, \mcF \in \mbbM\}\;.
\end{equ}
We use the shorthand
$\mfM^{P} \eqdef \bigcup_{\cC \subset \fullcuts} \mfM^{P}(\cC)$.
Equipping $2^{\fullcuts}$ with the inclusion partial order,
we use the term \emph{interval 
of cuttings} to refer to an interval of $2^{\fullcuts}$.
\begin{remark}
In this paper the use of the term ``interval'' will always refer to an interval of the poset $\mathbb{F}_{\pi}$ or $2^{\cut}$. However, the reader should be cautious since these two posets are themselves collections of subsets of other posets -- $\Div$ and $\cut$, respectively.

Our maximum and minimum operations are always applied to \emph{collections of elements} of a poset. For example, given an interval of forests $\mathbb{M}$, $\overline{s(\mbbM)}$ denotes the collection of maximal subtrees of the forest $s(\mbbM)$ -- the selection of maximal elements is taking place with respect to the poset structure of $\Div$
since $s(\mbbM) \subset \Div$.
\end{remark} 
Before stating our next level we define, for any interval of forests $\mathbb{M}$, 
\begin{equation*}\label{def: cuts away from interval}
\cC_{\mathbb{M}} 
\eqdef
\fullcuts 
\setminus 
\Big(
\bigcup_{S \in \overline{b(\mathbb{M})}}
K(S)
\Big).
\end{equation*}
\begin{lemma} 
For any forest projection $P$ and $\mathbb{M} \in \mfM^{P}$ one has $\overline{\fullcuts^{P}(\mbbM)} = \{ \cC_{\mbbM} \}$.
\end{lemma}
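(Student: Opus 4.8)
The statement claims that for any forest projection $P$ and any interval of forests $\mathbb{M} \in \mfM^{P}$, the set $\fullcuts^{P}(\mbbM)$ of cuttings $\cC$ for which $P^{-1}_{\cC}[s(\mbbM)] = \mbbM$ has a unique maximal element, namely $\cC_{\mbbM} = \fullcuts \setminus \bigcup_{S \in \overline{b(\mathbb{M})}} K(S)$.

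\textbf{The plan.} The proof proceeds in two halves: first show $\cC_{\mbbM} \in \fullcuts^{P}(\mbbM)$, and then show that $\cC_{\mbbM}$ dominates every other element of $\fullcuts^{P}(\mbbM)$. Since $\mbbM \in \mfM^{P}$, there is some $\cC_{0} \subset \fullcuts$ with $P^{-1}_{\cC_{0}}[s(\mbbM)] = \mbbM$; we will leverage this to identify $\cC_{\mbbM}$ concretely.

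\textbf{Step 1: preliminary reduction via the monotonicity \eqref{cut observation}.} First I would record the elementary but crucial consequences of \eqref{cut observation}: if $\cC_{1} \subset \cC_{2}$ then $P^{-1}_{\cC_{2}}[\mcS] \subset P^{-1}_{\cC_{1}}[\mcS]$. Combined with the fact that $P^{-1}_{\cC}[\mcS]$, when non-empty, is always an interval with smallest element $\mcS$ (Definition~\ref{def: forest projection}, plus the remark that intersecting an interval of forests with $\mathbb{F}_{\cC,\pi}$ is again such an interval with the same bottom, using the identity $\mbbM \cap \mathbb{F}_{\cC,\pi} = [s(\mbbM), b(\mbbM) \cap \Div_{\cC}]$), this tells us that for $\cC \supset \cC_{0}$ one has $P^{-1}_{\cC}[s(\mbbM)] \subset \mbbM$, while still containing $s(\mbbM)$. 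So membership in $\fullcuts^{P}(\mbbM)$ for such $\cC$ reduces to the condition $b(P^{-1}_{\cC}[s(\mbbM)]) = b(\mbbM)$, i.e. $b(\mbbM) \cap \Div_{\cC} = b(\mbbM)$, i.e. $b(\mbbM) \subset \Div_{\cC}$, i.e. $\cC \cap K(S) = \emptyset$ for every $S \in b(\mbbM)$ (equivalently for every $S \in \overline{b(\mbbM)}$, since $K(S) = \bigcup_{T \in \mcF, T \subset S} K(T)$ when $S$ ranges over a forest — any kernel edge in a tree of $b(\mbbM)$ lies in a maximal such tree). This last condition says precisely $\cC \subset \cC_{\mbbM}$.

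\textbf{Step 2: closing the argument.} The reduction in Step~1 already does most of the work once we know $\cC_{0} \subset \cC_{\mbbM}$ — which holds because $P^{-1}_{\cC_{0}}[s(\mbbM)] = \mbbM$ forces $b(\mbbM) \subset \Div_{\cC_{0}}$, hence $\cC_{0} \cap K(S) = \emptyset$ for all $S \in \overline{b(\mbbM)}$, i.e. $\cC_{0} \subset \cC_{\mbbM}$. Now take $\cC_{\mbbM}$ itself: since $\cC_{0} \subset \cC_{\mbbM}$, Step~1 shows $s(\mbbM) \in P^{-1}_{\cC_{\mbbM}}[s(\mbbM)] \subset \mbbM$, and since by construction $\cC_{\mbbM} \cap K(S) = \emptyset$ for all $S \in \overline{b(\mbbM)}$ we get $b(\mbbM) \subset \Div_{\cC_{\mbbM}}$, so the interval $P^{-1}_{\cC_{\mbbM}}[s(\mbbM)]$ reaches all the way up to $b(\mbbM)$; hence $P^{-1}_{\cC_{\mbbM}}[s(\mbbM)] = \mbbM$ and $\cC_{\mbbM} \in \fullcuts^{P}(\mbbM)$. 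Conversely, if $\cC \in \fullcuts^{P}(\mbbM)$ is arbitrary, then $P^{-1}_{\cC}[s(\mbbM)] = \mbbM$ forces $b(\mbbM) \subset \Div_{\cC}$, hence $\cC \cap K(S) = \emptyset$ for all $S \in \overline{b(\mbbM)}$, i.e. $\cC \subset \cC_{\mbbM}$. Thus $\cC_{\mbbM}$ is the maximum of $\fullcuts^{P}(\mbbM)$, so $\overline{\fullcuts^{P}(\mbbM)} = \{\cC_{\mbbM}\}$.

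\textbf{Main obstacle.} The only genuinely delicate point is verifying that ``$b(\mbbM) \subset \Div_{\cC}$'' is equivalent to ``$\cC \cap K(S) = \emptyset$ for every $S$ in $b(\mbbM)$'', and that this in turn can be tested against the \emph{maximal} trees $\overline{b(\mbbM)}$ only. This requires unwinding the definition $\Div_{\cC} = \{T \in \Div : \cC \cap K(T) = \emptyset\}$ together with the fact that for a forest of subtrees $\mcF$, $K(\mcF) = \bigsqcup_{S \in \mcF} K(S) = \bigsqcup_{S \in \overline{\mcF}} K(S)$ (kernel edges of a non-maximal tree are kernel edges of the maximal tree above it). Everything else is bookkeeping with the interval structure of $\mathbb{F}_{\pi}$ and the monotonicity \eqref{cut observation}.
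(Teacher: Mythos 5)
Your proof is correct and takes essentially the same route as the paper: establish $\cC \subset \cC_{\mbbM}$ for every $\cC \in \fullcuts^{P}(\mbbM)$ from the requirement $\mbbM \subset \mathbb{F}_{\cC,\pi}$, then squeeze $P^{-1}_{\cC_{\mbbM}}[s(\mbbM)]$ between $\mbbM$ and $\mbbM$ using the monotonicity \eqref{cut observation} together with the forward containment $\mbbM \subset P^{-1}[s(\mbbM)] \cap \mathbb{F}_{\cC_{\mbbM},\pi}$. One small imprecision in Step~1: the claim that $P^{-1}_{\cC}[s(\mbbM)]$ ``still contains $s(\mbbM)$'' for arbitrary $\cC \supset \cC_0$ is not generally true (the pullback can be empty if $\cC$ meets $K(T)$ for some $T \in s(\mbbM)$), but since you only invoke this for $\cC = \cC_{\mbbM}$, where $s(\mbbM) \subset b(\mbbM) \subset \Div_{\cC_{\mbbM}}$ by construction, the argument goes through.
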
 
\begin{proof}  
First we observe that for any $\cC \in \fullcuts^{P}(\mbbM)$ one must have $\cC \subset \cC_{\mbbM}$ since $\mathbb{M} \subset \mathbb{F}_{\cC,\pi}$.
Next we show $\cC_{\mbbM} \in \fullcuts^{P}(\mbbM)$. Fixing some $\cC \in \fullcuts^{P}(\mbbM)$ (the latter set is non-empty by assumption) we have 
\[
P^{-1}_{\cC}[s(\mbbM)]
=
\mathbb{M}
\subset
P^{-1}_{\cC_{\mbbM}}[s(\mbbM)].
\] 
The subset relation is a consequence of the facts that $\mbbM \subset P^{-1}[s(\mbbM)]$ and $\mbbM \subset \mbbF_{\cC_{\mbbM},\pi}$. 
We also have $P^{-1}_{\cC_{\mbbM}}[s(\mbbM)] 
\subset 
P^{-1}_{\cC}[s(\mbbM)] 
= \mathbb{M}
$ by virtue of \eqref{cut observation}.
\end{proof}

\begin{definition}
A cut rule is a map $\cG: \mathbb{F}_{\pi} \rightarrow \fullcuts$ such that for each $\mcF \in \mathbb{F}_{\pi}$, $\cG(\mcF) \subset \fullcuts_{\mcF}$. 
\end{definition}

When we perform a multiscale expansion, each scale assignment $\mbn$ will determine a forest projection $P^{\mbn}$ which will be used to organize the sum over forests of divergent subtrees and a cut rule $\cG^{\mbn}$ for organizing the sum over cut sets.

We now formulate a criterion which will guarantee that conflicts do not arise between negative renormalizations and positive cuts we wish to harvest, that is $P^{\mbn}$ and $\cG^{\mbn} $ do not get in each other's way. 
\begin{definition}\label{compatforests}
Given a forest projection $P$ and a cut rule $\cG$ we say that $\cG$ and $P$ are \emph{compatible} if, for all  $\mbbM \in \mfM^{P}$, $e \in \cG(b(\mbbM))$, and $\cC \subset \fullcuts$, one has 
\[
\cC \cup \{e\} \in \fullcuts^{P}(\mbbM)
\quad\iff\quad
\cC \setminus \{e\} \in \fullcuts^{P}(\mbbM).
\]
\end{definition}
For any compatible $\cG$ and $P$ we define, for each $\mbbM \in \mfM^{P}$, a collection of intervals of cut sets
\[
\mathfrak{G}^{P}_{\cG}(\mbbM)
\eqdef
\left\{ 
\left[
\cC , \cC \sqcup \cG(b(\mbbM)
\right]:\ 
\cC \in \fullcuts^{P}(\mbbM),\ \cC \cap \cG(b(\mbbM) = \emptyset
\right\}\;.
\]
Note that, for each $\mbbM \in \mfM^{P}$, by compatibility one has that $\mathfrak{G}^{P}_{\cG}(\mbbM)$ is a partition  of $\fullcuts^{P}(\mbbM)$. 
The following proposition is essentially immediate.
\begin{proposition}\label{prop:sumcuts}
Given a forest projection $P$ and a compatible $\cG \subset \fullcuts$, one has 
\[
\bigsqcup_{\cC \subset \fullcuts}
\bigsqcup_{\mcF \in \fmod{\cC}}
\{ (\mcF, \cC)  \}
=
\bigsqcup_{\mbbM \in \mfM^{P}}
\bigsqcup_{\mathbb{G} \in \mathfrak{G}^{P}_{\cG}(\mbbM)}
\bigsqcup_{
\substack{
\mcF \in \mbbM\\
\cC \in \mathbb{G}
}
}
\{ (\mcF , \cC)  \}\;.
\]
\end{proposition}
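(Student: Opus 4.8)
The plan is to verify that the right-hand side of the claimed identity is just a regrouping of the left-hand side, using the two prior combinatorial observations already assembled in this section: the reorganization \eqref{e:cutsandforests} of the index set $\{(\CF,\cC): \cC \subset \fullcuts,\ \CF \in \fmod{\cC}\}$ into $\bigsqcup_{\mbbM \subset \fullf}\bigsqcup_{\cC \in \fullcuts^P(\mbbM)}\{(\CF,\cC):\CF \in \mbbM\}$, and the fact that for each $\mbbM \in \mfM^P$ the collection $\mathfrak{G}^P_{\cG}(\mbbM)$ is a partition of $\fullcuts^P(\mbbM)$ when $\cG$ and $P$ are compatible (stated just above as a consequence of Definition~\ref{compatforests}).

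First I would start from the left-hand side and apply \eqref{e:cutsandforests} to rewrite it as a disjoint union over intervals $\mbbM$. I would then note that only those $\mbbM$ for which $\fullcuts^P(\mbbM) \neq \emptyset$ contribute, and that such $\mbbM$ are precisely the elements of $\mfM^P = \bigcup_{\cC\subset\fullcuts}\mfM^P(\cC)$ — this is essentially by definition of $\mfM^P(\cC)$ and $\fullcuts^P(\mbbM)$, but I should spell out that $\mbbM \in \mfM^P$ iff there exists $\cC$ with $P^{-1}_\cC[s(\mbbM)] = \mbbM$ iff $\fullcuts^P(\mbbM)\neq\emptyset$. Hence the union over $\mbbM\subset\fullf$ collapses to a union over $\mbbM\in\mfM^P$.

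Next, for each fixed $\mbbM\in\mfM^P$, I would use compatibility of $\cG$ and $P$ to partition $\fullcuts^P(\mbbM)$ as $\bigsqcup_{\mathbb{G}\in\mathfrak{G}^P_{\cG}(\mbbM)}\mathbb{G}$; this is exactly the assertion made right after the definition of $\mathfrak{G}^P_{\cG}(\mbbM)$, so I can cite it. Substituting this into the inner disjoint union over $\cC\in\fullcuts^P(\mbbM)$ turns $\bigsqcup_{\cC\in\fullcuts^P(\mbbM)}$ into $\bigsqcup_{\mathbb{G}\in\mathfrak{G}^P_{\cG}(\mbbM)}\bigsqcup_{\cC\in\mathbb{G}}$, which yields precisely the right-hand side after reordering the (disjoint) unions. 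The whole argument is a bookkeeping chain of set identities, so there is no real analytic or combinatorial obstacle; the only point requiring a modicum of care is the claim that $\mathfrak{G}^P_{\cG}(\mbbM)$ is genuinely a \emph{partition} (not merely a cover) of $\fullcuts^P(\mbbM)$ — i.e.\ that distinct base points $\cC, \cC'$ disjoint from $\cG(b(\mbbM))$ give disjoint intervals $[\cC,\cC\sqcup\cG(b(\mbbM))]$ and that every $\cC''\in\fullcuts^P(\mbbM)$ lies in exactly one such interval. This follows from compatibility: given $\cC''\in\fullcuts^P(\mbbM)$, set $\cC = \cC''\setminus\cG(b(\mbbM))$; repeatedly applying the equivalence in Definition~\ref{compatforests} shows $\cC\in\fullcuts^P(\mbbM)$ and $\cC''\in[\cC,\cC\sqcup\cG(b(\mbbM))]$, and uniqueness is immediate since the base point of an interval of cut sets containing $\cC''$ and contained in $\fullcuts^P(\mbbM)$ is forced once we know it must be disjoint from $\cG(b(\mbbM))$. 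I expect this partition verification — really the content already packaged in the sentence preceding Proposition~\ref{prop:sumcuts} — to be the only step worth writing out in any detail, and even it is short.
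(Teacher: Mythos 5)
Your proof is correct and follows the same route as the paper's: start from the reorganization \eqref{e:cutsandforests}, observe that only $\mbbM\in\mfM^P$ contribute, and then for each $\mbbM$ use compatibility to show that $\mathfrak{G}^P_{\cG}(\mbbM)$ partitions $\fullcuts^P(\mbbM)$ via the unique decomposition $\cC = (\cC\setminus\cG(b(\mbbM)))\sqcup(\cC\cap\cG(b(\mbbM)))$. The paper's proof is essentially this, written slightly more tersely.
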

\begin{proof}
It follows from \eqref{e:cutsandforests} that the left hand side
is equal to
\begin{equ}
\bigsqcup_{\mbbM \in \mfM^{P}} \bigsqcup_{\cC \in  \fullcuts^{P}(\mbbM)}\bigsqcup_{
\mcF \in \mbbM
}
\{(\CF,\cC)\}\;,
\end{equ}
since $\fullcuts^{P}(\mbbM)$ is empty unless $\mbbM \in \mfM^{P}$.
The claim then follows from the fact that all subsets in 
$\mathfrak{G}^{P}_{\cG}(\mbbM)$ are disjoint by definition and
every $\cC \in \fullcuts^{P}(\mbbM)$ has a unique decomposition 
$\cC = \cD \sqcup \cA$ with 
$\cD \in \fullcuts^{P}(\mbbM)$ and $ \cD \cap \cG(b(\mbbM) = \emptyset$,
as well as $\cA \subset \cG(b(\mbbM)$.
Indeed, simply set $\cD = \cC \setminus \cG(b(\mbbM)$ and $\cA = \cC \cap \cG(b(\mbbM)$: one then 
has $\cD \in \fullcuts^{P}(\mbbM)$ by the compatibility of $\cG$ and $P$.
\end{proof}
\section{Multiscale expansion} \label{Sec: multiscale expansion}
Given any set $A$ we denote by $A^{(2)}$ the collection of all two element subsets of $A$.

We also define
\[
\CE_{\pi} \eqdef
\bigsqcup_{B \in \pi} B^{(2)}\;.
\] 
When viewed as a set of edges, $\CE_{\pi}$ forms a union of complete graphs with vertex 
sets given by the true nodes of the blocks of $\pi$. 


We also define $\CE_{\logof} \eqdef \{ \{\logof,u\}: u \in N(\sT)\}$ and
\begin{equ}\label{def: set of edges for scale expansion}
\mcE \eqdef
K(\sT)
\sqcup
\CE_{\pi}
\sqcup
\CE_{\logof},
\end{equ}
For any subset $A \subset N(\sT)$ we write $\CE_{\logof}(A) \eqdef \{\{\logof,u\}:\ u \in A\} \subset \CE_{\logof}$.
\begin{remark}
Note that we often view the elements of $K(\sT)$ as sets of two element subsets of $N(\sT)$ in \eqref{def: set of edges for scale expansion}. 
When we are doing this there will be pairs $\{a,b\}$ that may appear ``twice'', once as an element of $K(\sT)$ and another as an element of $\CE_{\pi}$, however we see these occurrences as distinct and distinguishable. 
This should also be kept in mind when we present some subsets of $\CE$ as disjoint unions of subsets of $K(\sT)$, $\CE_{\pi}$, and $\CE_{\logof}$.
\end{remark}

We define a \emph{global scale assigment} for $\sT$ to be a tuple 
\[
\mbn = (n_{e})_{e \in \mcE} \in \N^{\mcE}\;.
\]
We fix $\psi:\R \rightarrow [0,1]$ to be a smooth function supported on $[3/8,1]$ and with the property that $\sum_{n \in \Z}\psi(2^{n}x) = 1$ for $x \not = 0$.
We then define another family of functions $\{\Psi^{(k)}\}_{k \in \N}$, $\Psi^{(k)}:\R^{d} \rightarrow [0,1]$ by 
setting $\Psi^{(k)}(0) = \delta_{k,0}$ and, for $x\neq 0$,
\[
\Psi^{(k)}(x) 
\eqdef 
\begin{cases}
\sum_{n \le 0} \psi(2^{n}|x|) & \textnormal{ if } k=0\\
\psi(2^{k}|x|) & \textnormal{ if } k \not = 0.
\end{cases} 
\]
Given $e \in \mcE$ with $e = \{a,b\}$ and a global scale assignment $\mbn$ we define $\Psi_{\mbn}^{e} \in \allf$ via $\Psi_{\mbn}^{e}(x_{a} - x_{b})$. 
Given $E \subset \mcE$  we define $\Psi_{\mbn}^{E} \in \allf$ via
\[
\Psi_{\mbn}^{E} 
\eqdef
\prod_{e \in E} \Psi^{e}_{\mbn}\;.
\]

We now define single scale analogues of some of the functions we introduced earlier,
as well as for the functions
\begin{equ}[e:kerHat]
\KerHat_{\mfe}^{\cD}
\eqdef
\prod_{e \in \cD}
\left( 
\ke{\{e\}}{\mfe}
+
\RKer^{\{e\}}_{\mfe}
\right).
\end{equ}
For any $\mbn \in \N^{\mcE}$ we set
\begin{equs}[2]
\ke{E}{\mfe,\mbn} 
&\eqdef
\ke{E}{\mfe}
\cdot
\Psi_{\mbn}^{E},&\qquad
\KerHat_{\mfe,\mbn}^{\cD}
&\eqdef
\KerHat_{\mfe}^{\cD}
\cdot
\Psi_{\mbn}^{\cD},\\
\RKer_{\mfe,\mbn}^{\cD}
&\eqdef
\RKer_{\mfe}^{\cD}
\cdot
\Psi_{\mbn}^{\cD},
&\qquad
\powroot{N}{\mfn}{v,\mbn} 
&\eqdef
\powroot{N}{\mfn}{v}
\cdot
\prod_{u \in N}
\Psi_{\mbn}^{\{\{\logof,u\}\}},
\end{equs}
as well as $\psi^{\lambda}_{\mbn} \in \mcb{C}_{\{\logof,\mainroot\}}$ via $\psi^{\lambda}_{\mbn}(x)
\eqdef\ 
\psi^{\lambda}_{x_{\logof}}(x_{\mainroot}) 
\cdot
\Psi_{\mbn}^{\{\logof,\mainroot\}}(x)$.

We now describe our multiscale expansion for cumulants.
Our expansion will be more involved for second cumulants.  Since we allow $\mft \in \Le$ to have $|\mft|_{\s} \in (-\frac{2}{3}|\s|, - |\s|)$ this means that if $\|\xi\|_{N,\c}$ only contained information about cumulants away from the diagonal then the quantity $\|\xi\|_{N,\c}$ could not be used to control, for arbitrary $\xi \in \mcM(\Omega_{\infty})$, $\delta > 0$, and $\mft, \mft' \in \Le$ with $|\mft|_{\s} + |\mft'|_{\s} \le - |\s|$, the quantity 
\[
\int_{
\substack{
y \in \R^{d}\\
|y|_{\s} < \delta
}
}
dx\ 
\Cum[\{\xi_{\mft}(0),\xi_{\mft'}(x)\}].
\]
In particular, if we use a brutal multiscale bounds on the above quantity and only use the information on $\|\xi\|_{N,\c}$ given away from the diagonal then by power-counting the above integral will look divergent.
However our norm $\|\xi\|_{N,\c}$ includes data on the behavior of the diagonal through \eqref{def: control over the diagonal} so there is no real problem here. 
In order to overcome this in our power-counting analysis, we perform a built-in ``fictitious''-renormalization of second cumulants which uses the data from \eqref{def: control over the diagonal}. 
For each $B \subset L(\sT)$ with $B = \{u,v\}$ and $|\mft(B)|_{\s} \le -|\s|$ we define, by applying the same trick as used in \cite[Lem.~A.4]{KPZJeremy}, a family of functions $\{\widetilde{\mathrm{Cu}}_{B,j}(x_{u},x_{v})\}_{j=0}^{\infty}$ with the property that each $\widetilde{\mathrm{Cu}}_{B,j}(x_{u},x_{v})$ is translation invariant (i.e., expressable as a function of $x_{u}- x_{v}$) and furthermore
\begin{enumerate}
\item The identity
\[
\sum_{j=0}^{\infty}
\widetilde{\mathrm{Cu}}_{B,j}
=
\Cum[\{\xi_{\mft(u)}, \xi_{\mft(v)}\}]
\]
holds in the sense of distributions on $(\R^d)^{B}$.
\item $\widetilde{\mathrm{Cu}}_{B,j}$ is supported on $(x_{u},x_{v})$ with $2^{-n-2} \le |x_{u} - x_{v}| \le 2^{-n}$.
\item One has, uniform in $j \ge 0$ and $x_{B} \in \R^{2d}$, 
\[
|\widetilde{\mathrm{Cu}}_{B,j}(x_{B})|
\lesssim
\| \xi \|_{2,\c}
\cdot
|x_{u} - x_{v}|^{|\mft(B)|_{\s}}\;.
\]
\item For any polynomial $Q$ on $\R^{d}$ of $\s$-degree strictly less than $- |\s| - |\mft(B)|_{\s}$ and every $j > 0$, one has
\begin{equ}\label{eq: fict renorm int vanishes}
\int dx\, \widetilde{\mathrm{Cu}}_{B,j}(0,x)
Q(x)
=
0.
\end{equ}
\end{enumerate}
\begin{remark}
Note that we do \textit{not} impose \eqref{eq: fict renorm int vanishes}
for $j = 0$. 
Indeed, this would in general be in contradiction with the first item. 
\end{remark}
For any $B \subset L(\sT)$ and any global scale assignment $\mbn$ we set
\[
\mathrm{Cu}_{B,\mbn}(x)
\eqdef
\begin{cases}
\widetilde{\mathrm{Cu}}_{B,n_{B}}(x)
&\ 
\textnormal{if }|B| = 2 \textnormal{ and }|\mft(B)|_{\s} \le -|\s|\\[1.5ex]
\Psi_{\mbn}^{B^{(2)}}(x) \cdot \Cum[\{ \xi_{\mft(u)}(x_{u}) \}_{u \in B}]
&\ 
\textnormal{otherwise}
\end{cases}
\]
Finally, for any $L \subset L(\sT)$ and global scale assignment $\mbn$ we set 
\[
\mathrm{Cu}^{L}_{\pi,\mbn} 
\eqdef
\prod_{
\substack{
B \in \pi,\\ 
B \subset L
}}
\mathrm{Cu}_{B,\mbn}
.
\]
We now define operators $\singleslicegenvert{\pi,\mbbM,S}{\mbn}: \allf \rightarrow \mcb{C}_{\tilde{N}(S)^{c}}$, where $\mbbM$ is an interval of forests, $S \in b(\mbbM)$, and $\mbn \in \N^{\mcE}$.
This definition is again recursive and for $\phi \in \allf$ we set, for any $x \in (\R^{d})^{\tilde{N}(S)^{c}}$, 
\begin{equation*}
\begin{split}
[\singleslicegenvert{\pi,\mbbM,S}{\mbn}\phi](x)\ 
\eqdef&\ 
\int_{\tilde{N}_{b(\mbbM)}(S)} dy\
\mathrm{Cu}^{L_{b(\mbbM)}(S)}_{\pi,\mbn}(y) 
\cdot
\ke{\mathring{K}_{b(\mbbM)}(S)}{0,\mbn}(x_{\rho_{S}} \sqcup y)\\
&\quad
\cdot
\singleslicegenvert{\pi,\mbbM,C_{b(\mbbM)}(S)}{\mbn}
\left[
\ke{K^{\partial}_{b(\mbbM)}(S)}{0,\mbn}
\cdot
[\mathscr{Y}_{S,\mbbM}^{\#}\phi]
\right](x \sqcup y)
\end{split}
\end{equation*}
where 
\begin{equation*}
\mathscr{Y}_{S,\mbbM}^{\#}\phi
=
\begin{cases}
- \mathscr{Y}_{S} \phi &\textnormal{ if } S \in s(\mbbM)\\
(\mathrm{Id} - \mathscr{Y}_{S})\phi &\textnormal{ if } S \in \delta(\mbbM).
\end{cases}
\end{equation*}
Recall that the operators $\mathscr{Y}_{S}$ were introduced in Definition~\ref{set of derivatives}.
The notation $\singleslicegenvert{\pi,\mbbM,C_{b(\mbbM)}(S)}{\mbn}$ denotes the composition of the operators $\{\singleslicegenvert{\pi,\mbbM,T}{\mbn}\}_{T \in C_{b(\mbbM)}(S)}$. 
Again, no order needs to be prescribed for this composition because these operators commute -- the justification of this claim is  essentially the same as that given for the commutation proved in Remark~\ref{remark: explanation of negative renormalization}.
The difference here is that some instances of $\mathscr{Y}_{S}$ are replaced by $(\mathrm{Id} - \mathscr{Y}_{S})$ and all the cumulants and kernels are replaced by single slices of their multiscale expansions but these changes make no real difference when checking that these operators commute. 

We can now define single slice, partially resummed chaos kernels. 
For any $\mbn \in \N^{\mcE}$, any interval of forests $\mathbb{M}$, and any interval of cuttings $\mathbb{G}$ with $b(\mbbG) \subset \cut_{b(\mbbM)}$, we define $\mathcal{W}_{\lambda}^{\mbn}[\mbbM, \mathbb{G}] \in \mcb{C}_{\wickleaves \cup \{\mainroot,\logof\}}$ via setting, for each $x \in (\R^{d})^{\wickleaves \cup \{\logof,\mainroot\}}$, 
\begin{equs}\label{multiscale - integrand}
\mathcal{W}^{\mbn}_{\lambda}&[\mbbM, \mathbb{G}](x)
\eqdef 
\int_{\nodesleft{b(\mbbM)}{\sT} \setminus 
(\wickleaves \cup \{\logof\})}
dy\ 
\psi^{\lambda}_{\mbn}(x \sqcup y)
\cdot
\powroot{\nodesleft{b(\mbbM)}{\sT}}{\sn}{\logof,\mbn}(x \sqcup y)\\
&
\cdot
\mathrm{Cu}^{\leavesleft{b(\mbbM)}{\sT}}_{\pi,\mbn}(x \sqcup y)
\,\ke{
\kernelsleft{b(\mbbM)}{\sT} \setminus b(\mathbb{G})
}{0,\mbn}(x \sqcup y)\\
&
\cdot
\,\RKer_{0,\mbn}^{s(\mbbG) \setminus K^{\downarrow}(\overline{b(\mbbM)})}(x \sqcup y)
\,\KerHat_{0,\mbn}^{\delta(\mbbG) \setminus K^{\downarrow}(\overline{b(\mbbM)})}(x \sqcup y)
\\
\cdot
\Big( 
& 
\prod_{S \in \mmax{b(\mbbM)}}
H_{\pi,\mbbM,S}^{\mbn} \left[ 
\RKer_{0,\mbn}^{s(\mbbG) \cap K^{\downarrow}(S)}
\,\KerHat_{0,\mbn}^{\delta(\mbbG) \cap K^{\downarrow}(S)}
\,\ke{K^{\downarrow}(S) \setminus b(\mathbb{G})}{0,\mbn}
\powroot{\tilde{N}(S)}{\sn,\logof}{\mbn}
\right](x \sqcup y)
\Big)\;.
\end{equs}
With our definitions the following lemma is straightforward.
\begin{lemma}\label{lem:reorderSum}\martin{Double-check that.}
For any $\cC \subset \cut$ and $\mcF \in \mathbb{F}_{\cC,\pi}$ one has
\begin{equ}\label{eq: full multiscale expansion}
\sum_{\mbn \in \N^{\mcE}}
\mathcal{W}^{\mbn}_{\lambda}\left[\{\mcF\}, \{\cC\} \right]
=
\mathcal{W}_{\lambda}[\mcF, \cC]\;,
\end{equ}
where for the sum on the the LHS we have absolute convergence pointwise in $x_{\wickleaves}$. 

On the other hand, for fixed $\mbn \in \N^{\mcE}$, interval of forests $\mathbb{M}$, and any interval of cut sets $\mathbb{G}$ with $b(\mbbG) \subset \cut_{b(\mbbM)}$, one has
\begin{equ}\label{eq: interval sum}
\sum_{
\substack{
\mcF \in \mathbb{M}\\
\cC \in \mathbb{G}}
}
\mathcal{W}_{\lambda}^{\mbn}\left[\{\mcF\}, \{\cC\} \right]
=
\mathcal{W}_{\lambda}^{\mbn}\left[\mathbb{M}, \mathbb{G} \right].
\end{equ}
\end{lemma}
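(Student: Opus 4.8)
The lemma has two parts. For the first identity \eqref{eq: full multiscale expansion}, the right-hand side $\mathcal{W}_\lambda[\mcF,\cC]$ is the summand of \eqref{eq: chaos decomp} corresponding to the pair $(\mcF,\cC)$, and by definition it is built from the unsliced kernels $\ke{\cdot}{0}$, $\RKer_0^\cdot$, the cumulant products $\mathrm{Cu}^\cdot_\pi$, the power functions $\powroot{\cdot}{\sn}{\logof}$, and the negative renormalization operators $H_{\pi,\mcF,S}$. The strategy is simply to insert, into each factor appearing in the integrand defining $\mathcal{W}_\lambda[\mcF,\cC]$, the partition of unity $\sum_{n}\Psi^{(n)}_{\mbn}(\cdot) = 1$ associated to the corresponding edge of $\mcE$, and likewise to replace each second cumulant $\Cum[\{\xi_{\mft(u)},\xi_{\mft(v)}\}]$ (for $|\mft(B)|_\s \le -|\s|$) by its fictitious-renormalization decomposition $\sum_{j}\widetilde{\mathrm{Cu}}_{B,j}$ established just above. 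Since the set of edges $\mcE = K(\sT)\sqcup\CE_\pi\sqcup\CE_\logof$ is precisely the collection of edges along which slices are taken in \eqref{multiscale - integrand} when $\mbbM = \{\mcF\}$ and $\mbbG = \{\cC\}$, distributing the product of all these partitions of unity over a single global scale assignment $\mbn \in \N^\mcE$ exactly reconstitutes $\mathcal{W}^{\mbn}_\lambda[\{\mcF\},\{\cC\}]$, and summing over $\mbn$ telescopes back to $\mathcal{W}_\lambda[\mcF,\cC]$. When $\mbbM = \{\mcF\}$ one has $s(\mbbM) = b(\mbbM) = \mcF$, so $\delta(\mbbM) = \emptyset$, hence $\mathscr{Y}^{\#}_{S,\mbbM} = -\mathscr{Y}_S$ for all $S$; similarly $\delta(\mbbG) = \emptyset$ so $\KerHat$ does not appear, $\RKer^{s(\mbbG)\setminus\dots}_{0,\mbn} = \RKer^{\cC\setminus\dots}_{0,\mbn}$, and $\singleslicegenvert{\pi,\mbbM,S}{\mbn}$ reduces to the single-slice analogue of $H_{\pi,\mcF,S}$. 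Matching the factors of \eqref{multiscale - integrand} with those of $\mathcal{W}^{\pi}_\wickleaves[\mcF,\cC]$ term by term is then mechanical.

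For the convergence claim in the first part, I would argue that the bounds already recorded in the remark following \eqref{def: generalized vertex base case} show that each kernel factor $D^{\bar e}K_{\mft(e)}$ contributes a locally integrable singularity of strictly subcritical order, that the fictitious-renormalized cumulant slices obey the uniform bound $|\widetilde{\mathrm{Cu}}_{B,j}(x_B)| \lesssim \|\xi\|_{2,\c}\,|x_u - x_v|^{|\mft(B)|_\s}$ with geometrically-supported scales, and that the $\mathscr{Y}_S$-operators act by Taylor subtraction which only improves power-counting near the small diagonals. Combining these as in the cited remark, the integral defining $\mathcal{W}^{\pi}_\wickleaves[\mcF,\cC]$ converges absolutely for fixed $x_\wickleaves$; the sum over $\mbn$ then converges absolutely by the same power-counting (each scale sum being geometric once all renormalization subtractions have been accounted for). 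Here one should be a little careful that the positive-cut Taylor expansions hidden in $\RKer$ do not spoil integrability, but this is exactly the content of the definition of $\cut$ via $|\clearrootn(\sT_{\ge}(e),0)^{\sn}_{\se}|_+ > 0$.

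The second identity \eqref{eq: interval sum} is purely combinatorial. Both sides are integrals of products of the same slice-level factors; the only difference is that on the left one fixes a single $\mcF\in\mbbM$ and $\cC\in\mbbG$ and sums, while on the right $\mbbM$ and $\mbbG$ appear directly. I would unfold \eqref{multiscale - integrand} for the pair $(\{\mcF\},\{\cC\})$ and observe that, since $\mbbM = [s(\mbbM),b(\mbbM)]$ and all $\mcF \in \mbbM$ share the same maximal forest $b(\mbbM)$ (because $s(\mbbM)\subset\mcF\subset b(\mbbM)$ and intervals of forests are precisely determined by their endpoints), the factors $\nodesleft{b(\mbbM)}{\sT}$, $\leavesleft{b(\mbbM)}{\sT}$, $\kernelsleft{b(\mbbM)}{\sT}$, the cumulant product, and the power functions are all independent of which $\mcF\in\mbbM$ we picked — only the operators $\mathscr{Y}^{\#}_{S,\mbbM}$ distinguish $S\in s(\mbbM)$ from $S\in\delta(\mbbM)$, and that distinction is already built into the definition of $\singleslicegenvert{\pi,\mbbM,S}{\mbn}$ rather than depending on $\mcF$. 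Summing $\mathscr{Y}^{\#}$ over the Boolean lattice $\mbbM$: for each $S\in\delta(\mbbM)$, summing over the two choices "$S$ renormalized'' versus "$S$ not renormalized'' produces $-\mathscr{Y}_S + (\mathrm{Id}-\mathscr{Y}_S)\cdot(\text{something})$ — more precisely $\sum_{\mcF}\prod_{S}(\text{factor})$ factorizes as $\prod_{S\in s(\mbbM)}(-\mathscr{Y}_S)\prod_{S\in\delta(\mbbM)}\big((-\mathscr{Y}_S)+(\mathrm{Id}-\mathscr{Y}_S)\big)$, which collapses to the single-slice operator with the $\mathscr{Y}^{\#}$ convention. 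The analogous telescoping happens for $\cC\in\mbbG$ via $\ke{\{e\}}{0,\mbn}$ versus $\RKer^{\{e\}}_{0,\mbn}$ summing to $\KerHat^{\{e\}}_{0,\mbn}$ by \eqref{e:kerHat}.

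\textbf{Main obstacle.} The genuinely delicate point is the bookkeeping in the second identity: one must check that the recursive structure of $\singleslicegenvert{\pi,\mbbM,S}{\mbn}$ correctly encodes the sum over all $\mcF\in\mbbM$ of the recursive structure of $\singleslicegenvert{\pi,\{\mcF\},S}{\mbn}$, paying attention to the fact that the children $C_{b(\mbbM)}(S)$ are computed with respect to $b(\mbbM)$ rather than $\mcF$, and that $\mathring{K}_{b(\mbbM)}(S)$, $K^\partial_{b(\mbbM)}(S)$, $\tilde N_{b(\mbbM)}(S)$ likewise use $b(\mbbM)$. The point is that the interval structure guarantees $\mcF\setminus s(\mbbM) \subset \delta(\mbbM)$ consists of subtrees that are strictly nested strictly below elements of $\overline{b(\mbbM)}$, so that which of them is present only toggles the Taylor-subtraction at that subtree without altering the forest skeleton used to organize the integration variables — hence the factorization over $\delta(\mbbM)$ is clean. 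Once that structural observation is in place, the proof is a one-line induction on $\mathrm{depth}(b(\mbbM))$ combined with multilinearity of the integral in its factors.
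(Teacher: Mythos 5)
Your overall approach is sound and the first part of your proof matches the paper's (essentially trivial) treatment — inserting the partitions of unity $\sum_n\Psi^{(n)} = 1$ over the edges of $\mcE$ and the fictitious-renormalization decomposition of the second cumulants exactly reconstitutes $\mathcal{W}^{\mbn}_\lambda[\{\mcF\},\{\cC\}]$, and the absolute-convergence discussion is fine. For the second identity you also have the right idea — the sum over the interval should factorize over the Boolean lattices $2^{\delta(\mbbM)}$ and $2^{\delta(\mbbG)}$, with $\mathrm{Id} - \mathscr{Y}_S$ arising from $\delta(\mbbM)$ and $\KerHat = \ke{}{}+\RKer$ from $\delta(\mbbG)$. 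But there are two genuine problems in the way you set this up.

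\textbf{First}, the displayed factorization $\prod_{S\in s(\mbbM)}(-\mathscr{Y}_S)\prod_{S\in\delta(\mbbM)}\big((-\mathscr{Y}_S)+(\mathrm{Id}-\mathscr{Y}_S)\big)$ is wrong: summing over ``$S$ renormalized'' and ``$S$ not renormalized'' gives $(-\mathscr{Y}_S)+\mathrm{Id} = \mathrm{Id}-\mathscr{Y}_S$, not $(-\mathscr{Y}_S)+(\mathrm{Id}-\mathscr{Y}_S) = \mathrm{Id}-2\mathscr{Y}_S$. As written this does not collapse to the $\mathscr{Y}^{\#}$ convention.

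\textbf{Second}, and more substantively: your justification for why the factorization over $\delta(\mbbM)$ is ``clean'' — that the sets $\nodesleft{b(\mbbM)}{\sT}$, $\kernelsleft{b(\mbbM)}{\sT}$, $\leavesleft{b(\mbbM)}{\sT}$, the children $C_{b(\mbbM)}(S)$, etc., are all computed from $b(\mbbM)$ and hence independent of which $\mcF\in\mbbM$ you picked — is false. For the single-forest summands $\mathcal{W}^{\mbn}_\lambda[\{\mcF\},\{\cC\}]$ one has $b(\{\mcF\}) = s(\{\mcF\}) = \mcF$, so every one of those sets is computed from $\mcF$ itself, \emph{not} from $b(\mbbM)$, and in particular $\overline{\mcF}$, $\tilde N_{\mcF}(S)$, $\mathring K_{\mcF}(S)$, $K^\partial_{\mcF}(S)$ all change as $\mcF$ ranges over $\mbbM$. (Concretely, take $s(\mbbM)=\emptyset$, $b(\mbbM)=\{T\}$; then $\mcF=\emptyset$ has no $H$-operator at all, while $\mcF=\{T\}$ does.) The equality of the two sides therefore is \emph{not} a pointwise factorization of a fixed integrand: the operators $\hat H^{\mbn}_{\pi,\{\mcF\},S}$ redistribute which variables are integrated inside them versus outside. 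This is precisely what requires the paper's inductive argument — an induction on $|\delta(\mbbM)|$ with the split $\mbbM = \mbbM_1\sqcup\mbbM_2$ (according to whether a chosen $T\in\delta(\mbbM)$ is present), followed by the auxiliary induction establishing $H^{\mbn}_{\pi,\mbbM_1,S}+H^{\mbn}_{\pi,\mbbM_2,S} = H^{\mbn}_{\pi,\mbbM,S}$ for $S>T$, whose base case at $S=\tilde T$ uses the factorization property \eqref{eq: factorization} to merge a one-level-smaller $H$ into a one-level-larger one. Your ``main obstacle'' paragraph correctly identifies that this is the delicate point, but then dismisses it as ``a one-line induction on $\mathrm{depth}(b(\mbbM))$''; that particular induction variable is also not the one that makes the decomposition work — the paper inducts on $|\delta(\mbbM)|$ precisely because one adds or removes a single optional subtree $T$ at each step, regardless of $T$'s depth in $b(\mbbM)$. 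You should make the inductive merging step explicit rather than asserting that the skeleton is unchanged, because that assertion is literally false.
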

\begin{proof}
Both statements are straightforward but we sketch how to show \eqref{eq: interval sum} since the notation there is somewhat heavy. 
Note that the fact that we stated this identity for a ``single scale'' slice (i.e. with an  $\mbn$) plays no role (a similar identity holds before the multiscale expansion) but we have stated things this way for later use.

The heuristic for \eqref{eq: interval sum} is as follows: for any interval $\mbbA$ of sets where the ordering is given by inclusion one has  
\[
\sum_{\mcA \in \mbbA} \prod_{a \in \mcA} (-  y_{a})
=
\Big(\prod_{a \in s(\mbbA)} (-y_{a}) \Big) 
\Big(\prod_{a \in \delta(\mbbA)} ( 1 -y_{a}) \Big) 
\]
where the $\{y_{a}\}_{a \in b(\mbbA)}$ are indeterminates which need not be commuting as long as one interprets the products on either side consistently. 

Fix $\mbn \in \N^{\CE}$. 
We first show that for any interval of forests $\mbbM$, 
\begin{equ}\label{eq: interval sum work}
\sum_{\mcF \in \mathbb{M}}
\mathcal{W}_{\lambda}^{\mbn}\left[\{\mcF\}, \emptyset \right]
=
\mathcal{W}_{\lambda}^{\mbn}\left[\mbbM, \emptyset \right]\;.
\end{equ}
We prove the above identity via induction on $|\delta(\mbbM)|$. 
The base case, which occurs when $|\mbbM| = 1$ and $|\delta(\mbbM)| = 0$, is immediate. 
For the inductive step fix $l > 0$, assume the claim has been proven whenever $|\delta(\mbbM)| < l$, and fix $\mbbM$ with $|\delta(\mbbM)| = l$.

Fix $T \in \delta(\mbbM)$, we prove the claim in the case where there exists $\tilde{T} \in b(\mbbM)$ with $T \in C_{b(\mbbM)}(\tilde{T})$. 
The case when there is no such $\tilde{T}$ is easier. 

Let $\mbbM_{1} \eqdef \{ \mcF \in \mbbM: \mcF \not \ni T\}$ and $\mbbM_{2} \eqdef \{ \mcF \sqcup \{T\}: \mcF \in \mbbM_{1} \}$. 
Note that $\mbbM_{1}$ and $\mbbM_{2}$ are intervals that partition $\mbbM$. Therefore, by our inductive hypothesis, 
\begin{equ}\label{eq: interval work 3}
\mathcal{W}_{\lambda}^{\mbn}[\mbbM, \emptyset ] = \mathcal{W}_{\lambda}^{\mbn}[\mbbM_{1}, \emptyset ] + \mathcal{W}_{\lambda}^{\mbn}[\mbbM_{2}, \emptyset ]\;.
\end{equ} 
Clearly one has, for all $S \in b(\mbbM)$ with $T \not \le S$, 
\begin{equ}\label{interval work 3.5}
H_{\pi,\mbbM_{1},S}^{\mbn} = H_{\pi,\mbbM_{2},S}^{\mbn}\;.
\end{equ}
Next we claim that for every $S \in b(\mbbM)$ with $S > T$ one has
\begin{equ}\label{eq: interval work 4}
H_{\pi,\mbbM_{1},S}^{\mbn} + H_{\pi,\mbbM_{2},S}^{\mbn} = H_{\pi,\mbbM,S}^{\mbn}\;. 
\end{equ}
Proving this claim finishes our proof since the combination of \eqref{interval work 3.5} and \eqref{eq: interval work 4} yields \eqref{eq: interval sum work}. 
We prove the claim using an auxilliary induction in 
\[
\mathrm{depth}(\{ S' \in b(\mbbM): S > S' > T\})\;.
\] 
The inductive step for this induction is immediate upon writing out both sides of \eqref{eq: interval work 4} and remembering that $\mathscr{Y}_{S,\mbbM_{1}}^{\#} = \mathscr{Y}_{S,\mbbM_{2}}^{\#} = \mathscr{Y}_{S,\mbbM}^{\#}$. 
What remains is to check base case of this induction which occurs when $S = \tilde{T}$. 

To obtain \eqref{eq: interval work 4} when $S = \tilde{T}$ we first observe that $C_{b(\mbbM)}(\tilde{T}) = C_{b(\mbbM_{2})}(\tilde{T}) =  C_{b(\mbbM_{1})}(\tilde{T})~\sqcup~\{T\}$ and then rewrite, for $i = 1,2$,  $H_{\pi,\mbbM_{i},\tilde{T}}^{\mbn}[\phi]$ as 
\begin{equs}\label{eq: final work for interval sum}
{}&
\int_{\tilde{N}_{b(\mbbM)}(\tilde{T})} dy\
\mathrm{Cu}^{L_{b(\mbbM)}(\tilde{T})}_{\pi,\mbn}(y)
\cdot
\ke{\mathring{K}_{b(\mbbM)}(\tilde{T})}{0,\mbn}(x_{\rho_{\tilde{T}}} \sqcup y)
\int_{\tilde{N}_{b(\mbbM)}(T)}dz\ 
\mathrm{Cu}^{L_{b(\mbbM)}(T)}_{\pi,\mbn}(z)\\
& \quad
\cdot
\ke{\mathring{K}_{b(\mbbM)}(\tilde{T})}{0,\mbn}(w)
\big(
\mathscr{Y}^{(i)}
\circ
H^{\mbn}_{\pi,C_{b(\mbbM)}(\tilde{T}) \setminus \{T\}}
[
\ke{K^{\partial}_{b(\mbbM)}(S)}{0,\mbn}
\mathscr{Y}_{\tilde{T},\mbbM}^{\#}\phi
]
\big)(w),
\end{equs}
where $w = x_{\rho_{\tilde{T}}} \sqcup y \sqcup z$,  $\mathscr{Y}^{(1)} = \Id$, and $\mathscr{Y}^{(2)} = (-\mathscr{Y}_{T})$. 
In obtaining \eqref{eq: final work for interval sum} for $i=1$ we used \eqref{eq: factorization}. 

The corresponding identity for summing over $\cC \in \mathbb{G}$ is easier to check, one just expands, for each $e \in \delta(\mbbG)$, the $\KerHat_{0,\mbn}^{\{e\}}$ appearing in the RHS of \eqref{multiscale - integrand} as $\ke{\{e\}
}{0,\mbn} + \RKer_{0,\mbn}^{\{e\}}$.  
\end{proof}
\begin{remark}
In what follows we will perform various interchanges between integrals and infinite sums over scale assignments with an implicit assumption of equality. 
Once we fix a final method of organizing these sums and integrals we will establish their absolute convergence which will a posteriori justify such interchanges. 
\end{remark}
\section{The projection onto safe forests and choosing positive cuts} \label{sec: Safe forest projection}
We now introduce the family of forest projections $\{P^{\mbn}\}_{\mbn \in \N^{\mcE}}$ used to organize the sum over forests, these are the ``projections onto safe forests'' of \cite{FMRS85}. 
For any subtree $S$ of $\sT$, we define the edge sets
\begin{equation*}
\begin{split}
\mcE_{\emptyset}^{\inte}(S) &\eqdef
K(S) \sqcup
\left\{ e \in \CE_{\pi}:\ 
e \subset N(S)
\right\}\\
\CE_{\emptyset}^{\exte}(S) &\eqdef
\{e \in \CE \setminus \mcE^{\inte}(S):\ e \cap N(S) \not = \emptyset \}.
\end{split}
\end{equation*}
Heuristically, one determines whether the divergent subtree $S$ needs to be renormalized under slice $\mbn$ based on the relative values of the quantities
\begin{equation}\label{eq: inner outer scales}
\mathrm{int}_{\emptyset}^{\mbn}(S) 
\eqdef
\min
\{n_{e}:\ e \in \mcE_{\emptyset}^{\inte}(S) \}
\enskip
\textnormal{and}
\enskip 
\mathrm{ext}_{\emptyset}^{\mbn}(S) 
\eqdef
\max
\{ n_{e}:\ e \in \CE_{\emptyset}^{\exte}(S)  \}\;.
\end{equation}
We describe the general idea in the scenario where $S$ is the only divergent structure.
In this scenario one would sum over the scales assignments in the following way: we freeze the value of the scales external to $\mathrm{ext}_{\emptyset}^{\mbn}(S)$ and perform a sum on scales internal to $S$, after this is done we sum over the scales external to $S$.

It is the sum over the scales internal to $S$ where our bounds blow up due to the divergence of $S$. 

In the parlance of \cite{FMRS85}, $S$ is \emph{dangerous} for the scale assignment $\mbn$ if
\begin{equ}\label{eq: inner > outer scales}
\mathrm{int}_{\emptyset}^{\mbn}(S) > \mathrm{ext}_{\emptyset}^{\mbn}(S).
\end{equ}
The contribution of $S$ for a given scale will be of the order $2^{\omega(S)\mathrm{int}_{\emptyset}^{\mbn}(S)}$, 
which means that the our estimates on the sum over internal scales, subject to the constraint 
\eqref{eq: inner > outer scales}, will diverge. These are the scale assignments for which we need to 
exploit the cancellation that occurs between the original integral and the counter-term we subtract for $S$ -- we need to 
harvest this negative renormalization.

On the other hand, the sum over internal scales subject to the constraint
\begin{equ}\label{eq: inner < outer scales}
\mathrm{int}_{\emptyset}^{\mbn}(S) \le \mathrm{ext}_{\emptyset}^{\mbn}(S)
\end{equ}
is not a problem, since we have an upper bound on our sum. 
When \eqref{eq: inner < outer scales} holds \cite{FMRS85} calls $S$ \emph{safe} for the scale assignment $\mbn$. We get a bound of order $2^{\omega(S)\mathrm{ext}_{\emptyset}^{\mbn}(S)}$ after performing the sum over scales internal to $S$. If $S$ is the only divergence, then our estimate on the entire integral, written as a product of exponential factors, will have the scale $\mathrm{ext}_{\emptyset}^{\mbn}(S)$ multiplied by a negative constant in the exponent -- this comes from the fact that any structure larger than $S$ would be power-counting convergent in this scenario. Therefore the sum over $\mathrm{ext}_{\emptyset}^{\mbn}(S)$ also creates no problem. 

Loosely speaking, the issue of overlapping negative renormalizations is clarified by working in this multiscale approach since, for a given scale assignment $\mbn$, the set of divergent structures which are dangerous for $\mbn$ form a forest. 

We now generalize the discussion above, as well as make it more concrete.
When working with forests $\mcF$ of divergences, the quantities appearing in 
\eqref{eq: inner outer scales} should be replaced by analogues which are computed ``mod $\mcF$'', that is 
they take the renormalizations of $\mcF$ into consideration with regards to the notions of internal and external. 
The forest projections $P^{\mbn}$ we use will map a forest $\mcF$ to the subset $\mcS \subset \mcF$ which consists of all the elements of $\mcF$ which are safe for $\mbn$ when the notions of internal and external are taken ``mod $\mcF$''. 
Defining the projection $P^{\mbn}$ requires us to introduce some notions. 

Fix a subtree $S$ and let $\mcF \in \mathbb{F}_{\pi}$. 
We now describe some useful notation.
We define the immediate ancestor of $S$ in $\mcF$ by
\[
A_{\mcF}(S)
\eqdef 
\begin{cases}
T & \textnormal{ if }\mathrm{Min}(\{\tilde{T} \in \mcF:\ \tilde{T} > S\}) = \{T\},\\
\sT^{\ast} & \textnormal{ if }\{\tilde{T} \in \mcF:\ \tilde{T} > S\} = \emptyset
\end{cases}
\]
where we view $\sT^{\ast}$ as an undirected multigraph with node set $\allnodes$ and edge set $\mcE^{\inte}(\sT^{\ast}) \eqdef \mcE$. 

We also define the edge sets
\begin{equs}
\mcE^{\inte}(C_{\mcF}(S)) 
&\eqdef \bigsqcup_{T \in C_{\mcF}(S)} \mcE^{\inte}(T),\\
\mcE^{\inte}_{\mcF}S) 
&\eqdef
\mcE^{\inte}(S)\setminus \mcE^{\inte}(C_{\mcF}(S)),\\
\textnormal{and}\enskip
\mcE^{\exte}_{\mcF}(S)
&\eqdef 
\mcE^{\exte}(S) 
\cap
\mcE^{\inte}(A_{\mcF}(S)).
\end{equs}
The definitions generalizing \eqref{eq: inner outer scales} are then
\begin{equs}[inner and outer scales def]
\mathrm{int}_{\mcF}^{\mbn}(S) 
&
\eqdef 
\min 
\{ n_{e}:\ 
e \in \mcE^{\inte}_{\mcF}S) 
\}\;,
\\
\mathrm{ext}_{\mcF}^{\mbn}(S)
&\eqdef
\max
\{
n_{e}:\  
e \in \mcE^{\exte}_{\mcF}(S) 
\}\;.
\end{equs}
We now introduce the forest projections we will use. For each $\mbn \in \N^{\CE}$, we 
define $P^{\mbn}: \mathbb{F}_{\pi} \rightarrow \mathbb{F}_{\pi}$ by
\begin{equation*}\label{def: projection onto safe forests}
P^{\mbn}[\mcF] \eqdef
\left\{ 
S \in \mcF:\  
\inte_{\mcF}^{\mbn}(S) \le \exte_{\mcF}^{\mbn}(S)
\right\}\;.
\end{equation*}
We now present the main arguments (Lemma~\ref{main classification lemma} and Proposition~\ref{prop: forest proj}) 
showing that $P^{\mbn}$ is a forest projection -- here we are adapting \cite[Lem.~2.2,~2.3]{FMRS85}. 
\begin{lemma}\label{main classification lemma}
For any $\mbn \in \N^{\CE}$, $\mcF \in \mathbb{F}_{\pi}$ and $S \in \mcF$, one has
\begin{equ}
\inte_{\mcF}^{\mbn}(S) =
\inte_{P^{\mbn}[\mcF]}^{\mbn}(S),\qquad
\exte_{\mcF}^{\mbn}(S) =
\exte_{P^{\mbn}[\mcF]}^{\mbn}(S).
\end{equ} 
\end{lemma}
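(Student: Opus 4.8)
The statement is that the ``internal'' and ``external'' scales of a divergent subtree $S \in \mcF$ are unchanged when we pass from $\mcF$ to $P^{\mbn}[\mcF]$. The plan is to show the two equalities separately by analysing exactly which edges can enter or leave the relevant edge sets $\mcE^{\inte}_{\mcF}(S)$ and $\mcE^{\exte}_{\mcF}(S)$ when we delete from $\mcF$ some of its elements — namely, the ones declared ``dangerous'' by $P^{\mbn}$. The key conceptual point, which I expect to drive the whole argument, is that the edges that get added or removed in these edge sets are precisely edges belonging to $\mcE^{\inte}$ of certain dangerous subtrees, and that a dangerous subtree $T$ has $\inte^{\mbn}_{\mcF}(T) > \exte^{\mbn}_{\mcF}(T)$, which forces the min/max defining $\inte/\exte$ of $S$ not to be attained on those extra edges. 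This is the adaptation of \cite[Lem.~2.2]{FMRS85} flagged in the text.

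First I would fix $S \in \mcF$ and set $\mcS = P^{\mbn}[\mcF]$, noting $S \in \mcS$ is \emph{not} assumed, but the statement still makes sense since $\inte_{\mcS}^{\mbn}(S)$, $\exte_{\mcS}^{\mbn}(S)$ only depend on the poset structure of $\mcS \cup \{S\}$ around $S$; more carefully I would first reduce to the case $S \in \mcS$ by observing (or by a small separate argument) that $S$ can be added back to $\mcS$ without affecting the conclusion, or simply treat $\mcS' = \mcS \cup \{S\}$ throughout. For the internal scale: $\mcE^{\inte}_{\mcF}(S) = \mcE^{\inte}(S) \setminus \mcE^{\inte}(C_{\mcF}(S))$, so the difference between $\mcE^{\inte}_{\mcF}(S)$ and $\mcE^{\inte}_{\mcS}(S)$ comes entirely from the difference between $C_{\mcF}(S)$ and $C_{\mcS}(S)$. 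Passing from $\mcF$ to $\mcS$ removes dangerous subtrees; the immediate children of $S$ can only change by a dangerous child $T \in C_{\mcF}(S)$ being replaced (in $C_{\mcS}(S)$) by its own safe descendants, so $\mcE^{\inte}(C_{\mcS}(S)) \subset \mcE^{\inte}(C_{\mcF}(S))$, hence $\mcE^{\inte}_{\mcF}(S) \subset \mcE^{\inte}_{\mcS}(S)$. The extra edges lie in $\mcE^{\inte}(T) \setminus (\text{children})$ for dangerous $T$'s below $S$, and on each such $T$ one has $n_e \ge \inte^{\mbn}_{\mcF}(T) > \exte^{\mbn}_{\mcF}(T) \ge \inte^{\mbn}_{\mcF}(S)$ (the last inequality because some external edge of $T$ is internal to $S$ mod $\mcF$, by the child relation); so the min over $\mcE^{\inte}_{\mcS}(S)$ equals the min over $\mcE^{\inte}_{\mcF}(S)$. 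Symmetrically, for the external scale: $\mcE^{\exte}_{\mcF}(S) = \mcE^{\exte}(S) \cap \mcE^{\inte}(A_{\mcF}(S))$ changes only through $A_{\mcF}(S)$ vs.\ $A_{\mcS}(S)$; if the immediate ancestor $A_{\mcF}(S) = T$ is dangerous and thus removed, $A_{\mcS}(S)$ becomes a larger ancestor (or $\sT^\ast$), so $\mcE^{\inte}(A_{\mcF}(S)) \subset \mcE^{\inte}(A_{\mcS}(S))$ and the external edge set can only grow; the new edges lie in $\mcE^{\inte}(T') \setminus \mcE^{\inte}(T)$ for dangerous ancestors $T'$, and there the relevant scales are $\le \exte^{\mbn}_{\mcF}(T') < \inte^{\mbn}_{\mcF}(T') \le$ the scales already counted, so the max is unchanged. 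One iterates over the finitely many dangerous subtrees removed, using that $P^{\mbn}$ removes them all at once but the estimates compose.

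The main obstacle I anticipate is bookkeeping the ``mod $\mcF$'' versus ``mod $\mcS$'' edge sets precisely, in particular being careful that removing \emph{several} dangerous subtrees at once (some nested, some not) doesn't create edge-set contributions that were not present when removing them one at a time; the clean way around this is an induction on $|\mcF \setminus \mcS|$, removing one maximal dangerous subtree at a time and checking that (i) this single removal doesn't change $\inte^{\mbn}$, $\exte^{\mbn}$ of any $S$ that survives and (ii) the set of dangerous subtrees of the smaller forest is exactly $\mcS$ within it — the latter being essentially the content of the accompanying \cite[Lem.~2.3]{FMRS85}-type statement (Proposition~\ref{prop: forest proj}), so a slight care is needed about circularity; I would phrase Lemma~\ref{main classification lemma} so it only uses the definition of $P^{\mbn}$ and the inequality $\inte^{\mbn}_{\mcF}(T) > \exte^{\mbn}_{\mcF}(T)$ for dangerous $T$, not the fact that $P^{\mbn}$ is idempotent, keeping the dependency order clean for the subsequent Proposition~\ref{prop: forest proj}.
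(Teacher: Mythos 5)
Your argument is correct and rests on the same two pillars as the paper's proof: the trivial monotonicity in $\mcF$ gives the easy one-sided inequalities $\inte_{\mcF}^{\mbn}(S) \ge \inte_{P^{\mbn}[\mcF]}^{\mbn}(S)$ and $\exte_{\mcF}^{\mbn}(S) \le \exte_{P^{\mbn}[\mcF]}^{\mbn}(S)$, and the danger inequality $\inte^{\mbn}_{\mcF}(T) > \exte^{\mbn}_{\mcF}(T)$ for each removed $T$ is what makes the extra edges irrelevant to the extremum. You are also right to flag that the one-step bound $n_e \ge \inte^{\mbn}_{\mcF}(T) > \exte^{\mbn}_{\mcF}(T) \ge \inte^{\mbn}_{\mcF}(S)$ only applies directly when $T$ is an immediate dangerous child of $S$ with $e \in \mcE^{\inte}_{\mcF}(T)$, and that the nested case requires chaining. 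The paper does the chaining directly rather than by your induction on $|\mcF \setminus P^{\mbn}[\mcF]|$: for each extra edge $e$ it takes the unique chain $T_1 < \cdots < T_k$ of dangerous subtrees sitting strictly between the minimal $\mcF$-tree containing $e$ and $S$, picks crossing edges $e_j \in \mcE^{\exte}(T_j) \cap K(T_{j+1})$, and chains the danger inequalities to obtain $n_e > n_{e_1} > \cdots > n_{e_k}$ with $e_k \in \mcE^{\inte}(S) \setminus \mcE^{\inte}(C_{\mcF}(S))$; the external statement is proved symmetrically by chaining upward through dangerous ancestors. Your inductive reorganization, peeling off one maximal dangerous tree at a time, is an equivalent packaging, and your remark that it should be phrased so as to use only the definition of $P^{\mbn}$ (not its idempotence) is exactly right and keeps the dependency order with Proposition~\ref{prop: forest proj} clean; in fact point~(ii) of your dichotomy follows automatically from point~(i), since preserving $\inte/\exte$ of the surviving trees preserves their safe/dangerous classification.
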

\begin{proof}
Since $P^{\mbn}[\mcF] \subset \mcF$ and since both $\mcE^{\inte}_{\mcF}$ and $\mcE^{\exte}_{\mcF}$
are decreasing in $\mcF$, it is immediate that one has
\begin{equ}\label{easy inequality - classification}
\inte_{\mcF}^{\mbn}(S) \ge
\inte_{P^{\mbn}[\mcF]}^{\mbn}(S),\qquad
\exte_{\mcF}^{\mbn}(S) \le
\exte_{P^{\mbn}[\mcF]}^{\mbn}(S).
\end{equ}
We start by turning the first inequality of \eqref{easy inequality - classification} into an equality. Fix $S$ and $\mcF$ and define
\begin{equ}
K \eqdef \mcE^{\inte}(C_{P^{\mbn}[\mcF]}(S))\;,\qquad 
\tilde{K} \eqdef \mcE^{\inte}(C_{\mcF}(S))\;.
\end{equ}
The claim is non-trivial only if $\tilde K \neq K$.
It then suffices to show that for any $e \in \tilde{K} \setminus K$ there 
exists $\tilde{e} \in \mcE^{\inte}(S) \setminus \tilde{K}$ with $n_{\tilde{e}} \le n_{e}$. 

Fix such an $e$. Observe that if $T \in \mcF$ with $T < S$ and $\mcE^{\inte}(T) \ni e$ then 
it must be the case that $T \in \mcF \setminus P^{\mbn}[\mcF]$, moreover there
is at least one such tree $T$. 
In particular, there is a (unique) sequence of trees $T_{1},\dots,T_{k+1} \in \mcF \setminus P^{\mbn}[\mcF]$, 
with $k \ge 0$, such that
\begin{itemize}
\item $T_{1}$ is the minimal element of $\mcF$ with $e \in \mcE^{\inte}(T_{1})$, 
\item for $1 \le j \le k$ one has $T_{j+1} = A_{\mcF}(T_{j})$, and $T_{k+1} = S$.
\end{itemize}
By our assumptions, for $1 \le j \le k$ we have 
\begin{equation}\label{intermedwork classification}
\inte_{\mcF}^{\mbn}(T_{j}) > \exte_{\mcF}^{\mbn}(T_{j})\;.
\end{equation} 
For each such $j$, fix a choice of $e_{j} \in \CE^{\exte}(T_{j}) \cap K(T_{j+1})$. Then our desired claim follows by setting $\tilde{e} = e_{k}$ since this edge belongs to $\mcE^{\inte}(S) \setminus \tilde{K}$ and \eqref{intermedwork classification} implies
\[
n_{e} > n_{e_{1}} > n_{e_{2}} > \cdots > n_{e_{k}}\;.
\]

We now turn the second inequality of \eqref{easy inequality - classification} into an equality. 
For this, we start by introducing the shorthand
\begin{equ}
T \eqdef A_{\mcF}(S)\; \quad 
\textnormal{and}
\quad
\widetilde{T} \eqdef 
A_{P^{\mbn}[\mcF]}(S)\;.
\end{equ}
With this notation, the claim is trivial if $\mcE^{\exte}(S) \cap \big( \mcE^{\inte}(\tilde{T}) \setminus \mcE^{\inte}(T) \big)$ 
is empty so we suppose this is not the case. (In particular, this means that $A_{\mcF}(S) \not = \sT^{\ast}$.)

In a way anaolgous to above, it remains to show that for any 
$e \in \mcE^{\exte}(S) \cap \big( \mcE^{\inte}(\tilde{T}) \setminus \mcE^{\inte}(T) \big)$, one can find 
$\tilde{e} \in \mcE^{\exte}(S) \cap \CE^{\inte}(T)$ with $n_{\tilde{e}} \ge n_{e}$. 
Fixing such an $e$, we define similarly to before the unique sequence of subtrees 
$T_{1},\dots,T_{k}$ such that
\begin{itemize}
\item $T_{1} = S$ and, for $1 \le j \le k$, $T_{j+1} = A_\CF(T_j)$.
\item $T_{k}$ is the minimum element of $\mcF$ with $S \le T_{k}$ and $e \in \CE^{\exte}(T_{k}) \cap \CE^{\inte}(A_{\mcF}(T_{k}))$.
\end{itemize}
One then has $T_j \in \mcF \setminus P^{\mbn}[\mcF]$ for $j=1,\ldots,k$
so that, in particular, \eqref{intermedwork classification} holds.
We then set $e_k = e$ and, for each for $1 \le j < k$, we pick some $e_{j} \in  \CE^{\exte}(T_{j}) \cap \CE^{\inte}(T_{j+1})$ arbitrarily. It then follows again by \eqref{intermedwork classification} that $n_{e_{j}} > n_{e_{j+1}}$ so that,
by setting $\tilde{e} = e_{1}$ our claim is proved.
\end{proof} 
\begin{corollary} 
For any $\mbn \in \N^{\CE}$, $P^{\mbn} \circ P^{\mbn} = P^{\mbn}$.
\end{corollary}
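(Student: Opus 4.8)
The statement $P^{\mbn} \circ P^{\mbn} = P^{\mbn}$ is an immediate consequence of Lemma~\ref{main classification lemma}. The plan is to unfold the definition of $P^{\mbn}$ applied twice and use the fact that the inner and outer scales are unchanged when one passes from $\mcF$ to $P^{\mbn}[\mcF]$.

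\textbf{Main step.} Fix $\mbn \in \N^{\CE}$ and $\mcF \in \mathbb{F}_{\pi}$, and write $\mcS \eqdef P^{\mbn}[\mcF]$. By definition,
\[
\mcS = \{ S \in \mcF:\ \inte_{\mcF}^{\mbn}(S) \le \exte_{\mcF}^{\mbn}(S)\}\;,
\]
and
\[
P^{\mbn}[\mcS] = \{ S \in \mcS:\ \inte_{\mcS}^{\mbn}(S) \le \exte_{\mcS}^{\mbn}(S)\}\;.
\]
For any $S \in \mcS$, Lemma~\ref{main classification lemma} gives $\inte_{\mcF}^{\mbn}(S) = \inte_{\mcS}^{\mbn}(S)$ and $\exte_{\mcF}^{\mbn}(S) = \exte_{\mcS}^{\mbn}(S)$ (the lemma is stated for any $S \in \mcF$, and $\mcS \subset \mcF$). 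Hence for $S \in \mcS$ the defining inequality $\inte_{\mcS}^{\mbn}(S) \le \exte_{\mcS}^{\mbn}(S)$ is equivalent to $\inte_{\mcF}^{\mbn}(S) \le \exte_{\mcF}^{\mbn}(S)$, which holds by the very fact that $S \in \mcS$. Therefore $P^{\mbn}[\mcS] = \mcS$, i.e. $P^{\mbn} \circ P^{\mbn} = P^{\mbn}$.

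\textbf{Remark on the obstacle.} There is no real obstacle once Lemma~\ref{main classification lemma} is in hand; the only point to be careful about is that Lemma~\ref{main classification lemma} is applied with $P^{\mbn}[\mcF]$ in the role of the forest, so one must check that the hypotheses of that lemma (namely $S \in \mcF$ for the forest in question, here $S \in P^{\mbn}[\mcF]$) are met, which they trivially are since $P^{\mbn}[\mcF] \in \mathbb{F}_{\pi}$ and $S$ ranges over its elements. The genuine content has already been discharged in the proof of Lemma~\ref{main classification lemma}, which is where the combinatorial argument about chains of dangerous subtrees was carried out.
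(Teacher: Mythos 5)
Your main step is correct and matches what the paper leaves implicit (the corollary is stated without a proof, being presented as an immediate consequence of Lemma~\ref{main classification lemma}). Writing $\mcS \eqdef P^\mbn[\mcF]$, you apply the lemma with $\mcF$ in the role of the forest and $S \in \mcS \subset \mcF$ to transfer $\inte_\mcF^\mbn(S) \le \exte_\mcF^\mbn(S)$ (which holds since $S \in \mcS$) to $\inte_\mcS^\mbn(S) \le \exte_\mcS^\mbn(S)$, giving $\mcS \subset P^\mbn[\mcS]$; combined with the trivial reverse inclusion $P^\mbn[\mcS] \subset \mcS$ this yields idempotence.

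One small inaccuracy: in your ``Remark on the obstacle'' you state that the lemma is applied with $P^\mbn[\mcF]$ in the role of the forest, but your main step (correctly) applies it with $\mcF$ as the forest and uses that $\mcS \subset \mcF$. Applying the lemma with $\mcS$ as the forest would instead relate $\inte_\mcS^\mbn(S)$ to $\inte_{P^\mbn[\mcS]}^\mbn(S)$, which is circular here. The main argument is fine as written; only the explanatory remark misdescribes which forest the lemma is invoked for.
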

We can now show that $P^{\mbn}$ is a forest projection.
\begin{proposition}\label{prop: forest proj}
For any $\mbn \in \N^{\CE}$, $P^{\mbn}$ is a forest projection.
\end{proposition}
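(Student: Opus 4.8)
The plan is to verify the two defining conditions of a forest projection (Definition~\ref{def: forest projection}): that for every $\mcS \in \mathbb{F}_{\pi}$ the preimage $(P^{\mbn})^{-1}[\mcS]$ is, when non-empty, an interval of forests whose smallest element is $\mcS$ itself. The Corollary already gives us $P^{\mbn}\circ P^{\mbn} = P^{\mbn}$, which is the key algebraic input: it tells us that if $(P^{\mbn})^{-1}[\mcS]$ is non-empty then $\mcS$ is a fixed point of $P^{\mbn}$, hence $\mcS \in (P^{\mbn})^{-1}[\mcS]$, and moreover $\mcS$ is automatically the minimal element of this preimage since $P^{\mbn}[\mcF] \subset \mcF$ for every $\mcF$ (so any $\mcF$ with $P^{\mbn}[\mcF] = \mcS$ satisfies $\mcF \supseteq \mcS$).

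The real content is therefore to show that $(P^{\mbn})^{-1}[\mcS]$ is \emph{up-closed within the interval determined by its maximal element}, i.e.\ that it is genuinely an interval rather than merely a subset with a minimum. Concretely, I would fix $\mcS$ with non-empty preimage, let $\mcF_1, \mcF_2 \in (P^{\mbn})^{-1}[\mcS]$, and show that any forest of subtrees $\mcG$ with $\mcF_1 \subseteq \mcG \subseteq \mcF_1 \cup \mcF_2$ (more precisely, any $\mcG \in \mathbb{F}_{\pi}$ with $\mcS \subseteq \mcG$ and $\mcG$ contained in the join of the elements of $(P^{\mbn})^{-1}[\mcS]$) also satisfies $P^{\mbn}[\mcG] = \mcS$. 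The natural way to organize this is: first establish that $(P^{\mbn})^{-1}[\mcS]$ has a unique maximal element (call it $b(\mcS)$), and then show that for every $\mcG$ with $\mcS \subseteq \mcG \subseteq b(\mcS)$ one has $P^{\mbn}[\mcG] = \mcS$. Both of these reduce, via Lemma~\ref{main classification lemma}, to purely combinatorial statements about the scale quantities $\inte^{\mbn}_{\mcF}(S)$ and $\exte^{\mbn}_{\mcF}(S)$: Lemma~\ref{main classification lemma} says these are insensitive to replacing $\mcF$ by $P^{\mbn}[\mcF]$, which lets us compare the status (safe versus dangerous) of a subtree $S$ across different forests in the same preimage fibre.

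The precise mechanism I expect: given $\mcF$ with $P^{\mbn}[\mcF] = \mcS$, a subtree $T$ can be \emph{added} to $\mcS$ (remaining in the fibre) exactly when $T$ is ``dangerous relative to the forest it would sit in'' — and the set of such addable $T$'s is characterized intrinsically in terms of $\mbn$ and the geometry, not in terms of which particular $\mcF$ we started from, precisely because Lemma~\ref{main classification lemma} decouples the scale computations from the dangerous part of the forest. So I would (i) show the dangerous elements $\mcF \setminus P^{\mbn}[\mcF]$ of any $\mcF$ in the fibre of $\mcS$ form a sub-collection of a fixed maximal set $\mcD(\mcS)$, giving $b(\mcS) = \mcS \cup \mcD(\mcS)$; (ii) show conversely that any $\mcG$ with $\mcS \subseteq \mcG \subseteq \mcS \cup \mcD(\mcS)$ has $P^{\mbn}[\mcG] = \mcS$, i.e.\ every element of $\mcS$ remains safe in $\mcG$ and every element of $\mcG \setminus \mcS$ is dangerous in $\mcG$. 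For step (ii) the safety of $S \in \mcS$ inside $\mcG$ follows because adding only dangerous trees below $S$ doesn't change $\inte^{\mbn}_{\mcG}(S)$ or $\exte^{\mbn}_{\mcG}(S)$ by the same argument as in Lemma~\ref{main classification lemma} (replacing $\mcF$ by the intermediate $\mcG$), and the dangerousness of $S \in \mcG \setminus \mcS$ is stable under shrinking the forest towards $\mcS$ by an analogous monotonicity-plus-Lemma~\ref{main classification lemma} argument.

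The main obstacle will be step (ii), and specifically pinning down the correct intrinsic description of $\mcD(\mcS)$ and checking the stability of ``dangerous'' under passing between $\mcG$ and $\mcS\cup\mcD(\mcS)$ — this is where one has to be careful that adding or removing a dangerous tree $T$ can shift the immediate-ancestor structure $A_{\mcF}(\cdot)$ and the edge sets $\mcE^{\inte}_{\mcF}(\cdot)$, $\mcE^{\exte}_{\mcF}(\cdot)$, so that the inequalities defining safe/dangerous must be re-examined at each nearby forest. The chain-of-subtrees argument inside the proof of Lemma~\ref{main classification lemma} (the sequences $T_1, \dots, T_{k+1}$ of dangerous trees with strictly decreasing boundary scales $n_{e_1} > \cdots > n_{e_k}$) is exactly the tool that makes this work, and I anticipate essentially reusing that argument in two more guises here. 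This matches \cite[Lem.~2.2,~2.3]{FMRS85}, and I would flag that correspondence explicitly. Once (i) and (ii) are in hand, Definition~\ref{def: forest projection} is satisfied with $s((P^{\mbn})^{-1}[\mcS]) = \mcS$ and $b((P^{\mbn})^{-1}[\mcS]) = \mcS \cup \mcD(\mcS)$, completing the proof.
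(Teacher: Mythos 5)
Your proposal is correct and follows essentially the same route as the paper: using the corollary to get minimality, defining the maximal set $\mcD(\mcS)$ of subtrees compatible with $\mcS$ that are dangerous \emph{relative to} $\mcS$, and then using Lemma~\ref{main classification lemma} together with the monotonicity of $\inte^{\mbn}_\bullet$ and $\exte^{\mbn}_\bullet$ to sandwich the relevant scale quantities and show the preimage equals $[\mcS, \mcS \sqcup \mcD(\mcS)]$. The one step you should be sure to make explicit, since it is a genuine and separate lemma in the paper's argument rather than a consequence of Lemma~\ref{main classification lemma}, is that $\mcS \sqcup \mcD(\mcS)$ is actually an element of $\mathbb{F}_\pi$ (i.e.\ that any two dangerous-relative-to-$\mcS$ subtrees compatible with $\mcS$ are nested or disjoint); the paper proves this by a contradiction argument that is indeed a further "guise" of the chain-of-decreasing-scales mechanism you identified.
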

\begin{proof}
Fix $\mbn$. 
By the previous corollary it follows that if for some $\mcS \in \mathbb{F}_{\pi}$ the set $(P^{\mbn})^{-1}[\mcS]$ is non-empty then $P^{\mbn}[\mcS] = \mcS$. Fix such an $\mcS$.

Since $P^{\mbn}[\mcF] \subset \mcF$ for every $\mcF \in \mathbb{F}_{\pi}$ it follows 
that $\mcS$ is the unique minimal element of $(P^{\mbn})^{-1}[\mcS]$, so all that is left 
to show is that this pullback is an interval. Define 
\begin{equation}\label{maximal dangerous extension}
\mcG 
\eqdef 
\left\{ T \in \Div:\ 
\{T\} \cup \mcS \in \mathbb{F}_{\pi} \textnormal{  and  }
\inte_{\mcS}^{\mbn}(T) > \exte_{\mcS}^{\mbn}(T)
\right\}\;.
\end{equation}
Since $P^{\mbn}[\mcS] = \mcS$, it follows from the definitions of $P^{\mbn}$
and $\mcG$ that $\mcG \cap \mcS = \emptyset$.
We claim that one also had $\mcG \sqcup \mcS \in \mathbb{F}_{\pi}$. 
For this, it suffices to show that for any $S_{1}, S_{2} \in \mcG$ one has $N(S_{1})$ and $N(S_{2})$ 
either nested or disjoint. 

Suppose on the contrary that these two sets are neither disjoint nor nested, it follows 
that $N(S_{1}) \setminus N(S_{2})$, $N(S_{2}) \setminus N(S_{1})$, and $N(S_{1}) \cap N(S_{2})$ 
are all non-empty. 
In particular, there must be two edges $e_{1} = (u_{1},v_{1})$ and $e_{2} = (u_{2},v_{2})$ 
with $e_{1} \in \CE^{\inte}(S_{1}) \cap  \CE^{\exte}(S_{2})$ and $e_{2} \in \CE^{\inte}(S_{2}) \cap \CE^{\exte}(S_{1})$.

Moreover, the condition that $S_{1},S_{2}$ be compatible with the forest $\mcS$ implies that 
$A_{\mcS}(S_{1}) = A_{\mcS}(S_{2})$ contains both $S_1$ and $S_2$ and that 
the node set given by the trees in $C_{\mcS}(S_{1})$ does not intersect the node set of
$S_2$, and vice-versa. As a consequence, 
for $i,j \in \{1,2\}$, $i\not = j$, one has $e_{i} \in \CE^{\inte}(A_{\mcS}(S_{j})) \setminus \mcE^{\inte}(C_{\mcS}(S_{i}))$.

Since $\exte_{\mcS}^{\mbn}(S_{1}) < \inte_{\mcS}^{\mbn}(S_{1})$ we must have $n_{e_{2}} < n_{e_{1}} $, on the other hand $\exte_{\mcS}^{\mbn}(S_{2}) < \inte_{\mcS}^{\mbn}(S_{2})$ forces the reverse inequality which gives us a contradiction, thus proving 
our claim that $\mcG \sqcup \mcS \in \mathbb{F}_{\pi}$ 

We complete the proof by showing that $(P^{\mbn})^{-1}[\mcS] = [ \mcS, \mcG \sqcup \mcS]$.
For this, we first show that for $\mcF$ such that $\mcS \subset \mcF \subset (\mcG \sqcup \mcS)$, 
one has $P^{\mbn}[\mcF] = \mcS$. 
Observe that $P^{\mbn}[\mcF] \subset \mcS$ since for any $T \in (\mcF \setminus \mcS) \subset \mcG$ one has
\begin{equation*}
\exte_{\mcF}^{\mbn}(T) 
\le 
\exte_{\mcS}^{\mbn}(T) 
< 
\inte_{\mcS}^{\mbn}(T) 
\le 
\inte_{\mcF}^{\mbn}(T).
\end{equation*}
The middle inequality is a consequence of $T \in \mcG$ and the outer 
inequalities are a consequence of $\mcS \subset \mcF$.

The fact that one also has $\mcS \subset P^{\mbn}[\mcF]$ follows from the chain of inequalities 
\[
\inte_{\mcF}^{\mbn}(T)
=
\inte_{P^{\mbn}[\mcF]}^{\mbn}[T]
\le 
\inte_{\mcS}^{\mbn}[T]
\le
\exte_{\mcS}^{\mbn}[T] 
\le 
\exte_{P^{\mbn}[\mcF]}^{\mbn}[T]
=
\exte_{\mcF}^{\mbn}[T]
\]
which hold for arbitrary $T \in \mcS$. Lemma~\ref{main classification lemma} gives the outermost equalities. 
Working inward, the next two inequalities follow from $P^{\mbn}[\mcF] \subset \mcS$ and the center inequality is a consequence of $P^{\mbn}[\mcS] = \mcS$.

All that is left is proving that $\mcG \sqcup \mcS$ is the unique maximal element of $(P^{\mbn})^{-1}[\mcS]$, 
namely that if $P^{\mbn}[\mcF] = \mcS$ then $(\mcF \setminus \mcS) \subset \mcG$.
For any $T \in \mcF \setminus \mcS$ we have indeed
\[
\exte_{\mcS}^{\mbn}[T]
=
\exte_{\mcF}^{\mbn}[T] 
< 
\inte_{\mcF}^{\mbn}[T] 
=
\inte_{\mcS}^{\mbn}[T]
\] 
where the outer equalities are given by Lemma~\ref{main classification lemma} and the middle inequality follows from assuming $T \in \mcF \setminus P^{\mbn}[\mcF]$.
\end{proof}  
We say a subset $\tilde{\CE} \subset \CE$ connects $u,v \in \allnodes$ if one can find a sequence of $e_{1},\dots,e_{k} \in \CE$ with $ u \in e_{1}$, $v \in e_{k}$ and $e_{j} \cap e_{j+1} \not = \emptyset$ for $1 \le j \le k-1$. 

Then given $u,v \in \allnodes$, $\mbn \in \N^{\CE}$, and $\mcF \in \mathbb{F}_{\pi}$, we define 
\begin{equ}\label{pathfinder scale}
\mbn_{\mcF}(u,v) 
\eqdef
\max
\Big\{ 
\min\{ n_{e}: e \in \CE' \setminus \CE^{\inte}(\mcF) \}:\ 
\CE' \subset \CE \textnormal{ connects }u,v
\Big\}\;.
\end{equ}
We now define, for each $\mbn \in \N^{\CE}$, by setting, for each $\mcF \in \mathbb{F}_{\pi}$,
\begin{equation}\label{def: cuts to harvest}
\cG^{\mbn}(\mcF) \eqdef
\{ e \in \fullcuts:\ 
\mbn_{\mcF}(\logof,e_{\p}) > \mbn_{\mcF}(e_{\p},e_{\ch})
\}\;.
\end{equation}
These represent precisely those edges for which there is a cancellation between
the term $\Ker^{\{e\}}_\mfe$ and its Taylor expansion $\RKer^{\{e\}}_\mfe$ appearing in \eqref{e:kerHat}.
We also introduce the shorthand $\mfM^{\mbn} \eqdef \mfM^{P^{\mbn}}$, $\mathfrak{G}^{\mbn}(\cdot) \eqdef \mathfrak{G}^{P^{\mbn}}_{{\cG}^{\mbn}}(\cdot)$, and $\cut^{\mbn}(\cdot) \eqdef \cut^{P^{\mbn}}(\cdot)$. 
We can now state our final combinatorial result.
\begin{proposition} For any $\mbn \in \N^{\CE}$ the forest projection $P^{\mbn}$ is compatible 
with the cut rule $\cG^{\mbn}$ in the sense of Definition~\ref{compatforests}.
\end{proposition}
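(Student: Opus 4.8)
The plan is to verify Definition~\ref{compatforests} directly by unpacking what membership in $\fullcuts^{P^{\mbn}}(\mbbM)$ means and checking that toggling an edge $e \in \cG^{\mbn}(b(\mbbM))$ in or out of a cut set $\cC$ does not change whether $P^{\mbn}_{\cC}[s(\mbbM)] = \mbbM$. By the characterization of $\fullcuts^{P^{\mbn}}(\mbbM)$ and the fact (from the lemma just before Definition~\ref{def: forest projection}'s corollaries) that $\overline{\fullcuts^{P^{\mbn}}(\mbbM)} = \{\cC_{\mbbM}\}$, the condition $\cC \in \fullcuts^{P^{\mbn}}(\mbbM)$ is equivalent to the conjunction of (a) $\cC \subset \cC_{\mbbM}$, i.e. $\cC$ avoids $K(S)$ for every $S \in \overline{b(\mbbM)}$, so that $\mbbM \subset \mathbb{F}_{\cC,\pi}$; and (b) $P^{\mbn}_{\cC}[s(\mbbM)] = \mbbM$, which by Proposition~\ref{prop: forest proj} and \eqref{cut observation} amounts to: the maximal dangerous extension $\mcG$ of $s(\mbbM)$ computed in \eqref{maximal dangerous extension} intersected with $\Div_{\cC}$ equals $\delta(\mbbM)$. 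Since $\mcG$ itself does not depend on $\cC$, this reduces to the statement that $\delta(\mbbM) \subset \Div_{\cC}$ and that $\mcG \setminus \Div_{\cC} \subset \mcG \setminus \delta(\mbbM)$; the first is the relevant content, since the second is then automatic once (a) holds for $b(\mbbM)$.

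\textbf{Reduction to a scale inequality.} With this in hand, the content of compatibility becomes: for $e \in \cG^{\mbn}(b(\mbbM))$ and any $S \in \delta(\mbbM)$, one has $e \notin K(S)$. Indeed, if this holds, then adding $e$ to or removing $e$ from $\cC$ never affects whether $\delta(\mbbM) \subset \Div_{\cC}$ (recall $T \in \Div_{\cC}$ iff $\cC \cap K(T) = \emptyset$), and it also never affects condition (a) because $\delta(\mbbM) \subset b(\mbbM)$ means $e \notin K(\overline{b(\mbbM)})$ would follow — wait, one must be slightly careful: $e$ could lie in $K(S')$ for some $S' \in \overline{b(\mbbM)}$ even if $e \notin K(S)$ for $S \in \delta(\mbbM)$. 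But membership of $e$ in $\cC$ versus $\cC \setminus \{e\}$ is precisely what we are toggling, and $\cG^{\mbn}(b(\mbbM)) \subset \fullcuts_{b(\mbbM)} = \fullcuts \setminus K(b(\mbbM))$ by the definition of a cut rule, so $e \notin K(S')$ for \emph{any} $S' \in b(\mbbM)$, in particular for all $S' \in \overline{b(\mbbM)}$ and all $S \in \delta(\mbbM)$. Thus condition (a) is insensitive to toggling $e$, and condition (b) is insensitive as well. So the equivalence in Definition~\ref{compatforests} follows immediately, and in fact the cut-rule constraint $\cG^{\mbn}(\mcF) \subset \fullcuts_\mcF$ is doing all the work.

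\textbf{The subtle point.} The step I expect to need the most care is making the reduction in the previous paragraph airtight — specifically, confirming that $P^{\mbn}_{\cC}[s(\mbbM)]$ depends on $\cC$ \emph{only} through which elements of the $\cC$-independent set $\mcG$ survive in $\Div_{\cC}$, and that $\mcG \cap \Div_{\cC}$ changes under $\cC \mapsto \cC \cup \{e\}$ or $\cC \mapsto \cC \setminus \{e\}$ only via elements $T$ with $e \in K(T)$. This uses that $\inte_{\mcS}^{\mbn}$ and $\exte_{\mcS}^{\mbn}$ in \eqref{maximal dangerous extension} are computed purely from the graph structure and $\mbn$, with no reference to $\cC$, together with Proposition~\ref{prop: forest proj} which identifies $(P^{\mbn})^{-1}[\mcS] = [\mcS, \mcG \sqcup \mcS]$ and hence $P^{\mbn}_{\cC}{}^{-1}[\mcS] = [\mcS, (\mcG \sqcup \mcS) \cap \Div_\cC] = [\mcS, \mcS \sqcup (\mcG \cap \Div_\cC)]$ via \eqref{cut observation} and the remark preceding Definition~\ref{def: forest projection}. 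Once this bookkeeping is laid out, the proof is: pick $e \in \cG^{\mbn}(b(\mbbM))$; since $\cG^{\mbn}$ is a cut rule, $e \in \fullcuts_{b(\mbbM)}$, so $e \notin K(T)$ for every $T \in b(\mbbM) \supset \delta(\mbbM) \cup \overline{b(\mbbM)}$; therefore $T \in \Div_\cC \iff T \in \Div_{\cC \setminus \{e\}} \iff T \in \Div_{\cC \cup \{e\}}$ for all such $T$, whence $\mcG \cap \Div_{\cC} = \mcG \cap \Div_{\cC \cup \{e\}} = \mcG \cap \Div_{\cC \setminus \{e\}}$ \emph{at least as far as the elements lying in $b(\mbbM)$ are concerned}, and the elements of $\mcG$ outside $b(\mbbM)$ are irrelevant to whether the pullback equals $\mbbM$; combined with the $\cC$-insensitivity of $\cC \subset \cC_{\mbbM}$, we get $\cC \cup \{e\} \in \fullcuts^{P^{\mbn}}(\mbbM) \iff \cC \setminus \{e\} \in \fullcuts^{P^{\mbn}}(\mbbM)$, as required. $\qed$
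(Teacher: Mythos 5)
There is a genuine gap, and it sits precisely where you flagged the ``subtle point.'' When unpacking the condition $P^{\mbn}_{\cC}{}^{-1}[s(\mbbM)] = \mbbM$, you correctly reduce it to $\mcG \cap \Div_{\cC} = \delta(\mbbM)$, with $\mcG$ the maximal dangerous extension of $s(\mbbM)$ from \eqref{maximal dangerous extension}, but you then split this equality into ``$\delta(\mbbM) \subset \Div_{\cC}$'' and ``$\mcG \setminus \Div_{\cC} \subset \mcG \setminus \delta(\mbbM)$''. These two inclusions are in fact the \emph{same} statement, since $\delta(\mbbM) \subset \mcG$. The genuinely independent half of the equality is the reverse inclusion $\mcG \cap \Div_{\cC} \subset \delta(\mbbM)$: every $T \in \mcG \setminus \delta(\mbbM)$ must satisfy $\cC \cap K(T) \neq \emptyset$. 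Such $T$ lie \emph{outside} $b(\mbbM)$, so the cut-rule constraint $e \notin K(b(\mbbM))$ says nothing about whether $e \in K(T)$. If $e$ happens to be the unique edge of $\cC$ inside $K(T)$ for some such $T$, then passing from $\cC \cup \{e\}$ to $\cC \setminus \{e\}$ reinstates $T$ as an element of $\Div_{\cC\setminus\{e\}}$, so $s(\mbbM)\sqcup\{T\}$ enters the pullback and $\cC\setminus\{e\} \notin \fullcuts^{\mbn}(\mbbM)$. Your assertion that ``the elements of $\mcG$ outside $b(\mbbM)$ are irrelevant'' is therefore false, and the claim that ``the cut-rule constraint $\cG^{\mbn}(\mcF) \subset \fullcuts_{\mcF}$ is doing all the work'' cannot be right: if it were, \emph{every} cut rule would be compatible with $P^{\mbn}$, and the scale-based definition \eqref{def: cuts to harvest} would be vacuous.

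The direction you do prove (inserting $e$) is fine, and indeed holds for every $e \in \cC_{\mbbM}$, not only for $e \in \cG^{\mbn}(b(\mbbM))$; this matches the paper. The hard direction (deleting $e$) is where the scale comparison must enter. The paper's argument: suppose $\widetilde{\cC} \in \fullcuts^{\mbn}(\mbbM)$ but $\widetilde{\cC}\setminus\{e\} \notin \fullcuts^{\mbn}(\mbbM)$. Then some new tree $T$ appears in $b(P^{-1}_{\widetilde{\cC}\setminus\{e\}}[s(\mbbM)])$; it must satisfy $e \in \CE^{\inte}(T)$, be dangerous for $\mbn$ modulo $s(\mbbM)$, and not be contained in any $S \in b(\mbbM)$. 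These facts combine to give
\[
\mbn_{b(\mbbM)}(\logof,e_{\p}) \le \exte^{\mbn}_{s(\mbbM)}(T) < \inte^{\mbn}_{s(\mbbM)}(T) \le \mbn_{b(\mbbM)}(e_{\ch},e_{\p}),
\]
which is exactly the negation of the defining condition \eqref{def: cuts to harvest} for $e \in \cG^{\mbn}(b(\mbbM))$. This chain of scale inequalities is the essential content of the proposition and is absent from your argument.
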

\begin{proof} 
Fix $\mbn \in \N^{\CE}$ and $\mbbM \in \mfM^{\mbn}$. To keep notation light, for arbitrary $\cD \subset \fullcuts$ we write similarly to before $P^{-1}_{\cD}[\cdot]$ to denote $(P^{\mbn}_{\cD})^{-1}[\cdot] \cap \mathbb{F}_{\cD,\pi}$.  

First we prove that for any $e \in \cC_{\mbbM}$ and any $\cC \subset \fullcuts$
\[
\cC \setminus \{e\} \in \fullcuts^{\mbn}(\mbbM)
\quad\Rightarrow\quad
\cC \cup \{e\} \in \fullcuts^{\mbn}(\mbbM)\;.
\]
Fix $e \in \cC_{\mbbM}$ and let $\cC \in \fullcuts^{\mbn}(\mbbM)$ with $\cC \not \ni e$. 
The fact that $\cC \sqcup \{e \} \in \fullcuts^{\mbn}(\mbbM)$ follows from the observation that,
since $\cC \subset \cC_{\mbbM}$, one has the inclusions
\[
P^{-1}_{\cC_{\mbbM}}[s(\mbbM)] 
\subset 
P^{-1}_{\cC \sqcup \{e\}}[s(\mbbM)]
\subset
P^{-1}_{\cC}[s(\mbbM)]\;,
\]
where the outermost sets are both equal to $\mbbM$.

We now turn to the proof of the converse statement.  
Fix $e \in \cC_{\mbbM}$ and suppose that $\widetilde{\cC} \subset \cC_{\mbbM}$, $\widetilde{\cC} \ni e$, and $\widetilde{\cC} \in \fullcuts^{\mbn}(\mbbM)$.
We now additionally assume that $\widetilde{\cC} \setminus  \{e\} \not \in \fullcuts^{\mbn}(\mbbM)$ and show that this forces $e \not \in \cG^{\mbn}(b(\mbbM))$, thus establishing the claim. 
Our assumptions imply
\[
b\big(P^{-1}_{\widetilde{\cC} \setminus \{e\}}[s(\mbbM)]\big) \supsetneq
b\big(P^{-1}_{\widetilde{\cC} } [s(\mbbM)]\big).
\] 
For any element $T$ of the left hand side which is not an element of the right
hand side, it must be the case that $e \in \CE^{\inte}(T)$ and $T \not \le S$ for any $S \in b(\mbbM)$ (in particular, $T \not \in s(\mbbM)$). 
It follows that 
\[
\mbn_{b(\mbbM)}(\logof,e_{\p}) \le \exte^{\mbn}_{s(\mbbM)}(T) < \inte^{\mbn}_{s(\mbbM)}(T) \le \mbn_{b(\mbbM)}(e_{\ch},e_{\p}),
\] 
as a consequence of the fact that $T \not \in P^{\mbn}[s(\mbbM)]$ and $e \not \in \CE^{\inte}(S)$ for all $S \in s(\mbbM)$.
It follows that indeed $e \not \in \cG^{\mbn}(b(\mbbM))$ as announced.
\end{proof}
\section{Summing over scales}\label{Sec: summing over scales}
\subsection{Interchanging the sum over scales and intervals}
In Lemma~\ref{lem:reorderSum} we showed, in an abstract sense, how a sum over forests and cuts could be reorganized into a smaller sum over 
intervals of forests $\mbbM$ and intervals of cuts $\mbbG$. 
In the previous section this was made more concrete, we specified an algorithm which, given a scale assignment $\mbn$, 
specifies a particular forest projection $P^{\mbn}$ and compatible cut rule $\mathscr{G}^{\mbn}(\cdot)$ for which we can apply Lemma~\ref{lem:reorderSum}.  

We now reorganize our sums again, first summing over pairs $(\mbbM,\mbbG)$ of intervals of forests and cuttings and then summing over the scale assignments $\mbn$ which allow a given pair of intervals to arise from from $P^{\mbn}$ and $\cG^{\mbn}$.
To this end we define 
\[
\mathfrak{R} \eqdef 
\left\{
(\mbbM, \mathbb{G}) 
\in 
2^{\mathbb{F}_{\pi}} 
\times
2^{2^{\fullcuts}}:\ 
\exists \mbn \in \N^{\CE} 
\textnormal{ such that } 
\mathbb{M} \in \mfM^{\mbn} \textnormal{ and }
\mathbb{G} \in \mathfrak{G}^{\mbn}(\mbbM)
\right\}\;
\] 
and, for any $(\mbbM, \mathbb{G}) \in \mathfrak{R}$ and $\lambda \in (0,1]$, we set
\[
\mcN_{\mbbM,\mathbb{G},\lambda}
\eqdef 
\left\{ 
\mbn \in \N^{\CE}:\ 
\mbbM \in \mfM^{\mbn},\  
\mathbb{G} \in \mathfrak{G}^{\mbn}(\mbbM),\ 
\textnormal{ and }
n_{\{\logof,\mainroot\}} \ge \lfloor - \log_{2}(\lambda) \rfloor
\right\}\;.
\]
We then have the following lemma.
\begin{lemma}\label{lemma: interval, scale expansion}
For any $\lambda \in (0,1]$, one has
\[
\mcW^{\pi}_{\wickleaves}
=
\sum_{
(\mathbb{M},\mathbb{G}) \in \mathfrak{R}
}
\sum_{\mbn \in \mcN_{\mbbM,\mathbb{G},\lambda}}
\mathcal{W}_{\lambda}^{\mbn}\left[\mathbb{M},\mathbb{G} \right]
\]
where the LHS is defined as in \eqref{eq: chaos kernels}. 
\end{lemma}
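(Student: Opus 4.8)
\textbf{Proof plan for Lemma~\ref{lemma: interval, scale expansion}.}
The strategy is to combine the two reorganization identities already established in the excerpt: Lemma~\ref{lem:reorderSum} (which handles the full multiscale expansion and the resummation over intervals for a \emph{fixed} scale assignment) and Proposition~\ref{prop:sumcuts} (which reorganizes the sum over pairs $(\mcF,\cC)$ into a sum over intervals $\mbbM$ and intervals of cuttings $\mbbG$, using the compatibility of $P^{\mbn}$ with $\cG^{\mbn}$ proved in the previous section). First I would start from the chaos-kernel formula \eqref{eq: chaos kernels}, which expresses $\mcW^\pi_{\wickleaves}$ (or rather $\mcW^\pi_{\wickleaves,\lambda}$) as $\sum_{\mcF \in \mathbb{F}_\pi}\sum_{\cC \subset \cut_\mcF} \mcW_\lambda[\mcF,\cC]$, and then insert the first identity of Lemma~\ref{lem:reorderSum}, namely $\mcW_\lambda[\mcF,\cC] = \sum_{\mbn \in \N^{\mcE}} \mcW^{\mbn}_\lambda[\{\mcF\},\{\cC\}]$, so that
\begin{equ}
\mcW^\pi_{\wickleaves}
=
\sum_{\mbn \in \N^{\mcE}}
\sum_{\cC \subset \cut}
\sum_{\mcF \in \mathbb{F}_{\cC,\pi}}
\mcW^{\mbn}_\lambda[\{\mcF\},\{\cC\}]\;.
\end{equ}
At this stage one must justify interchanging the outer sum over $\mbn$ with the sums over $\mcF$ and $\cC$; since $\mathbb{F}_\pi$ and each $\cut_\mcF$ are finite, this interchange and all subsequent ones are purely a matter of absolute summability of the scale sum, which is the content of Remark~\ref{remark: simplified cumulant homogeneity} style comments and will be verified a posteriori (as flagged in the remark following Lemma~\ref{lem:reorderSum}); I would simply cite that the estimates of Section~\ref{Sec: summing over scales} establish this.

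Next, for each fixed $\mbn$, I would apply Proposition~\ref{prop:sumcuts} with $P = P^{\mbn}$ and $\cG = \cG^{\mbn}$ (legitimate because the final proposition of Section~\ref{sec: Safe forest projection} shows these are compatible) to rewrite the double sum $\sum_{\cC}\sum_{\mcF \in \mathbb{F}_{\cC,\pi}}$ as $\sum_{\mbbM \in \mfM^{\mbn}} \sum_{\mbbG \in \mathfrak{G}^{\mbn}(\mbbM)} \sum_{\mcF \in \mbbM}\sum_{\cC \in \mbbG}$. Then I would use the second identity of Lemma~\ref{lem:reorderSum}, $\sum_{\mcF \in \mbbM}\sum_{\cC \in \mbbG} \mcW^{\mbn}_\lambda[\{\mcF\},\{\cC\}] = \mcW^{\mbn}_\lambda[\mbbM,\mbbG]$, valid whenever $b(\mbbG) \subset \cut_{b(\mbbM)}$ — a condition automatically satisfied here since $\mbbG \in \mathfrak{G}^{\mbn}(\mbbM)$ means $b(\mbbG) = s(\mbbG) \sqcup \cG^{\mbn}(b(\mbbM))$ with $s(\mbbG) \subset \cut_{b(\mbbM)}$ and $\cG^{\mbn}(b(\mbbM)) \subset \cut_{b(\mbbM)}$ by definition of a cut rule. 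This yields
\begin{equ}
\mcW^\pi_{\wickleaves}
=
\sum_{\mbn \in \N^{\mcE}}
\sum_{\mbbM \in \mfM^{\mbn}}
\sum_{\mbbG \in \mathfrak{G}^{\mbn}(\mbbM)}
\mcW^{\mbn}_\lambda[\mbbM,\mbbG]\;.
\end{equ}

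Finally I would swap the order of summation once more, grouping by the pair $(\mbbM,\mbbG)$ rather than by $\mbn$: the set of triples $(\mbn,\mbbM,\mbbG)$ with $\mbbM \in \mfM^{\mbn}$ and $\mbbG \in \mathfrak{G}^{\mbn}(\mbbM)$ is, by definition of $\mathfrak{R}$ and of $\mcN_{\mbbM,\mbbG,\lambda}$, exactly $\bigsqcup_{(\mbbM,\mbbG)\in\mathfrak{R}} \mcN_{\mbbM,\mbbG,\lambda}$ once we also impose $n_{\{\logof,\mainroot\}} \ge \lfloor -\log_2 \lambda\rfloor$; here I should remark that this last constraint is harmless because the factor $\psi^\lambda_{x_{\logof}}(x_{\mainroot}) \Psi^{\{\logof,\mainroot\}}_{\mbn}(x)$ in $\mcW^{\mbn}_\lambda$ vanishes identically unless $n_{\{\logof,\mainroot\}} \ge \lfloor - \log_2 \lambda \rfloor$, given that $\psi^\lambda_{x_{\logof}}$ is supported on a ball of radius $\lambda$ and $\Psi^{(k)}$ is supported on the dyadic annulus of scale $2^{-k}$ (with $\Psi^{(0)}$ covering everything at scale $\ge 1$). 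So restricting to $\mbn \in \mcN_{\mbbM,\mbbG,\lambda}$ only discards vanishing terms. Reindexing gives the claimed identity. The main obstacle I anticipate is not any of these combinatorial shuffles, each of which is an immediate consequence of the cited results, but rather making the interchange-of-sums steps rigorous: one genuinely needs the absolute convergence of $\sum_{\mbn} \big| \mcW^{\mbn}_\lambda[\mbbM,\mbbG](x)\big|$ (pointwise in $x_{\wickleaves}$, say), and the honest thing to do is to defer this to the power-counting estimates of Section~\ref{Sec: summing over scales} and add a sentence noting that the rearrangement is justified once those bounds are in place, exactly in the spirit of the remark immediately following Lemma~\ref{lem:reorderSum}.
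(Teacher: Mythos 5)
Your proposal is correct and follows essentially the same route as the paper: combine the two identities of Lemma~\ref{lem:reorderSum} with Proposition~\ref{prop:sumcuts} (applied to $P^{\mbn}$ and $\cG^{\mbn}$), reindex the sum over $(\mbn,\mbbM,\mbbG)$, and note that the support of $\psi^{\lambda}_{\mbn}$ makes the constraint $n_{\{\logof,\mainroot\}} \ge \lfloor -\log_2\lambda\rfloor$ automatic. The paper likewise flags the interchange of finite and infinite sums and defers its justification to the absolute convergence established later.
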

\begin{proof}
By Lemma~\ref{lem:reorderSum} and Proposition~\ref{prop:sumcuts}, 
and furthermore freely interchanging finite sums with infinite ones,  
\begin{equs}
\mcW^{\pi}_{\wickleaves}
=&
\sum_{\mfn \in \N^{\CE}}
\sum_{
\substack{
\mcF \in \mbbF_{\pi}\\
\mathscr{C} \subset \cut_{\mcF}
}
}
\mathcal{W}^{\mbn}_{\lambda}\left[\{\mcF\}, \{\cC\} \right]
= 
\sum_{\mfn \in \N^{\CE}}
\sum_{
\substack{
\mbbM \in \mfM^{\mbn}\\
\mbbG \in \mathfrak{G}^{\mbn}(\mbbM)
}
}
\mathcal{W}^{\mbn}_{\lambda}\left[\mbbM, \mbbG \right]\\
=& 
\sum_{\mfn \in \N^{\CE}}
\sum_{
(\mbbM,\mbbG) \in \mfR
}
\mathbbm{1}
\left\{
\begin{array}{c}
\mbbM \in \mfM^{\mbn},\\ 
\mbbG \in \mathfrak{G}^{\mbn}(\mbbM)
\end{array}
\right\}
\mathcal{W}^{\mbn}_{\lambda}\left[\mbbM, \mbbG \right]\\
=&
\sum_{
(\mbbM,\mbbG) \in \mfR
}
\sum_{\mfn \in \N^{\CE}}
\mathbbm{1}
\left\{
\begin{array}{c}
\mbbM \in \mfM^{\mbn},\\ 
\mbbG \in \mathfrak{G}^{\mbn}(\mbbM)
\end{array}
\right\}
\mathcal{W}^{\mbn}_{\lambda}\left[\mbbM, \mbbG \right]
=
\sum_{
(\mbbM,\mbbG) \in \mfR
}
\sum_{\mfn \in \mcN_{\mbbM,\mathbb{G},\lambda}}
\mathcal{W}^{\mbn}_{\lambda}\left[\mbbM, \mbbG \right].
\end{equs}
Note that the constraint that $n_{\{\logof,\mainroot\}} \ge \lfloor - \log_{2}(\lambda) \rfloor$ comes for free since the presence of $\psi^{\lambda}$ makes $\mathcal{W}^{\mbn}_{\lambda}\left[\cdot, \cdot \right]$ vanishes if this is not the case.
\end{proof}
The goal of the subsequent sections is to prove the following theorem.
\begin{proposition}\label{prop: main estimate}
For any $(\mathbb{M},\mathbb{G}) \in \mathfrak{R}$, and any $p \in \N$ there exists $C_{p}$ such that, uniform in $\xi \in \mcM(\Omega_{\infty})$ and $x_{\logof} \in \R^{d}$, one has
\begin{equs}\label{eq: main est}
{}
&
\E
\Big[
\Big(
\sum_{\mbn \in \mcN_{\mbbM,\mathbb{G},\lambda}}
\int_{\wickleaves \cup \{\mainroot\}} \!\! dx\ 
\mathcal{W}_{\lambda}^{\mbn}[\mathbb{M},\mathbb{G}](x)
\,
\wwick{\{\xi_{\mft(u)}(x_{u})\}_{u \in \wickleaves}}
\Big)^{2p}\Big]
\\
&
\le
\ 
C_{p}
\Big(
\prod_{e \in K(\sT)}
\|K_{\mft(e)}\|_{|\mft(e)|_{\s},m}
\Big)^{2p}
\|\xi\|_{j,\c}
\lambda^{2p\alpha}
\end{equs}
uniform in $\lambda \in (0,1]$.
Here $m \eqdef 2|\s|\cdot |N(\sT)|$, $j \eqdef 2p |L(\sT)|$, and $\alpha = |\sT^{\sn}_{\se}|_{\s}$.
\end{proposition}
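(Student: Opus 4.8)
The plan is to prove the bound by a standard two-stage argument: first a deterministic multiscale estimate on each single-slice kernel $\mathcal{W}_\lambda^{\mbn}[\mbbM,\mbbG]$ (after insertion of absolute values and integration out the internal variables), and then a summation over the scale assignments $\mbn \in \mcN_{\mbbM,\mbbG,\lambda}$, followed by a reduction of the $2p$-th moment of the Wick-polynomial random variable to a Gaussian-type graph-gluing estimate controlled by $\|\xi\|_{j,\c}$. I would organise this around a single ``per-slice power-counting'' lemma: for fixed $\mbn$, integrating out the variables in $\nodesleft{b(\mbbM)}{\sT}$ against the product of single-slice kernels, cumulant slices, renormalised kernels $\RKer$, $\KerHat$, and the $\mathscr{Y}$-type Taylor operators yields a bound of the form $2^{-\text{(positive combination of scales)}}$ times $\prod_{e \in K(\sT)}\|K_{\mft(e)}\|_{|\mft(e)|_\s,m}^{?}$ times powers of $\|\xi\|_{j,\c}$, where the exponents of the scales in $\mbn$ are governed by: (i) the homogeneity $|\mft(e)|_\s - |\se(e)|_\s$ of each surviving kernel slice; (ii) the cumulant homogeneities $|\mft(B)|_\s$ of each block $B \in \pi$ (using the fictitious renormalisation of second cumulants so that the diagonal contributions from $\widetilde{\mathrm{Cu}}_{B,j}$ also get a favourable power via \eqref{eq: fict renorm int vanishes}); and, crucially, (iii) the improvements coming from the Taylor-remainder operators $(\mathrm{Id}-\mathscr{Y}_S)$ on $\delta(\mbbM)$ and $\RKer$-vs-$\KerHat$ differences on $\delta(\mbbG)$.

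\textbf{Key steps in order.} First I would fix $(\mbbM,\mbbG) \in \mathfrak{R}$ and $\mbn \in \mcN_{\mbbM,\mbbG,\lambda}$ and set up the seminorms on $\allf$ adapted to the scale structure (the simplified versions of the seminorms of \cite{FMRS85} mentioned in the introduction), tracking for each function how it depends on the ``internal'' versus ``external'' variables of each $S \in b(\mbbM)$. Second, I would prove by induction on $\mathrm{depth}(b(\mbbM))$ that applying the composite operator $\prod_{S\in\overline{b(\mbbM)}} H^{\mbn}_{\pi,\mbbM,S}$ and integrating against $\psi^\lambda_{\mbn}$ produces a bound where each edge $e\in\CE$ contributes a factor $2^{-n_e \beta_e}$ with $\beta_e$ strictly negative exactly when $e$ is ``internal'' to a structure that is power-counting divergent but \emph{not} renormalised under $\mbn$ — and here Lemma~\ref{main classification lemma} (the equality $\inte^{\mbn}_\mcF = \inte^{\mbn}_{P^{\mbn}[\mcF]}$) together with the super-regularity hypothesis (Definition~\ref{def: strong subcriticality}) guarantees that the net exponent at each scale is strictly positive, so that the geometric sum over $\mbn\in\mcN_{\mbbM,\mbbG,\lambda}$ converges and produces the factor $\lambda^{\alpha}$ from the constraint $n_{\{\logof,\mainroot\}}\ge \lfloor-\log_2\lambda\rfloor$. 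Third, for the $2p$-th moment I would use the Wick-product structure: $\E[\wwick{\{\xi_{\mft(u)}(x_u)\}}_{u\in\wickleaves} \cdots]$ expands over pairings/partitions gluing the $2p$ copies of $\wickleaves$, and each such gluing produces a new (generalised) Feynman diagram on $2p$ copies of $\sT$ whose cumulant factors are bounded using $\|\xi\|_{j,\c}$ with $j=2p|L(\sT)|$; I would then invoke the deterministic per-slice estimate applied to this enlarged diagram, using that the homogeneity bookkeeping on the glued object is additive over the $2p$ copies so one recovers $\lambda^{2p\alpha}$ and the product over $K(\sT)$ of kernel norms to the power $2p$. Finally I would assemble: apply the triangle inequality in the finite sums over $\wickleaves$ and $\pi$ (absorbed into $C_p$), apply the $\mbn$-sum convergence, and quote the kernel-norm dependence with $m = 2|\s|\cdot|N(\sT)|$ coming from the maximal number of derivatives ever landing on a single kernel.

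\textbf{Main obstacle.} The hardest part will be step two: proving that for \emph{every} $\mbn$ the net scale exponents are strictly positive after harvesting exactly the renormalisations dictated by $P^{\mbn}$ and $\cG^{\mbn}$. This requires showing that the Taylor-remainder gain $(\mathrm{Id}-\mathscr{Y}_S)$ on a structure $S\in\delta(\mbbM)$ precisely compensates its power-counting deficit $\omega(S)$ at the scale $\inte^{\mbn}_{b(\mbbM)}(S)$ while costing only a harmless factor at the (necessarily larger, by $S$ being safe) scale $\exte^{\mbn}_{b(\mbbM)}(S)$, and simultaneously that the $\RKer$-vs-$\KerHat$ difference on $\delta(\mbbG)$ gives the analogous gain $\gamma(e)$ for positive cuts — and that these two mechanisms do not collide, which is exactly what the compatibility of $P^{\mbn}$ and $\cG^{\mbn}$ was designed to ensure via the mantra ``positive and negative renormalisations don't overlap in phase-space''. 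The bookkeeping is delicate because a single edge $e$ can be simultaneously internal to one structure in $b(\mbbM)$ and external to another, and one must check that $e$'s scale never gets charged a negative exponent twice; carrying the right induction hypothesis (stated in terms of the adapted seminorms, uniformly over the external variables) is what makes this manageable, and I would model it closely on \cite[Sec.~2--3]{FMRS85} while substituting our super-regularity condition for their power-counting hypotheses.
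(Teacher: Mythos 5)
Your proposal lines up with the paper's architecture — seminorms that carry the Taylor-remainder structure (Definition~\ref{seminorm def}), an induction on forest depth to bound the renormalization operators (Lemma~\ref{lem: genvertbd}), power counting guaranteed by super-regularity and the safe-forest projection, and a Wick-contraction gluing for the $2p$-th moment via Theorem~\ref{thm: half graph assumptions}. But the ``two-stage'' framing of your opening paragraph would fail if executed literally, and it sits in tension with your own step~3. You cannot first sum over all of $\mcN_{\mbbM,\mbbG,\lambda}$ for a single copy to extract $\lambda^\alpha$ and only then do the Wick gluing: the blocks of a $2p$-fold Wick contraction straddle different copies and introduce fresh cumulant factors whose singular scaling must be clustered together with the kernel scales, and a pointwise bound on $\sum_\mbn \CW^\mbn_\lambda$ is not \emph{a priori} integrable against them. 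The paper's summation is conditional and staged: the scales $\mbk \in \mathring\mcN_S(\mbj)$ internal to a divergent $S\in b(\mbbM)$ are summed conditionally on the outer scales $\mbj$ via the operators $\hat H^{\mbj}_S$ (whether $\inte^\mbk(S)$ exceeds $\exte^\mbj(S)$ determines whether $S$ is safe or dangerous, hence whether $-\mathscr{Y}_S$ or $\mathrm{Id}-\mathscr{Y}_S$ acts), producing a family $\mathring\CW^\mbj_\lambda$ over scales on the contracted multigraph $\go H$; the remaining scale sum, including the new edges from Wick contractions between copies, is then performed jointly with the moment estimate on the multigraph obtained by gluing $2p$ copies of $\go H$ (Lemma~\ref{lem: bound on full integrand} feeding into Theorem~\ref{thm: half graph assumptions}, with the subdivergence-free and decay conditions checked in Lemmas~\ref{subdiv free big graph} and \ref{large scale bound for big graph}). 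Your step~3's instinct to ``invoke the per-slice estimate applied to the enlarged diagram'' is the right one; but then you should not already be claiming $\lambda^\alpha$ for a single copy in step~2 — that exponent only emerges from the scale sum on the full glued object, via the constraint $n_{\{\logof,\mainroot\}}\ge\lfloor -\log_2\lambda\rfloor$.

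A secondary correction: the compatibility of $P^{\mbn}$ and $\cG^{\mbn}$ (Definition~\ref{compatforests}) is a purely combinatorial requirement that licenses the reorganization of the sum over $(\mcF,\cC)$ into intervals $(\mbbM,\mbbG)$ in Proposition~\ref{prop:sumcuts}; it does not by itself make the scale exponents favourable, and it is not the mechanism preventing a double-charge on an edge's scale. The favourable power counting comes from the \emph{definitions} of $P^{\mbn}$ (safe forests) and $\cG^{\mbn}$ (cuts with $\mbn_{\CF}(\logof,e_\p) > \mbn_{\CF}(e_\p,e_\ch)$) combined with super-regularity; it is established in the integrability/decay lemmas, which you should treat as a separate verification rather than as a consequence of compatibility.
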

We pick an arbitrary $(\mathbb{M},\mathbb{G}) \in \mathfrak{R}$ and treat it as fixed for the remainder of the paper, writing $\mathcal{W}^{\mbn}_{\lambda}$ instead of $\mathcal{W}^{\mbn}_{\lambda}\left[\mbbM, \mbbG \right]$.
We also fix the shorthands $\mcS \eqdef s(\mbbM)$, $\mcB \eqdef b(\mbbM)$, $\mcD \eqdef \delta(\mbbM)$, $\cS \eqdef s(\mathbb{G})$, $\cB \eqdef b(\mathbb{G})$, and $\cD \eqdef \delta(\mathbb{G})$. 
\subsection{Summing over scales inductively}\label{subsec: inductively summing scales}
In order to efficiently prove a theorem that applies for any value of $\mathrm{depth}(\mcB)$ it is natural to
formulate our analytic scheme in a way that operates inductively with respect to this quantity. 

To that end we will decompose the single sum over global scale assignments $\mcN_{\mbbM,\mathbb{G},\lambda}$ into a family of sums which facilitate summing the scales internal to single $T \in \mcB$ conditioned on the values of relevant external scales (these two sets of quantities being dependent through the requirement that $T \in \mcS$ or $T \in \mcD$). 
In what follows, for any $\tilde{\CE} \subset \CE$ and $\mbj \in \N^{\CE}$ we write $\mbj \restr {\CE'} \in \N^{\CE'}$ for the restriction of $\mbj$ to the edges in $\CE'$. 

The most external set of edges is given by
\begin{equ}\label{external edges for a single tree}
\CE^{\inte}_{\mcB}(\sT^{\ast})
\eqdef
(K^{\downarrow}(\overline{\mcB}) \cup \kernelsleft{\mcB}{\sT})
\sqcup 
\{e \in \CE_{\pi}:\ e \subset \leavesleft{\mcB}{\sT} \}
\sqcup 
\CE_{\logof}\;.
\end{equ}
We now introduce some sets of ``partial'' scale assignments, writing  
\[
\d\mcN_{\mcB,\lambda} \eqdef
\{
\mbk \in \N^{\CE^{\inte}_{\mcB}(\sT^{\ast})}:\ 
\exists \mbj \in \mcN_{\mbbM,\mathbb{G},\lambda} \textnormal{ with } \mbj \restr {\CE^{\inte}_{\mcB}(\sT^{\ast})} = \mbk
\}
\]
and, for any $S \in \mcB$, $\CE' \subset \CE$ with $\CE' \supset \CE^{\exte}_{\mcB}(S)$, and $\mbj \in \N^{\CE'}$, setting
\begin{equ}[e:defNring]
\mathring{\mcN}_{S}(\mbj) 
\eqdef
\left\{ \mbk \in  \N^{\CE^{\inte}_{\mcB}(S)}:\ 
\exists \tilde{\mbj} \in \mcN_{\mbbM,\mathbb{G},\lambda} 
\textnormal{ with }
\tilde{\mbj} \restr {\CE'} = \mbj 
\textnormal{ and }
\tilde{\mbj}
\restr {\CE^{\inte}_{\mcB}(S)} 
=
\mbk
\right\}\;.
\end{equ}
Note that for every $\mbk \in \mathring{\mcN}_{S}(\mbj)$ one then has 
\begin{equs}[2]
\inte^{\mbk}_{\mcB}(S) &\le \exte^{\mbj}_{\mcB}(S) &\qquad&\textnormal{ if }S \in \mcS,\\
\inte^{\mbk}_{\mcB}(S) &> \exte^{\mbj}_{\mcB}(S) &\qquad&\textnormal{ if }S \in \mcD.
\end{equs}
The following lemma verifies that the set of scale assignments $\mathring{\mcN}_{S}(\mbj)$ 
really only depends on $\mbj$'s values on the edges of of $\CE^{\exte}_{\mcB}(S)$. 
\begin{lemma}\label{lem: independence of scale sums}
For any $S \in \mcB$ and $\mbj \in \mcN_{\mbbM,\mathbb{G},\lambda}$ one has 
\begin{equ}\label{eq: scale independence}
\mathring{\mcN}_{S}(\mbj \restr {\CE^{\exte}_{\mcB}(S)})
=
\mathring{\mcN}_{S}(\mbj).
\end{equ}
\end{lemma}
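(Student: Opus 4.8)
The statement asserts that the set $\mathring{\mcN}_{S}(\mbj)$ depends only on the restriction of $\mbj$ to the ``external'' edges $\CE^{\exte}_{\mcB}(S)$. Since $\mathring{\mcN}_{S}(\mbj \restr {\CE^{\exte}_{\mcB}(S)}) \supseteq \mathring{\mcN}_{S}(\mbj)$ is immediate from the definition \eqref{e:defNring} (restricting the data one constrains against to a smaller edge set can only enlarge the set of admissible completions), the content is the reverse inclusion. First I would unwind the definition: given $\mbk \in \mathring{\mcN}_{S}(\mbj \restr {\CE^{\exte}_{\mcB}(S)})$, there exists $\tilde{\mbj} \in \mcN_{\mbbM,\mathbb{G},\lambda}$ agreeing with $\mbj$ on $\CE^{\exte}_{\mcB}(S)$ and with $\mbk$ on $\CE^{\inte}_{\mcB}(S)$; I must produce a (possibly different) global $\hat{\mbj} \in \mcN_{\mbbM,\mathbb{G},\lambda}$ that agrees with $\mbj$ on \emph{all} of the edges where $\mbj$ is defined and still agrees with $\mbk$ on $\CE^{\inte}_{\mcB}(S)$.

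The natural candidate is the ``splice'' $\hat{\mbj}$ defined to equal $\tilde{\mbj}$ on $\CE^{\inte}_{\mcB}(S)$ and to equal $\mbj$ everywhere else (the two agree on the overlap $\CE^{\exte}_{\mcB}(S)$, so this is well defined; more carefully one splices on the full edge set so that $\hat{\mbj}$ restricts to $\mbk$ on $\CE^{\inte}_{\mcB}(S)$ and to $\mbj$ on its domain). I then need to verify that $\hat{\mbj} \in \mcN_{\mbbM,\mathbb{G},\lambda}$, i.e. that $\mbbM \in \mfM^{\hat{\mbj}}$, $\mbbG \in \mathfrak{G}^{\hat{\mbj}}(\mbbM)$, and the constraint $\hat{\mbj}_{\{\logof,\mainroot\}} \ge \lfloor -\log_2 \lambda\rfloor$. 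The last of these is obvious since $\{\logof,\mainroot\} \in \CE_\logof \subset \CE^{\inte}_{\mcB}(\sT^\ast) \setminus \CE^{\inte}_{\mcB}(S)$ for $S \in \mcB$ (the tree $S$ is proper), so $\hat{\mbj}_{\{\logof,\mainroot\}} = \mbj_{\{\logof,\mainroot\}}$, which satisfies the bound since $\mbj \in \mcN_{\mbbM,\mathbb{G},\lambda}$.

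The heart of the argument is the invariance of the combinatorial data. The key observation — which is exactly the point of Lemma~\ref{main classification lemma} — is that for any $T \in \mcB$ the quantities $\inte^{\mbn}_{\mcB}(T)$ and $\exte^{\mbn}_{\mcB}(T)$ depend only on the values of $\mbn$ on $\CE^{\inte}_{\mcB}(T)$ and $\CE^{\exte}_{\mcB}(T)$ respectively, and these are subsets of $\CE^{\inte}_{\mcB}(S)$ when $T \le S$ (since $\CE^{\inte}_{\mcB}(T) \subset \CE^{\inte}_{\mcB}(S)$ and $\CE^{\exte}_{\mcB}(T) \subset \mcE^{\inte}(A_\mcB(T)) \subset \CE^{\inte}_\mcB(S)$ when $A_\mcB(T) \le S$, the remaining case $T = S$ being handled separately), while for $T \not\le S$ one has $\CE^{\inte}_{\mcB}(T), \CE^{\exte}_{\mcB}(T) \subset \CE \setminus \CE^{\inte}_{\mcB}(S)$ possibly together with $\CE^{\exte}_\mcB(S)$ itself, on all of which $\hat{\mbj} = \mbj$ agrees with $\tilde{\mbj}$ restricted appropriately. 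Since $\tilde{\mbj}$ and $\mbj$ both lie in $\mcN_{\mbbM,\mathbb{G},\lambda}$ and they respectively control the ``$T \le S$'' and ``$T \not\le S$'' families of $(\inte, \exte)$-comparisons, $\hat{\mbj}$ inherits $P^{\hat{\mbj}}[\mathcal{F}] = s(\mbbM)$ for $\mathcal{F}$ in the relevant range and $\mathfrak{G}^{\hat{\mbj}}(\mbbM) \ni \mbbG$: here one invokes \eqref{def: cuts to harvest}, noting that $\mbn_\mcF(\logof, e_\p)$ and $\mbn_\mcF(e_\p, e_\ch)$ for $e \in \cut_{b(\mbbM)}$ are also determined by the external-internal decomposition in the same way. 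The main obstacle I anticipate is bookkeeping: carefully checking that every edge appearing in the definitions of $\inte_\mcB$, $\exte_\mcB$, $\cG^{\mbn}$ and $\mfM^{\mbn}$ lies either entirely in $\CE^{\inte}_{\mcB}(S)$ or entirely outside it (or in the overlap $\CE^{\exte}_{\mcB}(S)$), with no edge straddling the cut in a way that sees \emph{both} $\tilde{\mbj}$- and $\mbj$-values inconsistently — this is where one must use that $S$ is a subtree and that the edge sets $\mcE^{\inte}$, $\mcE^{\exte}$ were defined precisely to make such a decomposition clean.
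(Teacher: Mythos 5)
Your high-level strategy — the trivial inclusion, then the reverse via a ``splice'' $\hat{\mbj}$ which must be shown to lie in $\mcN_{\mbbM,\mathbb{G},\lambda}$ — is exactly the paper's approach. But there is a genuine error in the edge-set bookkeeping which makes the proposed splice incorrect.

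You claim that for $T \le S$ one has $\mcE^{\inte}_{\mcB}(T) \subset \mcE^{\inte}_{\mcB}(S)$. This is \emph{false} whenever $T < S$. By definition, if $T$ is a proper subtree of $S$ lying in $\mcB$, then $T$ lies in (or under) some element of $C_{\mcB}(S)$, so $\mcE^{\inte}(T) \subset \mcE^{\inte}(C_{\mcB}(S))$; but $\mcE^{\inte}(C_{\mcB}(S))$ is precisely the set that is \emph{subtracted} in the definition $\mcE^{\inte}_{\mcB}(S) = \mcE^{\inte}(S) \setminus \mcE^{\inte}(C_{\mcB}(S))$. Hence $\mcE^{\inte}_{\mcB}(T)$ and $\mcE^{\inte}_{\mcB}(S)$ are \emph{disjoint}, not nested — which is the entire point of the ``mod forest'' decomposition of the edge set. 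Similarly $\mcE^{\exte}_{\mcB}(T)$, for $T$ a strict descendant of a child of $S$, lies inside $\mcE^{\inte}(A_{\mcB}(T)) \subset \mcE^{\inte}(C_{\mcB}(S))$, again disjoint from $\mcE^{\inte}_{\mcB}(S)$.

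Because of this, your splice ($\hat{\mbj} = \tilde{\mbj}$ on $\mcE^{\inte}_{\mcB}(S)$, $= \mbj$ elsewhere) does \emph{not} work for children $T \in C_{\mcB}(S)$: under that splice, $\hat{\mbj}$ on $\mcE^{\inte}_{\mcB}(T) \subset \mcE^{\inte}(C_\mcB(S))$ comes from $\mbj$, while $\hat{\mbj}$ on $\mcE^{\exte}_{\mcB}(T) \subset \mcE^{\inte}_{\mcB}(S)$ comes from $\tilde{\mbj}$. The safe/dangerous classification of $T$ is exactly the comparison $\inte^{\hat{\mbj}}_{\mcB}(T)$ versus $\exte^{\hat{\mbj}}_{\mcB}(T)$, and mixing the two sources can flip the outcome: e.g.\ both $\mbj$ and $\tilde{\mbj}$ may classify $T$ as dangerous ($\inte > \exte$) individually, yet $\mbj$'s internal scale may lie below $\tilde{\mbj}$'s external scale, making $T$ safe for the splice — so $\hat{\mbj} \notin \mcN_{\mbbM,\mathbb{G},\lambda}$.

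The repair is to splice on the \emph{full} internal edge set $\mcE^{\inte}(S)$ rather than its ``mod-$\mcB$'' version $\mcE^{\inte}_{\mcB}(S)$; this is what the paper does, setting $\hat{\mbj} = (\tilde{\mbj} \restr{\mcE^{\inte}(S)}) \sqcup (\mbj \restr{\CE \setminus \mcE^{\inte}(S)})$. With that choice: for every $T$ compatible with $\mcB$ and properly contained in (or equal to one of the elements of) $C_{\mcB}(S)$, both $\mcE^{\inte}_{\mcB}(T)$ and $\mcE^{\exte}_{\mcB}(T)$ lie inside $\mcE^{\inte}(S)$ and so draw uniformly from $\tilde{\mbj}$; for $T$ disjoint from $S$ or containing $S$, they lie outside $\mcE^{\inte}(S)$ and draw uniformly from $\mbj$; for $T = S$ itself, $\mcE^{\inte}_{\mcB}(S) \subset \mcE^{\inte}(S)$ draws from $\tilde{\mbj}$ while $\mcE^{\exte}_{\mcB}(S)$ lies in the overlap where $\mbj$ and $\tilde{\mbj}$ agree by assumption. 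Your analysis of the cut rule $\cG^{\bullet}$ and the constraint $\hat{\mbj}_{\{\logof,\mainroot\}} \ge \lfloor -\log_2\lambda\rfloor$ is fine and requires no modification once the splice domain is corrected.
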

\begin{proof}
Clearly the RHS of \eqref{eq: scale independence} is contained on the LHS, we now prove the reverse inclusion.
 Fix $ \mbk \in \mathring{\mcN}_{S}(\mbj \restr {\CE^{\exte}_{\mcB}(S)})$.
We are guaranteed the existence of corresponding $\tilde{\mbj} \in \mcN_{\mbbM,\mathbb{G},\lambda}$ with $\tilde{\mbj} \restr {\CE^{\exte}_{\mcB}(S)} = \mbj \restr {\CE^{\exte}_{\mcB}(S)}$ and $ \tilde{\mbj} \restr {\CE^{\inte}_{\mcB}(S)} = \mbk$. 
We then define $\hat{\mbj} \in \N^{\CE}$ by setting $\hat{\mbj} = ( \tilde{\mbj} \restr {\mcE^{\inte}(S)}) \sqcup ( \mbj \restr {\CE \setminus \mcE^{\inte}(S)})$.
To finish our proof we need to show that $\hat{\mbj} \in \mcN_{\mbbM,\mbbG,\lambda}$. 

To prove that $\mbbM \in \mfM^{\hat{\mbj}}$ it suffices to show that
$P^{\hat{\mbj}}[\mcB] = \mcS$ and that for any $\mathscr{C} \in \mbbG$ and any $T \in \Div_{\pi} \setminus \mcB$, which is both compatible with $\mcB$ and does not contain any edge of $\mathscr{C}$, one has 
$\inte_{\mcB}^{\hat{\mbj}}(T)
\le
\exte_{\mcB}^{\hat{\mbj}}(T)$. 
These two statements can together be rewritten as the claim that for every $T \in \Div_{\pi}$ compatible with $\mcB$ and disjoint from $\mathscr{C}$, one has
\[
\begin{cases}
\inte_{\mcB}^{\hat{\mbj}}(T) \le \exte_{\mcB}^{\hat{\mbj}}(T)
&
\textnormal{ if } T \not \in \mcD \\[1.5ex]
\inte_{\mcB}^{\hat{\mbj}}(T) > \exte_{\mcB}^{\hat{\mbj}}(T)
&
\textnormal{ if } T \in \mcD.
\end{cases}
\] 
This claim can be checked in the following four cases: (i) $T$ is disjoint from all elements of $\mcF$, (ii) $T$ 
properly contains at least one element of $\mcF$, (iii) $T$ is properly contained in an element of $\mcF$, or (iv) $T$ is an element of $\mcF$.

In the first two cases our claim follows from the fact that both $\CE^{\inte}_{\mcB}(T)$ and $\CE^{\exte}_{\mcB}(T)$ consist of edges where $\hat{\mbj}$ is determined by $\mbj \in \mcN_{\mbbM,\mbbG,\lambda}$, while for the last two cases it is because $\hat{\mbj}$ is determined by $\tilde{\mbj} \in \mcN_{\mbbM,\mbbG,\lambda}$ on these edges. 

Clearly all edges whose scale assignments are involved determining $\mathscr{G}^{\hat{\mbj}}(\overline{\mcB})$ are edges where $\hat{\mbj}$ is determined by $\mbj$, so it follows that $\mbbG \in \mathfrak{G}^{\hat{\mbj}}(\mbbM)$. 
Finally, we also have $\hat{j}_{\{\logof,\mainroot\}} = j_{\{\logof,\mainroot\}} \ge \lfloor - \log_{2}(\lambda) \rfloor$. 
This shows that $\hat{\mbj} \in \mcN_{\mbbM,\mbbG,\lambda}$ as desired.
\end{proof} 
We immediately have the following corollary. 
\begin{corollary}\label{cor: tower property}
For any $F: \mcN_{\mbbM,\mbbG,\lambda}: \rightarrow \R$ one has
\[
\sum_{\mbn \in \mcN_{\mbbM,\mbbG,\lambda}}F(\mbn)
=
\sum_{\mbn^{(0)} \in A_{0}}
\sum_{\mbn^{(1)} \in A_{1}(\mbn^{(0)})}
\cdots
\sum_{\mbn^{(j)} \in A_{j}(\mbn^{(j-1)})}
F( \mbn^{(0)} \sqcup \mbn^{(1)} \sqcup \cdots \sqcup \mbn^{(j)})
\]
where $j \eqdef \mathrm{depth}(\mcB)$, $A_{0} \eqdef \d\mcN_{\mcB,\lambda}$, and for each $1 \le i \le j$ we inductively set, for each $\mbn^{(i-1)} \in A_{i-1}$,  
\[
A_{i} \eqdef \bigtimes_{S \in D_{i}(\mcB)} \mcN_{S}(\mbn^{(i-1)}).
\]
\end{corollary}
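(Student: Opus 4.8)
\textbf{Proof plan for Corollary~\ref{cor: tower property}.}
The idea is to repeatedly peel off one ``layer'' of the forest $\mcB$ at a time, using Lemma~\ref{lem: independence of scale sums} to guarantee that at each stage the remaining sum over internal scales depends only on the external scales already fixed. First I would set up notation: write $\mcE^{(0)} \eqdef \CE^{\inte}_{\mcB}(\sT^{\ast})$ as in \eqref{external edges for a single tree}, and for $1 \le i \le j$ write $\mcE^{(i)} \eqdef \bigsqcup_{S \in D_{i}(\mcB)} \CE^{\inte}_{\mcB}(S)$. The key combinatorial observation is that these sets partition $\CE$, i.e.\ $\CE = \bigsqcup_{i=0}^{j} \mcE^{(i)}$. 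This follows from the definitions: $\CE^{\inte}_{\mcB}(S)$ was defined as $\mcE^{\inte}(S) \setminus \mcE^{\inte}(C_{\mcB}(S))$, so that the internal edges of a tree $T$ at depth $i$ get partitioned between $T$ itself and its children at depth $i+1$, and iterating down to the leaves of $\mcB$ (where $C_{\mcB}(S) = \emptyset$) exhausts all edges internal to some element of $\mcB$; meanwhile $\mcE^{(0)}$ collects precisely the edges not internal to any element of $\mcB$, together with $\CE_{\logof}$ and the edges among the ``surviving'' leaves. Thus a global scale assignment $\mbn \in \N^{\CE}$ is equivalently a tuple $(\mbn^{(0)}, \dots, \mbn^{(j)})$ with $\mbn^{(i)} \in \N^{\mcE^{(i)}}$.

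Next I would argue by induction on $j = \mathrm{depth}(\mcB)$ that the set $\mcN_{\mbbM,\mbbG,\lambda}$, viewed as a subset of $\N^{\mcE^{(0)}} \times \cdots \times \N^{\mcE^{(j)}}$, is exactly the ``iterated fibred product'' appearing in the statement. Concretely, I would show that $\mbn = \mbn^{(0)} \sqcup \cdots \sqcup \mbn^{(j)}$ lies in $\mcN_{\mbbM,\mbbG,\lambda}$ if and only if $\mbn^{(0)} \in \d\mcN_{\mcB,\lambda}$ and, for each $1 \le i \le j$ and each $S \in D_{i}(\mcB)$, the restriction $\mbn \restr {\CE^{\inte}_{\mcB}(S)}$ lies in $\mathring{\mcN}_{S}\big(\mbn^{(0)} \sqcup \cdots \sqcup \mbn^{(i-1)}\big)$. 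One direction is immediate from the definitions of $\d\mcN_{\mcB,\lambda}$ and $\mathring{\mcN}_{S}(\cdot)$ in \eqref{e:defNring}: if $\mbn \in \mcN_{\mbbM,\mbbG,\lambda}$ then $\mbn$ itself witnesses all the required memberships (here one uses $\CE^{\exte}_{\mcB}(S) \subset \CE^{\inte}_{\mcB}(A_{\mcB}(S)) \subset \mcE^{(0)} \sqcup \cdots \sqcup \mcE^{(i-1)}$, which holds because $A_{\mcB}(S) \in D_{i-1}(\mcB)$ when $S \in D_i(\mcB)$ and $A_\mcB(S) \neq \sT^\ast$, and $\CE^{\exte}_{\mcB}(S) \subset \mcE^{(0)}$ in the remaining case). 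For the converse direction, given data lying in each of these sets, I would build up a candidate global assignment and check the three defining conditions of $\mcN_{\mbbM,\mbbG,\lambda}$ — namely $\mbbM \in \mfM^{\mbn}$ (equivalently $P^{\mbn}[\mcB] = \mcS$ together with the correct safety/danger dichotomy for trees $T \in \Div_\pi$ compatible with $\mcB$ and disjoint from the relevant cut), $\mbbG \in \mathfrak{G}^{\mbn}(\mbbM)$, and $n_{\{\logof,\mainroot\}} \ge \lfloor -\log_2 \lambda \rfloor$ — by exactly the four-case analysis used in the proof of Lemma~\ref{lem: independence of scale sums}. In fact the cleanest route is to avoid reproving that analysis: reading Lemma~\ref{lem: independence of scale sums} with $S$ ranging over $D_1(\mcB)$ and peeling off $\mcE^{(1)}$, then applying it recursively to the sub-forests $\mcB_S \eqdef \{T \in \mcB : T \le S\}$ for $S \in D_1(\mcB)$ (which have depth $j-1$), furnishes exactly the induction.

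Given the set-theoretic identity for $\mcN_{\mbbM,\mbbG,\lambda}$, Fubini/Tonelli for sums (valid for any $\R$-valued, or indeed $[0,\infty]$-valued, summand since we may regroup terms of an unordered sum freely) immediately yields the claimed iterated-sum formula, using that the inner summation index set $A_i = \bigtimes_{S \in D_i(\mcB)} \mathring{\mcN}_S(\mbn^{(i-1)})$ is, by Lemma~\ref{lem: independence of scale sums}, a genuine product depending only on $\mbn^{(i-1)} = \mbn^{(0)} \sqcup \cdots \sqcup \mbn^{(i-1)}$ restricted to $\bigsqcup_{S \in D_i(\mcB)} \CE^{\exte}_{\mcB}(S) \subset \mcE^{(0)} \sqcup \cdots \sqcup \mcE^{(i-1)}$, so the iterated sum is well-defined. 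The main obstacle, such as it is, is the bookkeeping in verifying that $\{\mcE^{(i)}\}_{i=0}^{j}$ partitions $\CE$ and that the external edge set $\CE^{\exte}_{\mcB}(S)$ of a depth-$i$ tree is always contained in the union of the strictly lower-index blocks — this is where the precise definitions of $A_\mcF$, $C_\mcF$, $\mcE^{\inte}_\mcF$, $\mcE^{\exte}_\mcF$ all have to be juggled simultaneously — but it is entirely routine once the partition claim is isolated, and the analytic content is nil: this is purely a reindexing of an unordered sum, justified a posteriori by the absolute convergence established later (cf.\ the remark following Lemma~\ref{lem:reorderSum}).
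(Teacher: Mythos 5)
Your proof is essentially correct; note that the paper states this corollary without any proof, presenting it as immediate from Lemma~\ref{lem: independence of scale sums}, so there is no ``paper approach'' to compare against --- your write-up is simply a spelling-out of what the authors leave implicit, and it captures the right ideas.

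Two small points worth flagging. First, the partition claim $\CE = \bigsqcup_{i=0}^{j}\mcE^{(i)}$ quietly relies on the compatibility of $\mcB$ with $\pi$: compatibility is precisely what rules out $\CE_{\pi}$-edges with one endpoint inside $L(T)$ for some $T \in \mcB$ and the other outside, since such an edge would be excluded from $\mcE^{(0)}$ (it is not contained in $\leavesleft{\mcB}{\sT}$) yet also from every $\mcE^{\inte}(S)$ (it is not contained in $N(S)$ for any $S \in \mcB$). Second, the phrase ``applying [Lemma~\ref{lem: independence of scale sums}] recursively to the sub-forests $\mcB_S$'' is a bit misleading: the lemma is stated for the single fixed forest $\mcB$ and is never re-derived for its sub-forests. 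What one actually does is iterate the splicing construction appearing in the \emph{proof} of that lemma over $S \in D_1(\mcB)$, then $D_2(\mcB)$, and so on: given a witness $\mbj^{(i-1)} \in \mcN_{\mbbM,\mbbG,\lambda}$ agreeing with $\mbn^{(0)}\sqcup\cdots\sqcup\mbn^{(i-1)}$ on $\mcE^{(0)}\sqcup\cdots\sqcup\mcE^{(i-1)}$, and for each $S \in D_i(\mcB)$ a witness $\tilde\mbj_S$ realizing $\mbn^{(i)}_S \in \mathring{\mcN}_S(\mbn^{(i-1)}) = \mathring{\mcN}_S(\mbj^{(i-1)})$, one forms the hybrid $(\tilde\mbj_S\restr{\mcE^{\inte}(S)}) \sqcup (\mbj^{(i-1)}\restr{\CE\setminus\mcE^{\inte}(S)})$ exactly as in the proof of Lemma~\ref{lem: independence of scale sums} and checks it stays in $\mcN_{\mbbM,\mbbG,\lambda}$; since each splice modifies only scales in $\mcE^{\inte}(S)$, which is disjoint both from the lower-depth layers already pinned down and from $\mcE^{\inte}(S')$ for the other $S' \in D_i(\mcB)$, the splices over distinct $S$ compose to give the next witness $\mbj^{(i)}$. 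Everything else in your argument --- the inclusion $\CE^{\exte}_{\mcB}(S) \subset \mcE^{(i-1)}$ for $S \in D_i(\mcB)$, the Fubini reindexing of an unordered sum, and the use of Lemma~\ref{lem: independence of scale sums} to reduce the argument of $\mathring{\mcN}_S(\cdot)$ to the preceding layer --- is in order.
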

Here, we implicitly make the identifications $(\mbk_1,\ldots,\mbk_j) \sim \mbk_1 \sqcup\ldots \sqcup \mbk_j$
for scale assignments $\mbk_i$ that involve disjoint sets of edges.

We now inductively define a family of operators $\hat{H}^{\mbj}_{S}:\allf \rightarrow \allf$, where $S \in \mcB$ and $\mbj \in \N^{\CE'}$ with $\CE' \supset \CE^{\exte}_{\mcB}(S)$ by setting
\begin{equation}\label{genvert def}
\begin{split}
[\hat{H}^{\mbj}_{S}\phi](x)\ 
\eqdef\  
&
\sum_{ \mbk \in \mathring{\mcN}_{S}(\mbj)}
\int_{\tilde{N}_{\mcB}(S)}\back dy \
\mathrm{Cu}^{L_{\mcB}(S)}_{\pi,\mbk}(y) 
\,
\ke{\mathring{K}_{\mcB}(S)}{0,\mbk}(y \sqcup x_{\rho_{S}})\\
&\quad
\times
\hat{H}^{\mbk}_{C_{\mcB}(S)}
\left[
\ke{K^{\partial}_{\mcB}(S)}{0,\mbk}
\,
[\mathscr{Y}_{S,\mbbM}^{\#}\phi]
\right](x_{\tilde{N}(S)^c} \sqcup 
y)\;,
\end{split}
\end{equation}
with the base case of the induction given by setting $\hat{H}^{\mbj}_{\emptyset}$ to be the identity operator. The operators $\hat{H}^{\mbj}_{S}$ are partially summed analogues of the operators $H^{\cdot}_{\mbbM,S}$ which perform the summation of scales inside $S$ but outside $C_{\mcB}(S)$ (note that this operator contains in its definition all expressions within $\mathcal{W}_{\lambda}^{\bullet}$ which depend on these scales). 
We define, for each $\mbj \in \partial \mcN_{\mcB,\lambda}$, a function $\mathring{\CW}_{\lambda}^{\mbj} \in \mcb{C}_{\tilde{N}(\overline{\mcB})^{c}}$ via
\begin{equation}\label{def of inductively summed kernels}
\begin{split}
\mathring{\CW}_{\lambda}^{\mbj}
\eqdef&\; 
\psi^{\lambda}
\cdot
\ke{
\kernelsleft{\mcB}{\sT}\setminus \cB}{0,\mbj}
\mathrm{Cu}^{\leavesleft{\mcB}{\sT}}_{\pi,\mbj}
\KerTilde_{0,\mbj}^{\cB \setminus K^{\downarrow}(\overline{\mcB})}
\powroot{\nodesleft{\mcB}{\sT}}{\sn,\logof}{\mbj}\\
&
\quad \cdot
\prod_{S \in \overline{\mcB}}
\hat{H}_{S}^{\mbj} \left[ 
\KerTilde_{0,\mbj}^{\cB \cap K^{\downarrow}(S)} 
\ke{K^{\downarrow}(S) \setminus \cB}{0,\mbj}
\powroot{\tilde{N}(S)}{\sn,\logof}{\mbj}
\right]\;,
\end{split}
\end{equation}
where for any $\cC \subset \cut$ and $\mbj \in \partial \mcN_{\mcB,\lambda}$ we set 
\[
\KerTilde_{0,\mbj}^{\cC} 
\eqdef 
\RKer_{0,\mbj}^{\cC \cap \cS}
\cdot
\KerHat_{0,\mbj}^{\cC \cap \cD}\;.
\]
Lemma~\ref{lem: independence of scale sums} and Corollary~\ref{cor: tower property} together give the following lemma.
\begin{lemma}\label{inductively summed integrand}
\begin{equation}\label{inductive sum identity}
\sum_{\mbj \in \partial \mcN_{\mcB,\lambda}}
\int_{\nodesleft{\mcB}{\sT} \setminus \wickleaves}dy\ 
\mathring{\mathcal{W}}_{\lambda}^{\mbj}(x \sqcup y)
= 
\sum_{\mbn \in \mcN_{\mbbM,\mathbb{G},\lambda}}
\mathcal{W}_{\lambda}^{\mbn}(x )\;.
\end{equation}
\end{lemma}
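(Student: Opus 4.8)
The statement to prove is Lemma~\ref{inductively summed integrand}, namely the identity \eqref{inductive sum identity} expressing that the partially-resummed kernels $\mathring{\mathcal{W}}_{\lambda}^{\mbj}$, summed over the most external partial scale assignments $\mbj \in \partial \mcN_{\mcB,\lambda}$, reproduce the full multiscale kernel $\mathcal{W}_{\lambda}^{\mbn}[\mbbM,\mbbG]$ summed over $\mbn \in \mcN_{\mbbM,\mathbb{G},\lambda}$.

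\medskip

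\textbf{Plan of proof.}
The strategy is to combine Corollary~\ref{cor: tower property}, which decomposes the single sum over $\mcN_{\mbbM,\mbbG,\lambda}$ into an iterated sum over the layered partial scale assignments $\mbn^{(0)} \sqcup \mbn^{(1)} \sqcup \cdots \sqcup \mbn^{(j)}$ (with $j = \mathrm{depth}(\mcB)$ and $\mbn^{(i)}$ ranging over $A_i(\mbn^{(i-1)}) = \bigtimes_{S \in D_i(\mcB)}\mathring{\mcN}_S(\mbn^{(i-1)})$), with the observation that the operator $\hat H_S^{\mbj}$ in \eqref{genvert def} has exactly been designed so that applying it performs the sum over those scales which are internal to $S$ but external to $C_{\mcB}(S)$ — i.e.\ the sum over $\mbk \in \mathring{\mcN}_S(\mbj)$ of the corresponding slice of the integrand of $\mathcal{W}_\lambda^{\mbn}$. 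Concretely I would first unfold the definition \eqref{def of inductively summed kernels} of $\mathring{\mathcal{W}}_\lambda^{\mbj}$ and, comparing term-by-term with \eqref{multiscale - integrand}, check that the factors $\psi^\lambda$, $\ke{\kernelsleft{\mcB}{\sT}\setminus\cB}{0,\mbj}$, $\mathrm{Cu}^{\leavesleft{\mcB}{\sT}}_{\pi,\mbj}$, $\KerTilde_{0,\mbj}^{\cB\setminus K^{\downarrow}(\overline{\mcB})}$, and $\powroot{\nodesleft{\mcB}{\sT}}{\sn,\logof}{\mbj}$ are precisely the pieces of $\mathcal{W}_\lambda^{\mbn}[\mbbM,\mbbG]$ that depend only on scales in $\CE^{\inte}_{\mcB}(\sT^{\ast})$, and that the remaining piece — the product over $S \in \overline{\mcB}$ of $H_{\pi,\mbbM,S}^{\mbn}[\cdots]$ — is captured, after summing the internal scales, by the product of the $\hat H_S^{\mbj}$. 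I would also use that $\KerTilde^{\cC}_{0,\mbj} = \RKer^{\cC\cap\cS}_{0,\mbj}\cdot\KerHat^{\cC\cap\cD}_{0,\mbj}$ matches the cut-interval factorization appearing in \eqref{multiscale - integrand} via $s(\mbbG),\delta(\mbbG)$.

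\medskip

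Next I would prove, by induction on $\mathrm{depth}(\{T \in \mcB : T \le S\})$, the key intermediate identity: for every $S \in \overline{\mcB}$ and every $\mbj \in \N^{\CE'}$ with $\CE' \supset \CE^{\exte}_{\mcB}(S)$,
\[
\hat H_S^{\mbj}[\phi]
=
\sum_{\mbk \in \mathring{\mcN}_S(\mbj)}
H_{\pi,\mbbM,S}^{\mbk'}[\phi]\restr{\mbk},
\]
where $\mbk' = \mbj\sqcup\mbk$ and the right side means: take the single-slice operator $H_{\pi,\mbbM,S}^{(\cdot)}$ of \eqref{multiscale - integrand}, with scales on $\mcE^{\inte}(S)$ supplied by $\mbk$ and those outside supplied by $\mbj$, and sum over $\mbk$. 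The base case is $C_{\mcB}(S) = \emptyset$, where $\hat H^{\mbj}_\emptyset = \mathrm{Id}$ composed with the explicit innermost integral, and where $\mathring{\mcN}_S(\mbj)$ just ranges over scales on $\mcE^{\inte}(S)$ compatible with the forest/cut membership of $S$; comparing \eqref{genvert def} with the $\mbbM$-version of the base case of $H$ (cf.\ the definition preceding Lemma~\ref{lem:reorderSum}) gives the claim directly. The inductive step uses the recursive structure of both $\hat H$ and $H$: the factor $\hat H^{\mbk}_{C_{\mcB}(S)}$ appearing inside $\hat H^{\mbj}_S$ is by hypothesis exactly $\sum H^{\cdot}_{C_{\mcB}(S)}$, and since $\CE^{\inte}(S) = \mcE^{\inte}_{\mcB}(S)\sqcup \mcE^{\inte}(C_{\mcB}(S))$ the sum over $\mathring{\mcN}_S(\mbj)$ factorizes — by Lemma~\ref{lem: independence of scale sums} and the tower structure of Corollary~\ref{cor: tower property} — into the outer sum over $\mcE^{\inte}_{\mcB}(S)$-scales (performed by the explicit integral in \eqref{genvert def}) and the inner sums performed recursively by $\hat H^{\mbk}_{C_{\mcB}(S)}$. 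That the choices $\mathscr{Y}^\#_{S,\mbbM}$ and the cumulant/kernel slices $\mathrm{Cu}^{L_{\mcB}(S)}_{\pi,\mbk}$, $\ke{\mathring K_{\mcB}(S)}{0,\mbk}$, $\KerTilde^{\cdot}_{0,\mbk}$ agree between $\hat H$ and $H$ is immediate from the definitions. One must also confirm the commutativity of the $\hat H_S^{\mbj}$ over $S \in C_{\mcB}(S')$ so that ``$\prod_{S\in\overline{\mcB}}$'' and ``$\hat H^{\mbk}_{C_{\mcB}(S)}$'' are unambiguous — this follows by the same argument sketched in Remark~\ref{remark: explanation of negative renormalization} and used in Lemma~\ref{lem:reorderSum}, since passing to single slices and to the partially-summed operators changes nothing structural.

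\medskip

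Finally, I would assemble the pieces: start from the right side of \eqref{inductive sum identity}, which by Lemma~\ref{lem:reorderSum} (eq.~\eqref{eq: interval sum}, read with $\mathbb{M},\mathbb{G}$ the fixed intervals) equals $\sum_{\mbn \in \mcN_{\mbbM,\mathbb{G},\lambda}}\mathcal{W}_\lambda^{\mbn}[\mbbM,\mbbG]$; apply Corollary~\ref{cor: tower property} with $F(\mbn) = \mathcal{W}_\lambda^{\mbn}[\mbbM,\mbbG](x)$ to rewrite this as the iterated sum over $\mbn^{(0)} \in \partial\mcN_{\mcB,\lambda}$ and then over the inner layers $\mbn^{(1)},\ldots,\mbn^{(j)}$; recognize that the factors of $\mathcal{W}_\lambda^{\mbn}$ depending only on $\mbn^{(0)}$ (the non-$H$ prefactors, which live on $\CE^{\inte}_{\mcB}(\sT^{\ast})$) pull out of the inner sums; and then recognize the inner iterated sum over $\mbn^{(1)},\ldots,\mbn^{(j)}$ applied to $\prod_{S\in\overline{\mcB}}H^{\cdot}_{\pi,\mbbM,S}[\cdots]$ as exactly $\prod_{S\in\overline{\mcB}}\hat H_S^{\mbn^{(0)}}[\cdots]$, by the intermediate identity just proved. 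Reading off the definition \eqref{def of inductively summed kernels} completes the identification with $\sum_{\mbj\in\partial\mcN_{\mcB,\lambda}}\int\mathring{\mathcal{W}}_\lambda^{\mbj}$. Throughout, interchanges of integration with the (infinite) scale sums are carried out formally, as flagged in the Remark following Lemma~\ref{lem:reorderSum}; the a posteriori absolute-convergence justification belongs to the later estimates and is not re-proved here. The main obstacle I anticipate is bookkeeping: carefully matching the edge-set decompositions ($\CE^{\inte}_{\mcB}(S)$ vs.\ $\mcE^{\inte}_{\mcB}(S)\sqcup\mcE^{\inte}(C_{\mcB}(S))$, the behavior of $\CE^{\exte}_{\mcB}(S)$ under restriction as in Lemma~\ref{lem: independence of scale sums}) against the recursive structure of the operators, and making sure that the membership conditions ``$S\in\mcS$ vs.\ $S\in\mcD$'' that define $\mathring{\mcN}_S(\mbj)$ align with the ``$\mathscr{Y}^\#$'' alternative in both $\hat H$ and $H$ — these are conceptually routine but must be tracked with care.
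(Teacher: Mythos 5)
Your proposal is correct and takes essentially the same route as the paper: the paper simply asserts that Lemma~\ref{lem: independence of scale sums} together with Corollary~\ref{cor: tower property} yield the identity, and your argument unpacks that — peeling off the $\mbn^{(0)}$-dependent prefactors (all of which live on $\CE^{\inte}_{\mcB}(\sT^{\ast})$), then identifying the tower of inner scale sums applied to the single-slice operators $H^{\mbn}_{\pi,\mbbM,S}$ with the recursively defined $\hat H^{\mbj}_S$ — which is precisely what those two ingredients encode. (The invocation of Lemma~\ref{lem:reorderSum} at the start is superfluous since $\CW^{\mbn}_{\lambda}$ is already shorthand for $\CW^{\mbn}_\lambda[\mbbM,\mbbG]$, but this is harmless.)
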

\subsection{Estimates on renormalization}\label{sec: inductive bound}
\subsubsection{More notational preliminaries}
In what follows we will frequently use the generalized Taylor remainder estimate of \cite[Prop.~A.1]{Regularity}. 
In view of this, it is natural to define, for any set of multi-indices $A$ and subset $N \subset \allnodes$, the set of multi-indices
\[
\partial_{N} A 
\eqdef
\left\{ 
k \in (\N^{d})^{\allnodes} \setminus A: \exists j \in (\N^{d})^{N} \textnormal{ with } 
\|j\| = 1,\ k - j \in A \right\}\;.
\]
Note that if $N = \{u\}$ we sometimes write $\partial_{u}A$ instead of $\partial_{\{u\}}A$.

We now introduce some notation for the renormalization of second cumulants. First we set $
R(\pi)
\eqdef
\{ B \in \pi:\ \fict(B) > 0\}$ where $\fict(\cdot)$ is defined in \eqref{def: fict}.

The second cumulants corresponding to $B \in R(\pi)$ will be renormalized in a manner similar to the divergent subtrees -- in particular one needs to choose a distinguished vertex for each such $B$ which serves the same rule that $\rho_{S}$ does for $S \in \Div$.
To that end for each $B \in R(\pi)$ we label the elements of $B$ with either a $+$ or $-$, so for each such $B$ we can write $B = \{ (+,B), (-,B) \}$ -- this labeling is arbitrary except for one constraint: we require that for every $S \in \mcB$ and every $B \in L_{\mcB}(S)$ one has $(+,B) \not = \rho_{S}$. 
One should think of $(-,B)$ as serving as the ``root'' of $B$ for renormalization. 

Moving forward we take these labelling as fixed. 
We then define, for every $B \in R(\pi)$, an operator $\mathscr{Y}_{B}: \allf \rightarrow \allf$ via setting, for each $\phi \in \allf$, 
\begin{equ}\label{def: fict renorm}
[
\mathscr{Y}_{B}
\phi
]
(z)
\eqdef
\sum_{
b \in \mathrm{Der}(B)
}
(
D^{b}
\phi
)
(\Coll_{B}(z)
)
(z_{(+,B)} - z_{(-,B)})^{b}\;,
\end{equ}
where 
\begin{equ}\label{eq: overload collapse}
\Coll_{B}(z)_u \eqdef 
\left\{\begin{array}{cl}
  z_{(-,B)} & \text{if $u \in B$,} \\
  z_u & \text{otherwise,}
\end{array}\right.
\end{equ}
and $\mathrm{Der}(B)$ is the set of all multi-indices $b$ supported on $(+,B)$ and satisfying $|b|_{\s} < \fict(B)$. We also set $\overline{\mathrm{Der}}(B) \eqdef \mathrm{Der}(B) \sqcup \partial_{(+,B)} \mathrm{Der}(B)$ and for any subset $A \subset R(\pi)$ we set 
\begin{equ}\label{sum of second cumulant derivatives}
\overline{\mathrm{Der}}(A)
\eqdef
\Big\{ \sum_{B \in A} m_{B} :\ m_{B} \in \overline{\mathrm{Der}}(B) \Big\}\;.
\end{equ}
Finally, for any $S \in \mcB$ we define 
\begin{equ}
\overline{\mathrm{Der}}(\pi,S) \eqdef
\Big\{ \sum_{
\substack{
B \in R(\pi)\\
B \subset L_{\mcB}(S)}}
k_{B}:\ 
k_{B} \in \overline{\mathrm{Der}}(B) 
\Big\}\;.
\end{equ}
This finishes our additional notation for renormalizing second cumulants, we now introduce other useful shorthands.  
We define a map $\mfh:K(\sT) \rightarrow \R$ by setting 
\begin{equ}\label{equ: homogeneity map}
\mfh(e) \eqdef |\s| - |\mft(e)|_{\s} + |\se(e)|_{\s}\;.
\end{equ} 
The mnemonic here is that $\mfh$ stands for homogeneity, the kernel $\ke{\{e\}}{0}$ blows up like $|x_{e_{\p}} - x_{e_{\ch}}|^{-\mfh(e)}$ as $|x_{e_{\p}} - x_{e_{\ch}}| \rightarrow 0$. 

We also introduce notation for various domain constraints. 
For $z,w \in \R^{d}$ and $t \in \R$ we write $\link{z}{w}{t}$ for the condition
\begin{equ}\label{defining annular region}
C^{-1} 2^{-t} \le |z-w| \le C2^{-t}
\end{equ}
write $\ulink{z}{w}{t}$ for the condition
\begin{equ}\label{defining circular region}
|z-w| \le C2^{-t}\;.
\end{equ}
In both \eqref{defining annular region} and \eqref{defining circular region} one chooses a
fixed value $C > 0$ (not dependent on $t$). 
\begin{remark}
Note that from line to line the constant $C$ implicit in the notations \eqref{defining annular region} and \eqref{defining circular region} may change but remains suppressed from the notation. 

In particular, when the notations \eqref{defining annular region} and/or \eqref{defining circular region} 
appear in the assumptions of a lemma or proposition one is allowed to choose any value(s) of $C$.
Any proportionality constant hidden in the notation $\lesssim$ appearing in the conclusion 
may then depend on the choices of constants $C$ made in the assumption. 
Moreover, if there is another use of the notations \eqref{defining annular region} and/or 
\eqref{defining circular region} in the conclusion of the lemma or proposition, then implicit constants 
for these conditions cannot be chosen arbitrarily, but have to be taken sufficiently large in
a way that may depend on the constants chosen for the analogous expression in the assumptions. 

In the end, all of these constants influence the overall factor $C_{\tau,p}$ in \eqref{upgraded thm - main theorem}. 
\end{remark}
\subsubsection{Inductive estimates for negative renormalizations}
After renormalizing everything in $\mcF \subset \mcB$, the sets $\tilde{N}(\mcF)^{c}$ and $\tilde{N}(\mcF)$ are the sets of free and integrated out variables of our integrand, respectively. 
Since the forest $\CB$ is now fixed once and for all, we also write 
$\inte^{\bullet}$ and $\exte^{\bullet}$ instead of $\inte^{\bullet}_{\mcB}$ and 
$\exte^{\bullet}_{\mcB}$ and, for any $T \in \Div$, we define $\bar{\omega}(T) \eqdef \lfloor \omega(T) \rfloor < \omega(T)$. 

The goal in this subsection is to estimate, for any forest $\mcF \subset \mcB$ of depth $1$ and 
any scale assignment $\mbj \in \N^{\CE'}$ with $\CE' \supset \CE^{\exte}_{\mcB}(\mcF)$, quantities of the form $\hat{H}^{\mbj}_{\mcF}(\phi)$. 
In order to facilitate this, we define a family of seminorms $\| \cdot \|_{\mcF,\mbj}$ on the functions of $\allf$.
These seminorms control the derivatives of $\phi$ that are generated when one renormalizes the trees $\mcF$, as well as the derivatives generated by the renormalization of the sub-divergences within these trees. 
To that end, for any forest $\mcF \subset \mcB$ with $\mathrm{depth}(\mcF) \le 1$, we define
\begin{equ}\label{forest of derivatives}
\widetilde{\mathrm{Der}}(\mcF) \eqdef
\Big\{
\sum_{S \in \mcG} 
k_{S}
+
\sum_{
\substack{
B \in R(\pi),\ B \subset L(\mcF)\\
B \not \subset L(\mcG)}}
k_{B}:\ 
\begin{array}{c}
\mcG \subset \mcB \textnormal{ with } 
\mcG \in \mathbb{F}_{\le}[\mcF]\\[1ex] 
k_{S} \in 
\overline{\mathrm{Der}}(S),\ 
k_{B} \in \overline{\mathrm{Der}}(B)
\end{array}
\Big\}\;,
\end{equ}
where $\mathbb{F}_{\le}[\mcF]$ is defined as in \eqref{e:maxForest} and for any tree in $T \in \Div$, we set
\begin{equ}
\overline{\mathrm{Der}}(T)
\eqdef
\mathrm{Der}(T)
\cup
\partial_{\tilde{N}(T)} \mathrm{Der}(T)
\end{equ}
where the notation $\mathrm{Der}(T)$ was introduced in Definition~\ref{set of derivatives}.

The control over derivatives will be modulated by a certain scaling. 
For $\mcF$ and $\mbj$ as before and $b \in \widetilde{\mathrm{Der}}(\mcF)$ we define
a  differential operator with constant coefficients on $\allf$ by writing
\[
\mbbD_{\mcF}^{b,\mbj} 
\eqdef
\prod_{S \in \mcF}
\frac{D^{b_{S}}}{2^{\exte^{\mbj}(S)|b_{S}|_{\s}}},
\]
where we use the fact that each such $b$ admits a unique decomposition $b = \sum_{S \in \mcF}b_{S}$ with $b_{S}$ supported on $\tilde{N}(S)$.
Additionally, we build a seminorm which controls the result of acting on a test function $\phi$ with the 
renormalisation operator $\hat{H}_{\mcF}^{\mbj}$. This is achieved by taking suitable suprema of $\phi$
and some of its partial derivatives over the variables to be integrated. 
Recall that $\rho(\mcF) \subset \tilde N(\mcF)^{c}$ denotes the set of 
roots of $\CF$.
Then, for any $V \supset \rho(\mcF)$ fixed $x \in (\R^{d})^{V}$, $\CE' \supset \CE^{\exte}_{\mcB}(\mcF)$, and $\mbj \in \N^{\CE'}$, we define $\Dom(\mcF,\mbj,x)$ to be the set of all $y \in (\mathbf{R}^{d})^{\tilde{N}(\mcF)}$ such that the following conditions hold
\begin{itemize}
\item For each $S \in \mcF \cap \mcS$, one has $y_v = x_{\rho_{S}}$
for all nodes $v \in \tilde N(S)$.
\item For each $S \in \mcF \cap \mcD$, one has
$\ulink{y_{v}}{x_{\rho_S}}{\exte^{\mbj}(S)}$ for all nodes $v \in \tilde N(S)$.
\end{itemize}
In other words, we restrict ourselves to coordinates $y$ for which ``safe'' trees are collapsed to
a point, while ``unsafe'' trees are restricted to be of diameter of order $2^{-\exte^{\mbj}(S)}$.
We can now define the previously mentioned seminorms. 

\begin{definition}\label{seminorm def}
Let $\mcF \subset \mcB$  with $\mathrm{depth}(\mcF) \le 1$. 
Let $\mbj = \N^{\CE'}$ with $\CE' \supset \CE_{\mcB}(\mcF)$. 
For $x \in (\R^d)^{\tilde{N}(\mcF)^{c}}$, we define a family of seminorms $\|\cdot\|_{\mcF,\mbj}(x)$ on $\allf$ by
\begin{equation*}
\|\phi\|_{\mcF,\mbj}(x)
\eqdef 
\sup
\left\{
\left|
\left(
\mbbD^{b,\mbj}_{\mcF}\phi
\right)
(x \sqcup y)
\right|:\ 
\begin{array}{c}
b \in \widetilde{\mathrm{Der}}(\mcF)
\textnormal{ and }\\
y \in \Dom(\mcF,\mbj,x) 
\end{array}
\right\}\;.
\end{equation*}
In the particular case when $\CF = \{S\}$ (i.e.\ it only consists of a single tree),
we also write $\|\phi\|_{S,\mbj}(x)$ instead of $\|\phi\|_{\{S\},\mbj}(x)$.
\end{definition}
\begin{remark} 
We make a few simple observations about the seminorms defined in Definition~\ref{seminorm def}. 
The first is that $\| \cdot \|_{\mcF,\mbj}(x)$ clearly only depends on $\mbj$'s values on $\CE_{\mcB}(\mcF)$. 
The second is that the case $\mcF = \emptyset$ is somewhat degenerate, here the ``seminorm'' is really just a point-wise absolute value.
\end{remark}

We then have the following lemmas.
\begin{lemma}\label{lem: productrule}
Let $\mcF \subset \mcB$ with $\mathrm{depth}(\mcF) \le 1$. 
Then uniform in $\mbj \in \N^{\CE'}$ with $\CE' \supset \CE_{\mcB}(\mcF)$, $x \in (\R^d)^{\tilde{N}(\mcF)^{c}}$,  and $f,g \in \allf$, one has 
\begin{equation*}
\|f g\|_{\mcF, \mbj}(x) 
\lesssim
\|f\|_{\mcF, \mbj}(x) 
\, 
\|g \|_{\mcF, \mbj}(x)
\end{equation*}
Furthermore, if $g \in \mcb{C}_{\tilde{N}(\mcF)^{c}}$ then 
$\|f g \|_{\mcF, \mbj}(x)
=
\|f \|_{\mcF, \mbj}(x)\,g(x)$.
\end{lemma}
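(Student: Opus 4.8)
\textbf{Proof plan for Lemma~\ref{lem: productrule}.}

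The statement is essentially a ``submultiplicativity of the seminorm'' result, and the plan is to reduce it to the Leibniz rule together with the observation that the domain constraints $\Dom(\mcF,\mbj,x)$ and the scaled derivatives $\mbbD^{b,\mbj}_{\mcF}$ are both compatible with products. First I would unfold the definition: to bound $\|fg\|_{\mcF,\mbj}(x)$ I must bound $\big|(\mbbD^{b,\mbj}_{\mcF}(fg))(x \sqcup y)\big|$ uniformly over $b \in \widetilde{\mathrm{Der}}(\mcF)$ and $y \in \Dom(\mcF,\mbj,x)$. Since $\mbbD^{b,\mbj}_{\mcF} = \prod_{S \in \mcF} 2^{-\exte^{\mbj}(S)|b_S|_{\s}} D^{b_S}$ is a constant-coefficient differential operator (with the scaling factors pulled out front), the ordinary multivariate Leibniz rule gives
\[
D^{b}(fg) = \sum_{c \le b} \binom{b}{c} (D^{c}f)(D^{b-c}g)\;,
\]
and the scaling factor $2^{-\exte^{\mbj}(S)|b_S|_{\s}}$ splits as $2^{-\exte^{\mbj}(S)|c_S|_{\s}} \cdot 2^{-\exte^{\mbj}(S)|(b-c)_S|_{\s}}$ for each $S$, so that $\mbbD^{b,\mbj}_{\mcF}(fg) = \sum_{c \le b} \binom{b}{c} (\mbbD^{c,\mbj}_{\mcF}f)(\mbbD^{b-c,\mbj}_{\mcF}g)$.

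The key point, which I would verify next, is that $\widetilde{\mathrm{Der}}(\mcF)$ is closed under the decomposition $b \mapsto (c, b-c)$ in the sense that both $c$ and $b-c$ lie in (a set controlled by) $\widetilde{\mathrm{Der}}(\mcF)$ whenever $c \le b$. Looking at the definition \eqref{forest of derivatives}, an element of $\widetilde{\mathrm{Der}}(\mcF)$ is a sum $\sum_{S \in \mcG} k_S + \sum_{B} k_B$ with $k_S \in \overline{\mathrm{Der}}(S)$ (which by definition is $\mathrm{Der}(S) \cup \partial_{\tilde N(S)}\mathrm{Der}(S)$, hence downward closed up to one derivative) and similarly for the $k_B$'s. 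Since $\mathrm{Der}(S)$ is downward closed (it is cut out by the single inequality $|k|_{\s} < \omega(S)$), any $c_S \le k_S$ again lies in $\overline{\mathrm{Der}}(S)$, and likewise $k_S - c_S$; the same holds for the block pieces $k_B$. Therefore both $c$ and $b - c$ belong to $\widetilde{\mathrm{Der}}(\mcF)$, and we may bound each term $\big|(\mbbD^{c,\mbj}_{\mcF}f)(x\sqcup y)\big| \le \|f\|_{\mcF,\mbj}(x)$ and $\big|(\mbbD^{b-c,\mbj}_{\mcF}g)(x\sqcup y)\big| \le \|g\|_{\mcF,\mbj}(x)$ since $y \in \Dom(\mcF,\mbj,x)$. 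The finitely many binomial coefficients, whose number depends only on the fixed tree $\sT$ and the homogeneities, get absorbed into the implicit constant, giving $\|fg\|_{\mcF,\mbj}(x) \lesssim \|f\|_{\mcF,\mbj}(x)\|g\|_{\mcF,\mbj}(x)$.

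For the second assertion, if $g \in \mcb{C}_{\tilde N(\mcF)^{c}}$ then $g$ depends only on the variables $x$ and not on $y$, so $D^{b-c}g$ vanishes unless $b - c$ is supported on $\tilde N(\mcF)^{c}$; but $b$, $c$ are supported on $\tilde N(\mcF)$ (being built from $\overline{\mathrm{Der}}(S)$'s, each supported on $\tilde N(S)$), so the only surviving term in the Leibniz expansion is $c = b$, $b - c = 0$, yielding $\mbbD^{b,\mbj}_{\mcF}(fg) = (\mbbD^{b,\mbj}_{\mcF}f)\cdot g$, and hence $\|fg\|_{\mcF,\mbj}(x) = \|f\|_{\mcF,\mbj}(x)\,g(x)$ by taking the supremum over $b$ and $y$ (with $g(x)$ factoring out, using positivity of $g$ if one wants strict equality of the nonnegative quantities — though in practice $g$ here will be a product of kernels and is treated in absolute value, so one should read the equality as holding up to $|g(x)|$). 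The only mild subtlety I anticipate is bookkeeping around the boundary pieces $\partial_{\tilde N(S)}\mathrm{Der}(S)$ — one must check that $c_S$ or $k_S - c_S$ landing in $\partial_{\tilde N(S)}\mathrm{Der}(S)$ rather than $\mathrm{Der}(S)$ still keeps the sum inside $\widetilde{\mathrm{Der}}(\mcF)$, which it does since $\overline{\mathrm{Der}}(S)$ already includes that boundary layer and we never need more than one extra derivative per node; this is the routine part and I would dispatch it with a one-line remark rather than a case analysis.
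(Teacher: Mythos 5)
Your proposal is correct and takes the same route as the paper, whose own proof simply invokes Leibniz's rule and leaves the rest to the reader; you supply the verification that was omitted. In particular, the closure argument is sound: $\overline{\mathrm{Der}}(S)$ can equivalently be described as $\{k : k - j \in \mathrm{Der}(S) \text{ for some } j \text{ with } \|j\| \le 1, \, 0 \le j \le k\}$, and a short case analysis (depending on whether $c \le k - j$ or not) shows this is downward closed even under anisotropic scalings, so your splitting of $b = c + (b-c)$ does stay inside $\widetilde{\mathrm{Der}}(\mcF)$. Your parenthetical observation that the second identity should really read $\|f\|_{\mcF,\mbj}(x)\,|g(x)|$ for strict equality of nonnegative quantities is also a fair catch — the paper's wording is slightly loose, and in every application $g$ is taken in absolute value anyway.
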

\begin{proof} The first claim follows from Leibniz's Rule and the second is immediate
from the definitions.
\end{proof}
 
We use a shorthand for the kernel norms of \eqref{def: singular kernel norm} by writing, 
for any subset of edges $E \subset K(\sT)$,
\[
\|E\|_{\mft}
\eqdef
\prod_{e \in E}
\|K_{\mft(e)}\|_{|\mft(e)|_{\s},2 |\s| \times  |N(\sT)|}\;.
\]
\begin{lemma}\label{lem: kernel bound and support}
Let $S \in \mcB$ and $ \mcF \eqdef C_{\mcB}(S)$. 
Then uniform in $\mbk \in \N^{\CE'}$ with $\CE' \supset \CE^{\inte}_{\mcB}(S)$, $x \in (\R^d)^{\tilde{N}(\mcF)^{c}}$, and any multi-index $p$ supported on $e_{\ch}[K^{\partial}_{\mcB}(S)]$
with $|p(u)|_{\s} \le |\s|$ for every $u$ in that node set, one has
\begin{equ}[e:boundKernel]
\Big\|
D^{p}
\ke{K^{\partial}_{\mcB}(S)}{0,\mbk}
\Big\|_{\mcF,\mbk}(x)
\lesssim 
\|K^{\partial}_{\mcB}(S)\|_{\mft}
\cdot
\prod_{e \in K^{\partial}_{\mcB}(S)}
2^{
(\mfh(e) + |p(e_{\ch})|_{\s}) k_{e}
}
\;.
\end{equ}
Furthermore, the left hand side vanishes unless, for all $T \in \mcF$ and 
$e \in K^{\partial}_{T}(S)$, one has $\link{x_{e_{\ch}}}{x_{\rho_{T}}}{k_{e}}$.
\end{lemma}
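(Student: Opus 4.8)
\textbf{Proof plan for Lemma~\ref{lem: kernel bound and support}.}

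The plan is to unfold the definition of the seminorm $\|\cdot\|_{\mcF,\mbk}$ from Definition~\ref{seminorm def} and reduce the bound to a pointwise estimate on the product kernel $D^p \ke{K^{\partial}_{\mcB}(S)}{0,\mbk}$ and each of its $\mbbD^{b,\mbk}_{\mcF}$-derivatives, evaluated over $y \in \Dom(\mcF,\mbk,x)$. Recall that $\ke{K^{\partial}_{\mcB}(S)}{0,\mbk}(z) = \prod_{e \in K^{\partial}_{\mcB}(S)} D^{\se(e)}K_{\mft(e)}(z_{e_{\p}} - z_{e_{\ch}}) \cdot \Psi^{e}_{\mbk}(z_{e_{\p}} - z_{e_{\ch}})$, so it factorizes over edges $e \in K^{\partial}_{\mcB}(S)$. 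By the Leibniz rule (Lemma~\ref{lem: productrule}), applying $D^p$ and then the constant-coefficient operator $\mbbD^{b,\mbk}_{\mcF}$ distributes over this product up to combinatorial constants, so it suffices to estimate a single factor $D^{q}\bigl[ D^{\se(e)}K_{\mft(e)}(\cdot) \Psi^e_{\mbk}(\cdot)\bigr]$ at argument $z_{e_{\p}} - z_{e_{\ch}}$, where $q$ collects the relevant components of $p$ and of the multi-index $b_T$ (for the tree $T \in \mcF$ with $e \in K^{\partial}_T(S)$) acting on $e_{\ch} \in \tilde N(T)$; note $e_{\p} \notin \tilde N(\mcF)$ since $e \in K^{\partial}_{\mcB}(S)$ means $e_{\p} \in N(S) \setminus N(C_\mcB(S))$, so only $e_{\ch}$ is an integration variable hit by $\mbbD_{\mcF}$.

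Next I would invoke the localization of the slice $\Psi^e_{\mbk}$: it is supported on $3/8 \cdot 2^{-k_e} \le |z_{e_{\p}} - z_{e_{\ch}}| \le 2^{-k_e}$ (with the obvious modification when $k_e = 0$), which immediately yields the support statement $\link{x_{e_{\ch}}}{x_{\rho_T}}{k_e}$ once one observes that on $\Dom(\mcF,\mbk,x)$ the variable $y_{e_{\p}} = x_{e_{\p}} = x_{\rho_T}$ (the edge $e$ emanates downward from a node of $S$ that is also the root $\rho_T$ of $T \in C_\mcB(S)$, or rather from a node identified with it, so $x_{e_{\p}}$ is indeed $x_{\rho_T}$). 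On this annular region, the singular-kernel bound $\|K_{\mft(e)}\|_{|\mft(e)|_{\s},m} < \infty$ from Definition~\ref{def: kernel type map} with $m = 2|\s| |N(\sT)|$ gives, using \eqref{def: singular kernel norm}, that $|D^{q+\se(e)}K_{\mft(e)}(w)| \lesssim \|K_{\mft(e)}\|_{|\mft(e)|_\s,m} |w|^{|\mft(e)|_\s - |\s| - |q|_\s - |\se(e)|_\s}$ whenever $|q+\se(e)|_\s \le m$, which holds because $|p(e_\ch)|_\s \le |\s|$, $|b_T|_\s$ is bounded by $\sum_{T' \le S} \omega(T')$ hence by a quantity controlled by $|N(\sT)||\s|$ as one checks from $\widetilde{\mathrm{Der}}$, and there are at most $|N(\sT)|$ such derivative orders to accumulate; on the annulus $|w| \sim 2^{-k_e}$ this is $\lesssim \|K_{\mft(e)}\|_{|\mft(e)|_\s,m} \, 2^{k_e(|\s| - |\mft(e)|_\s + |\se(e)|_\s + |q|_\s)} = \|K_{\mft(e)}\|_{|\mft(e)|_\s,m}\, 2^{k_e(\mfh(e)+|q|_\s)}$ by \eqref{equ: homogeneity map}. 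The derivatives of $\Psi^e_{\mbk}$ contribute only harmless powers of $2^{k_e}$ already absorbed in the same estimate, since each derivative of $\Psi^{(k)}$ produces a factor $\lesssim 2^{k}$ on its support. Finally, the normalization in $\mbbD^{b,\mbk}_{\mcF}$ divides by $2^{\exte^{\mbk}(T)|b_T|_\s}$; for safe trees $T \in \mcF \cap \mcS$ one has $|b_T|$ forced to vanish by the collapsing constraint in $\Dom$ (the derivative order $b_T$ acts only along $\tilde N(T)$, all of which are collapsed, but the support constraint $\link{x_{e_\ch}}{x_{\rho_T}}{k_e}$ together with $e_\ch \in \tilde N(T)$ forces $k_e = \exte$, and the combinatorial bookkeeping of $\exte^{\mbk}(T) \le k_e$ makes the ratio $2^{(|q|_\s - |b_T|_\s \cdot \mathbf{1}_{\text{from norm}})k_e}$ bounded by $2^{|p(e_\ch)|_\s k_e}$), so that after collecting, each edge factor is $\lesssim \|K_{\mft(e)}\|_{|\mft(e)|_\s,m} 2^{(\mfh(e)+|p(e_\ch)|_\s)k_e}$, which is exactly \eqref{e:boundKernel} upon multiplying over $e \in K^{\partial}_{\mcB}(S)$ and recalling $\|K^{\partial}_{\mcB}(S)\|_{\mft} = \prod_e \|K_{\mft(e)}\|_{|\mft(e)|_\s,m}$.

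The main obstacle I anticipate is the careful matching of the external scales: one must verify that on $\Dom(\mcF,\mbk,x)$ the scale $k_e$ appearing in the slice $\Psi^e_{\mbk}$ of an edge $e \in K^{\partial}_{T}(S)$ really is controlled by $\exte^{\mbk}(T)$ (so that the division by $2^{\exte^{\mbk}(T)|b_T|_\s}$ genuinely cancels the growth produced by differentiating along $\tilde N(T)$), and that the distinction between the safe case ($T \in \mcF \cap \mcS$, where $\tilde N(T)$ is collapsed and there is nothing to control) and the dangerous case ($T \in \mcF \cap \mcD$, where $y_v$ is only within distance $\lesssim 2^{-\exte^{\mbk}(T)}$ of $x_{\rho_T}$) is handled uniformly. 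This is essentially bookkeeping with the definitions of $\mcE^{\inte}_{\mcB}$, $\mcE^{\exte}_{\mcB}$ and $\Dom$, but it is where an error could creep in; everything else is a routine application of the singular-kernel bound \eqref{def: singular kernel norm}, the compact support of the $\Psi^{(k)}$, and the Leibniz rule.
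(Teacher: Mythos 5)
Your high-level plan — unfold the seminorm, factorize over edges via Lemma~\ref{lem: productrule}, use the slice support to localize to an annulus, then invoke the singular-kernel bound \eqref{def: singular kernel norm} and cancel the scale factors — is the same strategy the paper uses. However, there is a genuine gap, and it is not a typo: you have the roles of $e_{\p}$ and $e_{\ch}$ reversed, and this error propagates through the whole argument.

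For $e \in K^{\partial}_{T}(S) = K^{\downarrow}(T) \cap K(S)$ with $T \in C_{\mcB}(S)$, the definition of $K^{\downarrow}(T)$ gives $e_{\p} \in N(T)$ and $e_{\ch} \notin N(T)$. So it is $e_{\p}$, not $e_{\ch}$, that sits in $\tilde{N}(T) \subset \tilde{N}(\mcF)$ (when $e_{\p} \neq \rho_T$) and is therefore hit by $\mbbD^{b,\mbk}_{\mcF}$; conversely $e_{\ch}$ lies in $\tilde N(S) \setminus \tilde N(T)$ (so it is a frozen variable of the seminorm $\|\cdot\|_{\mcF,\mbk}(x)$), which is exactly why the exterior derivative $p$, supported on $e_{\ch}[K^{\partial}_{\mcB}(S)]$, acts on frozen variables. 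Your parenthetical ``note $e_{\p} \notin \tilde N(\mcF)$ \dots so only $e_{\ch}$ is an integration variable'' is backwards, and the later claim $e_{\ch} \in \tilde N(T)$ is false. (You also implicitly contradict yourself by writing $y_{e_{\p}}$, which cannot exist if $e_{\p}$ is not an integration variable.)

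Because of this, two of your key steps do not hold. First, your argument for the support statement relies on ``$x_{e_\p}=x_{\rho_T}$'', but exact equality holds only for safe trees $T\in\mcS$ after collapsing; for $T \in \mcD$ the constraint in $\Dom(\mcF,\mbk,x)$ is only $\ulink{y_{e_\p}}{x_{\rho_T}}{\exte^{\mbk}(T)}$. The correct route is to combine the annulus support of $\ke{\{e\}}{0,\mbk}$, which forces $\link{x_{e_\ch}}{y_{e_\p}}{k_e}$, with this $\Dom$ constraint and the scale inequality $\exte^{\mbk}(T)\ge k_e$ to conclude $\link{x_{e_\ch}}{x_{\rho_T}}{k_e}$ by the triangle inequality. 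Second, the claim ``$|b_T|$ forced to vanish'' is false — $b$ ranges over $\widetilde{\mathrm{Der}}(\mcF)$ regardless of whether $T$ is safe or dangerous — and you write $\exte^{\mbk}(T)\le k_e$, which is the wrong direction. The inequality you actually need is $\exte^{\mbk}(T)\ge k_e$, which holds because $K^{\partial}_T(S)\subset\CE^{\exte}_{\mcB}(T)$ and $\exte^{\mbk}(T)$ is a maximum over that set; it is precisely this inequality that lets the normalization $2^{-|b|_\s\exte^{\mbk}(T)}$ in $\mbbD^{b,\mbk}_{\mcF}$ absorb the growth $2^{|b|_\s k_e}$ produced by differentiating the kernel on the scale-$k_e$ annulus.
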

\begin{proof}
When $\mcF = \emptyset$ the statement is an immediate consequence of the definition of \eqref{def: singular kernel norm} so we turn to the case of $\mcF \not = \emptyset$
Using Lemma~\ref{lem: productrule} it suffices to show that for each $T \in \mcF$, $e \in K^{\partial}_{T}(S)$
\begin{equ}[e:wantedBound]
\big\|
D^{p'}
\ke{\{e\}}{\mfe,\mbk}
\big\|_{T,\mbk}(x)
\lesssim
\mathbbm{1}\{
\link{x_{e_{\ch}}}{x_{\rho_{T}}}{k_{e}}
\}
\|K_{\mft(e)}\|_{\mft(e)}
2^{(\mfh(e) + |p'|_{\s} + |j(e_{\ch})|_{\s}) k_{e}}\;,
\end{equ}
where the multi-index $p'$ is supported on $e_{\ch}$ and is bounded as in the assumption. 
By definition, we have 
\begin{equ}[e:boundSingle]
\big\|
D^{p'}
\ke{\{e\}}{0,\mbk}
\big\|_{{T},\mbk}(x)
= 
\sup_{
\substack{
b \in \widetilde{\mathrm{Der}}(\set{T})\\
y \in \Dom(\set{T},\mbk,x)
}
}
2^{-|b|_{\s}\exte^{\mbk}(T)}
\big|
\big(
D^{b + p'}\ke{\{e\}}{0,\mbk}
\big)
(x_{e_{\ch}} \sqcup y_{e_{\p}})
\big|\;.
\end{equ}
Note now that the bound $\exte^{\mbk}(T) \ge k_{e}$ holds by the definition
of $\exte^{\mbk}(T)$ and the fact that $e \in K^{\partial}_{T}(S) \subset K_\CB^{\exte}(T)$, so that 
\begin{equs}
\big|
D^{b + p'}\ke{\{e\}}{0,\mbk}
(\cdot)
\big|
&\lesssim
2^{(|b|_{\s} + |p'|_{\s} + \mfh(e))k_{e}}\|K_{\mft(e)}\|_{\mft(e)}\\
&\le
2^{ (|p'|_{\s} + \mfh(e))k_{e} +|b|_{\s}\exte^{\mbk}(T)}\|K_{\mft(e)}\|_{\mft(e)}
\end{equs} 
yielding
\begin{equ}
\big\|
\ke{\{e\}}{0,\mbk}
\big\|_{{T},\mbk}(x)
\lesssim 
2^{ (\mfh(e) + |p'|_{\s} )k_{e}}\|K_{\mft(e)}\|_{\mft(e)}\;.
\end{equ}
It remains to show that all points $x$ in the support of the left hand side of \eqref{e:boundKernel} 
satisfy $\link{x_{e_{\ch}}}{x_{\rho_{T}}}{k_{e}}$.
The support property of $\ke{\{e\}}{0,\mbk}$ enforces $\link{x_{e_{\ch}}}{y_{e_{\p}}}{k_{e}}$
for all $y$'s over which the supremum in our seminorm is taken.
On the other hand the condition $y \in \Dom(\set{T},\mbk,x)$ forces
$\ulink{y_{e_{\p}}}{x_{\rho_{T}}}{\exte^{\mbk}(T)}$, so that the required relation 
follows from the triangle inequality, thus completing the proof.
\end{proof}
\begin{lemma}\label{lem: renorm factors}
Let $S \in \mcB$ and $ \mcF \eqdef C_{\mcB}(S)$. 
Then, uniform in $\mbj \in \N^{\CE'}$ with $\CE' \supset \CE^{\exte}_{\mcB}(S)$, $\mbk \in \mathring{\mathcal{N}}_{S}(\mbj)$, $m \in \mathrm{Der}(\pi,S)$, and $x \in (\R^{d})^{\tilde{N}(\mcF)^{c}}$ satisfying the constraints 
\begin{equs}\label{domainconstraint}
\ulink{x_{e_{\p}}}{x_{e_{\ch}}}{k_{e}} 
&\,\textnormal{for all $e \in \mathring{K}_{\mcB}(S)$,}\\
\ulink{x_{e_{\ch}}}{x_{\rho_{T}}}{k_{e}}
&\,\textnormal{for all $T \in \mcF$ and $e \in K^{\partial}_{T}(S)$,}
\end{equs}
one has the bound
\begin{equ}\label{eq: renorm factor}
\big\| D^{m} \mathscr{Y}^{\#}_{\mbbM,S} \phi 
\big\|_{\mcF, \mbk}( x)
\lesssim 
\|\phi\|_{S,\mbj}(x_{\tilde{N}(S)^{c}})
\cdot
2^{\bar{\omega}^{\#}(S)[\exte^{\mbj}(S) - \inte^{\mbk}(S)]
+
|m|_{\s}\inte^{\mbk}(S)}\;,
\end{equ}
where
\begin{equ}\label{def: barred omega}
\bar{\omega}^{\#}(S)
\eqdef
\begin{cases}
\bar{\omega}(S) 
& 
\textnormal{ if } 
S \in \mcS,\\
\bar{\omega}(S) + 1 
& \textnormal{ if }S \in \mcD\;.
\end{cases}
\end{equ}
\end{lemma}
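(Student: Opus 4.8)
The strategy is to unfold the recursive definition \eqref{genvert def} of $\hat H^{\mbj}_S$ layer by layer, turning it into an explicit expansion of the shape \eqref{eq: H expansion} (adapted to the single-slice, partially-summed setting), and then to bound each factor that appears. Concretely, I would first treat the ``leaf'' part of the operator: the factor $\mathscr{Y}^{\#}_{\mbbM,S}\phi$ against which $\hat H^{\mbj}_{C_{\mcB}(S)}$ is composed. By the generalized Taylor remainder estimate of \cite[Prop.~A.1]{Regularity}, together with the definition of $\mathscr{Y}_S$ and of the seminorm $\|\cdot\|_{S,\mbj}$, one gets that $D^m \mathscr{Y}^{\#}_{\mbbM,S}\phi$, restricted to the relevant domain, is controlled by $\|\phi\|_{S,\mbj}$ times a power of $2$ whose exponent records (i) the gain from the Taylor subtraction when $S\in\mcS$ is collapsed (this produces the $\bar\omega(S)$), (ii) the extra order-$1$ gain from the $(\mathrm{Id}-\mathscr{Y}_S)$ remainder when $S\in\mcD$ (the $+1$ in \eqref{def: barred omega}), and (iii) the $2^{|m|_\s \inte^{\mbk}(S)}$ factor from differentiating once the internal variables of $S$ have been brought to scale $\inte^{\mbk}(S)$. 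The fact that the exponent is $\bar\omega^\#(S)[\exte^{\mbj}(S)-\inte^{\mbk}(S)]$, i.e.\ the \emph{floored} homogeneity rather than $\omega(S)$, is exactly what allows the later geometric summation over $\inte^{\mbk}(S)\le\exte^{\mbj}(S)$ (safe) or $\inte^{\mbk}(S)>\exte^{\mbj}(S)$ (dangerous) to converge.

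The second ingredient is to check that the domain constraints \eqref{domainconstraint} on the free variables $x_{\tilde N(\mcF)^c}$, combined with the definition of $\Dom(S,\mbj,x_{\tilde N(S)^c})$, are compatible: one needs to see that a point $y\in(\R^d)^{\tilde N(S)}$ over which the $\|\cdot\|_{S,\mbj}$ supremum runs, glued with the given $x$ satisfying \eqref{domainconstraint}, lands in a configuration for which the seminorm is meaningful, and conversely that the ``collapsed'' configuration $\Coll_S(\cdot)$ (for $S\in\mcS$) or the $2^{-\exte^{\mbj}(S)}$-ball configuration (for $S\in\mcD$) is reached. Here I would invoke Lemma~\ref{lem: productrule} to split the seminorm of the product $\ke{K^{\partial}_{\mcB}(S)}{0,\mbk}\cdot[\mathscr{Y}^{\#}_{\mbbM,S}\phi]$ into the kernel part, already handled by Lemma~\ref{lem: kernel bound and support}, and the $\phi$ part handled as above. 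The proof of \eqref{eq: renorm factor} itself should then be an induction on $\mathrm{depth}(C_{\mcB}(S))$ (equivalently, on the structure of the nested renormalizations inside $S$): the base case is $C_{\mcB}(S)=\emptyset$, where $\hat H^{\mbk}_\emptyset=\mathrm{Id}$ and the bound is a direct application of the Taylor estimate to $\mathscr{Y}^{\#}_{\mbbM,S}\phi$; the inductive step pushes $D^m$ through the composition, using that $\widetilde{\mathrm{Der}}(\mcF)$ and $\overline{\mathrm{Der}}(\pi,S)$ are designed precisely so that the derivative multi-indices generated at each level stay within the seminorm's control.

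The bookkeeping of the \emph{second-cumulant} renormalizations ($B\in R(\pi)$ with $B\subset L_{\mcB}(S)$) runs in parallel: for each such $B$ the operator $\mathscr{Y}_B$ of \eqref{def: fict renorm} collapses $B$ onto $(-,B)$ with a Taylor remainder of order $\fict(B)$, and the constraint $(+,B)\neq\rho_S$ guarantees that the distinguished vertex of $B$ is genuinely integrated out, so the same power-counting gain applies; these contributions get absorbed into the $m\in\overline{\mathrm{Der}}(\pi,S)$ already present in the statement, and into the $|m|_\s\inte^{\mbk}(S)$ term.

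\textbf{Main obstacle.} I expect the hard point to be not the power-counting arithmetic but the careful verification that, after pushing the outer derivative $D^m$ and the collapse/remainder operators through the nested composition $\hat H^{\mbk}_{C_{\mcB}(S)}$, every intermediate factor still has its arguments confined to the domains over which Lemmas~\ref{lem: kernel bound and support} and~\ref{lem: productrule} and the Taylor estimate were stated; in particular one must track that the scale $\inte^{\mbk}(S)$ appearing in the exponent is indeed $\min\{k_e : e\in\mcE^{\inte}_{\mcB}(S)\}$ and that all the $k_e$ for $e\in\mathring K_{\mcB}(S)$ and $e\in K^{\partial}_{\mcB}(S)$ are $\ge\inte^{\mbk}(S)$, which is what lets the separate kernel bounds $2^{\mfh(e)k_e}$ be collected into a single clean exponent. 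Managing this interplay of nested supports and scales — rather than any single estimate — is the delicate step, and it is where the definitions of $\CE^{\inte}_{\mcB}$, $\CE^{\exte}_{\mcB}$, and the seminorm domains all have to lock together.
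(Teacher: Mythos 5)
You have correctly identified the analytical ingredients — the Taylor remainder estimate of \cite[Prop.~A.1]{Regularity}, the source of the $\bar\omega(S)$ versus $\bar\omega(S)+1$ exponents (Taylor subtraction when safe, remainder when dangerous), the $|m|_{\s}\inte^{\mbk}(S)$ term from the extra derivative, and the fact that $\exte^{\mbk}(T)\ge\inte^{\mbk}(S)$ for $T\in\mcF$. These are exactly the ingredients the paper uses.

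However, the scaffolding you build around them is misconceived. Lemma~\ref{lem: renorm factors} is a bound on the seminorm $\|D^{m}\mathscr{Y}^{\#}_{\mbbM,S}\phi\|_{\mcF,\mbk}(x)$ and nowhere involves the operators $\hat H^{\mbj}_{S}$ or $\hat H^{\mbk}_{C_{\mcB}(S)}$; there is nothing to ``unfold layer by layer,'' and there is no induction on $\mathrm{depth}(C_{\mcB}(S))$. You appear to be conflating this lemma with Lemma~\ref{lem: genvertbd}, where $\hat H$ does appear and where one does induct on $\mathrm{depth}_{\mcB}$. Here, the nesting is already encoded in the definition of the seminorm $\|\cdot\|_{\mcF,\mbk}$ (through the supremum over $b\in\widetilde{\mathrm{Der}}(\mcF)$ and over $y\in\Dom(\mcF,\mbk,x)$), and the proof is a direct, case-split computation: for $S\in\mcS$ a term-by-term estimate where $(D^{p+m}\phi)(\Coll_{S}(x))$ is pulled out of the seminorm because it does not depend on the $\tilde N(\mcF)$ variables and $\|\powroot{\tilde{N}(S)}{p}{\rho_{S}}\|_{\mcF,\mbk}(x)\lesssim 2^{-|p|_{\s}\inte^{\mbk}(S)}$; for $S\in\mcD$ a further split between $|b+m|_{\s}>\bar\omega(S)$ (where $\mathscr{Y}_{S}$ annihilates and one bounds $D^{b+m}\phi$ directly) and $|b+m|_{\s}\le\bar\omega(S)$ (where one applies the Taylor remainder estimate). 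Your proposed invocation of Lemma~\ref{lem: productrule} to split off the factor $\ke{K^{\partial}_{\mcB}(S)}{0,\mbk}$ likewise belongs to Lemma~\ref{lem: genvertbd}, not here. The ``main obstacle'' you flag — tracking arguments through a nested composition $\hat H^{\mbk}_{C_{\mcB}(S)}$ — therefore does not arise in this proof; what actually needs care is the much simpler bookkeeping of which quantities depend on which variables (so that they may be pulled out of the seminorm) and the inclusion $x_{\tilde N_{\mcF}(S)}\sqcup y\in\Dom(\{S\},\mbj,x)$ given $y\in\Dom(\mcF,\mbk,x)$.
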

\begin{proof}

We first treat the case where $\mcF \not = \emptyset$. 
We fix $\mbj$ as above and $\mbk \in \mathring{\mathcal{N}}_{S}(\mbj)$.

Throughout the proof we implicitly assume that $x$ satisfies the constraints \eqref{domainconstraint}.

We first establish the desired bound when $S \in \mcS$. 
A term by term estimate gives
\begin{equ}[e:firstBoundYsphi]
\left\| 
D^{m}
\mathscr{Y}_{S} \phi
\right\|_{\mcF, \mbk}(x)
\lesssim
\max_{
p + m \in \mathrm{Der}(S)
}
\Big[
\big\|\powroot{\tilde{N}(S)}{p}{\rho_{S}} \big\|_{\mcF, \mbk}(x)
\cdot
\big|
(D^{p + m}\phi)  
(\Coll_{S}(x))
\big|
\Big]\;.
\end{equ}
Here, the factor $(D^{p + m}\phi)  
(\Coll_{S}(x))$ could be pulled out of the seminorm 
because it does not depend on any of the variables in $\tilde{N}(\mcF)$.

It is easy to see that we have the bound
\begin{equation*}
\begin{split}
\big\| 
\powroot{\tilde{N}(S)}{p}{\rho_{S}}
 \big\|_{\mcF, \mbk}
( x) 
=&
\sup
\left\{ 
\big|
\mbbD^{b,\mbk}_{\mcF}
\powroot{\tilde{N}(S)}{p}{\rho_{S}}
\big|
(x \sqcup y):
\begin{array}{l}
b \in \widetilde{\mathrm{Der}}(\mcF),\ b \le p\\
y \in 
\Dom(\mcF,\mbk,x)
\end{array}
\right\}\\
\lesssim&\ 
2^{-|p|_{\s} \inte^{\mbk}(S)}\;.
\end{split}
\end{equation*}
Here we used that our condition on $x$ forces the distances between coordinates in $N_{\mcF}(S)$ to be at most of 
order $2^{-\inte^{\mbk}(S)}$, while the constraint $y \in \Dom(\mcF,\mbk,x)$ forces the distances between 
coordinates in $N_{T}$ for 
each $T \in \mcF$ to be of order at most $2^{-\exte^{\mbk}(T)}$. Furthermore, for any $T \in \mcF$ we have
\begin{equ}\label{eq: basic fact about scale indices}
\exte^{\mbk}(T) \ge \inte^{\mbk}(S).
\end{equ} 

Inserting this into \eqref{e:firstBoundYsphi} we have, for $x \in (\R^d)^{\tilde{N}(S)^{c}}$,
\begin{equation*}
\begin{split}
\big\| 
D^{m}&
\mathscr{Y}_{S} \phi
\big\|_{\mcF, \mbk}(x)
\lesssim
\max 
\left\{ 
2^{-|p|_{\s} \inte^{\mbk}(S)}
\big|
(D^{p + m}\phi)  
(\Coll_{S}(x))  
\big|:\ 
 p+m \in \mathrm{Der}(S)
\right\}\\
&=
\max 
\left\{ 
2^{|p|_{\s}(\exte^{\mbj}(S) - \inte^{\mbk}(S)) + |m|_{\s}\exte^{\mbj}(S)}:\ 
 p+m \in \mathrm{Der}(S)
\right\}
\cdot
\|\phi\|_{S,\mbj}(x)\\
&= 2^{(\bar{\omega}(S) - |m|_{\s})(\exte^{\mbj}(S) - \inte^{\mbk}(S)) + |m|_{\s}\exte^{\mbj}(S)}
\cdot
\|\phi\|_{S,\mbj}(x)\;,
\end{split}
\end{equation*}
where in the last line we used the fact that $S \in \mcS$ implies 
that $\exte^{\mbj}(S) \ge \inte^{\mbk}(S)$.

We turn to case $S \in \mcD$ and start with the estimate
\begin{equation*}
\begin{split}
&
\left\| 
D^{m}
(1 - \mathscr{Y}_{S}) \phi 
\right\|_{\mcF,\mbk}(x)\\ 
&\le
\sup
\left\{ 
2^{-|b|_{\s} \inte^{\mbk}(S)}
\left|
D^{b + m}
\left[
(1 - \mathscr{Y}_{S}) \phi
\right]
(x \sqcup y)
\right|:\ 
\begin{array}{c}
b \in \widetilde{\mathrm{Der}}(\mcF)\\
y \in \Dom(\mcF,\mbk,x)
\end{array}
\right\}\;,
\end{split}
\end{equation*}
where we have an inequality because we used \eqref{eq: basic fact about scale indices}. 
We treat the cases 
$|b + m|_{\s} > \bar{\omega}(S)$ and $|b + m|_{\s} \le \bar{\omega}(S)$ separately. 
In the former case one has $D^{b + m}\mathscr{Y}_{S} \phi = 0$ and we arrive at the bound 
\begin{equs}
{}&\sup
\left\{ 
2^{-|b|_{\s} \inte^{\mbk}(S)}
\big|\big(
D^{b + m} 
\phi
\big)
(y \sqcup x)\big|:
\begin{array}{c}
b \in \widetilde{\mathrm{Der}}(\mcF),\ |b + m|_{\s} > \bar{\omega}(S)\\
y \in \Dom(\mcF,\mbk,x)
\end{array}
\right\}\\
&\le 
\max 
\left\{ 
2^{ |b|_{\s} [\exte^{\mbj}(S) - \inte^{\mbk}(S)]
+
|m|_{\s} \exte^{\mbj}(S) }:\ 
\begin{array}{c}
b \in \widetilde{\mathrm{Der}}(\mcF)\;,\\ 
|b + m|_{\s} > \bar{\omega}(S)
\end{array}
\right\}
\cdot
\|\phi\|_{S,\mbj}(x)\\
&\le 
2^{ (\bar{\omega}(S) + 1 - |m|_{\s}) [\exte^{\mbj}(S) - \inte^{\mbk}(S)] + |m|_{\s}\exte^{\mbj}(S)}
\cdot
\|\phi\|_{S,\mbj}(x)\;. \label{intermedwork powercounting factors}
\end{equs} 
To obtain the first inequality of \eqref{intermedwork powercounting factors} observe 
that the condition $y \in \Dom(\mcF,\mbk,x)$ implies that one also has
\begin{equation}\label{eq: domain inclusion}
x_{\tilde{N}_{\mcF}(S)} \sqcup y
\in
\Dom(\{S \},\mbj,x).
\end{equation}
For the second equality we used $\exte^{\mbj}(S) < \inte^{\mbk}(S)$.

We now treat the case of $b \in \widetilde{\mathrm{Der}}(\mcF)$ with $|b + m|_{\s} \le \bar{\omega}(S)$ and write $D^{b + m}(1 - \mathscr{Y}_{S})\phi(x)$ as
\begin{equation}\label{tayremaind}
D^{b + m}\phi(x) - 
\sum_{
\substack{p,\\
p + m \in \mathrm{Der}(S)
}
}
\frac{(x-\Coll_S(x))^p}{p!}
\big(
D^{b+p + m}\phi\big)(\Coll_S(x))\;.
\end{equation}
Viewing \eqref{tayremaind} as a Taylor remainder of order $\bar{\omega}(S) - |b|_{\s} - |m|_{\s}$ 
for $D^{b + m}\phi$, we can apply \cite[Prop.~A.1]{Regularity} followed by \eqref{eq: domain inclusion}
to get the estimate
\begin{equation*}
\begin{split}
\sup&
\left\{ 
2^{-|b|_{\s} \inte^{\mbk}(S)}
\left|
D^{b}
\left[
(1 - \mathscr{Y}_{S})\phi
\right]
(y \sqcup x)
\right|:\ 
\begin{array}{c}
b \in \widetilde{\mathrm{Der}}(\mcF),\ |b + m|_{\s} \le \bar{\omega}(S)\\
y \in \Dom(\mcF,\mbk,x)
\end{array}
\right\}\\
&\le
2^{(\bar{\omega}(S) + 1 - |m|_{\s}) [ \exte^{\mbj}(S) - \inte^{\mbk}(S)]
+
|m|_{\s} \exte^{\mbj}(S)} 
\cdot
\|\phi\|_{S,\mbj}(x)\;.
\end{split}
\end{equation*}
Combining this with \eqref{intermedwork powercounting factors} yields the required
bound in the case $S \in \CD$ and thus concludes the proof for when $C_{\mcB}(S) \not = \emptyset$. 

The case $C_{\mcB}(S) = \emptyset$ follows by the same argument (a Taylor remainder estimate when $S \in \mcD$ and a term by term estimate when $S \in \mcS$) but is strictly easier.
\end{proof}
We now state and prove the advertised bound on the operators $\hat{H}_{\mcF}^{\mbj}$ which was the motivation for the
introduction of these seminorms.
\begin{lemma}\label{lem: genvertbd}
Let $\mcF \subset \mcB$ with $\mathrm{depth}(\mcF) \le 1$. 
Then, uniform in $x \in (\R^d)^{\tilde{N}(\mcF)^{c}}$, $\mbj \in \N^{\CE'}$ with $\CE' \supset \CE^{\exte}_{\mcB}(\mcF)$, and $\phi \in \allf$, one has the bound
\begin{equ}\label{genvertbd}
\left|
\hat{H}_{\mcF}^{\mbj}
[\phi](x)
\right|
\lesssim 
\Big(
\prod_{S \in \mcF}
2^{\omega(S) \exte^{\mbj}(S)}
\|K(S)\|_{\mft}
\cdot
\| \xi \|_{|L(S)|,\c}
\Big)
\|\phi\|_{\mcF, \mbj}(x)\;.
\end{equ} 
\end{lemma}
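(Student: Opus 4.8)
The statement is an inductive bound on the nested renormalisation operator $\hat H_{\mcF}^{\mbj}$, and the natural strategy is induction on $\mathrm{depth}(\mcF)$, with the case $\mathrm{depth}(\mcF) = 0$ (i.e.\ $\mcF = \emptyset$, where $\hat H_{\mcF}^{\mbj}$ is the identity and $\|\cdot\|_{\emptyset,\mbj}$ is a pointwise absolute value) being trivial. For the inductive step, it suffices to treat a single tree $S \in \mcF$ with $\mcG \eqdef C_{\mcB}(S)$ of depth one less, since $\hat H_{\mcF}^{\mbj}$ factorises over the connected components of $\mcF$ (these operators commute, as noted after \eqref{genvert def}) and the seminorm $\|\cdot\|_{\mcF,\mbj}$ is sub-multiplicative over the corresponding variable blocks by Lemma~\ref{lem: productrule}. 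So the crux is to prove
\[
\left| \hat H_{\{S\}\cup\mcG}^{\mbj}[\phi](x) \right|
\lesssim
\Big( \prod_{T \in \{S\}\cup\mcG} 2^{\omega(T)\exte^{\mbj}(T)} \|K(T)\|_{\mft} \|\xi\|_{|L(T)|,\c} \Big)
\|\phi\|_{\{S\}\cup\mcG,\mbj}(x)\;,
\]
assuming the analogous bound for $\mcG$.

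First I would expand the definition \eqref{genvert def} of $\hat H_S^{\mbj}$: it is a sum over $\mbk \in \mathring{\mcN}_S(\mbj)$ of an integral over $\tilde N_{\mcB}(S)$ of the product of $\mathrm{Cu}^{L_{\mcB}(S)}_{\pi,\mbk}$, the kernel factor $\ke{\mathring K_{\mcB}(S)}{0,\mbk}$, and $\hat H_{\mcG}^{\mbk}$ applied to $\ke{K^{\partial}_{\mcB}(S)}{0,\mbk}\cdot[\mathscr{Y}^{\#}_{\mbbM,S}\phi]$. I would apply the inductive hypothesis to the inner $\hat H_{\mcG}^{\mbk}$ with test function $\ke{K^{\partial}_{\mcB}(S)}{0,\mbk}\cdot[\mathscr{Y}^{\#}_{\mbbM,S}\phi]$, which produces the factors $\prod_{T \in \mcG}2^{\omega(T)\exte^{\mbk}(T)}\|K(T)\|_{\mft}\|\xi\|_{|L(T)|,\c}$ times $\|\ke{K^{\partial}_{\mcB}(S)}{0,\mbk}\cdot \mathscr{Y}^{\#}_{\mbbM,S}\phi\|_{\mcG,\mbk}$. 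The latter seminorm is then controlled by combining Lemma~\ref{lem: productrule} (sub-multiplicativity), Lemma~\ref{lem: kernel bound and support} (to bound $\ke{K^{\partial}_{\mcB}(S)}{0,\mbk}$ and extract its support constraints $\link{x_{e_{\ch}}}{x_{\rho_T}}{k_e}$), and Lemma~\ref{lem: renorm factors} (to bound $D^m \mathscr{Y}^{\#}_{\mbbM,S}\phi$ in terms of $\|\phi\|_{S,\mbj}$ and a power-counting factor $2^{\bar\omega^{\#}(S)[\exte^{\mbj}(S)-\inte^{\mbk}(S)]+|m|_{\s}\inte^{\mbk}(S)}$). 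The crucial structural fact needed here is that $\widetilde{\mathrm{Der}}(\mcG)$ together with the extra derivatives generated by differentiating $\ke{K^{\partial}_{\mcB}(S)}{0,\mbk}$ and $\mathscr{Y}_S$ lands inside $\widetilde{\mathrm{Der}}(\{S\}\cup\mcG)$, so that all the derivative data ultimately gets absorbed into $\|\phi\|_{\{S\}\cup\mcG,\mbj}$.

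After plugging in these bounds, the remaining integral over $\tilde N_{\mcB}(S)$ has an integrand supported on a region where: the cumulant $\mathrm{Cu}_{B,\mbk}$ factors each confine a block of leaves of $L_{\mcB}(S)$ to scale $\approx 2^{-n_B}$, the kernels $\ke{\mathring K_{\mcB}(S)}{0,\mbk}$ confine the edges of $\mathring K_{\mcB}(S)$ to their scales, and the $K^{\partial}$-edges confine $x_{e_{\ch}}$ near $x_{\rho_T}$. Since $\mathring K_{\mcB}(S)$ spans a tree on the vertices $\tilde N_{\mcB}(S)$ (together with the already-collapsed children roots), the volume of this region is bounded by a product of $2^{-d\,\inte^{\mbk}\text{-type scales}}$, and carrying out the $\mbk$-sum over $\mathring{\mcN}_S(\mbj)$ — this is where one needs $\inte^{\mbk}(S) \le \exte^{\mbj}(S)$ for $S \in \mcS$, resp.\ $\inte^{\mbk}(S) > \exte^{\mbj}(S)$ for $S \in \mcD$ — gives a convergent geometric series whose value is of order $2^{\omega(S)\exte^{\mbj}(S)}$. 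The bookkeeping here is: for $S \in \mcS$ the power-counting exponent of the internal scale, $\omega(S) - \bar\omega^{\#}(S) = \omega(S) - \bar\omega(S) > 0$, makes the sum over $\inte^{\mbk}(S)$ from $\exte^{\mbj}(S)$ down converge (summing to $2^{\bar\omega(S)\exte^{\mbj}(S)}$ times the harvested factor $2^{(\omega(S)-\bar\omega(S))\exte^{\mbj}(S)}$), while for $S \in \mcD$ the subtracted Taylor term has boosted the vanishing order by one, so $\omega(S) - \bar\omega^{\#}(S) = \omega(S) - \bar\omega(S) - 1 < 0$, and the sum over $\inte^{\mbk}(S)$ from $\exte^{\mbj}(S)+1$ upward converges, again summing to something of order $2^{\omega(S)\exte^{\mbj}(S)}$. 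The cumulant magnitudes are controlled by $\|\xi\|_{|L(S)|,\c}$ via Definition~\ref{def: cumulantbound} (for $|B| \ge 3$ or away from the diagonal) and by the properties of the $\widetilde{\mathrm{Cu}}_{B,j}$ (items (3)--(4) of their construction) for renormalised second cumulants; multiplying the kernel norms over $\mathring K_{\mcB}(S)$ and the children gives exactly $\|K(S)\|_{\mft}$.

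The main obstacle I anticipate is the careful phase-space bookkeeping in the last step: one must verify that when the internal kernels and cumulants of $S$ are ``sliced'' at scales $\mbk$ and the children have already been collapsed (for safe children) or confined (for dangerous children), the combined power-counting exponent of $\inte^{\mbk}(S)$ — after accounting for the kernel blow-ups $\mfh(e)$, the $d$-dimensional integration volumes, the polynomial factors $\powroot{\tilde N(S)}{\sn}{\logof}$, and the $\mathscr{Y}_S$-renormalisation gain — is precisely $\omega(S)$ (safe) or $\omega(S)-1$ (dangerous, after the extra vanishing), so that the geometric sum converges in the correct direction and produces $2^{\omega(S)\exte^{\mbj}(S)}$. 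This is the combinatorial heart of the FMRS-style multiscale estimate; everything else is an application of Lemmas~\ref{lem: productrule}--\ref{lem: renorm factors} and the cumulant norm bounds.
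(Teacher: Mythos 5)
Your overall strategy (peel off a top tree $S$, apply the inductive hypothesis to $\hat H_{C_\mcB(S)}^{\mbk}$, bound the seminorm of $\ke{K^\partial_{\mcB}(S)}{0,\mbk}\cdot\mathscr{Y}^\#_{\mbbM,S}\phi$ via Lemmas~\ref{lem: productrule}--\ref{lem: renorm factors}, then integrate over $\tilde N_\mcB(S)$ and sum the internal scales) is exactly the right shape. But there are two genuine gaps.

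First, your induction variable is ill-posed: the hypothesis already requires $\mathrm{depth}(\mcF)\le 1$, and $C_\mcB(S)$ also has depth $\le 1$, so $\mathrm{depth}(\mcF)$ does not decrease along the recursion. What actually decreases is the nesting depth of $\mcF$ \emph{inside} $\mcB$, i.e.\ the quantity $\mathrm{depth}_\mcB(\mcF) = \mathrm{depth}(\mcF \sqcup \{T\in\mcB: \exists S\in\mcF,\ T<S\})$ introduced in \eqref{full forest}. The paper runs the outer induction on this quantity (and a secondary induction on $|\mcF|$ to pass from single trees to forests). This is a small fix, but as written your argument does not terminate.

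Second, and more seriously, the final step — "carrying out the $\mbk$-sum \dots\ gives a convergent geometric series" — understates what must be proved. The set $\mathring{\mcN}_S(\mbj)$ is a high-dimensional multi-index set: there is one scale per edge of $\CE^{\inte}_\mcB(S)$, and the integrand is supported on nested coalescence regions, not a single annulus. You cannot reduce this to a one-parameter geometric sum in $\inte^{\mbk}(S)$; you must decompose according to coalescence trees and verify that every proper cluster of vertices inside $S$ has strictly convergent accumulated homogeneity, i.e.\ that the total homogeneity $\varsigma$ of \eqref{def of sigma - genvertbd} is subdivergence free for $\mcN_{\go G}$ — this is Theorem~\ref{multicluster 1}, whose applicability is exactly Lemma~\ref{lem: proof of subdivfree}, and it is precisely where the super-regularity assumption on $\sT^{\sn}_{\se}$ is consumed. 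Without ruling out these sub-clusters the $\mbk$-sum can genuinely diverge. Closely related, you also need the $\mathrm{Mod}$-trick: when a pair $B$ of noises with $\fict(B)>0$ coalesces alone inside $S$ (the case $R(\mcT(\mbk))\neq\emptyset$), the raw integrand $F^{\mbk}$ is not bounded by $\varsigma$ and must be replaced by the modification $\mathring F^{\mbk}$ that incorporates the $(1-\mathscr{Y}_B)$ renormalisation of second cumulants; this uses the moment-vanishing property \eqref{eq: fict renorm int vanishes} of the slices $\widetilde{\mathrm{Cu}}_{B,j}$ to leave the integral unchanged while restoring the power-counting. Your proposal gestures at the $\widetilde{\mathrm{Cu}}_{B,j}$ bounds but does not perform this modification, so the supremum bound required by Definition~\ref{def:boundedBy} would fail for those trees.
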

\begin{proof}
Our proof uses two nested inductions. The outer one, which is also the less trivial one, 
is an induction in the quantity 
\begin{equation}\label{full forest}
\mathrm{depth}_{\mcB}(\mcF)
\eqdef 
\mathrm{depth}
\left[ 
\mcF \sqcup \{ T \in \mcB:\ \exists S \in \mcF \textnormal{ with } T < S \}
\right]\;.
\end{equation} 
The second, simpler, induction step is then in the cardinality of $\mcF$
for a fixed value of $\mathrm{depth}_{\mcB}(\mcF)$.
For both inductions we focus on the inductive steps, the base case being strictly easier
to verify.

Fix $m \ge 1$ and assume that \eqref{genvertbd} has already been proven for all forests $\mcG$ of depth 
$1$ with $\mathrm{depth}_{\mcB}(\mcG) \le m$. 
Our aim is to then prove \eqref{genvertbd} for any $\mcF$ of cardinality one, i.e.\ for 
$\mcF =\{S\} \subset \mcB$ with $\mathrm{depth}_{\mcB}(\{S\}) = m+1$. 

Fix $\mbj \eqdef \mbj_{S} \in \N^{\CE^{\exte}_{\mcB}(S)}$, our concern to control the corresponding sum over $\mbk \in \mathring{\mcN}_{S}(\mbj)$ appearing in the definition of $\hat{H}^{\mbj}_{S}$ (see \eqref{genvert def}). 
We will use Lemmas~\ref{lem: productrule},~\ref{lem: kernel bound and support},~\ref{lem: renorm factors} and then appeal to Theorem~\ref{multicluster 1} to control the integral over $\tilde{N}_{\mcB}(S)$, the application of this theorem will occur with $x \in (\R^{d})^{\tilde{N}(S)^{c}}$ fixed but our our estimates will be uniform in $x$ (however dependence on $x$ may sometimes be suppressed from the notation).

The multigraph underlying our application of Theorem~\ref{multicluster 1} is given by a quotient of the multigraph $\CE^{\inte}_{\mcB}(S)$ where, for each $T \in C_{\mcB}(S)$, one performs a contraction and identifies the collection of vertices $N(T)$ as a single equivalence class of points which we identify with $\rho_{T}$. 

Then our set of vertices is given by $\CV \eqdef N_{\mcB}(S)$ with $\CV_{0} \eqdef \tilde N_{\mcB}(S)$ (so $\rho_{S}$ serves the role of the pinned vertex) and our multigraph $\go{G}$ is given by 
\begin{equation}\label{multigraph for inductive bound}
\go{G}
\eqdef\ 
\mathring{K}_{\mcB}(S) 
\sqcup 
\left\{ 
\{u,\rho_{T}\}:\ 
T \in C_{\mcB}(S),\ 
u \in e_{\ch}[K^{\partial}_{T}(S)]
\right\}
\sqcup
\go{C}\;,
\end{equation}
where we selectively view $\mathring{K}_{\mcB}(S) \subset \CV^{(2)}$ as a set of undirected edges and $\go{C}$ is the set of contractions given by $\go{C}
\eqdef
\left\{ e \in \CE_{\pi} 
:\ 
e \subset L_{\mcB}(S)
\right\}\;.$
We define a bijection $\mfq_{S}: \CE^{\inte}_{\mcB}(S) \rightarrow \go{G}$ in the natural way: by asking that $\mfq_{S}$ maps $\CE_{\pi} \cap \CE^{\inte}_{\mcB}(S)$ onto $\go{C}$, $\mfq_{\go{S}}$ maps $\mathring{K}_{\mcB}(S)$ onto $\mathring{K}_{\mcB}(S)$, and $\mfq_{S}$ maps $K^{\partial}_{\mcB}(S)$ onto the middle set of \eqref{multigraph for inductive bound} as follows: for $(e_{\p},e_{\ch}) \in K^{\partial}_{T}(S)$ for $T \in \mcC_{\mcB}(S)$ one sets $\mfq_{S}(\{e_{\ch},e_{\p}\}) = \{e_{\ch},\rho_{T}\}$.
The map $\mfq_{S}$ induces a corresponding bijection between $\N^{\CE^{\inte}_{\mcB}(S)}$ and $\N^{\go{G}}$. 
In what follows, we abuse notation and treat this bijection as an identification.
As a start we define $\mcN_{\go{G}} \subset \N^{\go{G}}$ by setting $\mcN_{\go{G}} \eqdef  \mathring{\mcN}_{S}(\mbj)$.

For each $\mbk \in \mcN_{\go{G}}$ we define a function $F^{\mbk} \in \mcb{C}_{\CV}$ by setting 
\[
F^{\mbk}(y)
\eqdef
\mathrm{Cu}^{L_{\mcB}(S)}_{\pi,\mbk}(y) 
\ke{\mathring{K}_{\mcB}(S)}{0,\mbk}(y)
\hat{H}^{\mbk}_{C_{\mcB}(S)}
\left[
\ke{K^{\partial}_{\mcB}(S)}{0,\mbk}
\,
[\mathscr{Y}_{S,\mbbM}^{\#}\phi]
\right](x \sqcup 
y)\;.
\]
So we then have
\[
\hat{H}^{\mbj}_{S}[\phi](x \sqcup y_{\rho_{S}})
=
\sum_{ \mbk \in \mathring{\mcN}_{S}(\mbj)}
\int_{\tilde{N}_{\mcB}(S)}\back dy\ F^{\mbk}(y)\;.
\] 
Define $\hat{\mfq}_{S}:N(S) \rightarrow \CV$ via setting $\hat{\mfq}(S)(u) \eqdef \rho_{T}$ if there exists $T \in C_{\mcB}(S)$ with $u \in N(T)$, we set $\hat{\mfq}_{S}(u) \eqdef u$ otherwise.
We now define a total homogeneity $\varsigma$ on the trees of $\widehat{\CU}_{\CV}$ 
(see Definitions~\ref{def: coalescence tree} and~\ref{def:homogeneity} below)
by setting
\begin{equ}\label{def of sigma - genvertbd}
\varsigma
\eqdef 
- \bar{\omega}^{\#}(S)\delta^{\uparrow}[\CV]
+
\sum_{e \in K_{\mcB}(S)}
\mfh(e) \delta^{\uparrow}[\{e_{\ch},\hat{\mfq}_{S}(e_{\p})\}]
+
\varsigma^{\go{C}}
+
\varsigma^{R}
+
\sum_{T \in C_{\mcB}(S)} 
\omega(T)
\delta^{\uparrow}[\rho_{T}]\;,
\end{equ}
where the total homogeneity $\varsigma^{\go{C}}$ is given by
\begin{equ}\label{def of sigma - cumulants - genvert bd - new}
\varsigma^{\go{C}}
\eqdef
\sum_{
\substack{
B \in \pi\\
B \subset L_{\mcB}(S)
}
}  
\c^{B}\;.
\end{equ}
The total homogeneity $\c^{B}$ on the coalescence trees of $\widehat{\CU}_{\CV}$ is defined in Definition~\ref{def: useful cumulant bound}. 
The total homogeneity $\varsigma^{R}$ is given by setting, for each $\go{T} \in \widehat{\CU}_{\CV}$,
\begin{equs}
\varsigma^{R}_{\go{T}}
&\eqdef
\sum_{B \in R(\go{T})}
\fict(B)
\left(
\delta_{\go{T}}^{\Uparrow}[B]
-
\delta_{\go{T}}^{\uparrow}[B]
\right), \textnormal{ where}\\
R(\bo{T}) 
&\eqdef
\left\{ B \in \pi:\ 
B \subset L_{\mcB}(S),\ 
\go{L}_{B^{\uparrow}},\
= 
B,
\textnormal{ and }
\fict(B) > 0
\right\}\;.
\end{equs}
The notation $\fict(B)$ was defined in \eqref{def: fict}.
and, for $a \in \mathring{\go{T}}$ we write $\go{L}_{a}$ for the set of leaves of $\go{T}$ which are descendants of $a$.
In Lemma~\ref{lem: proof of subdivfree} we establish that $\varsigma$ is subdivergence free on $\CV$ for the set of scales $\mcN_{\go{G}}$. Taking this for granted for the moment, we check the other conditions of Theorem~\ref{multicluster 1}. 
Observe that the total homogeneity $\varsigma$ is of order $\omega(S) - \bar{\omega}(S)^{\#}$ which is negative when $S \in \mcD$ and positive when $S \in \mcS$. Additionally, one has 
\[
\mcN_{\go{G}} 
\eqdef 
\begin{cases}
\mcN_{\go{G}, > \exte^{\mbj}(S)}
& 
\textnormal{ if } S \in \mcD\;,\\
 \mcN_{\go{G}, \le \exte^{\mbj}(S)}
& 
\textnormal{ if } S \in \mcS\;.
\end{cases}
\] 
We are then done if we can exhibit a modification of $\tilde{F} = (\tilde{F}^{\mbk})_{\mbk \in \mcN_{\go{G}}} \in \mathrm{Mod}(F)$ such that $\tilde{F}$ is is bounded by $\varsigma$ in the sense of Definition~\ref{def:boundedBy} with
\begin{equ}\label{eq: inductive bound, total homogeneity bound}
\|\tilde{F}\|_{\varsigma, \mcN_{\go{G}}} \lesssim 
\left\|
\phi
\right\|_{S,\mbj}(x)
\cdot
2^{\bar{\omega}^{\#}(S)\exte^{\mbj}(S)}
\|K(S)\|_{\mft} \cdot \|\xi\|_{|L(S)|,\c}\;.
\end{equ}
For any $\mbk \in \mcN_{\go{G}}$ we will set
\[
\tilde{F}^{\mbk}
\eqdef
\begin{cases}
F^{\mbk} & \textnormal{ if } R(\mcT(\mbk)) = \emptyset\;,\\
\mathring{F}^{\mbk} & \textnormal{ if } R(\mcT(\mbk)) \not = \emptyset\;.
\end{cases}
\]
We will define $\mathring{F}^{\mbk}$ on a tree by tree basis later. It suffices to check the domain condition and desired supremum bound in each of the two cases separately.

We treat the first case. The domain condition \eqref{domain condition} is easy to check so we turn to the desired supremum bounds needed for \eqref{eq: inductive bound, total homogeneity bound}.

Fix $\go{T} \in \CU_{\CV}$ with $R(\go{T}) = \emptyset$.
We now obtain the desired uniform in $\go{s} \in \go{\mathrm{Lab}}_{\go{T}}$ and $\mbk \in \mcN_{\go{G},\tr}(\go{T},\go{s})$ estimates.
Fix appropriate $\go{s}$ and $\mbk$, we start by applying the inductive hypothesis to $\hat{H}_{C_{\mcB}(S)}^{\bullet}$ along with Lemmas~\ref{lem: productrule},~\ref{lem: kernel bound and support}, and~\ref{lem: renorm factors} which yields
\begin{equs}
\big|
F^{\mbk}
(y)
\big|
&\lesssim
\Big(
\prod_{T \in C_{\mcB}(S)}
2^{\omega(T)\exte^{\mbk}(T)}
\|K(T)\|_{\mft}
\cdot
\| \xi \|_{|L(T)|,\c}
\Big)
\cdot
\left\|
\mathscr{Y}_{S}^{\#}\phi
\right\|_{C_{\mcB}(S),\mbk}(x \sqcup y)\\
&
\cdot
\left|
\ke{\mathring{K}_{\mcB}(S)}{0,\mbk}(y \sqcup x_{\rho_{S}})
\mathrm{Cu}^{L_{\mcB}(S)}_{\pi,\mbk}(y \sqcup x_{\rho_{S}})
\right|
\cdot
\left\|
\ke{K^{\partial}_{\mcB}(S)}{0,\mbk}
\right\|_{C_{\mcB}(S),\mbk}(x \sqcup y)\\
&\lesssim 
\left\|
\phi
\right\|_{S,\mbj}(x)
\cdot
2^{\bar{\omega}^{\#}(S)\exte^{\mbj}(S)}\label{inductive bound - starting point}
\\
&
\cdot
2^{-\bar{\omega}^{\#}(S)\inte^{\mbk}(S)}
\Big(
\prod_{T \in C_{\mcB}(S)}
2^{\omega(T)\exte^{\mbk}(T)}
\|K(T)\|_{\mft}
\cdot
\| \xi \|_{|L(T)|,\c}
\Big)\\
&
\cdot
\left|
\ke{\mathring{K}_{\mcB}(S)}{0,\mbk}(y \sqcup x_{\rho_{S}})
\mathrm{Cu}^{L_{\mcB}(S)}_{\pi,\mbk}(y \sqcup x_{\rho_{S}})
\right|
\cdot
\left\|
\ke{K^{\partial}_{\mcB}(S)}{0,\mbk}
\right\|_{C_{\mcB}(S),\mbk}(x \sqcup y)\;.
\end{equs}
The desired bound follows using the supremum estimates
\begin{equs}
\left|
\ke{\mathring{K}_{\mcB}(S)}{0,\mbk}(y \sqcup x_{\rho_{S}})
\mathrm{Cu}^{L_{\mcB}(S)}_{\pi,\mbk}(y \sqcup x_{\rho_{S}})
\right|
&\lesssim
\|\mathring{K}_{\mcB}(S)\|_{\mft} 
\|\xi\|_{L_{\mcB}(S),\c}
\cdot
2^{
\langle
\varsigma^{C}_{\go{T}}
+
\varsigma_{\go{T}}[\mathring{K}_{\mcB}(S)],
\go{s}
\rangle
}
\\
\textnormal{and }
\left\|
\ke{K^{\partial}_{\mcB}(S)}{0,\mbk}
\right\|_{C_{\mcB}(S),\mbk}(x \sqcup y)
&\lesssim
\|K^{\partial}_{\mcB}(S)\|_{\mft} 
\cdot
2^{
\langle
\varsigma_{\go{T}}[\mathring{K}_{\mcB}(S)],\go{s}
\rangle
}\;.
\end{equs}
Now fix $\go{T} \in \CU_{\CV}$ with $R(\go{T}) \not = \emptyset$. 
For any $\mbk \in \mcN_{\go{G}}$ with $\mcT(\mbk) = \go{T}$ we set, for $y \in (\R^{d})^{\CV_{0}}$, 
\begin{equs}
\mathring{F}^{\mbk}(y)
\eqdef&\;
\mathrm{Cu}^{L_{\mcB}(S)}_{\pi,\mbk}(y \sqcup x_{\rho_{S}}) 
\cdot
\Big(
\prod_{B \in R(\bo{T})}
(1-\mathscr{Y}_{B})
\Big)
[G^{\mbk}]
(x \sqcup y), \textnormal{ where}\\
G^{\mbk}(x \sqcup y)
\eqdef&\;
\ke{\mathring{K}_{\mcB}(S)}{0,\mbk}(y \sqcup x_{\rho_{S}})
\hat{H}^{\mbk}_{C_{\mcB}(S)}
\Big[
\ke{K^{\partial}_{\mcB}(S)}{0,\mbk}
\,
(\mathscr{Y}_{S}^{\#}\phi)
\Big](x \sqcup y)\;,
\end{equs}
and the notation $\mathscr{Y}_{B}$ was defined in \eqref{def: fict renorm}. 
Clearly 
\[
F^{\mbk}(y)
-
\mathring{F}^{\mbk}(y)
\eqdef
\mathrm{Cu}^{L_{\mcB}(S)}_{\pi,\mbk}(y \sqcup x_{\rho_{S}})
\cdot
\sum_{ 
\substack{
\mathcal{A}
\subset
R(\go{T})\\
\mathcal{A} \not = \emptyset
}
}
\Big(
\prod_{B \in \mathcal{A}}
( - \mathscr{Y}_{B})
\Big)
[G^{\mbk}]
(x \sqcup y)\;.
\]
We claim that for each fixed $x$ and for each $\mathcal{A}$ in the above sum the integral over $y_{\CV_{0}}$ of the corresponding summand vanishes.
Fixing $\mathcal{A}$ and expanding the summand completely yields a linear combination of terms -- in each individual term all the dependence on $y_{(+,B)}$ can extracted as a factor of the form 
\[
(y_{(+,B)} - z_{(-,B)})^{n} \widetilde{\mathrm{Cu}}_{B,k_{B}}(z_{B})\;,
\]
where $z = y \sqcup x_{\mainroot}$ and $n \in \N^{d}$ with $|n|_{\s} \le \fict(B)$. 
Since $B \in R(\go{T})$ on must have $k_{B} > 0$ and therefore the above quantity vanishes when integrating $y_{(+,B)}$ by \eqref{eq: fict renorm int vanishes}. 

The required domain constraint for $\tilde{F}^{\mbk}$ is straightforward to check and for the supremum bound we observe that by the estimate of \cite[Prop.~A.1]{Regularity} one has, uniform in $\go{s}$, $\mbk \in \mcN_{\tr}(\go{T},\go{s})$, and in $y$ with $\ulink{y_{(+,B)}}{y_{(-,B)}}{k_{B}}$ for every $B \in R(\bo{T})$ -- which is required to be in the support of $\mathrm{Cu}^{L_{\mcB}(S)}_{\pi,\mbk}$ -- the bound 
\begin{equation}\label{mistake 1}
\begin{split}
&
\sup_{y}
\Big|
\Big(
\prod_{B \in R(\go{T})}
(1 - \mathscr{Y}_{B})
\Big)
[G^{\mbk}]
(x \sqcup y) 
\Big|\\
&\lesssim
\sup_{y}
\max_{
\substack{
f \in \overline{\mathrm{Der}}(R(\go{T}))\\
a + b + c = f}
}
\Big(
\prod_{B \in R(\go{T})}
2^{|f_{(+,B)}|_{\s} \go{s}(B^{\uparrow})}
\Big)
\big|
D^{a}\ke{\mathring{K}_{\mcB}(S)}{0,\mbk}(y)
\big|\\
&
\qquad
\cdot
\big|
\hat{H}^{\mbk}_{C_{\mcB}(S)}
\Big[
\big(
D^{b}
\ke{K^{\partial}_{\mcB}(S)}{0,\mbk}
\big)
[D^{c}\mathscr{Y}_{S,\mbbM}^{\#}\phi]
\Big](x \sqcup y)
\big|\;,
\end{split}
\end{equation}
where $\overline{\mathrm{Der}}(R(\go{T}))$ was defined in \eqref{sum of second cumulant derivatives} and for the multi-indices $m=  a,b,c,f$ and any $B \in \go{R}(T)$ we use the notation $m_{(+,B)}$ for the multi-index which is given by $m$ on the site $(+,B)$ but vanishes everywhere else. 
We proceed as earlier and  use our inductive hypothesis along with Lemmas~\ref{lem: kernel bound and support} and~\ref{lem: renorm factors} to get
\begin{equs}\label{mistake 2}
{}
&
\big|
\hat{H}^{\mbk}_{C_{\mcB}(S)}
\Big[
\big(
D^{b}
\ke{K^{\partial}_{\mcB}(S)}{0,\mbk}
\big)
[D^{c}\mathscr{Y}_{S,\mbbM}^{\#}\phi]
\Big](x \sqcup y)
\big|\\
&
\lesssim
\Big(
\prod_{T \in C_{\mcB}(S)}
2^{\omega(T)\exte^{\mbk}(T)}
\|K(T)\|_{\mft}
\cdot
\| \xi \|_{|L(T)|,\c}
\Big)
\cdot
\left\|
D^{c}
\mathscr{Y}_{S}^{\#}\phi
\right\|_{C_{\mcB}(S),\mbk}(x \sqcup y)\\
&
\quad
\cdot
\left\|
D^{b}
\ke{K^{\partial}_{\mcB}(S)}{0,\mbk}
\right\|_{C_{\mcB}(S),\mbk}(y)
\\
&
\lesssim
\Big(
\prod_{T \in C_{\mcB}(S)}
2^{\omega(T)\go{s}(\rho_{T}^{\uparrow})}
\|K(T)\|_{\mft}
\cdot
\| \xi \|_{|L(T)|,\c}
\Big)
\cdot
2^{\bar{\omega}^{\#}(S)\exte^{\mbj}(S)}
2^{-\bar{\omega}^{\#}(S)\go{s}(\rho_{\go{T}})}
\left\|
\phi
\right\|_{S,\mbj}(x)\\
&
\quad
\cdot
\left\|
K^{\partial}_{\mcB}(S)
\right\|_{\mft}
\cdot
2^{\big\langle \varsigma_{\go{T}}[K^{\partial}_{\mcB}(S)], \go{s} \big\rangle}
\Big(
\prod_{B \in \go{R}(\go{T})}
2^{|b_{(+,B)} + c_{(+,B)}|_{\s} \go{s}(B^{\Uparrow})}
\Big)\;.
\end{equs}
In the last inequality above the key observation is that for each $B \in R(\go{T})$, the scale $\go{s}(B^{\Uparrow})$ dominates the scale $\go{s}(\cdot)$ at which we would pay the cost of $|b_{(+,B)}|_{\s}$ or $|c_{(+,B)}|_{\s}$ with regards to Lemmas~\ref{lem: kernel bound and support} and~\ref{lem: renorm factors}. Similarly, one has the bound
\begin{equ}\label{mistake 3}
\big|
D^{a}\ke{\mathring{K}_{\mcB}(S)}{0,\mbk}(y)
\big|
\lesssim
\|\mathring{K}_{\mcB}(S)\|_{\mft}
2^{\langle \varsigma_{\go{T}}[\mathring{K}_{\mcB}(S)],\go{s} \rangle}
\prod_{B \in \go{R}(\go{T})}
2^{|a_{(+,B)}|_{\s} \go{s}(B^{\Uparrow})}
\Big)\;.
\end{equ}
Inserting \eqref{mistake 2} and \eqref{mistake 3} into \eqref{mistake 1} we get
\begin{equs}\label{mistake 4}
{}&
\sup_{y}
\Big|
\Big(
\prod_{B \in R(\go{T})}
(1 - \mathscr{Y}_{B})
\Big)
[\tilde{G}^{\mbk}]
(x \sqcup y) 
\Big|\\
&\lesssim
\|K_{\mcB}(S)\|_{\mft}
2^{\langle \varsigma_{\go{T}}[K_{\mcB}(S)],\go{s} \rangle}
\Big(
\prod_{T \in C_{\mcB}(S)}
2^{\omega(T)\go{s}(\rho_{T}^{\uparrow})}
\| \xi \|_{|L(T)|,\c}
\Big)
\cdot 2^{\bar{\omega}^{\#}(S)\exte^{\mbj}(S)}\\
&
\quad
\cdot
2^{-\bar{\omega}^{\#}(S)\go{s}(\rho_{\go{T}})}
\left\|
\phi
\right\|_{S,\mbj}(x)
\cdot
\max_{
\substack{
f \in \overline{\mathrm{Der}}(R(\go{T}))\\
a + b + c = f}
}
\Big(
\prod_{B \in R(\go{T})}
2^{|f_{(+,B)}|_{\s} [\go{s}(B^{\Uparrow})-\go{s}(B^{\uparrow})]}
\Big)
\end{equs}
The desired supremum bound is then obtained by noticing that the maximum of the last factor is obtained when $|f_{(+,B)}|_{\s} = \fict(B)$ for each $B \in R(\go{T})$ and by combining this with the earlier used supremum bound on $\mathrm{Cu}^{L_{\mcB}(S)}_{\pi,\mbk}$. 
This concludes the proof of \eqref{eq: inductive bound, total homogeneity bound}. 

We now prove the inductive step for the easier induction: let $n,m \ge 1$ and suppose that \eqref{genvertbd} has been proven for every forest $\mcG \subset \mcB$ of depth $1$ with either $\mathrm{depth}_{\mcB}(\mcG) \le n$ or $\mathrm{depth}_{\mcB}(\mcG) = n + 1$ and $|\mcG| \le m$. 

Now suppose $\mcF \subset \mcB$ is of depth $1$,  
$\mathrm{depth}_{\mcB}(\mcF) = n + 1$, and $|\mcF| = m + 1$. We prove \eqref{genvertbd} in this case. 
Fix $S \in \mcF$ and write $\mcG \eqdef \mcF \setminus \{S\}$, then for $x \in \R^{\tilde{N}(\mcF)^{c}}$,
\[
\hat{H}_{\mcF}^{\mbj}[\phi](x)
=
\hat{H}_{\mcG}^{\mbj}\left[ 
\hat{H}_{S}^{\mbj}[ 
\phi]
\right](x)\;.
\]
By applying the inductive hypothesis for $\mcG$ we have the bound
\begin{equ}
\left|
\hat{H}_{\mcF}^{\mbj}[\phi](x)
\right|
\lesssim
\Big(
\prod_{T \in \mcG}
2^{\omega(T) \exte^{\mbj}(T)}
\|K(T)\|_{\mft}
\cdot
\| \xi \|_{|L(T)|,\c}
\Big) 
\Big\|
\hat{H}_{S}^{\mbj}[ 
\phi]
\Big\|_{\mcG,\mbj}(x_{\tilde{N}(\mcF)^{c}})\;.
\end{equ}
It suffices to prove
\begin{equ}\label{claim for second induction}
\Big\|
\hat{H}_{S}^{\mbj}[ 
\phi]
\Big\|_{\mcG,\mbj}(x)
\lesssim
2^{\omega(S) \exte^{\mbj}(S)}
\|K(S)\|_{\mft}
\cdot
\| \xi \|_{|L(S)|,\c}
\cdot
\| \phi \|_{\mcF,\mbj}(x)\;.
\end{equ}
This is not hard to see since
\begin{equs}
{}&
\|
\hat{H}_{S}^{\mbj}[ 
\phi]
\|_{\mcG,\mbj}(x)\\
=\;& 
\sup
\left\{
\left|
\left(
\mbbD^{b,\mbj}_{\mcG}
\hat{H}_{S}^{\mbj}[ 
\phi]
\right)
(x,y)
\right|:\ 
\begin{array}{c}
b \in \mathrm{Der}(\mcG)
\textnormal{ and }\\
y \in \Dom(\mcG,\mbj,x) 
\end{array}
\right\}\\
=\;& 
\sup
\left\{
\left|
\hat{H}_{S}^{\mbj}[
\mbbD^{b,\mbj}_{\mcG} 
\phi]
(x,y)
\right|:\ 
\begin{array}{c}
b \in \mathrm{Der}(\mcG)
\textnormal{ and }\\
y \in \Dom(\mcG,\mbj,x) 
\end{array}
\right\}\\[1ex]
\lesssim\;
&
\sup
\left\{
\left|
\left(
\mbbD^{\bar{b},\mbj}_{\set{S}}
\mbbD^{b,\mbj}_{\mcG} 
\phi
\right)
(x,y,z)
\right|
:\ 
\begin{aligned}
&\bar{b} \in \mathrm{Der}(\set{S}),\ b \in \mathrm{Der}(\mcG),\\[-1ex]
&y_{\tilde{N}_{\mcG}} \in \Dom(\mcG,\mbj,x),\\[-1ex]
\textnormal{and}&\ z_{\tilde{N}(S)} \in \Dom(\set{S},\mbj,x)
\end{aligned}
\right\}\\
&
\qquad
\cdot
2^{\omega(S) \exte^{\mbj}(S)}
\|K(S)\|_{\mft}
\cdot
\| \xi \|_{|L(S)|,\c}\;,
\end{equs}
where we used the inductive hypothesis for $\{S\}$ in the final inequality.
\end{proof}
In the following lemma and later sections we use the following notation: for any $K(\sT)$ connected subset $A \subset N(\sT)$ we write $T(A)$ for the maximal subtree of $\sT$ with true node set $A$ and also write $\rho_{A} \eqdef \rho_{T(A)}$. 
Observe that if $e \in L(\sT)$ with $e_{\p} \in A$ then $e \in L(T(A))$.
\begin{lemma}\label{lem: proof of subdivfree} In the context of Lemma~\ref{lem: genvertbd}, the total homogeneity $\varsigma$ given by \eqref{def of sigma - genvertbd} is subdivergence free in $N_{\mcB}(S)$ for the set of scales $\mcN_{\go{G}}$.
\end{lemma}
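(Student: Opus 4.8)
The plan is to unfold the notion of subdivergence-freeness required in the hypotheses of Theorem~\ref{multicluster 1} and reduce it to a family of power-counting inequalities about subtrees of $S$, which are then supplied by the super-regularity of $\sT^{\sn}_{\se}$ (Definition~\ref{def: strong subcriticality}), the interval description of the safe-forest projection (Proposition~\ref{prop: forest proj}), and the defect structure of the fictitious renormalisation carried by $\varsigma^{R}$. Unpacked, one has to show that for every $\mbk \in \mcN_{\go{G}}$, every coalescence tree $\go{T} = \mcT(\mbk)$ compatible with $\mbk$, and every interior node $a \in \mathring{\go{T}} \setminus \{\rho_{\go{T}}\}$ whose associated cluster $A \eqdef \go{L}_{a} \subsetneq \CV$ is not a singleton, the partial total homogeneity of $\varsigma_{\go{T}}$ accumulated on the subtree of $\go{T}$ below $a$ satisfies the requisite lower bound. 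The first step is to translate such a cluster into a substructure of $\sT$: replacing each contracted root $\rho_{T} \in A$, $T \in C_{\mcB}(S)$, by the full node set $N(T)$ produces a set $\hat A \subseteq N(\sT)$ which, by the definition of the multigraph $\go{G}$ in \eqref{multigraph for inductive bound} and the fact that its kernel part is a tree on $\CV$, is a disjoint union $\hat A = N(T'_{1}) \sqcup \cdots \sqcup N(T'_{k})$ of node sets of subtrees $T'_{i}$ of $S$, the pieces being tied together only through those cumulant blocks of $\pi$ that are \emph{resolved} at $a$.

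Second, I would read off $\deg(\varsigma_{\go{T}}, a)$ directly from \eqref{def of sigma - genvertbd}. Since $a \neq \rho_{\go{T}}$, the term $-\bar\omega^{\#}(S)\,\delta^{\uparrow}[\CV]$ does not contribute below $a$, and what remains is the sum of: the kernel homogeneities $\mfh(e)$ of the edges of $K_{\mcB}(S)$ internal to $\hat A$; the weights $\omega(T)$ for the children $T \in C_{\mcB}(S)$ with $\rho_{T} \in A$; the relevant partial sums of the $\c^{B}$ (Definition~\ref{def: useful cumulant bound}); and the $\varsigma^{R}$-defects $\pm\fict(B)$. Using $\mfh(e) = |\s| - |\mft(e)|_{\s} + |\se(e)|_{\s}$ together with the elementary identity on trees relating the number of internal kernel edges to the number of internal integration vertices, this telescopes into an expression whose sign is controlled by quantities of the form $|(T'_{i})^{0}_{\se}|_{\s}$ and by the cumulant-homogeneity jumps $\mcb{j}_{L(\sT)}(L(T'_{i}))$ of \eqref{super-reg 0} — precisely the data that super-regularity constrains.

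Third comes the case analysis on the $T'_{i}$'s and the resolved blocks. If no block is resolved at $a$ and $k = 1$ with $L(T'_{1}) = \emptyset$, then $\deg(\varsigma_{\go{T}}, a) = |(T'_{1})^{0}_{\se}|_{\s}$, which is non-negative unless $T'_{1} \in \Div$; but an \emph{uncaptured} divergent $T'_{1}$ strictly inside $S$ cannot occur for $\mbk \in \mcN_{\go{G}}$ — this is where one invokes that $\mcN_{\go{G}}$ encodes $P^{\tilde\mbj}[\mcB] = \mcS$ and the interval description $(P^{\mbn})^{-1}[\mcS] = [\mcS, \mcG \sqcup \mcS] \ni \mcB$ of Proposition~\ref{prop: forest proj}, which together with the cut restriction built into $\mfM^{\mbn}$ forces such a $T'_{1}$ either to lie inside a child of $C_{\mcB}(S)$ (so $A$ properly contains $\rho_{T}$ and the weight $\omega(T)$ restores the positivity of the convergent power-counting of $T'_{1} \setminus T$) or to carry a harvested cut edge of $\cB$, whose improved kernels $\RKer, \KerHat$ in \eqref{e:kerHat} raise the effective homogeneity by $\gamma(e) - |\se(e)|_{\s} > 0$ — this last point being exactly what the compatibility of $P^{\mbn}$ and $\cG^{\mbn}$ secures. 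The remaining case, in which some cumulant block $B$ of $\pi$ is resolved at $a$, is handled by super-regularity of $\sT^{\sn}_{\se}$ in the form \eqref{super-reg 0}: its term $\mcb{j}_{L(\sT)}(\cdot)$ is built (Definition~\ref{external cumulant homogeneity jump}) exactly to dominate the worst-case homogeneity jump produced by leaves of the $T'_{i}$ forming cumulants across pieces and with leaves outside $\hat A$, and one uses the sub-additivity $\mcb{j}_{L(\sT)}(L(T'_{1}) \sqcup \cdots \sqcup L(T'_{k})) \ge \sum_{i} \mcb{j}_{L(\sT)}(L(T'_{i}))$; for second cumulants with $\fict(B) > 0$ one additionally spends the extra homogeneity delivered near the diagonal by the regularised cumulant $\widetilde{\mathrm{Cu}}_{B,\cdot}$ (the vanishing-moment property \eqref{eq: fict renorm int vanishes}, which already justified the modification $\mathring F^{\mbk}$ in the proof of Lemma~\ref{lem: genvertbd}) to absorb the $-\fict(B)$ defect of $\varsigma^{R}$.

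The step I expect to be the main obstacle is the bookkeeping in this last case: for a cluster that is a union of several subtrees glued by cumulant blocks, one must track precisely how much of each $\c^{B}$ and each $\varsigma^{R}$-defect is actually paid below the node $a$ versus at the higher nodes $B^{\uparrow}, B^{\Uparrow}$ at which block scales are set, and then check that the residual deficit at $a$ is still dominated by the super-regularity margin of the individual $T'_{i}$. I anticipate this will require an auxiliary induction on the nesting depth of resolved blocks, parallel to the nested inductions already used for Lemma~\ref{lem: genvertbd}, together with a careful matching of the pieces of each $\c^{B}$ to the coalescence-tree nodes on the path from $a$ down to $B^{\uparrow}$; a secondary technical nuisance will be translating the cluster-level notions of internal and external scale in $\go{G}$ into the forest-relative notions $\inte_{\mcS}, \exte_{\mcS}$ that appear in the definition of $\mcN_{\go{G}}$.
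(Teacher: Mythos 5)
Your overall strategy---re-expand each coalescence cluster into a union $\hat A = N(T'_1) \sqcup \cdots \sqcup N(T'_k)$ of subtrees of $S$, telescope the kernel homogeneities via $\mfh(e) = |\s| - |\mft(e)|_\s + |\se(e)|_\s$, and close the estimate by super-regularity and the scale restriction $\mathring\mcN_S$---matches the paper's approach, which introduces an explicit auxiliary map $\tilde\varsigma$ on node sets (formula \eqref{eq: summed homogeneity for genvertbd}) so that the accumulated sum below a node $a$ becomes $\tilde\varsigma(\go N(a))$, and then verifies $\tilde\varsigma(M)<0$ for $M\in\mcQ$ by a four-way case split. However, the proposal has several concrete gaps. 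First, the case you carry out in detail, $k=1$ with $L(T'_1)=\emptyset$ and no cumulant block resolved, never requires the safe-forest / scale-restriction argument: a subtree with no leaves has strictly positive homogeneity automatically, since kernel types satisfy $|\mft|_\s>0$. The place where the interval description of $P^{\mbn}$ is genuinely needed is the $K(S)$-connected case in which $T(M)$ \emph{has} leaves and is compatible with $\pi$ (the paper's final case), which you do not address. Second, you omit the ``all singletons'' case $M\subset L(S)$ entirely, where the bound follows from the cumulant constraint of Assumption~\ref{assump - noise + kernel} through Lemma~\ref{lemma: higher cumulants are fine}, not from super-regularity of $\sT^{\sn}_{\se}$.

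Third, the sub-additivity $\mcb j_{L(\sT)}(L(T'_1)\sqcup\cdots\sqcup L(T'_k))\ge\sum_i\mcb j_{L(\sT)}(L(T'_i))$ is neither used in the paper nor readily defensible ($\mcb j$ is a minimum, so the natural inequality runs the other way); the actual argument for $k\ge2$ connected pieces is the cruder bound $|(T'_i)^0_\se|_\s>-|\s|/2$ from the third bullet of super-regularity together with the gain $-(k-1)|\s|$. Fourth, Lemma~\ref{lem: proof of subdivfree} is a purely combinatorial statement about $\varsigma$, so invoking the vanishing-moment property \eqref{eq: fict renorm int vanishes} and the modification $\mathring F^{\mbk}$ is out of place here---those are the analytic inputs that later establish that $\mathring F^{\mbk}$ is \emph{bounded by} $\varsigma$ in Lemma~\ref{lem: genvertbd}, not inputs into the inequality \eqref{eq: divergence free condition}. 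What one actually needs is that the $-\fict(B)\,\delta^\uparrow[B]$ part of $\varsigma^R$ is absorbed into the cumulant power-counting bound \eqref{power counting cumulant bound}. Finally, the cut machinery ($\RKer$, $\KerHat$, $\gamma(e)$) is irrelevant to the operators $\hat H_S^{\mbj}$: they contain only plain kernels $\ke{\cdot}{\cdot}$, and the admissible cut sets $\cC$ are constrained to avoid $K(\mcB)\supset K(S)$ (since $\cC\subset\cC_{\mbbM}$), so any $T(M)\subset S$ automatically lies in $\Div_\cC$ and the exclusion of a divergent $T(M)$ follows directly from $(P^{\mbn})^{-1}[\mcS]=[\mcS,\mcS\sqcup\mcG]$ in Proposition~\ref{prop: forest proj}.
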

\begin{proof}
Fix an arbitrary $\bo{T} \in \mcU_{\mcV}$.
For any $a \in \mathring{\bo{T}}$ we define 
\begin{equ}\label{def: rexpanded set}
\go{N}(a)
\eqdef 
\bo{L}_{a}
\sqcup 
\Big( 
\bigsqcup_{
\substack{
T \in C_{\mcB}(S)\\
\rho_{T} \in \bo{L}_{a}
}
}
\tilde{N}(T)
\Big)\;.
\end{equ}
We also set 
\begin{equ}\label{def: family of re-expanded sets}
\mcQ \eqdef 
\left\{
\go{N}(a):\ 
a \in \mathring{\bo{T}} \setminus \{\rho_{\bo{T}}\}
\right\}\;.
\end{equ}
Observe that  $N(T) \not \in \mcQ$ for any $T \in C_{\mcB}(S)$ and that any node-set $M \in \mcQ$ is edge connected by $\mcE^{\inte}(S)$. 

We define a map $\tilde{\varsigma}: 2^{N(S)} \setminus \{\emptyset\} \rightarrow \R$ as follows: for $M \subset N(S)$ we set
\begin{equs}\label{eq: summed homogeneity for genvertbd}
\tilde{\varsigma}(M) 
\eqdef
& 
\sum_{
\substack{
e \in K(S)\\
e \subset M}
}
\mfh(e)
-
\Big(
\sum_{
\substack{
B \in \pi,\\
B \not \subset M
}
}
|\mft( B \cap M)|_{\s,\c,L(\sT)}
\Big)
\\
&
\enskip
-
(|M| - 1) |\s|
-
\sum_{
\substack{
B \in \pi\\
B \subset M}}
\Big[
|\mft(B)|_{\s}
+ 
\fict(B)
\mathbbm{1}
\left\{
M=B
\right\}
\Big]\;
.
\end{equs}
Observe that if $M \subset N(S)$ is $K(S)$-connected then $\tilde{\varsigma}(M) \le - |T(M)^{0}_{\se}|_{\s}$. Furthermore, it is straightforward to check that for any $a \in \mathring{\bo{T}} \setminus \{ \rho_{\bo{T}}\}$, 
\begin{equ}\label{eq: summed homogeneity equality - genvertbd}
\sum_{
b \in \bo{T}_{\ge a}
}
\varsigma_{\bo{T}}(b) 
-
(|\bo{L}_{a}| - 1)|\s| 
= 
\tilde{\varsigma}(\go{N}(a))\;.
\end{equ} 
Our desired result will follow if we show $\tilde{\varsigma}(M) < 0$ for all $M \in \mcQ$. 
We do this by splitting into various cases. 
While we are not always explicit about it, at each point in this proof we are always restricting ourselves to the complement of all the cases treated previously. 

First we assume $M \in \mcQ$ contains no $K(S)$-connected components of cardinality more than $1$. In this subcase the first sum on the RHS of \eqref{eq: summed homogeneity equality - genvertbd} vanishes and by $\mcE^{\inte}(S)$-connectivity this forces $M \subset L(S)$. 
Moreover, there must be a unique block $\bar{B} \in \pi$ with $M \subset \bar{B}$. 
Therefore $(\mft,M) \in \mfL^{\all}_{\CCum}$ and $\tilde{\varsigma}(M)$ is bounded above by the RHS of \eqref{power counting cumulant bound} which is negative by Lemma~\ref{lemma: higher cumulants are fine}.
  
Next we treat the case where $M$ is not $K(S)$ connected but has at least one component of cardinality at least $2$. 
Then, for some $k \ge 1$, 
\begin{equ}\label{connectivity decomp}
M 
= 
\Big(
\bigsqcup_{j=1}^{k} M_{j} 
\Big) \sqcup \tilde{M}\;,
\end{equ} 
where the $M_{j}$ are the $K(S)$-components of $M$ of cardinality at least $2$ and $\tilde{M} \subset L(S)$ is given by the union of the $K(S)$-components of $M$ of cardinality $1$. 

Now we treat the subcase where $k=1$ and $\tilde{M}$ is non-empty. 
By $\mcE^{\inte}(S)$-connectivity there exist $B_{1},\dots,B_{n} \in \pi$ such that $\cup_{m=1}^{n} B_{m} \supset [\tilde{M} \sqcup L(T(M_{1})]$ and, for every $m \in [n]$, one has $B_{m} \cap L(T(M_{1})) \not = \emptyset$.
Now if it is the case that for every $m$ one has $B_{m} \cap M \in \mfL_{\CCum}$ then we are done since one has
\[
\tilde{\varsigma}(M)
\le
- |T(M_{1})^{0}_{\se}|_{\s}
- |\mft(\tilde{M})|_{\s} - |\s| \cdot |\tilde{M}|
< 0
\]
where the second inequality comes from \eqref{super-reg 0}. 
On the other hand, if there exists $m \in [n]$ for which $B_{m} \cap M \not \in \mfL_{\CCum} $ then it follows that $B_{m} \cap M = B_{m} \cap L(T(M_{1})) = \{u\}$ so $|\mft(B_{m} \cap M)|_{\s,\c,L(\sT)} = 0$. Thus by adding and subtracting $|\mft(u)|_{\s}$ we get
\[
\tilde{\varsigma}(M)
\le
- \Big[ |T(M_{1})^{0}_{\se}|_{\s} 
-|\mft(\mfu)|_{\s} \Big]
- |\mft(\tilde{M})|_{\s} - |\s| \cdot |\tilde{M}|
< 0\;,
\]
where in the second inequality we used \eqref{super-reg 0} to see that the bracketed quantity is strictly positive. 
This finishes the subcase where $k =1$ and $|\tilde{M}| \not = 0$. 
We next treat the subcase where $k \ge 2$ and $|\tilde{M}|$ is arbitrary. 
Then one has 
\begin{equs}\label{eq: subdivergence - components}
\tilde{\varsigma}(M)
\le&
-(k-1) |\s| - \sum_{j=1}^{k}
|T(M_{j})^{0}_{\se}|_{\s}
- |\mft(\tilde{M})|_{\s} - |\s| \cdot |\tilde{M}|\\
\le&
-(k-1) |\s| - \sum_{j=1}^{k}
|T(M_{j})^{0}_{\se}|_{\s}
<
-(k-1)|\s| + \frac{|\s|}{2}k \le 0 \;,
\end{equs}
where in last inequality we used \eqref{super-reg 0}.

We now treat the case where $M \in \mcQ$, is $K(S)$-connected, and $T(M)$ is not compatible with $\pi$. 
It follows that there exists some $B \in \pi$ with $B \not \subset L(T(M))$ and $B \cap L(T(M)) \not = \emptyset$. 
Then one has 
\[
\tilde{\varsigma}(M)
=
-|T(M)^{0}_{\se}|_{\s} 
+
\big(
|\mft(B \cap L(T(M)))|_{\s}
-
|\mft(B \cap L(T(M)))|_{\s,\c,L(\sT)}
\big)
< 0\;,
\]
where the final inequality comes Definition~\ref{def: cumulant strong subcriticality}.

We are now left with the case that $M \in \mcQ$, $M$ is $K(S)$-connected, and $T(M)$ is compatible with $\pi$.
In this case $\tilde{\varsigma}(M) = - |T(M)^{0}_{\bar{\mfe}}|_{\s}$.  
We claim that indeed $|T(M)^{0}_{\bar{\mfe}}|_{\s} > 0$ as required since one would otherwise have $T(M) \in \Div$ and, combining this with the fact that $T(M)$ is compatible with $\mcB$ and $\pi$, this would force $T \not \in \mcQ$ by the definition of the set of scale assignments $\mathring{\mcN}_{S}$.
\end{proof}
\subsubsection{Estimates with positive renormalizations}\label{subsec: positive renorm}
To make formulas more readable we introduce the notation $\exp_{2} [ t ]
\eqdef
2^{t}$ for $t \in \R$. 
Also, for $u,v \in \tilde{N}(\mcB)^{c}$ we define $\mbj_{\mcB}(u,v)$ as in \eqref{pathfinder scale}. 
\begin{lemma}\label{lem: cut edges estimate}
Let $e \in \mathscr{B}$ and $c > 0$. 
Uniform in $\mbj \in \partial \mcN_{\mcB,\lambda}$ satisfying  
\begin{equ}\label{triangle inequality for positive renormalized}
|\mbj_{\mcB}(e_{\ch},e_{\p}) - j_{e}|,\ |\mbj_{\mcB}(e_{\p},\logof) - j_{\{e_{p},\logof\}}| < c\;,
\end{equ} 
any multi-index $k$ supported on $\{e_{\ch},e_{\p}\}$, and $x = (x_{\logof},x_{e_{\ch}},x_{\e_{\p}})$ such that $\link{x_{e_{\ch}}}{x_{\logof}}{j_{\{\logof,e_{\ch}\}}}$ and $\link{x_{e_{\p}}}{x_{\logof}}{j_{\{\logof,e_{\p}\}}}$, one has the estimate
\begin{equ}
\left|
D^{k}
\KerTilde^{\{e\}}_{0,\mbj}(x)
\right|
\lesssim
\|K_{\mft(e)}\|_{\mft(e)}
2^{
\eta(j_{e},j_{\{\logof,e_{\p}\}},j_{\{\logof,e_{\ch}\}},k_{e_{\ch}},k_{e_{\p}},e)
}\;
\end{equ}
where $\eta(j_{e},j_{\{\logof,e_{\p}\}},j_{\{\logof,e_{\ch}\}},k_{e_{\ch}},k_{e_{\p}},e)$ is given by
\begin{equ}
\begin{cases}
\gamma(e)(j_{e} - j_{\{\logof,e_{\p}\}}) + (\mfh(e) + |k_{e_{\ch}}|_{\s})j_{e}
+|k_{e_{\p}}|_{s}j_{\{\logof,e_{\p}\}}
&
\textnormal{if }e \in \cD,\\
-(\gamma(e) - 1 + |k_{e_{\p}}|_{\s})j_{\{\logof,e_{\p}\}}
+
(|k_{e_{\ch}}|_{\s} + \mfh(e) + \gamma(e)-1)j_{\{\logof,e_{\ch}\}}
&
\textnormal{if }e \in \cS\;.
\end{cases}
\end{equ}
Furthermore, the LHS vanishes unless the condition $\link{x_{e_{\ch}}}{x_{e_{\p}}}{j_{e}}$ holds. 
\end{lemma}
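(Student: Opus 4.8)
The statement is a single-slice kernel estimate for the ``twisted'' kernel $\KerTilde^{\{e\}}_{0,\mbj}$, which by definition equals $\RKer^{\{e\}}_{0,\mbj}$ when $e \in \cS$ and $\KerHat^{\{e\}}_{0,\mbj} = (\ke{\{e\}}{0} + \RKer^{\{e\}}_{0})\Psi^{\{e\}}_{\mbj}$ when $e \in \cD$. So the plan is to treat the two cases separately, and in each case to unpack the definitions of $\RKer$, $\ke{}{}$, the multiscale cutoffs $\Psi^{\{e\}}_{\mbn}$ and $\Psi^{\{\logof,e_\p\}}_{\mbn}$, and then apply the singular kernel bound $\|K_{\mft(e)}\|_{|\mft(e)|_\s,m}$ together with the elementary Taylor-remainder estimate of \cite[Prop.~A.1]{Regularity}. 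Throughout, the scale hypothesis \eqref{triangle inequality for positive renormalized} lets us freely replace $\mbj_\mcB(e_\ch,e_\p)$ by $j_e$ and $\mbj_\mcB(e_\p,\logof)$ by $j_{\{e_\p,\logof\}}$ up to bounded multiplicative constants, which are absorbed into the implicit constant.

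\textbf{Step 1: the case $e \in \cD$.} Here $\KerTilde^{\{e\}}_{0,\mbj} = \big(\ke{\{e\}}{0} + \RKer^{\{e\}}_0\big)\Psi^{\{e\}}_{\mbj}$. First I would handle the $\ke{\{e\}}{0}$ term: the cutoff $\Psi^{\{e\}}_{\mbj}$ forces $\link{x_{e_\ch}}{x_{e_\p}}{j_e}$ (giving the stated support statement), and on that annulus $|D^{k+\se(e)}K_{\mft(e)}(x_{e_\p}-x_{e_\ch})| \lesssim \|K_{\mft(e)}\|_{\mft(e)}\,2^{(\mfh(e) + |k|_\s)j_e}$ by \eqref{def: singular kernel norm} and $\mfh(e) = |\s| - |\mft(e)|_\s + |\se(e)|_\s$. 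Writing $|k|_\s = |k_{e_\ch}|_\s + |k_{e_\p}|_\s$ and using $j_e \le j_{\{\logof,e_\p\}}$ is \emph{not} available in this regime; instead, since $e \in \cD$ means $e \in \cG^{\mbn}(b(\mbbM))$, one has $\mbn_{b(\mbbM)}(\logof,e_\p) > \mbn_{b(\mbbM)}(e_\p,e_\ch)$, i.e.\ $j_{\{\logof,e_\p\}} > j_e$ up to the $c$-slack, so any derivative landing on $x_{e_\p}$ can be charged at scale $j_{\{\logof,e_\p\}}$ rather than $j_e$. This produces the bound $2^{(\mfh(e)+|k_{e_\ch}|_\s)j_e + |k_{e_\p}|_\s j_{\{\logof,e_\p\}}}$, which is the exponent $\eta$ from the first branch \emph{without} the factor $2^{\gamma(e)(j_e - j_{\{\logof,e_\p\}})}$; since $j_e - j_{\{\logof,e_\p\}} < c$ and $\gamma(e) > 0$, this extra factor is bounded below by a constant, so the bound with $\eta$ is weaker and still holds. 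For the $\RKer^{\{e\}}_0$ term one expands $\RKer^{\{e\}}_0(x) = -\sum_{|l|_\s < \gamma(e)}\frac{(x_{e_\p}-x_\logof)^l}{l!} D^{l+\se(e)}K_{\mft(e)}(x_\logof - x_{e_\ch})$, uses $\link{x_{e_\p}}{x_\logof}{j_{\{\logof,e_\p\}}}$ (giving $|(x_{e_\p}-x_\logof)^l| \lesssim 2^{-|l|_\s j_{\{\logof,e_\p\}}}$) and the kernel bound on $D^{l+k+\se(e)}K_{\mft(e)}(x_\logof-x_{e_\ch})$ evaluated with $\link{x_{e_\ch}}{x_\logof}{j_{\{\logof,e_\ch\}}}$; the triangle inequality combined with the two $\link{}{}{}$ hypotheses and $j_{\{\logof,e_\p\}} \ge j_e$ again yields $\link{x_{e_\ch}}{x_{e_\p}}{j_e}$ and the same $\eta$-bound after collecting exponents.

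\textbf{Step 2: the case $e \in \cS$.} Here $\KerTilde^{\{e\}}_{0,\mbj} = \RKer^{\{e\}}_{0,\mbj} = \RKer^{\{e\}}_0 \cdot \Psi^{\{e\}}_{\mbj}$, and the point is that we are now in the regime $j_{\{\logof,e_\p\}} \ge j_{\{\logof,e_\ch\}}$ (this is what $e \in \cS$, i.e.\ $e \notin \cG^{\mbn}$, gives us up to the $c$-slack). I would again expand $\RKer^{\{e\}}_0$ into its Taylor sum; the key gain is that because the sum runs over $|l|_\s < \gamma(e)$ and we can Taylor-expand $D^{\se(e)}K_{\mft(e)}(x_\logof - \cdot)$ around $x_\logof$ to order $\gamma(e)-1$, the remainder of $D^k(\ke{\{e\}}{0} - \text{Taylor})$ is controlled by \cite[Prop.~A.1]{Regularity}: the factor $2^{-(\gamma(e)-1)j_{\{\logof,e_\p\}}}$ coming from the Taylor coefficients is outweighed by $2^{(\gamma(e)-1)j_{\{\logof,e_\ch\}}}$ coming from the remainder estimate because $j_{\{\logof,e_\p\}} \ge j_{\{\logof,e_\ch\}}$, so we must be careful to organise the expansion the other way: keep the explicit subtraction as written in $\RKer$, bound $|(x_{e_\p}-x_\logof)^{l+k_{e_\p}}| \lesssim 2^{-(|l|_\s+|k_{e_\p}|_\s)j_{\{\logof,e_\p\}}}$, and bound $|D^{l+k_{e_\ch}+\se(e)}K_{\mft(e)}(x_\logof - x_{e_\ch})| \lesssim \|K_{\mft(e)}\|_{\mft(e)}2^{(\mfh(e)+|l|_\s+|k_{e_\ch}|_\s)j_{\{\logof,e_\ch\}}}$. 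Summing over $|l|_\s < \gamma(e)$ is a geometric-type sum; since each term contributes $2^{|l|_\s(j_{\{\logof,e_\ch\}} - j_{\{\logof,e_\p\}})} \le 1$, the sum is dominated by the maximal exponent $|l|_\s = \gamma(e)-1$, yielding exactly the exponent $-(\gamma(e)-1+|k_{e_\p}|_\s)j_{\{\logof,e_\p\}} + (|k_{e_\ch}|_\s + \mfh(e) + \gamma(e)-1)j_{\{\logof,e_\ch\}}$. The support statement $\link{x_{e_\ch}}{x_{e_\p}}{j_e}$ is forced by $\Psi^{\{e\}}_{\mbj}$.

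\textbf{Main obstacle.} The delicate point is bookkeeping in Step 2: correctly tracking which scale ($j_e$, $j_{\{\logof,e_\p\}}$, or $j_{\{\logof,e_\ch\}}$) each derivative and each monomial factor is charged to, and verifying that the geometric sum over the Taylor multi-indices $l$ really is dominated by its top term rather than its bottom term — this depends essentially on the sign of $j_{\{\logof,e_\p\}} - j_{\{\logof,e_\ch\}}$, which is exactly the content of $e \in \cS$ versus $e \in \cD$. A secondary nuisance is that the hypotheses use the \emph{connectivity} scales $\mbj_\mcB(\cdot,\cdot)$ while the estimate is phrased in terms of the raw scales $j_e$ etc.; one must invoke \eqref{triangle inequality for positive renormalized} at the very start to identify them up to the fixed slack $c$, noting that the resulting constant may depend on $c$ (and this dependence feeds into the overall constant $C_{\tau,p}$ of Theorem~\ref{upgraded thm - main theorem}, as flagged in the remark on implicit constants).
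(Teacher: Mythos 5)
Your treatment of the case $e \in \cS$ is on the right track (a term-by-term estimate, as in the paper), though the intermediate inequality you state — ``$2^{|l|_\s(j_{\{\logof,e_\ch\}} - j_{\{\logof,e_\p\}})} \le 1$'' — has the sign backwards: for $e \in \cS$ one has $j_{\{\logof,e_\ch\}} \gtrsim j_e \ge j_{\{\logof,e_\p\}}$ up to the $c$-slack, so this factor is $\gtrsim 1$, which is precisely \emph{why} the geometric sum is dominated by the top exponent $|l|_\s = \gamma(e)-1$ rather than the bottom one. As written, your premise and your conclusion about which term dominates contradict one another, even though you arrive at the right exponent.

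The serious gap is in Step 1, the case $e \in \cD$. You write $\KerTilde^{\{e\}}_{0,\mbj} = (\ke{\{e\}}{0} + \RKer^{\{e\}}_0)\Psi^{\{e\}}_{\mbj}$ and propose to bound the two terms \emph{separately}. This destroys the cancellation that is the entire content of the positive renormalization. You then try to recover the missing factor $2^{\gamma(e)(j_e - j_{\{\logof,e_\p\}})}$ by claiming it is ``bounded below by a constant'', citing $j_e - j_{\{\logof,e_\p\}} < c$. But this only gives an \emph{upper} bound on $j_e - j_{\{\logof,e_\p\}}$; the hypothesis \eqref{triangle inequality for positive renormalized} provides no lower bound, and indeed for $e \in \cD$ the defining inequality $\mbj_\mcB(\logof,e_\p) > \mbj_\mcB(e_\p,e_\ch)$ means $j_{\{\logof,e_\p\}}$ can exceed $j_e$ by an \emph{arbitrarily large} amount — this is exactly the regime ($x_{e_\p}$ very close to $x_\logof$, $x_{e_\ch}$ far away) in which the positive renormalization is harvested. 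Hence $2^{\gamma(e)(j_e - j_{\{\logof,e_\p\}})}$ can be exponentially small, and the target bound with this factor is strictly \emph{stronger} than what your split yields. The correct argument treats $\ke{\{e\}}{0} + \RKer^{\{e\}}_0$ as a single Taylor remainder of order $\ge \gamma(e)$ around $x_\logof$ in the $x_{e_\p}$-variable and applies the remainder estimate of \cite[Prop.~A.1]{Regularity}: this produces a factor $|x_{e_\p} - x_\logof|^{|n|_\s} \sim 2^{-|n|_\s j_{\{\logof,e_\p\}}}$ for $n \in \partial_{e_\p}\mathrm{Der}(e)$ paired with a kernel derivative of the same order, and since $\min_{n}|n|_\s = \gamma(e)$ while $j_e - j_{\{\logof,e_\p\}}$ is bounded above, the maximum over $n$ is attained at $|n|_\s = \gamma(e)$, giving the stated exponent. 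Without invoking the Taylor remainder estimate, Step 1 does not prove the lemma.
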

\begin{proof}
We prove the lemma in the case when $k=0$, upon inspection of how each type of derivative modifies a Taylor expansion (or Taylor remainder) then the estimate for $k \not = 0$ follows easily. 

We first treat the case when $e \in \cD$ in which case $\KerTilde^{\{e\}}_{0,\mbj}(x)$ is given by
\begin{equ}\label{eq: positively renormalized edge}
\Big(
\Ker_{0}^{\{e\}}(x_{e_{\p}}, x_{e_{\ch}})
-
\sum_{
m \in \mathrm{Der}(e)
}
\frac{(x_{e_{\p}} - x_{\logof})^{m}}{m!}
(D^{m}\Ker_{0}^{\{e\}})(x_{\logof},x_{e_{\ch}})
\Big) \Psi^{j_{e}}(x_{e_{\p}} - x_{e_{\ch}}),
\end{equ}
where $\mathrm{Der}(e)$ denotes the set of multi-indices supported on $e_{\p}$ with $|m|_{\s} < \gamma(e)$. 
By using \cite[Prop.~A.1]{Regularity} it follows that, uniform in $x$ satisfying $\link{x_{e_{\p}}}{x_{e_{\ch}}}{j_{e}}$ (enforced by the second factor of \eqref{eq: positively renormalized edge}) and $\link{x_{e_{\p}}}{x_{\logof}}{j_{\{\logof,e_{\p}\}}}$ one has a bound
\begin{equation*}
\begin{split}
&
\Big|
\Ker_{0}^{\{e\}}(x_{e_{\p}}, x_{e_{\ch}})
-
\sum_{
m \in \mathrm{Der}(e)
}
\frac{(x_{e_{\p}} - x_{\logof})^{m}}{m!}
(D^{m}\Ker_{0}^{\{e\}})(x_{\logof},x_{e_{\ch}})
\Big|\\
\lesssim& 
\sup_{n \in \partial_{e_{\p}} \mathrm{Der}(e)}
|x_{e_{\p}} - x_{\logof}|^{|n|_{s}}
\cdot
\sup_{
\substack{
y,y' \in \R^{d}\\
\link{y}{y'}{j_{e}}
}
}
|D^{n}\Ker_{0}^{\{e\}}(y, y')|\\
\lesssim&
\max_{n \in \partial_{e_{\p}} \mathrm{Der}(e)}
2^{-j_{\{\logof,e_{\p}\}}|n|_{s}}
\cdot
\|K_{\mft(e)}\|_{\mft(e)} \cdot
2^{(\mfh(e) + |n|_{\s})j_{e}}\;.
\end{split}
\end{equation*}
Here the use of the scale $j_{e}$ vs $j_{\{\logof,e_{\ch}\}}$ is interchangeable -- the combinations of the conditions \eqref{triangle inequality for positive renormalized}, the fact that $\mbj(e_{\p},\logof) > \mbj(e_{\ch},e_{p})$, $\link{x_{e_{\p}}}{x_{e_{\ch}}}{j_{e}}$, $\link{x_{e_{\p}}}{x_{\logof}}{j_{\{\logof,e_{\p}\}}}$, and $\link{x_{e_{\ch}}}{x_{\logof}}{j_{\{\logof,e_{\ch}\}}}$ and the triangle inequality mean that there exists some combinatorial constant $C > 0$, depending on $c$, such that if $|j_{\{\logof,e_{\ch}\}} - j_{e}| \le C$ does not hold then our domain in $x$ is empty.

The desired bound now follows upon using the constraint $\link{x_{e_{\p}}}{x_{\logof}}{j_{\{\logof,e_{\p}\}}}$, the fact that $j_{\{\logof,e_{\p}\}} + 2c > j_{e}$, and that $\min_{n \in \partial_{e_{\p}} \mathrm{Der}(e)} |n|_{\s} = \gamma(e)$. 
Now we turn into the case where $e \in \cS$. By a similar triangle inequality argument we may assume that there exists some combinatorial constant $C > 0$, depending on $c$, such that 
\begin{equ}\label{triangle inequality}
j_{\{\logof,e_{\ch}\}} + C \ge j_{e} \ge j_{\{\logof,e_{\p}\}}\;.
\end{equ}
Under this assumption we do a term by term estimate which gives
\begin{equation*}
\begin{split}
&
\Big|
\sum_{
m \in \mathrm{Der}(e)
}
\frac{(x_{e_{\p}} - x_{\logof})^{m}}{m!}
D^{m}\Ker_{0}^{\{e\}}(x_{\logof},x_{e_{\ch}})
\Big|\\
\lesssim 
&
\max_{m \in \mathrm{Der}(e)}
|(x_{e_{\p}} - x_{\logof})^{m}| 
\cdot
\sup_{
\substack{
y,y' \in \R^{d}\\
\link{y}{y'}{j_{\{\logof,e_{\ch}\}}}
}
}
|D^{m}\Ker_{0}^{\{e\}})(y,y')|\\
\lesssim&
\|K_{\mft(e)}\|_{\mft(e)} \cdot 
\max_{m \in \mathrm{Der}(e)} 
2^{(\mfh(e) + |m|_{\s})j_{\{\logof,e_{\ch}\}} - |m|_{\s}j_{\{\logof,e_{\p}\}}}\\
\lesssim
&
\|K_{\mft(e)}\|_{\mft(e)}
\cdot
2^{
(\mfh(e) + \gamma(e) - 1)
j_{\{\logof,e_{\ch}\}}
-
(\gamma(e)-1)j_{\{\logof,e_{\p}\}}
}\;,
\end{split}
\end{equation*}  
where in the final inequality we used \eqref{triangle inequality} and that $\max_{m \in \mathrm{Der}(e)} |m|_{\s} = \gamma(e) - 1$.
\end{proof}
\begin{lemma}\label{lem: top genvert bound}
Let $S \in \overline{\mcB}$, $c > 0$, and $k \in (\N^{d})^{\allnodes}$ be supported on $N^{\downarrow}(S) \sqcup \{ \rho_{S} \}$. 
One has, uniform in $\mbj \in \CE^{\exte}(S)$ satisfying \eqref{triangle inequality for positive renormalized} for every $e \in \mfC_{\mcB}$, and $x \in \R^{\tilde{N}(S)^{c}}$ satisfying  \tabularnewline
\begin{equs}
\link{x_{\rho_{S}}}{x_{\logof}}{j_{\{\logof,\rho_{S}\}}}\;,  \quad
\link{x_{e_{\ch}}}{x_{\logof}}{j_{\{\logof,e_{\ch}\}}}
\quad 
\forall
e \in K^{\downarrow}(S)\;,
\end{equs}
the bound, 
\begin{equation}\label{e: toplevel genvert bound}
\begin{split}
&
\left|
D^{k}
\hat{H}_{S}^{\mbj} \left[ 
\KerTilde_{0,\mbj}^{\cB \cap K^{\downarrow}(S)} 
\ke{K^{\downarrow}(S) \setminus \cB}{0,\mbj}
\powroot{\tilde{N}(S)}{\sn,\mbj}{\logof}
\right](x)
\right|\\
\lesssim&\ 
\exp_{2} 
\Bigg[
\omega(S) \exte^{\mbj}(S) + 
j_{(\rho_{S},\logof)} |\sn(\tilde{N}(S))|_{\s}
+
\sum_{e \in K^{\downarrow}(S) \setminus \cB} (\mfh(e) + |k_{e_{\ch}}|_{\s}) 
j_{e}
\\
&
\enskip
\qquad
+
\Big(
\sum_{e \in K^{\downarrow}(S) \cap \cS}
-(\gamma(e) - 1)j_{\{\logof,\rho_{S}\}} + (\mfh(e) + |k_{e_{\ch}}|_{\s} + \gamma(e) - 1)j_{\{\logof,e_{\ch}\}}
\Big)\\
&
\enskip
\qquad
+
\Big(
\sum_{e \in K^{\downarrow}(S) \cap \cD} \gamma(e)(j_{e} - j_{(\logof,\rho_S)}) + (\mfh(e) + |k_{e_{\ch}}|_{\s}) 
j_{e} \Big)
\Bigg]\\ 
&
\qquad
\cdot
\|\overline{K}^{\downarrow}(S) \|_{\mft}
\cdot
\| \xi \|_{|L(S)|}\;.
\end{split}
\end{equation}
Furthermore, there exists a combinatorial constant $C > 0$ such that the LHS vanishes unless $\link{x_{\rho_{S}}}{x_{e_{\ch}}}{j_{e}}$ for all $e \in K^{\downarrow}(S)$, and $|j_{\{\logof,u\}} - j_{(\logof,\rho_S)}| \le C$ for each $u \in \tilde{N}(S)$.
\end{lemma}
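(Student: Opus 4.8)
The plan is to prove Lemma~\ref{lem: top genvert bound} by essentially unpacking the definition of $\hat{H}_{S}^{\mbj}$ and combining the estimates already established for the interior of $S$ (Lemma~\ref{lem: genvertbd}, applied to $C_{\mcB}(S)$) with the edge estimates for the dangling kernels of $K^{\downarrow}(S)$. The subtlety is that $K^{\downarrow}(S)$ is a set of kernel edges whose parent lies in $S$ but whose child lies \emph{outside} $S$, so when we differentiate with respect to coordinates in $N^{\downarrow}(S) \sqcup \{\rho_S\}$ these derivatives land on the dangling kernels $\ke{\{e\}}{0,\mbj}$ or on the positively renormalized edges $\KerTilde_{0,\mbj}^{\{e\}}$ (for $e \in K^{\downarrow}(S) \cap \cB$), or on the polynomial prefactor $\powroot{\tilde{N}(S)}{\sn,\mbj}{\logof}$ evaluated at the root; none of these objects is affected by the $\tilde{N}(S)$ integration. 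The strategy is therefore: first push the external derivative $D^k$ inside using the Leibniz rule and the fact that $k$ is supported on $N^{\downarrow}(S) \sqcup \{\rho_S\}$, which is disjoint from $\tilde{N}(S)$; then treat the object inside $\hat{H}_S^{\mbj}[\cdot]$ as a product of (a) the renormalized dangling edges, (b) the unrenormalized dangling edges, and (c) the root polynomial, and recognize that, modulo these prefactors which commute past $\hat{H}_S^{\mbj}$ via a factorization identity analogous to \eqref{eq: factorization}, the remaining operator $\hat{H}_S^{\mbj}$ acting on a constant is exactly what Lemma~\ref{lem: genvertbd} controls.

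More concretely, I would proceed as follows. Step one: rewrite $\hat{H}_S^{\mbj}[\phi]$ for $\phi = \KerTilde_{0,\mbj}^{\cB \cap K^{\downarrow}(S)} \ke{K^{\downarrow}(S) \setminus \cB}{0,\mbj} \powroot{\tilde{N}(S)}{\sn,\mbj}{\logof}$. Since all three factors of $\phi$ depend only on coordinates $x_v$ with $v \in N^{\downarrow}(S) \sqcup \{\rho_S\} \sqcup \{\logof\}$ — i.e.\ none of them depend on $x_v$ for $v \in \tilde{N}(S)$ (recall $\powroot{\cdot}{\cdot}{\logof}$ for $\tilde N(S)$ is a power of $x_{\rho_S} - x_\logof$ when the subtree is collapsed, but we need to be careful: in the unfolding of $\hat H$, the Taylor jet $\mathscr{Y}_{S,\mbbM}^{\#}$ collapses $\tilde{N}(S)$ to $\rho_S$, so effectively $\phi$ enters only through its $\mathscr{Y}$-jet at $\rho_S$) — the product structure decouples. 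Step two: apply the Leibniz rule to $D^k$, distributing the derivative across the three factors and the root coordinate; this produces a sum over decompositions $k = \sum_e k_{e_\ch} + k_{\rho_S}$ (with $k_{\rho_S}$ the part of $k$ landing on $x_{\rho_S}$, which feeds into $|\sn(\tilde{N}(S))|_\s$ through the polynomial and into the renormalized kernels through their dependence on $x_{\rho_S}$). Step three: for each dangling edge $e \in K^{\downarrow}(S) \setminus \cB$, invoke the singular-kernel bound $|D^{p}\ke{\{e\}}{0,\mbj}| \lesssim \|K_{\mft(e)}\|_{\mft(e)} 2^{(\mfh(e) + |p|_\s) j_e}$ together with its support property $\link{x_{e_\ch}}{x_{\rho_S}}{j_e}$ (this uses the analogue of Lemma~\ref{lem: kernel bound and support}, or rather its single-edge version directly from \eqref{def: singular kernel norm}); for each $e \in K^{\downarrow}(S) \cap \cB$, invoke Lemma~\ref{lem: cut edges estimate} with the constraints \eqref{triangle inequality for positive renormalized}, which gives precisely the $\cD$-case and $\cS$-case exponents appearing in \eqref{e: toplevel genvert bound} (with $j_{\{\logof,e_\p\}} = j_{\{\logof,\rho_S\}}$ since $e_\p \in \tilde N(S) \cup \{\rho_S\}$ collapses to $\rho_S$, and the triangle inequality forces $|j_{\{\logof,e_\p\}} - j_{\{\logof,\rho_S\}}| \le C$).

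Step four: for the polynomial $\powroot{\tilde{N}(S)}{\sn,\mbj}{\logof}$, after $\mathscr{Y}_{S,\mbbM}^{\#}$ collapses $\tilde{N}(S)$ to $\rho_S$, the relevant factor is bounded by $2^{-j_{\{\logof,\rho_S\}}|\sn(\tilde N(S))|_\s}$ times the cutoffs $\Psi$; but the exponent in \eqref{e: toplevel genvert bound} carries $+j_{(\rho_S,\logof)}|\sn(\tilde N(S))|_\s$ which looks like the opposite sign — this is because $\powroot{N}{\mfn}{v}(x) = \prod_u (x_u - x_v)^{\mfn(u)}$ grows (i.e.\ the sizes $|x_u - x_\logof| \sim 2^{-j_{\{\logof,u\}}}$ give a factor $2^{-j|\mfn|_\s}$) and I should double-check the sign convention; since all $|\sn|_\s \ge 0$ and the scales $j$ are nonnegative, the bound is $2^{-j_{\{\logof,\rho_S\}}|\sn(\tilde N(S))|_\s} \le 2^{0}$ and one absorbs this, so I will carefully track which of $\omega(S)$ vs the raw power-counting produces the $j_{(\rho_S,\logof)}$-dependence; in fact this term arises from the $\powroot{}{}{}$ in the \emph{ambient} integrand that gets passed through, not from inside $\hat H_S$, so I will re-read the definition \eqref{def of inductively summed kernels} to confirm the bookkeeping. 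Step five: for the remaining object — $\hat{H}_S^{\mbj}$ with all the prefactors stripped out and replaced by the supremum bounds on their $\mathscr{Y}$-jets — apply Lemma~\ref{lem: genvertbd} (the case $\mcF = \{S\}$ in the notation of that lemma, but here we need the $\hat H$ version: more precisely the key work of Lemma~\ref{lem: genvertbd} was the inductive bound whose base case is a single tree $S$ with $\mathcal F = \{S\}$, which gives $\lesssim 2^{\omega(S)\exte^{\mbj}(S)} \|K(S)\|_\mft \|\xi\|_{|L(S)|,\c} \|\phi\|_{S,\mbj}$); the seminorm $\|\phi\|_{S,\mbj}(x)$ of the stripped function is bounded, using Lemma~\ref{lem: productrule} and Lemma~\ref{lem: renorm factors}, by the product of the edge exponents and the polynomial exponent, all evaluated at the external scales, which is exactly the bracketed exponent in \eqref{e: toplevel genvert bound} minus the $\omega(S)\exte^{\mbj}(S)$ already supplied by Lemma~\ref{lem: genvertbd}. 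Step six: assemble, collecting the support constraints $\link{x_{\rho_S}}{x_{e_\ch}}{j_e}$ from each edge estimate and the constraint $|j_{\{\logof,u\}} - j_{(\logof,\rho_S)}| \le C$ for $u \in \tilde N(S)$ from the triangle-inequality arguments (which follow because the cumulant and kernel cutoffs $\Psi$ force the coordinates in $N(S)$ to be within $2^{-\inte^{\mbj}(S)}$ of each other, and $\inte^{\mbj}(S) \ge$ or $<$ the external scales depending on whether $S \in \mcS$ or $\mcD$, but in either case the root $\rho_S$ and a node $u$ are connected by kernel edges whose scales are controlled).

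The main obstacle I anticipate is \emph{sign and bookkeeping consistency of the exponents}: the statement \eqref{e: toplevel genvert bound} packages together contributions from three different sources (the $\omega(S)$-factor from the inductive renormalization bound, the dangling-edge homogeneities $\mfh(e)$ and their derivative corrections $|k_{e_\ch}|_\s$, the positive-cut Taylor-subtraction gains $\gamma(e)$, and the polynomial degree $|\sn(\tilde N(S))|_\s$) and it is easy to drop a $j_e$ vs.\ $j_{\{\logof,e_\ch\}}$ or misattribute a scale. The resolution is to be meticulous about which scale governs which kernel's support: for $e \in K^{\downarrow}(S)$ one has $e \in \CE^{\exte}_{\mcB}(S)$ intersected with the ambient edges, so $j_e \le \exte^{\mbj}(S)$; combined with the triangle inequality constraint $|j_{\{\logof,e_\ch\}} - j_{(\logof,\rho_S)}| \le C$ and $\link{x_{\rho_S}}{x_{e_\ch}}{j_e}$, one can freely trade $j_e$ for $j_{\{\logof,e_\ch\}}$ up to the combinatorial constant $C$ exactly as in the proof of Lemma~\ref{lem: cut edges estimate}. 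Once the scale identifications are pinned down, the estimate is a routine combination of Lemmas~\ref{lem: productrule}, \ref{lem: renorm factors}, \ref{lem: genvertbd}, \ref{lem: cut edges estimate} and the single-edge kernel bound; no new analytic input is needed beyond what is already in hand.
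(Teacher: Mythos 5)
Your plan is essentially the paper's proof: commute $D^k$ past $\hat H_S^{\mbj}$ (legal since $k$ is supported outside $\tilde N(S)$), invoke Lemma~\ref{lem: genvertbd} with $\mcF=\{S\}$ to produce $2^{\omega(S)\exte^{\mbj}(S)}\|K(S)\|_\mft\|\xi\|_{|L(S)|,\c}\|D^k\phi\|_{S,\mbj}$, and then bound the seminorm $\|D^k\phi\|_{S,\mbj}$ factor by factor via Lemma~\ref{lem: productrule}, treating the plain kernel edges and the polynomial as in Lemma~\ref{lem: kernel bound and support} and the renormalized cut edges $\KerTilde^{\{e\}}_{0,\mbj}$ by a Taylor-remainder or term-by-term estimate in the spirit of Lemma~\ref{lem: cut edges estimate} (note that Lemma~\ref{lem: renorm factors} is not the right citation for bounding those factor seminorms, and the polynomial \emph{does} depend on the $\tilde N(S)$ variables, so there is no commutation of prefactors past $\hat H_S^{\mbj}$ --- the seminorm $\|\cdot\|_{S,\mbj}$ is precisely the tool that absorbs that dependence). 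Your sign concern about the $j_{(\rho_S,\logof)}|\sn(\tilde N(S))|_\s$ term is well founded: the paper's own proof derives the factor $2^{-|\sn(u)|_\s j_{\{\logof,u\}}}$ from each polynomial, and the total homogeneity \eqref{single wick homogeneity} records these same contributions with a minus sign via $-\sum_u|\sn(u)|_\s\delta^{\uparrow}[\cdot]$, so the plus sign in the printed exponent of \eqref{e: toplevel genvert bound} is a typo and the correct contribution is $-j_{(\rho_S,\logof)}|\sn(\tilde N(S))|_\s$.
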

\begin{proof}
First observe that the differential operator $D^{k}$ commutes with $\hat{H}_{S}^{\mbj}$.
We use Lemma~\ref{genvertbd}, the desired bound is obtained by estimating $\|D^{k}\Theta\|_{S,\mbj}(x_{\tilde{N}(S)^{c}})$ where $\Theta$ is given by either: $\powroot{\{u\}}{\sn,\mbj}{\logof}$ for $u \in \tilde{N}(S)$, $\ke{\{e\}}{0,\mbj}$ for $e \in K^{\downarrow}(S)$, or $\KerTilde_{0,\mbj}^{\{e\}}$ for $e \in K^{\downarrow}(S) \cap \cB$.

The proof of the first two scenarios mirror that of Lemma~\ref{lem: kernel bound and support} and one obtains 
\begin{equs}
\|D^{k}\Theta\|_{S,\mbj}(x_{\tilde{N}(S)^{c}}) 
\lesssim
\begin{cases}
2^{-|\sn(u)|_{\s} j_{\{\logof,u\}}} 
\mathbbm{1}\{ \link{x_{\rho_{S}}}{x_{\logof}}{j_{\{\logof,u\}}} \} & 
\textnormal{if } \Theta = \powroot{\{u\}}{\sn,\mbj}{\logof}\;,\\
2^{(\mfh(e) + |k_{e_{\ch}}|_{\s})j_{e}}\|K_{\mft(e)}\|_{\mft(e)}
\mathbbm{1}\{ \link{x_{\rho_{S}}}{x_{e_\ch}}{j_{e}}\} & 
\textnormal{if } \Theta = \ke{\{e\}}{0,\mbj}\;.\\
\end{cases}
\end{equs} 
Our bound for the factor $\powroot{\tilde{N}(S)}{\sn,\logof}{\mbj}$ then gives us the constraint that  $\link{x_{\rho_{S}}}{x_{\logof}}{j_{\{\logof,u\}}}$ for every $u \in \tilde{N}(S)$. 
Combining this with the inherited $\link{x_{\rho_{S}}}{x_{\logof}}{j_{\{\logof,\rho_{S}\}}}$ constraint it follows that there exists a combinatorial constant $C > 0$ such that the LHS of \eqref{e: toplevel genvert bound} vanishes unless 
\begin{equ}\label{scale collapse}
|j_{\{\logof,u\}} - j_{(\logof,\rho_S)}| \le   C
\textnormal{ for every }u \in \tilde{N}(S)\;.
\end{equ}
We now turn to the case where $\Theta = \KerTilde_{0,\mbj}^{\{e\}}$ for $e \in K^{\downarrow}(S) \cap \cB$ which further splits into two subcases depending on whether $e \in \cD$ or $e \in \cS$. 
We claim that one has
\begin{equation*}
\begin{split}
\|D^{k}\KerTilde^{\{e\}}_{0,\mbj}\|_{S,\mbj}(x_{\tilde{N}(S)^{c}})
\lesssim
\mathbbm{1}
\{
\link{x_{\rho_{S}}}
{x_{e_{\ch}}}
{j_{e}}
\}
\|K_{\mft(e)}\|_{\mft(e)}
2^{\eta(j_{e},j_{\{\logof,\rho_{S}\}},j_{\{\logof,e_{\ch}\}},k_{e_{\ch}},0,e)}\;,
\end{split}
\end{equation*}
where $\eta(\cdot)$ is as in the statement of Lemma~\ref{lem: cut edges estimate}.
We only sketch the proofs here since the details are very similar to those of Lemma~\ref{lem: cut edges estimate}.

Writing $\KerTilde^{\{e\}}_{0,\mbj}(z) = \KerTilde^{\{e\}}_{0,\mbj}(z_{e_{\p}},z_{e_{\ch}},z_{\logof})$ we claim that for any $b \in \widetilde{\mathrm{Der}}(\set{S})$ we have, using a Taylor remainder bound or a term-by-term Taylor series bound, the following estimate, uniform $\mbj \in \partial \mcN_{S}$ satisfying our assumptions, \eqref{scale collapse} and in $x_{\tilde{N}(S)^{c}}$ satisfying  both $\link{x_{\rho_{S}}}{x_{\logof}}{j_{\{\logof,\rho_{S}\}}}$ and $\link{x_{e_{\ch}}}{x_{\logof}}{j_{\{\logof,e_{\ch}\}}}$,  
\begin{equs}\label{double renormalized edge}
&\sup_{y_{\tilde{N}(S)} \in \Dom(\set{S},\mbj,x)}
\left|
D^{b+k}\KerTilde^{\{e\}}_{0,\mbj}(y_{e_{\p}},x_{e_{\ch}},x_{\logof})
\right|\\
&\lesssim\ 
\sup_{
\substack{
y_{\tilde{N}(S)} \in \Dom(\set{S},\mbj,x_{\rho_{S}})\\
n \in A_{e}
}
}
|y_{u(e)} - x_{\logof}|^{|n|_{s} - |b_{e_{\p}}|_{\s}}
\cdot
\sup_{
\substack{
w,w' \in \R^{d}\\
\link{w}{w'}{j_{e}}
}
}
|D^{n+k_{e_{\ch}}}\Ker_{0}^{\{e\}}(w - w')|\\
&
\lesssim
\|K_{\mft(e)}\|_{\mft(e)}
\cdot
2^{(\mfh(e) + |k_{e_{\ch}}|_{\s})\theta_{2}(\mbj,e) + |b_{e_{\p}}|_{\s}j_{\{\logof,\rho_{S}\}}}
\max_{n \in A_{e}}
2^{
|n|_{\s}
\theta_{1}(\mbj,e)}.
\end{equs}
Here, if $e \in \cD$ one has $u(e) \eqdef e_{\p}$, $A_{e} \eqdef \partial_{e_{\p}} \mathrm{Der}(e)$, $\theta_{1}(\mbj,e) \eqdef (j_{e} - j_{\{\logof,\rho_{S}\}})$, and $\theta_{2}(\mbj,e) \eqdef j_{e}$.
On the other hand, if $e \in \cS$ then $A_{e} \eqdef \mathrm{Der}(e)$, $\theta_{1}(\mbj,e) \eqdef (j_{\{\logof,e_{\ch}\}} - j_{\{\logof,\rho_{S}\}})$, and $\theta_{2}(\mbj,e) \eqdef j_{\{\logof,e_{\p}\}}$.

We also used $\link{y_{e_{\p}}}{x_{\logof}}{j_{\{\logof,\rho_{S}\}}}$ which comes from the inherited constraint $\link{x_{\rho_{S}}}{x_{\logof}}{j_{\{\logof,\rho_{S}\}}}$ along with the domain of the supremum for $y_{e_{\p}}$.  
We also used the fact that by \eqref{triangle inequality for positive renormalized} and \eqref{scale collapse} there is a combinatorial constant $C' > 0$, depending on $c$, such that $j_{\{\logof,e_{\p}\}} + C' > j_{e}$ if $e \in \cD$ and $j_{\{\logof,e_{\p}\}} \le j_{e} + C'$ if $e \in \cS$
which is good enough, the discrepancies $C'$ only contributes to the constant of proportionality in our bound. 

The desired bound on $\|\KerTilde_{0,\mbj}^{\{e\}}\|_{S,\mbj}(x_{\tilde{N}(S)^{c}})$ follows from \eqref{double renormalized edge} by observing from multiplying the last line of \eqref{double renormalized edge} by $2^{-\exte^{\mbj}(S)|b|_{e_{\p}}}$, $\exte^{\mbj}(S) \ge j_{\{\logof,\rho_{S}\}}$, and using the observation of the above paragraph when taking the maximum over $n$. 
For the indicator function on the RHS one observes that the LHS of \eqref{double renormalized edge} vanishes  unless $\link{x_{\rho_{S}}}{x_{e_{\ch}}}{j_{e}}$, this follows by similar reasoning as in Lemma~\ref{lem: kernel bound and support}.
\end{proof}
\subsection{Full control over a single tree}\label{subsec: full control} 
We start by taking a quotient of the multigraph $\CE^{\inte}(\sT^{\ast})$ (defined in \eqref{external edges for a single tree}) by contracting every $T \in \mcB$ and identifying it with $\rho_{T}$, doing this we obtain a multigraph $\go{H}$ on the vertex set $\CX \eqdef \tilde{N}(\overline{\mcB})^{c}$. 
In particular,
\begin{equation}\label{multigraph for single tree}
\go{H}
\eqdef\ 
K(\mcB,\sT) 
\sqcup 
\Big\{ 
\{e_{\ch}, \rho_{T}\}:\ 
\begin{array}{c}
T \in \overline{\mcB},\\
e \in K^{\downarrow}(T)
\end{array}
\Big\}
\sqcup
\{e \in \CE_{\pi}:\ e \subset L(\mcB,T)\}\;.
\end{equation}
We write $\mfq: \N^{\CE^{\inte}(\sT^{\ast})} \rightarrow \N^{\go{H}}$ for the natural bijection between these two sets and then set $\mcN_{\go{H},\lambda} \eqdef \mfq(\partial \mcN_{\mcB,\lambda})$ and $\mcN_{\go{H}} \eqdef \cup_{\lambda \in (0,1]} \mcN_{\go{H},\lambda}$. 

From now on we switch viewpoints replacing, for every $\lambda \in (0,1]$, the family $\mathring{\CW}_{\lambda} = (\mathring{\CW}_{\lambda}^{\mbj})_{\mbj \in \partial \mcN_{\mcB,\lambda}}$  with $\mathring{\CW}_{\lambda} = (\mathring{\CW}_{\lambda}^{\mbn})_{\mbn \in \mcN_{\go{H}}}$ where we set, for $\mbn \in \mcN_{\go{H}}$, $\mathring{\CW}_{\lambda}^{\mbn} = \mathring{\CW}_{\lambda}^{\mfq^{-1}(\mbn)}$ if $\mbn \in \mcN_{\go{H},\lambda}$ and $\mathring{\CW}_{\lambda}^{\mbn} = 0$ otherwise. 

We also define $\hat{\mfq}:N^{\ast} \rightarrow \CX$ via setting $\hat{\mfq}(u) \eqdef \rho_{T}$ if there exists $T \in \overline{\mcB}$ with $u \in N(T)$, we set $\hat{\mfq}(u) \eqdef u$ otherwise.

We now define a total homogeneity $\varsigma$ on the trees of $\widehat{\CU}_{\CX}$ by setting  
\begin{equation}\label{single wick homogeneity}
\begin{split}
\varsigma
\eqdef& 
-\sum_{u \in N(\sT)}
|\sn(u)|_{\s} 
\delta^{\uparrow}[ \{\hat{\mfq}(u),\logof\} ]
+
\sum_{
\substack{
B \in  \pi\\
B \subset \leavesleft{\mcB}{\sT} 
}
}
\c^{B}
+
\sum_{T \in \overline{\mcB}}
\omega(T)
\delta^{\uparrow}[\rho_{T}]
+
\varsigma^{R}
\\
&
+
\sum_{
\substack{ 
e \in \kernelsleft{\mcB}{\sT}\sqcup K^{\downarrow}(\overline{\mcB})\\
e \not \in \enS
}
}
\mfh(e)
\delta^{\uparrow}[\{e_{\ch},\hat{\mfq}(e_{\p})\}]
+
\sum_{e \in \cD} 
\gamma(e)
\left(
\delta^{\uparrow}[\{e_{\ch},\hat{\mfq}(e_{\p})\}]
-\delta^{\uparrow} 
[\{\logof,\hat{\mfq}(e_{\p})\}]
\right)\\[1.5ex]
&
+
\big[
\sum_{e \in \enS} 
(\gamma(e)-1) 
\left(
\delta^{\uparrow}[\{e_{\ch},\logof\}]
-
\delta^{\uparrow} 
[
\{\logof,\hat{\mfq}(e_{\p})\}]
\right)
+
\mfh(e)
\delta^{\uparrow}
[
\{e_{\ch},\logof\}]
\big],
\end{split}
\end{equation}
where the the total homogeneity $\varsigma^{R}$ is defined by setting, for each $\bo{T} \in \widehat{\CU}_{\CX}$,  
\[
\varsigma_{\go{T}}^{R}
\eqdef
\sum_{B \in R(\go{T})} 
\fict(B)
\left(
\delta^{\Uparrow}[B]
-
\delta^{\uparrow}[B]
\right)
\]
and
$R(\bo{T}) 
\eqdef 
\left\{ B \in \pi:\  
B \subset \leavesleft{\mcB}{\sT},\ 
\fict(B) > 0,\
\textnormal{ and }
\go{L}_{B^{\uparrow}}
= 
B
\right\}$. 
We can then state the final lemma of this section.
\begin{lemma}\label{lem: bound on full integrand}
If one defines $\varsigma$ as in \eqref{single wick homogeneity}, then uniform in $\lambda \in (0,1]$, there exists a $F_{\lambda} \in \mathrm{Mod}_{\genwickleaves}(\mathring{\CW}_{\lambda})$ such that 
\[
\|F_{\lambda}
\|_{\varsigma,\mcN_{\go{H}},\wickleaves}
\lesssim 
\lambda^{-|\s|} \|K(\sT)\|_{\mft}
\cdot
\|\xi\|_{|L(\sT) \setminus \wickleaves|}\;,
\] 
where we are using the notation of \eqref{def: a derivative norm}. 
\end{lemma}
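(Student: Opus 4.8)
The plan is to assemble the bound on $\mathring{\CW}_\lambda^{\mbn}$ factor by factor, exploiting the inductive machinery already developed (Lemmas~\ref{lem: genvertbd}, \ref{lem: top genvert bound} and \ref{lem: cut edges estimate}) and then to dispose of the ``fictitious'' second-cumulant renormalizations by a Taylor-expansion argument exactly mirroring the treatment of $\varsigma^R$ inside the proof of Lemma~\ref{lem: genvertbd}. Recall from \eqref{def of inductively summed kernels} that $\mathring{\CW}_\lambda^{\mbj}$ is a product of: the test function $\psi^\lambda$; the ``low-level'' kernels $\ke{\kernelsleft{\mcB}{\sT}\setminus\cB}{0,\mbj}$ together with the doubly-renormalized edges $\KerTilde_{0,\mbj}^{\cB\setminus K^\downarrow(\overline\mcB)}$; the low-level cumulant product $\mathrm{Cu}^{\leavesleft{\mcB}{\sT}}_{\pi,\mbj}$; the power factors $\powroot{\nodesleft{\mcB}{\sT}}{\sn,\logof}{\mbj}$; and finally, for each $S\in\overline\mcB$, a factor $\hat H_S^{\mbj}[\,\cdot\,]$ applied to the top-level kernels associated to $S$. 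The first step is to bound each of the non-$\hat H_S^{\mbj}$ factors pointwise using the single-slice support properties and the kernel norm estimates: $\psi^\lambda$ contributes the $\lambda^{-|\s|}$ and forces $n_{\{\logof,\mainroot\}}\ge\lfloor-\log_2\lambda\rfloor$; the low-level kernels give factors $2^{\mfh(e)n_e}$ matching the $\mfh(e)\delta^\uparrow[\cdot]$ terms in \eqref{single wick homogeneity}; the doubly-renormalized edges $\KerTilde^{\{e\}}_{0,\mbj}$ are handled by Lemma~\ref{lem: cut edges estimate} (after verifying the hypothesis \eqref{triangle inequality for positive renormalized} holds on $\mcN_{\go H}$ thanks to the compatibility of $P^{\mbn}$ with $\cG^{\mbn}$, which guarantees that edges in $\cB$ satisfy the relevant comparisons between $\mbn_\mcB(\logof,e_{\p})$ and $\mbn_\mcB(e_{\p},e_{\ch})$), producing the $\gamma(e)$-terms; the cumulant factor contributes $\c^B$-type homogeneities via Definition~\ref{def: useful cumulant bound}; and the power factors give the $-|\sn(u)|_\s\delta^\uparrow[\{\hat\mfq(u),\logof\}]$ terms. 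For the $\hat H_S^{\mbj}$ factors, I would apply Lemma~\ref{lem: top genvert bound} directly: that lemma already packages the sum over internal scales, the renormalization of $S$ itself (contributing $\omega(S)\delta^\uparrow[\rho_S]$), and the correct $\gamma(e)$ and $\mfh(e)$ contributions for $e\in K^\downarrow(S)$.

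The second step is to identify the precise total homogeneity produced by this factor-by-factor estimate with the $\varsigma$ defined in \eqref{single wick homogeneity}. This is bookkeeping: one checks that $\sum_{S}\omega(S)\exte^{\mbj}(S)$ matches $\sum_T\omega(T)\delta^\uparrow[\rho_T]$ evaluated along the coalescence tree $\mcT(\mbn)$, using \eqref{eq: summed homogeneity equality - genvertbd}-style identities relating the scale-tree picture to the $\inte/\exte$ quantities. The splitting of the edge sums into $e\in\cD$ vs.\ $e\in\enS$ in \eqref{single wick homogeneity} matches the two branches of $\eta(\cdot)$ in Lemma~\ref{lem: cut edges estimate} and of the exponent in \eqref{e: toplevel genvert bound}. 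One must be careful that the ``$-\gamma(e)\delta^\uparrow[\{\logof,\hat\mfq(e_{\p})\}]$'' terms and the ``$-(\gamma(e)-1)\delta^\uparrow[\{\logof,\hat\mfq(e_{\p})\}]$'' terms are correctly attributed depending on whether $e$ is a cut-edge under $\cS$ or $\cD$, and that the scale-collapse phenomenon in Lemma~\ref{lem: top genvert bound} (the $|j_{\{\logof,u\}}-j_{(\logof,\rho_S)}|\le C$ constraint) lets one freely interchange scales $j_{\{\logof,u\}}$ for $u\in\tilde N(S)$ with $j_{\{\logof,\rho_S\}}$ inside the power-counting, which is exactly what permits writing $\delta^\uparrow[\{\hat\mfq(u),\logof\}]$ rather than keeping separate scales.

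The third step handles $\varsigma^R$, the fictitious renormalization of second cumulants. For each $\mbn\in\mcN_{\go H}$ with $R(\mcT(\mbn))\ne\emptyset$, I would replace $\mathring{\CW}_\lambda^{\mbn}$ by its modification $\mathring F_\lambda^{\mbn}$ obtained by applying $\prod_{B\in R(\bo T)}(1-\mathscr Y_B)$ to the portion of the integrand depending on $y_{(+,B)}$, exactly as in the second half of the proof of Lemma~\ref{lem: genvertbd}. The vanishing of the $\mathscr Y_B$-subtracted terms under $\int dy_{(+,B)}$ is guaranteed by property \eqref{eq: fict renorm int vanishes} of $\widetilde{\mathrm{Cu}}_{B,n_B}$, valid because $B\in R(\bo T)$ forces $n_B>0$; the gain is the extra factor $2^{\fict(B)(\go s(B^\Uparrow)-\go s(B^\uparrow))}$, which is precisely the $\varsigma^R$ contribution. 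The membership $\mathring F_\lambda\in\mathrm{Mod}_{\genwickleaves}(\mathring{\CW}_\lambda)$ then follows from the definition of $\mathrm{Mod}$, and the domain constraints required by Definition~\ref{def:boundedBy} are straightforward from the support properties of the single slices.

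I expect the main obstacle to be the third step combined with the verification that $\varsigma$ is subdivergence-free on $\CX$ for the scale set $\mcN_{\go H}$ — this is the global analogue of Lemma~\ref{lem: proof of subdivfree}, and it is where the super-regularity hypothesis of Definition~\ref{def: strong subcriticality} enters to rule out, for every $\bo T\in\mcU_\CX$ and every re-expanded node set $M$ arising from $\bo T$, the possibility that the summed homogeneity $\tilde\varsigma(M)$ is nonnegative. The argument will again split into cases according to the $K(\sT)$-connectivity structure of $M$ and whether $T(M)$ is compatible with $\pi$, using \eqref{super-reg 0} to bound the homogeneity of each connected component and the definition of $\mathbb F_\pi$ together with $\mathring{\mcN}_S$ to exclude genuine divergent subtrees that would have been caught by $P^{\mbn}$ or $\cG^{\mbn}$. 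Once subdivergence-freeness is in hand, the bound on $\|F_\lambda\|_{\varsigma,\mcN_{\go H},\wickleaves}$ is immediate from the definition of that norm (Definition~\ref{def: a derivative norm}) together with the supremum estimates assembled in steps one through three, with the $\lambda^{-|\s|}$ coming from $\psi^\lambda$ and the kernel-norm product $\|K(\sT)\|_{\mft}=\prod_{e\in K(\sT)}\|K_{\mft(e)}\|_{|\mft(e)|_\s,2|\s|\cdot|N(\sT)|}$ and the noise norm $\|\xi\|_{|L(\sT)\setminus\wickleaves|}$ absorbing all the cumulant factors.
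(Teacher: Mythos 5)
Your proposal follows the same structure as the paper's proof: you build the family $F_{\lambda}$ so that $F_{\lambda}^{\mbn}=\mathring{\CW}_{\lambda}^{\mbn}$ when $R(\mcT(\mbn))=\emptyset$ (combining Lemmas~\ref{lem: genvertbd}, \ref{lem: cut edges estimate} and \ref{lem: top genvert bound} together with the observation that \eqref{triangle condition for scales} guarantees \eqref{triangle inequality for positive renormalized} on $\mcN_{\go H}$), and you replace $\mathring{\CW}_{\lambda}^{\mbn}$ by the $(1-\mathscr{Y}_B)$-modified integrand when $R(\mcT(\mbn))\ne\emptyset$, using \eqref{eq: fict renorm int vanishes} and the disjointness of $\wickleaves$ from $\bigcup_{B\in R(\go T)}B$ to justify membership in $\mathrm{Mod}_{\genwickleaves}(\mathring{\CW}_{\lambda})$ and a Taylor-remainder estimate to produce the $\varsigma^R$ gain. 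That is exactly the paper's argument.

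One point of scope to flag: your closing paragraph treats the verification that $\varsigma$ is subdivergence-free on $\CX$ as part of what this lemma requires. It is not. The norm $\|\cdot\|_{\varsigma,\mcN_{\go H},\wickleaves}$ in Definition~\ref{def: a derivative norm} is a pure supremum; verifying $\|F_{\lambda}\|_{\varsigma,\mcN_{\go H},\wickleaves}\lesssim\cdots$ requires only the domain and supremum estimates from your steps one through three. Subdivergence-freeness of $\varsigma$ (and the large-scale integrability condition) are hypotheses of Theorem~\ref{thm: half graph assumptions} needed to \emph{sum} the scales later, and the paper establishes them separately in Lemmas~\ref{subdiv free big graph} and~\ref{large scale bound for big graph} as part of Proposition~\ref{final prop}. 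Including that verification here does not produce an error, but it conflates two distinct steps of the larger proof of Proposition~\ref{prop: main estimate}.
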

When we say this condition holds uniform in $\lambda$ we are including uniformity in the combinatorial constant appearing in \eqref{domain condition for modification}.
\begin{proof} 
We set, for each $\mbn \in \mcN_{\go{H}}$, 
\[
F^{\mbn}_{\lambda}
\eqdef
\begin{cases}
\mathring{\CW}^{\mbn}_{\lambda} & \textnormal{if } \mbn \in \mcN_{\go{H},\lambda}
\textnormal{ and } R(\mcT(\mbn)) = \emptyset\;,\\
\mathring{F}_{\lambda}^{\mbn} & \textnormal{if } \mbn \in \mcN_{\go{H},\lambda}
\textnormal{ and } R(\mcT(\mbn)) \not = \emptyset\;,\\
0 & \textnormal{otherwise.}
\end{cases}
\]
We will define $\mathring{F}_{\lambda}^{\mbn}$ on a tree by tree basis later. 
It suffices to check the domain condition and desired supremum bounds in each of the three cases separately.
For the first case one this is done by combining all the lemmas of Section~\ref{sec: inductive bound}, and all the previous lemmas of this section, note that by the condition \eqref{triangle condition for scales} there is come combinatorial constant $c$ such that one has the condition \eqref{triangle inequality for positive renormalized} for every $e \in \mcB$ and every $\mbn$ appearing in our supremum . 

For the second case we fix $\go{T} \in \CU_{\CX}$ with $R(\go{T}) \not = \emptyset$ and $\mbn \in \mcN_{\go{H},\lambda}$ with $\mcT(\mbn) = \go{T}$. 
We then set
\[
\mathring{F}_{\lambda}^{\mbn}
\eqdef
\mathrm{Cu}^{\hat{R}(\go{T})}_{\pi,\mbn}
\cdot
\Big(
\prod_{B \in R(\go{T})}
(1 - \mathscr{Y}_{B})
\Big)
\big[
\hat{G}_{\lambda}^{\mbj}
\big]\;,
\]
where $\hat{R}(\go{T}) \eqdef \bigcup_{B \in R(\go{T})} B$, the operators $\mathscr{Y}_{B}$ were defined in \eqref{def: fict renorm}, and
\begin{equs}
\hat{G}_{\lambda}^{\mbj}
\eqdef&\;
\psi^{\lambda}_{\mbj}
\cdot
\ke{
\kernelsleft{\mcB}{\sT}\setminus \cB}{0,\mbj}
\mathrm{Cu}^{\leavesleft{\mcB}{\sT} \setminus \hat{R}(\go{T})}_{\pi,\mbn}
\KerTilde_{0,\mbj}^{\cB \setminus K^{\downarrow}(\overline{\mcB})}
\powroot{\nodesleft{\mcB}{\sT}}{\sn}{\logof,\mbj}\\
&
\quad \cdot
\prod_{S \in \overline{\mcB}}
\hat{H}_{S}^{\mbj} \left[ 
\KerTilde_{0,\mbj}^{\cB \cap K^{\downarrow}(S)} 
\ke{K^{\downarrow}(S) \setminus \cB}{0,\mbj}
\powroot{\tilde{N}(S)}{\sn,\mbj}{\logof}
\right].
\end{equs}
An important observation is that the sets $\wickleaves$ and $\bigcup_{B \in R(\go{T})} B$ must be disjoint.
Therefore one can use the same argument as in Lemma~\ref{genvertbd} to show that for any $x \in (\R^{d})^{\wickleaves}$ one has
\[
\int_{\CX_{0} \setminus \wickleaves} dy\ 
\mathring{F}_{\lambda}^{\mbn}(x \sqcup y)
=
\int_{\CX_{0} \setminus \wickleaves} dy\ 
\tilde{\CW}_{\lambda}^{\mbn}(x \sqcup y).
\]
The domain condition for $\mathring{F}_{\lambda}^{\mbn}$ is also straightforward to check. 
The desired supremum bound for $\mathring{F}_{\lambda}^{\mbn}$ follows by the same sort of Taylor remainder argument as in Lemma \ref{genvertbd}. 
For multi-indices $m$ supported on $\wickleaves$ one gets the desired supremum bounds for on the corresponding derivative by following the same argument -- note that the operator $D^{m}$ commutes with the $\mathscr{Y}_{B}$ and only acts on the $\hat{G}_{\lambda}^{\mbj}$. 
\end{proof} 
\section{Moments of a single tree}\label{sec: est on moments of trees}
We now state the propositions which combined with Theorem \ref{thm: half graph assumptions} establish the main theorem.
\begin{proposition}\label{final prop}
The total homogeneity $\varsigma$ given by \eqref{single wick homogeneity} satisfies the conditions of Theorem~\ref{thm: half graph assumptions} with $\CX$, $\CX_{0}$, $\go{H}$ and $\mcN_{\go{H}}$ as given, $\logof$ as the pinned vertex, $\go{H}_{\ast} \eqdef \{ \logof,\mainroot\}$, $\alpha \eqdef - |\bar{T}^{\bar \mfn,0}_{\bar \mfe}| - |\mft(\wickleaves)|_{\s} - |\s|$, and $\genwickleaves \eqdef \wickleaves$ with its type map $\mft$.  
\end{proposition}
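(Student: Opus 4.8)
The statement asserts that the total homogeneity $\varsigma$ defined in \eqref{single wick homogeneity} meets the hypotheses of Theorem~\ref{thm: half graph assumptions} (the ``half graph'' moment estimate) with the indicated data. The proof is a verification, case-by-case, that each structural condition required by that theorem holds for our specific $\varsigma$; there is no clever trick, but the bookkeeping is substantial. I would organize the verification around the list of conditions in the statement of Theorem~\ref{thm: half graph assumptions}: that $\varsigma$ is of the correct order $\alpha$ at the root pair $\go{H}_\ast = \{\logof,\mainroot\}$, that it is subdivergence free on $\CX$ for the set of scales $\mcN_{\go{H}}$, and the compatibility of $\varsigma^R$ (the second-cumulant fictitious renormalization piece) with the half-graph structure and with the Wick leaves $\genwickleaves = \wickleaves$.

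First I would compute the order of $\varsigma$. Summing all the $\delta^\uparrow$ contributions over the whole vertex set $\CX$ collapses telescopically: the $\omega(T)$ terms over $T \in \overline{\mcB}$, the $\mfh(e)$ and $\gamma(e)$ kernel terms, the $\c^B$ cumulant terms, the $\fict(B)$ terms in $\varsigma^R$ (whose $\delta^\Uparrow - \delta^\uparrow$ structure cancels upon total summation), and the $-|\sn(u)|_\s$ power terms. Carrying this out, using the definition of $|\sT^{\sn}_{\se}|_\s$ in \eqref{Regularity structure homogeneity}, the orders $|\mft(e)|_\s$, and the fact that $\sum_{B \in \pi} |\mft(B)|_\s$ accounts precisely for the cumulant-paired leaves while $\genwickleaves$ carries $|\mft(\wickleaves)|_\s$, one should land exactly on $\alpha = -|\sT^{\sn}_{\se}|_\s - |\mft(\wickleaves)|_\s - |\s|$ (the $-|\s|$ coming from the pinned-vertex normalization, as in the translation-invariance reduction used throughout). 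This is essentially a replay of the order computation implicit in Lemma~\ref{lemma: higher cumulants are fine} and the super-regularity bounds, now globalized.

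The heart of the matter is the subdivergence-freeness of $\varsigma$ on $\CX$ for $\mcN_{\go{H}}$, and this is where I would spend most of the effort. The argument parallels Lemma~\ref{lem: proof of subdivfree} very closely: for an arbitrary coalescence tree $\bo{T} \in \CU_{\CX}$ and any internal node $a \ne \rho_{\bo{T}}$, one defines the ``re-expanded'' node set $\go{N}(a) \subset N(\sT) \sqcup \{\logof\}$ obtained by pulling back through the contraction $\hat{\mfq}$, and one must show that the summed homogeneity $\sum_{b \in \bo{T}_{\ge a}} \varsigma_{\bo{T}}(b) - (|\bo{L}_a| - 1)|\s|$, which equals $\tilde\varsigma(\go{N}(a))$ for an appropriate set function $\tilde\varsigma$ generalizing \eqref{eq: summed homogeneity for genvertbd}, is strictly negative. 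The casework is: (i) if $\logof \in \go{N}(a)$, then the relevant substructure straddles the base point and the cut/Taylor-subtraction terms $\gamma(e)$, $\gamma(e)-1$ supply the needed positivity --- here one invokes that $\mcB$ was safe-projected so that no element of $\Div$ compatible with $\mcB$ and disjoint from the harvested cuts is power-counting divergent, exactly as in the final case of Lemma~\ref{lem: proof of subdivfree}; (ii) if $\logof \notin \go{N}(a)$ and $\go{N}(a)$ restricted to the true nodes is a $K(\sT)$-connected set $M$ with $T(M)$ compatible with $\pi$, one gets $\tilde\varsigma = -|T(M)^0_{\se}|_\s$, and $T(M) \notin \mcQ$ forces this to be negative by the definition of $\mcN_{\go{H}}$ (via $P^{\mbn}[\mcB] = \mcS$); (iii) the disconnected and $\pi$-incompatible cases are handled using super-regularity \eqref{super-reg 0} and Definition~\ref{def: cumulant strong subcriticality}, verbatim as before, with the extra contributions from $\wickleaves$ (which only appear as genuine noise factors, hence never hurt the power counting). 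The presence of $\genwickleaves = \wickleaves$ must be threaded through: leaves in $\wickleaves$ contribute $-|\mft(u)|_\s$ to $\tilde\varsigma$ when inside $M$ and nothing when outside, and since $\wickleaves$ receives no cumulant pairing, the super-regularity inequalities still apply with $\mcb{j}_{L(\sT)}$ computed over the full leaf type set.

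Finally I would verify the conditions specific to the half-graph setup: that $\go{H}$ together with the distinguished pair $\go{H}_\ast = \{\logof,\mainroot\}$ and the Wick leaves $\genwickleaves$ forms an admissible half-graph (every Wick leaf has the required number of incident edges and the multigraph structure coming from \eqref{multigraph for single tree} is connected after adjoining the cumulant edges $\CE_\pi$ and the $\logof$-edges), and that $\varsigma^R$ satisfies the local homogeneity bound near each renormalized block $B \in R(\bo{T})$ needed for the integration-by-parts in the proof of Theorem~\ref{thm: half graph assumptions} --- this is immediate from $|b|_\s \le \fict(B)$ and the definition of $\overline{\mathrm{Der}}(B)$. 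The main obstacle, as usual, is not conceptual but the careful matching of indices and homogeneities in case (iii) of the subdivergence-freeness argument, in particular ensuring that when a straddling structure overlaps both a harvested cut and a divergent subtree the two renormalization gains ($\gamma$ from the cut, $\bar\omega$ from the subtree) are accounted for exactly once each and do not double-count; this is precisely the ``positive and negative renormalizations don't overlap in phase space'' mantra, and it is made rigorous here by the compatibility of $P^{\mbn}$ and $\cG^{\mbn}$ established in Section~\ref{sec: Safe forest projection}.
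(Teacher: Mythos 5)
Your proposal correctly identifies two of the verifications required — computing the order and checking the subdivergence-free (small-scale integrability) condition \eqref{integrability condition - big graph} — and your casework for the latter (re-expanded node sets, the $\logof \in \go{N}(a)$ vs.\ $\logof \notin \go{N}(a)$ dichotomy, appeals to super-regularity and the safe-forest projection) does track the structure of the paper's Lemma~\ref{subdiv free big graph}. But there is a genuine gap: Theorem~\ref{thm: half graph assumptions} has a \emph{third} hypothesis, the large-scale decay condition \eqref{decay at large scales}, which you never address. That condition is a numerical lower bound on $\sum_{b \in \go{T}_{\not\ge a}} \varsigma_{\go{T}}(b)$ for $a \le \CX_\ast^{\uparrow}$, i.e.\ on the sum over the \emph{complement} of the subtree above $a$, and it is what makes the sum over scales decay at large distances from the base point. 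In the paper this is verified separately in Lemma~\ref{large scale bound for big graph} by a quite different argument: one defines $\tilde\varsigma$ on $M = \allnodes \setminus \go{N}(a)$, decomposes into $K(\sT)$-connected components, and proves inductively the inequality $\tilde\varsigma(M) \ge -|\clearrootn[\bar T_{\ge}(\tilde e)]^{\sn}_{\se}|_{\s} + \gamma(\tilde e)\mathbbm{1}\{\tilde e \in \cut\} > 0$, using the identity \eqref{tree homogeneity} and the definition of $\gamma(\cdot)$. You seem to have folded this into a vague discussion of ``admissible half-graph'' structure and a bound on $\varsigma^R$, but neither of those is the content of \eqref{decay at large scales}; the former is a graph-connectivity requirement and the latter is a derivative-counting observation, whereas the missing condition is a concrete homogeneity inequality that must be checked by the same type of case analysis as the subdivergence-free one, only with the opposite sign and with the nodes below $a$ in the coalescence tree. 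Without this verification your argument would not establish the convergence of the sum over coarse scales, and hence would not produce the $\lambda^{2p\beta}$ factor in \eqref{the final bound}.
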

\begin{proof}
It is straightforward to check $\varsigma$ is of order $- |\bar{T}^{\bar \mfn,0}_{\bar \mfe}| - |\mft(\wickleaves)|_{\s} - |\s|$. We split the verification of \eqref{integrability condition - big graph} and \eqref{decay at large scales} into Lemma~\ref{subdiv free big graph} and Lemma~\ref{large scale bound for big graph}.
\end{proof}
Before stating and proving Lemma~\ref{subdiv free big graph} and Lemma~\ref{large scale bound for big graph} we give introduce more notation and state an observation.

For any $u \in N(\sT)$ we write $\sT_{\ge}(u)$ for the maximal subtree of $\sT$ with true node set $\{v \in N(\sT): v \ge u \}$. 
Also, for any $K(\sT)$ connected $M \subset N(\sT)$ we write
\begin{equs}
M^{\cu}
&\eqdef
(L(\sT) \setminus \wickleaves) \cap M,\   
M^{\exte}
\eqdef
\wickleaves \cap M,\\ 
K(M)
&\eqdef
K(T(M)), 
\enskip
\textnormal{and}
\enskip
K^{\downarrow}(M)
\eqdef
K^{\downarrow}(T(M))\;.
\end{equs}
Finally, an important observation we will use is that for any $K(\sT)$-connected subset $A \subset N(\sT)$ one has
\begin{equation}\label{tree homogeneity}
|T(A)^{\sn}_{\se}|_{\s} 
+
\sum_{e \in K^{\downarrow}(A)}
|
\clearrootn[\sT_{\ge}(e)^{\sn}_{\se}]|_{\s}
=
|\sT_{\ge}(\rho_{A})^{\bar \mfn}_{\bar \mfe}|.
\end{equation} 
\begin{lemma}\label{subdiv free big graph}
The total homogeneity $\varsigma$, as defined by \eqref{single wick homogeneity}, satisfies condition \eqref{integrability condition - big graph} of Theorem~\ref{thm: half graph assumptions}.
\end{lemma}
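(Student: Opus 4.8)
\textbf{Proof proposal for Lemma~\ref{subdiv free big graph}.}
The plan is to verify that the total homogeneity $\varsigma$ of \eqref{single wick homogeneity} is subdivergence free on the multigraph $\go{H}$ (in the sense required by condition \eqref{integrability condition - big graph} of Theorem~\ref{thm: half graph assumptions}). As in the proof of Lemma~\ref{lem: proof of subdivfree}, I will fix an arbitrary coalescence tree $\bo{T} \in \widehat{\CU}_{\CX}$, and for each internal vertex $a \in \mathring{\bo{T}} \setminus \{\rho_{\bo{T}}\}$ control the ``partial sum homogeneity'' $\sum_{b \in \bo{T}_{\ge a}} \varsigma_{\bo{T}}(b) - (|\bo{L}_a| - 1)|\s|$. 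First I would unfold the contractions: for $a \in \mathring{\bo{T}}$ set $\go{N}(a) \eqdef \bo{L}_a \sqcup \bigsqcup_{T \in \overline{\mcB},\ \rho_T \in \bo{L}_a} \tilde N(T)$, exactly as in \eqref{def: rexpanded set}, and let $\mcQ$ be the collection of all such $\go{N}(a)$ for internal non-root $a$. The key point (mirroring \eqref{def: family of re-expanded sets}) is that because $\bo{T} \in \CU_{\CX}$ and $\mbn \in \mcN_{\go{H}}$, no $N(T)$ with $T \in \overline{\mcB}$ lies in $\mcQ$, and every $M \in \mcQ$ is $\mcE^{\inte}(\sT^\ast)$-connected after contraction, i.e. corresponds to a $\CE$-connected node subset of $N(\sT)$.

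The main work is then to define a ``summed homogeneity'' $\tilde\varsigma: 2^{N(\sT)} \setminus \{\emptyset\} \to \R$ analogous to \eqref{eq: summed homogeneity for genvertbd}, but now incorporating the positive-cut contributions: for $M \subset N(\sT)$ the value $\tilde\varsigma(M)$ collects $\sum_{e \in K(T(M)),\ e \subset M} \mfh(e)$, the cumulant penalty terms $-\sum_{B \in \pi,\ B \not\subset M} |\mft(B \cap M)|_{\s,\c,L(\sT)}$, the connectivity cost $-(|M|-1)|\s|$, the second-cumulant terms $-\sum_{B \subset M}[|\mft(B)|_{\s} + \fict(B)\mathbbm 1\{M = B\}]$, plus the cut contributions $\sum_{e \in K^{\downarrow}(M) \cap \cut_{\mcB}} (\text{appropriate multiple of } \gamma(e))$ coming from the edges of $\cD$ and $\cS$ hanging off $M$. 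I would then check, by a bookkeeping computation parallel to \eqref{eq: summed homogeneity equality - genvertbd}, that $\tilde\varsigma(\go{N}(a))$ equals the partial sum $\sum_{b \in \bo{T}_{\ge a}} \varsigma_{\bo{T}}(b) - (|\bo{L}_a|-1)|\s|$. The crucial identity making the estimate work is \eqref{tree homogeneity}: for $K(\sT)$-connected $A$,
\[
|T(A)^{\sn}_{\se}|_{\s} + \sum_{e \in K^{\downarrow}(A)} |\clearrootn[\sT_{\ge}(e)^{\sn}_{\se}]|_{\s} = |\sT_{\ge}(\rho_A)^{\sn}_{\se}|_{\s}\;,
\]
so that when $T(M)$ is compatible with $\pi$ and no cumulant straddles its boundary, $\tilde\varsigma(M)$ is bounded above by $-|\sT_{\ge}(\rho_M)^{\sn}_{\se}|_{\s}$ plus the $\gamma(e)$ contributions, and the latter are arranged (through the definition of $\gamma$ as $\lceil |\clearrootn(\sT_{\ge}(e),0)^{\sn}_{\se}|_{+}\rceil$) precisely to leave a strictly negative remainder; this is where being in the \textit{reduced} regularity structure and the hypothesis that every $\sT_{\ge}(\rho_M)^{\sn}_{\se}$ (a strict subtree rooted above the true root of $\sT$, hence with more than one node whenever $M \neq N(\sT)$) is of negative homogeneity after the cuts get used, enters.

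The remaining cases split as in Lemma~\ref{lem: proof of subdivfree}: (i) $M \subset L(\sT)$ with $M$ contained in a single block of $\pi$ — here $\tilde\varsigma(M) < 0$ by the higher-cumulant power counting (Assumption~\ref{assump - noise + kernel} / Lemma~\ref{lemma: higher cumulants are fine}), after accounting for the possible $\fict(B)$ term via the $\varsigma^R$ contribution; (ii) $M$ not $K(\sT)$-connected — decompose into $K(\sT)$-components as in \eqref{connectivity decomp}, and use super-regularity \eqref{super-reg 0} on each non-trivial component together with the $(k-1)|\s|$ saving from disconnectedness, exactly as in \eqref{eq: subdivergence - components}; (iii) $M$ connected but $T(M)$ incompatible with $\pi$ — a straddling cumulant reduces $\tilde\varsigma(M)$ strictly, via Definition~\ref{def: cumulant strong subcriticality}; (iv) $M$ connected, $T(M)$ compatible with $\pi$ — the main case above, where $T(M)$ cannot be a divergent subtree of $\mcB$ because $M \notin \mcQ$ by construction of $\mcN_{\go{H}}$ (this is the analogue of the final paragraph of Lemma~\ref{lem: proof of subdivfree}), so $|\sT_{\ge}(\rho_M)^{\sn}_{\se}|_{\s}$ is controlled using the classification via $P^{\mbn}$ and the cut rule $\cG^{\mbn}$. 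I expect the main obstacle to be case (iv): correctly tracking how many cut contributions $\gamma(e)$ attach to a given $M$ and showing that the combination of \eqref{tree homogeneity}, the ceiling in the definition of $\gamma$, and the super-regularity of the ambient decorated tree yields a \emph{strict} inequality rather than merely $\le 0$ — in particular one must be careful that the ``fictitious'' second-cumulant renormalizations and the positive cuts do not both try to claim the same homogeneity budget, which is exactly what the compatibility of $P^{\mbn}$ with $\cG^{\mbn}$ (proved in Section~\ref{sec: Safe forest projection}) and the bookkeeping in $\varsigma^R$ are designed to prevent.
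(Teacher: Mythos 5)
Your proposal follows the broad outline of Lemma~\ref{lem: proof of subdivfree} but misses the feature that makes Lemma~\ref{subdiv free big graph} genuinely harder: here the underlying vertex set is $\CX = \tilde N(\overline{\mcB})^c$, which \emph{contains} $\logof$, so the sets $\go N(a)$ can contain $\logof$. Your $\tilde\varsigma$ is defined on $2^{N(\sT)}$ and your cases (i)--(iv) reproduce exactly the case split of Lemma~\ref{lem: proof of subdivfree}; neither mentions $\logof$. But the target condition \eqref{integrability condition - big graph} has the indicator $\mathbbm1\{\bo L_a \not\ni \logof\}$ multiplying the favorable slack term, so the case $\logof \in \go N(a)$ is precisely the one where you get \emph{no} extra slack and need the raw inequality $\tilde\varsigma(M) < 0$.

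This $\logof \in M$ case is also the case where the positive-cut contributions actually appear in $\tilde\varsigma$: in \eqref{eq: subdiv free big graph homogeneity} the $\gamma(e)$ and $\mfh(e)$ terms for $e \in \cS$, and the $|\sn(\cdot)|_\s$ term, are all guarded by $\mathbbm1\{\logof \in M\}$, and the inequality $\tilde\varsigma(\go N(a)) \ge \sum_{b\in\bo T_{\ge a}}\varsigma_{\bo T}(b) - (|\bo L_a|-1)|\s|$ is justified by two structural facts about the cut rule (if $e\in\cS$ and $e_\p,\logof\in M$ then $e_\ch\in M$; if $e\in\cD$ and $e_\p,e_\ch\in M$ then $\logof\in M$) — not by the abstract ``compatibility of $P^\mbn$ with $\cG^\mbn$'' you invoke. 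Moreover your appeal to \eqref{eq: summed homogeneity equality - genvertbd} as giving an \emph{equality} is not available here: the paper only gets an inequality, because the cumulant penalty term $|\mft(B\cap M^\cu)|_{\s,\c,L(\sT)}$ is an upper bound, not an identity.

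Finally, once you are in the case $\logof \in M$, the actual proof does not reduce to one more invocation of super-regularity: it passes to a modified functional $\hat\varsigma$ (dropping the $\c$-dependent cumulant bookkeeping), decomposes $M\setminus\{\logof\}$ into $K(\sT)$-connected pieces, restricts to a family $\widehat{\mcQ}$ of ``upward-closed'' connected pieces, and proves the sharp bound \eqref{eq: inductive goal} by induction on $|\widehat M \cap \CX_0|$, using \eqref{tree homogeneity}, \eqref{tree homogeneity 2}, and the fractional-part identity for $\gamma(\tilde e) - 1$ to carry exactly the remainder ${\rm frac}(|\clearrootn[\sT_\ge(\tilde e)^{\sn}_{\se}]|_\s)$ through the recursion. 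Your sketch gestures at ``a bookkeeping computation'' and at $\gamma$ being a ceiling, but does not identify that the argument must be recursive in the number of true nodes, nor that the strict negativity in this case rests on the positivity of $|\clearrootn[\sT_\ge(e)^{\sn}_{\se}]|_+$ for all $e\in\cut$, not on super-regularity of divergent subtrees. Without isolating the $\logof$ case and supplying this inductive argument, the proof does not go through.
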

\begin{proof}
We define a map $\tilde{\varsigma}: 2^{\allnodes} \rightarrow \R$ as follows, for $M \subset \allnodes$ we set
\begin{equs}\label{eq: subdiv free big graph homogeneity}
\tilde{\varsigma}(M) 
\eqdef
& 
-
(|M| - 1) |\s|
+
\sum_{
\substack{
e \in K(\sT) \setminus \enS\\
e \subset M}
}
\mfh(e)\\
&
-
\mathbbm{1}\{\logof \in M\}
\left[
|\sn(M \setminus \{\logof\})|_{\s}
+
\sum_{
e \in \cD}
\gamma(e) 
\mathbbm{1}
\left\{ 
\begin{array}{c}
e_{\ch} \not \in M\\
e_{\p} \in M
\end{array} 
\right\}
\right]\\
&
+
\mathbbm{1}\{\logof \in M\}
\sum_{e \in \enS}
\left[
\mfh(e)
+
(\gamma(e) - 1) \mathbbm{1}\{e_{\p} \not \in M\}
\right]
\mathbbm{1}\{ e_{\ch} \in M\}\\
&
\quad
-
|\mft(M^{\exte})|_{\s}
-
\sum_{
\substack{
B \in \pi\\
B \subset M^{\cu}
} 
} 
|\mft(B)|_{\s}
-
\sum_{
\substack{
B \in \pi\\
B \not \subset M^{\cu}
}
}
|\mft(B \cap M^{\cu})|_{\s,\c,L(\sT)}
- 
\sum_{
\substack{
B \in \pi\\
M = B
}
}
\fict(B)\;.
\end{equs}
We fix, for the remainder of this proof, $\bo{T} \in \CU_{\CX}$. 
We define
\begin{equ}\label{def: rexpanded set2}
\go{N}(a)
\eqdef 
\bo{L}_{a}
\sqcup 
\Big( 
\bigsqcup_{
\substack{
T \in C_{\mcB}(\sT)\\
\rho_{T} \in \bo{L}_{a}
}
}
\tilde{N}(T)
\Big),
\end{equ}
and also set $
\mcQ \eqdef 
\left\{
\go{N}(a):\ 
a \in \mathring{\bo{T}} \setminus \{\rho_{\bo{T}}\}
\right\}$.

We claim that for any $a \in \mathring{\bo{T}} \setminus \{\rho_{\bo{T}}\}$, 
\begin{equ}\label{eq: a resummed homogeneity}
\Big(
\sum_{
b \in \bo{T}_{\ge a}
}
\varsigma(b)
\Big) 
- 
(|\bo{L}_{a}| -1) |\s|
\le
\tilde{\varsigma}(\go{N}(a))\;.
\end{equ}
To obtain \eqref{eq: a resummed homogeneity} note that our choice of cut rule means that for each $M \in \mcQ$, 
\begin{enumerate} 
\item $e \in \enS$ and $e_{\p}, \logof \in M$ together imply $e_{\ch} \in M$.
\item $e \in \cD$ and $e_{\ch},e_{\p} \in M$ together imply $\logof \in M$.
\end{enumerate}
We have an inequality because of the third term on the last line of \eqref{eq: subdiv free big graph homogeneity}. 

To prove the lemma it suffices to show that for all $M \in \mcQ$ one has
\begin{equation}\label{goal0}
\tilde{\varsigma}(M) <  
\mathbbm{1}
\left\{
M \not \ni \logof
\right\}
\Big[
\mcb{h}_{\c,\ell}(M^{\exte})
\wedge 
\mcb{j}_{\ell}(M^{\exte})
\wedge
\frac{|\s|}{2}
\Big]\;,
\end{equation}
where $\mcb{h}_{\c,\ell}(\cdot)$ is defined as in \eqref{homogeneity gain from missed noise}.
When $M$ satisfies both $M \not \ni \logof$ and $M^{\exte} = \emptyset$ one can show $\tilde{\varsigma}(M) < 0 $ by copying the argument of Lemma~\ref{lem: proof of subdivfree} nearly verbatim so this case is done.
 
The proof of the lemma will be completed by proving \eqref{goal0} in the two remaining cases, either: (i) $M \in \mcQ$, $M \not \ni \logof$, and $M^{\exte} \not = \emptyset$, or (ii) $M \in \mcQ$ and $M \ni \logof$.

We first treat case (i), again by splitting into subcases. Note that in this case the RHS of \eqref{goal0} is non-negative so for some subcases we just show $\tilde{\varsigma}(M) < 0$. 

Note that in case (i) it is impossible for $M$ to consist of $K(\sT)$-components of cardinality $1$. 
By $\go{H}$-connectivity this would forces $M = M^{\cu}$ so $M^{\exte} = \emptyset$.
 
Now suppose we are in a subcase of case $(i)$ where $M$ is not $K(\sT)$-connected and has at least one $K(\sT)$-component of cardinality greater than or equal to $2$.
We then have a decomposition of $M$ as in \eqref{connectivity decomp} and same arguments used in Lemma~\ref{lem: proof of subdivfree} can be copied verbatim to show that one always has $\tilde{\varsigma}(M) < 0$ in this subcase.

What is left in case (i) is to establish \eqref{goal0} for any $K(\sT)$-connected $M \in \mcQ$ with $M^{\exte} \not = \emptyset$, and $M \not \ni \logof$ -- this is then an immediate consequence of the inequality $\tilde{\varsigma}(M) \le - |T(M)^{0}_{\se}|_{\s}$ and Definition~\ref{def: cumulant strong subcriticality}.

Now we turn to case (ii) where it is convenient to prove a stronger bound than \eqref{goal0}.
Define another function $\hat{\varsigma}:2^{N^\ast} \rightarrow \R$ by using the same definition for $\tilde{\varsigma}$ as in \eqref{eq: subdiv free big graph homogeneity} but replacing the entire last line of \eqref{eq: subdiv free big graph homogeneity} with $-|\mft(M^{\cu} \sqcup M^{\exte})|_{\s}$. 
In case $(ii)$ it suffices to show $\hat{\varsigma}(M) < 
0$. 
Let $M \in \mcQ$ with $M \ni \logof$, writing $\{M_{j}\}_{j=1}^{n}$ for the $K(\sT)$ connected components of $M \setminus \{ \logof \}$ it is straightforward to see that
\[
\hat{\varsigma}(M) 
= 
\sum_{j = 1}^{n} \hat{\varsigma}(M_{j} \sqcup \{ \logof\})
\]
so it suffices to prove $\hat{\varsigma}(\widehat{M} \sqcup \{\logof\}) < 0$ for all $\widehat{M} \in \widehat{\mcQ}$ where we have defined
\[ 
\widehat{\mcQ}
\eqdef
\left\{ 
\widehat{M} \subset N(\sT):\ 
\begin{array}{c}
\widehat{M} \textnormal{ is } K(\sT)-\textnormal{connected and }
\exists\ M \in \mcQ,\ u \in \CX_{0}\\
\textnormal{ such that }
M \ni \logof
\textnormal{ and }
\widehat{M} = \sT_{\ge}(u) \cap M
\end{array} 
\right\}\;.
\]
Observe that for any $\widehat{M} \in \widehat{\mcQ}$ and $S \in \mcB$ one has $N(S) \subset \widehat{M}$ or $N(S) \cap \widehat{M} = \emptyset$.
A second important observation is that for any $\widehat{M} \in \widehat{\mcQ}$ one has 
\begin{equ}\label{tree homogeneity 2}
\sum_{ e \in K^{\downarrow}(\widehat{M})}
|\clearrootn[\sT_{\ge}(e)^{\sn}_{\se}]|_{\s}
\le\ 
\sum_{e \in \cD \cap K^{\downarrow}(\widehat{M})}
\gamma(e)
\end{equ}
because (i) the conditions $e \in K^{\downarrow}(\widehat{M})$ and $|\clearrootn[\sT_{\ge}(e)^{\sn}_{\se}]|_{\s} > 0$ together force $e \in \cD$ and (ii) the definition of $\gamma(e)$.

The claim we will prove, which will finish the proof of this lemma, is that for every $\widehat{M} \in \widehat{\mcQ}$
\begin{equation}\label{eq: inductive goal}
\hat{\varsigma}(\widehat{M} \sqcup \{\logof\})
\le
\begin{cases}
- |\s| 
- 
|\sT_{\ge}(\rho_{\widehat{M}})^{\sn}_{\se}|_{\s} 
\quad &\textnormal{ if } 
\rho_{\widehat{M}} \not \in e_{\ch}(\enS)\\
-
{\rm frac}(
|\clearrootn[\sT_{\ge}(\tilde{e})_{\se}^{\sn}]|_{\s})
\quad
&
\textnormal{ if }
\rho_{\widehat{M}} = \tilde{e}_{\ch} \textnormal{ for }\tilde{e} \in \enS,
\end{cases}
\end{equation}
where for $t \ge 0$ we define ${\rm frac}(t) = t - \lfloor t \rfloor$. 
Note that in the second case of the RHS such an edge $\tilde{e}$ is necessarily unique, this makes the formula well-defined.
We prove \eqref{eq: inductive goal} inductively with respect to the cardinality of $\widehat{M} \cap \CX_{0}$.

We start with the base case where $|\widehat{M} \cap \CX_{0}| = 1$ -- here there are no edges $e \in \enS$ with $e \subset \widehat{M}$ and the LHS of \eqref{eq: inductive goal} is equal to
\begin{equation}\label{eq: zero integrability base case}
\begin{cases}
-|\s| - |T(\widehat{M})^{\sn}_{\se}|_{\s}
-
\displaystyle\sum_{
e \in \cD \cap K^{\downarrow}(\widehat{M})
}
\gamma(e)
& 
\textnormal{if }
\rho_{\widehat{M}} \not \in e_{\ch}^{-1}(\enS), \\
-
|T(\widehat{M})^{\sn}_{\se}|_{\s}
+
(\mfh(\tilde{e}) - |\s|)
+ 
\gamma(\tilde{e})-1
-
\displaystyle\sum_{
e \in \cD \cap K^{\downarrow}(\widehat{M})
}
\gamma(e)
&
\textnormal{if }
\tilde{e} \in \enS,\ 
\rho_{\widehat{M}} = \tilde{e}_{\ch}\;. 
\end{cases}
\end{equation}
Now let us verify that \eqref{eq: zero integrability base case} implies \eqref{eq: inductive goal}. When $\rho_{\widehat{M}} \not \in e_{\ch}^{-1}(\enS)$ this follows from \eqref{tree homogeneity 2}, when $\rho_{\widehat{M}} = \tilde{e}_{\ch}$ ones uses \eqref{tree homogeneity} and \eqref{tree homogeneity 2} to obtain
\begin{equs}\label{eq: bad renormalization}
\gamma(\tilde{e}) - 1
&=
|\clearrootn[\sT_{\ge}(\tilde{e})_{\se}^{\sn}]|_{\s}
-
{\rm frac}
(
|\clearrootn[\sT_{\ge}(\tilde{e})_{\se}^{\sn}]|_{\s}
)\\
&= 
(|\s| - \mfh(\tilde{e}))
+
|\sT_{\ge}(\rho_{\widehat{M}})_{\se}^{\sn}|_{\s}
-
{\rm frac}
(
|\clearrootn[\sT_{\ge}(\tilde{e})_{\se}^{\sn}]|_{\s}
)\\
&= 
(|\s| - \mfh(\tilde{e}))
+
|T(\widehat{M})^{\sn}_{\se}|_{\s}
+
\sum_{ e \in K^{\downarrow}(\widehat{M})}
|\clearrootn[\sT_{\ge}(e)_{\se}^{\sn}]|_{\s}
-
{\rm frac}
(
|\clearrootn[\bar{T}_{\ge}(\tilde{e})^{\bar \mfn}_{\bar{\mfe}}]|
) \\
&\le
(|\s| - \mfh(\tilde{e}))
+
|T(\widehat{M})^{\sn}_{\se}|_{\s}
+
\sum_{ e \in \cD \cap K^{\downarrow}(\widehat{M})}
\gamma(e)
-
{\rm frac}
(
|\clearrootn[\bar{T}_{\ge}(\tilde{e})^{\bar \mfn}_{\bar{\mfe}}]|
)\;.
\end{equs}
We now turn to the inductive step. 
We fix $q \ge 2$ and assume that \eqref{eq: inductive goal} holds for all elements of $\widehat{\mcQ}$ whose intersection with $\CX_{0}$ is of cardinality strictly less than $q$. 
Suppose that we are given $\widehat{M} \in \widehat{\mcQ}$ with $|\widehat{M} \cap \CX_{0}| = q$. 

We first define
\begin{equs}
\overline{M} &\eqdef 
\begin{cases}
N(S) & \textnormal{ if } \rho_{\widehat{M}} = \rho_{S}, 
\textnormal{ for some }S \in \overline{\mcB}\\
\{ \rho_{\widehat{M}} \} & \textnormal{ otherwise}, 
\end{cases}\\[1.5ex]
K^{\downarrow}_{\rho,\inte}(\widehat{M}) 
&\eqdef K^{\downarrow}(\overline{M}) \cap K(\widehat{M}),
\enskip 
\textnormal{and}
\enskip
K^{\downarrow}_{\rho,\exte}(\widehat{M}) 
\eqdef 
K^{\downarrow}(\overline{M}) \setminus K(\widehat{M})\;.
\end{equs}
Next, we decompose $\widehat{M} \setminus \overline{M}$ into $K(\sT)$-connected components $\{\widehat{M}_{j}\}_{j=1}^{n}$, then the LHS of \eqref{eq: inductive goal} is then equal to
\begin{equs}\label{eq: inductive step 1}
{}
&
- |T(\overline{M})^{\sn}_{\se}|_{\s}
- |\s|
+
\sum_{j=1}^{n} 
\Big[
\hat{\varsigma}(\widehat{M}_{j} \sqcup \{\logof\}) - |\mft(\widehat{M}^{\exte}_{j})|_{\s}
\Big]\\
&
+
\sum_{e \in K^{\downarrow}_{\rho,\inte}(\widehat{M})}
\Big[
\mathbbm{1}\{ e \not \in \enS \}
\mfh(e)
 - 
\mathbbm{1}\{ e \in \enS \}
(\gamma(e) - 1)
\Big]\\
&\quad
-
\sum_{e \in K^{\downarrow}_{\rho,\exte}(\widehat{M})}
\mathbbm{1}\{ e \in \cD \}
\gamma(e)
+
\mathbbm{1}
\left\{ 
\begin{array}{c}
\rho_{\widehat{M}} = \tilde{e}_{\ch}\\
\textnormal{for }\tilde{e} \in \enS 
\end{array}
\right\}
\left[
\mfh(\tilde{e})
+
(\gamma(\tilde{e}) - 1) 
\right]
\end{equs}
Note that the two summation sets appearing in the first two sets are in bijection where one assigns each $\widehat{M}_{j}$ to the unique $e$ with $e_{\ch} = \rho_{\widehat{M}_{j}}$. 

For each $j$, $\widehat{M}_{j} \in \widehat{\mcQ}$ and $|\widehat{M}_{j} \cap \CX_{0}| < q$ -- thus using our inductive hypothesis combined with \eqref{eq: bad renormalization} gives
\begin{equs}\label{eq: induction work 2}
{}
&
\sum_{j=1}^{n} 
\hat{\varsigma}(\widehat{M}_{j} \sqcup \{\logof\}) 
+
\sum_{e \in K^{\downarrow}_{\rho,\inte}(\widehat{M})}
\Big[
\mathbbm{1}\{ e \not \in \enS \}
\mfh(e)
 - 
\mathbbm{1}\{ e \in \enS \}
(\gamma(e) - 1)
\Big]\\
\le
& 
-
\sum_{e \in K^{\downarrow}_{\rho,\inte}(\widehat{M})}
\mathbbm{1}\{ e \not \in \enS\}
\left(
|\bar{T}_{\ge} (e_{\ch})^{\sn}_{\se}|_{\s} + |\s| - \mfh(e) 
\right)
+
\mathbbm{1}\{ e \in \enS \}
|\clearrootn[\sT_{\ge}(e)^{\sn}_{\se}]|_{\s}\\
=&
\sum_{e \in K^{\downarrow}_{\rho,\inte}(\widehat{M})}
|\clearrootn[\bar{T}_{\ge} (e)^{\sn}_{\se}]|_{\s}\;.
\end{equs}
On the other hand,
\begin{equ}\label{induction work 3}
-
\sum_{e \in K^{\downarrow}_{\rho,\exte}(\widehat{M})}
\mathbbm{1}\{ e \in \cD \}
\gamma(e)
\le
-
\sum_{e \in K^{\downarrow}_{\rho,\exte}(\widehat{M})}
|\clearrootn[\bar{T}_{\ge} (e)^{\sn}_{\se}]|_{\s}\;.
\end{equ}
By using \eqref{eq: induction work 2} and \eqref{induction work 3} in \eqref{eq: inductive step 1} we see $\hat{\varsigma}(\widehat{M} \sqcup \{ \logof \} )$ is bounded above by
\begin{equs}
{} 
&
- |T(\overline{M})^{\sn}_{\se}|_{\s}
-|\s|
+
\mathbbm{1}
\left\{ 
\begin{array}{c}
\rho_{\widehat{M}} = \tilde{e}_{\ch}\\
\textnormal{for }\tilde{e} \in \enS 
\end{array}
\right\}
\big[
\mfh(\tilde{e})
+
\gamma(\tilde{e}) - 1
\big]
-
\sum_{e \in K^{\downarrow}(\overline{M})}
|\clearrootn[\bar{T}_{\ge} (e)^{\sn}_{\se}]|_{\s}
\\
&
=
-|\sT_{\ge}(\rho_{\widehat{M}})^{\sn}_{\se}|_{\s}
-|\s|
+
\mathbbm{1}
\left\{ 
\begin{array}{c}
\rho_{\widehat{M}} = \tilde{e}_{\ch}\\
\textnormal{for }\tilde{e} \in \enS 
\end{array}
\right\}
\big[
\mfh(\tilde{e})
+
\gamma(\tilde{e}) - 1 
\big].
\end{equs}
This is the desired bound when $\rho_{\widehat{M}} \not \in e_{\ch}(\enS)$. It is also the desired bound when the indicator function is non-vanishing since in this case
\begin{equs}
{}
&
-|\sT_{\ge}(\rho_{\widehat{M}})^{\sn}_{\se}|_{\s}
-|\s| + \mfh(\tilde{e})
=
|\clearrootn[\sT_{\ge}(\tilde{e})^{\sn}_{\se}]|_{\s}\\
&
\textnormal{and}
\enskip \gamma(\tilde{e}) - 1
=
|\clearrootn[\sT_{\ge}(\tilde{e})_{\se}^{\sn}]|_{\s}
-
{\rm frac}\{
|\clearrootn[\sT_{\ge}(\tilde{e})_{\se}^{\sn}]|_{\s}
\}\;.
\end{equs}
\end{proof}
\begin{lemma}\label{large scale bound for big graph}
The total homogeneity $\varsigma$, as defined by \eqref{single wick homogeneity}, satisifies condition \eqref{decay at large scales} of Theorem~\ref{thm: half graph assumptions}.
\end{lemma}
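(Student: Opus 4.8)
The plan is to verify the ``decay at large scales'' condition \eqref{decay at large scales} of Theorem~\ref{thm: half graph assumptions} for the total homogeneity $\varsigma$ of \eqref{single wick homogeneity}. Recall that this condition requires that for every coalescence tree $\bo{T} \in \widehat{\CU}_{\CX}$ and every $a \in \mathring{\bo{T}}$ such that $\go{H}_{\ast} = \{\logof,\mainroot\}$ is \emph{not} entirely contained in $\bo{L}_a$ (i.e., the cluster corresponding to $a$ does not yet contain both the basepoint and the root), the partial sum $\sum_{b \in \bo{T}_{\ge a}} \varsigma_{\bo{T}}(b)$ is \emph{strictly positive}. Heuristically this is the statement that any substructure of $\sT$ which does not yet ``see'' the test function is power-counting convergent from above, which is exactly why the sum over the external scales converges --- it is the counterpart in our setting of the fact that all structures strictly larger than a maximal divergence have positive homogeneity.

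The key step is to introduce, exactly as in the proof of Lemma~\ref{subdiv free big graph}, a ``resummed homogeneity'' function $\tilde\varsigma: 2^{\allnodes} \to \R$ together with the family $\mcQ$ of node sets $\go{N}(a)$ arising from the contraction of the forests $\mcB$, so that $\sum_{b \in \bo{T}_{\ge a}} \varsigma_{\bo{T}}(b) - (|\bo{L}_a| - 1)|\s|$ equals (or is bounded below by, up to the choice of cuts) $\tilde\varsigma(\go{N}(a))$ for each $a$. Then it suffices to show that $\tilde\varsigma(M) > 0$ for every $M \in \mcQ$ with $\logof \notin M$ or (more precisely) for every $M$ not containing all of $\go{H}_\ast$. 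First I would reduce to the connected case: when $M$ is not $K(\sT)$-connected one splits $M$ into $K(\sT)$-components as in \eqref{connectivity decomp}, and the elementary estimates used at the end of Lemma~\ref{lem: proof of subdivfree} and in case~(i) of Lemma~\ref{subdiv free big graph} --- using super-regularity \eqref{super-reg 0} and the strict positivity of the cumulant homogeneity jump from Definition~\ref{def: cumulant strong subcriticality} --- carry over verbatim. The remaining case is $M$ $K(\sT)$-connected; here one uses the identity \eqref{tree homogeneity} relating $|T(M)^{\sn}_{\se}|_{\s}$ and $\sum_{e \in K^{\downarrow}(M)} |\clearrootn[\sT_{\ge}(e)^{\sn}_{\se}]|_{\s}$ to $|\sT_{\ge}(\rho_M)^{\sn}_{\se}|_{\s}$, and combines it with the cut bound \eqref{tree homogeneity 2} to express $\tilde\varsigma(M)$ in terms of $|\sT_{\ge}(\rho_M)^{\sn}_{\se}|_{\s}$ up to a correction by the fractional part coming from positive cuts, as in \eqref{eq: bad renormalization}. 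The positivity then follows because $\sT_{\ge}(\rho_M)$ sits \emph{strictly above} $\rho_M$ in $\sT$ and the only structures of $\sT$ with non-positive homogeneity are the divergent subtrees (which have been accounted for in $\mcB$) and because the condition that $\go{H}_\ast \not\subset \bo{L}_a$ prevents the root decoration and the factor $\psi^{\lambda}$ from contributing their would-be negative homogeneity.

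More concretely, I expect the argument to run by an induction on $|\widehat M \cap \CX_0|$ entirely parallel to the inductive step \eqref{eq: inductive step 1}--\eqref{eq: inductive goal} in the proof of Lemma~\ref{subdiv free big graph}, but now tracking the \emph{sign} rather than tracking against the external cumulant quantities $\mcb{h}_{\c,\ell}$ and $\mcb{j}_\ell$. The base case $|\widehat M \cap \CX_0| = 1$ reduces to the observation that $\sT_{\ge}(u)$ has strictly positive homogeneity for any $u \in N(\sT)$ with $u$ not the root --- which holds because any subtree whose true node set is $\{v \ge u\}$ that had non-positive homogeneity would belong to $\Div$ and would then have been a member of (or compatible with) some element of $\mcB$, contradicting $M \in \mcQ$ and the definition of $\mathring{\mcN}$. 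The inductive step peels off the root cluster $\overline M$ and reassembles using \eqref{tree homogeneity} and \eqref{tree homogeneity 2} exactly as in \eqref{eq: induction work 2}--\eqref{induction work 3}.

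The main obstacle I anticipate is bookkeeping the positive-cut terms: an edge $e \in \enS$ near the root of $M$ contributes a Taylor-subtracted kernel which, rather than the full homogeneity $|\clearrootn[\sT_{\ge}(e)^{\sn}_{\se}]|_{\s}$, contributes only $\gamma(e) - 1 = |\clearrootn[\sT_{\ge}(e)^{\sn}_{\se}]|_{\s} - {\rm frac}(|\clearrootn[\sT_{\ge}(e)^{\sn}_{\se}]|_{\s})$, so one loses the fractional part and must check it is not enough to flip the sign. This is handled precisely because, in the case $M$ does not contain the whole of $\go{H}_\ast$, one has a full spare $|\s|$ of room (the ``$-|\s|$'' in \eqref{eq: inductive goal} is actually a $+|\s|$ of slack relative to $0$ in the relevant regime), together with the strict inequality ${\rm frac}(t) < 1 \le |\s|$ since $\s$ has strictly positive integer entries --- but one must be careful that the root vertex $\rho_{\widehat M}$ is itself of the form $\tilde{e}_{\ch}$ for at most one $\tilde e \in \enS$, which keeps the correction to a single fractional part and makes the final estimate \eqref{eq: inductive goal} come out strictly positive. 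The rest is routine and mirrors the companion lemma closely enough that I would present it compactly, pointing to Lemma~\ref{subdiv free big graph} for the verbatim parts.
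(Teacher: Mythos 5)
The proposal rests on a misreading of condition \eqref{decay at large scales}, and the resulting sketch points in the wrong direction. You state that the condition concerns $a \in \mathring{\bo{T}}$ such that $\go{H}_\ast$ is \emph{not} contained in $\bo{L}_a$, and that the quantity to bound is $\sum_{b \in \bo{T}_{\ge a}} \varsigma_{\bo{T}}(b)$. Both parts are inverted. The hypothesis in \eqref{decay at large scales} is $a \le \CX_\ast^{\uparrow}$, which means $\bo{L}_a \supseteq \CX_\ast = \{\logof,\mainroot\}$: the cluster at $a$ must \emph{already contain} the basepoint and root. And the sum runs over $b \in \go{T}_{\not\ge a}$, i.e.\ the coalescence events \emph{outside} the cluster at $a$ (the large-scale structure), not the ones below $a$. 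Correspondingly the natural node set is $M = \allnodes \setminus \go{N}(a)$, the complement of what you propose to work with. This is exactly what distinguishes \eqref{decay at large scales} (which feeds into \eqref{second cond of HQ} of Theorem~\ref{thm: both bounds} and provides summability over \emph{large} external scales) from \eqref{integrability condition - big graph} (which feeds into the subdivergence-free condition and controls the small scales, and is the subject of Lemma~\ref{subdiv free big graph}).

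A second issue is the proposed inductive strategy. You plan an induction on $|\widehat M \cap \CX_0|$ parallel to the step \eqref{eq: inductive step 1}--\eqref{eq: inductive goal} of Lemma~\ref{subdiv free big graph}, but the paper's proof of this lemma is not inductive. After reducing to $K(\sT)$-connected components $M$ of $\allnodes \setminus \go{N}(a)$, it identifies the unique incoming kernel edge $\tilde{e}$ with $\tilde{e}_{\ch} = \rho_M$ and proves the single inequality \eqref{desired inequality} directly from the telescoping identity \eqref{tree homogeneity}, the ceiling bound \eqref{tree homogeneity 2}, and the two elementary inequalities in \eqref{large scale bound work: 0.5}. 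The positive cut bookkeeping you anticipate does appear, but not in the form of ``$|\s|$ of slack'' against the fractional part: the inequalities used are $|\clearrootn[\sT_{\ge}(e)]^{\sn}_{\se}|_{\s} > -\mfh(e)$ for $e \notin \cS$ and $|\clearrootn[\sT_{\ge}(e)]^{\sn}_{\se}|_{\s} > \gamma(e)-1$ for $e \in \cS$, and the final strict positivity of \eqref{desired inequality} comes from the very definition of $\cut$ and $\gamma(\cdot)$, not from a spare $|\s|$.

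Finally, your proposed base case claim that ``$\sT_{\ge}(u)$ has strictly positive homogeneity for any non-root $u \in N(\sT)$, because otherwise it would belong to $\Div$ and hence to some element of $\mcB$'' is false: divergent subtrees $\sT_{\ge}(u)$ need not be elements of $\mcB$ (only the forest $\mcB$ picked by the safe-forest projection is), and subtrees of negative homogeneity sitting above non-root vertices are precisely what negative renormalization is for. The quantity that the paper's proof actually needs to be positive is $|\sT_{\ge}(e_{\ch})^{\sn}_{\se}|_{\s} + |\s|$, which follows from super-regularity (Definition~\ref{def: strong subcriticality}) or Assumption~\ref{assump - noise + kernel}, a strictly weaker and correct statement.
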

\begin{proof}
We start by defining a map $\tilde{\varsigma}: 2^{\allnodes} \rightarrow \R$ as follows, for $M \subset \allnodes$ we set
\begin{equs}\label{large scale homogen def}
\tilde{\varsigma}(M)
\eqdef& 
\sum_{
\substack{
e \in K(D_{2p}) \setminus \enS\\
e \cap M \not = \emptyset}
}
\mfh(e)
-
|\mft(M^{\cu})|_{\s}
-
|\mft(M^{\exte})|_{\s}
+
\sum_{
e \in \cD}
\mathbbm{1}
\left\{
\begin{array}{c}
e_{\ch} \in M\\
e_{\p} \not \in M
\end{array}
\right\}\gamma(e)\\
&
+
\Big[
\sum_{
e \in \enS
}
\mfh(e)
\mathbbm{1}
\left\{ 
e_{\ch} \in M
\right\} 
- 
(\gamma(e) - 1)  
\mathbbm{1}
\left\{ 
\begin{array}{c}
e_{\p} \in M\\
e_{\ch} \not \in M
\end{array}
\right\} 
\Big]\\
&
-
|\sn(M)|_{\s}
-
|M| \, |\s|.
\end{equs}
Fix for the remainder of the proof $\bo{T} \in \CU_{\CX}$.

We claim that for $a \in \mathring{\bo{T}}$ with $a \le \{\logof,\mainroot\}^{\uparrow}$ one has, writing $M \eqdef \allnodes \setminus \go{N}(a)$ (where $\go{N}(a)$ is defined as in \eqref{def: rexpanded set2}), 
\[
\sum_{
b \in \go{T}_{\not \ge a}
}
\varsigma_{\go{T}}(b) 
-
|\CX \setminus \bo{L}_{a}| \, |\s|
-
|\mft(M^{\exte})|_{\s}
\ge
\tilde{\varsigma}(M)
\]
The claim is easily justified upon remembering that $e \in \fullcuts$, $e_{\ch} \in \allnodes \setminus \go{N}(a)$ and $e_{\p} \not \in \allnodes \setminus \go{N}(a)$ together imply $e \in \cD$ and that one has, using item 3 of Definition~\ref{def: cumulant homogeneity}, the inequality
\[
\sum_{
b \in \go{T}_{\not \ge a}
}
\sum_{B \in \pi}
\c^{B}(b)
\ge
-
|\mft(M^{\cu})|_{\s}\;.
\]
We now define a collection of node sets $\mcZ \subset 2^{\tilde{N}(\sT)}$ by setting
\[
\mcZ
\eqdef
\left\{ 
M \subset \tilde{N}(\sT):\ 
\begin{array}{c}
\exists a \in \mathring{\bo{T}} \textnormal{ with } a \le \logof \textnormal{ and } a \le \mainroot
\textnormal{ such that }\\
M\textnormal{ is an }K(\sT)\textnormal{-connected }
\textnormal{component of } N^{\ast} \setminus \go{N}(a)
\end{array}
\right\}\;.
\]
The lemma will be proved if we show that, for every $M \in \mcZ$, $\tilde{\varsigma}(M) > 0$.
Here we used that if $M \subset \tilde{N}(\sT)$ decomposes into $K(\sT)$-components $\{M_{j}\}_{j=1}^{n}$ then $\tilde{\varsigma}(M) = \sum_{j=1}^{n} \tilde{\varsigma}(M_{j})$. 

We now fix $M \in \mcZ$ and write $\tilde{e}$ for the unique edge in $e \in K(\sT)$ with $\tilde{e}_{\ch} = \rho_{M}$. 
We start by observing that $\tilde{e} \in \cut \Rightarrow \tilde{e} \in \cD$, and $\tilde{e}$ is the only edge that could possibly contribute to the sum occurring in the last term of the first line of \eqref{large scale homogen def}. 

The proof of the lemma is finished upon proving the claim
\begin{equ}\label{desired inequality}
\tilde{\varsigma}(M) 
\ge
- |
\clearrootn[\bar{T}_{\ge}(\tilde{e})]^{\sn}_{\se}|_{\s}
+
\gamma(\tilde{e}) 
\cdot 
\mathbbm{1} 
\left\{ 
\tilde{e} \in \cut
\right\} > 0.
\end{equ}
The last inequality of \eqref{desired inequality} is a consequence of the definition of $\cut$ and $\gamma(\cdot)$. 

We now prove the first inequality of \eqref{desired inequality}. First observe that
\begin{equs}\label{large scale bound work: -1}
\tilde{\varsigma}(M) 
=&
-
|T(M)^{\sn}_{\se}|_{\s}
+
\mfh(\tilde{e})
-
|\s|
+
\gamma(\tilde{e})
\cdot
\mathbbm{1}\{ \tilde{e} \in \cut \}
\\
&
+
\sum_{e \in K^{\downarrow}(M) \setminus \enS}
\mfh(e)
-
\sum_{e \in K^{\downarrow}(M) \cap \enS}
(\gamma(e) - 1)
\end{equs}

Conversely, by using \eqref{tree homogeneity} we have
\begin{equs}\label{large scale bound - work 0}
-
|
\clearrootn[\sT_{\ge}(\tilde{e})]^{\sn}_{\se}
|_{\s}
=&\ 
-
|\sT_{\ge}(\rho_M)^{\sn}_{\se}|_{\s}
+
\mfh(\tilde{e})
-
|\s|\\
=&
-
|T(M)^{\sn}_{\se}|_{\s}
+
\mfh(\tilde{e})
-
|\s|
-
\sum_{e \in K^{\downarrow}(M)}
|
\clearrootn[\sT_{\ge}(e)]^{\sn}_{\se}
|_{\s}\;.
\end{equs}
The claim is then proved upon observing that
\begin{equs}\label{large scale bound work: 0.5}
&
\forall e \in K^{\downarrow}(M) \setminus \cS,\ 
|
\clearrootn[\sT_{\ge}(e)]^{\sn}_{\se}
|_{\s}
>
-\mfh(e)\\
\textnormal{and}
\enskip
&\forall e \in K^{\downarrow}(M) \cap \cS,\  
|
\clearrootn[\sT_{\ge}(e)]^{\sn}_{\se}
|_{\s}
>
(\gamma(e) - 1)\;.
\end{equs}
The second inequality of \eqref{large scale bound work: 0.5} follows from the definition of $\gamma(\cdot)$ (and holds for all $e \in \cut$).
The first inequality of \eqref{large scale bound work: 0.5} actually holds for all $e \in K(\sT)$ -- one writes
\[
|
\clearrootn[\sT_{\ge}(e)]^{\sn}_{\se}
|_{\s}
=
\Big(
|\sT_{\ge}(e_{\ch})^{\sn}_{\se}|_{\s}
+
|\s|
\Big)
-\mfh(e)
\]
and then notices that the quantity in parentheses on the RHS must be strictly positive by either by the assumption of super-regularity as in Definition~\ref{def: strong subcriticality} or Assumption~\ref{assump - noise + kernel}. 
Using \eqref{large scale bound work: 0.5} in \eqref{large scale bound - work 0} and then using \eqref{large scale bound work: -1} gives the first inequality of \eqref{desired inequality} as required.
\end{proof}
\section{Proofs of the main theorems}\label{sec: proof of main theorem}
We now describe how everything fits together to prove Theorem~\ref{upgraded thm - main theorem}.
\begin{proof}[Proposition~\ref{prop: main estimate}]
Here we combine the outputs of Lemma~\ref{lem: bound on full integrand} with Proposition~\ref{final prop} as input to Theorem~\ref{thm: half graph assumptions} -- the desired bound is obtained from \eqref{the final bound} and setting $r \eqdef - \lfloor \log_{2}(\lambda) \rfloor$.
\end{proof}
\begin{proof}[Theorem~\ref{upgraded thm - main theorem}]
Note that the sums over $\wickleaves \subset L(\sT)$, $\pi \in \fillingpon{L(\sT) \setminus \wickleaves}$, and $(\mathbb{M},\mathbb{G}) \in \mathfrak{R}$ are all finite and therefore contribute only combinatorial factors -- it follows that the estimate of Proposition~\ref{prop: main estimate} combined with the identifications of Lemma~\ref{lem: BPHZ model and tree} and Proposition~\ref{explicit formula} gives one \eqref{eq: uniform bound - main theorem} of Theorem~\ref{upgraded thm - main theorem}.

We now describe how a simple modification of our analysis gives \eqref{eq: convergence bound - main theorem}. By stationarity, it suffices to prove the bound for $z =0$. 
Observe that $X^{\xi} \eqdef \hat{\Upsilon}^{\xi}_{0}[\sT^{\sn}_{\se}](\psi^{\lambda}_{0})$, seen as a function of $\xi \in \mcM(\Omega_{\infty})$ is in fact a $|L(\sT)|$-linear function of $\xi$.

Now we write, for $\xi, \bar{\xi} \in \mcM(\Omega_{\infty})$, 
\begin{equs}
\E
\big[
\big( 
X^{\xi}
-
X^{\bar{\xi}}
\big)^{2p}
\big]
=\ & 
\sum_{j=0}^{2p}
(-1)^{2p - j}
\binom{2p}{j}
\E 
\big[ 
(X^{\xi})^{j}
(X^{\bar{\xi}})^{2p -j }
\big]\\
=\ & 
\sum_{j=0}^{2p}
(-1)^{2p - j}
\binom{2p}{j}
\E 
\big[ 
(X^{\xi})^{j}
(X^{\bar{\xi}})^{2p -j }
-
(X^{\xi})^{2p}
\big]\;.
\end{equs}
We will estimate the above sum term by term. 
Fixing $j$, note the we are estimating can be written as the difference of a single $2p|L(\sT)|$-linear functional on $\mcM(\Omega_{\infty})$ evaluated at two different sets of arguments.
Denoting this functional by $\mcb{X}$ and choosing a convenient ordering for its arguments we then write 
\begin{equs}
{}
&
(X^{\xi})^{j}
(X^{\bar{\xi}})^{2p -j }
-
(X^{\xi})^{2p}\\
&
=\ 
\mcb{X}\big(\xi^{\otimes j|L(\sT)|}
\otimes
\bar{\xi}^{\otimes(2p - j)|L(\sT)|}\big)
-
\mcb{X}\big(\xi^{\otimes 2p|L(\sT)|}\big)\\
&
=
\sum_{k = 0}^{(2p - j)|L(\sT)| - 1}
\mcb{X}\big(\xi^{\otimes j|L(\sT)|+ k} \otimes (\bar{\xi} - \xi) \otimes
\bar{\xi}^{\otimes (2p - j)|L(\sT)| - k - 1}\big)\;.
\end{equs}
Finally, by performing the same analysis as used for the uniform bound, one obtains, for each fixed $k$ in the above sum,  
\begin{equs}
{}
&
\Big|
\E
\big[
\mcb{X}\big(\xi^{\otimes j|L(\sT)|+ k} \otimes (\xi - \bar{\xi}) \otimes
\bar{\xi}^{\otimes (2p - j)|L(\sT)| - k - 1}\big)
\big]
\Big|\\
\lesssim
&\ 
\Big( 
\prod_{e \in K(T)} \|K_{\mft(e)}\|_{|\mft(e)|_{\s},m}
\Big)^{2p}
(\|\xi\|_{N,\c} \vee \|\bar \xi\|_{N,\c})
\| \xi; \bar\xi\|_{j,\c}
\lambda^{2p|\tau|_{\s}}\;.
\end{equs}
The factor of $\| \xi; \bar\xi\|_{N,\c}$ is obtained because at least one of the cumulants appearing in our estimates will involve the difference $\xi - \bar{\xi}$.
\end{proof}
Next we prove Theorem~\ref{main thm: continuity}.
\begin{proof}[Theorem~\ref{main thm: continuity}]
The main ingredients of this proof are Theorem~\ref{upgraded thm - main theorem}, Proposition~\ref{convergence of mollifications}, and a simple diagonalization argument. 

First we construct the mentioned extension of $Z_{\BPHZ}^{\bullet}$. 
We define a homogeneity assignment $|\cdot|_{\s}$ on $\mfL$ by setting, for each $\mft \in \Le$, $|\mft|_{\s} \eqdef \wwnorm{\mft}_{\s} \cdot (1 + \frac{\kappa}{2} \mathbbm{1}\{ \mft \in \Le\})$.
Note that $|\cdot|_{\s}$ is $R$-consistent with $\wnorm{\cdot}_{\s}$. 
We claim that for any $\xi \in \mcM(\Omega_{0},\mfL_{\CCum},\c)$ the map $\eta \mapsto Z^{\xi \ast \eta}_{\BPHZ}$ from $(\Moll, \| \cdot \|_{\kappa/2,0})$ into the $L^{2}$ space of random models on $\mathscr{M}_{\infty}(\mathscr{T})$ has some uniform continuity properties.

To show this we first define $\tilde{\c}$ to be the $\kappa/2$-penalization of $\c$. 
Applying Theorem~\ref{upgraded thm - main theorem} followed by Proposition~\ref{convergence of mollifications}, we 
conclude that for every compact set $\mathfrak{K} \subset \R^{d}$ and $\alpha \in A$ one has the bound
\begin{equs}\label{L2 bound on mollifications}
\E \$ Z^{\xi \ast \eta_{1} }_{\BPHZ}; Z^{\xi \ast \eta_{2}}_{\BPHZ} \$_{\alpha, \mathfrak{K}}^{2}
&\lesssim
(\|\xi \ast \eta_{1}\|_{N,\tilde{\c}} \vee \| \xi \ast \eta_{2} \|_{N,\tilde{\c}})
\| \xi \ast \eta_{1} ; \xi \ast \eta_{2}\|_{N,\tilde{\c}}\\
&\lesssim
\|\xi\|_{N,\c,1}^{3}
\cdot
\| \eta_{1} - \eta_{2}\|_{\kappa/2,0}\;,
\end{equs}
uniformly over $\eta_{1}, \eta_{2} \in \Moll$ and $\xi \in \mcM(\Omega_{0},\mfL_{\CCum},\c)$.
In what follows we denote by $(\eta_{n})_{n \in \N} \subset \Moll$ an arbitrary sequence satisfying $ \lim_{n \rightarrow \infty} \|\eta_{n} - \delta\|_{\frac{\kappa}{2},0} = 0$. 
We see that the map $\eta \mapsto Z^{\xi \ast \eta }_{\BPHZ}$ extends uniquely to $(\overline{\Moll}, \| \cdot \|_{\kappa/2})$ by setting
\[
Z^{\xi \ast \delta }_{\BPHZ} \eqdef \lim_{n \rightarrow \infty} Z^{\xi  \ast \eta_{n}}_{\BPHZ}\;,
\]
where the limit is in $L^{2}$. 

We then define the extension by setting, for each $\xi \in \mcM(\Omega_{0},\mfL_{\CCum},N,\c) \setminus \mcM(\Omega_{\infty},\mfL_{\CCum})$, $Z^{\xi }_{\BPHZ} \eqdef Z^{\xi \ast \delta}_{\BPHZ} $.

Observe that for $\xi \in \mcM(\Omega_{\infty},\mfL_{\CCum})$ one has the equality $ Z^{\xi  \ast \delta }_{\BPHZ} = Z^{\xi  }_{\BPHZ}$ since one has the convergence in probability of $\xi \ast \eta_{n} \rightarrow \xi$ with respect to very strong topologies on $\Omega_{\infty}$ along with a convergence of cumulants (with also sit in regular spaces with uniform bounds), the map $\Z_{\BPHZ}^{\bullet}$ is a continuous function of the data consisting of (i) realizations in $\Omega_{\infty}$, equipped with a suitably strong topology, and (ii) and corresponding cumulants. 
The uniqueness of this extension is immediate since by Proposition~\ref{convergence of mollifications} we see that $\| \xi \|_{N,\c,2} \le \infty$ implies  $\sup_{n \ge 0} \| \xi \ast \eta_{n} \|_{N,\c,1} < \infty$. 

We now prove the desired continuity property. Suppose that we are given random noises $(\xi_{n})_{n \ge 0}$ with $\xi_{n} \rightarrow \xi$ in probability and $\sup_{n \ge 0} \| \xi_{n} \|_{N,\c,1} < \infty$. 
We define, for every $\alpha \in A$, compact $\mathfrak{K} \subset \R^{d}$, and random models $Z,Z'$ defined on the same probability space as $\xi$, 
\[
d_{\alpha,\mathfrak{K}}(Z,Z')
\eqdef
\E \big[ 1 \wedge \$Z;Z'\$_{\alpha,\mathfrak{K}}
\big]\;.
\]
The family of pseudometrics $\{d_{\alpha,\mathfrak{K}}(\cdot,\cdot)\}_{\alpha,\mathfrak{K}}$ 
then generates the topology of convergence in probability on the space of random models. 

Fix $\alpha$ and $\mathfrak{K}$ as above. For any $n,m \in \N$ one has
\begin{equs}
d_{\alpha,\mathfrak{K}}(Z^{\xi}_{\BPHZ}, Z^{\xi_{n}}_{\BPHZ}) 
&\le
d_{\alpha,\mathfrak{K}}(Z^{\xi}_{\BPHZ}, Z^{\xi \ast \eta_{m}}_{\BPHZ})
+
d_{\alpha,\mathfrak{K}}(Z^{\xi \ast \eta_{m}}_{\BPHZ}, Z^{\xi_{n} \ast \eta_{m}}_{\BPHZ}) \\
&\qquad +
d_{\alpha,\mathfrak{K}}(Z^{\xi_{n} \ast \eta_{m}}_{\BPHZ}, Z^{\xi_{n}}_{\BPHZ})\;.\label{final triangle inequality}
\end{equs}
Now fix $\eps > 0$. By applying the bound \eqref{L2 bound on mollifications} it follows that there exists $M > 0$ such that for any $m \ge M$ and any $n \in \N$ both the first and third terms on the RHS of the above inequality are each smaller than $\frac{\eps}{3}$. 

We now claim that for any $m \in \N$ one can find $n_m$ such that for any $n > n_m$ one has 
\[
d_{\alpha,\mathfrak{K}}(Z^{\xi \ast \eta_{m}}_{\BPHZ}, Z^{\xi_{n} \ast \eta_{m}}_{\BPHZ}) < \frac{\eps}{3}\;.
\]
This follows by observing that the map $Z^{\bullet \ast \eta_{m}}_{\BPHZ}$, viewed as a function of a frozen realization in $\bigoplus_{\mft \in \Le} \mcD'(\R^{d})$ and cumulants living in a sufficiently regular space, is again a continuous map and so one can push through the convergence of $\xi_{n} \rightarrow \xi$ in probability along with the convergence of cumulants with uniform bounds. 

By fixing $m \ge M$ we see that for any $n \ge n_m$ one has $d_{\alpha,\mathfrak{K}}(Z^{\xi}_{\BPHZ}, Z^{\xi_{n}}_{\BPHZ}) < \eps$ as required, thus concluding the proof.
\end{proof}
\appendix
\section{Multiscale Clustering}\label{Sec: Multiclustering}
\subsection{Multiscale clustering for a single tree}
Suppose we are given an undirected connected multigraph $\go{G}$ on a vertex set $\mvert \eqdef \{v_0,\dots,v_p\}$ with $p \ge 1$. Here, when we say that $\go{G}$ is a multigraph one should view  $\go{G}$ as some set equipped with some map $\iota_{\go{G}} : \go{G} \rightarrow \CV^{(2)}$. 

The map $\iota_{\go{G}}$ may not be an injection which is why we call $\go{G}$ a multigraph, however the ``duplicated'' edges are seen as distinguishable amongst themselves. 
In practice we will often just present the vertex set $\CV$ and the multigraph $\go{G}$ and the map $\iota_{\go{G}}$ will be obvious. 

We consider $v_0$ to be the ``root'' of $\go{G}$. We also write $\mvert_{0} = \mvert \setminus \{v_0\}$.
\begin{definition}\label{def: coalescence tree}
A \emph{coalescence tree} $\go{T}$ on a vertex set $\CV$ is a rooted tree with at least three nodes with the following structures and properties:
\begin{itemize}
\item The set of leaves of $\go{T}$ is identified with the set $\CV$.
\item Writing $\mathring{\go{T}}$ for the set of internal nodes (i.e. nodes that are not leaves) and $\rho_{\go{T}}$ for the root of $\go{T}$,  we require that every $u \in \mathring{\go{T}} \setminus \{\rho_{\go{T}}\}$ has degree at least $3$ and that $\rho_{\go{T}}$ has degree at least $2$. 
\end{itemize}
We then also equip the set of nodes of $\go{T}$ with the poset structure corresponding to viewing 
$\go{T}$ has a Hasse diagram with its root $\rho_{\go{T}}$ as the unique minimal element.
\end{definition}
Note that unlike in \cite{KPZJeremy} these coalescence trees are not necessarily binary, an internal node is allowed to have more than $2$ ``children''. 
We denote by $\widehat{\CU}_{\mvert}$ the (finite) set of \emph{all} coalescence trees $\go{T}$ on the vertex set $\mvert$. 

The trees $\widehat{\CU}_{\mvert}$ will be used to organize our multiscale expansions, for each fixed $x_{v_{0}}$ they slice the domain $(\R^{d})^{\CV_{0}}$ into regions characterized by the relative distances of the variables $x_{\CV} = (x_{v})_{v \in \CV}$. 
These domains can be visualized in the following way. 
For each fixed $x_{\CV}$, define a family of equivalence relations $\{\sim_{n}\}_{n \in \N}$ on the set $\CV$ 
by taking $\sim_{n}$ to be the transitive closure of the relation
$R_n$ given by
\[
v R_{n} v' \quad \Leftrightarrow\quad  \exists e \in \go{G} \textnormal{ with } \iota_{\go{G}}(e) = \{v,v'\}
\textnormal{ and } |x_{v} - x_{v'}| \le 2^{-n}\;.
\]
The tree $\go{T}$ should then be thought of as pictorially representing one particular way the equivalence classes of the equivalence relations $\sim_{n}$ could merge as one takes $n$ from $-\infty$ to $0$ -- if one ignores issues of coinciding position variables then at some value of $n$ all the elements of $\CV$ are singletons and by $n=0$ there is only one equivalence class. 
The domain associated to a tree $\go{T}$ would then correspond to all values of $x_{\CV_{0}}$ for which the ``coalescence history'' prescribed by $\go{T}$ holds. 

The description above is only for intuition, concrete definitions are given below. 
However one can already see that it is natural to label the inner nodes of $\go{T}$ with numbers that indicate the specific scale at which a coalescence event takes place.
\begin{definition}\label{def: labels for coalescence trees}
Given a vertex set $\CV$ and $\go{T} \in \widehat{\CU}_{\CV}$, we define $\go{\mathrm{Lab}}_{\go{T}}$ to be the collection of all maps $\go{s}: \mathring{T} \rightarrow \N$ with the property that $u < v \Rightarrow \go{s}(u) < \go{s}(v)$.
The pair $(\go{T},\go{s})$ is then called a \emph{labeled coalescence tree}.
\end{definition}
We define the set of all possible \emph{scale assignments} to be given by $\mbn = (n_{e})_{e \in \go{G}}  \in \N^{\go{G}}$. 
We usually fix a possibly smaller set of scales $\mcN_{\go{G}} \subset \N^{\go{G}}$ relevant to the given context when deploying the machinery of this section. 

For any fixed $\mbn \in \N^{\go{G}}$ and $r \in \N$ we define the sub-multigraph $\go{G}^{\mbn}_{r} = \{e \in \go{G}:\  n_{e} \ge r\}$ and also define $\mvert^{\mbn}_{r} \subset 2^{\mvert}$ to be the collection of vertex sets of the connected components of $\go{G}^{\mbn}_{r}$. We consider singletons as connected components so that, for every 
$r$, $\CV^{\mbn}_{r}$ is a partition of $\CV$.

The sequence $(\mvert^{\mbn}_{r})_{r \in \N}$ determines a labelled coalescence tree $(\go{T},\go{s})$ via the following procedure. 
The set of nodes for $\go{T}$ is given by 
${\go{T}} = \bigcup_{r=0}^{\infty} \mvert^{\mbn}_{r}$. Since elements
of ${\go{T}}$ are themselves subsets of $\CV$, they are partially ordered by inclusion.
Given two distinct nodes $a,b \in \mathring{\go{T}}$ we then connect $a$ and $b$ if $a \subset b$ 
maximally in ${\go{T}}$. In this way, the set of leaves is indeed given by $\CV \subset \go{T}$ since,
 for $r$ sufficiently large, $\mvert^{\mbn}_{r}$ consists purely of singletons.
The root is always given by $\rho_{\go{T}} = \CV$, by considering $r$ sufficiently small. 
It is easy to verify that the required properties hold for $\go{T}$ as a consequence of the fact that 
the children of any node, viewed as subsets of $\CV$, form a non-trivial partition of that node.
The labeling $\go{s}(\cdot)$ on internal nodes is defined as follows. For each $a \in \mathring{\go{T}}$, 
we set
\[
\go{s}(a) 
\eqdef
\max \{ r \in \N:\ a \in \mvert^{\mbn}_{r} \}\;.
\]
This is always finite since elements of $\mathring{\go{T}}$ are not singletons, while there always 
exists some $r$ such that $\mvert^{\mbn}_{r} = \{\{v\}\,:\, v \in \CV\}$.
This completes our construction of the labeled coalescence tree $(\go{T},\go{s})$ with the caveat that for purely aesthetic reasons we identify the ``singleton'' leaves of $\go{T}$ with their lone constituent element.
We will henceforth always treat the elements of $\mathring{\go{T}}$ as ``abstract'' nodes, once we are done constructing the tree we forget how they correspond to non-singleton subsets of $\CV$.  
 
The above procedure gives us a map $\hat{\mcT}: \N^{\go{G}} \rightarrow \widehat{\CU}_{\mvert} \ltimes \go{\mathrm{Lab}}_{\bullet}$ taking scale assignments to labeled coalescence trees, we write it $\mbn \mapsto (\mcT(\mbn),\go{s}(\mbn))$. 
We then define $\mtree_{\mvert} \eqdef \mcT(\mcN_{\go{G}})$. 
\begin{remark}
The usage of the notation $\mtree_{\mvert}$ instead of $\widehat{\CU}_{\mvert}$ indicates that we are looking at a sub-collection of coalesence trees which have been determined by fixing a multigraph $\go{G}$ on $\CV$ and a corresponding set of scales $\mcN_{\go{G}}$ even though this is suppressed from the notation $\mtree_{\mvert}$.
\end{remark}
Given $\go{T} \in \widehat{\CU}_{\CV}$ and $f \subset \mvert$ we write $f^{\uparrow}$ for the maximal internal node which is a \emph{proper} ancestor of all the elements of $f$. 
When $f = \{a\}$ we may write $a^{\uparrow}$ instead of $\{a\}^{\uparrow}$. We define $f^{\Uparrow}$ to be the maximal internal node which is a proper ancestor of $f^{\uparrow}$ if $f^{\uparrow} \not = \rho_{\go{T}}$, otherwise we set $f^{\Uparrow} = f^{\uparrow} = \rho_{\go{T}}$.
For $a \in \mathring{\go{T}}$ we write $\go{L}_{a}$ for the set of leaves of $\go{T}$ which are descendants of $a$.

Let us give a concrete example to clarify these definitions. 
For our vertex set we fix $\CV = \{v_{0},\dots,v_{6}\}$.
Below we fix a graph $\go{G}$ by presenting it pictorially and also specify a scale assignment $\mbn \in \N^{\go{G}}$ by labeling each of the edges.
\begin{center}
\begin{tikzpicture}[rotate=-25,yscale=0.7]
    \node [style=dot, label=left:$v_{0}$] (v0) at (-4.5, 0.5) {};
    \node [style=dot, label=below:$v_{2}$] (v2) at (-4, -2) {};
    \node [style=dot, label=above:$v_{4}$] (v4) at (-3.25, 1.75) {};
    \node [style=dot, label=below:$v_{3}$] (v3) at (-0.75, -0.25) {};
    \node [style=dot, label=right:$v_{1}$] (v1) at (0, 1) {};
    \node [style=dot, label=above:$v_{6}$] (v6) at (-1, 2.25) {};
    \node [style=dot, label=below:$v_{5}$] (v5) at (-2, -1.25) {};

    \draw (v0) -- ++ (v2) 
      node [midway,fill=white,color=black,text=white] {500};

    \draw (v0) -- ++ (v5) 
      node [midway,fill=white,color=black,text=white] {187};

    \draw (v2) -- ++ (v5) 
      node [midway,fill=white,color=black,text=white] {185};
    
    \draw (v1) -- ++ (v3) 
      node [midway,fill=white,color=black,text=white] {80};

    \draw (v4) -- ++ (v0) 
      node [midway,fill=white,color=black,text=white] {7};

    \draw (v4) -- ++ (v5) 
      node [midway,fill=white,color=black,text=white] {6};

    \draw (v6) -- ++ (v1) 
      node [midway,fill=white,color=black,text=white] {25};

    \draw (v3) -- ++ (v4) 
      node [midway,fill=white,color=black,text=white] {25};

    \draw (v4) -- ++ (v6) 
      node [midway,fill=white,color=black,text=white] {24};

    \draw (v3) -- ++ (v5) 
      node [midway,fill=white,color=black,text=white] {5};

\end{tikzpicture}
\end{center}
Below we draw the corresponding coalescence tree $\go{T} = \mcT(\mbn)$, labeling the leaves using the set $\CV$ and introducing new labels for the internal nodes of $\go{T}$. 
\begin{center}
\begin{tikzpicture}[yscale=0.7]
    \node [style=dot, label=below:$v_{2}$] (v2) at (-3, -2) {};
    \node [style=dot, label=below:$v_{0}$] (v0) at (-2, -2) {};
    \node [style=dot, label=below:$v_{5}$] (v5) at (-1, -2) {};
    \node [style=dot, label=below:$v_{1}$] (v1) at (0, -2) {};
    \node [style=dot, label=below:$v_{3}$] (v3) at (1, -2) {};
    \node [style=dot, label=below:$v_{4}$] (v4) at (2, -2) {};
    \node [style=dot, label=below:$v_{6}$] (v6) at (3, -2) {};
    \node [style=coalnode, label=left:$a$] (a) at (-2.5, -0.75) {};
    \node [style=coalnode, label=left:$b$] (b) at (-1.25, 0.5) {};
    \node [style=coalnode, label=above:$c$] (c) at (0.5, -0.75) {};
    \node [style=coalnode, label=above:$d$] (d) at (0, 1.5) {};
    \node [style=coalnode, label=right:$e$] (e) at (1.5, 0.5) {};

    \draw[coalline] (v2) -- (a); 
    \draw[coalline] (v0) -- (a);

    \draw[coalline] (v5) -- (b);

    \draw[coalline] (a) -- (b);

    \draw[coalline] (v1) -- (c);
    \draw[coalline] (v3) -- (c);

    \draw[coalline] (v4) -- (e);
    \draw[coalline] (v6) -- (e);

    \draw[coalline] (c) -- (e);

    \draw[coalline] (b) -- (d);
    \draw[coalline] (e) -- (d);
\end{tikzpicture}
\end{center}
The labeling $\go{s} = \go{s}(\mbn) \in \go{\mathrm{Lab}}_{\go{T}}$ is then given by
\begin{center}
\smallskip
\def\bar{\,\,\,\smash{\rule[-.6em]{.3pt}{1.9em}}\,\,\,}
\begin{tabular}{c|ccccc}
\toprule
  $u \in \mathring{\go{T}}$ & $a$ & $b$ & $c$ & $d$ & $e$ \\ 
\midrule
  $\go{s}(u)$ & $500$ & $187$ & $80$ & $7$ & $25$\\
\bottomrule
\end{tabular}
\smallskip
\end{center}
In this example, we have
\begin{equ}
\{v_2,v_5\}^\uparrow = \{v_5\}^\uparrow = b\;,\quad 
\{v_2,v_5\}^\Uparrow = d\;,\quad 
\{v_1,v_3,v_5\}^\Uparrow = \{v_1,v_3,v_5\}^\uparrow = d\;.
\end{equ}
We also define, for any $(\go{T},\go{s}) \in \widehat{\CU}_{\CV} \ltimes \go{\mathrm{Lab}}_{\bullet}$, $\mcN_{\mathrm{tri}}(\go{T},\go{s}) \subset \mcN_{\go{G}}$ to be the set of all those 
scale assignments $\mbn$ with $\hat{\mcT}(\mbn) = (\go{T},\go{s})$ and the property that for every $e \in \go{G}$
\begin{equ}\label{triangle condition for scales}
| n_{e} - \go{s}(e^{\uparrow}) | < 2 C \cdot|\mvert|,
\end{equ}
where $C > 0$ is chosen to be the same as \eqref{equ: domain for tree}. Here we are abusing notation and writing $e^{\uparrow}$ instead of $\iota_{\go{G}}(e)^{\uparrow}$, we commit this abuse of notation frequently.
Clearly $|\mcN_{\mathrm{tri}}(\go{T},\go{s})|$ is finite and bounded uniform in $(\go{T},\go{s}) \in \widehat{\CU}_{\CV} \ltimes \go{\mathrm{Lab}}_{\bullet}$.

For each $(\go{T},\go{s}) \in \widehat{\CU}_{\CV} \ltimes \go{S}_{\bullet}$ we define
\begin{equ}\label{equ: domain for tree}
\DDom(\go{T},\go{s},x_{v_{0}}) \eqdef 
\{ x
\in 
(\R^{d})^{\mvert_{0}}:\ \forall e = \{v_i,v_j\} \in \mvert^{(2)},\link{x_{v_i}}{  x_{v_j}}{\go{s}(e^{\uparrow})}
\},
\end{equ} 
where we've used the notation \eqref{defining annular region}.
\begin{definition}\label{def:homogeneity}
For $\go{T} \in \widehat{\CU}_{\CV}$ we call a map $\varsigma_{\go{T}}: \mathring{\go{T}} \rightarrow \R$ a 
$\go{T}$-homogeneity. 
A collection of such maps $\varsigma = (\varsigma_{\go{T}})_{\go{T} \in \widehat{\CU}_{\CV}}$ is called 
a \textit{total homogeneity}. Addition of total homogeneities is defined pointwise. 
\end{definition}
We introduce two special families of total homogeneities which will play the role of ``Kronecker deltas'' out of which we will build other total homogenieties. 
Given any subset $\tilde{\CV} \subset \CV$, the total homogeneities $\delta^{\uparrow}[\tilde{\CV}]$ and $\delta^{\Uparrow}[\tilde{\CV}]$ are given by setting, for every $\go{T} \in \widehat{\CU}_{\CV}$ and $a \in \mathring{\go{T}}$, 
\begin{equ}\label{def: delta total homogeneities}
\delta_{\go{T}}^{\uparrow}[\tilde{\CV}](a)
\eqdef
\mathbbm{1}\left\{ a = \tilde{\CV}^{\uparrow,\go{T}} \right\}
\quad
\textnormal{and}
\quad
\delta_{\go{T}}^{\Uparrow}[\tilde{\CV}](a)
\eqdef
\mathbbm{1}\left\{ a = \tilde{\CV}^{\Uparrow,\go{T}} \right\},
\end{equ} 
where the superscript $\go{T}$ is used to remind readers that these operations are $\go{T}$-dependent. 
For $u \in \CV$ we will write $\delta^{\uparrow}[u]$ or $\delta^{\Uparrow}[u]$ instead of $\delta^{\uparrow}[\{u\}]$ or $\delta^{\Uparrow}[\{u\}]$.

Given $\go{T} \in \widehat{\CU}_{\CV}$, a $\go{T}$-homogeneity $\varsigma_{\go{T}}$, and $\go{s} \in \go{\mathrm{Lab}}_{\go{T}}$ we often use the shorthand
\[
\langle \varsigma_{\go{T}}, \go{s} \rangle = \sum_{a \in \mathring{\go{T}}} \varsigma_{\go{T}}(a) \go{s}(a)\;.
\]\martin{We should really give a slightly better name to these ``families of functions''.} 
\begin{definition}\label{def:boundedBy}
Given a set of scale assignments $\mcN_{\go{G}}$ and a total homogeneity $\varsigma$ we say that a family of continuous compactly supported functions $F = \left(F^{\mbn}\right)_{\mbn \in \mcN_{\go{G}}}$ on $(\R^{d})^{\CV_{0}}$ is bounded by $\varsigma$ if the following conditions hold.
\begin{enumerate}
\item There exists $x_{v_{0}} \in \R^{d}$ such that for each $(\go{T},\go{s}) \in \mtree_{\mvert} \ltimes \go{\mathrm{Lab}}_{\bullet}$, and $\mbn \in \mcN_{\mathrm{tri}}(\go{T},\go{s})$, one has
\begin{equation}\label{domain condition}
\supp
\left( 
F^{\mbn}(\cdot)
\right)
\subset
\DDom(\go{T},\go{s},x_{v_{0}})\;.
\end{equation} 
\item One has the bound
\begin{equation}\label{supremum bound} 
\| 
F
\|_{\varsigma, \mcN_{\go{G}}}
\eqdef
\sup_{
\substack{\go{T} \in \mtree_{\mvert}\\
\go{s} \in \go{\mathrm{Lab}}_{\go{T}}\\
\mbn \in \mcN_{\mathrm{tri}}(\go{T},\go{s})
}
}
2^{-\langle \varsigma_{\go{T}},\go{s}\rangle}
\sup_{x \in (\R^{d})^{\mvert_{0}}}
\big| F^{\mbn}(x) \big|
< \infty.
\end{equation}
(In the particular case $\mcN_{\go{G}} = \N^{\go{G}}$ we will also just write $\|F\|_{\varsigma}$.)
\end{enumerate}
\end{definition}
\begin{remark} Because of the domain constraint \eqref{domain condition}, it is clear that $F^{\mbn}$ must vanish unless $\mbn \in \mcN_{\mathrm{tri},\go{G}}$, where
\[
\mcN_{\mathrm{tri},\go{G}} \eqdef 
\bigsqcup_{(\go{T},\go{s}) \in \mtree_{\mvert} \ltimes \mathrm{Lab}_{\bullet}}
\mcN_{\mathrm{tri}}(\go{T},\go{s})\;.
\]
\end{remark}
\begin{remark} 
The notion of being ``bounded'' by a total homogeneity $\varsigma$ depends on a invisible ``combinatorial'' constant $C$ hidden in \eqref{equ: domain for tree} -- this affects both the domain constraint \eqref{domain condition} and the definition of $\| \cdot \|_{\varsigma, \mcN_{\go{G}}}$. 
In practice we want to be able to formulate that this constant $C$ can be chosen independently of certain parameters. 
Thus, if we have a collections of families of functions $F_{\theta} = (F^{\mbn}_{\theta})_{\mbn \in \mcN_{\go{G}}}$ where $\theta$ varies as a parameter in some set $\Theta$ we say that a the collection of families  $F_{\theta}$ are bounded uniform in $\theta \in \Theta$ by a total homogeneity $\varsigma$ if one can use the same constant $C$ in \eqref{equ: domain for tree} for all values of $\theta \in \Theta$.
\end{remark}
\begin{definition}
Given a set of scale assignments $\mcN_{\go{G}}$ and a total homogeneity $\varsigma$, we say that $\varsigma$ is subdivergence free in $\bar{\mvert} \subset \mvert$ for the set of scales $\mcN_{\go{G}}$ if for every $\go{T} \in \mcU_{\mvert}$ and every $a \in \mathring{\go{T}} \setminus \{\rho_{\go{T}}\}$ with $\go{L}_{a} \subset \bar{\mvert}$ one has
\begin{equ}\label{eq: divergence free condition}
\sum_{
b \in \mathring{\go{T}}_{\ge a}}
\varsigma_{\go{T}}(b)
< (|\go{L}_{a}| -1 ) |\s|.
\end{equ}
\end{definition}
\begin{definition}\label{order of total homogeneity}
We say a total homogeneity $\varsigma$ is of order $\alpha \in \R$ if for every $\go{T} \in \widehat{\mtree}_{\mvert}$ one has $\sum_{a \in \mathring{\go{T}}} \varsigma_{\go{T}}(a) - (|\mvert| - 1)|\s| = \alpha$.
\end{definition}
The following definition will be useful in getting good bounds on various integrals.
\begin{definition}
Given a family of continuous compactly supported functions $F = (F^{\mbn})_{\mbn \in \mcN_{\go{G}}}$ on $(\R^d)^{\CV_{0}}$ and a subset $A \subset \CV_{0}$ we denote by $\mathrm{Mod}_{A}(F)$ the collection of all family of continuous compactly supported functions $\tilde{F} = (\tilde{F}^{\mbn})_{\mbn \in \mcN_{\go{G}}}$ such that there exists $x_{v_{0}} \in \R^{d}$ with the property that for every $\mbn \in \mcN_{\go{G}}$ one has
\begin{equ}\label{domain condition for modification}
\supp
\left( 
\tilde{F}^{\mbn}(\cdot)
\right)
\subset
\DDom(\go{T},\go{s},x_{v_{0}})
\end{equ}
and for any $x \in (\R^{d})^{A}$
\[
\int_{\CV_{0} \setminus A} dy\ 
F^{\mbn}(x \sqcup y)
=
\int_{\CV_{0} \setminus A} dy\ 
\tilde{F}^{\mbn}(x \sqcup y)\;.
\]
We also use the notation $\mathrm{Mod}(F) = \bigcup_{A \subset \CV_{0}} \mathrm{Mod}_{A}(F)$.
\end{definition} 
\begin{theorem}\label{multicluster 1}
Suppose we are given a set of scales $\mcN_{\go{G}}$ and a family of smooth compactly supported functions $F = (F^{\mbn})_{\mbn \in \mcN_{\go{G}}}$ on $(\R^{d})^{\CV_{0}}$ and a total homogeneity $\varsigma$ on the trees of $\widehat{\CU}_{\CV}$ which is of order $\alpha$ and subdivergence free on $\mvert$ for the set of scales $\mcN_{\go{G}}$. 

For $r \in \N$ we define
\begin{equ}[e:defN>]
\mcN_{\go{G}, > r} \eqdef \{ \mbn \in \mcN_{\go{G}}:\ \min_{e \in \go{G}} n_{e} > r \}
\enskip
\textnormal{and}
\enskip
\mcN_{\go{G}, \le r} \eqdef \{ \mbn \in \mcN_{\go{G}}:\ \min_{e \in \go{G}} n_{e} \le r \}\;.
\end{equ}
Then, for $\alpha > 0$, one has
\[
\sum_{\mbn \in \mcN_{\go{G}, \le r}}
\int_{\mvert_{0}} \back dy\ 
\left|F^{\mbn}(y)
\right|
\le 
\mathrm{const}(|\CV|)
2^{\alpha r}
\inf_{\tilde{F} \in \mathrm{Mod}(F)}
\|\tilde{F}\|_{\varsigma,\mcN_{\go{G}, \le r}}
\]
while for $\alpha < 0$ one has
\[
\sum_{\mbn \in \mcN_{\go{G}, > r}}
\int_{\mvert_{0}} \back dy\ 
\left|F^{\mbn}(y)
\right|
\le 
\mathrm{const}(|\CV|)
2^{\alpha r}
\inf_{\tilde{F} \in \mathrm{Mod}(F)}
\|\tilde{F}\|_{\varsigma,\mcN_{\go{G},> r}}\;.
\]
Here, $\mathrm{const}(|\CV|)$ is a combinatorial factor depending only 
on $|\CV|$ and not on $r$. 
\end{theorem}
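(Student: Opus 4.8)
\textbf{Proof proposal for Theorem~\ref{multicluster 1}.}

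The plan is to prove both inequalities by induction on $|\CV|$, peeling off one ``cluster'' of the coalescence tree at a time and summing out the scales internal to that cluster. I will first reduce to the situation where $F = \tilde F$ is already the optimal modification, so that $\|F\|_{\varsigma,\mcN_{\go{G},\le r}}$ (resp.\ over $\mcN_{\go{G},>r}$) is finite, and it suffices to bound $\sum_\mbn \int |F^\mbn|$ by that quantity times $\mathrm{const}(|\CV|)\,2^{\alpha r}$. The key bookkeeping device is Corollary~\ref{cor: tower property}-style iterated summation: fix a coalescence tree $\go{T}$ and a label $\go{s} \in \go{\mathrm{Lab}}_{\go{T}}$, and organize the sum over $\mbn \in \mcN_{\mathrm{tri}}(\go{T},\go{s})$ by first summing over the scales $n_e$ with $e^\uparrow$ a maximal proper descendant of the root, i.e.\ over the scales labelling the children subtrees of $\rho_{\go{T}}$, and then over the ``external'' scales. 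Using the domain constraint \eqref{domain condition}, for fixed external data each such subtree integral is, up to the combinatorial factor controlling the choice of $(\go{T},\go{s})$ (which is finite by the triangle condition \eqref{triangle condition for scales}), an integral of the form handled by the inductive hypothesis on a strictly smaller vertex set, namely the vertex set obtained by contracting that child cluster to a single point.

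Concretely, I would proceed as follows. \emph{Step 1: base case.} When $|\CV| = 2$, $\widehat{\CU}_\CV$ consists of a single tree with one internal node, $\varsigma$ is a single real number equal to $\alpha$ (after subtracting $|\s|$, but with $|\CV|-1 = 1$ the order is just that number), and $\int_{\CV_0} |F^\mbn| \lesssim 2^{-\langle \varsigma_{\go{T}},\go{s}\rangle}\|F\|_\varsigma \cdot 2^{-(\,\cdot\,)|\s|}$; the geometric series in $n_e \le r$ (resp.\ $n_e > r$) converges precisely because $\alpha > 0$ (resp.\ $\alpha < 0$) and yields the factor $2^{\alpha r}$. \emph{Step 2: inductive step.} Fix $\go{T}$ and $\go{s}$; let $a_1,\dots,a_k$ be the children of $\rho_{\go{T}}$. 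For each $i$ with $|\go{L}_{a_i}| \ge 2$, the restriction of $\varsigma$ to the subtree $\go{T}_{\ge a_i}$ defines a total homogeneity on the contracted vertex set $\CV_i \eqdef (\CV \setminus \go{L}_{a_i}) \cup \{\ast_i\}$ which is again subdivergence free (this is immediate from \eqref{eq: divergence free condition} restricted to $a \le a_i$), and whose order is $\sum_{b \ge a_i}\varsigma(b) - (|\go{L}_{a_i}| - 1)|\s|$; by the subdivergence-free hypothesis applied at $a = a_i$ this order is \emph{strictly negative}. So I apply the $\alpha < 0$ part of the inductive hypothesis with threshold $r' = \go{s}(\rho_{\go{T}})$ (the scale just below which $a_i$ is no longer a cluster) to sum out the scales internal to $\go{L}_{a_i}$, gaining a factor $2^{(\text{neg order})\,\go{s}(\rho_{\go{T}})}$. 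Multiplying over $i$, the leftover is an integral on the fully contracted vertex set $\{\ast_1,\dots,\ast_k\} \cup (\text{singletons})$, with a total homogeneity whose order is exactly $\alpha$ (the orders telescope by construction of $\varsigma$), and one is reduced to the $|\CV'| \le k+(\dots) < |\CV|$ case with the \emph{same} sign of $\alpha$, which gives the final $2^{\alpha r}$.

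\emph{Step 3: handling $\mathrm{Mod}$ and the root.} The passage to $\inf_{\tilde F \in \mathrm{Mod}(F)}$ is used only to replace $F$ by a representative with the stated support property; since $\mathrm{Mod}$ only alters $F$ in directions that get integrated out, $\sum_\mbn \int |F^\mbn|$ is unchanged along the way except that one must be slightly careful that the inductive hypothesis is itself stated with an $\inf$ over $\mathrm{Mod}$, so the infima compose correctly — this is routine. The main obstacle I anticipate is \emph{not} the geometric-series estimates (those are standard once the order signs are tracked), but rather the combinatorics of the \emph{triangle condition} \eqref{triangle condition for scales}: one must verify that when summing over $\mbn \in \mcN_{\mathrm{tri}}(\go{T},\go{s})$ the constraint $|n_e - \go{s}(e^\uparrow)| < 2C|\CV|$ interacts correctly with the re-clustering after contraction (the contracted graph's triangle constant must be allowed to grow, but only by a bounded amount), and that the number of pairs $(\go{T},\go{s})$ compatible with a given $\mbn \in \mcN_{\go{G}}$ — which feeds the $\mathrm{const}(|\CV|)$ factor — is indeed bounded uniformly in $r$. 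This is where I would spend the most care, essentially reproducing and mildly generalizing \cite[Lem.~2.2,~2.3]{FMRS85} to the non-binary coalescence trees used here; everything else follows the FMRS multiscale template.
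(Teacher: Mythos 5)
Your approach is essentially the one the paper takes: the paper's ``proof'' is a pointer to \cite[Lem.~A.10]{KPZJeremy}, itself a regularity-structures adaptation of the FMRS multiscale argument, and you are reconstructing that argument from scratch along the same lines (peel off clusters, use the subdivergence-free condition for geometric convergence of internal scale sums, then use the global order $\alpha$ at the root to extract the factor $2^{\alpha r}$). You also correctly identify the observation that the paper records explicitly: internal subtrees always contribute with strictly negative order by the subdivergence-free hypothesis, so the dichotomy between $\alpha>0$ and $\alpha<0$ only matters at the root-scale sum in the base case, where one ranges over $\go{s}(\rho_{\go{T}})\le r$ or $\go{s}(\rho_{\go{T}})>r$ respectively.

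That said, the way you frame the recursion has a genuine loose end. Your inductive hypothesis is Theorem~\ref{multicluster 1} itself, which requires as input a \emph{total} homogeneity, i.e.\ a $\go{T}'$-homogeneity for \emph{every} tree $\go{T}'\in\widehat{\CU}_{\CV'}$ on the contracted vertex set, together with a family $F'=(F'^{\mbm})$ on that set indexed by a set of scales on the contracted multigraph; you never construct either. After contracting all children of $\rho_{\go{T}}$ you only have one tree (the star), so the $\go{T}'$-homogeneity you need is defined, but a total homogeneity and a scale-indexed family are not. The cleaner route — which I believe is what \cite[Lem.~A.10]{KPZJeremy} actually does — is not to induct on $|\CV|$ at all but to work with a fixed $\go{T}$ and perform a nested geometric sum over $\go{s}\in\go{\mathrm{Lab}}_{\go{T}}$ from leaves to root: integrate in the volume factors $2^{-|\s|(c(a)-1)\go{s}(a)}$ per internal node $a$ with $c(a)$ children, sum $\go{s}(a)$ over values exceeding $\go{s}$ of its parent (geometric by the subdivergence-free condition), and handle $\go{s}(\rho_{\go{T}})$ last; then sum over the finitely many $(\go{T},\go{s})$ compatible with a given $\mbn$, which is where the triangle condition \eqref{triangle condition for scales} and the $\mathrm{const}(|\CV|)$ factor come in. Your Step~2 is secretly this, but stated as an induction it asks for more than you've provided. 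Also two small slips: ``$\CV_i\eqdef(\CV\setminus\go{L}_{a_i})\cup\{\ast_i\}$'' is the complement of what you mean (you want the homogeneity restricted to the subtree on $\go{L}_{a_i}$, not the complement), and the subdivergence-free restriction should be to $a\ge a_i$, not $a\le a_i$.
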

\begin{proof}
This is essentially a special case of \cite[Lem.~A.10]{KPZJeremy} with $\nu_\star$
equal to the root of $\go{T}$. The only difference is that our ``subdivergence-free condition''
does not include the root itself. In the case $\alpha < 0$, Definition~\ref{order of total homogeneity}
implies that \eqref{eq: divergence free condition} also holds for the root and we
can apply \cite[Lem.~A.10]{KPZJeremy}. In the case $\alpha > 0$,  this is not the case, but 
the proof of \cite[Lem.~A.10]{KPZJeremy} still applies, the only difference being that the 
sum appearing in the base case $|\mathring{\go{T}}| = 1$ runs over large scales instead of
small scales.
\end{proof}
For the next theorem and what follows, for any $\go{T} \in \widehat{\CU}_{\CV}$ and $a \in \mathring{\go{T}}$ we define 
\[
\go{T}_{\not\ge a} \eqdef \{ b \in \mathring{\go{T}}:\ b \not \ge a\}\;.
\]
\begin{theorem}\label{thm: both bounds}
Let $\mcN_{\go{G}}$ be fixed and let $\go{G}_{\ast} \subset \go{G}$ be non-empty subset of edges which connects the collection of vertices of $\CV$ it is incident with, we denote this collection of vertices by $\CV_{\ast}$. 

Suppose that we are given a family of functions $F = (F^{\mbn})_{\mbn \in \mcN_{\go{G}}}$ and a total homogeneity $\varsigma$ which is sub-divergence free on $\CV$ for the scales $\mcN_{\go{G}}$, of order $\alpha < 0$, and satisfies the following large scale integrability condition: for every $\go{T} \in \mtree_{\mvert}$ and $u \in \mathring{\go{T}}$ with both $u \le (\mvert_{\ast})^{\uparrow}$ and $\go{T}_{\not\ge u} \not = \emptyset$ one has 
\begin{equ}\label{second cond of HQ}
\sum_{ 
w \in \go{T}_{\not\ge u}
}
 \varsigma_{\go{T}}(w) 
> 
|\s| \, |\CV \setminus \bo{L}_{u} |\;.
\end{equ}
Then if we set, for any $r \in \N$,  
\[
\mcN_{\go{G}, > r, \go{G}_{\ast}}
\eqdef
\Big
\{ \mbn \in \mcN_{\go{G}}:\ \min_{
e \in \go{G}_{\ast}
}
n_{e}
\ge
r
\Big\}
\]
we have the bound, uniform in $r$, 
\[
\sum_{\mbn \in \mcN_{\go{G}, > r, \go{G}_{\ast}}}
\int dy_{\mvert_{0}}\ 
\left|F^{\mbn}(y)
\right|
\lesssim
2^{\alpha r}
\inf_{\tilde{F} \in \mathrm{Mod}(F)}
\|\tilde{F}\|_{\varsigma, \mcN_{\go{G}}}\;.
\]
\end{theorem}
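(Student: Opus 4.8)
The plan is to reduce the claim to an application of Theorem~\ref{multicluster 1} (the $\alpha<0$ case) by first summing out the scales on the edges \emph{not} in $\go{G}_{\ast}$ down to the scale $r$, and only afterwards paying the decay coming from the large-scale behaviour of $\varsigma$. Concretely, I would proceed as follows. Fix $r\in\N$ and a representative $\tilde F\in \mathrm{Mod}(F)$ which is almost optimal for $\|\cdot\|_{\varsigma,\mcN_{\go{G}}}$. For each $\mbn\in\mcN_{\go{G},>r,\go{G}_\ast}$ the coalescence tree $\go{T}=\mcT(\mbn)$ has the property that, because $\go{G}_\ast$ connects $\CV_\ast$ and all its scales are $\ge r$, there is a well-defined internal node $u_r(\go{T},\mbn)$ which is the minimal node of $\go{T}$ whose leaf set contains $\CV_\ast$ and all of whose labels are $\ge r$; equivalently $u_r$ is obtained by "cutting" $\go{T}$ at scale $r$ along the part containing $\CV_\ast$. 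The node $u_r$ satisfies $u_r\le(\CV_\ast)^\uparrow$, which is precisely the hypothesis under which \eqref{second cond of HQ} is available.

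\textbf{Key steps.} First I would partition the sum over $\mbn\in\mcN_{\go{G},>r,\go{G}_\ast}$ according to the value of the pair $(\go{T}_{\le},\go{s}_{\le})$, where $\go{T}_{\le}$ is the "lower part" of $\go{T}$ consisting of the root, $u_r$, and all internal nodes not above $u_r$ (i.e.\ $\go{T}_{\not\ge u_r}$ together with $u_r$ itself viewed as a leaf of $\go{T}_\le$), and $\go{s}_\le$ is the restriction of the labelling to those nodes. There are only finitely many such pairs up to the combinatorial data, and on each piece the remaining freedom is exactly a scale assignment on the sub-multigraph $\go{G}(u_r)\eqdef\{e\in\go{G}:\iota_{\go G}(e)\subset\go L_{u_r}\}$ whose induced coalescence structure is the "upper part" $\go{T}_{\ge u_r}$. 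Second, I would apply Theorem~\ref{multicluster 1} with $\alpha<0$ to the inner sum: on the sub-graph $\go{G}(u_r)$ with vertex set $\go L_{u_r}$, the restriction of $\varsigma$ to $\go{T}_{\ge u_r}$ is still subdivergence-free (subdivergence-freeness on $\CV$ restricts trivially), and it is of order $\sum_{b\in\go{T}_{\ge u_r}}\varsigma_{\go T}(b)-(|\go L_{u_r}|-1)|\s|=\alpha-\big(\sum_{w\in\go{T}_{\not\ge u_r}}\varsigma_{\go T}(w)-|\s|\,|\CV\setminus\go L_{u_r}|\big)$, which by \eqref{second cond of HQ} is $<\alpha<0$; call it $\alpha_{u_r}$. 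Since every $\mbn$ in this piece has $\min_{e\in\go{G}(u_r)}n_e\ge\go{s}_\le(u_r)\ge r$, Theorem~\ref{multicluster 1} gives a factor $2^{\alpha_{u_r}\,\go{s}_\le(u_r)}$ times $\|\tilde F\|_{\varsigma,\mcN_{\go{G}}}$, up to a combinatorial constant and the absorption of the bounded number of "triangle" scales via $\mcN_{\mathrm{tri}}$. Third, I would sum the resulting bound over the remaining (finitely constrained) lower data: the product of the factors $2^{\langle\varsigma_{\go T_\le},\go s_\le\rangle}$ coming from the outer tree and $2^{\alpha_{u_r}\go s_\le(u_r)}$ telescopes so that, after re-summing the lower labels subject to $\go s_\le(u_r)\ge r$, the only surviving decaying exponential is $2^{\alpha r}$ — here one uses that the lower-part homogeneity plus $\alpha_{u_r}$ on the node $u_r$ recombines exactly to $\varsigma$ evaluated on the full tree, whose total order is $\alpha$, so summing the label $\go s_\le(u_r)$ from $r$ upward (all other lower labels being $\le\go s_\le(u_r)$ up to the triangle slack) produces a geometric series dominated by $2^{\alpha r}$.

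\textbf{Main obstacle.} The delicate point is not any single estimate but the bookkeeping of \emph{which} node plays the role of $u_r$ and the verification that, after the inner application of Theorem~\ref{multicluster 1}, the leftover exponent on $u_r$ combines correctly with the outer tree's homogeneity so that the whole geometric re-summation over the lower labels collapses to a clean $2^{\alpha r}$ rather than something involving $\alpha_{u_r}$. In particular one must check that \eqref{second cond of HQ}, which is a statement about $\go{T}_{\not\ge u}$ for \emph{arbitrary} $u\le(\CV_\ast)^\uparrow$ with $\go{T}_{\not\ge u}\neq\emptyset$, is exactly what is needed to keep every intermediate order $\alpha_w$ (for $w$ ranging over the candidate cut nodes between the root and $u_r$) strictly negative, so that the outer sum over lower labels converges and does not itself contribute a growing factor. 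I would handle this by making the induction on $\mathrm{depth}(\go{T}_\le)$ explicit, peeling off one lower internal node at a time exactly as in the proof of \cite[Lem.~A.10]{KPZJeremy}, with \eqref{second cond of HQ} supplying the negativity at each peel; the base case is when $\go{T}_{\not\ge u_r}$ is a single node, where the bound is immediate from $\alpha<0$ and the constraint $\go s_\le(u_r)\ge r$. The remaining ingredients — the domain constraint \eqref{domain condition}, absorbing $\mathrm{const}(|\CV|)$, and the harmless triangle slack $2C|\CV|$ — are routine and identical to those in Theorem~\ref{multicluster 1}.
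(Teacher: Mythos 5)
The paper's proof of this theorem is a one-line citation to \cite[Lem.~A.10]{KPZJeremy}, and your proposal is a correct reconstruction of precisely that argument (cutting the coalescence tree at the component of $\CV_\ast$ at scale $r$, applying the $\alpha<0$ inner clustering bound of Theorem~\ref{multicluster 1} above the cut, and resumming the lower labels using \eqref{second cond of HQ} to keep each intermediate order strictly negative), which you yourself explicitly note. So this is the same approach, only spelled out in more detail than the paper bothers to.
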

\begin{proof}
This is precisely the content of \cite[Lem.~A.10]{KPZJeremy}.
\end{proof}
We close the section by introducing more notation. 
Given any total homogeneity $\varsigma$ on the set of coalesence trees $\widehat{\CU}_{\CV}$, any $A \subset \CV$, and any multi-index $(k_{v})_{v \in A} \in (\N^{d})^{A}$ we define another total homogeneity $D^{k}\varsigma$ by setting
\begin{equ}\label{derivative of homogeneity}
D^{k}\varsigma \eqdef 
\varsigma + \sum_{v \in A}|k_{v}|_{\s} \delta^{\uparrow}[v]\;.
\end{equ}
\begin{definition}\label{def: a derivative norm}
Given a family of compactly supported functions $(F^{\mbn})_{\mbn \in \mcN_{\go{G}}}$ on $(\R^{d})^{\CV_{0}}$ and a total homogeneity $\varsigma$ on the trees of $\widehat{\CU}_{\CV}$ and a subset $\tilde{\CV} \subset \CV_{0}$, we define 
\[
\| 
F
\|_{\varsigma,\mcN_{\go{G}},\tilde{\CV}}
\eqdef
\sup_{k \in A}
\| 
F_{k}
\|_{D^{k}\varsigma, \mcN_{\go{G}}}
\]
where 
\[
A \eqdef
\Big\{
k \in (\N^{d})^{\allnodes}:\ 
k \textnormal{ supported on }\tilde{\CV}\textnormal{ and }\sup_{u \in \allnodes} |k(u)|_{\s} \le |\s|
\Big\},
\]
and for $k \in A$ we defined $F_{k} = \big(F_{k}^{\mbn}\big)_{\mbn \in \mcN_{\go{G}}}$ via $F_{k}^{\mbn} \eqdef D^{k}F^{\mbn}$. 
\end{definition}
\subsection{Multiscale clustering for cumulants}\label{cumulant homogeneities}
The types of bounds on cumulants we ask for in Definitions~\ref{def: cumulantbound} and 
\ref{def: metric on mollified measures} place major limitations on the types of driving noises 
we can accommodate -- we are basically restricted to space-time random fields that are either
Gaussian, Gaussian-like (the kind of situation one encounters when considering a central-limit type
convergence result as in \cite{HS15}), or non-Gaussian 
with rapidly decreasing cumulants. 

In order to work in a more general setting and which as few restrictions as possible we will require additional data: in addition to a set of types $\mfL = \mfL_{+} \sqcup \mfL_{-}$ and a homogeneity assignment $|\cdot|_{\s}$ for $\mfL$,
we prescribe (i) a set of non-vanishing cumulants and (ii) a corresponding \textit{cumulant homogeneity}.  

To describe the latter, we use the multiclustering notation introduced earlier to describe our cumulant bounds. For $N \in \N$ with $N \ge 2$ we view $[N]$ as a vertex set, we take $\go{G} \eqdef [N]^{(2)}$ to be the complete graph and we set $1 \in [N]$ to be the pinned vertex.
We set $\mcN_{\go{G}} \eqdef \N^{\go{G}}$ for our
set of scales. 
One then has $\CU_{[N]} = \widehat{\CU}_{[N]}$, the set of all coalescence trees on $[N]$. 
\begin{definition}\label{def: cumulant homogeneity}
A cumulant homogeneity $\c$ for $\mfL_{\CCum}$ is a collection 
\[
\c 
=
\big\{\c^{(\mft,[M])} \,:\, (\mft,[M]) \in \mfL_{\CCum}\big\}
\]
where each $\c^{(\mft,[M])}$ is a total homogeneity on the trees of $\CU_{[M]}$, so $\c^{(\mft,[M])} =
(\c^{(\mft,[M])}_{\go{T}})_{\go{T} \in \CU_{[M]}}$.

We also impose that $\c$ is imposed to be invariant under
permutations in the following sense.
For each $(\mft,[M]) \in \mfL_{\CCum}$ a permutation $\sigma: [M] \rightarrow [M]$ induces bijections $\sigma: 2^{[M]} \rightarrow 2^{[M]}$ and  $\sigma: \widehat{\CU}_{[M]} \rightarrow \widehat{\CU}_{[M]}$ since there is a canonical association $ \go{T} \in \widehat{\CU}_{[M]} \leftrightarrow N_{\go{T}} \subset 2^{[M]}$. 
We then require that for any $A \subset [M]$ with at least $2$ elements and for any $\go{T} \in \widehat{\CU}_{[M]}$ one has 
\[
\c^{(\mft,[M])}_{\go{T}}(A^{\uparrow,\go{T}})
=
\c^{(\mft \circ \sigma,[M])}_{\sigma(\go{T})}(\sigma(A)^{\uparrow,\go{T}})\;.
\]
Thus, given a finite set $B$ equipped with a type map $\mft:B \rightarrow \mfL_{-}$ we can write $\c^{(\mft,B)}$ without ambiguity, and this notation is compatible with the analogous notation introduced in Assumption~\ref{assump: set of cumulants}. 
\end{definition}
\begin{remark}
Note that for any cumulant homogeneity on a set of cumulants $\mfL_{\CCum}$ one has that for any $(\mft,[2]) \in \mfL_{\CCum}$ the total homogeneity $\c^{(\mft,[2])}$ is encoded by just a single scalar value. We sometime abuse notation and treat $\c^{(\mft,[2])}$ as a scalar. 
\end{remark}
We will need the following notion of consistency. 
\begin{definition}
Given a homogeneity assignment $|\cdot|_{\s}$, we say $\c$ is \textit{consistent} with $|\cdot|_{\s}$ if, 
for every finite set $B$,
every $(\mft,B) \in \mfL_{\CCum}$ and $\go{T} \in \widehat{\CU}_{B}$ the following conditions hold. 
\begin{enumerate}
\item Total homogeneities are `correct': 
$
\sum_{
u \in \mathring{\go{T}}
}
\c^{(\mft,B)}_{\go{T}}(u)
=
-|\mft(B)|_{\s}\;.
$
\item For every $A \subset B$,
\begin{equation}\label{cumulant homogeneity upper bound}
\sum_{
\substack{
u \in \mathring{\go{T}}\\
u \le v \textnormal{ for some }v \in A
}
}
\c^{(\mft,B)}_{\go{T}}(u)
\ge
-
|\mft(A)|_{\s}\;.
\end{equation}
\item For every $a \in \mathring{\go{T}}$,\martin{Doesn't sound right: one probably wants to 
take only $A$'s such that $\go{L}_{A^{\uparrow}} = A$...}\ajay{Agree with problem, change made.}
\begin{equation}\label{cumulant homogeneity lower bound}
\sum_{
u \in \go{T}_{\ge a}
}
\c^{(\mft,B)}_{\go{T}}(u)
\le
-
|\mft(\go{L}_{a})|_{\s}\;.
\end{equation}
\item If $M \ge 3$ then for every $a \in \mathring{\go{T}}$ with $|\go{L}_{a}| \le 3$ 
one has 
\begin{equ}\label{eq: for cumulant homogeneities all cumulants are good}
\sum_{
u \in \go{T}_{\ge a}
}
\c^{(\mft,[M])}_{\go{T}}(u) < |\s| \cdot (|\go{L}_{a}| - 1)\;.
\end{equ}
\end{enumerate}
\end{definition}
\begin{remark} 
It is possible to see that if item $1$ of Definition~\ref{def: cumulant homogeneity} is satisfied, 
then items $2$ and $3$ are equivalent. One way of interpreting these conditions is that $\c^{(\mft,B)}_{\go{T}}$
is always obtained by distributing the homogeneity 
$-|\mft(a)|_\s$ of each leaf $a$ among the nodes of $\mathring{\go{T}}$ connecting $a$ to the root.
\end{remark}
With these definitions at hand, we are now ready to formulate our ``distance'' on the space
$\mathcal{M}(\Omega_{0},\mfL_{\CCum})$ of stationary noise processes.

\begin{definition}\label{def: cumulant bound2} 
Given a set of cumulants $\mfL_{\CCum}$ and a cumulant homogeneity $\c$ for $\mfL_{\CCum}$, $N \ge 2$, $r \in \N$, and $\xi \in \mathcal{M}(\Omega_{0},\mfL_{\CCum})$ we set
\[
\|\xi\|_{N,\c,r}
\eqdef
\mathrm{Diag}_{\c}(\xi)
\vee
\max_{
\substack{
M \le N\\
(\mft,[M]) \in \mfL_{\CCum}
}
}
\max_{
k \in D_{M,r}
}
\|F_{k,(\mft,[M])}
\|_{D^{k}\c^{(\mft,[M])}}
\]
Here we have set 
\begin{equ}\label{set of derivatives in cumulant norm}
D_{M,r}
\eqdef
\{ k = (k_{i})_{i=1}^{M} \in (\N^{d})^{M}:\ 
\forall i \in [M],\ |k_{i}|_{\s} \le 3r|\s|
\}\;,
\end{equ}
We have also defined the family $F_{k,(\mft,[M])} = (F^{\mbn}_{k,(\mft,[M])}:\ \mbn \in \N^{[M]^{2}})$ to be given by setting $F^{\mbn}_{k,(\mft,[M])}(x_{[M] \setminus \{1\}})$ equal to
\[
\Big(\prod_{1 \le j < m \le M}
\Psi^{(n_{\{j,m\}})} (x_{j} - x_{m})
\Big)
D^{k} \Cum[\{\xi_{\mft(i)}(x_{i})\}_{i = 1}^{M}]
\Big|_{x_{1} = 0}\;.
\]
and we are using the notation of \eqref{supremum bound}. We also used the set of derivatives 

The term $\mathrm{Diag}_{\c}(\xi)$ is defined as follows. 
If there exists any $(\mft,[2]) \in \mfL_{\CCum}$ such that $\reduce[\Cum[\{\xi_{\mft(1)},\xi_{\mft(2)}\}]]$ is not of order $(\c_{(\mft,[2])} - |\s|) \vee 0$ at $0$ then we set  $\mathrm{Diag}(\xi) \eqdef \infty$.
Otherwise we set\martin{Probably $\bullet^{k}$ should be truncated in some way, otherwise this might not exist...}\ajay{Replaced $\bullet^{k}$ with $p_{k}$ which is truncated.}
\begin{equ}
\label{def: control over the diagonal2}
\mathrm{Diag}_{\c}(\xi)
\eqdef
\max_{(\mft,[2]) \in \mfL_{\CCum} }
\max_{
\substack{
k \in \N^{d},\\
|k|_{\s} < \c^{(\mft,[2])} - |\s| 
}
}
|\reduce[\Cum[\{\xi_{\mft(1)},\xi_{\mft(2)}\}]](p_{k})|\;.
\end{equ}
where $\reduce[\cdot]$ is as in \eqref{transinvar second cumulant}.
\end{definition}
\begin{definition}\label{def: metric on mollified measures2}
Given a set of cumulants $\mfL_{\CCum}$, a cumulant homogeneity $\c$ for $\mfL_{\CCum}$, $N \ge 2$, $r \in \N$ and $\xi, \bar \xi \in \mathcal{M}(\Omega_{0},\mfL_{\CCum})$ defined on the same probability space and jointly admitting pointwise cumulants, we define the quantity 
$\|\xi; \bar{\xi}\|_{N,\c,r}$ by 
\[
\|\xi; \bar\xi\|_{N,\c,r}
\eqdef
\mathrm{Diag}_{\c}(\xi ; \bar{\xi})
\vee
\max_{
\substack{ 
\hat M \le M \le N\\
(\mft,[M]) \in \mfL_{\CCum}
}
}
\max_{
k \in A_{M,r}
}
\|
\hat{F}_{k,(\mft,[M]),\hat{M}}
\|_{D^{k}\c^{(\mft,M)},\N^{[M]^{(2)}}}
\;.
\]
Here $D_{M,r}$ is defined as in \eqref{set of derivatives in cumulant norm}.
We have set the family $\hat{F}_{k,(\mft,[M]),\hat{M}} = (\hat{F}^{\mbn}_{(\mft,[M]),\hat{M}}:\ \mbn \in \N^{[M]^{2}})$ to be given by setting $\hat{F}^{\mbn}_{k,(\mft,[M]),\hat{M}}(x_{[M] \setminus \{1\}})$ equal to
\[
\Big(
\prod_{1 \le j < m \le M}
\Psi^{(n_{\{j,m\}})} (x_{j} - x_{m})
\Big)
D^{k}
\Cum
[
\{\hat{\xi}_{\mft(i),i}(x_{i})\}_{i =1}^{M}
]
\Big|_{x_{1} = 0}\;,
\] and the random fields $(\hat{\xi}_{\mft,i})_{i=1}^{M}$ above are given by
\[
\hat{\xi}_{\mft,i}(x)
\eqdef
\xi_{\mft}(x) \one\{i \le \hat M\} - \bar\xi_{\mft}(x) \one\{i \ge \hat M\}\;.
\]
We set 
The quantity $\mathrm{Diag}_{\c}(\xi;\bar{\xi})$ is defined as follows. 
If there exists any $(\mft,[2]) \in \mfL_{\CCum}$ and $\hat{M} \in \{1,2\}$ such that $\reduce[\Cum[\{\hat{\xi}_{\mft(1)},\hat{\xi}_{\mft(2)}\}]]$ is not of order $\c_{(\mft,[2])} - |\s|$ at $0$ then we set  $\mathrm{Diag}_{\c}(\xi;\bar{\xi}) \eqdef \infty$.
Otherwise we set
\begin{equ}\label{eq: control over diagonal 2}
\mathrm{Diag}_{\c}(\xi ; \bar{\xi})
\eqdef
\max_{
\substack{
(\mft,[2]) \in \mfL_{\CCum}\\
1 \le \hat{M} \le 2 }
}
\max_{
\substack{
k \in \N^{d},\\
|k|_{\s} < \c^{(\mft,[2])} - |\s| 
}
}
|\reduce[\Cum[\{\hat{\xi}_{\mft(1)},\hat{\xi}_{\mft(2)}\}]](p_{k})|\;.
\end{equ}
If $\xi$ and $\bar{\xi}$ don't admit joint pointwise cumulants we set $\|\xi; \bar\xi\|_{N,\c} = \infty$ for every $N$. 
\end{definition}
\begin{remark}
We often write $\|\xi|_{N,\c}$ or $\|\xi; \bar\xi\|_{N,\c}$ instead of $\|\xi\|_{N,\c,0}$ or $\|\xi; \bar\xi\|_{N,\c,0}$.
\end{remark}
\begin{remark} Going back to an earlier mentioned example, suppose $\psi$ is a random, stationary, centered Gaussian, element of the scaled Holder-Besov space $C^{\alpha}_{\s}(\R^{d})$ where $\alpha \in (-\frac{|\s|}{4},0)$. Then for any sequence of smooth compactly supported approximate identities $(\eta_{n})_{n \in \N}$ appropriately converging to a Dirac delta as $n \rightarrow \infty$ one expects that $(\psi \ast \eta_{n})^{2}(\cdot) - \E[(\psi \ast \eta_{n})^{2}(0)]$ should converge in probability on $C^{2\alpha}_{\s}(\R^{d})$ to a limit we denote $\psi^{\diamond 2}$.

The random distribution $\psi^{\diamond 2}$ is sometimes referred to as the ``second Wick power'' of $\psi$. 
Suppose we are looking at a system of equations driven by only $\psi^{\diamond 2}$. Writing $\mft_{-}$ for the type-label for this noise one could set $|\mft_{-}|_{\s} = 2 \alpha$ and we now describe a cumulant homogeneity $\c$ one could use in this situation. 

For any $N \ge 2$, $\go{T} \in \widehat{\CU}_{[N]}$, and $a \in \mathring{\go{T}}$ one sets, 
\[
\c^{(\mft_{-},[N])}_{\go{T}}(a)
\eqdef
2 \alpha \cdot ( d(a) - 1) + 2 \alpha \mathbbm{1}\{ a = \rho_{\go{T}}\}
\]
where for $a \in N_{\go{T}}$ we write $d(a) \eqdef |\mathrm{Min}(\{ b \in N_{\go{T}}:\ b > a\})|$.
\end{remark}
\begin{definition}\label{def: external homogeneity}
Assume we are given a cumulant homogeneity $\c$ on $\mfL_{\CCum}$ consistent with a homogeneity assignment $|\cdot|_{\s}$, and finite set $A$ and $D$ equipped with a type map $\mft:A \sqcup D \rightarrow \Le$. 
We then define the quantity $| \mft(A) |_{\s,\c,D} \le 0$ as follows: if $A \not \in \mfL_{\CCum}$ we set $| \mft(A) |_{\s,\c,D} \eqdef 0$ and otherwise we set $| \mft(A) |_{\s,\c,D}$ equal to
\begin{equ}\label{def: extended homogeneity}
\inf
\left\{ 
-
\sum_{
u \in \go{T}_{\ge a}
}
\c^{(\mft',A \sqcup [N])}
(u)
:\ 
\begin{aligned}
&N \ge 1,\ (\mft', A \sqcup [N]) \in \mfL_{\CCum}\\
& \mft' \restr {A} = \mft,\ \mft'([N]) \subset \mft(D)\\
&\go{T} \in \CU_{A \sqcup [N]} 
\textnormal{ with }
 a \in \mathring{\go{T}} \textnormal{ s.t.\ } \go{L}_{a} = A.
\end{aligned} 
\right\}\;.
\end{equ}
Above, we are enforcing that $\mft'([N]) \subset \mft(D)$ in the sense of sets, not multi-sets.
\end{definition}
\begin{remark}\label{remark on extended homogeneity}
Observe that for any leaf typed sets $A$ and $D$ we have $|A| \le 3 \Rightarrow | \mft(A) |_{\s,\c,D} \ge [(|A| - 1) \vee 0] \cdot |\s|$ thanks to \eqref{eq: for cumulant homogeneities all cumulants are good} and by \eqref{cumulant homogeneity lower bound} one has 
\begin{equ}\label{eq: useful fact for cumulant bound}
|\mft(A) |_{\s,\c,D} \ge | \mft(A) |_{\s}\;.
\end{equ}
The above inequality is used many times throughout the paper, often implicitly.
\end{remark} 
In order to get the desired stochastic estimates when we use a cumulant homogeneity $\c$ to control 
cumulants we also need a stronger notion of super-regularity.
\begin{definition}\label{def: homogeneity gain}
Given a homogeneity assignment $|\cdot|_{\s}$, a set of cumulants $\mfL_{\CCum}$, and a cumulant homogeneity $\c$ on $\mfL_{\CCum}$ consistent with $|\cdot|_{\s}$ and a leaf-typed finite set $D$ we define a map $h_{\c,D}$ defined on leaf typed finite sets $(\mft,A)$ as follows.
If $A = \emptyset$ we set $\mcb{h}_{\c,D}(\emptyset) \eqdef 0$ and for $A \neq \emptyset$ we set
\begin{equ}\label{homogeneity gain from missed noise}
\mcb{h}_{\c,D}(A)
\eqdef
\min_{\emptyset \not = B \subset A}
( |\mft(B) |_{\s,\c,D} - | \mft(B) |_{\s})\;.
\end{equ}
\end{definition}
\begin{remark}
$\mcb{h}_{\c,D}(A)$ gives the minimum homogeneity gain from having at least one element of $A$ participate in a cumulant external to $A$ with noises drawn from $D$.
\end{remark}
For the renormalization of second cumulants it is convenient to define, for any leaf typed set $B$,
\begin{equ}\label{def: fict}
\fict(B)
\eqdef 
\mathbf{1}\{ |B| = 2\}
\big(
\lceil 
-|\mft(B)|_{\s} - |\s| 
\rceil
\vee 0
\big)
\;.
\end{equ} 
The quantity $\fict(B)$ gives the power-counting gain from the renomalization of a second cumulant corresponding to $B$. The next lemma states that we don't need to renormalize any other cumulants.
\begin{lemma}\label{lemma: higher cumulants are fine}
Let $M$ and $D$ be a leaf typed sets and $(\mft,M) \in \mfL_{\CCum}$. Then 
\begin{equ}\label{power counting cumulant bound}
|\mft(M)|_{\s,\c,D} \wedge \big(\fict(M) +  |\mft(M)|_{\s}\big) + (|M| - 1) \cdot |\s|
>
0 
\end{equ}
\end{lemma}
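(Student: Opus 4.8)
The plan is to dispose of the two terms in the minimum separately, using the fact that by Assumption~\ref{assump - noise + kernel} only second cumulants ever need renormalizing. First I would observe that since $(\mft,M) \in \mfL_{\CCum}$ we have $|M| \ge 2$ by the first item of Assumption~\ref{assump: set of cumulants}, so $(|M|-1)\cdot|\s| \ge |\s| > 0$; it therefore suffices to show that each of the two quantities inside the minimum is strictly greater than $-(|M|-1)\cdot|\s|$. For the term $\fict(M) + |\mft(M)|_{\s}$: when $|M| = 2$ we have $\fict(M) = \big(\lceil -|\mft(M)|_{\s} - |\s|\rceil \vee 0\big)$, so $\fict(M) + |\mft(M)|_{\s} \ge (-|\mft(M)|_{\s} - |\s|) + |\mft(M)|_{\s} = -|\s| = -(|M|-1)\cdot|\s|$, and in fact the inequality is strict unless $-|\mft(M)|_{\s} - |\s|$ is a nonnegative integer, in which case one checks directly that $\fict(M) + |\mft(M)|_{\s} + |\s| = -|\mft(M)|_{\s} - |\s| + |\mft(M)|_{\s} + |\s| = 0$ fails — so here I would need to be slightly more careful and instead combine with the $|\mft(M)|_{\s,\c,D}$ term (see below). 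When $|M| \ge 3$, $\fict(M) = 0$ and we simply need $|\mft(M)|_{\s} > -(|M|-1)\cdot|\s|$, which is precisely the content of Assumption~\ref{assump - noise + kernel}.

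For the term $|\mft(M)|_{\s,\c,D}$: if $M \notin \mfL_{\CCum}$ the quantity is $0$ by Definition~\ref{def: external homogeneity}, and the bound is immediate; but here $M \in \mfL_{\CCum}$ by hypothesis. By Remark~\ref{remark on extended homogeneity}, when $|M| \le 3$ we have $|\mft(M)|_{\s,\c,D} \ge (|M|-1)\cdot|\s| > -(|M|-1)\cdot|\s|$, so these cases are done. When $|M| \ge 4$, I would unwind Definition~\ref{def: external homogeneity}: $|\mft(M)|_{\s,\c,D}$ is an infimum over $N \ge 1$, extensions $(\mft', M \sqcup [N]) \in \mfL_{\CCum}$, and coalescence trees $\go{T} \in \CU_{M \sqcup [N]}$ with an internal node $a$ whose leaf-descendants are exactly $M$, of $-\sum_{u \in \go{T}_{\ge a}} \c^{(\mft', M \sqcup [N])}(u)$. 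By item (3) of the consistency definition, \eqref{cumulant homogeneity lower bound}, applied to $\mathring{\go{T}}_{\ge a}$ with $\go{L}_a = M$, we have $\sum_{u \in \go{T}_{\ge a}} \c^{(\mft', M\sqcup[N])}(u) \le -|\mft(M)|_{\s}$, hence $-\sum_{u \in \go{T}_{\ge a}} \c^{(\mft', M\sqcup[N])}(u) \ge |\mft(M)|_{\s} > -(|M|-1)\cdot|\s|$ using Assumption~\ref{assump - noise + kernel} for $|M| \ge 3$. Taking the infimum preserves this strict lower bound (the infimum is over a finite set once $N$ is bounded, but even in general the bound $|\mft(M)|_{\s,\c,D} \ge |\mft(M)|_{\s}$ is \eqref{eq: useful fact for cumulant bound} of Remark~\ref{remark on extended homogeneity}, so no compactness argument is needed).

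The one case requiring care is $|M| = 2$ with $-|\mft(M)|_{\s} - |\s|$ a nonnegative integer: then $\fict(M) + |\mft(M)|_{\s} = -|\s|$ exactly, so that term contributes $\fict(M) + |\mft(M)|_{\s} + (|M|-1)\cdot|\s| = -|\s| + |\s| = 0$, which is \emph{not} strictly positive. Here I would argue instead via the other term: since $M \in \mfL_{\CCum}$ and $|M| = 2 \le 3$, Remark~\ref{remark on extended homogeneity} gives $|\mft(M)|_{\s,\c,D} \ge (|M|-1)\cdot|\s| = |\s|$, so $|\mft(M)|_{\s,\c,D} + (|M|-1)\cdot|\s| \ge 2|\s| > 0$; thus the minimum of the two terms, when added to $(|M|-1)\cdot|\s|$, is still governed by whichever term is smaller, and I claim the left term in the minimum — $|\mft(M)|_{\s,\c,D}$ — need not be the minimizer. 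Actually the cleanest route is: the statement \eqref{power counting cumulant bound} asks that $\big(|\mft(M)|_{\s,\c,D} \wedge (\fict(M) + |\mft(M)|_{\s})\big) + (|M|-1)\cdot|\s| > 0$, equivalently that $\min$ of $\big(|\mft(M)|_{\s,\c,D} + (|M|-1)\cdot|\s|\big)$ and $\big(\fict(M) + |\mft(M)|_{\s} + (|M|-1)\cdot|\s|\big)$ is positive; I have shown the first is positive in all cases, and the second is positive except possibly when $|M|=2$ and $-|\mft(M)|_{\s}-|\s| \in \N$, where it is exactly $0$ — so the clean proof splits on $|M| = 2$ versus $|M| \ge 3$, using $\fict$ when $|M|=2$ is non-integral-critical and $|\mft(M)|_{\s,\c,D} \ge |\s|$ (via $|M| \le 3$) to handle the critical subcase, and Assumption~\ref{assump - noise + kernel} together with \eqref{eq: useful fact for cumulant bound} when $|M| \ge 3$. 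I expect the main obstacle is keeping the case analysis clean and making sure the definition of $\fict$ (with its ceiling and $\vee 0$) is handled correctly in the borderline integer case — everything else is a direct application of the consistency axioms for cumulant homogeneities and Assumption~\ref{assump - noise + kernel}.
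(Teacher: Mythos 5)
Your decomposition of $\wedge$ (minimum) into two separate inequalities, \emph{both} of which must hold, is correct and is what the paper intends; your handling of the $|M|\ge 3$ case and of the $|\mft(M)|_{\s,\c,D}$ term for $|M|=2$ is sound. You also rightly pin down the delicate point: for $|M|=2$ the quantity $\fict(M)+|\mft(M)|_\s+(|M|-1)|\s|$ equals $\lceil x\rceil\vee 0 - x$ with $x = -|\mft(M)|_\s-|\s|$, which vanishes exactly when $x \in \Z_{\ge0}$. The genuine gap is that your proposed resolution of this borderline case contradicts the (correct) logic of your opening paragraph. You stated that it suffices to prove that \emph{each} of the two quantities exceeds $-(|M|-1)|\s|$; but then for the critical subcase you offer to "argue instead via the other term." That move is unavailable: if $\fict(M)+|\mft(M)|_\s+(|M|-1)|\s|=0$ then $\min(\cdot,\cdot)+(|M|-1)|\s|\le 0$ regardless of how large $|\mft(M)|_{\s,\c,D}$ may be. Your own closing sentence concedes the second quantity "is exactly $0$" there and then asserts the other term "handles the critical subcase" --- this is a non sequitur, and \eqref{power counting cumulant bound} as stated simply fails in that subcase.

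Notably, the paper's own proof (a one-liner) has the very same gap: it claims the $|M|=2$ case "follows from either \eqref{eq: for cumulant homogeneities all cumulants are good} or the definition of $\fict(M)$" without excluding $x\in\Z_{\ge0}$, and a leftover editorial comment in the source acknowledges a missing hypothesis keeping homogeneity assignments away from integers. The clean fix is indeed such a genericity assumption on $|\cdot|_\s$; with it, the ceiling inequality becomes strict and your argument closes. A separate but minor point: you quote Remark~\ref{remark on extended homogeneity} as giving $|\mft(M)|_{\s,\c,D}\ge(|M|-1)|\s|$ for $|M|\le 3$, but as written this contradicts the paper's own Wick-power example, where $|\mft(M)|_{\s,\c,D}=-2\alpha\in(0,|\s|/2)$ for $|M|=2$ and $\alpha\in(-|\s|/4,0)$. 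The bound that actually follows from item 4 of the consistency definition is the weaker $|\mft(M)|_{\s,\c,D}>-[(|M|-1)\vee 0]\cdot|\s|$, which is nonetheless exactly what is needed for $|\mft(M)|_{\s,\c,D}+(|M|-1)|\s|>0$.
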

\begin{proof}
If $|M| = 2$ then this follows from either \eqref{eq: for cumulant homogeneities all cumulants are good} or the definition of $\fict(M)$. 
If $|M| \ge 3$ this follows from either Remark~\ref{remark on extended homogeneity} or Assumption~\ref{assump - noise + kernel}. 
\end{proof}
\begin{definition}\label{def: cumulant strong subcriticality}
A semi-decorated $T^{\mfn}_{\mfe}$ is said to be $(\c,|\cdot|_{\s},\mfL_{\CCum})$-\emph{super-regular} if, for every subtree $S$ of $T$ with $|N(S)| > 1$, one has
\begin{equ}\label{super-reg 1}
|S^{0}_{\mfe}|_{\s} + \Bigl(\frac{|\s|}{2} \wedge \mcb{h}_{\c,L(T)}(L(S)) \wedge \mcb{j}_{L(T)}(L(S))\Bigr) > 0\;.
\end{equ}
\end{definition}
\begin{remark}\label{rem:backtrack}
Given a set of non-vanishing cumulants $\mfL_{\CCum}$ and a homogeneity assignment $|\cdot|_{\s}$ one choice of cumulant homogeneity $\c$ for $\mfL_{\CCum}$ consistent with $|\cdot|_{\s}$ is given by setting, for each $(\mft,[M]) \in \mfL_{\CCum}$, the total homogeneity $(\c^{(\mft,[M])}_{\go{T}})_{\go{T} \in \CU_{[M]}}$ to be given by $\c^{(\mft,[M])}_{\go{T}}(a) \eqdef |\mft([M])|_{\s} \cdot \mathbbm{1}\{a = \rho_{\go{T}}\}$ for every $\go{T} \in \CU_{[M]}$ and $a \in \mathring{\go{T}}$. 
This is precisely the choice of $\c$ being referred to in Remark~\ref{remark: simplified cumulant homogeneity}. 
\end{remark}
It is useful to formalize how the cumulant homogeneity will be used with regards to larger integrands, which motivates the following definitions.
\begin{definition}\label{tree restriction}
Let $A$ be a finite set and let $B \subset A$ be of cardinality at least $2$. 
For each $\go{T} \in \widehat{\CU}_{A}$ there is a natural choice of $\go{S} \in \widehat{\CU}_{B}$ corresponding to the ``the restriction of $\go{T}$ to $B$''. 
First recall that $N_{\go{T}}$ can be identified with a subset of $2^{A}$. 
Using this identification, we then define $N_{\go{S}} = \{ D \cap B:\ D \in N_{\go{T}},\ D \cap B \not = \emptyset \}$. 
We then equip $N_{\go{S}}$ with the reverse inclusion partial order as in Section~\ref{Sec: Multiclustering} -- this finishes the specification of $\go{S} \in \CU_{B}$.
For what follows, recall that, except for the leaves, we usually treat $N_{\go{S}}$ as an abstract set (except for the leaves).

Note that there is a natural injection $\iota: \mathring{\go{S}} \rightarrow \mathring{\go{T}}$ given by the map $a \mapsto (\go{L}_{a})^{\uparrow,\go{T}}$ where $a \in \mathring{\go{S}}$ and the set of descendents of $a$ given by $L_{\go{a}}$ is determined by $\go{S}$.  

If a coalesence tree $\go{S}$ on a vertex set $B$ is obtained in such a way from a a coalesence tree $\go{T}$ on a vertex set $A \supset B$ then we say $\go{S}$ is a \emph{restriction} of $\go{T}$ to $B$ and sometimes write $\go{S} \eqdef \go{T} \restr_{B}$.  

We draw a picture to make this clearer. Here $A = \{a_{i}\}_{i=1}^{4} \sqcup \{b_{i}\}_{i=1}^{5}$ and $B = \{b_{i}\}_{i=1}^{5}$. 
To the left we have drawn a choice of $\go{T} \in \widehat{\CU}_{A}$ and on the right the corresponding $\go{T}\restr_{B} \in \CU_{B}$. 
\begin{center}
\begin{tikzpicture}

    \node [style=dot, label=below:$b_{1}$] (1) at (-1.75, 0) {};
    \node [style=dot, label=below:$a_{1}$] (2) at (-1.25, 0) {};
    \node [style=dot, label=below:$b_{2}$] (3) at (-.75, 0) {};
    \node [style=dot, label=below:$b_{3}$] (4) at (0, 0) {};
    \node [style=dot, label=below:$b_{4}$] (5) at (0.5, 0) {};
    \node [style=dot, label=below:$a_{2}$] (6) at (1, 0) {};
    \node [style=dot, label=below:$b_{5}$] (7) at (1.5, 0) {};
    \node [style=dot, label=below:$a_{3}$] (8) at (2, 0) {};
    \node [style=dot, label=below:$a_{4}$] (9) at (2.5, 0) {};

    \node [style=coalnode, label=left:$\iota(c_1)$] (c1) at (.25, 2.75) {};
    \node [style=coalnode] (c2) at (-1.5, 1.25) {};
    \node [style=coalnode, label=left:$\iota(c_2)$] (c3) at (1, 2) {};

    \node [style=coalnode] (c4) at (1.25, 1) {};

    \node [style=coalnode, label=left:$\iota(c_3)$] (c5) at (-0.5, 2) {};

    \node [style=coalnode] (c6) at (2, 1) {};

    \node [style=coalnode, label=right:$\iota(c_4)$] (c7) at (0.25, 1) {};

    \draw[coalline] (1) -- (c2); 
    \draw[coalline] (2) -- (c2);
    \draw[coalline] (c2) -- (c5);
    \draw[coalline] (3) -- (c5); 

    \draw[coalline] (4) -- (c7); 
    \draw[coalline] (5) -- (c7);

    \draw[coalline] (6) -- (c4);
    \draw[coalline] (7) -- (c4);

    \draw[coalline] (8) -- (c6);
    \draw[coalline] (9) -- (c6);

    \draw[coalline] (c7) -- (c3); 
    \draw[coalline] (c4) -- (c3);
    \draw[coalline] (c6) -- (c3);

    \draw[coalline] (c3) -- (c1);
    \draw[coalline] (c5) -- (c1);

\end{tikzpicture}
\quad
\begin{tikzpicture}

    \node [style=dot, label=below:$b_{1}$] (1) at (-1.75, 0) {};

    \node [style=dot, label=below:$b_{2}$] (3) at (-.75, 0) {};
    \node [style=dot, label=below:$b_{3}$] (4) at (0, 0) {};
    \node [style=dot, label=below:$b_{4}$] (5) at (0.5, 0) {};

    \node [style=dot, label=below:$b_{5}$] (7) at (1.5, 0) {};

    \node [style=coalnode, label=left:$c_1$] (c1) at (.25, 2.75) {};

    \node [style=coalnode, label=left:$c_2$] (c3) at (1, 2) {};

    \node [style=coalnode, label=left:$c_3$] (c5) at (-0.5, 2) {};

    \node [style=coalnode, label=right:$c_4$] (c7) at (0.25, 1) {};

    \draw[coalline] (1) -- (c5); 
    \draw[coalline] (3) -- (c5); 

    \draw[coalline] (4) -- (c7); 
    \draw[coalline] (5) -- (c7);

    \draw[coalline] (7) -- (c3);

    \draw[coalline] (c7) -- (c3); 

    \draw[coalline] (c3) -- (c1);
    \draw[coalline] (c5) -- (c1);
\end{tikzpicture}
\end{center}
\end{definition}
\begin{definition}\label{def: useful cumulant bound}
Let $\c$ be a cumulant homogeneity, let $A$ be a finite set, and let $B \subset A$ be of cardinality at least $2$ and suppose we also have a type map $\mft:B \rightarrow \Le$.\\
We define a total homogeneity $\c^{(\mft,B)} = (\c^{(\mft,B)}_{\go{T}})_{\go{T} \in \widehat{\CU}_{A}}$ as follows.
Fix $\go{T} \in \widehat{\CU}_{A}$, let $\go{S} \eqdef \go{T}\restr_{B}$, and let $\iota$ be the corresponding injection $\iota: \mathring{\go{S}} \rightarrow \mathring{\go{T}}$. 
We then set
\[
\c^{(\mft,B)}_{\go{T}}(u)
\eqdef
\begin{cases}
\c^{(\mft,B)}_{\go{S}}(a)
&
\textnormal{ if }
u = \iota(a)
\textnormal{ for }a \in \mathring{\go{S}}\\
0
&\ 
\textnormal{ otherwise.}
\end{cases}
\]
\end{definition} 

\subsection{Multiscale clustering for Wick contractions}

\begin{definition} Let $A$ be a finite set and $p$ be a positive integer. We denote by $\{ A^{(j)} \}_{j=1}^{p}$ $p$ distinct copies of the set $A$. A $p$-fold Wick contraction on $A$ is a partition $\pi$ of $\bigsqcup_{j=1}^{p} A^{(j)}$ which satisfies the condition that for any $B \in \pi$ we have $B \not \subset A^{(j)}$ for any $1 \le j \le p$. In other words, each element of the partition is required to straddle at least two copies of $A$. 
\end{definition}
\begin{theorem}\label{thm: half graph assumptions}
Let $\CX$ be a finite set of cardinality at least two with a distinguished element $\logof \in \CX$ and a subset $\genwickleaves \subset \CX_{0} \eqdef \CX \setminus \{\logof\}$ equipped with a type map $\mft: \genwickleaves \rightarrow \Le$.

Let $\bo{H} \subset \CX^{(2)}$ be a connected multigraph on $\CX$ and $\mcN_{\bo{H}} \subset \N^{\go{H}}$ be a set of scales. 
Let $\bo{H}_{\ast} \subset \bo{H}$ be a non-empty connected subset of edges such that $\CX_{\ast}$, the set 
of vertices incident to $\bo{H}_*$, is disjoint from $\genwickleaves$.

Let $\varsigma$ be a total homogeneity on $\widehat{\CU}_{\CX}$ with the following properties:
\begin{enumerate}
\item $\varsigma$ is of order $\alpha < |\mft(\genwickleaves)|_{\s}$. 
\item Using the notation $\mcb{h}_{\c,\cdot}(\cdot)$ of \eqref{homogeneity gain from missed noise}, one has, for every $\bo{T} \in \mcU_{\mcX}$ and $a \in \mathring{\bo{T}}$,
\begin{equs}\label{integrability condition - big graph}
\sum_{
b \in \bo{T}_{\ge a}
}
\varsigma_{\bo{T}}(b)
<&
|\s| \,
(|\bo{L}_{a}|-1)
+
|
\mft\left( 
\bo{L}_{a}^{\exte}
\right)|_{\s}\\
&
+
\mathbbm{1} 
\left\{ 
\bo{L}_{a} \not \ni \logof
\right\}
\Big[
\mcb{h}_{\c,\ell}(\bo{L}_{a}^{\exte})
\wedge 
\mcb{j}_{\ell}(\go{L}_{a}^{\exte})
\wedge
\frac{|\s|}{2}
\Big]\;,
\end{equs}
where we used the notation $\go{L}_{a}^{\exte} \eqdef \{u \in \genwickleaves :\ u \in \go{L}_{a}\}$.
\item For any $\bo{T} \in \mcU_{\mcX}$ and $a \in \mathring{\bo{T}}$ with both $a \le \CX_{\ast}^{\uparrow}$ and $\go{T}_{\not\ge a} \not = \emptyset$ one has
\begin{equ}\label{decay at large scales}
\sum_{
b \in \go{T}_{\not\ge a}
}
\varsigma_{\go{T}}(b)
>
|\s| \, |\mcX \setminus \bo{L}_{a}|
+
|
\mft(L
\setminus
\bo{L}_{a}^{\exte}
)|_{\s}\;.
\end{equ}
\end{enumerate}
Then, for any $p \in \N$, there exists $C > 0$ such that for any family of functions $G = (G^{\mbn})_{\mbn \in \mcN_{\bo{H}}}$ on $(\R^{d})^{\CX_{0}}$ bounded by $\varsigma$ (for some choice of $x_{\logof} \in \R^{d}$ where $\logof$ serves as the pinned vertex of $\CX$), one has the estimate
\begin{equs}\label{the final bound}
{}
&
\E
\Big(
\int_{\CX_{0}} dy \,
\sum_{\mbn \in \mcN_{\bo{H}, > r, \go{H}_{\ast}}}
G^{\mbn}(y)
\cdot
\wwick{ \{\xi_{\mft(u)}(y_{u}) \}}_{u \in \genwickleaves}
\Big)^{2p}\\
&
\qquad \qquad \qquad
\le
C
\|\xi\|_{2p|\genwickleaves|,\c}
\cdot
2^{2p\beta r}
\cdot
\inf_{\tilde{G} \in \mathrm{Mod}_{\genwickleaves}(G)}
\|\tilde G\|^{2p}_{
\varsigma,
\mcN_{\go{H}},
\genwickleaves
},
\end{equs}
where the norm on the RHS was defined in Definition~\ref{def: a derivative norm} and $\beta \eqdef \alpha - |\mft(\genwickleaves)|_{\s}$.
\end{theorem}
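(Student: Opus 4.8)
The goal is to prove the moment bound \eqref{the final bound} on Wick-contracted integrals, which is the workhorse estimate feeding into Proposition~\ref{prop: main estimate}. The strategy is to expand the $2p$-th moment of the Wick polynomial, turning it into a sum over $2p$-fold Wick contractions of a large multigraph obtained by gluing together $2p$ copies of the tree-type structure on $\CX$ along their noise leaves $\genwickleaves$. After this expansion, each term becomes an integral of the form treated in Appendix~\ref{Sec: Multiclustering}, and the conclusion will follow by invoking Theorem~\ref{thm: both bounds} (or Theorem~\ref{multicluster 1}) once I verify that the relevant total homogeneity on the glued graph is of negative order and subdivergence-free.

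\textbf{Step 1: Moment expansion into Wick contractions.} First I would write $\E[(\cdots)^{2p}]$ as a sum over $p' = 2p$ copies indexed by $j \in [2p]$, with the $j$-th copy carrying its own scale assignment $\mbn^{(j)} \in \mcN_{\go{H},>r,\go{H}_\ast}$, its own integration variables $y^{(j)} \in (\R^d)^{\CX_0^{(j)}}$, and the kernel $G^{\mbn^{(j)}}$. The expectation of the product of $2p$ Wick monomials $\wwick{\{\xi_{\mft(u)}(y^{(j)}_u)\}_{u \in \genwickleaves}}$ is computed via the non-Gaussian analogue of Wick's theorem: it equals a sum over $2p$-fold Wick contractions $\pi$ of $\bigsqcup_j \genwickleaves^{(j)}$, each contributing a product of joint cumulants $\Cum[\{\xi_{\mft(u)}(y_u)\}_{u \in B}]$ over blocks $B \in \pi$. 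Since $\xi \in \mcM(\Omega_\infty,\mfL_\CCum)$, only blocks $B$ with $(\mft,B) \in \mfL_\CCum$ survive, and since $\pi$ is a Wick contraction every block straddles at least two copies. This produces a finite (combinatorial) sum of integrals over $(\R^d)^{\bigsqcup_j \CX_0^{(j)}}$ of a product of: $2p$ copies of the bounded family $G$, and cumulant factors which I control using $\|\xi\|_{2p|\genwickleaves|,\c}$ via the multiscale cumulant decomposition of Section~\ref{cumulant homogeneities} (Definition~\ref{def: cumulant bound2}).

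\textbf{Step 2: Assembling the glued graph and its homogeneity.} For a fixed Wick contraction $\pi$, I would build the multigraph $\go{G}_\pi$ on the vertex set $\CV_\pi \eqdef (\bigsqcup_{j=1}^{2p} \CX_0^{(j)}) \sqcup \{\logof\}$ (identifying the basepoints of all copies, since $G^{\mbn^{(j)}}$ depends only on $x_{\logof}$ and $x_\mainroot$), consisting of: the $2p$ copies of $\go{H}$, the cumulant complete-graph edges $\CE_\pi$, and $\logof$-edges. The corresponding set of scales $\mcN_{\go{G}_\pi}$ records the $\mbn^{(j)}$ together with scales for the cumulant edges. The total homogeneity is then $\varsigma_\pi \eqdef \sum_{j=1}^{2p} \varsigma^{(j)} + \sum_{B \in \pi} \c^{(\mft,B)}$, where $\varsigma^{(j)}$ is $\varsigma$ lifted to the $j$-th copy and $\c^{(\mft,B)}$ is as in Definition~\ref{def: useful cumulant bound}. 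Its order is $2p\alpha - |\mft(\genwickleaves_{\mathrm{tot}})|_{\s}$ where by the block-straddling and permutation-invariance of $\c$ one computes $\sum_B (\text{order of }\c^{(\mft,B)}) = -\sum_B |\mft(B)|_{\s} = -2p|\mft(\genwickleaves)|_{\s}$, giving order $2p\alpha - 2p|\mft(\genwickleaves)|_{\s} = 2p\beta < 0$ by hypothesis~(1). The extra factor $2^{2p\beta r}$ then comes from restricting to scales with $\mbn^{(j)} \in \mcN_{>r,\go{H}_\ast}$: on the glued graph this forces the connecting edges $\go{H}_\ast^{(j)}$ all to have scale $\ge r$, and I would apply Theorem~\ref{thm: both bounds} with $\go{G}_\ast \eqdef \bigsqcup_j \go{H}_\ast^{(j)}$ (which connects its incident vertices, and one checks these are disjoint from the union of the $\genwickleaves^{(j)}$ since $\CX_\ast \cap \genwickleaves = \emptyset$) and $\alpha$ replaced by the order $2p\beta$.

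\textbf{Step 3: Verifying subdivergence-freeness and large-scale decay on the glued graph.} This is the heart of the matter and the step I expect to be the main obstacle. I must check that $\varsigma_\pi$ is subdivergence-free on $\CV_\pi$ and satisfies the large-scale integrability condition \eqref{second cond of HQ} for the scales $\mcN_{\go{G}_\pi}$. The key point is a ``locality'' decomposition: for any coalescence tree $\go{T}$ on $\CV_\pi$ and any internal node $a$, the leaf set $\go{L}_a$ decomposes according to which copy each leaf belongs to; restricting $\go{T}$ to a single copy $j$ (Definition~\ref{tree restriction}) and using subdivergence-freeness of $\varsigma$ within that copy, combined with the consistency bounds \eqref{cumulant homogeneity lower bound} and \eqref{cumulant homogeneity upper bound} on $\c^{(\mft,B)}$ for the cumulant edges crossing $a$, one reduces to the single-copy estimate \eqref{integrability condition - big graph} of the hypothesis. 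The terms $\mcb{h}_{\c,\ell}(\go{L}_a^{\exte})$ and $\mcb{j}_\ell(\go{L}_a^{\exte})$ appearing there are precisely what is needed: $\mcb{j}_\ell$ covers the case where the external noise leaves of copy $j$ at node $a$ contract with leaves of other copies (forming part of a block of $\pi$), while $\mcb{h}_{\c,\ell}$ covers the homogeneity gain from partial cumulant participation, and $|\mft(\go{L}_a^{\exte})|_\s$ accounts for the leaves that contract entirely within the set under consideration. The $\frac{|\s|}{2}$ term handles the degenerate situation where a whole $K$-connected component is being collapsed. For the large-scale condition \eqref{decay at large scales}, one uses \eqref{second cond of HQ} of the hypothesis copy-wise together with the fact that along $\go{G}_\ast$ all scales are large. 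Carefully bookkeeping the cumulant contributions — ensuring that each block $B \in \pi$ is either fully above or split by the node $a$, and applying the right inequality from the definition of cumulant homogeneity consistency in each case — is where the combinatorial delicacy lies; this mirrors the internal structure of the proofs of Lemma~\ref{subdiv free big graph} and Lemma~\ref{large scale bound for big graph}, which establish exactly these conditions for the un-glued integrand, so those can serve as a template. Finally, summing over the finitely many Wick contractions $\pi$ and the finitely many ways of distributing derivatives (picked up from the $\mathrm{Mod}_{\genwickleaves}$ freedom, which is what makes the norm $\|\cdot\|_{\varsigma,\mcN_{\go{H}},\genwickleaves}$ with derivatives on $\genwickleaves$ appear on the right) contributes only the constant $C$, completing the proof.
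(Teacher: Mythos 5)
Your overall strategy matches the paper's proof: expand the $2p$-th moment as a sum over Wick contractions, pass to the glued multigraph on $\CV = \bigsqcup_j \CX_0^{(j)} \sqcup \{\logof\}$, define a total homogeneity by lifting $\varsigma$ to each copy and adding the cumulant contributions $\c^{(\mft,B)}$, compute the order to be $2p\beta < 0$, and then invoke Theorem~\ref{thm: both bounds}. The order computation, the choice of $\go{G}_\ast$, and the disjointness check are all correct, and your locality decomposition by restricting coalescence trees to single copies is the right idea for verifying the large-scale condition.

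There is, however, a genuine gap in Step~2/3: your candidate total homogeneity $\varsigma_\pi = \sum_j \varsigma^{(j)} + \sum_{B\in\pi}\c^{(\mft,B)}$ is \emph{not} subdivergence-free without an additional correction term. The issue is second cumulants $B$ with $\fict(B) = \lceil -|\mft(B)|_\s - |\s| \rceil > 0$ (which Assumption~\ref{assump - noise + kernel} allows, since individual noise homogeneities can be as low as $-|\s|$ and hence $|\mft(B)|_\s$ as low as $-2|\s|$). For a coalescence tree $\go{S}$ that has an internal node $a$ with $\go{L}_a = B$ for such a block, one finds $\sum_{b\in\go{S}_{\ge a}}\varsigma_{\pi,\go{S}}(b) = -|\mft(B)|_\s \ge |\s|$, violating the strict inequality $< (|\go{L}_a|-1)|\s| = |\s|$ required by \eqref{eq: divergence free condition}. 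The paper handles this by adding a third piece $\varsigma^R_{\go{S}} = \sum_{B\in R(\go{S})}\fict(B)\big(\delta^\Uparrow_{\go{S}}[B]-\delta^\uparrow_{\go{S}}[B]\big)$ to the total homogeneity, and correspondingly replacing the raw integrand on those trees by a Taylor-subtracted modification $\mathring{F}^{\mbn}$ built from the operators $\mathscr{Y}_B$ of \eqref{def: fict renorm}; this ``fictitious renormalization'' of second cumulants uses the vanishing-moment property \eqref{eq: fict renorm int vanishes} to cancel the divergence, and it is exactly what feeds the $\fict(B)$ term into the power-counting bound \eqref{power counting cumulant bound} of Lemma~\ref{lemma: higher cumulants are fine}. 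You actually cite Lemma~\ref{subdiv free big graph} and Lemma~\ref{large scale bound for big graph} as a template, but notice that both of them already carry a $\varsigma^R$ summand in their respective definitions of $\varsigma$ --- that term is not optional, and without it (and without the accompanying $\mathrm{Mod}$-replacement of the integrand on trees with $R(\go{T})\neq\emptyset$) the verification of subdivergence-freeness breaks down precisely in the most singular case.
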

\begin{proof}
Observe that the LHS of \eqref{the final bound} does not change if we replace $G$ with $\tilde{G}$ in $\mathrm{Mod}_{\genwickleaves}(G)$ -- for the rest of the proof we fix such a $\tilde{G}$. 
Next, recall that 
\[
\sum_{\mbn \in
\mcN_{\bo{H}, > r, \go{H}_{\ast}}}
\tilde{G}^{\mbn}
=
\sum_{\go{T} \in \CU_{\CX}}
\Big( 
\sum_{
\substack{
\mbn \in \mcN_{\bo{H}, > r, \go{H}_{\ast}}\\
\mcT(\mbn) = \go{T}
}
}
\tilde{G}^{\mbn}
\Big)\;,
\]
where the number of terms in outer sum on the RHS is bounded by some finite combinatorial constant depending on $|\CX|$.
Fix some choice of $\go{T} \in \CU_{\CX}$. By the triangle inequality it suffices to prove \eqref{the final bound} where on the LHS we replace 
\[
\sum_{\mbn \in \mcN_{\bo{H}, > r, \go{H}_{\ast}}}
G^{\mbn}
\enskip
\textnormal{with}
\enskip
\hat{G}
\eqdef
\sum_{
\substack{
\mbn \in \mcN_{\bo{H}, > r, \go{H}_{\ast}}\\
\mcT(\mbn) = \go{T}
}
}
\tilde{G}^{\mbn}
\]
and on the RHS of \eqref{the final bound} we drop the infinum over $\tilde{G}$ (since it has been fixed). 

We write $\CX_{0}^{(j)}$ for $j=1,\ldots,2p$ be $2p$ disjoint copies of $\CX_{0}$. 
We write $\CV_{0} \eqdef \bigsqcup_{j=1}^{2p} \CX_{0}^{(j)}$, $\CV \eqdef \CV_{0} \sqcup \{\logof\}$, and $\CX^{(j)} \eqdef \CX^{(j)}_{0} \sqcup \{\logof\}$.
We also set $\allwickleaves \eqdef \bigsqcup_{j=1}^{2p} \genwickleaves^{(j)} \subset \CV_0$,
so that $2p$-fold Wick contractions are naturally identified with partitions of $\allwickleaves$. 
Note that the type map $\mft$ naturally yields a type map on $\allwickleaves$ which we again call $\mft$.

Using standard facts about the expansion of moments of Wick powers into sums over Wick 
contractions (see \cite[Lem.~4.5]{HS15}) it suffices to show that for any  
any fixed $2p$-fold Wick 
contraction $\pi$ on $\genwickleaves$ one has
\begin{equs}\label{eq: the final bound}
\Big|
&
\int_{\CV_{0}}
dy \,
\Big(
\prod_{j=1}^{2p}
\hat{G}(y^{(j)})
\Big)
\Big( 
\prod_{B \in \pi}
\Cum
\left[ 
\{\xi_{\mft(u)}(y_{u})\}_{u \in B}
\right]
\Big)
\Big|\\
&
\qquad \qquad
\lesssim
\|\tilde{G}\|_{
\varsigma,
\mcN_{\go{H},\lambda},
\genwickleaves
}^{2p}
\|\xi\|_{2p|\genwickleaves|,\c}
\lambda^{2p\beta}\;.
\end{equs}
where we use the shorthand $y^{(j)} \eqdef y_{\CX^{(j)}_{0}}$.

For $j \in [2p]$ and $\bo{T} \in \CU_{\CX}$, we write $\go{H}^{(j)}$ and $\bo{T}^{(j)}$ for 
the corresponding copies of $\go{H}$ and $\bo{T}$ on the vertex set $\CX^{(j)}$. 
To prove \eqref{eq: the final bound} we will apply Theorem~\ref{thm: both bounds} with $\CV$ as the vertex set, and $\logof$ as the root vertex -- we know that $\hat{G}$ satisfies \eqref{domain condition} for some choice of $x_{\logof} \in \R^{d}$ -- this will be fixed throughout our whole application of Theorem~\ref{thm: both bounds} but our bounds don't depend on this variable.

The underlying multigraph is given by $\go{G} 
\eqdef
\tilde{\go{G}}
\sqcup
\go{C}$ where
\begin{equ}
\tilde{\go{G}}
\eqdef
\bigsqcup_{j=1}^{2p} \go{H}^{(j)} 
\enskip
\textnormal{and}
\enskip
\go{C}
\eqdef
\Big\{ 
\{v,v'\} \in \CV^{(2)}:\ 
\exists B \in \pi \textnormal{ with } 
v,v' \in B
\Big\}\;.
\end{equ}
We define a distinguished set of edges $\go{G}_{\ast} = \bigsqcup_{j = 1}^{2p} \go{H}^{(j)}_{\ast}$ where $\go{H}^{(j)}_{\ast}$ denote the copies of $\go{H}_{\ast}$. 
There is a natural identification of $\bigtimes_{j=1}^{2p}\N^{\go{H}^{(j)}}$ with $\N^{\tilde{\go{G}}}$ and using this identification we define our set of scales $\mcN_{\go{G}} \subset \N^{\tilde{\go{G}}} \times \N^{\go{C}} = \N^{\go{G}}$ by 
\[
\mcN_{\go{G}} 
\eqdef 
\left(
\bigtimes_{j=1}^{2p}
\left[
\mcN_{\bo{H}, > r, \go{H}_{\ast}}
\cap \mcT^{-1}(\go{T}^{(j)})
\right]
\right)
\times
\N^{\go{C}}
.
\] 
Note that then $\mcN_{\bo{G}, > r, \go{G}_{\ast}} =
\mcN_{\bo{G}}$. 
We write $\CU_{\CV}$ for the corresponding set of coalescence trees on $\CV$.
Elements $\mbm \in \mcN_{\go{G}}$ are often denoted as pairs $\mbm = (\mbn, \mbj) \in \N^{\tilde{\go{G}}} \times \N^{\go{C}}$.

Let the family of functions $
F 
=
(F^{\mbm})_{\mbm \in \mcN_{\go{G}}}$ on $(\R^{d})^{\CV_{0}}$ for which we apply Theorem~\ref{thm: both bounds} be given by 
\begin{equs}\label{the almost final integrand}
F^{(\mbn,\mbj)}_{\lambda}(y)
\eqdef
\Big(
\prod_{j=1}^{2p}
\tilde{G}^{\mbn_{j}}(y^{(j)})
\Big)
\mathrm{WickCum}^{\mbj}(y)
\end{equs}
where we used the natural correspondance $(\mbn_{j})_{j =1}^{2p} \leftrightarrow \mbn$ and set
\[
\mathrm{WickCum}^{\mbj}(y_{\allwickleaves})
\eqdef
\prod_{B \in \pi}
\Big[
\Cum
\left[ 
\{\xi_{\mft(u)}(y_{u})\}_{u \in B}
\right]
\prod_{\{u,v\} \in B^{(2)}}
\Psi^{(j_{\{u,v\}})}(y_{u} - y_{v})
\Big].
\]
We will define a total homogeneity $\tilde{\varsigma} = \{\tilde{\varsigma}_{\bo{S}}\}_{\bo{S} \in \widehat{\CU}_{\CV}}$ and $\tilde{F} \in \mathrm{Mod}(F)$ such that $\tilde{F}$ is bounded by $\tilde{\varsigma}$ (the root vertex position for \eqref{domain condition} will be given by $x_{\logof}$) and with the property that
\begin{equ}\label{eq: final supremum bound}
\|\tilde{F}\|_{\tilde{\varsigma},\mcN_{\go{G}}} \lesssim 
\|\xi\|_{2p|\genwickleaves|,\c} \cdot \|\tilde{G}\|_{
\varsigma,
\mcN_{\go{H}},
\genwickleaves
}^{2p}\;.
\end{equ}
We will then check that $\tilde{\varsigma}$ satisfies the conditions of Theorem~\ref{thm: both bounds}.

We start by defining $\tilde{\varsigma}$.
For each $j \in [2p]$ and $\go{S} \in \widehat{\CU}_{\CV}$ we define a map $\theta_{j,\go{S}}: 
\mathring{\bo{T}}^{(j)}
\rightarrow
\mathring{\bo{S}}
$
by setting, for any $a \in \mathring{\bo{T}}^{(j)}$, 
\[
\theta_{j,\go{S}}(a)
\eqdef
(\bo{L}_{a})^{\uparrow,\go{S}}\;.
\]
For the remainder of the proof, whenever we write $\uparrow$ or $\Uparrow$ without specifying a tree then this operation is taken in $\go{S}$.

For $j \in [2p]$ we define total homogeneities $\varsigma^{j} = \{\varsigma_{\bo{S}}^{j}\}_{\go{S} \in \CU_{\CV}}$ by setting, for each $\go{S} \in \CU_{\CV}$ and $a \in \mathring{\go{S}}$,  
\[
\varsigma_{\bo{S}}^{j}(a)
\eqdef
\sum_{b \in \theta_{j,\go{S}}^{-1}(a)}
\varsigma_{\bo{T}^{(j)}}(b) 
\]
where the total homogeneity $\varsigma$ was fixed in the assumption, $\go{T}$ is the previously fixed tree in $\CU_{\CX}$, and the $\varsigma_{\bo{T}^{(j)}}$ are the corresponding copies of $\varsigma_{\bo{T}}$. 

We introduce more total homogeneities on the trees of $\widehat{\CU}_{\CV}$, setting 
$
\bar{\varsigma}
\eqdef
\sum_{j=1}^{2p} 
\varsigma^{j}$, $\varsigma^{C} \eqdef \sum_{B \in \pi} \c^{B}$ where we use Definition~\ref{def: useful cumulant bound}, as well as $\varsigma^{R}$ by setting, for each $\go{S} \in \CU_{\CV}$, 
\begin{equs}
\varsigma^{R}_{\go{S}}
\eqdef&
\sum_{B \in R(\bo{S})}
\fict(B)
\left(
\delta^{\Uparrow}_{\go{S}}[B]
-
\delta^{\uparrow}_{\go{S}}[ B]
\right), \textnormal{ where}\\
R(\bo{S}) 
\eqdef& 
\left\{ B \in \pi:\ 
\fict(B) > 0
\textnormal{ and }
\go{L}_{B^{\uparrow}}
= 
B
\right\}\;,
\end{equs}
and $\fict(B)$ was defined in \eqref{def: fict}.
Finally, we define $\tilde{\varsigma} \eqdef \bar{\varsigma} + \varsigma^{C} + \varsigma^{R}$.

The family $\tilde{F}$ will be given by setting
\[
\tilde{F}^{\mbk}
\eqdef
\begin{cases}
F^{\mbk} & \textnormal{ if } R(\mcT(\mbk)) = \emptyset\;,\\
\mathring{F}^{\mbk} & \textnormal{ if } R(\mcT(\mbk)) \not = \emptyset\;.\\
\end{cases}
\]
We will define $\mathring{F}^{\mbk}$ on a tree by tree basis later. It suffices to check the domain condition and desired supremum bound in each of the two cases separately.

We first treat the former case, that is when $\go{S} \in \CU_{\CV}$ satisfies $R(\go{S}) = \emptyset$. 
The domain property \eqref{domain condition} is straightforward to check and next we show the supremum bound. 
Uniformly over $j \in [2p]$, $\go{s} \in \go{\mathrm{Lab}}_{\go{S}}$, and $(\mbn,\mbj) \in \mcN_{\mathrm{tri}}(\go{S},\go{s})$, one has 
\[
\sup_{y \in (\R^{d})^{\CV_{0}}}
\big| \tilde{G}^{\mbn^{j}}(y^{(j)}) \big|
\lesssim
2^{\langle \varsigma_{\bo{S}}^{j},\go{s}\rangle}
\|\tilde{G}\|_{\varsigma,\mcN_{\go{H}}}\;,
\]
as well as
\[
\sup_{y \in (\R^{d})^{\CV_{0}}}
\big| \mathrm{WickCum}^{\mbj}(y_{\allwickleaves}) \big|
\lesssim
2^{\langle
\varsigma^{C}_{\go{S}},
\go{s}
\rangle
}
\|\xi\|_{2p|\genwickleaves|,\c}\;.
\]
Combining these two estimates and setting $\mbm  = (\mbn,\mbj)$ as above yields 
\[
\sup_{y \in (\R^{d})^{\CV_{0}}}
\left| F^{\mbm}(y) \right|
\lesssim 
2^{\langle \varsigma_{\go{S}},\go{s} \rangle}
\|\tilde{G}\|_{\varsigma,\mcN_{\go{H}}}^{2p} 
\|\xi\|_{2p|\genwickleaves|,\c}\;.
\]
Now we treat the other case for $\tilde{F}$, we fix $\go{S} \in \CU_{\CX}$ and suppose that $R(\bo{S}) \not = \emptyset$. 
We work similarly to Lemma~\ref{genvertbd} here.
We again label, for each $B \in R(\bo{S})$, each of the elements of $B$ with a $+$ or $-$, define sets of multi-indices $\mathrm{Der}(B)$, and operators $\mathscr{Y}_{B}$. 
Then for any $\mbm  = (\mbn,\mbj) \in \mcN_{G}$ with $\mcT(\mbm) = \bo{S}$ we define $\mathring{F}^{\mbm}_{\lambda}$ via
\begin{equs}
\mathring{F}^{(\mbn,\mbj)}(y)
\eqdef
\Big(
\prod_{B \in R(\bo{S})}
(1 - 
\mathscr{Y}_{B}
)
\Big)
\Big[
\bigotimes_{j=1}^{2p}\tilde{G}^{\mbn^{j}}
\Big](y)
\cdot
\mathrm{WickCum}^{\mbj}(y_{\allwickleaves})\;.
\end{equs}
To see that both $\mathring{F}^{(\mbn,\mbj)}_{\lambda}$ and $F^{(\mbn,\mbj)}_{\lambda}$ integrate to the same value, one proceeds as in Lemma~\ref{genvertbd}. The desired domain constraint for $\mathring{F}^{(\mbn,\mbj)}_{\lambda}$ is also straightforward to check -- all that is left is the supremum bound.

Using \cite[Prop.~A.1]{Regularity} we get, uniform in $\bo{s} \in \go{\mathrm{Lab}}_{\go{S}}$ and $(\mbn,\mbj) \in \mcN_{\mathrm{tri}}(\bo{S},\bo{s})$, the supremum bound
\begin{equs}\label{noise renormalization}
\sup_{y \in (\R^{d})^{\CV_{0}}}
\left|
\mathring{F}^{(\mbn,\mbj)}_{\lambda}(y)
\right|
&\lesssim
\max_{
k \in 
\mathrm{Der}(R(\bo{S})) 
}
\Big( 
\prod_{B \in R(\bo{S})}
2^{-|k_{(+,B)}|_{\s} j_{B} }
\Big)\\
&
\quad
\quad
\cdot
\sup_{y \in (\R^{d})^{\CV_{0}}}
\Big|
\mathrm{WickCum}^{\mbj}(y_{\allwickleaves})
\prod_{j = 1}^{2p}
D^{k_{j}}
\tilde{G}^{\mbn^{j}}(y^{(j)})
\Big|
\end{equs}
where $
\mathrm{Der}(R(\bo{S})) \eqdef \sum_{B \in R(\bo{S})} \partial_{(+,B)} \mathrm{Der}(B)$  
and, for any $k \in (\N^{d})^{\CV}$ and $j \in [2p]$, we set $
k_{j}
\eqdef
\sum_{l \in \genwickleaves^{(j)}}
k_{l}\;$. 
Now for any $k$ appearing in \eqref{noise renormalization} we have, uniform in $\go{s} \in \go{\mathrm{Lab}}_{\go{S}}$ and $(\mbn,\mbj) \in \mcN_{\mathrm{tri}}(\go{S},\go{s})$, 
\begin{equation*}
\begin{split}
&
\sup_{y \in (\R^{d})^{\CV_{0}}}
\big|D^{k_{j}}\hat{G}^{\mbn^{j}}(y^{(j)}) \big|\\
\le
&
\exp_{2}
\Big[
\Big\langle
\varsigma_{\bo{S}}^{j}
+
\sum_{
\substack{
B \in R(\go{S})\\
(+,B) \in \genwickleaves^{(j)}}
}
|k_{(+,B)}|_{\s}
\mathbbm{1}
\left\{
\bullet =
\theta_{j,\go{S}}
\big[
(+,B)^{\uparrow,\bo{T}^{(j)}}
\big]
\right\},
\go{s}
\Big\rangle
\Big]
\|\tilde{G}\|_{
\varsigma,
\mcN_{\go{H}},
\genwickleaves
}^{2p}\\
&
\le
\exp_{2}
\Big[
\Big\langle
\varsigma_{\bo{S}}^{j}
+
\sum_{
\substack{
B \in R(\go{S})\\
(+,B) \in \genwickleaves^{(j)}
}
}
|k_{(+,B)}|_{\s}
\delta^{\Uparrow}_{\go{S}}[B],
\go{s}
\Big\rangle
\Big]
\|\tilde{G}\|_{
\varsigma,
\mcN_{\go{H}},
\genwickleaves
}^{2p}\;.
\end{split}
\end{equation*}
In going to the third line above we used the fact that in the partial order on the nodes of $\bo{S}$ one has $B^{\Uparrow}
\ge
\theta_{j,\go{S}}[(+,B)^{\uparrow,\bo{T}^{(j)}}]$ which can be justified as follows: the inner node on the left corresponds to the first coalescence event in $\go{S}$ where the component containing $(+,B)$ contains a distinct element from the same copy of $\CX_{0}$ that $(+,B)$ is from, however this must happen strictly after $(+,B)$ combines with $(-,B)$ to form a component of cardinality $2$. 

It follows that uniform in $\go{s} \in \go{\mathrm{Lab}}_{\go{S}}$ and $(\mbn,\mbj) \in \mcN_{\mathrm{tri}}(\go{S},\go{s})$ we have the bound
\begin{equation*}
\begin{split}
&
\sup_{y \in (\R^{d})^{\CV_{0}}}
\left|
\mathring{F}^{(\mbn,\mbj)}_{\lambda}(y)
\right|
\lesssim
2^{
\langle
\bar{\varsigma}_{\bo{S}},
\go{s}
\rangle
}
\|\xi\|_{2p|\genwickleaves|,\c} 
\|\tilde{G}\|_{
\varsigma,
\mcN_{\go{H}},
\genwickleaves
}^{2p}
\\
&
\qquad
\cdot
\max_{
k \in \mathrm{Der}(\bo{S})
}
\exp_{2}
\big[  
\langle
\sum_{
B \in R(\go{S})
}
|k_{(+,B)}|_{\s}
\big(
\delta^{\Uparrow}_{\go{S}}[B]
-
\delta_{\go{S}}^{\uparrow}[B]
\big),
\go{s}
\rangle
\big]\;.
\end{split}
\end{equation*}
To obtain the desired result observe that the maximum over $k$ on the RHS is acheived when $|k_{(+,B)}|_{\s} =
\fict(B)$ for each $B \in R(\go{S})$. 
This finishes the proof of \eqref{eq: final supremum bound}.

We now check that $\tilde{\varsigma}$ satisfies the assumptions of Theorem~\ref{thm: both bounds}. 
It is easy to see that the total homogeneity $\tilde{\varsigma}$ is of order $\beta \eqdef 2p(\alpha - |\mft(\genwickleaves)|_{\s}) < 0$. 

Next we prove that $\tilde{\varsigma}$ is subdivergence free on $\CV$ for the set of scales $\mcN_{\go{G}}$. 
In what follows we make frequent and implicit use of Assumption~\ref{assump - noise + kernel} and the following trivial bound -- for any $L' \subset \genwickleaves^{\textnormal{all}}$
\begin{equ}\label{eq: simple noise bound}
-
\sum_{
\substack{
B \in \pi\\
B \subset L'
}
}
|\mft(B)|_{\s}
-
\sum_{
\substack{
B \in \pi\\
B \not \subset L'
}
}
|\mft(B)|_{\s,\c,\ell}
\le
-
|\mft(L')|_{\s}\;.
\end{equ}
Fix $\bo{S} \in \CU_{\CV}$ and $a \in \mathring{\bo{S}}$.
For each $j \in [2p]$ and $A \subset [2p]$ we write  
\begin{equ}
\bo{L}_{a}^{j} \eqdef \bo{L}_{a} \cap \CX^{(j)},\enskip
\bo{L}_{a}^{j,\exte} \eqdef \go{L}_{a}^{j} \cap \genwickleaves^{j},\enskip 
\bo{L}_{a}^{A} \eqdef \bigsqcup_{i \in A} \bo{L}_{a}^{i},\enskip
\textnormal{and}\enskip
\bo{L}_{a}^{A,\exte} = \bigsqcup_{i \in A} \bo{L}_{a}^{i,\exte}\;.
\end{equ}
We also define three sets of indices 
\begin{equ}\label{indices}
J_{1}
\eqdef
\{ j \in [2p]:\ 
|\bo{L}^{(j)}_{a}| = 1
\},\; 
J_{2}
\eqdef
\{ j \in [2p]:\ 
|\bo{L}^{(j)}_{a}| \ge 2
\},\textnormal{ and }
J \eqdef J_{1} \sqcup J_{2}\;.
\end{equ}
Given $a \in \mathring{\go{S}}$ and $j \in [2p]$, we also write $a^{j}$ as a shorthand for
\begin{equ}
a^{j} \eqdef (\bo{L}_{a}^{j})^{\uparrow, \bo{T}^{(j)}}\;.
\end{equ}
We first treat the case when $\bo{L}_{a} \not \ni \logof$; this is done by checking a variety of subcases. 
Restricting to this case, for every $j \in J_{1}$ we have, by $\tilde{\go{G}}$-connectivity, $\bo{L}^{(j)}_{a} \subset \genwickleaves^{(j)}$.
In the subcase where $J_{2} = \emptyset$ we then have $\go{L}^{J,\exte}_{a} = \bo{L}_{a}$ and
\begin{equs}
\sum_{
b \in \bo{S}_{\ge a}
}
\tilde{\varsigma}_{\bo{S}}(b)
\le
-
\sum_{
\substack{
B \in \pi\\
B \subset \bo{L}_{a}
}
}
\big(
\fict(B)
\mathbbm{1}
\{\bo{L}_{a} = B\}
+
|\mft(B)|_{\s}
\big)
-
\sum_{
\substack{
B \in \pi\\
B \not \subset \go{L}_{a}
}
}
|\mft(B \cap \go{L}_{a})|_{\s,\c,\ell}.
\end{equs}
Again, by $\tilde{\go{G}}$-connectivity, there is a unique $\bar{B} \in \pi$ with $\go{L}_{a} \subset \bar{B}$ -- it follows that the RHS above is bounded by the LHS of \eqref{power counting cumulant bound} with $M = \go{L}_{a}$. 
The desired bound then follows from Lemma~\ref{lemma: higher cumulants are fine} and this finishes the subcase $J_{2} = \emptyset$.

For the subcase $J_{1} = \emptyset$ and $J_{2} = \{j\}$, we write 
$B^{a,j} \eqdef B \cap \go{L}^{j,\exte}_{a}$ and set $\pi^{a,j} \eqdef \{B \in \pi\,:\, B^{a,j} \neq \emptyset\}$.
With this notation, we have
\begin{equ}\label{single tree}
\sum_{
b \in \bo{S}_{\ge a}
}
\tilde{\varsigma}_{\bo{S}}(b)
\le
\sum_{
c \in {\bo{T}}^{(j)}_{\ge a^{j}}
}
\varsigma_{\go{T}^{(j)}}(c)
-
\sum_{
B \in \pi^{a,j}}
|\mft(B^{a,j})|_{\s,\c,\ell}\;.
\end{equ}
To see that the RHS is bounded by $(|\go{L}_{a}| -1) |\s| = (|\go{L}^{j}_{a}| -1) |\s|$ first note that it is immediate by \eqref{integrability condition - big graph} when the second sum on the RHS of \eqref{single tree} is empty. 
When this sum is not empty we then use $|\mft(\bullet)|_{\s} \le |\mft(\bullet)|_{\s,\c,\ell}$ for all but one block $B$ contributing to the sum to get the inequality
\begin{equ}\label{single tree 2}
-
\sum_{
B \in \pi^{a,j}}
|\mft(B^{a,j})|_{\s,\c,\ell}
\le
-|\mft(\go{L}^{j,\exte}_{a})|_{\s}
+
\min_{
B \in \pi^{a,j}
}
\big(
|\mft(B^{a,j})|_{\s,\c,\ell} - |\mft(B^{a,j})|_{\s}
\big)\;.
\end{equ}
The last term on the RHS of \eqref{single tree 2} is bounded above by $\mcb{h}_{\c,\ell}(\go{L}^{j,\exte}_{a})$ and therefore by \eqref{integrability condition - big graph} the RHS of \eqref{single tree} must be strictly bounded above by $(|\go{L}^{j}_{a}| -1 ) |\s|$. 

Now we treat the subcase where $J_{2} = \{j\}$ and $J_{1} \not = \emptyset$. 
Since $\go{L}_{a}$ is connected by $\go{G}$ and $\go{L}_{a} \not \ni  \logof$ it follows that there exist $B_{1},\dots,B_{n} \in \pi$ such that $\cup_{m=1}^{n} B_{m} \supset \go{L}_{a}^{J,\exte}$ and, for every $m \in [n]$, one has $B_{m} \cap \go{L}_{a}^{j,\exte} \not = \emptyset$.
Now if it is the case that for every $m$ one has $B_{m} \cap \go{L}_{a}^{J,\exte} \in \mfL_{\CCum}$ then,
\[
-
|\mft(\go{L}_{a}^{J_{1}})|_{\s}
-
|\s|\cdot|\go{L}_{a}^{J_{1}}|
\le 
- \mcb{j}_{\ell}(\go{L}_{a}^{j,\exte}),
\] 
and also using $\sum_{
b \in {\bo{S}}_{\ge a}
} \varsigma^{C}_{\bo{S}}(b) \le -
|\mft(\go{L}_{a}^{j,\exte})|_{\s} -
|\mft(\go{L}_{a}^{J_{1}})|_{\s}$ we have 
\begin{equs}\label{wick contraction work}
\sum_{
b \in {\bo{S}}_{\ge a}
}
\tilde{\varsigma}_{\bo{S}}(b)
\le&
\sum_{
c \in {\bo{T}}^{(j)}_{\ge a^{(j)}}
}
\varsigma_{\go{T}^{(j)}}(c)
-
|\mft(\go{L}_{a}^{j,\exte})|_{\s} -
|\mft(\go{L}_{a}^{J_{1}})|_{\s}\\
\le&
\sum_{
c \in {\bo{T}}^{(j)}_{\ge a^{(j)}}
}
\varsigma_{\go{T}^{(j)}}(c)
-
|\mft(\go{L}_{a}^{j,\exte})|_{\s}
-
\mcb{j}_{\ell}(\go{L}_{a}^{j,\exte})
+
|\s| \cdot |\go{L}_{a}^{J_{1}}|
\end{equs}
and we then get the desired bound by applying \eqref{integrability condition - big graph}. 
On the other hand, if there exists $m \in [n]$ such that $B_{m} \cap \go{L}_{a}^{J,\exte} \not \in \mfL_{\CCum}$ then it follows that $|B_{m} \cap \go{L}_{a}^{j,\exte}| = 1$ and $|\mft(B_{m} \cap \go{L}_{a}^{\exte})|_{\s,\c,\ell} = 0$, so writing $B_{m} \cap \go{L}_{a}^{j,\exte} = \{u\}$ we have
we have
\begin{equs}
\sum_{
b \in {\bo{S}}_{\ge a}
}
\tilde{\varsigma}_{\bo{S}}(b)
\le&
\sum_{
c \in {\bo{T}}^{(j)}_{\ge a^{(j)}}
}
\varsigma_{\go{T}^{(j)}}(c)
-
|\mft(\go{L}_{a}^{j,\exte} \setminus \{u\})|_{\s}
-
|\mft(\go{L}_{a}^{J_{1}})|_{\s}\\
\le
&
\sum_{
c \in {\bo{T}}^{(j)}_{\ge a^{(j)}}
}
\varsigma_{\go{T}^{(j)}}(c)
-
|\mft(\go{L}_{a}^{j,\exte})|_{\s}
+
\big[
|\mft(\{u\})|_{\s} - |\mft(\{u\})|_{\s,\c,\ell} 
\big]
+|\s| \cdot |J_{1}|
\end{equs}
The bracketed quantity on the last line is bounded below by $- \mcb{h}_{\c,\ell}(\go{L}^{j,\exte})$ and so by \eqref{integrability condition - big graph} the last line without the last term is bounded above by $(|\go{L}_{a}^{j}| - 1) \cdot |\s|$ which finishes this subcase.

Finally, suppose $|J_{2}| \ge 2$. We then have
\begin{equs}
\sum_{
b \in \bo{S}_{\ge a}
}
\tilde{\varsigma}_{\bo{S}}(b)
&\le
\Big[
\sum_{j \in J_{2}}
\Big(
\sum_{
c \in {\bo{T}}^{(j)}_{\ge a^{j}}
}
\varsigma_{\go{T}^{(j)}}(c)
-
|\mft(\go{L}^{j,\exte}_{a})|_{\s}
\Big)
\Big]
-
\sum_{j \in J_{1}}
|
\mft\left( \go{L}^{j,\exte}_{a}
\right)|_{\s}
\\
&<
\sum_{j \in J_{2}}
|\s| 
\left[ 
|\bo{L}^{j}_{a}|
- 
\frac{1}{2}
\right]
-
\sum_{j \in J_{1}}
|
\mft\left( \go{L}^{j,\exte}_{a}
\right)|_{\s}\;.\label{upper bound, generic case}
\end{equs} 
In going to the second line of \eqref{upper bound, generic case} we used, for each $j \in J_{2}$, \eqref{integrability condition - big graph}.
Now by using $\sum_{j \in J_{2}} |\bo{L}_{a}^{j}| = |\bo{L}_{a}| - |J_{1}|$ and Assumption~\ref{assump - noise + kernel} we see that the RHS of \eqref{upper bound, generic case} is strictly bounded above by $|\s| ( 
|\bo{L}_{a}| - \frac{1}{2}|J_{2}| 
)$ so we are done. 

We now treat the case $\bo{L}_{a} \ni \logof$. Here $\bo{L}_{a}^{j} \ni \logof$ for each $j \in [2p]$ and \eqref{integrability condition - big graph} gives
\begin{equation}\label{upper bound, basepoint case}
\begin{split}
\sum_{
b \in \bo{S}_{\ge a}
}
\tilde{\varsigma}_{\bo{S}}(b)
\le&\  
\sum_{j \in J_{2}}
\Big(
\sum_{
c \in {\bo{T}}^{(j)}_{\ge a^{j}}
}
\varsigma_{\go{T}^{(j)}}(c)
-
|
\mft
(\go{L}^{j,\exte}_{a})
|_{\s}
\Big)
\\
<&\ 
|\s| 
\sum_{j \in J_{2}}
\big(
|\bo{L}_{a}^{j}|
- 1
\big)
=
|\s|
(|\bo{L}_{a}| - 1)\;.
\end{split}
\end{equation}
This completes the proof that the total homogeneity $\tilde{\varsigma}$ is subdivergence free on $\CV$ for the set of scales $\mcN_{\bo{G}}$.

We now turn to showing \eqref{second cond of HQ} in our context. 
Suppose we are given $u \in \mathring{\bo{S}}$ satisfying both $u \le (\mvert_{\ast})^{\uparrow}$ and $
\go{S}_{\not\ge u} \not = \emptyset$.
We define, for each $j \in [2p]$,  
\[
\bo{Q}^{j}_{u} \eqdef \CX^{(j)} \setminus \bo{L}_{u} 
\enskip
\textnormal{and}
\enskip
D \eqdef \{ j \in [2p]:\ \bo{Q}^{j}_{u} \not = \emptyset\}\;.
\] 
Clearly $D$ is non-empty.
We then have 
\begin{equation*}
\begin{split}
\sum_{ 
w \in \go{S}_{\not\ge u}
}
&\tilde{\varsigma}_{\go{S}}(w) - |\CV \setminus \bo{L}_{u}|\, |\s|\\
&\ge
-
\sum_{w \in \go{S}_{\not\ge u}}
\sum_{B \in \pi}
\c^{(\mft,B)}(w)
+
\sum_{j \in D}
\Big[
\sum_{
w \in \go{T}^{(j)}_{\not\ge u^{j}}
}
\varsigma_{\bo{T}^{(j)}}(w)
-
|\CX^{(j)} \setminus \bo{L}_{u^{j}}| \, |\s|
\Big]\\
&\ge
\sum_{j \in D}
\Big[
\sum_{
w \in \go{T}^{(j)}_{\not\ge u^{j}}
}
\varsigma_{\bo{T}^{(j)}}(w)
-
|\mft( \genwickleaves^{(j)} \setminus \bo{L}_{u}^{j})|_{\s}
-
|\CX^{(j)} \setminus \bo{L}_{u^{j}}| \, |\s|
\Big]
>\
0\;.
\end{split}
\end{equation*}
In the second inequality we used \eqref{decay at large scales} and in the first we used
\begin{equs}
\sum_{w \in \go{S}_{\not\ge u}}
\sum_{B \in \pi}
\c^{(\mft,B)}(w)
=&
-|\mft(\allwickleaves)|_{\s}
-
\sum_{
w \in \go{S}_{\ge u}}
\c^{(\mft,B)}(w)\\
\le&
-|\mft(\allwickleaves)|_{\s}
+
|\mft(\go{L}_{u}^{\exte})|_{\s} = -|\mft(\allwickleaves \setminus \go{L}_{u}^{\exte})|_{\s}\;.
\end{equs}
\end{proof}
\section{Convergence of mollified approximations}\label{Sec: Convergence of measures}
Throughout this section we take the dimension $d$ and space-time scaling $\s$ as fixed along with a set of noise types $\mfL_{-}$ and a set of non-vanishing cumulants $\mfL_{\CCum}$.

The primary question of this section is as follows: given $\xi \in \CM(\Omega_{0})$ and a sequence of smooth, compactly supported approximate identities $\{\eta_{\eps}\}_{\eps \in (0,1]}$ on $\R^{d}$ which converge to the Dirac delta function $\delta$, does $\|\xi;\xi_{\eps}\|_{\c,N} \rightarrow 0$ as $\eps \down 0$ where we use the notation of Definition~\ref{def: metric on mollified measures2} and we set $\xi_{\eps} = \{ \xi_{\mft} \ast \eta_{\eps}\}_{\mft \in \Le}$.  

We denote by $\Moll$ the collection of all smooth functions $\eta \in \mcb{C}(\R^{d})$ which are supported on the closed ball $\{x \in \R^{d}: |x| \le 1\}$ and satisfy $\int dx\ \eta(x) = 1$. 
The following lemma is immediate. 
\begin{lemma}\label{lem: mollification is good}
For any $\eta \in \Moll$ and $\zeta \in \mathcal{M}(\Omega_{0},\mfL_{\CCum})$ the noise $\zeta \ast \eta = (\zeta_{\mft} \ast \eta)_{\mft \in \mfL_{-}}$ belongs to $\mathcal{M}(\Omega_{\infty},\mfL_{\CCum})$.
\end{lemma}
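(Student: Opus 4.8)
The plan is to verify directly that $\zeta \ast \eta$ satisfies each defining property of $\mcM(\Omega_{\infty},\mfL_{\CCum})$, every one of which is inherited from the corresponding property of $\zeta$ via the elementary facts that convolution with the fixed function $\eta \in \mcD(\R^{d})$ is linear and commutes with the translation action on $\Omega_{0}$. First I would observe that $\zeta \ast \eta$ takes values in $\Omega_{\infty}$: for each $\mft \in \Le$ and each realisation, $\zeta_{\mft}$ is a distribution and hence $\zeta_{\mft}\ast\eta$ is smooth, with $D^{k}(\zeta_{\mft}\ast\eta)(z) = \zeta_{\mft}\big((D^{k}\eta)(z-\cdot)\big)$. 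Since $(D^{k}\eta)(z-\cdot) \in \mcD(\R^{d})$, membership of $\zeta$ in $\mcM(\Omega_{0})$ guarantees that this random variable has finite moments of all orders, so the moment part of the first bullet of Definition~\ref{def: smooth random variables} holds, while stationarity of the law of $\zeta\ast\eta$ (its second bullet) is immediate from stationarity of the law of $\zeta$ together with the intertwining of convolution and translation.

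The vanishing of the first moment of $D^{k}(\zeta_{\mft}\ast\eta)(z)$ is where I would invoke the $\mfL_{\CCum}$ structure: by Assumption~\ref{assump: set of cumulants}(1) one always has $(\mft,[1]) \in \mfL_{\CCum}^{\all}\setminus\mfL_{\CCum}$, so $\zeta \in \mcM(\Omega_{0},\mfL_{\CCum})$ forces $\E[\zeta_{\mft}(f)] = \Cum[\{\zeta_{\mft}(f)\}] = 0$ for every $f \in \mcD(\R^{d})$, and applying this with $f = (D^{k}\eta)(z-\cdot)$ gives $\E[D^{k}(\zeta_{\mft}\ast\eta)(z)] = 0$. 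This completes the verification that $\zeta\ast\eta \in \mcM(\Omega_{\infty})$.

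It then remains to check the cumulant condition defining the subscript $\mfL_{\CCum}$. Fix $(\mft,[M]) \in \mfL_{\CCum}^{\all}\setminus\mfL_{\CCum}$ with $M \ge 2$ (the case $M = 1$ being already covered and the case $(\mft,[M])\in\mfL_{\CCum}$ being vacuous); I want $\Cum[\{(\zeta_{\mft(n)}\ast\eta)(z_{n})\}_{n=1}^{M}] = 0$ for all $z$. By multilinearity of joint cumulants in their arguments together with the fact that $\zeta$ admits pointwise cumulants, this quantity equals the distribution $\Cum[\{\zeta_{\mft(n)}\}_{n=1}^{M}]$ tested against $\eta(z_{1}-\cdot)\otimes\cdots\otimes\eta(z_{M}-\cdot)$, i.e.\ its convolution with $\eta^{\otimes M}$; since $\Cum[\{\zeta_{\mft(n)}\}_{n=1}^{M}]$ vanishes by definition of $\mcM(\Omega_{0},\mfL_{\CCum})$, so does the convolution. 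The only point requiring a moment's care is the justification of this last distributional identity — reducing the joint cumulant of the mollified fields to a convolution against the cumulant distribution of $\zeta$ — which one obtains by expanding the cumulant through its defining recursion into products of moments, passing the finitely many convolutions through the (absolutely convergent, by the finite-moment hypothesis on $\zeta$) expectations, and reassembling; alternatively, since $\zeta\ast\eta$ is smooth all its cumulants are honest functions and the identity may simply be read off pointwise. No genuine obstacle is anticipated: the substance of the lemma is merely that all the structural constraints defining these noise classes are stable under convolution with a fixed test function.
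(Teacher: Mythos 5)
The paper offers no proof of this statement, simply remarking that it is ``immediate,'' so there is no argument to compare against; your job was to supply the elaboration. Your proof does this correctly and covers all the points that need checking: smoothness of the mollified noise, finite moments of all orders, stationarity, vanishing first moment, and vanishing of the excluded cumulants. The only point worth flagging is that the vanishing first moment must come from the $\mfL_{\CCum}$ condition rather than from membership in $\mcM(\Omega_0)$ alone (which imposes no mean-zero condition), and you identify this correctly: since Assumption~\ref{assump: set of cumulants} forces every $(\mft,[M]) \in \mfL_{\CCum}$ to have $M \geq 2$, the singleton $(\mft,[1])$ lies in $\mfL_{\CCum}^{\all} \setminus \mfL_{\CCum}$, and the defining condition of $\mcM(\Omega_0,\mfL_{\CCum})$ then kills the first cumulant, i.e.\ the mean. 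The reduction of $\Cum[\{(\zeta_{\mft(n)} \ast \eta)(z_n)\}_{n=1}^M]$ to the distribution $\Cum[\{\zeta_{\mft(n)}\}_{n=1}^M]$ paired against $\bigotimes_n \eta(z_n - \cdot)$ via multilinearity of cumulants is exactly the right mechanism for the vanishing of higher excluded cumulants, and your justification for passing the convolution through the cumulant (finite moments, or smoothness of the mollified fields) is adequate. No gaps.
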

Throughout this section $\delta$ will always denote the Dirac delta distribution on $\R^{d}$.
We also write $\overline{\Moll} \eqdef \Moll \sqcup \{\delta\}$, and also quantitative notions of the convergence in $\overline{\Moll}$ by setting, for each $\kappa \in [0,1)$, $r \in \N$, and any distribution $\eta \in \mcD'(\R^{d})$ determined by a smooth function away from the origin, 
\[
\|\eta \|_{\kappa,r}
\eqdef
\max_{
\substack{
k \in \N^{d}\\
|k|_{\s} < 4r|\s|
}
}
\Big[
|
\eta (p_{k})
|
\vee
\big(
\sup_{y \in \R^{d} \setminus \{0\}}
|D^{k}\eta(y)
|
\cdot
|y|^{|\s| + |k|_{\s} + \kappa}
\big)
\Big].
\]
As an example, observe that if one fixes $\eta \in \Moll$ and defines, for each $\eps > 0$, 
\begin{equ}\label{scaled mollifier}
\eta^{\eps}(x_{1},\dots,x_{d}) \eqdef \eps^{-|\s|}\eta
\left(
\eps^{-\s_{1}}x_{1},\dots,\eps^{-\s_{d}}x_{d}
\right)\;,
\end{equ}
then for any $\kappa \in [0,1)$ one has $\|\eta^{\eps} - \delta\|_{\kappa,r} \sim \eps^{\kappa}$ as $\eps \downarrow 0$ for fixed $r$.
In what follows we use the notation 
\[
\widetilde{\Moll} \eqdef \Moll \sqcup \{ \eta_{1} - \eta_{2}:\ \eta_{1},\eta_{2} \in \Moll \}
\]

Before stating the main proposition of this section we need additional notation. 
\begin{definition}\label{def: cumulant homogeneity penalization}
Given a cumulant homogeneity $\c$ on $\mfL_{\CCum}$ we define the the $\kappa$-penalization of $\c$ to be the cumulant homogeneity $\tilde{\c}$ on $\mfL_{\CCum}$ defined via setting, for each $(\mft,[N]) \in \mfL_{\CCum}$, 
\[
\tilde{\c}^{(\mft,[N])} \eqdef
\c^{(\mft,[N])}
+
\kappa
\sum_{v \in [N]}
\delta^{\uparrow}[v],
\]
where we are using the notation of \eqref{def: delta total homogeneities}.
\end{definition}
\begin{proposition}\label{convergence of mollifications}
For any $\xi \in \mcM(\Omega_{0},\mfL_{\CCum})$ and $\eta \in \Moll$ one has the random noise type valued variable $\xi \ast \eta \eqdef (\xi_{\mft} \ast \eta)_{\mft \in \Le}$ is an element of $\mcM(\Omega_{\infty},\mfL_{\CCum})$. 

Additionally, for any $N \ge 2$, and cumulant homogeneity $\c$ on $\mfL_{\CCum}$ and $r \in \N$, one has, uniform in $\xi \in \mcM(\Omega_{0},\mfL_{\CCum})$, and $\kappa$ satisfying 
\[
0 < \kappa < 
\frac{1}{2}
\min_{(\mft,[2]) \in \mfL_{\CCum}}
\Big( \lceil \c^{(\mft,[2])} \rceil - \c^{(\mft,[2])} \Big),
\] 
and $\eta_{1},\eta_{2} \in \Moll_{r} \sqcup \{0\}$, the bounds
\begin{equ}\label{difference between mollified measures}
\|\xi \ast \eta_{1} ; \xi \ast \eta_{2}\|_{N,\c',r}
\lesssim
\|\eta_{1} - \eta_{2}\|_{\kappa}
\cdot
(\|\eta_{1}\|_{\kappa} \vee \|\eta_{2}\|_{\kappa})^{N-1}
\cdot
\|\xi\|_{N,\c,r+1}\;,
\end{equ}
where $\c'$ is the $\kappa$-penalization of $\c$. 
\end{proposition}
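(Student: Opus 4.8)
The claim is fundamentally an estimate on how cumulants behave under mollification, sliced into the multiscale language of Definition~\ref{def: cumulant bound2} and Definition~\ref{def: metric on mollified measures2}. The plan is to first establish that $\xi \ast \eta \in \mcM(\Omega_\infty,\mfL_{\CCum})$: smoothness of each $\xi_\mft \ast \eta$ is immediate from $\eta \in \Moll$, stationarity is inherited, finiteness of moments follows from that of $\xi$, and vanishing of cumulants outside $\mfL_{\CCum}$ follows since the cumulant of $\{(\xi_{\mft(i)} \ast \eta)(x_i)\}$ is obtained by convolving $\Cum[\{\xi_{\mft(i)}(\cdot)\}]$ in each variable against $\eta$, which annihilates nothing but preserves the block structure. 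This is essentially Lemma~\ref{lem: mollification is good} applied pointwise and should be dispatched quickly.

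For the quantitative bound \eqref{difference between mollified measures}, I would work one cumulant $(\mft,[M])$ at a time with $2 \le M \le N$, and control $\|\hat{F}_{k,(\mft,[M]),\hat M}\|_{D^k \c'^{(\mft,[M])},\N^{[M]^{(2)}}}$ as in Definition~\ref{def: metric on mollified measures2}. The key identity is that
\[
\Cum[\{(\hat\xi_{\mft(i),i} \ast \eta_i)(x_i)\}_{i=1}^M]
=
\int \prod_{i=1}^M dz_i\, \eta_i(x_i - z_i)\,
\Cum[\{\hat\xi_{\mft(i),i}(z_i)\}_{i=1}^M]\;,
\]
where $\eta_i$ is $\eta_1$ for $i \le \hat M$ and $-\eta_2$ for $i \ge \hat M$ (with the telescoping convention of \eqref{def: telescoping field}), so that for $i$ outside the range a genuine difference $\eta_1 - \eta_2$ appears. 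The strategy is to perform a multiscale decomposition of the $M$ convolution kernels $\eta_i$ and of the cumulant $\Cum[\{\hat\xi_{\mft(i),i}(z_i)\}]$ using the clustering machinery of Section~\ref{Sec: Multiclustering} (this is exactly the setting with $\go{G} = [M]^{(2)}$ the complete graph, $1 \in [M]$ pinned, as set up in the paragraph before Definition~\ref{def: cumulant homogeneity}). One then applies Theorem~\ref{multicluster 1} (or a variant) with a total homogeneity built from $\c^{(\mft,[M])}$ penalized by $\kappa$ at every leaf — this is precisely the $\kappa$-penalization $\tilde\c = \c'$ of Definition~\ref{def: cumulant homogeneity penalization}. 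The $\kappa$ at each leaf is paid for either by the Hölder-type bound $\sup_y |D^k\eta(y)||y|^{|\s|+|k|_\s+\kappa}$ buried in $\|\eta\|_\kappa$ (when the mollification scale dominates the cumulant scale at that leaf), or it comes for free from the underlying regularity of $\Cum[\{\xi_{\mft(i)}(\cdot)\}]$ which, by $\|\xi\|_{N,\c,r+1}$, is controlled with the unpenalized exponent $\c^{(\mft,[M])}$ at scale $r+1$ rather than $r$ — the extra unit of scale absorbs the $\kappa$-loss. The factor $(\|\eta_1\|_\kappa \vee \|\eta_2\|_\kappa)^{M-1}$ arises because $M-1$ of the $M$ mollifiers contribute a supremum bound (the pinned vertex being free), while the single factor $\|\eta_1 - \eta_2\|_\kappa$ comes from the one leaf where the difference sits; for $M = N$ this gives the stated $(N-1)$ powers after taking the max over $M$.

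The diagonal terms $\mathrm{Diag}_{\c'}(\xi \ast \eta_1; \xi \ast \eta_2)$ require separate treatment, using the same trick as in \cite[Lem.~A.4]{KPZJeremy} that underlies the fictitious renormalization of Section~\ref{Sec: multiscale expansion}: one checks that $\reduce[\Cum[\{(\hat\xi_{\mft(1)}\ast\eta_1), (\hat\xi_{\mft(2)}\ast\eta_2)\}]]$ is of order $\c'^{(\mft,[2])} - |\s| = \c^{(\mft,[2])} + 2\kappa - |\s|$ at $0$ whenever the original second cumulant is of order $\c^{(\mft,[2])} - |\s|$, and that the test-function evaluations $\reduce[\cdots](p_k)$ for $|k|_\s < \c^{(\mft,[2])} + 2\kappa - |\s|$ are controlled — this is where the constraint $2\kappa < \lceil \c^{(\mft,[2])}\rceil - \c^{(\mft,[2])}$ enters, ensuring no new integer is crossed so that the set of allowed multi-indices $k$ for $\c'$ is no larger than that for $\c$ (at scale $r$ versus $r+1$). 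The main obstacle I anticipate is bookkeeping the scale-by-scale trade-off cleanly: at each coalescence node of the clustering tree one must decide whether the mollifier or the bare cumulant governs the local scaling, and verify that in either case the penalized homogeneity $\c'$ is respected while the prefactor accumulates correctly — in particular that the $\kappa$-penalization is always consistent with $|\cdot|_\s$ in the sense of the conditions following Definition~\ref{def: cumulant homogeneity} (items 2–4), which needs a short check that adding $\kappa \sum_v \delta^\uparrow[v]$ preserves those inequalities for $\kappa$ small enough. Once the homogeneity bookkeeping is set up, the actual summation over scales is a direct application of Theorem~\ref{multicluster 1}.
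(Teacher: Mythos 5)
Your proposal has the right ingredients — mollified cumulant as a convolution of the bare cumulant, multiscale slicing of both, a coalescence‐tree argument, the $\kappa$-penalized homogeneity at leaves, and the constraint on $\kappa$ to avoid crossing an integer in the Taylor orders near the diagonal — but there is a genuine gap in how you propose to assemble them.

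You write that one should "apply Theorem~\ref{multicluster 1} (or a variant)" to the product of all $M$ mollifiers against the cumulant at once, with a $\kappa$-penalized homogeneity. This will not work as stated, for two reasons. First, the quantity controlled by $\|\cdot\|_{N,\c,r}$ is a supremum (over scale labellings and over the frozen positions $x_{[M]\setminus\{1\}}$), not an integral over all non-pinned variables, whereas Theorem~\ref{multicluster 1} estimates integrals over $\CV_0$. Second, if you introduce $M$ auxiliary convolution variables $o_1,\dots,o_M$ you have a coalescence problem on $2M$ vertices in which only the $o_i$'s are integrated and the $x_i$'s remain frozen arguments — exactly the situation the paper flags with "Here we will be leaving more variables frozen instead of integrating all but one of them so we will perform more of the analysis by hand instead of just referencing the earlier multiclustering lemmas." The paper's route is instead to do \emph{one} convolution at a time: it proves a single-variable lemma \eqref{penultimate claim} (and its second-cumulant variant \eqref{claim for second cumulant bound}), each application of which adds exactly one auxiliary vertex $o$ and one $\kappa\,\delta^{\uparrow}[j]$ to the total homogeneity, and then iterates $M$ times. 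That iterated structure is what makes the per-leaf penalization and the single $\|\eta_1-\eta_2\|_\kappa$ versus $(\|\eta_1\|_\kappa\vee\|\eta_2\|_\kappa)^{M-1}$ factor come out cleanly; an "all at once" clustering would require a new theorem.

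Two smaller points. The jump from $r$ to $r+1$ in $\|\xi\|_{N,\c,r+1}$ is not "an extra unit of scale absorbing the $\kappa$-loss": looking at Definition~\ref{def: cumulant bound2}, the index $r$ controls the \emph{order of derivatives} of the cumulant that must be bounded, and the loss of one unit is because each single-mollifier step uses a Taylor remainder argument that consumes extra derivatives of $H$ (this is the $q$ versus $q+1$, and $p$ versus $p+1$, in \eqref{claim for second cumulant bound} and \eqref{penultimate claim}). Finally, the diagonal piece is not a genuinely separate step in the paper: the auxiliary seminorm $\wnorm{\cdot}_{\beta,q}$ used to prove \eqref{claim for second cumulant bound} already bundles the $H(p_k)$ diagonal data with the off-diagonal scaling bounds, so the treatment of $\mathrm{Diag}_{\c'}$ happens inside the same estimate rather than alongside it.
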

\begin{remark}
Note that by setting $\eta_{2} \eqdef 0$ in the above proposition we see that \eqref{difference between mollified measures} actually implies
\[
\|\xi \ast \eta_{1}\|_{N,\c',r}
\lesssim
\|\eta_{1}\|_{\kappa}^{N}
\cdot
\|\xi\|_{N,\c,r+1}\;.
\]
\end{remark}
\begin{proof}
It is immediate that $\xi \ast \eta_{1}$ and $\xi \ast \eta_{2}$ jointly admit pointwise cumulants. 
We first establish control over second cumulants.

Fix $(\mft,[2]) \in \mfL_{\CCum}$ and the shorthand $\alpha \eqdef \c^{(\mft,[2])}  - |\s|$.
For any $\beta \in \R$, $q \in \N$, and $H \in \mcb{S'}(\R^{d})$ with singular support contained in $\{0\}$, we define
\[
\wnorm{H}_{\beta,q}
\eqdef
\Big(
\max_{
\substack{k \in \N^{d}\\
|k|_{\s} < \alpha
}
}
H(p_{k})
\Big)
+
\max_{
k \in A_{q}
}
\sup_{x \in \R^{d}}
|D^{k}H(x)| \cdot |x|^{\beta + |k|_{\s}}\;. 
\]
Here $A_{q} \eqdef \{ k \in \N^{d}:\ |k|_{\s} \le 3q|\s| \}$
Clearly $\wnorm{\reduce{\Cum[\{\xi_{\mft(1)},\xi_{\mft(2)}\}]}}_{\alpha,2r+2} \le \|\xi\|_{\c,2,r+1}$ and conversely $\wnorm{\reduce{\Cum[\{\xi_{\mft(1)},\xi_{\mft(2)}\}]}}_{\alpha,2r}$ controls the contribution of second cumulants to $\|\xi\|_{\c,2,r}$. 

For any $H$ as above, $q \le 3$, and $\rho \in \widetilde{\Moll}$ we claim that for $\theta \in \{\alpha , \alpha + \kappa\}$, 
\begin{equ}\label{claim for second cumulant bound}
\wnorm{H \ast \rho}_{\theta + \kappa,q}
\lesssim
\|\rho \|_{\kappa}
\cdot
\wnorm{ H }_{ \theta ,q+1}\;.
\end{equ}
The desired bound on second cumulants follows from the claim since by applying it twice we get
\begin{equs}
{}
&
\wnorm{\reduce\big(\Cum[\{\xi_{\mft(1)} \ast \eta_{1}, \xi_{\mft(2)} \ast (\eta_{1} - \eta_{2}) \}]
\big)}_{\alpha + 2 \kappa,2r}\\
=
&
\wnorm{\reduce\big(\Cum[\{\xi_{\mft(1)}, \xi_{\mft(2)} \}]
\big)
\ast
\eta_{1} \ast \mathrm{Ref}[\eta_{1} - \eta_{2}]}_{ \alpha + 2\kappa,0}\\
\lesssim 
&
\wnorm{ \reduce \big( \Cum[\{\xi_{\mft(1)},\xi_{\mft(2)}\}] \big) \ast \eta_{1}}_{\alpha + \kappa,2r+1}
\cdot \|\eta_{1}-\eta_{2}\|_{\kappa}\\
\lesssim
&
\wnorm{ \reduce \big( \Cum[\{\xi_{\mft(1)},\xi_{\mft(2)}\}] \big)}_{\alpha,2r+2}
\cdot
\|\eta_{1}\|_{\kappa}
\cdot \|\eta_{1}-\eta_{2}\|_{\kappa},
\end{equs}
where $\mathrm{Ref}(g)(x) = g(-x)$ for any distribution $g$ on $\R^{d}$.

We now prove \eqref{claim for second cumulant bound}, assuming without loss of generality the RHS side of the bound is finite. 
Observe that for $m \in A_{q}$, $D^{m}(H \ast \rho)$ has empty singular support and is given everywhere by integration against the smooth function 
\begin{equs}
D^{m}(H \ast \rho)(x)
\eqdef&
\sum_{|k|_{\s}
<
\alpha
}
\frac{
H(p_{k})}
{k!}
(D^{k + m}\rho)(x)
+
D^{m}I(x)\\
\textnormal{where}\;
I(x)
\eqdef&
\int_{\R^{d}} dz\
\Big(
\rho(x-z) -
\sum_{|k|_{\s}
<
\alpha
}
\frac{(-z)^{k}}{k!}
(D^{k}\rho)(x)
\Big)
H(z)\;.
\end{equs}
Since $m \in A_{q}$ we get the bound, uniform in $x \in \R^{d}$, 
\begin{equ}\label{ easier second cumulant bound}
\Big|
\sum_{|k|_{\s}
<
\alpha
}
\frac{
H(p_{k})}
{k!}
(D^{k + m}\rho)(x)
\Big|
\lesssim
\| \rho \|_{\kappa} \cdot 
\wnorm{H}_{\theta ,0}
\cdot |x|^{-|m|_{\s} - |\s| - \kappa}
\;.
\end{equ}  
For the second term we perform a multiscale expansion where we again use coalesence trees to organize our bounds. 
Here we will be leaving more variables frozen instead of integrating all but one of them so we will perform more of the analysis by hand instead of just referencing the earlier multiclustering lemmas.

As in \cite[Lem.~A.4]{KPZJeremy}, we can construct a families of smooth functions $\{\rho_{j}\}_{j=0}^{\infty}$ and $\{H_{j}\}_{j=0}^{\infty}$ such that, for $F = H$ or $\rho$,
\begin{enumerate}
\item The identity $\sum_{j=0}^{\infty}
F_{j}
=
F$
holds in the sense of distributions on $\R^d$.
\item $F_{j}(x)$ is supported on $x$ with $2^{-j-2} \le |x| \le 2^{-j}$.
\item For any $k \in \N^{d}$ with $|k|_{\s} < 10|\s|$ one has, uniform in $j \ge 0$ and $x \in \R^{d}$,
\[
|D^{k}\rho_{j}(x)|
\lesssim
\| \rho \|_{\kappa}
\cdot
|x|^{- |\s| - |k|_{\s} - \kappa}\;.
\]
\item For each $k \in \N^{d}$ one has, uniform in $j \ge 0$, and $x \in \R^{d}$ and with $k \in A_{2}$ one has
\[
|D^{k} H_{j}(x)|
\lesssim
\wnorm{H}_{\theta,||k||}
\cdot
|x|^{-\theta - |k|_{\s}}
\]
\item For any $j > 0$, $\int dx\, \rho_{j}(x)
=
0$.
\item For any $j > 0$ and any $k \in \N^{d}$ with $|k|_{\s} \le \alpha$, $\int dx\, x^{k} \cdot H_{j}(x)
=
0$
\end{enumerate}
The integrand determining $I$ as a function of $x$, $z$ which we view as the positions of the vertices $a$ and $b$, respectively. 
We also introduce a root vertex $c$ pinned at $0$.
Thus we set $\CV \eqdef \{a,b,c\}$, $\CV_{0} \eqdef \{a,b\}$, and we fix the multigraph $\go{G}$ to be the complete graph on $\CV$. 
We also set $\mcN_{\go{G}} \eqdef \N^{\go{G}}$.

Fix $\bar{n} \ge 0$.
Then for any $x \in \R^{d}$ with $2^{-\bar{n} - 2} \le |x| < 2^{-\bar{n}}$ one has, by exploiting triangle inequality constraints on scale indices,
\begin{equs}\label{thing to bound for convergence of second cumulants}
D^{m}I(x)
=
\sum_{
\substack{
\go{T} \in \widehat{\CU}_{\CV}\\
\go{s} \in \mathrm{Lab}_{\go{T}}(\bar{n})\\
\mbn \in \mcN_{\mathrm{tri}}(\go{T},\go{s})
}
}
\int
dz\ \mathring{I}_{m}^{\mbn}(x,z)
+
\int_{\R^{d}}dz\
\sum_{|k|_{\s}
<
\alpha
}
(D^{k+m}\rho)(x)
\frac{(-z)^{k}}{k!}
H_{0}(z)\;.
\end{equs}
where $\mathring{I}_{m}^{\mbn}(x,z) \eqdef D^{m}\rho_{n_{\{a,b\}}}(x-z)  H_{n_{\{b,c\}}}(z) 
\Psi^{(n_{\{a,c\}})}(x)$, for $\go{T} \in \widehat{\CU}_{\CV}$, 
\[
\mathrm{Lab}_{\go{T}}(\bar{n}) \eqdef \{ \go{s} \in \mathrm{Lab}_{\go{T}}:\ \go{s}(\{a,c\}^{\uparrow}) - \bar{n}| \le 12 \}\;,
\]
and we choose some constant $C \ge 4$ for the constraint defining $\mcN_{\mathrm{tri}}(\go{T},\go{s})$.
The second integral on the RHS of \eqref{thing to bound for convergence of second cumulants} satisfies a bound like \eqref{ easier second cumulant bound} so we are left with estimating the contribution from the first integral.

Since there are only four coalesence trees in $\widehat{\CU}_{\CV}$ we present $\varsigma$ by showing these four trees below (which we will denote, going from left to right, $\go{T}_{1},\go{T}_{2},\go{T}_{3},\go{T}_{4}$) and then labeling the internal nodes with the weight that $\varsigma$ gives them: 
\begin{equ}
\begin{tikzpicture}[scale = .75, every node/.style={scale=0.75}]
    \node [style=dot, label=below:$a$] (one) at (-1.5, -1) {};
    \node [style=dot, label=below:$b$] (two) at (-0.5, -1) {};

    \node [style=dot, label=below:$c$] (three) at (0.5, -1) {};

    \node [style=coalnode, label=left:$
    \begin{array}{c}
    |\s| + |m|_{\s} + \kappa\\ 
     {\color{red} - |m|_{\s} - 1}
    \end{array}$] (coal1) at (-1, 0) {};
    
    \node [style=coalnode2, label=above:$\theta  + {\color{red}|m|_{\s} + 1}$] (root) at (-0.5, 1) {};

    \draw[coalline] (one) -- (coal1); 
    \draw[coalline] (two) -- (coal1); 
    \draw[coalline] (coal1) -- (root);
    \draw[coalline] (three) -- (root);
\end{tikzpicture}
\;
\begin{tikzpicture}[scale = .75, every node/.style={scale=0.75}]
    \node [style=dot, label=below:$a$] (one) at (-1.5, -1) {};
    \node [style=dot, label=below:$c$] (two) at (-0.5, -1) {};

    \node [style=dot, label=below:$b$] (three) at (0.5, -1) {};

    \node [style=coalnode2, label=left:$0$] (coal1) at (-1, 0) {};
    
    \node [style=coalnode, label=above:$ \theta + |\s| + |m|_{\s} + \kappa$] (root) at (-0.5, 1) {};

    \draw[coalline] (one) -- (coal1); 
    \draw[coalline] (two) -- (coal1); 
    \draw[coalline] (coal1) -- (root);
    \draw[coalline] (three) -- (root);
\end{tikzpicture}
\;
\begin{tikzpicture}[scale = .75, every node/.style={scale=0.75}]
    \node [style=dot, label=below:$b$] (one) at (-1.5, -1) {};
    \node [style=dot, label=below:$c$] (two) at (-0.5, -1) {};

    \node [style=dot, label=below:$a$] (three) at (0.5, -1) {};

    \node [style=coalnode, label=left:$
    \begin{array}{c}
    \theta  \\
    {\color{red} - \lfloor \theta \rfloor - 1}
    \end{array}$] (coal1) at (-1, 0) {};
    
    \node [style=coalnode2, label=above:$
    \begin{array}{c}
    |\s| + |m|_{\s} + \kappa \\
     {\color{red}+ \lfloor \theta \rfloor + 1}
    \end{array}$] (root) at (-0.5, 1) {};

    \draw[coalline] (one) -- (coal1); 
    \draw[coalline] (two) -- (coal1); 
    \draw[coalline] (coal1) -- (root);
    \draw[coalline] (three) -- (root);
\end{tikzpicture}
\;
\begin{tikzpicture}[scale = .75, every node/.style={scale=0.75}]
    \node [style=dot, label=below:$a$] (one) at (-1.5, -1) {};
    \node [style=dot, label=below:$b$] (two) at (-0.5, -1) {};

    \node [style=dot, label=below:$c$] (three) at (0.5, -1) {};
    
    \node [style=coalnode2, label=above:$\theta + |\s| + |m|_{\s} + \kappa$] (root) at (-0.5, 1) {};

    \draw[coalline] (one) -- (root); 
    \draw[coalline] (two) -- (root); 
    \draw[coalline] (three) -- (root);
\end{tikzpicture}
\end{equ}
Writing $\mathring{I}_{m} \eqdef \{\mathring{I}_{m}^{\mbn}\}_{\mbn \in \mcN_{\go{G}}}$, we claim that there is a modification $\tilde{I}_{m} \in \mathrm{Mod}_{\{b\}}(\mathring{I}_{m})$ which is bounded by a total homogeneity $\varsigma$ with
\[
\|
\tilde{I}_{m}
\|_{\varsigma,\mcN_{\go{G}}}
\lesssim
\wnorm{H}_{\theta, j+1 } \cdot
\|\rho\|_{\kappa}.
\]
We have darkened the internal node $\{a,c\}^{\uparrow}$ whose scale $\go{s}(\{a,c\}^{\uparrow})$ will be frozen to be close to $\bar{n}$ and we have colored in red the contributions coming from renormalization. 

For any $\mbn \in \mcN_{\go{G}}$ with $\mcT(\mbn) \in \{\go{T}_{2}, \go{T}_{4}\}$ we set $\tilde{I}_{m}^{\mbn} \eqdef \mathring{I}_{m}^{\mbn}$ and the desired estimate comes from the bound
\[
\big|
\mathring{I}_{m}^{\mbn}
\big|
\lesssim 
\big(
2^{n_{\{a,b\}}(|m|_{\s} + |\s| + \theta)}
\|\rho\|_{\theta}
\big)
\big(
2^{n_{\{b,c\}} \alpha}
\wnorm{H}_{\alpha,0}
\big)\;.
\]
For any $\mbn \in \mcN_{\go{G}}$ with $\mcT(\mbn) = \go{T}_{1}$ we set
\begin{equs}
\tilde{I}_{m}^{\mbn}(y_{a},y_{b})
\eqdef&
D^{m}\rho_{n_{\{a,b\}}}(y_{a}-y_{b}) \cdot \Psi^{(n_{\{a,c\}})}(y_{a})\\
&
\enskip
\cdot
\big(
H_{n_{\{b,c\}}}(y_{b})
-
\sum_{
\substack{
k \in \N^{d}\\ 
|k|_{\s} \le |m|_{\s}
}}
\frac{(y_{b} - y_{a})^{k}}{k!}
H_{n_{\{b,c\}}}(y_{a}) 
\big)\;.
\end{equs}
To see that this is compatible with $\tilde{I}^{m} \in \mathrm{Mod}_{\{b\}}(\mathring{I}^{m})$ observe that for $\mbn$ as above we have $n_{\{a,b\}} \not = 0$ and so $\int dy_{b}\ 
\tilde{I}_{m}^{\mbn}(y_{a},y_{b})
=
\int dy_{b}\  
\tilde{I}_{m}^{\mbn}(y_{a},y_{b})$ - the support constraint is also easy to check. 
Writing $\go{s} = \go{s}(\mbn)$ and $\mathring{\go{T}}_{1} = \{ \rho_{\go{T}_{1}},u\}$, a Taylor remainder estimate then gives
\begin{equs}
|
\tilde{I}_{m}^{\mbn}(y_{a},y_{b})|
\lesssim&
2^{\go{s}(u)(|m|_{\s} + |\s| + \kappa)}
\cdot
\|\rho\|_{\kappa}
\cdot
2^{
\go{s}(\rho_{\go{T}_{1}})\theta
+
(\go{s}(\rho_{\go{T}_{1}}) - \go{s}(u))
(|m|_{\s} +1)}
\wnorm{H}_{\theta,j+1}\;.
\end{equs}
Finally, for any $\mbn \in \mcN_{\go{G}}$ with $\mcT(\mbn) = \go{T}_{3}$ we set
\begin{equs}
\tilde{I}_{m}^{\mbn}(y_{a},y_{b})
\eqdef&
H_{n_{\{b,c\}}}(y_{b})
\cdot
\Psi^{(n_{\{a,c\}})}(y_{a})\\
{}&
\Big(
D^{m}\rho_{n_{\{a,b\}}}(y_{a} - y_{b})
-
\sum_{|k|_{\s}
<
\alpha
}
D^{k+m}\rho_{n_{\{a,b\}}}(y_{a})
\frac{(-y_{b})^{k}}{k!}
\Big)
\end{equs}
As before, writing $\go{s} \eqdef \go{s}(\mbn)$ and $\mathring{\go{T}}_{3} = \{ \rho_{\go{T}_{3}},v\}$, a Taylor remainder estimate gives
\begin{equs}
\|
\tilde{I}_{m}^{\mbn}(y_{a},y_{b})
\|
\lesssim&
\exp_{2}\Big[\go{s}(\rho_{\go{T}_{3}})(|m|_{\s} + |\s| + \kappa)
+
(\go{s}(\rho_{\go{T}_{3}}) - \go{s}(v))
(\lfloor \theta \rfloor + 1)
\Big]
\cdot
\|\rho\|_{\kappa}\\
{}
&
\cdot
\exp_{2}
\Big[
\go{s}(v)\theta 
\Big]
\wnorm{H}_{\theta,j}
\end{equs}
The proof for our bounds for second cumulants is finished upon observing that for $1 \le i \le 4$, 
\begin{equs}
\sum_{
\go{s} \in \mathrm{Lab}_{\go{T}_{i}}(\bar{n})
}
\sum_{
\substack{
\mbn \in \mcN_{\mathrm{tri}}(\go{T},\go{s})
}
}
\int
dy_{b}\ 
|\tilde{I}_{m}^{\mbn}(y_{a},y_{b})|
&
\lesssim
\sum_{
\go{s} \in \mathrm{Lab}_{\go{T}_{i}}(\bar{n})
}
2^{\langle \varsigma_{\go{T}_{i}},\go{s} \rangle}
2^{-|\s|\go{s}(b^{\uparrow})}
\|
\tilde{I}_{m}
\|_{\varsigma,\mcN_{\go{G}}}\\
&
\lesssim
2^{\bar{n} (\theta + |m|_{\s} + \kappa)}
\|
\tilde{I}_{m}
\|_{\varsigma,\mcN_{\go{G}}}.
\end{equs}
The factor $2^{-|\s|\go{s}(b^{\uparrow})}$ in going to the RHS of the first line comes from the integration of $y_{b}$.
In going to the last line we summed over the labelings $\go{s} \in \mathrm{Lab}_{\go{T}_{i}}$ - they key point is that $\go{s}(\{a,c\}^{\uparrow})$ can be treated as fixed to the value $\bar{n}$. 
For $\go{T}_{4}$ the bound is then immediate since there is no other scale labeling to sum over. 
For $\go{T}_{1},\go{T}_{2},\go{T}_{3}$ one has an infinite sum over the scale labeling for the other internal node - this scale will be constrainted to be greater than $\bar{n}$ for $\go{T}_{1},\go{T}_{3}$ and less than $\bar{n}$ for $\go{T}_{2}$.\\

We now obtain the desired bounds for higher cumulants so fix $M \ge 3$. 
Let $\go{G}$ be the complete graph on $[M]$. 
We will also commit an abuse of notation - for any multi-index $k \in \N^{[M]}$ and function $H$ on $(\R^{d})^{[M]}$ we will also denote by $H_{k}$ the family of functions $H_{k} \eqdef (H_{k}^{\mbn})_{\mbn \in \N^{\go{G}}}$ given by
\[
H^{\mbn}_{k}(x_{1},\dots,x_{M}) 
\eqdef
\Big(
D^{k}
H(x_{1},\dots,x_{M})
\Big)
\cdot
\prod_{\{i,j\} \in \go{G}}\Psi^{(n_{\{i,j\}})}(x_{i} - x_{j}).
\]
Now given a function $H$ smooth on $(\R^{d})^{[M]} \setminus \mathrm{Diag}_{M}$, invariant under $M$-fold simultaneous $\R^{d}$ translations \footnote{that is, $H(x_{1},\dots,x_{M}) = H(x_{1} + h, \dots, x_{M} + h)$ for any $(x_{i})_{i=1}^{M} \in (\R^{d})^{[M]}$ and $h \in \R^{d}$}, a total homogeneity $\varsigma$ on the trees of $\widehat{\CU}_{[M]}$, a subset $J \subset [M]$, and $p \in \N$ we define
\begin{equs}
\wnorm{H}_{\varsigma,J,p}
\eqdef&
\max_{k \in A_{J,p}}
\|H_{k}\|_{D^{k}\varsigma,\N^{\go{G}}}\\
\textnormal{where }
A_{J,p} 
&\eqdef 
\Big\{ 
(k_{i})_{i=1}^{M} \in (\N^{d})^{[M]}:\
\forall 1 \le j \le M,\ ||k_{j}|| \le (p+\mathbbm{1}\{ j \in J \})|\s|
\Big\}\;.
\end{equs}
On the right hand side of the first line we are using the notation of \eqref{supremum bound}, choosing arbitrarily a particular element of $[M]$ as a root and a value to fix it as (due to translation invariance these choices do not matter). 
For any $j \in [J]$, $ \rho \in \widetilde{\Moll}$, and $H$ as above we define
\[
(H \ast_{j} \rho)(x_{1},\dots,x_{M})
\eqdef
\int_{\R^{d}}
dz_{j}\ 
H(x_{1},\dots,x_{j-1},y_{j},x_{j+1},\dots,x_{M})
\rho(x_{j} - z_{j})\;.
\]
The claim we wish to prove for higher cumulants is that for any  $j \in J \subset [M]$, $p \le 4$, and $\rho \in \widetilde{\Moll}$ along with a total homogeneity $\varsigma$ and function $H$ as above,
\begin{equ}\label{penultimate claim}
\wnorm{H \ast_{j} \rho}_{\varsigma + \kappa \delta^{\uparrow}[j], J \setminus \{j\},p}
\lesssim
\wnorm{H}_{\varsigma,J,p} \cdot \| \rho \|_{\kappa}.
\end{equ}
Applying claim $M$ times will give us the desired bound. To see this first note that for any $(\mft,[M]) \in \mfL_{\CCum}$ one has $\wnorm{\Cum[\{\xi_{\mft(i)}\}_{i=1}^{M}]}_{\c,[M],r} \le \|\xi\|_{M,\c,r+1}$ and that $\wnorm{\Cum[\{\xi_{\mft(i)}\}_{i=1}^{M}]}_{\c,\emptyset,r}$ controls the contribution of $\Cum[\{\xi_{\mft(i)}\}_{i=1}^{M}]$ to  $\|\xi\|_{M,\c,r}$. 

We now turn to proving, by using a multiscale expansion, the claim \eqref{penultimate claim}. Fix appropriate $j$, $J$, $\theta$, $p$, and $\rho$.  
The underlying vertex set is given by $\CV \eqdef [M] \sqcup \{o\}$ where the additional vertex $o$ corresponds to the integration variable $z_{j}$ in our formula for $H \ast_{j} \rho$. 
We choose $j \in \CV$ as a root vertex (so $\CV_{0} \eqdef \CV \setminus \{j\}$) and arbitrarily pin its position $y_{j}$ to some value in $\R^{d}$.
We also fix $\go{G}' \eqdef \go{G} \sqcup \{\{o,j\}\}$ and set $\mcN_{\go{G}'} \eqdef \N^{\go{G}'}$.

For any fixed $\bar{\mbn} \in \N^{\go{G}}$ and $n_{\{o,j\}} \in \N$ observe that if one sets $\go{S} \eqdef \mcT(\bar{\mbn}) \in \widehat{\CU}_{[M]}$ and $\go{T} \eqdef \mcT(\bar{\mbn} \sqcup n_{\{o,j\}}) \in \CU_{\CV}$ then $\go{S} = \go{T}\restr_{[M]}$ (see Definition~\ref{tree restriction}).

Now fix $k \in A_{J\setminus \{j\},p}$. 
For any $\mbn \in \mcN_{\go{G}}$ we define a family of functions $\mathring{I}_{k} = \{\mathring{I}_{k}^{\mbn}\}_{\mbn \in \mcN_{\go{G}'}}$ via setting
\begin{equs}
\mathring{I}_{k}^{\mbn}(y_{\CV_{0}})
\eqdef&
\rho_{n_{\{o,j\}}}(y_{j} - y_{o})\tilde{H}^{\mbn}_{k}(y_{\CV_{0}})\;,\textnormal{ where}\\
\tilde{H}^{\mbn}_{k}(y_{\CV_{0}})
\eqdef&
\Big(
D^{k}H(y_{1},\dots,y_{j-1},y_{o},y_{j+1},y_{M})
\Big)
\prod_{
\substack{
\{i,i'\} \in \go{G}'\\
\{i,i'\} \not = \{o,j\}
}
}
\Psi^{(n_{\{i,j\}})}(y_{i} - y_{j})\;.
\end{equs}
With these definitions we have
\[
D^{k}(H \ast_{j} \rho)(y_{\CV_{0} \setminus \{o\}})
=
\sum_{\go{T} \in \widehat{\CU}_{\CV}}
\sum_{\go{s} \in \mathrm{Lab}_{\go{T}}}
\sum_{\mbn \in \mcN_{\tr}(\go{T},\go{s})}
\int dy_{o}\  
\mathring{I}_{k}^{\mbn}(y_{\CV_{0}})\;.
\]
To prove \eqref{penultimate claim} it suffices to find a modification $\tilde{I}_{k} \eqdef \{ \tilde{I}_{k}^{\mbn}\}_{\mbn \in \mcN_{\go{G}'}} \in \mathrm{Mod}_{\{o\}}(\mathring{I}_{k})$ such that, uniform in $(\go{S},\bar{\go{s}}) \in \widehat{\CU}_{[M]} \ltimes \mathrm{Lab}_{\bullet}$ and $\bar{\mbn} \in \mcN_{\tr}(\go{S},\bar{\go{s}})$, one has the bound
\begin{equ}\label{final claim}
\sum_{
\substack{
\go{T} \in \CU_{\CV}\\
\go{T}\restr_{[M]} = \go{S}
}}
\sum_{
\substack{
\go{s} \in \mathrm{Lab}_{\go{T}}\\
\go{s}\restr_{\mathring{\go{S}}}
=
\bar{\go{s}}}
}
\sum_{
\substack{
\mbn \in \mcN_{\tr}(\go{T},\go{s})\\
\mbn \restr_{\go{G}} = \bar{\mbn}
}
}
\Big|
\int_{\R^{d}}dy_{o}\ 
\tilde{I}^{\mbn}(y_{\CV_{0}})
\Big|
\lesssim
\|\rho\|_{\kappa}
\wnorm{H}_{\varsigma,J,p}
\cdot
2^{\langle D^{k}\varsigma_{\go{S}} + \kappa \delta_{\go{S}}^{\uparrow}[j],\bar{\go{s}}\rangle}\;.
\end{equ}
Above we are committing an abuse of notation - for each $\go{T} \in \widehat{\CU}_{\CV}$ with $\go{T} \restr_{[M]} = \go{S}$ we use the inclusion map\footnote{See Definition~\ref{tree restriction}.} $\iota: \mathring{\go{S}} \rightarrow \mathring{\go{T}}$ to identify $\mathring{\go{S}}$ as a subset of $\mathring{\go{T}}$. 
This conventiion is used when we write the condition $\go{s}\restr_{\mathring{\go{S}}}
=
\bar{\go{s}}$ and is used throughout the rest of the proof.
Any use of the notation $\go{L}_{\bullet}$ in what follows refer to using the relevant tree of $\widehat{\CU}_{\CV}$ to determine descendent leaf vertices. 

We now obtain \eqref{final claim}.
We start with defining a total homogeneity $\hat{\varsigma}$ on the trees of $\widehat{\CU}_{\CV}$ by setting, for $\go{T} \in \widehat{\CU}_{\CV}$ and $a \in \mathring{\go{T}}$, 
\begin{equs}
\hat{\varsigma}_{\go{T}}(a)
\eqdef& 
(|\s| + \kappa)
\cdot
\delta_{\go{T}}^{\uparrow}[\{o,j\}]
+
\mathbbm{1}
\Big\{ 
\go{L}_{\{o,j\}^{\uparrow,\go{T}}} = \{o,j\}
\Big\}
\Big(\delta_{\go{T}}^{\Uparrow}[\{o,j\}] 
-
\delta_{\go{T}}^{\uparrow}[\{o,j\}]
\Big)\\
{}
&
\hspace{1cm}
+
\begin{cases}
\varsigma_{\go{S}}(a) &\ 
\textnormal{if }a \in \mathring{\go{S}} \textnormal{ for } \go{S} \eqdef \go{T}\restr_{[M]}\\
0 &\ \textnormal{otherwise.}
\end{cases}
\end{equs}
We now specify $\tilde{I}_{k} \in \mathrm{Mod}_{\{o\}}(\mathring{I}_{k})$ with
\begin{equ}\label{convergence of measures: final norm bound}
\|\tilde{I}_{k}\|_{\hat{\varsigma},\mcN_{\go{G}'}}
\lesssim 
\| \rho \|_{\kappa} \cdot \wnorm{H}_{\varsigma,J,p}\;.
\end{equ} 
For each $\go{T} \in \CU_{\CV}$ and $\mbn \in \mcN_{\go{G}'}$ with $\mcT(\mbn) = \go{T}$ we set $\tilde{I}_{k}^{\mbn} \eqdef \mathring{I}_{k}^{\mbn} $ if $\go{L}_{\{o,j\}^{\uparrow},\go{T}} \supsetneq \{o,j\}$ and if $\go{L}_{\{o,j\}^{\uparrow},\go{T}} = \{o,j\}$ we set
\begin{equ}
\tilde{I}_{k}^{\mbn}(y_{\CV_{0}})
\eqdef
\Big(
\tilde{H}^{\mbn}_{k}(y_{\CV_{0} \setminus \{o\}},y_{o}) -
\tilde{H}^{\mbn}_{k}(y_{\CV_{0} \setminus \{o\}},y_{j}) 
\Big)
\cdot
\rho_{n_{\{o,j\}}}(y_{j} - y_{o})\;.
\end{equ}
The necessary integration property, support property, and the estimate \eqref{convergence of measures: final norm bound} are all straightforward to check. 
Now fix $(\go{S},\bar{\go{s}}) \in \widehat{\CU}_{[M]} \ltimes \mathrm{Lab}_{\bullet}$, $\bar{\mbn} \in \mcN_{\tr}(\go{S},\bar{\go{s}})$, and $\go{T} \in \widehat{\CU}_{\CV}$ with $\go{T}\restr_{[M]} = \go{S}$. 
By using the bound \eqref{convergence of measures: final norm bound} we get
\begin{equs}
\sum_{
\substack{
\go{s} \in \mathrm{Lab}_{\go{T}}\\
\go{s}\restr_{\mathring{\go{S}}}
=
\bar{\go{s}}}
}
\sum_{
\substack{
\mbn \in \mcN_{\tr}(\go{T},\go{s})\\
\mbn \restr_{\go{G}} = \bar{\mbn}
}
}
\Big|
\int_{\R^{d}}dy_{o}\ 
\tilde{I}_{k}^{\mbn}(y_{\CV_{0}})
\Big|
\lesssim
\|\rho\|_{\kappa}
\cdot
\wnorm{H}_{\varsigma,J,p}
\cdot
\sum_{
\substack{
\go{s} \in \mathrm{Lab}_{\go{T}}\\
\go{s}\restr_{\mathring{\go{S}}}
=
\bar{\go{s}}}
}
2^{\langle \hat{\varsigma}_{\go{T}},\go{s} \rangle - \go{s}(o^{\uparrow})|\s|}\;.
\end{equs}
Again, the factor $2^{-\go{s}(o^{\uparrow})|\s|}$ comes from integration. 
What remains is showing that 
\begin{equ}\label{final inequality}
\sum_{
\substack{
\go{s} \in \mathrm{Lab}_{\go{T}}\\
\go{s}\restr_{\mathring{\go{S}}}
=
\bar{\go{s}}}
}
2^{\langle \hat{\varsigma}_{\go{T}},\go{s} \rangle - \go{s}(o^{\uparrow})|\s| }
\lesssim 
2^{\langle D^{k}\varsigma_{\go{S}} + \theta \delta_{\go{S}}^{\uparrow}[j],\bar{\go{s}}\rangle}\;.
\end{equ}
To verify the above inequality one can split into three different cases. 
Remember that it must always be the case that $\go{L}_{o^{\uparrow,\go{T}}} \ni j$ because of our choice of $\go{G}'$. 
We draw an example with $M=3$, the first tree represents $\go{S}$ and the the three other trees show examples of our three cases.
\begin{equ}
\begin{tikzpicture}[scale = .75]
    \node [style=coalnode] (root) at (0, 2) {};
    \node [style=dot, label=below:$j$] (j) at (-1, 0) {};
    \node [style=dot] (1) at (0, 0) {};
    \node [style=coalnode] (inner) at (-0.5, 1) {};
    \node [style=dot] (2) at (1.25, 0) {};

    \draw[coalline] (j) -- (inner);
    \draw[coalline] (1) -- (inner);

    \draw[coalline] (inner) -- (root);

    \draw[coalline] (2) -- (root);
\end{tikzpicture}
\;\;\;
\begin{tikzpicture}[scale = .75]
    \node [style=coalnode] (root) at (0, 2) {};
    \node [style=dot, label=below:$j$] (j) at (-1, 0) {};
    \node [style=dot, label=below:$o$] (o) at (-.5, 0) {};
    \node [style=dot] (1) at (0, 0) {};
    \node [style=coalnode] (inner) at (-0.5, 1) {};
    \node [style=dot] (2) at (1.25, 0) {};

    \draw[coalline] (j) -- (inner);
    \draw[coalline] (1) -- (inner);
    \draw[coalline] (o) -- (inner);

    \draw[coalline] (inner) -- (root);

    \draw[coalline] (2) -- (root);
\end{tikzpicture}
\;\;\;
\begin{tikzpicture}[scale = .75]
    \node [style=coalnode] (root) at (0, 2) {};
    \node [style=dot, label=below:$j$] (j) at (-1, 0) {};
    \node [style=dot, label=below:$o$] (o) at (.625, 0) {};
    \node [style=dot] (1) at (0, 0) {};
    \node [style=coalnode] (inner) at (-0.5, 1) {};
    \node [style=dot] (2) at (1.25, 0) {};
    \node [style=coalnode2] (inner2) at (-0.25,1.5) {};

    \draw[coalline] (j) -- (inner);
    \draw[coalline] (1) -- (inner);
    \draw[coalline] (o) -- (inner2);

    \draw[coalline] (inner2) -- (inner);

    \draw[coalline] (inner2) -- (root);

    \draw[coalline] (2) -- (root);
\end{tikzpicture}
\;\;\;
\begin{tikzpicture}[scale = .75]
    \node [style=coalnode] (root) at (0, 2) {};
    \node [style=dot, label=below:$j$] (j) at (-1, 0) {};
    \node [style=dot, label=below:$o$] (o) at (-.5, 0) {};
    \node [style=dot] (1) at (0, 0) {};
    \node [style=coalnode] (inner) at (-0.5, 1) {};
    \node [style=dot] (2) at (1.25, 0) {};
    \node [style=coalnode2] (inner2) at (-.75,.5) {};

    \draw[coalline] (j) -- (inner2);
    \draw[coalline] (1) -- (inner);
    \draw[coalline] (o) -- (inner2);

    \draw[coalline] (inner2) -- (inner);

    \draw[coalline] (inner) -- (root);

    \draw[coalline] (2) -- (root);
\end{tikzpicture}
\end{equ}
The first case occurs when $\mathring{\go{T}} =\mathring{\go{S}}$, here there is nothing to prove since the sum of the LHS only has one term (where $\go{s} = \bar{\go{s}}$) so both sides of the inequality are equal. 

In the second and third cases one has $\mathring{\go{T}} = \mathring{\go{S}} \sqcup \{ b \}$ where $b = \{o,j\}^{\uparrow}$ is represented with a darker node in our pictures.  
The sum over $\go{s}$ with $\go{s}\restr_{\mathring{\go{S}}} = \bar{\go{s}}$ is just a sum over the value of $\go{s}(b)$ constrained to satisfy an upper bound in the second case or lower bound in the third case. 

For the second case we assume that $\go{L}_{\{o,j\}^{\uparrow,\go{T}}} \supsetneq \{o,j\}$. 
It follows that $b$ is not maximal in the partial order of $\mathring{\go{T}}$ and in fact $j^{\uparrow,\go{T}} = j^{\uparrow,\go{S}} > b$. 
We then have
\begin{equs}
\sum_{
\substack{
\go{s} \in \mathrm{Lab}_{\go{T}}\\
\go{s}\restr_{\mathring{\go{S}}}
=
\bar{\go{s}}}
}
2^{\langle \hat{\varsigma}_{\go{T}},\go{s} \rangle - \go{s}(o^{\uparrow})|\s| }
\lesssim&
\sum_{\go{s}(b) < \bar{\go{s}}(j^{\uparrow})}
2^{(|\s| + \kappa - |\s|) \go{s}(b) }
\prod_{a \in \mathring{\go{S}}} 2^{\hat{\varsigma}_{\go{T}}(a)\bar{\go{s}}(a)}\\
\lesssim&
2^{\kappa \bar{\go{s}}(j^{\uparrow})}
\prod_{a \in \mathring{\go{S}}} 2^{\hat{\varsigma}_{\go{T}}(a)\bar{\go{s}}(a)}
=
2^{\langle D^{k}\varsigma_{\go{S}} + \kappa \delta_{\go{S}}^{\uparrow}[j],\bar{\go{s}}\rangle}\;.
\end{equs}
In the third case one has $\go{L}_{\{o,j\}^{\uparrow,\go{T}}} = \{o,j\}$.
It follows that $b$ is maximal in $\mathring{\go{T}}$ and $b^{\uparrow} = j^{\uparrow,\go{S}}$, thus we have
\begin{equs}
\sum_{
\substack{
\go{s} \in \mathrm{Lab}_{\go{T}}\\
\go{s}\restr_{\mathring{\go{S}}}
=
\bar{\go{s}}}
}
2^{\langle \hat{\varsigma}_{\go{T}},\go{s} \rangle - \go{s}(o^{\uparrow})|\s| }
\lesssim&
\sum_{\go{s}(b) > \bar{\go{s}}(j^{\uparrow,\go{S}})}
2^{(|\s| + \kappa - |\s| - 1) \go{s}(b) + \bar{\go{s}}(j^{\uparrow,\go{S}})}
\prod_{a \in \mathring{\go{S}}} 2^{\hat{\varsigma}_{\go{T}}(a)\bar{\go{s}}(a)}\\
\lesssim&
2^{\kappa \bar{\go{s}}(j^{\uparrow,\go{S}})}
\prod_{a \in \mathring{\go{S}}} 2^{\hat{\varsigma}_{\go{T}}(a)\bar{\go{s}}(a)}
=
2^{\langle D^{k}\varsigma_{\go{S}} + \kappa \delta_{\go{S}}^{\uparrow}[j],\bar{\go{s}}\rangle}\;.
\end{equs}
\end{proof}
\newpage
\section{Symbolic index}

In this appendix, we collect the most used symbols of the article, together
with their meaning and the page where they were first introduced.

 \begin{center}

 \renewcommand{\arraystretch}{1.1}
 \begin{tabular}{lll}\toprule
 Symbol & Meaning & Page\\
 \midrule
 $\bullet$ & The trivial tree consisting only of a root. & \pageref{subsec: trees}\\
 $\mathbf{1}$ & The empty forest. Not considered a tree. & \pageref{subsec: forests}\\
 $\Lab$ & Finite set of ``types'', used to distinguish each& {} \\
 {}     & of the kernels and noises appearing in the system SPDE. & \pageref{def: set of types}\\
 $\Ke$  & Set of $\mft \in \Lab$ which are kernel types, satisfy $|\mft|_{\s} > 0$. & \pageref{def: kernel and leaf types}\\
 $\Le$  & Set of $\mft \in \Lab$ which are leaf types, satisfy $|\mft|_{\s} < 0$. & \pageref{def: kernel and leaf types}\\
 $\mcb{B}_{{\s}}$ & Set of test functions used to & {}\\
 {} & define seminorms on space of models. & \pageref{space of functions}\\
 $\mfL^{\all}_{\CCum}$  & Prescribed set of non-vanishing cumulants. & \pageref{assump: set of cumulants}\\
 $p_{k}$ & For $k \in \N^{d}$, test function given by spatial truncation of $x^{k}$ & \pageref{def: cumulantbound}\\
 $\mcb{j}_{A}(B)$ & Minimum homogeneity gain from noises & {}\\
 {} & in $B$ contracting with noises with types from $A$. & \pageref{external cumulant homogeneity jump}\\
 $\overline{\CA}$ & Set of maximal elements of the poset $\CA$. & \pageref{posets}\\
 $\Max{\CA}$ & Set of maximal elements of the poset $\CA$. & \pageref{posets}\\
 $\underline{\CA}$ & Set of minimal elements of the poset $\CA$. & \pageref{posets}\\
 $\Min{\CA}$ & Set of minimal elements of the poset $\CA$. & \pageref{posets}\\
 $\Div$ & All superficially divergent subtrees & \pageref{Div}\\
 $\cut$ & All potentially useful cuts (edges).& \pageref{def: set of cuts}\\
 $\cC_{\mathbb{M}}$ & Collection of edges in $\cut$ which are not in any element of $\mathbb{M}$. & \pageref{def: cuts away from interval}\\
 $\cut^P(\mathbb{M})$ & Collection of cut sets $\cC$ generating $\mathbb{M}$: $P_{\cC}^{-1}[s(\mathbb{M})] = \mathbb{M}$. & \pageref{equ: cuts giving interval}\\
$\mfM^{P}(\cC)$ & All forest intervals $\mathbb{M}$ generated by $\cC$. & \pageref{equ: intervals arising from cut set}\\
$P^{\mbn}$ & Discards ``dangerous'' trees with respect to $\mbn$. & \pageref{def: projection onto safe forests}\\
$\cG^\mbn(\mcF)$ & Kernel edges benefiting from cancellation when $\mcF$ is contracted. & \pageref{def: cuts to harvest}\\
 $\CF, \CG$ & Generic forests of subtrees. & \pageref{forest of subtrees} \\
 $\mathbb{F}$ & All forests of superficially divergent subtrees. & \pageref{bbF} \\ 
 $\mathbb{F}^{\le k}$ & Forests in $\mathbb{F}$ of depth at most $k$. & \pageref{e:maxForest} \\ 
 $\mathbb{F}[\mathcal{F}]$ & All forests $\CG$ with $\Max \CG = \CF$. & \pageref{e:maxForest} \\
 $\mathbb{F}_<[\mathcal{F}]$ & All $\CG \in \mathbb{F}^{\le 1}$ with trees strictly contained
 in trees of $\CF$. & \pageref{e:maxForest} \\
 $\mathbb{F}_{\le}[\mathcal{F}]$ & All $\CG \in \mathbb{F}^{\le 1}$ with trees contained
 in trees of $\CF$. & \pageref{e:maxForest} \\
 $N_F$ & All nodes of the forest $F$. & \pageref{subsec: forests}\\
 $E_F$ & All edges of the forest $F$. & \pageref{subsec: forests}\\
 $\mathbb{L}(F)$ & Edges of noise type. & \pageref{e:leaf and kernel edges}\\
 \bottomrule
 \end{tabular}

 \newpage
 \begin{tabular}{lll}\toprule
 Symbol & Meaning & Page\\
 \midrule
  $K(F)$ & Edges of kernel type. & \pageref{e:leaf and kernel edges}\\
 $L(F)$ & True nodes corresponding to $\mathbb{L}(F)$, inherits types. & \pageref{def: leaf nodes}\\
 $\sT_{\not \ge}[\cC]$ & Intersection of all subtrees $\{ \sT_{\not\ge}(e): e \in \Min(\cC)\}$ & \pageref{def: subtrees from cut sets}\\
 $N(F)$ & True nodes: $N(F) = N_F \setminus e_\ch(L(F))$. & \pageref{e:leaf and kernel edges}\\
 $\allnodes$ & True nodes of $\bar T$, plus $\logof$. & \pageref{subsec: mapping to analytic objects}\\
 $\logof$ & Node representing basepoint of the model. & \pageref{def: basepoint vertex}\\ 
 $\tilde N(F)$ & Defined as $\tilde N(F) = N(F) \setminus \rho(F)$. & \pageref{e:leaf and kernel edges}\\
 $T_\ge(e)$ & Subtree of $T$ above the edge $e$ (including $e$). & \pageref{subsubsec: positive renorm}\\
 $T_{\not\ge}(e)$ & Subtree of $T$ formed by edges below or not & {}\\
 {} & comparable to the edge $e$. & \pageref{subsubsec: positive renorm} \\
 $\sT_{\not \ge}[\cC]$ & Intersection of all subtrees $\{ \sT_{\not\ge}(e): e \in \Min(\cC)\}$ & \pageref{def: subtrees from cut sets}\\
 $\Tr_{ \ge}[\cC]$ & Collection of maximal subtrees of $\sT$ whose roots are of & {}\\
 {} & degree $1$ and are edge disjoint $\sT_{\not \ge}[\cC]$. & \pageref{def: subtrees from cut sets}\\
 $K^\downarrow(S)$ & Kernel edges incoming to $S$ (i.e. $e_{\p} \in N_{S}$, $e_{\ch} \not \in N_{S}$) in $\bar T$. & \pageref{def: some kernel edge sets}\\
 $\bar K^\downarrow(S)$ & Defined as $K^\downarrow(S) \cup K(S)$. & \pageref{def: some kernel edge sets}\\
 $K_T^\d(S)$ & Defined for $T \le S$ as $K^\downarrow(T) \cap K(S)$. & \pageref{def: some modulo forest kernel sets} \\
 $N^\downarrow(S)$ & Defined as $e_\ch(K^\downarrow(S))$. & \pageref{def: some kernel edge sets} \\
 $C_\CF(S)$ & Maximal elements of $\CF$ restricted to subtrees of $S$. & \pageref{def: immediate children in forest}\\
 $\tilde N_\CF(S)$ & Nodes in $\tilde N(S)$, but not in $\tilde N(T)$
 for $T \in C_\CF(S)$. & \pageref{def: some modulo forest node sets}\\
 $N_\CF(S)$ & Defined as $\tilde N_\CF(S) \cup \{\rho_S\}$. & \pageref{def: some modulo forest node sets}\\
 $\mathring K_\CF(S)$ & Kernel edges in $S$ that neither belong to & \\
 {} & nor are adjacent to 
 any subtree of $C_\CF(S)$. & \pageref{def: some modulo forest kernel sets} \\
 $K_\CF^\d(S)$ & Union of $K_T^\d(S)$ over $T \in C_\CF(S)$. & \pageref{def: some modulo forest kernel sets}\\
 $\fict(B)$ & Gives power-counting gain for renormalization of $\Cum_{\mft(B)}$. & \pageref{def: fict}\\
 $\mfh_{\c,D}(B)$ & Worst-case homogeneity gain from noises in $B$& {}\\
 {} & participating in ``external'' versus ``internal'' cumulants & {}\\
 {} & with external noises drawn from $D$ & \pageref{homogeneity gain from missed noise}\\
 $|\mft(B)|_{\s}$ & Total homogeneity of the typed set $B$ & \pageref{def: homogeneity of a set}\\
 $|\mft(B)|_{\s,\c,D}$ & Worst-case homogeneity given to noises $B$ & {}\\
 {} &  when part of an external cumulant with noises from $D$. & \pageref{def: external homogeneity}\\ 
 $\ident[\mfF_{2}]$ & Collection of all decorated i-forests. & \pageref{def: set of identified forests}\\
 $\ident[\mfT_{2}]$ & Collection of all decorated i-trees. & \pageref{def: set of identified forests}\\
 $\ident[\mfF_{i}],\ \ident[\mfT_{i}]$ & For $i \in \{0,1\}$, set of decorated i-forests & {}\\
 {} & or i-trees with color at most $i$. & \pageref{def: set of identified forests}\\
 $\ident[\widehat{\Tr}_{2}]$ & Collection of all decorated i-trees with root colored $2$. & \pageref{def: i-trees with 2 colored root}\\
 $L_{\mcF}(S)$ & $e \in L(S)$ but not in $L(T)$ for every $T \in C_{\mcF}(S)$. & \pageref{def: leaves mod forest}\\
 $\mcb{C}(\R^{d})$ & Collection of smooth real valued functions on $\R^{d}$. & \pageref{def: smooth functions 1}\\
 $\mcb{C}_{A}$ & All smooth real valued functions on $(\R^{d})^{A}$. & \pageref{def: smooth functions 2}\\ 
 $\allf$ & Collection of all smooth real valued functions on $(\R^{d})^{\allnodes}$. & \pageref{def: all smooth functions}\\
 $\powrootquot{N}{\mfn}{v}{v'}$, etc. & Various notations for products of monomials & {}\\
 {} & or binomials in $\allf$. & \pageref{def: various power functions}\\
 $\genvert{\pi,\mcF,S}$ & Renormalized distribution for divergent $S$. & \pageref{def: renormalized kernel 1}\\
 $C_{\mathscr{C}}(e)$ & For $e \in \cut$, $\mathscr{C} \subset \cuts$, given by $\mathrm{Min}
 \left(
 \{ \bar{e} \in \cC:\ \bar{e} > e \}
 \right)$.& \pageref{def: immediate children - cuts}\\
 \bottomrule
 \end{tabular}
 \end{center}

\endappendix 

\bibliographystyle{./Martin}
\bibliography{./refs}

\newcommand{\etalchar}[1]{$^{#1}$}
\def\cprime{$'$} \def\polhk#1{\setbox0=\hbox{#1}{\ooalign{\hidewidth
  \lower1.5ex\hbox{`}\hidewidth\crcr\unhbox0}}}
\begin{thebibliography}{HHL{\etalchar{+}}15}
\expandafter\ifx\csname url\endcsname\relax
  \def\url#1{\texttt{#1}}\fi
\expandafter\ifx\csname urlprefix\endcsname\relax\def\urlprefix{URL }\fi
\expandafter\ifx\csname href\endcsname\relax
  \def\href#1#2{#2}\fi
\expandafter\ifx\csname burlalt\endcsname\relax
  \def\burlalt#1#2{\href{#2}{\texttt{#1}}}\fi

\bibitem[AT06]{AT06}
\textsc{F.~Avram} and \textsc{M.~S. Taqqu}.
\newblock On a {S}zeg{\"o} type limit theorem and the asymptotic theory of
  random sums, integrals and quadratic forms.
\newblock In \emph{Dependence in probability and statistics},  259--286.
  Springer, 2006.
\newblock
  \burlalt{doi:10.1007/0-387-36062-X_12}{http://dx.doi.org/10.1007/0-387-36062-X_12}.

\bibitem[BHZ16]{BHZalg}
\textsc{Y.~{Bruned}}, \textsc{M.~{Hairer}}, and \textsc{L.~{Zambotti}}.
\newblock {Algebraic renormalisation of regularity structures}.
\newblock \emph{ArXiv e-prints} (2016).
\newblock \burlalt{arXiv:1610.08468}{http://arxiv.org/abs/1610.08468}.

\bibitem[BP57]{BP}
\textsc{N.~N. Bogoliubow} and \textsc{O.~S. Parasiuk}.
\newblock \"{U}ber die {M}ultiplikation der {K}ausalfunktionen in der
  {Q}uantentheorie der {F}elder.
\newblock \emph{Acta Math.} \textbf{97}, (1957), 227--266.
\newblock
  \burlalt{doi:10.1007/BF02392399}{http://dx.doi.org/10.1007/BF02392399}.

\bibitem[CQ02]{MR1883719}
\textsc{L.~Coutin} and \textsc{Z.~Qian}.
\newblock Stochastic analysis, rough path analysis and fractional {B}rownian
  motions.
\newblock \emph{Probab. Theory Related Fields} \textbf{122}, no.~1, (2002),
  108--140.
\newblock
  \burlalt{doi:10.1007/s004400100158}{http://dx.doi.org/10.1007/s004400100158}.

\bibitem[CS16]{CS16}
\textsc{A.~{Chandra}} and \textsc{H.~{Shen}}.
\newblock {Moment bounds for SPDEs with non-Gaussian fields and application to
  the Wong-Zakai problem}.
\newblock \emph{ArXiv e-prints} (2016).
\newblock \burlalt{arXiv:1605.05683}{http://arxiv.org/abs/1605.05683}.

\bibitem[FMRS85]{FMRS85}
\textsc{J.~Feldman}, \textsc{J.~Magnen}, \textsc{V.~Rivasseau}, and
  \textsc{R.~S{\'e}n{\'e}or}.
\newblock Bounds on {R}enormalized {F}eynman graphs.
\newblock \emph{Communications in Mathematical Physics} \textbf{100}, no.~1,
  (1985), 23--55.
\newblock
  \burlalt{doi:10.1007/BF01212686}{http://dx.doi.org/10.1007/BF01212686}.

\bibitem[FV10]{MR2667703}
\textsc{P.~Friz} and \textsc{N.~Victoir}.
\newblock Differential equations driven by {G}aussian signals.
\newblock \emph{Ann. Inst. Henri Poincar\'e Probab. Stat.} \textbf{46}, no.~2,
  (2010), 369--413.
\newblock \burlalt{arXiv:0707.0313}{http://arxiv.org/abs/0707.0313}.
\newblock
  \burlalt{doi:10.1214/09-AIHP202}{http://dx.doi.org/10.1214/09-AIHP202}.

\bibitem[GIP15]{Paracontrol}
\textsc{M.~Gubinelli}, \textsc{P.~Imkeller}, and \textsc{N.~Perkowski}.
\newblock Paracontrolled distributions and singular {PDE}s.
\newblock \emph{Forum Math. Pi} \textbf{3}, (2015), e6, 75.
\newblock \burlalt{arXiv:1210.2684v3}{http://arxiv.org/abs/1210.2684v3}.
\newblock
  \burlalt{doi:10.1017/fmp.2015.2}{http://dx.doi.org/10.1017/fmp.2015.2}.

\bibitem[GJ87]{GlimmJaffe}
\textsc{J.~Glimm} and \textsc{A.~Jaffe}.
\newblock \emph{Quantum physics}.
\newblock Springer-Verlag, New York, second ed., 1987,  xxii+535.
\newblock A functional integral point of view.
\newblock
  \burlalt{doi:10.1007/978-1-4612-4728-9}{http://dx.doi.org/10.1007/978-1-4612-4728-9}.

\bibitem[Hai13]{KPZ}
\textsc{M.~Hairer}.
\newblock Solving the {KPZ} equation.
\newblock \emph{Ann. of Math. (2)} \textbf{178}, no.~2, (2013), 559--664.
\newblock \burlalt{arXiv:1109.6811}{http://arxiv.org/abs/1109.6811}.
\newblock
  \burlalt{doi:10.4007/annals.2013.178.2.4}{http://dx.doi.org/10.4007/annals.2013.178.2.4}.

\bibitem[Hai14]{Regularity}
\textsc{M.~Hairer}.
\newblock A theory of regularity structures.
\newblock \emph{Invent. Math.} \textbf{198}, no.~2, (2014), 269--504.
\newblock \burlalt{arXiv:1303.5113}{http://arxiv.org/abs/1303.5113}.
\newblock
  \burlalt{doi:10.1007/s00222-014-0505-4}{http://dx.doi.org/10.1007/s00222-014-0505-4}.

\bibitem[{Hai}16]{Proceedings}
\textsc{M.~{Hairer}}.
\newblock {The motion of a random string}.
\newblock \emph{ArXiv e-prints} (2016).
\newblock \burlalt{arXiv:1605.02192}{http://arxiv.org/abs/1605.02192}.

\bibitem[Hep69]{Hepp}
\textsc{K.~Hepp}.
\newblock On the equivalence of additive and analytic renormalization.
\newblock \emph{Comm. Math. Phys.} \textbf{14}, (1969), 67--69.
\newblock
  \burlalt{doi:10.1007/BF01645456}{http://dx.doi.org/10.1007/BF01645456}.

\bibitem[HHL{\etalchar{+}}15]{HuNualart}
\textsc{Y.~{Hu}}, \textsc{J.~{Huang}}, \textsc{K.~{L{\^e}}},
  \textsc{D.~{Nualart}}, and \textsc{S.~{Tindel}}.
\newblock {Stochastic heat equation with rough dependence in space}.
\newblock \emph{ArXiv e-prints} (2015).
\newblock \burlalt{arXiv:1505.04924}{http://arxiv.org/abs/1505.04924}.

\bibitem[Hos16]{Hoshino2016}
\textsc{M.~Hoshino}.
\newblock {KPZ} equation with fractional derivatives of white noise.
\newblock \emph{Stoch. Partial Differ. Equ. Anal. Comput.}  1--64.
\newblock \burlalt{arXiv:1602.04570}{http://arxiv.org/abs/1602.04570}.
\newblock
  \burlalt{doi:10.1007/s40072-016-0078-x}{http://dx.doi.org/10.1007/s40072-016-0078-x}.

\bibitem[HQ15]{KPZJeremy}
\textsc{M.~Hairer} and \textsc{J.~Quastel}.
\newblock A class of growth models rescaling to {KPZ}.
\newblock \emph{ArXiv e-prints} (2015).
\newblock \burlalt{arXiv:1512.07845}{http://arxiv.org/abs/1512.07845}.

\bibitem[HS15]{HS15}
\textsc{M.~{Hairer}} and \textsc{H.~{Shen}}.
\newblock {A central limit theorem for the KPZ equation}.
\newblock \emph{ArXiv e-prints} (2015).
\newblock \burlalt{arXiv:1507.01237}{http://arxiv.org/abs/1507.01237}.

\bibitem[HS16]{HaoSG}
\textsc{M.~Hairer} and \textsc{H.~Shen}.
\newblock The dynamical sine-{G}ordon model.
\newblock \emph{Comm. Math. Phys.} \textbf{341}, no.~3, (2016), 933--989.
\newblock \burlalt{arXiv:1409.5724}{http://arxiv.org/abs/1409.5724}.
\newblock
  \burlalt{doi:10.1007/s00220-015-2525-3}{http://dx.doi.org/10.1007/s00220-015-2525-3}.

\bibitem[MU11]{MR2823213}
\textsc{J.~Magnen} and \textsc{J.~Unterberger}.
\newblock From constructive field theory to fractional stochastic calculus.
  ({I}) {A}n introduction: rough path theory and perturbative heuristics.
\newblock \emph{Ann. Henri Poincar\'e} \textbf{12}, no.~6, (2011), 1199--1226.
\newblock \burlalt{arXiv:1012.3873}{http://arxiv.org/abs/1012.3873}.
\newblock
  \burlalt{doi:10.1007/s00023-011-0106-3}{http://dx.doi.org/10.1007/s00023-011-0106-3}.

\bibitem[MU12]{MR2889659}
\textsc{J.~Magnen} and \textsc{J.~Unterberger}.
\newblock From constructive field theory to fractional stochastic calculus.
  ({II}) {C}onstructive proof of convergence for the {L}\'evy area of
  fractional {B}rownian motion with {H}urst index {$\alpha\in(\frac 18,\frac
  14)$}.
\newblock \emph{Ann. Henri Poincar\'e} \textbf{13}, no.~2, (2012), 209--270.
\newblock \burlalt{arXiv:1103.1750}{http://arxiv.org/abs/1103.1750}.
\newblock
  \burlalt{doi:10.1007/s00023-011-0119-y}{http://dx.doi.org/10.1007/s00023-011-0119-y}.

\bibitem[SX16]{SX16}
\textsc{H.~{Shen}} and \textsc{W.~{Xu}}.
\newblock {Weak universality of dynamical $\Phi^4_3$: non-Gaussian noise}.
\newblock \emph{ArXiv e-prints} (2016).
\newblock \burlalt{arXiv:1601.05724}{http://arxiv.org/abs/1601.05724}.

\bibitem[Zim69]{Zimmermann}
\textsc{W.~Zimmermann}.
\newblock Convergence of {B}ogoliubov's method of renormalization in momentum
  space.
\newblock \emph{Comm. Math. Phys.} \textbf{15}, (1969), 208--234.
\newblock
  \burlalt{doi:10.1007/BF01645676}{http://dx.doi.org/10.1007/BF01645676}.

\bibitem[ZZ15]{ZhuZhu}
\textsc{R.~Zhu} and \textsc{X.~Zhu}.
\newblock Three-dimensional {N}avier-{S}tokes equations driven by space-time
  white noise.
\newblock \emph{J. Differential Equations} \textbf{259}, no.~9, (2015),
  4443--4508.
\newblock \burlalt{arXiv:1406.0047}{http://arxiv.org/abs/1406.0047}.
\newblock
  \burlalt{doi:10.1016/j.jde.2015.06.002}{http://dx.doi.org/10.1016/j.jde.2015.06.002}.

\end{thebibliography}

\end{document}